\newcommand{\C}{{\mathbb C}}
\newcommand{\R}{{\mathbb R}}
\newcommand{\Z}{{\mathbb Z}}
\newcommand{\N}{{\mathbb N}}
\newcommand{\Q}{{\mathbb Q}}
\newcommand{\A}{{\mathbb A}}
\newcommand{\T}{{\mathbb T}}
\newcommand{\F}{{\mathbb F}}
\newcommand{\G}{{\mathbb G}}
\newcommand{\K}{{\mathbb K}}
\renewcommand{\H}{{\mathbb H}}
\newtheorem{theorem}{Theorem}[section]
\newtheorem{lemma}[theorem]{Lemma}
\newtheorem{remark}[theorem]{Remark}
\newtheorem{example}[theorem]{Example}
\newtheorem{corollary}[theorem]{Corollary}
\newtheorem{proposition}[theorem]{Proposition}
\newtheorem{definition}[theorem]{Definition}
\def\cal{\mathcal}
\newcommand{\calh}[0]{{\cal H}}
\newcommand{\calr}[0]{{\cal R}}
\newcommand{\calf}[0]{{\cal F}}
\newcommand{\call}[0]{{\cal L}}
\newcommand{\cale}[0]{{\cal E}}
\newcommand{\calk}[0]{{\cal K}}
\newcommand{\cals}[0]{{\cal S}}
\newcommand{\calm}[0]{{\cal M}}
\newcommand{\calz}[0]{{\cal Z}}
\newcommand{\functionals}[2]{{\Theta_{#1}({#2})}}
\newcommand{\moduleright}[1]{{{\rm MOD}_#1}}
\newcommand{\moduleleft}[1]{{{\rm MOD}_#1}}
\newcommand{\modulehomomorphisms}[3]{{{\rm Hom}_{#1}(#2,#3)}}
\newcommand{\moduleendomorphisms}[2]{{{\rm End}_{#1}({#2})}}
\newcommand{\Ad}[0]{{\rm Ad}}
\newcommand{\id}[0]{{\rm id}}
\newcommand{\compacts}[2]{{\calk_{#1} (#2)}}
\newcommand{\idd}[0]{{1}}
\newcommand{\triv}[0]{{\rm 1}}   
\newcommand{\ind}[2]{{{\rm Ind }_{#1}^{#2}}}
\newcommand{\res}[2]{{{\rm Res }_{#1}^{#2}}}
\newcommand{\Aut}[0]{{\rm Aut}}
\newcommand{\Hom}[1]{{{\rm Hom }_{#1}}}
\newcommand{\ring}[1]{{{ R }^{#1}}}
\newcommand{\image}[1]{{{\rm im \;} {#1} }}
\newcommand{\diag}[0]{{\rm diag}} 
\renewcommand{\span}[0]{{\rm span}} 
\renewcommand{\Hom}[1]{{{\rm Hom}_{#1}}} 
\newcommand{\AddMaps}[0]{{\rm AddMaps}} 
\newcommand{\character}[0]{{\rm char}} 
\newcommand{\spec}[0]{{{\rm sp}}}
\title{A kind of $KK$-theory for rings} 
\author[B. Burgstaller]{Bernhard Burgstaller}
\email{bernhardburgstaller@yahoo.de}
\subjclass{19K35, 16B50}
\keywords{$KK$-theory, rings, algebras,  equivariant, split exact}
\begin{document}

\begin{abstract}
A group equivariant $KK$-theory 
for rings 
will be defined and studied 
in analogy to Kasparov's $KK$-theory for 
$C^*$-algebras. 
It is a kind of linearization of the category 
of rings by allowing addition of homomorphisms, imposing also homotopy invariance, invertibility of matrix corner embeddings, 
and allowing morphisms which are the opposite split of split exact sequences. 
We demonstrate the potential of this theory 
by proving for example equivalence induced by  Morita equivalence 
and a Green-Julg isomorphism in this framework. 
  
\end{abstract}

\maketitle

\tableofcontents



\section{Introduction}

Let $G$ be a discrete group. 
In this paper we consider 
a 
variant of $KK$-theory for the class of 
$G$-equivariant, quadratik rings which
is closely related to Kasparov's original $KK$- theory for $C^*$-algebras \cite{kasparov1981,kasparov1988}. 
This theory defined in this paper, called
$GK^G$-theory, is the universal additive, homotopy invariant, stable and split exact  category 
formed from the category of quadratik rings 
and ring homomorphisms. 

It has it roots in Cuntz \cite{cuntzn} and Higson's \cite{higson} findings, that Kasparov's $KK$-theory, when restricted to ungraded separable $C^*$-algebra, is the universal 
additive, homotopy invariant, split-exact theory formed from the category of separable 
$C^*$-algebras and $*$-homomorphisms. 
See 
\cite{bgenerators,baspects} for more on this link. 

A ring has the 
condition to be quadratik 
if every element of it can be written as 
a finite sum of products $ab$ for $a,b$ in the ring. 

Now in this paper we do homotopy as follows.
We at first complexify the two rings 
at the endpoints, and then do continuous-function homotopy in $\C$-algebras as usual. 
This may not be viewed as a proper homotopy in rings, as the complexified space is bigger, 
but it is a convenient equivalence relation suitable for our purposes. 

Now stability means the following.
We at first consider modules $\cale$ over a ring $A$ which are equipped with a functional space 
comprised
of $A$-module homomorphisms $\phi:\cale \rightarrow A$. Such functionals are the analogy of the functionals $\cale \rightarrow A: \xi \mapsto \langle \eta,\xi\rangle$ induced by the inner product on Hilbert modules $\cale$ over a $C^*$-algebra $A$. 
Then we define the ring of compact operators $\calk_A(\cale)$ generated by elementary operators 
$\theta_{\eta,\phi}$ defined by $\theta_{\xi,\phi}(\xi)= \eta \phi(\xi)$ in analogy to such operators in $KK$-theory. 
Then a corner embedding is the canonical 
ring homomorphism 
$$e:A \rightarrow \calk_A(\cale \oplus A)$$
acting by multiplication on the coordinate $A$, 
and we declare $e$ to be an invertible morphism in $GK^G$-theory. 
 This means ``stability".

If we restrict the class of quadratik rings 
to the separable $C^*$-algebras, define the 
functional spaces by the inner product as explained before as usual, restrict $\cale$ to countably generated Hilbert $A$-modules, 
and define $\calk_A(\cale)$ as before but also taking the norm closure, then we exactly get 
Kasparov's theory $KK^G$ for $GK^G$ 
by remark \ref{rem12} and \cite{bgenerators} out. 

This theory is also split-exact, meaning that given a short split-exact sequence of rings, splitting at the quotient ring, the opposite split on the ideal-side is declared to be a valid morphism. It is automatically always an additive group homomorphism, but not a ring homomorphism, and so this theory 
gets the glance of a kind of linearization of the ring category. 

The main concerns of this concept are two-folds. 
Firstly, general rings, even 
general Banach algebras, do not behave so well as $C^*$-algebras. More precisely, they do not have all the good analysis as $C^*$-algebras do have, like the Kasparov stabilization theorem, 
approximate units, positivity, to name a view. 

\if 0
This is obvious. 
\fi 

A sort of 
compensate of this is what we do in 
section \ref{sec10}, loosely called $\nabla$-calculation. 
An arbitrary complex algebraic expression which is zero in $GK^G$-theory is equivalently reformulated to an expression which is zero on 
the level (subcategory) of ring homomorphism in $GK^G$-theory by a simple algorithm. 
The problem that remains is the complicated equivalence relations on this level coming from $GK^G$-theory. 
Nevertheless, with this method we are able to show that a functor on $GK^G$-theory is faithful if and only if it is faithful on the sublevel of ring homomorphism. 
 See theorem \ref{theorem108}.

But the second issue is more subtle and appears to us also more severe. 
If one wants to construct a morphism in $GK^G$-theory involving one synthetical split morphism $\Delta$ axiomatically postulated by the theory, 
then one `automatically' runs into double-split exact sequences. For example the composition
$\pi \Delta$ of $\Delta$ with a ring homomorphism $\pi$ becomes $\pi' \Delta_{\pi''}$ 
with respect to a double split-exact sequence 
by lemmas \ref{lemma101} and \ref{lemma215}. 
But in practice it is extremely difficult to write down two splitting ring homomorphisms 
$\pi',\pi''$ 
of a short exact sequence, such that $\pi' \Delta_{\pi''} \neq 0$. 
This difficulty besets also Kasparov's theory 
  in that it 
might be narrow in applicability. 
Mostly such double splits are constructed 
by Clifford algebras, the $K$-theory Bott
(or dual-Dirac) element, and the Dirac element induced
 by the Dirac operator 
\cite{kasparov1988}.

The next 
goal of $GK^G$-theory should be clarifying applicability by coming up with examples involving synthetical splits. 
Of course, one might consider similar examples 
known from 
 Atiyah-Singer index theory and Kasparov theory 
by replacing continuous function 
spaces $C_0$  by differentiable function spaces $C^\infty$. 
This might be a possible application. 
Connes' consideration of $p$-summable operators 
in noncommutative differential geoemetry \cite{connesn} seems to be such a continuous by 
smooth functions `replacement', very roughly said, and 
actually such replacements appear 
in various contexts 
 in the literature. 
\if 0
appears 
in various contexts 
by many other authors too. 
\fi

But for the moment we show some standard results, as roughly speaking, Morita 
equivalence induces $GK^G$-equivalence 
in section \ref{sec14},
 the establishment of a descent functor 
in section \ref{sec12},
and an induction functor for 
subgroups $H$
of $G$ in section \ref{sec15}, and even a Green-Julg isomorphism in section \ref{sec16}. 
Also the Baum-Connes map might be verbatim generalized to $\C$-algebras, 
and we check that it is injective if and only 
if it is injective on the `pure' homomorphism 
level, 
by the functor theorem mentioned earlier,	
see section \ref{sec17}. 


There are other generalizations of $KK$-theory 
to bigger or other classes than $C^*$-algebras. 
Notably, this is Lafforgue's Banach $KK$-theory 
for Banach algebras \cite{lafforgue}, and Cuntz' $kk$-theory for 
locally convex algebras and diffeomorphic 
homotopy \cite{cuntz}. 
Cuntz' theory has been considered and extended by various authors, 
see Cuntz \cite{cuntz2}, Cuntz and Thom 
\cite{cuntzthom}, {Corti\~nas} 
and Thom \cite{cortinasthom}, 
Ellis 
\cite{ellis}, 
and 
Garkusha \cite{garkusha1,garkusha2}.
Related is also Grensing \cite{grensing}, 
and 
we should also mention 
Weidner 
\cite{weidner,weidner2}.

The theory by Cuntz is almost half-exact, meaning that short exact sequences of 
algebras  
having linear splits induce, even, long exact and cyclic sequences in $kk$-theory, and so might be even better 
compared with, the half-exact, $E$-theory by Higson \cite{higson2},
and Connes and Higson \cite{conneshigson,
conneshigson2}, which was designed to make, the split-exact, $KK$-theory half-exact.  
Notice, that our theory here is also split-exact by its very definition,  
but extremely likely not half-exact.


We remark that all results presented in this paper hold analogously in $KK^G$-theory for $C^*$-algebras. 

In particular, the $\nabla$-calculation 
might also be  interesting in $KK$-theory, 
as it shows that an arbitrarily long product 
of Kasparov products is zero in $KK^G$-theory 
if and only if an obvious ordinary Kasparov element is zero in $KK^G(A,B)$ without the need of the Kasparov product at all, 
see for instance example \ref{example2}. 
However, this is theoretical, because in practice, this element is so complicated, that 
we need to compute the element classically 
by computing the Kasparov products
to decide the equality with zero by a
homotopy, 
which on the other hand is also theoretical, 
because in general 
one needs the axiom of choice 
for the Kasparov product. 

We note that we have chosen to do everything 
for discrete rings here 
not because 
we think 
this is the best choice or even necessary, 
but rather to have it easier at first and to make a point.
First of all, it works for discrete rings. 
But, the theory is also understood to be adapted to 
various situations, and one has to 
choose appropriate closures in norm, 
or locally convex spaces, or Schwartz spaces 
and so on, and choose appropriate functional spaces, and also allow $G$ to be 
a locally compact group.  
Notably, the 
corner embeddings have to be adapted by choosing the appropriate topological closure,
as, notice, the stability axiom is 
mainly  the 
axiom of $GK^G$-theory sensitive to the differences, 
but 
the homotopy axiom might also be adjusted, 
for example by allowing only smooth functions 
rather than continuous ones. 

We also are confident that everything, 
excepting the Baum-Connes map, may  
easily be 
adapted to the inverse semigroup and 
not necessarily Hausdorff locally compact 
groupoid equivariant setting as 
in \cite{baspects}. 
But to avoid overloading very moderate technicalities, 
we abstained from such a general situation 
the first time.  

The brief overview of this paper is as follows. 
In section \ref{sec2}-\ref{sec3} 
we explain $GK$-theory. 
The way one works with double split exact 
sequences in $GK$-theory is explained in
sections \ref{sec4}-\ref{sec9}. 
In section \ref{sec10} the $\nabla$-calculation 
is shown. 
In the final sections \ref{sec11}-\ref{sec17} 
the above mentioned standard results 
will be 
performed.   

 \if 0
We have divided the whole material 
in smaller sections with aussagekräftige 
titles, so we hope giving here an overview 
would just be superflous and tedious 
for the reader to be read. 
\fi 


\section{Rings and Functional Modules}
					\label{sec2}

All rings in this paper
are neither necessarily commutative nor
unital, associative discrete rings.  
Throughout, $G$ denotes a discrete group, 
and all rings $A$ are equipped with a $G$-action, that is, a group homomorphism 
$\alpha:G \rightarrow \Aut(A)$ 
into the automorphism group $\Aut(A)$ of $A$. 
All ring homomorphisms are $G$-equivariant. 
We sometimes say ``non-equivariant" ring, 
homomorphism, module etc. if we want to 
ignore any 
possible $G$-structure. 
We often write $\id$ or $\idd$ for the identity map, for example in $T \otimes \idd$.
Likewise we write $\triv$ for the trivial 
$G$-action. The unitization of a ring $A$ 
will be denoted by $A^+$ or $\tilde A$. 

Given an invertible operator $U$ 
or a $G$-action $U$,
we write $\Ad(U)$ for the map $T \mapsto 
U \circ T \circ U^{-1}$ and 
the $G$-action 
$(g,T)\mapsto  U_g \circ T \circ U_{g^{-1}}$, respectively.

All structures 
have $G$-action. 
As soon as we introduce the definition of a $G$-structure on a 
module, operator spaces etc. it is understood 
without saying that these objects carry $G$-actions. 

Functionals on modules are the substitute 
for inner products on Hilbert modules. 
Functional spaces are just 
auxiliary spaces 
to define the space of compact operators. 
Everything runs in parallel to $C^*$-algebras (now rings),
Hilbert modules (now functional modules) and inner products (now functionals).

\begin{definition}[$G$-action on module]
{\rm

Let $(A,\alpha)$ be a ring. 
Let 
$M$ be a right $A$-module. 

A {\em $G$-action} $S$ on $M$ is a  
group homomorphism 
$S:G \rightarrow \AddMaps (M)$ (additive maps 
= abelian group homomorphisms) 
into the invertible additive maps on $M$ such that
$S_1 = \idd$ 
and 
$S(\xi a)= S(\xi) \alpha(a)$ for all $\xi \in \cale, a \in A$. 

}
\end{definition}

\begin{definition}[$G$-action on Module Homomorphisms]
{\rm

Let $(A,\alpha)$ 
be a ring. 
Let $(M,S)$ and $(N,T)$ be right $A$-modules. 
We define $\Hom A (M,N)$ to be the abelian group of all right $A$-module homomorphisms 
$\phi:M \rightarrow N$.

We equip $\Hom A (M,N)$ with the $G$-action 
$\Ad (S,T)$, that is, we set 
$g(\phi):= T_{g} \circ \phi \circ S_{g^{-1}} \in \Hom A (M,N)$ for all $g \in G$ and $\phi \in \Hom A (M,N)$. 

}
\end{definition}

We also write $\Hom A (M):=\Hom A (M,N)$ if $(M,S)=(N,T)$. 
This is a ring under concatenation and its 
$G$-action $\Ad(S)$.

\begin{definition}[Functionals]			\label{def12}    
{\rm

Let $(A,\alpha)$ be a ring. 
Let $M$ be a right $A$-module. 
Turn $\Hom A (M,A)$ to a left $A$-module 
by setting 
$(a \phi)(\xi):= a \phi(\xi)$  for all 
$a \in A, \xi \in M$ and $\phi \in \Hom A (M,A)$. 

Assume 
we are given a 
distinguished {\em functional space} $\Theta_A(M) \subseteq \Hom A (M,A)$ which is a $G$-invariant, left $A$-submodule of $\Hom A (M,A)$. 

Then we call $(M,\Theta_A(M))$ a 
{\em right functional $A$-module. 
}

}
\end{definition}

The functional space $\Theta_A(M)$ will usually not be notated in $M$, as it is called anyway always in the same way. 

\begin{definition}[Compact operators] 
		\label{defcompact}
{\rm

Let $(A,\alpha)$ be a ring. 
To right functional $A$-modules $(M,S)$ and $(N,T)$ is 
associated 
the $G$-invariant, abelian subgroup 
(under addition) of {\em compact operators} $\calk_A (M,N) \subseteq \Hom A (M,N)$
which consists of all 
finite sums  of  all {\em elementary compact operators}  
$\theta_{\eta,\phi} \in \Hom A (M,N)$ defined by
$\theta_{\eta,\phi}(\xi)= \eta \phi(\xi)$ for $\xi \in M, \eta \in N$ and $\phi \in \Theta_A(M)$.
}
\end{definition}

We write $\calk_A(M) \subseteq \Hom A (M)$ for the $G$-invariant subring $\calk_A(M,M)$. 
To observe $G$-invariance, we compute
$g(\theta_{\xi,\phi})(\eta)= T_g ( \xi \phi (S_{g^{-1}}(\eta)) = \theta_{T_g(\xi),g(\phi)}(\eta)$.

\begin{definition}[Multiplier operators] 
{\rm

Let $M$ and $N$ be right functional $A$-modules.

We write $\calm(\calk_A(M,N)) \subseteq \Hom A (M,N)$ 
for the $G$-invariant, abelian subgroup of {\em multipliers of the compact operators}, that is, we set 
\begin{eqnarray*}
&&\calm(\calk_A(M,N)) := \{ V \in \Hom A(M,N) \;|\; 
V \circ X, Y \circ V \in \calk_A (M,N) \;  
\\
&&\qquad \forall X \in \calk_A(M), Y \in \calk_A(N) \}
\end{eqnarray*} 

}
\end{definition}

The $G$-invariant subring $\calm(\call_A(M,M)) \subseteq \Hom A (M)$ is denoted by $\calm(\call_A(M))$. 
Clearly, $\calk_A(M)$ is a two-sided ideal 
in $\calm(\calk_A(M))$. 
We note that the above requirement $V \circ X  
\in \calk_A (M,N)$ is trivially automatic and so superfluously stated. 
The other requirement $Y \circ V \in \calk_A (M,N)$  
for all $Y$ is trivially satisfied when 
$\phi \circ V \in \Theta_A(M)$ for all $\phi \in \Theta_A(N)$, as $\theta_{\xi,\phi} \circ V= 
\theta_{\xi,\phi \circ V}$.  
One might write ``$V^*(\phi)=\phi \circ V$" and this might justify to sloppily call the elements 
of 
$\calm(\calk_A(M,N))$ also `adjointable operators', but we shall not follow this path. 
In practice there will be 
little difference between the formal adjointable-operators definition or the multiplier definition, even one can construct 
counterexamples, see 
 the paragraph after definition \ref{def115}. 
 
The adjointable operators behave slightly better, and we shall exclusively work with them: 

\begin{definition}[Adjointable operators]
{\rm

Let $M$ and $N$ be right functional $A$-modules.

We write $\call_A(M,N) \subseteq \Hom A (M,N)$ 
for the $G$-invariant, abelian subgroup of {\em adjointable operators}, that is, we set 
\begin{eqnarray*}
&&\call_A(M,N) := \{ V \in \Hom A(M,N) \;|\; 
\phi \circ V 
\in \Theta_A(M)  , 
\; 
\forall 
\phi \in \Theta_A(N)  
\}
\end{eqnarray*} 

}
\end{definition}

\if 0				

\begin{definition}[Functionals]
{\rm

Let $A$ be a ring. 
Let $M$ be a right $A$-module. 
Then to $M$ we assume is associated a {\em functional space} $\Theta_A(M) \subseteq \Hom A (M,A)$ which is a left $A$-submodule of $\Hom A (M,A)$ (left $A$-multiplication given by $(a \phi)(b)= a \phi(b)$ for $a,b \in A, \phi \in \Hom A(M,A)$).

To each module $(M,S)$ is associated an abelian group $\Theta_A(M)$ 
consisting of right $A$-linear module maps $\phi:M \rightarrow A$ ($A$-linear functionals).

}
\end{definition}

\fi

\if 0
\begin{definition}[$G$-action on module]
{\rm

Let $(\cale,\Theta_A(\cale))$ be a right $A$-module 
with functional space. 

A $G$-action $S$ on $M$ is a map 
$S:G \rightarrow \AddMaps (M)$ (additive maps) 
such that $S_{gh}= S_g S_h$ and $S_{g^{-1}} =S_{g}^{-1}$
and 
$S(\cale a)= S(\cale) \alpha(a)$ 
and
$g(\phi)  \in \Theta_A(\cale)$.

}
\end{definition}
\fi

\begin{definition}[Module homomorphism between functional modules]  
			\label{def116}
{\rm
A {\em functional module homomorphism} between two right functional $A$-modules
$(\cale,S,\Theta_A(\cale))$ and 
$(\calf,T,\Theta_A(\calf))$
is a $G$-equivariant, right 
$A$-module homomorphism $X:\cale \rightarrow \calf$ 
together with a  
$G$-equivariant, left $A$-module homomorphism 
$f:\Theta_A(\cale)  \rightarrow \Theta_A(\calf)$ 
such that for every $\phi \in \Theta_A(\cale)$ 
and $\xi \in \cale$ 
one has 
\begin{equation} 			\label{ref16b}
f(\phi) \big (X (\xi) \big )= \phi(\xi) 
\end{equation} 
 

}
\end{definition}

Just to demonstrate that the last definition works properly with compact operators, we note: 
 
\begin{lemma}				\label{lemma17}  
A module 
homomorphism $(X,f)$ as in the last definition 
with $X$ being surjective  induces a ring homomorphism 
\if 0
$\call_A(\cale) \rightarrow \call_A(\calf)$ 
which 
restricts to a ring homomorphism 
\fi 
$\sigma: \calk_A(\cale) \rightarrow \calk_A(\calf)$, and $f$ is then automatically injective.  

If $(X,f)$ is an isomorphism (i.e. $X$ and $f$ bijective)
$\sigma$ is an isomorphism 
and extends to an isomorphism 
$\call_A(\cale) \rightarrow \call_A(\calf)$.
\end{lemma}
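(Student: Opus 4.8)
The plan is to define $\sigma$ on elementary compact operators by the formula $\sigma(\theta_{\eta,\phi}) := \theta_{X(\eta),f(\phi)}$ and extend additively, then check everything. First I would verify that $\sigma$ is well-defined, which is the crux of the argument: a finite sum $\sum_i \theta_{\eta_i,\phi_i}$ can vanish in $\calk_A(\cale)$ without the individual terms vanishing, so I must show that $\sum_i \theta_{\eta_i,\phi_i} = 0$ in $\Hom A(\cale)$ forces $\sum_i \theta_{X(\eta_i),f(\phi_i)} = 0$ in $\Hom A(\calf)$. This is exactly where surjectivity of $X$ is used: given $\xi' \in \calf$, write $\xi' = X(\xi)$ for some $\xi \in \cale$; then using the compatibility relation \eqref{ref16b} one computes
\[
\Big(\sum_i \theta_{X(\eta_i),f(\phi_i)}\Big)(X(\xi)) = \sum_i X(\eta_i) \, f(\phi_i)(X(\xi)) = \sum_i X(\eta_i)\,\phi_i(\xi) = X\Big(\sum_i \theta_{\eta_i,\phi_i}(\xi)\Big) = X(0) = 0,
\]
and since every element of $\calf$ has the form $X(\xi)$, the operator $\sum_i \theta_{X(\eta_i),f(\phi_i)}$ is zero. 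This same computation, read with an arbitrary (not necessarily zero) sum, shows the square $\sigma(T) \circ X = X \circ T$ holds for all $T \in \calk_A(\cale)$, which is the intertwining identity that will drive the rest.

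Next I would check that $\sigma$ is a ring homomorphism. Additivity is built into the definition. For multiplicativity it suffices to treat elementary operators: $\theta_{\eta,\phi} \circ \theta_{\zeta,\psi} = \theta_{\eta,\phi}(\zeta \psi(\cdot)) = \theta_{\eta\cdot\phi(\zeta)\cdot,\psi}$ — more precisely $\theta_{\eta,\phi}\circ\theta_{\zeta,\psi} = \theta_{\eta\,\phi(\zeta),\psi}$ after absorbing the scalar, wait, one should write it as the operator $\xi \mapsto \eta\,\phi(\zeta\psi(\xi)) = \eta\,(\phi(\zeta))\,\psi(\xi) = \theta_{\eta\phi(\zeta),\psi}(\xi)$; applying $\sigma$ and using $G$-equivariance and $A$-linearity of $X$ and $f$ (namely $X(\eta\,a) = X(\eta)\,a$ and $f(\phi(\zeta)\psi)$ relating to $f(\phi)(X(\zeta)) f$... ) one matches it with $\sigma(\theta_{\eta,\phi}) \circ \sigma(\theta_{\zeta,\psi}) = \theta_{X(\eta),f(\phi)}\circ\theta_{X(\zeta),f(\psi)} = \theta_{X(\eta)\,f(\phi)(X(\zeta)),f(\psi)}$, and $f(\phi)(X(\zeta)) = \phi(\zeta)$ by \eqref{ref16b}, while $X(\eta\,\phi(\zeta)) = X(\eta)\,\phi(\zeta)$ by $A$-linearity; so the two sides agree. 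Equivariance of $\sigma$, if needed, follows from the formula $g(\theta_{\eta,\phi}) = \theta_{T_g\eta, g(\phi)}$ noted after Definition \ref{defcompact} together with equivariance of $X$ and $f$.

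For the claim that $f$ is automatically injective: suppose $f(\phi) = 0$ for some $\phi \in \Theta_A(\cale)$. Then for all $\xi \in \cale$ we get $\phi(\xi) = f(\phi)(X(\xi)) = 0$ by \eqref{ref16b}, so $\phi = 0$. Note this uses only \eqref{ref16b}, not surjectivity of $X$. Finally, in the isomorphism case, I would apply the already-constructed $\sigma$ to the inverse functional module homomorphism $(X^{-1}, f^{-1})$ — one checks $(X^{-1},f^{-1})$ is again a functional module homomorphism with $X^{-1}$ surjective, so it produces a ring homomorphism $\tau: \calk_A(\calf) \to \calk_A(\cale)$, and $\tau\sigma = \id$, $\sigma\tau = \id$ on elementary operators hence everywhere. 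To extend to $\call_A(\cale) \to \call_A(\calf)$, send $V \mapsto X \circ V \circ X^{-1}$; one must check this lands in $\call_A(\calf)$, i.e. that $\phi' \circ (X V X^{-1}) \in \Theta_A(\calf)$ for every $\phi' \in \Theta_A(\calf)$ — writing $\phi' = f(\phi)$ (possible since $f$ is surjective here), one has $f(\phi)\circ X V X^{-1}$, and $f(\phi) \circ X = $ the functional $\xi \mapsto f(\phi)(X\xi) = \phi(\xi)$, i.e. equals $\phi$ by \eqref{ref16b}, so the composite is $\phi \circ V \circ X^{-1} \in \Theta_A(\cale) \circ X^{-1}$, and one checks $\Theta_A(\cale) \circ X^{-1} = f(\Theta_A(\cale)) = \Theta_A(\calf)$, wait more carefully: $\psi \circ X^{-1}$ for $\psi \in \Theta_A(\cale)$ equals $f(\psi)$ since $f(\psi)(\xi') = f(\psi)(X(X^{-1}\xi')) = \psi(X^{-1}\xi')$; so indeed $\phi'\circ(XVX^{-1}) = f(\phi\circ V) \in \Theta_A(\calf)$ provided $\phi \circ V \in \Theta_A(\cale)$, which holds as $V$ is adjointable. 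The map $V \mapsto XVX^{-1}$ is clearly a ring homomorphism, is bijective with inverse $W \mapsto X^{-1}WX$, and restricts to $\sigma$ on $\calk_A(\cale)$ since $X \circ \theta_{\eta,\phi} \circ X^{-1} = \theta_{X\eta,\phi\circ X^{-1}} = \theta_{X\eta,f(\phi)}$. The main obstacle is the well-definedness step: one must resist the temptation to think of compact operators as formal linear combinations and instead genuinely use that $X$ is onto to transport the identity "this operator is zero" across.
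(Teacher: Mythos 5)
Your proposal is correct and follows essentially the same route as the paper: define $\sigma$ on elementary operators by $\theta_{\eta,\phi}\mapsto\theta_{X(\eta),f(\phi)}$, use surjectivity of $X$ plus the compatibility relation (\ref{ref16b}) to transport the vanishing of a sum across, verify multiplicativity via $\theta_{\xi,\phi}\theta_{\eta,\psi}=\theta_{\xi\phi(\eta),\psi}$, and get injectivity of $f$ directly from (\ref{ref16b}). Your treatment of the isomorphism case (applying the construction to $(X^{-1},f^{-1})$ and conjugating by $X$ on $\call_A$, using $\phi\circ X^{-1}=f(\phi)$) simply fills in details the paper leaves implicit, and it is sound.
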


\begin{proof}
Define $\sigma$ additively and by 
$\sigma(\theta_{\xi,\phi})= \theta_{X(\xi),f(\phi)}$. To see that it
is well-defined, assume that 
$\sum_i \xi_i \phi_i(\eta) = 0$ for all $\eta \in \cale$. 
Then by the last definition 
$$0=  \sum_i X(\xi_i \phi_i(\eta)) 
= \sum_i X(\xi_i) f(\phi_i)(X(\eta))
= \sum_i \theta_{X(\xi_i),f(\phi_i)} (X(\eta)) 
$$
Finally, with $\theta_{\xi,\phi} 
\theta_{\eta,\psi}  = \theta_{\xi \phi(\eta),\psi}$  we observe multiplicativity 
of $\sigma$. 
\if 0
$$T(\xi a \phi()) = T(\xi) a f(\phi(T()))$$
\fi
Injectivity of $f$ follows directly from  
(\ref{ref16b}). 
\end{proof}

If nothing else is said, the tensor product $\otimes$ means the 
exterior tensor product of 
abelian groups, or in other words of $\Z$-modules, so $\otimes$ means $\otimes_\Z$. 
We 
note that there is a well-known 
ring homomorphism 
\begin{equation}			\label{eqh} 
\pi: \Hom A(\cale 
) \otimes \Hom B(\calf)
\rightarrow 
\Hom {A \otimes B}(\cale \otimes \calf)  
: \pi(S \otimes T)  =  
S \otimes T
\end{equation} 
which notice, maps compact operators 
to compact operators, and adjoint-able 
operators to adjoint-able ones. 
However, none of the above ring homorphimsms 
need either 
to be injective 
or surjective.

\begin{definition}[Direct sum]  
{\rm

Let $(\cale_i,S_i)_{i \in I}$ be a 
a family of right functional $A$-modules. 
The algebraic {\em direct sum} $\cale:=\bigoplus_{i \in I} \cale_i$ 
of right $A$-modules with finite support with respect 
to $I$ is a right functional $A$-module 
with diagonal $G$-action $S:=\bigoplus S_i$ 
defined by $S_g(\oplus_{i \in I} \xi_i):= \sum_{i \in I} S_{i,g}(\xi_i)$. 
The functional space $\Theta_{A}(\cale)$ is defined to be the set of 
functionals $\phi:=\bigoplus_{i \in I} \phi_i$
defined by $\phi(\oplus_{i \in I} \xi_i)=
\sum_{i \in I} \phi_i(\xi_i)$
for $\phi_i \in \Theta_A(\cale_i)$. 
}
\end{definition}

\begin{definition}[External tensor product]  
{\rm

\label{def19}

Let $(\cale,S)$ be a right functional $A$-module and
$(\calf,T)$ a right functional $B$-module. 
The {\em external tensor product} 
$\cale \otimes \calf$ ($:=\cale \otimes_\Z \calf$) is a right functional $(A \otimes B)$-module under the diagonal $G$-action 
$S \otimes T$ defined by $(S_g \otimes T_g)(\xi \otimes \eta) := S_g(\xi) \otimes T_g(\eta)$. 
The functional space $\Theta_{A \otimes B}(\cale \otimes \calf)$ is defined to be the set of all sums  
of all elementary functionals 
$\phi \otimes \psi$ defined by
$(\phi \otimes \psi)(\xi \otimes \eta) = \phi(\xi) \otimes \psi(\eta)$
for $\phi \in \Theta_A(\cale)$ and $\psi \in \Theta_B(\calf)$. 
}
\end{definition}

\begin{definition}[Internal tensor product]  
		\label{def110}
{\rm

Let $\cale$ be a right functional $A$-module and
$\calf$ a right functional $B$-module 
and
$\pi:A \rightarrow \call_B( \calf)$ a ring homomorphism. 
Let 
$$\cale \otimes_\pi \calf := 
(\cale \otimes \calf)/
\Big \{ \sum_i \xi_i  \otimes \eta_i \in \cale \otimes \calf\, \Big |\, 
 \sum_i \pi \big (\phi(\xi_i)\big)\eta_i=0, 
\forall \phi \in \Theta_A(\cale) \Big \}$$
\if 0
\{\xi a \otimes \eta 
- \xi \otimes \pi(a)(\eta)\, |\, 
\xi \in \cale, \eta \in \calf, a \in A\}$$
\fi
be the 
{\em internal tensor product}, which is a right functional $B$-module by the $B$-module structure of $\calf$, 
%
equipped with the diagonal $G$-action 
$S \otimes T$. 

We define $\Theta_B(\cale \otimes_\pi \calf)$ to be set of all sums of elementary functionals 
$\phi \otimes \psi$ defined by
$$(\phi \otimes \psi)(\xi \otimes \eta) = \psi \Big (\pi\big(\phi(\xi)\big) \eta \Big)$$
for $\phi \in \Theta_A(\cale)$ and $\psi \in \Theta_B(\calf)$.

}
\end{definition}

One may observe that $a \phi \otimes \psi = \phi \otimes \psi \circ \pi(a)$ 
for $a \in A$. 

\if 0		

\begin{lemma}
Let $S$ be a $G$-action on a non-equivariant  $(A,\alpha)$-module 
$\cale$ and $T$ a $G$-action on a non-equivariant $(A,\alpha)$-module $\calf$. 
Then the non-equivariant compact operators 
$\calk_A(\cale,\calf)$ are invariant under 
$\Ad(S,T)$, that is, $T_g \circ \phi \circ S$

\end{lemma}

\fi


\begin{definition}			\label{def111} 
{\rm 
A ring $A$ is called {\em quadratik} 
if $A= \sum A^2$ (the two-sided ideal of $A$ comprised of all finite sums of all 
products $ab$ for $a,b \in A$). 

A ring 
$A$ is called an {\em essential (right) ideal 
in itself} if the ring homomorphism 
$\pi:A \rightarrow \Hom A (A): \pi(a)(b)= ab$ is
injective.  
}
\end{definition}

\begin{definition}
{\rm 
A right functional $A$-module $\cale$ is called {\em cofull} 
if 
$\cale = \sum \cale \Theta_A(\cale)(\cale)$
(all finite sums of all 
products $\xi \phi(\eta)$ for $\xi,\eta \in 
\cale$ and $\phi \in \Theta_A(\cale)$). 

}
\end{definition}

As
$\call_A(\cale)$ is unital, 
it is obviously quadratik and 
an essential ideal in itself. 


\begin{lemma}		\label{lemma113}

If $\cale$ is a cofull, right functional $A$-module then $\calk_A(\cale)$ is 
quadratik 
and an essential ideal in itself. 

\end{lemma}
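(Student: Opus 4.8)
The plan is to unwind the definition of \emph{cofull} and show directly that every elementary compact operator $\theta_{\eta,\phi}$ is already a finite sum of products of two compact operators, and dually that $\calk_A(\cale)$ acts faithfully on itself. First I would record what cofullness gives us: each $\xi \in \cale$ can be written as a finite sum $\xi = \sum_j \zeta_j \psi_j(\omega_j)$ with $\zeta_j, \omega_j \in \cale$ and $\psi_j \in \Theta_A(\cale)$. The key algebraic identity to exploit is the composition rule $\theta_{\xi,\phi}\,\theta_{\eta,\psi} = \theta_{\xi\phi(\eta),\psi}$, already used in the proof of Lemma \ref{lemma17}, together with the fact that $\theta_{\zeta\psi(\omega),\phi}$ can be rewritten using that same rule.

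For \textbf{quadratik}: take an arbitrary elementary operator $\theta_{\eta,\phi}$. Using cofullness on the left leg, write $\eta = \sum_j \zeta_j \psi_j(\omega_j)$, so that $\theta_{\eta,\phi} = \sum_j \theta_{\zeta_j \psi_j(\omega_j),\phi}$. Now I would like to recognize each summand as a product $\theta_{\zeta_j,\psi_j}\,\theta_{\omega_j,\phi}$; indeed $\theta_{\zeta_j,\psi_j}\,\theta_{\omega_j,\phi} = \theta_{\zeta_j \psi_j(\omega_j),\phi}$ by the composition rule, which is exactly what we want. Since a general element of $\calk_A(\cale)$ is a finite sum of such $\theta_{\eta,\phi}$, it follows that every element of $\calk_A(\cale)$ lies in $\sum \calk_A(\cale)^2$, i.e. $\calk_A(\cale)$ is quadratik.

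For \textbf{essential ideal in itself}: I must show that the left-multiplication homomorphism $\calk_A(\cale) \to \Hom{\calk_A(\cale)}(\calk_A(\cale))$ is injective, i.e. that if $K \in \calk_A(\cale)$ satisfies $KL = 0$ for all $L \in \calk_A(\cale)$ then $K = 0$. Given such a $K$, for any $\eta \in \cale$ and $\phi \in \Theta_A(\cale)$ we have $K\,\theta_{\eta,\phi} = 0$, that is $\theta_{K(\eta),\phi} = 0$ in $\Hom A(\cale)$, hence $K(\eta)\,\phi(\xi) = 0$ for all $\xi \in \cale$ and all $\phi$. Now apply cofullness to an arbitrary $\xi' \in \cale$: writing $\xi' = \sum_j \zeta_j \psi_j(\omega_j)$ is not quite the right move; instead I would write $K(\eta)$ itself: cofullness lets us expand $K(\eta) = \sum_j \zeta_j \psi_j(\omega_j)$, but the cleaner route is to observe $K(\eta) \in \cale = \sum \cale\,\Theta_A(\cale)(\cale)$ and then use the displayed vanishing $K(\eta)\phi(\xi)=0$ to kill each generating product — more precisely, from $\theta_{K(\eta),\phi}=0$ for all $\phi$ one gets $K(\eta)\,\Theta_A(\cale)(\cale) = 0$, and cofullness applied to $K(\eta)$ then forces $K(\eta) \in \sum K(\eta)\,\Theta_A(\cale)(\cale) \cdot (\text{something})$; to make this bite I would rather expand an arbitrary target vector and bring the functional onto it. The honest clean argument: cofullness gives $\cale = \sum \cale\,\Theta_A(\cale)(\cale)$, so it suffices to show $K$ vanishes on each $\zeta\psi(\omega)$; but $K(\zeta\psi(\omega)) = K(\zeta)\psi(\omega)$, and since $K\,\theta_{\zeta,\psi}=0$ we get $\theta_{K(\zeta),\psi}=0$, whence $K(\zeta)\psi(\omega) = \theta_{K(\zeta),\psi}(\omega) = 0$. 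Therefore $K$ vanishes on a generating set of $\cale$, so $K = 0$.

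The main obstacle I anticipate is the essentiality part: one must be careful that the relevant faithfulness is of the \emph{ring} $\calk_A(\cale)$ acting on itself (Definition \ref{def111}), not on $\cale$, and that cofullness is used precisely to pass from "$K$ annihilates all products $\xi\phi(\eta)$" back to "$K$ annihilates all of $\cale$"; without cofullness $K$ could be supported on a complementary piece. The quadratik part is routine once the composition rule $\theta_{\xi,\phi}\theta_{\eta,\psi}=\theta_{\xi\phi(\eta),\psi}$ is invoked. A minor point to verify is $G$-equivariance / well-definedness, but these are inherited from the already-established structure of $\calk_A(\cale)$ and require no new work.
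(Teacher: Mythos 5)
Your proof is correct and follows essentially the same route as the paper: the quadratik part uses cofullness on the left leg together with the composition rule $\theta_{\xi,\phi}\theta_{\eta,\psi}=\theta_{\xi\phi(\eta),\psi}$, exactly as in the paper, and your essentiality argument is just the contrapositive of the paper's (the paper starts from $K\neq 0$, evaluates at a cofully decomposed vector, and extracts some $\theta_{\xi_i,\phi_i}$ with $K\theta_{\xi_i,\phi_i}\neq 0$, while you start from $K\theta_{\zeta,\psi}=0$ for all $\zeta,\psi$ and conclude $K$ kills the generating products $\zeta\psi(\omega)$). No gaps; the meandering middle of your essentiality paragraph is superseded by the clean argument you settle on.
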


\begin{proof}
Quadratik: 
Given $\zeta \in \cale$ we may write it as 
$\zeta = \sum_i \xi_i \phi_i (\eta_i)$ 
and so
$$\theta_{\zeta,\psi} 
=   \theta_{ \sum_i \xi_i \phi_i(\eta_i),\psi}
= \sum_i \xi_i \phi_i(\eta_i) \psi 
= \sum_i \theta_{\xi_i,\phi_i} \theta_{\eta_i,\psi}
$$

Essential  
ideal in itself: 
Assume that $\sum_j \theta_{\eta_j,\psi_j} \neq 0$.  
Then its evaluation in some $\zeta$ (written as above) is nonzero. Thus 
$$\sum_j \theta_{\eta_j,\psi_j}(\zeta) 
=  \sum_i \Big ( \sum_j \eta_j \psi_j(\xi_i) \phi_i(\eta_i) \Big) =: \sum_i x_i
\neq 0$$
($x_i$ obviously defined). 
Hence at least one $x_i \neq 0$. But this means 
$\Big (\sum_j \theta_{\eta_j \psi_j} \Big ) \theta_{\xi_i, \phi_i} \neq 0$.
\end{proof}

\begin{definition}			\label{def115}
{\rm 

We turn any ring $(A,\alpha)$ into a right 
functional $(A,\alpha)$-module $(A,\alpha)$ 
over itself by setting $\Theta_A(A) := A^+$, where these functionals act by left multiplication in $A$ (i.e. $\phi_a(b)=ab$). 
If $A$ is quadratik, we 
prefer to set 
$\Theta_A(A) := A$.
}
\end{definition}

If $A$ is quadratik, then both functional spaces $A$ and $A^+$ yield obviously the same ring of compact operators $\calk_A(A)$, and this is 
what primarily counts for us. 
But notice, `functional-conceptually' there is a difference: 
for example, if $T \in \call_A(A)$ is a {\em multiplier} of $\calk_A(A)$, then $T$ 
may be `adjoint-able' with respect to the functional space $A$ but not with respect to $A^+$, as 
$\idd \circ T=T \notin A^+$ 
in general. 

Also, the functional space of $A\otimes_\pi \calf$ appears to become different under the two choices $A$ and $A^+$ for the functional space of $A$. 
So we choose $A$ if $A$ is quadratik, but sometimes allow $1 \in \Theta_A (A)$ for pure notational purposes, for example we shall write $\theta_{a,1}$, knowing it can always be resolved in $\theta_{a,1}= \sum_i \theta_{a_i,b_i}$ for $a_i, b_i \in A$. 

\if 0
ach für internal tensr ?
\fi

\begin{lemma}		\label{lemma114}

A ring $A$ is quadratik if and only if 
the right functional $A$-module $A$ over itself is cofull. 

\end{lemma}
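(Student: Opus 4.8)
The plan is to simply unwind the two definitions in the special case $\cale = A$. By Definition \ref{def115} the functionals $\phi \in \Theta_A(A)$ act by left multiplication, $\phi_c(b) = cb$, where $c$ ranges over $A^+$ in general (and over $A$ when $A$ is already known to be quadratik). Consequently the set $\sum \cale\, \Theta_A(\cale)(\cale)$ appearing in the definition of cofull is, for $\cale = A$, nothing but the additive span of the elements $\xi\, \phi_c(\eta) = \xi c \eta$ with $\xi,\eta \in A$ and $c$ in the chosen functional space. So the whole statement reduces to identifying this span.

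First I would treat the general functional space $\Theta_A(A) = A^+$. Writing $c = a + n\cdot 1$ with $a \in A$ and $n \in \Z$, one has $\xi c \eta = \xi(a\eta) + n\,\xi\eta$, which lies in $\sum A^2$ because $a\eta \in A$; conversely every product $ab$ with $a,b \in A$ equals $a\,\phi_{1}(b)$. Hence $\sum A\,\Theta_A(A)(A) = \sum A^2$, and therefore the module $A$ is cofull if and only if $A = \sum A^2$, i.e.\ if and only if $A$ is quadratik. In particular the ``if'' direction recovers that every element of a quadratik $A$ is a sum of products $\xi\,\phi(\eta)$, and the ``only if'' direction says that if every $a$ is such a sum $\sum_i \xi_i c_i \eta_i$, then $a \in \sum A^2$.

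For completeness I would also check that the preferred choice $\Theta_A(A) = A$, available when $A$ is quadratik, gives the same answer: there $\sum A\,\Theta_A(A)(A) = \sum A^3$, so it suffices to note that $\sum A^3 = \sum A^2$ for a quadratik ring. Indeed, given $\sum_i b_i c_i \in \sum A^2$, rewriting each $b_i \in A = \sum A^2$ as $b_i = \sum_j d_{ij} e_{ij}$ yields $\sum_i b_i c_i = \sum_{i,j} d_{ij} e_{ij} c_i \in \sum A^3$, so $\sum A^2 \subseteq \sum A^3 \subseteq \sum A^2$. Thus with either admissible functional space the module $A$ is cofull precisely when $A$ is quadratik, and the statement is unambiguous.

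There is essentially no real obstacle here: the proof is a direct unwinding of the definitions of \emph{quadratik} and \emph{cofull}, the point being only that the functionals on $A$ over itself are exactly the left multiplications. The single line of genuine argument is the identity $\sum A^3 = \sum A^2$ used to reconcile the two permissible choices $A$ and $A^+$ for $\Theta_A(A)$.
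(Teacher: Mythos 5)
Your proof is correct and follows essentially the same route as the paper: the only nontrivial point in both is that a quadratik ring satisfies $A=\sum A^{3}$ (your identity $\sum A^{3}=\sum A^{2}$), which is exactly the paper's ``$A=\sum A^{n}$ for all $n$, take $n=3$'' step, while the converse direction is immediate since $\sum A\,\Theta_A(A)(A)\subseteq\sum A^{2}$. Your extra care in checking both admissible functional spaces $A$ and $A^{+}$ from definition \ref{def115} is a welcome clarification the paper leaves implicit, but it does not change the substance of the argument.
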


\begin{proof}
Cofullness of the module $A$ obviously implies quadratik. For the reverse implication, note that $A = \sum A^n$ for all $n \ge 1$ 
whenever $A$ is quadratik. 
Hence $n=3$ yields.
\if 0
(Alternatevely, use $\phi= \id$.) 
\fi
\end{proof}

\begin{lemma}			\label{lemma116} 
Let $\pi:A \rightarrow B$ is a ring homomorphism between not necessarily quadratik rings $A$ and $B$. 

{\rm (i)} 
If $A$ is quadratik, then the range $\pi(A)$ is quadratik. 

{\rm (ii)} If $A$ is  quadratik and $I$ a two-sided ideal in $A$ then $A/I$ is quadratik.  

{\rm (iii)} If $\cale$ is a cofull $A$-modul 
then $\Theta_A(\cale)  (\cale)$ is a quadratik subring of $A$. 

{\rm (iv)}
If $\cale$ is cofull 
then
%
$\cale \otimes_\pi B$ is cofull.

{\rm (v)} 
If $\cale$ is cofull  
then
$\calk_{A}(\cale) \otimes \idd 
\subseteq \calk_A ( \cale \otimes_\pi B)$

\end{lemma}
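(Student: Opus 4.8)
The plan is to prove parts (i)--(v) in order, each building on the definitions of \emph{quadratik} (Definition \ref{def111}), \emph{cofull} (the definition preceding Lemma \ref{lemma113}), and the internal tensor product (Definition \ref{def110}), together with Lemma \ref{lemma114}. For (i): given $b = \pi(a) \in \pi(A)$, write $a = \sum_i a_i a_i'$ since $A$ is quadratik, apply $\pi$ and use multiplicativity to get $b = \sum_i \pi(a_i)\pi(a_i')$, which exhibits $b \in \sum \pi(A)^2$. For (ii): this is just (i) applied to the quotient map $\pi : A \to A/I$, since $\pi$ is surjective so $\pi(A) = A/I$; alternatively quote that $A = \sum A^n$ for all $n \ge 1$ when $A$ is quadratik (as in the proof of Lemma \ref{lemma114}) and push it through $\pi$.

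For (iii): by cofullness, $\cale = \sum \cale\,\Theta_A(\cale)(\cale)$, so any $\phi(\xi) \in \Theta_A(\cale)(\cale)$ can be rewritten, using $\xi = \sum_i \eta_i \psi_i(\zeta_i)$ and the fact that $\Theta_A(\cale)$ is a left $A$-module, as $\phi(\xi) = \sum_i \phi(\eta_i \psi_i(\zeta_i)) = \sum_i \phi(\eta_i)\,\psi_i(\zeta_i)$ once one observes $\phi(\eta a) = \phi(\eta)a$ (right $A$-linearity of $\phi$) — wait, more carefully: write $\phi(\xi)$ with $\xi$ cofull-decomposed so that $\phi(\xi) = \sum_i \phi(\eta_i)\psi_i(\zeta_i)$, a sum of products of two elements of $\Theta_A(\cale)(\cale)$. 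Hence $\Theta_A(\cale)(\cale)$ is quadratik; that it is a subring follows since $\Theta_A(\cale)$ is a left $A$-module and the functionals are right $A$-linear, so the set is closed under multiplication and addition.

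For (iv): elements of $\cale \otimes_\pi B$ are classes of $\sum_i \xi_i \otimes b_i$, and $\Theta_B(\cale \otimes_\pi B)$ consists of sums of $\phi \otimes \psi$ with $(\phi \otimes \psi)(\xi \otimes b) = \psi(\pi(\phi(\xi))b)$. Using cofullness of $\cale$ to write each $\xi_i = \sum_j \eta_{ij}\chi_{ij}(\zeta_{ij})$, the observation noted after Definition \ref{def110} that $a\phi \otimes \psi = \phi \otimes \psi\circ\pi(a)$ lets one absorb the $B$-factor, expressing a general simple tensor as a sum of products $(\xi \otimes b')\,\Theta_B(\cale\otimes_\pi B)(\cale \otimes_\pi B)$; I would combine this with $B$ being not-necessarily-quadratik but with the unitization $\Theta_B(\calf)$ convention (Definition \ref{def115}) to land inside the cofull-generating set. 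For (v): the elementary compact operator $\theta_{\xi,\phi} \in \calk_A(\cale)$ maps under $\otimes \idd$ to an operator on $\cale \otimes_\pi B$; I would check directly that $(\theta_{\xi,\phi}\otimes \idd)(\eta \otimes b) = \xi\phi(\eta)\otimes b = (\xi \otimes 1)\cdot \big(\text{something}\big)$, and show it equals a finite sum of elementary compact operators $\theta_{\xi \otimes b_k, \phi \otimes \psi_k}$ on $\cale \otimes_\pi B$ by again invoking cofullness of $\cale$ (via (iv)) to resolve $\xi$ and $\phi$ appropriately. The main obstacle I anticipate is (iv)--(v): one must be careful that the functional-space conventions for $B$ (whether $B$ or $B^+$) interact correctly with the quotient defining $\otimes_\pi$, and that the cofullness decomposition of $\cale$ genuinely produces functionals of the required form $\phi \otimes \psi$ rather than merely multiplier-type operators — this is exactly the kind of bookkeeping the paper flags in the discussion after Definition \ref{def115}.
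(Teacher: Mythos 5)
Parts (i)--(iii) of your plan are correct and essentially the paper's own argument: (i) and (ii) the paper treats as obvious, and your decomposition $\phi(\xi)=\sum_i\phi(\eta_i)\,\psi_i(\zeta_i)$ obtained from cofullness plus right $A$-linearity of $\phi$ is exactly the paper's step for (iii). The problem is (iv), and the same issue infects (v). You propose to ``land inside the cofull-generating set'' by invoking the unitization convention $\Theta_B(B)=B^+$, i.e.\ by allowing the unit functional in the $B$-slot of the elementary functionals $\phi\otimes\psi$ on $\cale\otimes_\pi B$. But under the paper's conventions the unit is available only when $B$ is \emph{not} quadratik; when $B$ is quadratik (which is the case every time the lemma is used later, by the standing convention at the end of Section \ref{sec2}) one has $\Theta_B(B)=B$, and the paper warns, precisely in the passage after Definition \ref{def115} that you cite, that the functional space of an internal tensor product genuinely changes under the two choices. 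So cofullness with respect to the $B^+$-based functionals is not the statement that is needed. Nor can you repair this by decomposing $b$ via quadratikness of $B$ (which anyway is not assumed): functionals in $\Theta_B(B)$ act by \emph{left} multiplication, so a factor split off from $b$ lands on the wrong side of the product.

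The idea missing from your sketch, and the paper's actual proof, is to apply the decomposition of (iii) \emph{twice}, producing three factors, and to park the middle one in the functional slot: writing $\phi(\eta)=\sum_{i,j}\phi(\eta_{1,i})\,\phi_{2,i}(\eta_{2,i,j})\,\phi_{3,i,j}(\eta_{3,i,j})$ one gets $\xi\phi(\eta)\otimes b=\sum_{i,j}\bigl(\xi\otimes\pi(\phi(\eta_{1,i}))\bigr)\,(\phi_{3,i,j}\otimes\varrho_{i,j})(\eta_{3,i,j}\otimes b)$ with $\varrho_{i,j}(d)=\pi(\phi_{2,i}(\eta_{2,i,j}))\,d$, which is left multiplication by an honest element of $B$ and hence lies in $\Theta_B(B)$ under either convention; since elementary tensors $\zeta\otimes b$ with $\zeta$ cofull-decomposed exhaust $\cale\otimes_\pi B$, this proves (iv). The same device makes (v) rigorous: one computes $\theta_{\xi\phi(\eta),\psi}\otimes 1=\theta_{\xi\otimes\pi(\phi(\eta)),\,\psi\otimes 1}$, and when $1\notin\Theta_B(B)$ the symbol $1$ is resolved as $\theta_{\xi\phi(\eta),\psi}\otimes 1=\sum_i\theta_{\xi\otimes\pi(\phi(\eta_{1,i})),\,\psi\otimes\varrho_i}$ with $\varrho_i$ left multiplication by $\pi(\phi_{2,i}(\eta_{2,i}))$ --- this uses cofullness of $\cale$ directly, not part (iv) as your sketch suggests, since (iv) by itself does not produce functionals of the required elementary form.
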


\begin{proof}

\if 0
As $\pi$ is a ring homomorphism, its range is a quadratik ring $\pi(A)$. 

The range of $\phi$ is a quadratik ring: 
$\phi(\xi \phi(\eta))= \phi(\xi) \phi(\eta)$. 

So range of $\pi(\phi())$ is a quadratik ring. 
\fi 

(i) and (ii) are obvious. 
(iii) 
Using cofullness of $\cale$, write
$\phi(\eta)= \sum_i \phi(\eta_{1,i} ) \phi_{2,i}(\eta_{2,i})$ 
for $\phi \in \Theta_A(\cale),\eta \in \cale$. 
(iv) 
Using this, in $\cale \otimes_\pi B$  we may write 
$$ \xi \phi(\eta) \otimes b = 
\xi  \otimes \pi(\phi(\eta)) b
= \sum_{i,j} \xi  \otimes \pi(\phi(\eta_{1,i})) 
\pi(\phi_{2,i}(\eta_{2,i,j})) \pi(\phi_{3,i,j} (\eta_{3,i,j})) b
$$
$$= \sum_{i,j}  \big ( \xi \otimes \pi(\phi(\eta_{1,i}))  \big )  
(\phi_{3,i,j}  \otimes \varrho_{i,j}) (\eta_{3,i,j} \otimes b)$$
with $\varrho_{i,j}(d)= \pi(\phi_{2,i}(\eta_{2,i,j})) d$. 
By cofullness of $\cale$ we are done.

(v) Just compute that 
$\theta_{\xi \phi(\eta),\psi} \otimes 1
= \theta_{\xi \otimes \pi(
\phi(\eta)), \psi \otimes 1}$. 
\end{proof}

\begin{lemma}				\label{lemma117}
Let $B$ be a quadratik ring and $\cale$ a 
right functional $B$-module. 

{\rm (i)} 
Then $\cale \oplus B$ is cofull if and only if 
for every $\xi \in \cale$ there are $\eta_i \in\cale, b_i \in B$ such that $\xi = \sum_i \eta_i b_i$.

{\rm (ii)}  
Let $\cale,\calf$ and $\pi$ be as in definition 
\ref{def110} and $\calf$ be cofull. 
Then 
$(\cale \otimes_\sigma \calf) \oplus B$ is cofull. 

\if 0
{\rm (vi)} 
If $\calf$ is a cofull right functional $B$-module 
and 
$\sigma:A \rightarrow \call_B( \calf)$ is a ring homomorphism 
and $B$ is quadratik then 
$(\cale \otimes_\sigma \calf) \oplus B$ is cofull. 
\fi

{\rm (iii)}    
If $\cale \oplus B$ is cofull then also 
$\calf \oplus B$ for any functional $B$-submodule $\calf$ of $\cale$.

\end{lemma}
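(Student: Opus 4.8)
The plan is to reduce parts~(ii) and~(iii) to the criterion proved in part~(i), which is itself just an unwinding of the definitions. For~(i), I would start from the observation that, by the direct-sum construction and Definition~\ref{def115} (with $B$ quadratik, so $\Theta_B(B)=B$ acting by left multiplication), a functional on $\cale\oplus B$ has the form $\phi\oplus\psi_c$ with $\phi\in\Theta_B(\cale)$, $c\in B$, and $(\phi\oplus\psi_c)(\zeta,d)=\phi(\zeta)+cd\in B$. For the ``only if'' direction one writes a given $(\xi,0)$ as a finite sum of elementary products $(\eta_i,e_i)\,(\phi_i\oplus\psi_{c_i})(\zeta_i,d_i)$ and reads off the $\cale$-coordinate, obtaining $\xi=\sum_i\eta_ib_i$ with $\eta_i\in\cale$ and $b_i:=\phi_i(\zeta_i)+c_id_i\in B$. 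For the ``if'' direction it suffices to realize $(\xi,0)$ and $(0,b)$ separately as finite sums of elementary products: using $B=\sum B^3$ for quadratik $B$ (as in the proof of Lemma~\ref{lemma114}) one writes $b=\sum_je_jc_jd_j$ and $(0,b)=\sum_j(0,e_j)\,(0\oplus\psi_{c_j})(\zeta_0,d_j)$ for any fixed $\zeta_0\in\cale$, while the hypothesis $\xi=\sum_i\eta_ib_i$ together with $b_i=\sum_jc_{ij}d_{ij}$ gives $(\xi,0)=\sum_{i,j}(\eta_i,0)\,(0\oplus\psi_{c_{ij}})(\zeta_0,d_{ij})$.

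For~(ii), by~(i) it is enough to show that every element of the internal tensor product $\cale\otimes_\pi\calf$ (Definition~\ref{def110}) is a finite sum $\sum_k\omega_kb_k$ with $\omega_k\in\cale\otimes_\pi\calf$ and $b_k\in B$; since such an element is a finite sum of elementary tensors, one only needs to treat $\xi\otimes\eta$. Here I would use cofullness of $\calf$ to write $\eta=\sum_j\eta_{1,j}\,\psi_j(\eta_{2,j})$ with $\eta_{1,j},\eta_{2,j}\in\calf$ and $\psi_j\in\Theta_B(\calf)$, and then use that the right $B$-action on $\cale\otimes_\pi\calf$ is the one inherited from $\calf$ (so $(\xi\otimes\eta')\,b=\xi\otimes(\eta'b)$, which indeed descends to the quotient) to conclude $\xi\otimes\eta=\sum_j(\xi\otimes\eta_{1,j})\,\psi_j(\eta_{2,j})$, which is of the required form with $b_j=\psi_j(\eta_{2,j})\in B$; then apply~(i).

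For~(iii), by~(i) it suffices to show that every $\xi\in\calf$ is a finite sum $\sum_i\eta_ib_i$ with $\eta_i\in\calf$ and $b_i\in B$. Applying~(i) to the cofull module $\cale\oplus B$ already gives such a decomposition, but with the $\eta_i$ a priori only in $\cale$; the remaining task is to push them into $\calf$ without disturbing $\xi$. I would do this with a retraction onto $\calf$ supplied by the notion of functional $B$-submodule --- in the typical case the idempotent adjointable operator $p$ with range $\calf$, so that $\xi=p\xi=\sum_i(p\eta_i)b_i$ with $p\eta_i\in\calf$ --- and then invoke~(i) again. I expect this last step to be the only genuine obstacle: it is where the precise meaning of ``functional $B$-submodule'' has to be used, since the bare condition of~(i) is plainly not inherited by an arbitrary $B$-submodule (e.g.\ any $\calf$ with $\calf\neq\calf B$ fails it). Everything else is routine direct-sum and elementary-tensor bookkeeping.
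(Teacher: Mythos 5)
Your parts (i) and (ii) are correct and are essentially the paper's own route: for (i) the paper performs exactly your decomposition, writing $\xi=\sum_{i,j}(\eta_i\oplus 0_B)(0\oplus\phi_{i,j})(0_\cale\oplus y_{i,j})$ with $b_i=\sum_j x_{i,j}y_{i,j}$ supplied by quadratikness and handling the $B$-summand via lemma \ref{lemma114} (the ``only if'' direction, which the paper leaves tacit, is the same coordinate-reading you give); for (ii) the paper says only that it is ``easily derived from (i)'', and your derivation --- expand $\eta=\sum_j\eta_{1,j}\psi_j(\eta_{2,j})$ by cofullness of $\calf$ and pull the scalar $b_j=\psi_j(\eta_{2,j})\in B$ out of the elementary tensor using that the $B$-action on $\cale\otimes_\pi\calf$ comes from $\calf$ --- is the intended one.

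For (iii) the gap you flag is genuine, and it cannot be closed by a cleverer use of (i): the paper again offers only ``easily derived from (i)'', but the criterion in (i) is purely module-theoretic ($\calf=\sum\calf B$), involves no functionals, and is not inherited by an arbitrary $B$-submodule. Concretely, let $B$ be the ring of continuous complex-valued functions on $[0,1]$ vanishing at $0$, viewed as a discrete ring with trivial $G$-action; it is quadratik since $f=(f/\sqrt{|f|})\cdot\sqrt{|f|}$, so $\cale:=B$ meets the criterion of (i) and $\cale\oplus B$ is cofull. The functional $B$-submodule $\calf:=\{t\mapsto t\,g(t)\,:\,g\in C[0,1]\}$ (functionals restricted from $\cale$) does not: every element of $\sum\calf B$ is of the form $t\,h(t)$ with $h(0)=0$, so the function $t\mapsto t$ lies in $\calf$ but not in $\sum\calf B$, and by (i) $\calf\oplus B$ is not cofull. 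Hence your retraction $p\in\call_B(\cale)$ onto $\calf$ is not a cosmetic crutch but an additional hypothesis that the paper's notion of functional submodule (restriction of functionals, see the paragraph following the lemma) does not supply. What (iii) really requires, by (i), is precisely $\calf=\sum\calf B$; this holds automatically, by quadratikness, for submodules of the form $\calf=\sum\calf'B$ (for instance $B_0=\sum\pi(A)B$ in lemma \ref{lemma220}), which is the kind of submodule occurring in the paper's applications, but not for a general functional $B$-submodule, so the statement as written needs such an extra hypothesis rather than a better proof.
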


\begin{proof}
(i)
Just notice that 
$\xi = \sum_i \eta_i b_i
= \sum_{i,j} (\eta_i  \oplus 0_B) (0 \oplus \phi_{i,j})( 0_\cale \oplus y_{i,j})$ for the functionals  
$\phi_{i,j} \in \Theta_B(B)$ defined by 
$\phi_{i,j}(a)= x_{i,j} a$, 
where $b_i = \sum_j x_{i,j} y_{i,j}$ is by quadratik of $B$. 
\if 0
$\xi = \sum_i \eta_i b_i
= \sum_i (\eta_i \oplus 0_B) (0 \oplus \phi)( 0_\cale \oplus b_i)$ for the functional 
$\phi=\id_B \in \Theta_B(B)$ (sloppily said). 
\fi 
Cofullness of the $B$-summand is by lemma \ref{lemma114}. 

(ii) and (iii) are then easily derived from (i).
\end{proof}

It might be useful to call $\cale$ {\em weakly 
cofull} if the right condition of lemma \ref{lemma117}.(i) is satisfied. 
It has the desirable property that it is permanent under taking submodules by the last 
lemma (if $B$ is quadratik). 


Usually, a $B$-submodule $\calf \subseteq \cale$ 
of a right functional $B$-module $\cale$ 
is turned to a 
functional module by 
restriction of the functionals, 
that is, one sets 
$\Theta_B(\calf):= \{\phi|_\calf|\, \phi \in \Theta_B(\cale)\}$.


There presumably is no point in considering 
functional modules whose elements are not separated by the functionals. If so, we may 
call the module {\em separated by functionals}, 
and here is the procedure how we get it:  

\begin{lemma}			\label{lemma219} 
We may turn a right functional $A$-module 
$(\cale,S)$ to a right functional $A$-module 
$$\overline \cale := \cale /\{\xi \in \cale|\, 
\phi(\xi) = 0 , \, \forall \phi \in 
\Theta_A(\cale) \}$$
whose functional space 
$$\Theta_A (\overline \cale ) :=  
\{ \overline \phi|\, \phi \in \Theta_A (\cale ) \}$$ 
where $\overline \phi([\xi])= \phi(\xi)
$, 
separates the points. {\rm (i)} Also, there is a 
ring homomorphism 
$\pi: \call_A(\cale) \rightarrow \call_A(\overline \cale)$ which restricts to a ring homomorphism  $  \calk_A(\cale) \rightarrow \calk_A(\overline \cale)$. 

{\rm (ii)} Further, $\overline{\cale \otimes_\pi \calf} = \cale \otimes_\pi \overline \calf$. 

{\rm (iii)} 
We also 
have $\cale \otimes_{\id} A = \overline \cale$. 
\end{lemma}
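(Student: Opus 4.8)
The plan is to verify all four claims by unwinding definitions; the only genuinely delicate points will be that the target modules in (ii)--(iii) are separated by their functionals and that the functional \emph{spaces}, not merely the underlying modules, correspond under the maps involved.

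First I would set up $\overline\cale$. With $N:=\{\xi\in\cale\mid\phi(\xi)=0\ \forall\phi\in\Theta_A(\cale)\}$, one checks $N$ is a right $A$-submodule (from $\phi(\xi a)=\phi(\xi)a$) which is $G$-invariant, since for $\xi\in N$ and $\phi\in\Theta_A(\cale)$ one has $\phi(S_g\xi)=\alpha_g\big((g^{-1}\phi)(\xi)\big)=0$ because $g^{-1}\phi\in\Theta_A(\cale)$. Then $\overline\cale=\cale/N$ inherits the module structure and $G$-action, $\overline\phi([\xi]):=\phi(\xi)$ is well defined, $\Theta_A(\overline\cale)=\{\overline\phi\}$ is a $G$-invariant left $A$-submodule of $\Hom A(\overline\cale,A)$ (because $a\,\overline\phi=\overline{a\phi}$ with $a\phi\in\Theta_A(\cale)$), and by the very definition of $N$ the $\overline\phi$ separate points.

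For (i): since $V\in\call_A(\cale)$ satisfies $\phi(V\xi)=(\phi\circ V)(\xi)$ with $\phi\circ V\in\Theta_A(\cale)$, it preserves $N$ and descends to $\overline V\in\Hom A(\overline\cale)$; it is adjointable as $\overline\phi\circ\overline V=\overline{\phi\circ V}$, the assignment $V\mapsto\overline V$ is clearly additive and multiplicative, and $\theta_{\eta,\phi}\mapsto\theta_{[\eta],\overline\phi}$, so it restricts to $\calk_A(\cale)\to\calk_A(\overline\cale)$. For (ii), writing $\pi$ also for the composite of $\pi\colon A\to\call_B(\calf)$ with the homomorphism $\call_B(\calf)\to\call_B(\overline\calf)$ of (i), I would show $q\colon\cale\otimes_\pi\calf\to\cale\otimes_\pi\overline\calf$, $[\xi\otimes\eta]\mapsto[\xi\otimes[\eta]]$, is a well-defined surjective functional module homomorphism sending $\phi\otimes\psi$ to $\phi\otimes\overline\psi$ (well-definedness: $\sum_i\pi(\phi(\xi_i))\eta_i=0$ forces $\sum_i\pi(\phi(\xi_i))[\eta_i]=0$). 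I would then observe that $\cale\otimes_\pi\overline\calf$ is already separated — if all $\phi\otimes\overline\psi$ kill $\sum_i\xi_i\otimes[\eta_i]$, then $\overline\psi\big(\sum_i\pi(\phi(\xi_i))[\eta_i]\big)=0$ for all $\overline\psi$, so $\sum_i\pi(\phi(\xi_i))[\eta_i]=0$ in the separated module $\overline\calf$ for each $\phi$, which is exactly its defining relation — and that every functional of $\cale\otimes_\pi\calf$ is one of $\cale\otimes_\pi\overline\calf$ precomposed with $q$; together these give $\ker q=N$ (the null space of functionals on $\cale\otimes_\pi\calf$), so $q$ induces the isomorphism $\overline{\cale\otimes_\pi\calf}=\cale\otimes_\pi\overline\calf$, with functionals matching as $\overline{\phi\otimes\psi}\leftrightarrow\phi\otimes\overline\psi$.

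For (iii), ``$\id$'' standing for the canonical left-multiplication homomorphism $A\to\call_A(A)$, the candidate is $\Phi\colon\cale\otimes_{\id}A\to\overline\cale$, $[\xi\otimes a]\mapsto[\xi a]$. Since $\id(\phi(\xi))a=\phi(\xi)a=\phi(\xi a)$, the element $\sum_i\xi_i\otimes a_i$ lies in the relation defining $\cale\otimes_{\id}A$ if and only if $\sum_i\xi_i a_i\in N$, so $\Phi$ is simultaneously well defined and injective; it is $A$-linear and $G$-equivariant, and using the formula for $\Theta_A(\cale\otimes_{\id}A)$ together with the identity $a\phi\otimes\psi=\phi\otimes(\psi\circ\id(a))$ I would match the functional spaces. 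The image of $\Phi$ is $(\cale A+N)/N$, so surjectivity reduces to $\cale A+N=\cale$, which I would obtain from cofullness of $\cale$ (whence $\cale=\sum\cale\,\Theta_A(\cale)(\cale)\subseteq\cale A$). The step I anticipate will need the most care is exactly this last circle of issues — separation of the targets in (ii)--(iii) and the precise agreement of the functional spaces, which in (iii) is bound up with a nondegeneracy condition on $\cale$ — whereas well-definedness, injectivity and multiplicativity are pure bookkeeping.
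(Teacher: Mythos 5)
Your argument is correct and, for the parts the paper actually proves, it follows the same definitional route: in (i) the paper simply sets $\pi(T)([\xi]):=[T(\xi)]$, exactly your $\overline V$, and your map $q$ in (ii) is an explicit form of the paper's one-line outline that both $\overline{\cale\otimes_\pi\calf}$ and $\cale\otimes_\pi\overline\calf$ are the quotient of $\cale\otimes\calf$ by the common subgroup of elements annihilated by all functionals $\phi\otimes\psi$; your verification that the target is separated and that the functionals pull back along $q$ is just the detail behind that identification. Where you genuinely go beyond the paper is (iii), for which the paper's proof says nothing. Your reduction of surjectivity of $[\xi\otimes a]\mapsto[\xi a]$ to $\cale A+N=\cale$ is the right point, and the hypothesis you import (cofullness, or merely weak cofullness $\cale=\sum\cale A$) is genuinely needed: for $A=\bigoplus_{n}\Q$ and $\cale=\prod_{n}\Q$ with $\Theta_A(\cale)$ the coordinatewise left multiplications by elements of $A$, one gets $N=0$ and $\overline\cale=\cale$, while $\cale\otimes_{\id}A\cong\bigoplus_n\Q\subsetneq\cale$, so (iii) fails as literally stated. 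This does not hurt the paper, since (iii) is only invoked (in Proposition \ref{lemma122}) for a cofull module and only as an identification of additive groups; likewise your caveat that the functional space transported from $\cale\otimes_{\id}A$ consists of the $\overline{a\phi}$ rather than all $\overline\phi$ correctly pins down in which sense the equality in (iii) holds, and is immaterial for its use there.
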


\begin{proof}

We also define $A$-module structure by $[\xi]a:= [\xi a]$, and a $G$-action $\overline S$ by $\overline S_g([\xi]):= [S_g(\xi)]$. 
(i) 
If $T \in \call_A(\cale)$ then we set
$\pi(T)([\xi]):= [T(\xi)]$. 

(ii) The outline is that we have maps $\cale \otimes \calf \rightarrow \cale \otimes_\pi \calf \rightarrow 
\overline{\cale \otimes_\pi \calf}
\cong  (\cale \otimes \calf) / I$, where
$I$ means intersection of the kernels of all functionals as defined in definition 
\ref{def110}. 
\end{proof}

\begin{lemma}				\label{lemma220}
{\rm (i)} 
Let $A,B$ be rings and $\pi:A \rightarrow B \subseteq \call_B(B)$ a ring homomorphism. 
Then there is a functional $B$-module isomorphism 
$X: A \otimes_\pi B \rightarrow B_0$ to a functional $B$-submodule $B_0$ of $B$.

{\rm (ii)} If $B$ is quadratik, then $B \otimes_{\id} B \cong B$ as functional $B$-modules. 

{\rm (iii)} 
If $B$ is quadratik, then $B \otimes_{\id} (B \rtimes G) \cong B \rtimes G$ as functional $B \rtimes G$-modules.


{\rm (iv)} 
If $A$ is unital and $\pi:A \rightarrow \call_B(\cale)$ is a unital ring homomorphism 
then $A \otimes_\pi  \cale \cong \cale$ as
right functional $B$-modules. 

\end{lemma}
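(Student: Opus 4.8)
All four statements come from one map, the \emph{action map}
$$X\colon\ \xi\otimes\eta\ \longmapsto\ \pi(\xi)(\eta),$$
that is, in (i)--(iii) the assignment $A\otimes_\pi B\to B$, $a\otimes b\mapsto\pi(a)b$, and in (iv) the assignment $A\otimes_\pi\cale\to\cale$, $a\otimes\xi\mapsto\pi(a)(\xi)$. The plan is to prove (i) carefully and then read off (ii)--(iv) by identifying the image of $X$ in each case.

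For (i), I would first check that $X$ is a well-defined, injective, $G$-equivariant homomorphism of right $B$-modules. The map $A\otimes B\to B$, $a\otimes b\mapsto\pi(a)b$, has kernel precisely the subgroup that is divided out in Definition~\ref{def110}: if $\sum_i\pi(a_i)b_i=0$ then, for each $\phi\in\Theta_A(A)$, $\sum_i\pi\big(\phi(a_i)\big)b_i=0$ (immediate for $\phi=\idd$, and equal to $\pi(c)\big(\sum_i\pi(a_i)b_i\big)=0$ for $\phi=\phi_c$, $c\in A$), and conversely evaluating the defining condition of Definition~\ref{def110} at the functional $\idd\in\Theta_A(A)$ (legitimate by the conventions following Definition~\ref{def115}) yields $\sum_i\pi(a_i)b_i=0$. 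Hence $X$ descends to an injection onto the right $B$-submodule $B_0$ generated by $\{\pi(a)b\mid a\in A,\ b\in B\}$. Next I would upgrade $X$ to a \emph{functional} module isomorphism in the sense of Definition~\ref{def116}: on the generating functionals of $\Theta_B(A\otimes_\pi B)$ set $f(\phi\otimes\psi):=\big(\psi\circ\pi(c)\big)\big|_{B_0}$ where $\phi=\phi_c$; this lies in $\Theta_B(B_0)$ because $\pi(c)\in B\subseteq\call_B(B)$ and so $\psi\circ\pi(c)\in\Theta_B(B)$ by the definition of $\call_B(B)$. Compatibility~(\ref{ref16b}) is then the one-line computation $f(\phi\otimes\psi)\big(X(a\otimes b)\big)=\psi\big(\pi(c)(\pi(a)b)\big)=\psi\big(\pi(\phi(a))b\big)=(\phi\otimes\psi)(a\otimes b)$; bijectivity of $X$ forces $f(\lambda)=\lambda\circ X^{-1}$, so $f$ is injective, and surjectivity of $f$ comes from taking $\phi=\idd$. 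Thus $(X,f)$ is a functional $B$-module isomorphism $A\otimes_\pi B\xrightarrow{\ \sim\ }B_0$.

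For (ii) apply (i) with $A=B$ and $\pi=\idd\colon B\hookrightarrow\call_B(B)$ given by left multiplication; then $B_0$ is the additive span of $B^2$, which equals $B$ since $B$ is quadratik (Definition~\ref{def111}), so $B\otimes_{\idd}B\cong B$. For (iii) apply (i) with $A=B$ (viewed as a functional $B$-module over itself) and $\pi\colon B\hookrightarrow B\rtimes G\subseteq\call_{B\rtimes G}(B\rtimes G)$ the canonical left-multiplication homomorphism; here $B_0=B\cdot(B\rtimes G)$, and $B=\sum B^2$ gives $B\cdot(B\rtimes G)=\sum_{g\in G}Bg=B\rtimes G$, so $B\otimes_{\idd}(B\rtimes G)\cong B\rtimes G$. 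For (iv) I would rerun the proof of (i) with $\cale=A$ unital and $\pi\colon A\to\call_B(\cale)$ a \emph{unital} ring homomorphism: the injectivity of $X$ and the construction of $f$ go through verbatim (now $\idd_A\in\Theta_A(A)$ is available because $A$ is unital), while $\pi$ unital gives $\xi=\pi(\idd_A)(\xi)=X(\idd_A\otimes\xi)$, so $X$ is onto all of $\cale$ and $B_0=\cale$; hence $A\otimes_\pi\cale\cong\cale$.

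The $B$-module part of all this is routine; the step I expect to be fiddly is the functional bookkeeping, i.e.\ producing the bijection $f$ between the functional spaces and verifying~(\ref{ref16b}) together with $G$-equivariance. This hinges on the two structural facts that each functional space is a left module and that $\call_B(\cdot)$-operators remain inside the functional space under post-composition. A secondary point to keep in mind is that the quotient in Definition~\ref{def110} depends on whether one uses $\Theta_A(A)=A$ or $\Theta_A(A)=A^+$; the clean identifications above rely on the unit functional $\idd$ being available, as permitted after Definition~\ref{def115}.
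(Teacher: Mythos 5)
Your route is the paper's own: the same action map $X(a\otimes b)=\pi(a)b$ onto $B_0=\sum\pi(A)B$, the same injectivity computation (hit the relation with $\pi(c)$ from the left), essentially the same functional map (your $\phi_c\otimes m_x\mapsto m_{x\pi(c)}$ is the paper's $l(m_y\otimes m_x)=m_{x\pi(y)}$), and (ii)--(iv) are read off from (i) exactly as you do; for (iv) the paper simply exhibits the inverse $\xi\mapsto 1_A\otimes\xi$, which is your surjectivity remark.

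The genuine weak point is your double appeal to ``$\idd\in\Theta_A(A)$'' in (i)--(iii). Under the convention fixed in definition~\ref{def115} and the paragraph following it, one takes $\Theta_A(A)=A$ when $A$ is quadratik, and that very paragraph warns that the internal tensor product of definition~\ref{def110} is sensitive to the difference between $A$ and $A^+$; the licence to write $\theta_{a,1}$ is purely notational (it must be resolvable into $\sum_i\theta_{a_i,b_i}$) and does not place $\idd$ in the functional space. Hence for non-unital $A$ (and in (ii),(iii) one has $A=B$ merely quadratik) you may not evaluate the defining condition of definition~\ref{def110} at $\idd$: your converse inclusion, i.e.\ well-definedness of $X$ on the quotient, is unproved, and likewise your surjectivity of $f$ ``by taking $\phi=\idd$''. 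The missing implication --- $\pi(x)z=0$ for all $x\in A$ forces $z=0$ for $z\in B_0$ --- is not a formality: it is a separation/essentialness statement (compare lemma~\ref{lemma219}.(iii), which in general yields only $\cale\otimes_{\id}A=\overline\cale$), and there exist quadratik rings with nonzero left annihilators for which it fails, so no purely formal substitute for the $\idd$-step exists. In (iv) your use of $\idd$ is legitimate, since there $A$ is unital and $\idd=\phi_{1_A}\in\Theta_A(A)$, so both well-definedness and surjectivity do follow. For comparison, the paper's proof of (i) only establishes injectivity and defines $\Theta_B(B_0)$ to be the image of $l$ (so surjectivity of the functional map is by fiat), remaining silent on well-definedness; your write-up at least localizes exactly the point where unitality, a separation hypothesis, or passing to the separated quotient of $B_0$ is needed.
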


\begin{proof}

(i) 
We have a 
$B$-module functional isomorphism
$(X,l)$ defined by 
\begin{eqnarray*}
&&  X:A \otimes_\pi B \rightarrow B_0:= 
\sum 
\pi(A) B  \subseteq B
: X(a\otimes b) = 
\pi(a) b 				\\
&&  l: \Theta_B(A \otimes_\pi B) 
\rightarrow \Theta_B(B_0) : l(m_y \otimes m_x) = m_{x \pi(y)}
\end{eqnarray*} 
%
where $m_x$ denotes left multiplication operator ($x \in B,y \in A$) 
and $\Theta_B(B_0)$ is defined to be the image of $l$, into a (now functional) $B$-submodule $B_0$ of $B$, see also lemma \ref{lemma17}. 

If $X(\sum_i a_i \otimes b_i)=0$ then also 
$\sum_i\pi(x a_i) b_i=$ for all $x \in A$, and so  
$\sum_i a_i \otimes b_i=0$ by definition 
\ref{def110}, proving injectivity of $X$. 
 
(ii)
follows from (i). 
(iii) is proven similarly. 

(iv)
The 
functional $B$-module isomorphism is $p: A \otimes_{s_-} \cale \rightarrow \cale$ defined by 
$p(a \otimes \xi)= \pi(a)(\xi)$, and 
its inverse 
isomorphism defined by $p^{-1}  (\xi)= 1_A  \otimes \xi$, 
together with 
$f:\Theta_A( \cale) \rightarrow \Theta_A(A \otimes_{\pi} \cale)$ defined by 
$f(\phi)= \idd \otimes \phi$, thereby recalling 
definition \ref{def116} and lemma \ref{lemma17}
%
\end{proof}

\begin{lemma}			\label{lemma221} 
{\rm (i)} 
A direct sum $\cale = \oplus_i \cale_i$ of $A$-modules $\cale$ and $\cale_i$ is a direct sum 
of functional $A$-modules if and only if $\cale$ is a functional $A$-module and the $A_i$s are functional $A$-submodules of $\cale$ obtained by 
restriction of the functional space of $\cale$.

{\rm (ii)} 
The exterior and interior tensor products $\cale \otimes_\pi \calf$  
commute with direct sums of functional modules in both `variables' 
$\cale$ and $\calf$ (provided $\pi(A)$ acts on the summands $\calf_i$ of $\calf$).

\if 0 
An $A$-module homomorphism $\cale = \oplus_i \cale_i$ of $A$-modules $\cale_i,\cale$,
where $\cale$ is a a functional $A$-module  
\fi

\end{lemma}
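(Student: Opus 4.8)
The plan is to prove both parts by directly unwinding the definitions of the functional space on a direct sum (the definition of a direct sum of functional modules, given just before Definition \ref{def19}), on the exterior tensor product (Definition \ref{def19}), and on the internal tensor product (Definition \ref{def110}), together with the standing convention that a submodule of a functional module carries the restricted functionals. For (ii) the output is, in each case, a pair $(X,f)$ of an underlying $A$-module isomorphism together with its effect on functional spaces as in Definition \ref{def116}; once the compatibility identity (\ref{ref16b}) is checked, Lemma \ref{lemma17} upgrades it to an isomorphism of the rings of compact and adjointable operators, although only the functional-module level is asserted in the statement.

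For (i) the one substantive point is the elementary observation that, since the algebraic direct sum $\cale=\bigoplus_i\cale_i$ has finite support, every right $A$-module homomorphism $\phi:\cale\rightarrow A$ is the ``sum of its restrictions'' $\phi_i:=\phi|_{\cale_i}$, i.e.\ $\phi=\bigoplus_i\phi_i$ in the notation of the direct-sum definition, and conversely every finitely supported family of module homomorphisms $\phi_i:\cale_i\rightarrow A$ assembles to such a $\phi$. Hence the functional space $\{\bigoplus_i\phi_i\mid\phi_i\in\Theta_A(\cale_i)\}$ of the direct sum of functional modules is exactly the set of those $\phi\in\Hom{A}(\cale,A)$ all of whose restrictions lie in the $\Theta_A(\cale_i)$, and its restriction to a fixed summand $\cale_j$ is precisely $\Theta_A(\cale_j)$ — taking the remaining components to be $0\in\Theta_A(\cale_i)$, which is allowed as each $\Theta_A(\cale_i)$ is an $A$-submodule. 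Read in one direction this identification says the summands of a direct sum of functional modules are exactly the restricted-functional submodules of the direct sum; read in the other direction it says that a functional structure on $\cale$ whose restrictions recover the $\Theta_A(\cale_i)$ must be the direct-sum structure. The diagonal $G$-action and the componentwise $A$-module structure coincide on the two sides by construction, so nothing further is needed.

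For (ii), on the level of underlying $\Z$- and $A$-modules the canonical maps
$$
\Big(\bigoplus_i\cale_i\Big)\otimes_\pi\calf\;\longrightarrow\;\bigoplus_i\big(\cale_i\otimes_\pi\calf\big),\qquad
\cale\otimes_\pi\Big(\bigoplus_j\calf_j\Big)\;\longrightarrow\;\bigoplus_j\big(\cale\otimes_\pi\calf_j\big)
$$
are the usual $G$-equivariant, $A$-linear isomorphisms expressing that $\otimes_\Z$ commutes with direct sums (and similarly for the exterior product, read off from Definition \ref{def19}). It remains to promote these to functional-module isomorphisms, i.e.\ to supply the matching map $f$ on functional spaces and verify (\ref{ref16b}). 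By Definitions \ref{def19}, \ref{def110} and the direct-sum definition, a spanning set of functionals on each side consists of the elementary functionals $\phi\otimes\psi$ with $\phi\in\Theta_A(\cale_i)$ (resp.\ $\Theta_A(\cale)$) and $\psi\in\Theta_B(\calf)$ (resp.\ $\Theta_B(\calf_j)$), and the isomorphisms send $\phi\otimes\psi$ supported on the summand $\cale_i$ (resp.\ $\calf_j$) to the correspondingly supported $\phi\otimes\psi$; since the defining formulas for $(\phi\otimes\psi)(\xi\otimes\eta)$ are literally identical on the two sides, this simultaneously gives the well-definedness of $f$ and the identity (\ref{ref16b}), and that these spanning sets span matching subgroups is the same recombination argument as in (i). For the internal product one further observes that the subgroup of $\big(\bigoplus_i\cale_i\big)\otimes\calf$ one quotients by in Definition \ref{def110}, namely the common kernel of all functionals, is computed summand by summand — using once more, via (i), that the functional space of a direct sum is the span of the component functionals, so that one may test the defining relation with a single $\phi_i\in\Theta_A(\cale_i)$ and the others zero — whence the quotient distributes over the direct sum; the hypothesis that $\pi(A)$ acts on each $\calf_j$ is precisely what makes each factor $\cale\otimes_\pi\calf_j$ defined.

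The work is essentially bookkeeping, and the main obstacle lies in part (ii): keeping the internal-tensor-product quotient (the common kernel of the functionals) and the direct-sum decomposition under control simultaneously, and checking that the former distributes over the latter. That is the one step which genuinely uses the description, from (i), of the functional space of a direct sum as the span of the component functionals; everything else reduces to comparing identical defining formulas.
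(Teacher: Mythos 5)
Your proposal is correct and takes essentially the same route as the paper, whose entire proof reads ``(i) is an easy check, and (ii) may be deduced from (i) and well-known corresponding isomorphisms of $A$-modules.'' You simply flesh out that check — functionals on the finitely supported sum $\bigoplus_i\cale_i$ decompose into and reassemble from their restrictions, giving (i) — and then deduce (ii) from the standard $A$-module isomorphisms plus the one substantive verification, namely that the defining kernel of $\otimes_\pi$ in Definition \ref{def110} is computed summand by summand (testing with functionals supported on a single summand), which is exactly the intended argument.
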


\begin{proof}
(i) is an easy check, and (ii) may be deduced from (i) and well-known corresponding isomorphisms of $A$-modules. 
\end{proof}

For a non-equivariant ring $A$ 
we write $M_n(A)$ for the ring of $n \times n$-matrices with coefficients in $A$, 
and
also 
$M_\infty (A)$ 
for arbitrarily big infinite matrices.

\begin{lemma}		\label{lemma1172}   
Let $(A,\alpha)$ be a quadratik ring.  
Then there are ring isomorphisms 
$$
(A,\alpha) \cong (\calk_A(A),\Ad(\alpha)) \qquad 
(M_n(A), \alpha \otimes 1_{M_n})  \cong \calk_A((A,\alpha)^n)  	$$ 
Also, $M_n(A)$ is quadratik.  
\end{lemma}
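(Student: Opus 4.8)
The plan is to produce the two isomorphisms explicitly and then note that quadratik-ness of $M_n(A)$ follows formally. First I would treat the identification $(A,\alpha)\cong(\calk_A(A),\Ad(\alpha))$. Recall that with the convention $\Theta_A(A)=A$ from definition \ref{def115} the elementary compact operators on $A$ are $\theta_{a,\phi_b}$ with $\theta_{a,\phi_b}(c)=ab c$, i.e.\ left multiplication by $ab$; since $A$ is quadratik, every element of $A$ is a finite sum of such products $ab$, so the map $a\mapsto(c\mapsto ac)=:\pi(a)$ lands in $\calk_A(A)$ and is surjective onto it. It is a ring homomorphism by associativity, and it is injective because $A$ is quadratik and hence an essential ideal in itself (this is the sentence ``As $\call_A(\cale)$ is unital\dots'' combined with lemma \ref{lemma114} / lemma \ref{lemma113}, or simply: if $ac=0$ for all $c$ then writing a general element $x=\sum b_i c_i$ gives $ax=0$, so $a$ annihilates $A=\sum A^2$, hence $a\in A$ with $aA=0$; but $a=\sum a_i' a_i''$ forces $a\cdot a=0$-type reasoning, or more cleanly use that $A$ quadratik $\Rightarrow$ essential ideal in itself, already recorded after definition \ref{def115} implicitly via lemma \ref{lemma113}). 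Finally $\pi$ intertwines $\alpha$ on the source with $\Ad(\alpha)$ on the target: $\pi(\alpha_g(a)) = \Ad(\alpha_g)(\pi(a))$ because $\alpha_g(ac)=\alpha_g(a)\alpha_g(c)$.

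Next I would do $(M_n(A),\alpha\otimes 1_{M_n})\cong\calk_A((A,\alpha)^n)$. Here $(A,\alpha)^n=\bigoplus_{i=1}^n(A,\alpha)$ is the direct sum of functional $A$-modules, so by the definition of the functional space on a direct sum, $\Theta_A(A^n)$ consists of functionals $\phi=(\phi_{b_1},\dots,\phi_{b_n})$, $\phi(\oplus_i c_i)=\sum_i b_i c_i$. The elementary compact operator $\theta_{\eta,\phi}$ for $\eta=(a_1,\dots,a_n)$ and this $\phi$ sends $\oplus_k c_k\mapsto(a_i\sum_k b_k c_k)_i$, which is exactly the action of the matrix $(a_i b_j)_{i,j}$ on the column vector $(c_k)_k$. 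Since $A$ is quadratik, every matrix unit entry $a_i b_j$ with prescribed $i,j$ and arbitrary product $a_i b_j$ is realized, and finite sums of these span all of $M_n(A)$; so the assignment $(x_{ij})\mapsto\big((c_k)_k\mapsto(\sum_k x_{ik}c_k)_i\big)$ is a surjection $M_n(A)\to\calk_A(A^n)$. It is a ring homomorphism by the usual matrix-multiplication-is-composition computation (equivalently use $\theta_{\xi,\phi}\theta_{\eta,\psi}=\theta_{\xi\phi(\eta),\psi}$ as in lemma \ref{lemma17}), and injective because if $(x_{ij})$ acts as zero then testing on $c_k=$ a general element of $A$ in the $k$-th slot and using quadratik-ness of $A$ forces each $x_{ij}$ to annihilate $A$ on the right, hence $x_{ij}=0$ by the essential-ideal-in-itself property again. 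Equivariance is immediate since $(\alpha\otimes 1_{M_n})$ acts entrywise by $\alpha$ and $\Ad(\alpha\oplus\cdots\oplus\alpha)$ conjugates the operator entrywise. Alternatively, and more slickly, I could obtain this from the first isomorphism plus lemma \ref{lemma221}(i) and a bookkeeping identification $\calk_A(A^n)\cong M_n(\calk_A(A))$ for a direct sum of $n$ copies of the same functional module, but writing it out directly is about the same length.

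For the last assertion, $M_n(A)$ is quadratik: this is purely formal once we have $M_n(A)\cong\calk_A((A,\alpha)^n)$, because $(A,\alpha)^n$ is cofull by lemma \ref{lemma117}(i) (for each $\xi\in A$, quadratik-ness of $A$ gives $\xi=\sum_i\eta_i b_i$, hence each summand $A$ is weakly cofull, hence $A^n$ is cofull --- indeed $A\oplus\cdots\oplus A$ is cofull directly from the direct-sum definition of the functional space together with lemma \ref{lemma114}), and then lemma \ref{lemma113} says $\calk_A$ of a cofull module is quadratik. A direct check is also painless: a matrix $(x_{ij})$ with $x_{ij}=\sum_k y_{ijk}z_{ijk}$ can be written as a finite sum of products $E_{ii}(y_{ijk})\cdot E_{ij}(z_{ijk})$ of matrices in $M_n(A)$, where $E_{pq}(a)$ denotes the matrix with $a$ in the $(p,q)$ slot and zeros elsewhere.

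The routine steps (ring-homomorphism axioms, equivariance, spelling out the matrix action) are genuinely routine. The one place needing a little care is injectivity in both maps, which is exactly where quadratik-ness is used and cannot be dropped; the proof rests on the fact that a quadratik ring is an essential ideal in itself (an element annihilating $A=\sum A^2$ on one side is $0$). I do not expect any serious obstacle --- the statement is a direct translation of the Hilbert-module fact $\calk_A(A)\cong A$, $\calk_A(A^n)\cong M_n(A)$ into the functional-module language, with the quadratik hypothesis playing the role that fullness/non-degeneracy plays in the $C^*$-setting.
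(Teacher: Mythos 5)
Your overall route is the paper's route: the first isomorphism is the left-multiplication homomorphism $\pi$ of definition \ref{def111}, surjective because $A=\sum A^2$ and equivariant for $\Ad(\alpha)$; the second isomorphism is the matrix-action formula, i.e.\ the map of lemma \ref{lemma1170} combined with the first isomorphism; and quadratikness of $M_n(A)$ comes either from lemmas \ref{lemma113}/\ref{lemma114} (cofullness of $(A,\alpha)^n$) or from the elementary matrix-unit computation. All of this matches the paper's two-line proof, only written out in more detail.

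The step that does not close is the injectivity of $\pi$ --- exactly the point you single out as the crux. Your inline argument (``$a\cdot a=0$-type reasoning'') proves nothing: from $aA=0$ and $a=\sum_i b_ic_i$ you cannot conclude $a=0$, since the factors $b_i$ sit on the wrong side of $a$. The fallback claim ``quadratik $\Rightarrow$ essential ideal in itself'' is not recorded in the paper either: lemma \ref{lemma113} gives the essential-ideal property for $\calk_A(\cale)$ of a cofull module, not for $A$ itself, and the implication genuinely fails for arbitrary quadratik rings. Take the ring with additive basis $e,x$ and products $e^2=e$, $ex=x$, $xe=x^2=0$ (trivial $G$-action): it is associative and quadratik (both $e=e\cdot e$ and $x=e\cdot x$ lie in $\sum A^2$), yet $xA=0$, so $\pi(x)=0$ and $\calk_A(A)$ collapses to the multiples of the identity operator, which is not isomorphic to $A$. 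So injectivity needs either the additional hypothesis that $A$ is an essential ideal in itself in the sense of definition \ref{def111} (equivalently some nondegeneracy/separation assumption on the rings admitted into $\ring G$), or an argument you have not supplied. To be fair, the paper's own proof cites lemmas \ref{lemma113} and \ref{lemma114} for this same injectivity and is open to the same objection, so you are reproducing its gap rather than introducing a new one --- but as a standalone proof your injectivity step fails and should be repaired by adding and using the essential-ideal hypothesis explicitly.
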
 

\begin{proof}

We just discuss the first isomorphism, the other one is done as in the next lemma. 
By lemmas \ref{lemma113} and \ref{lemma114}, 
$\pi$ of definition \ref{def111} is injective. 
The last claim follows also from these lemmas and the second isomorphism, or completely elementary by suitable matrix multiplication 
as in the proof of lemma \ref{lemma61}. 
\end{proof}

\begin{lemma}		\label{lemma1170}

Let $(A,\alpha)$ be a ring and $(\cale_i,S_i)$ 
right $(A,\alpha)$-modules
for $1 \le i \le n \le \infty$. 
Then there are ring isomorphisms 
%
\if 0
$$(M_n(A), \alpha \otimes 1_{M_n})  \cong \calk_A((A,\alpha)^n)  	$$ 
\fi 
$$(M_n(\calk_A(\cale)), \Ad( \oplus_{i=1}^n S_i))  \cong \calk_A(\oplus_{i=1}^n(\cale,S_i))  $$  
$$(M_n(\call_A(\cale)), \Ad( \oplus_{i=1}^n S_i))  \cong \call_A(\oplus_{i=1}^n(\cale,S_i))   $$  

where the $G$-action $\theta :=\Ad( \oplus_{i=1}^n S_i)$
on the  matrix algebras is defined by 
$$\theta_g((x_{ij}))=( S_{i,g} \circ x_{ij} \circ S_{j,g^{-1}})$$

\end{lemma}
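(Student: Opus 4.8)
The plan is to exhibit the isomorphisms explicitly at the level of matrices of operators and then check that the maps are ring homomorphisms, bijective, and $G$-equivariant. First I would define a candidate map $\Phi$ from $M_n(\call_A(\cale))$ to $\call_A(\bigoplus_{i=1}^n \cale)$: given a matrix $(x_{ij})$ with $x_{ij}\in\call_A(\cale)$, let $\Phi((x_{ij}))$ be the operator sending $\bigoplus_j \xi_j$ to $\bigoplus_i \big(\sum_j x_{ij}(\xi_j)\big)$, i.e.\ the usual ``matrix acting on a column vector'' formula. One checks immediately that this is a right $A$-module homomorphism on $\bigoplus_i \cale$, and that it lands in $\call_A(\bigoplus_i\cale)$: a functional on $\bigoplus_i\cale$ in $\Theta_A(\bigoplus_i\cale)$ is, by the definition of the direct-sum functional space, a sum $\phi = \bigoplus_i \phi_i$ with $\phi_i\in\Theta_A(\cale)$, and $\phi\circ\Phi((x_{ij}))$ evaluated on $\bigoplus_j\xi_j$ equals $\sum_{i,j}\phi_i(x_{ij}(\xi_j)) = \sum_j \big(\sum_i \phi_i\circ x_{ij}\big)(\xi_j)$, which is the functional $\bigoplus_j\big(\sum_i \phi_i\circ x_{ij}\big)$, and this lies in $\Theta_A(\bigoplus_j\cale)$ since each $\phi_i\circ x_{ij}\in\Theta_A(\cale)$ by adjointability of $x_{ij}$. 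The inverse is the obvious one: given $T\in\call_A(\bigoplus_i\cale)$, set $x_{ij} = q_i\circ T\circ \iota_j$ where $\iota_j:\cale\hookrightarrow\bigoplus\cale$ and $q_i:\bigoplus\cale\twoheadrightarrow\cale$ are the inclusion and projection; these are themselves adjointable (with adjoint data given by the restriction/extension of functionals), so $x_{ij}\in\call_A(\cale)$, and $\Phi((x_{ij}))=T$ because $\sum_i \iota_i q_i = \id$ on the algebraic direct sum. Multiplicativity of $\Phi$ is the standard computation $\big(\sum_k x_{ik}y_{kj}\big)$ matching composition, using finiteness of the sums (automatic for finite $n$, and for $n=\infty$ using that matrices in $M_\infty$ are finitely supported).

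Next I would verify $G$-equivariance, which is where the formula for $\theta$ comes in. For $(x_{ij})\in M_n(\call_A(\cale))$ we have $\Phi(\theta_g((x_{ij}))) = \Phi((S_{i,g}\circ x_{ij}\circ S_{j,g^{-1}}))$, and on $\bigoplus_j\xi_j$ this gives $\bigoplus_i\sum_j S_{i,g}(x_{ij}(S_{j,g^{-1}}(\xi_j)))$. On the other hand, $g\cdot\Phi((x_{ij}))$ is $\Ad(S)$ applied to $\Phi((x_{ij}))$, where $S=\bigoplus_i S_i$ is the diagonal $G$-action on $\bigoplus_i\cale$; evaluating $S_g\circ\Phi((x_{ij}))\circ S_{g^{-1}}$ on $\bigoplus_j\xi_j$ and using that $S_{g^{-1}}(\bigoplus_j\xi_j) = \bigoplus_j S_{j,g^{-1}}(\xi_j)$ yields exactly the same expression. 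So $\Phi$ intertwines $\theta$ on the matrix side with the adjointable-operator $G$-action on $\bigoplus\cale$, proving the second stated isomorphism. The first isomorphism, for compact operators, then follows by restricting $\Phi$: I would check that $\Phi$ carries $M_n(\calk_A(\cale))$ onto $\calk_A(\bigoplus_i\cale)$. For this, note $\Phi$ sends the matrix with $\theta_{\eta,\phi}\in\calk_A(\cale)$ in the $(i,j)$ slot and zeros elsewhere to the elementary compact operator $\theta_{\iota_i(\eta),\,\phi\circ q_j}$ on $\bigoplus\cale$ (here $\phi\circ q_j\in\Theta_A(\bigoplus\cale)$ by the definition of the direct-sum functional space, identifying it with $\bigoplus$ of $\phi$ in slot $j$ and $0$ elsewhere); conversely every elementary compact operator $\theta_{\bigoplus_i\eta_i,\,\bigoplus_j\phi_j}$ on $\bigoplus\cale$ decomposes as a finite sum over $(i,j)$ of such, so it lies in the image of $M_n(\calk_A(\cale))$. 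Since $\Phi$ is already known to be an injective ring homomorphism, this identifies $M_n(\calk_A(\cale))$ with $\calk_A(\bigoplus_i\cale)$.

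I expect the only genuine subtlety to be the $n=\infty$ case: one must be careful that $M_\infty(\cdot)$ denotes \emph{finitely supported} infinite matrices (as the paper stipulates after Lemma~\ref{lemma220}), so that all sums $\sum_j x_{ij}(\xi_j)$ and $\sum_k x_{ik}y_{kj}$ are finite and no convergence question arises, and that $\bigoplus_{i=1}^\infty\cale$ is the algebraic direct sum (finite support), so that $\sum_i\iota_i q_i$ acts as the identity on each element even though it is an infinite ``sum'' of operators. With that convention everything is purely algebraic and the argument above goes through verbatim. A minor bookkeeping point is checking that $\iota_j$ and $q_i$ are adjointable with respect to the chosen functional spaces — this is immediate from the definition of $\Theta_A$ on a direct sum — and that the $G$-action $\theta$ as written is indeed a group homomorphism into the additive automorphisms of the matrix ring, which is a one-line check from $S$ being a $G$-action. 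No step here looks like it needs more than routine verification; the content is entirely in setting up $\Phi$ and its inverse correctly and tracking the functional spaces through.
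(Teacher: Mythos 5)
Your proposal is correct and is essentially the paper's own proof: the same matrix--vector multiplication map $\sigma((x_{ij}))((\xi_i)_i)=(\sum_k x_{ik}(\xi_k))_i$, with the equivariance computation, the inverse via $x_{ij}=q_i\circ T\circ\iota_j$, and the identification of elementary compact operators spelled out in more detail than the paper bothers to. One caveat on your $n=\infty$ remark: reading $M_\infty$ as \emph{finitely supported} matrices makes the compact-operator isomorphism work (given finitely supported functionals on the direct sum) but would break the adjointable-operator isomorphism, since the identity on $\bigoplus_{i=1}^\infty\cale$ has no finitely supported matrix; for that statement one must interpret the paper's ``arbitrarily big infinite matrices'' as those acting column-locally-finitely on the algebraic direct sum, a vagueness inherited from the paper rather than a flaw in your argument.
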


\begin{proof}
The last isomorphism $\sigma$ is given by  
$\sigma((x_{ij})_{ij})((\xi_i)_i)= (\sum_{k=1}^n x_{ik}(\xi_k))_i$, and one easily checks $G$-equivariance,
and that it restricts to $M_n(\calk_A(\cale))$ and recall definition \ref{defcompact} for $G$-invariance. 
\end{proof}

The following lemma is elementary and well-known, and our sketched proof might appear superfluous.

\begin{lemma}		\label{lemma120} 
Let $A,B$ rings and $\cale,\calf,M,N$ be modules. Let $(\G,\triv)$ be a field. 
Write $\otimes^\G$ for the $\G$-vector space 
(i.e. $\G$-balanced) 
tensor product.   
Then we have isomorphisms 
$$(A \otimes B) \otimes \G
= (A \otimes \G)  \otimes^\G
(B \otimes \G)$$
$$(\cale \otimes_\pi \calf) \otimes \G
= (\cale \otimes \G)  \otimes^\G_{\pi \otimes \idd}  
(\calf \otimes \G)$$
$$(A \rtimes_\alpha  G) \otimes \G \cong 
(A \otimes \G) \rtimes_{\alpha \otimes \triv} 
G$$
$$ \calk_\G(\G^n) \cong M_n(\G) , \qquad
M_n(A) \otimes \G \cong M_n(A \otimes \G)$$
\if 0
$$\calk_B( \cale  ) \otimes \G  
\cong \calk_{B  \otimes \G} (\cale \otimes \G ) 
$$
%
%
%
$$\Hom A (M,N) \otimes \G \cong 
\Hom {A \otimes \G}  (M \otimes \G,N \otimes  \G) 
$$
\fi 

If $\F$ is a subfield of $\G$ and all rings and modules here are even vector spaces over $\F$ 
(so $\F$-algebras and their modules and homomorphisms, $\otimes$ replaced by $\otimes^\F$), then everything above is also true. 
\end{lemma}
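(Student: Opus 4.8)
The plan is to treat each displayed isomorphism as a separate, routine book-keeping exercise and to reduce all of them to two standard facts: associativity/commutativity of the base tensor product $\otimes_\Z$, and the observation that for a $\Z$-module $M$ the functor $-\otimes\G$ is nothing but extension of scalars along $\Z\to\G$, so that $M\otimes\G$ is a $\G$-vector space and $(M\otimes\G)\otimes_\G(N\otimes\G)\cong (M\otimes_\Z N)\otimes\G$ naturally. I would open with a single paragraph recording this last natural isomorphism (call it $(\ast)$), obtained from the chain $(M\otimes\G)\otimes_\G(N\otimes\G)\cong M\otimes_\Z\G\otimes_\G\G\otimes_\Z N\cong M\otimes_\Z\G\otimes_\Z N\cong(M\otimes_\Z N)\otimes\G$, and note that it is a ring isomorphism when $M,N$ are rings (multiplication is defined coordinatewise on elementary tensors, and $(\ast)$ respects this) and is $G$-equivariant when $M,N$ carry $G$-actions (the diagonal action on the left matches the diagonal action on the right under $(\ast)$, since the $\G$-factor carries the trivial action).

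Next I would dispatch the four algebra-level identities. The first, $(A\otimes B)\otimes\G\cong(A\otimes\G)\otimes^\G(B\otimes\G)$, is exactly $(\ast)$ with $M=A$, $N=B$. For $(A\rtimes_\alpha G)\otimes\G\cong(A\otimes\G)\rtimes_{\alpha\otimes\triv}G$, recall $A\rtimes_\alpha G=\bigoplus_{g\in G}A$ as a $\Z$-module with twisted convolution product; tensoring with $\G$ commutes with the direct sum and turns each summand $A$ into $A\otimes\G$, and the twisted product formula involves $\alpha$, which becomes $\alpha\otimes\idd$ on $A\otimes\G$, i.e. $\alpha\otimes\triv$ on the $\G$-coefficients — so the two convolution products agree on elementary tensors, hence everywhere. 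The identity $\calk_\G(\G^n)\cong M_n(\G)$ is the $A=\G$ case of Lemma \ref{lemma1172} (with $\G$ quadratik, being unital), and $M_n(A)\otimes\G\cong M_n(A\otimes\G)$ follows because $M_n(A)=\bigoplus_{n^2}A$ as a $\Z$-module with matrix product, $-\otimes\G$ commutes with the finite direct sum, and the matrix product is built from the product of $A$ and from addition, both of which $(\ast)$ respects; the $G$-action $\alpha\otimes 1_{M_n}$ goes to $(\alpha\otimes\idd)\otimes 1_{M_n}$ as required.

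The remaining identity, $(\cale\otimes_\pi\calf)\otimes\G\cong(\cale\otimes\G)\otimes^\G_{\pi\otimes\idd}(\calf\otimes\G)$, is the one requiring a little care, since $\otimes_\pi$ is a quotient of $\otimes_\Z$ by the relations of Definition \ref{def110}, and one must check that applying $-\otimes\G$ commutes with forming that quotient and that the relevant functional spaces match up. By $(\ast)$ we have $(\cale\otimes_\Z\calf)\otimes\G\cong(\cale\otimes\G)\otimes_\Z(\calf\otimes\G)$; since $-\otimes\G$ is exact (it is extension of scalars to a field, hence flat, indeed free), it carries the submodule of relations for $\otimes_\pi$ onto the submodule of relations for $\otimes_{\pi\otimes\idd}$ — here one uses that the functional space $\Theta_{A\otimes\G}(\cale\otimes\G)$ is, by Definition \ref{def19}, spanned by the $\phi\otimes\psi$ with $\psi\in\Theta_\G(\G)=\G$, so it is precisely $\Theta_A(\cale)\otimes\G$, and the defining condition $\sum_i(\pi\otimes\idd)(\phi\otimes\psi)(\xi_i\otimes\lambda_i)\,\eta_i=0$ becomes, after pulling the $\G$-scalars out, the $\G$-span of the original conditions. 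Passing to the quotient gives the module isomorphism; the functional-space identification and $G$-equivariance are then read off in the same way. The main obstacle — mild, but the only genuine point — is precisely this interchange of the $-\otimes\G$ functor with the quotient defining the internal tensor product, i.e. verifying that the relation submodule is sent exactly onto the relation submodule; everything else is formal. Finally, for a subfield $\F\subseteq\G$, the identical argument runs with $\Z$ replaced by $\F$ and $\otimes_\Z$ by $\otimes^\F$ throughout, since $-\otimes^\F\G$ is again free, hence flat, over $\F$, so no new idea is needed.
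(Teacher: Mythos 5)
Your handling of the first, third and fourth displayed identities is essentially the paper's own route: the paper likewise obtains the first identity from the universal property (your isomorphism $(\ast)$ is exactly that), gets the crossed-product identity from compatibility of $\otimes$ with direct sums, and realizes $\calk_\G(\G^n)\cong M_n(\G)$ by an explicit map (citing lemma \ref{lemma1172} instead is fine), with $M_n(A)\otimes\G\cong M_n(A\otimes\G)$ by finite direct sums. The genuine problem is the step you yourself single out, the internal tensor product identity, and specifically your justification ``$-\otimes\G$ is exact (extension of scalars to a field, hence flat, indeed free)''. In the first part of the lemma $\otimes$ means $\otimes_\Z$, and a field is flat over $\Z$ only in characteristic $0$: for $\G=\F_p\cong\Z/p$ the injection $\Z\stackrel{p}{\to}\Z$ dies after tensoring, and even $\Q$ is flat but not free over $\Z$. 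So the exactness claim is unavailable in exactly the generality in which the identity is stated. Moreover, even where $\G$ is flat, flatness only identifies $\ker(T_\phi)\otimes\G$ with $\ker(T_\phi\otimes\idd)$ for a single functional $\phi$ (or finitely many); the relation submodule of definition \ref{def110} is the intersection over \emph{all} $\phi\in\Theta_A(\cale)$, typically an infinite family, and tensoring does not commute with infinite intersections of kernels: for instance, with $T_n:\Z\to\bigoplus_m\Z/m$ the reduction mod $n$ into the $n$-th summand, one has $\bigcap_n\ker T_n=0$, while after $\otimes\,\Q$ every $T_n\otimes\idd$ is zero, so the intersection of kernels becomes all of $\Q$. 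Hence ``flatness carries the relation submodule onto the relation submodule'' --- which is precisely the hard inclusion needed so that the inverse of $(\ast)$ descends to the quotients --- is not a consequence of flatness and requires an argument exploiting the specific shape of the maps $\xi\otimes\eta\mapsto\pi(\phi(\xi))\eta$.

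In the $\F$-algebra setting of the last sentence your argument can be repaired, because there $\G$ really is free over $\F$: expanding an element of $V\otimes^\F\G$ along an $\F$-basis of $\G$ shows that an element annihilated by every $T_\phi\otimes\idd$ has all its coordinates annihilated by every $T_\phi$, which gives the interchange of $-\otimes^\F\G$ with arbitrary intersections of kernels; this is also the only version the paper actually uses later (with $\F\subseteq\G\subseteq\C$). For the general $\otimes_\Z$ statement the paper's (terse) proof takes a different tack: it writes down the mutually inverse homomorphisms coming from the first identity and asserts that they respect the two quotients; making that assertion precise is exactly the point that your flatness shortcut was meant to settle but does not. You should either restrict your argument for the second identity to the vector-space case and prove it by the basis argument, or verify the hard inclusion directly from the structure of the functional spaces rather than from a general exactness property of $-\otimes\G$.
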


\begin{proof}
(Sketch)
For the first identity we observe that 
by the universal property of the tensor product   
an obvious homomorphism and inverse homomorphism are well-defined. 
These homomorphisms respect certain quotients 
and so we arrive at 
the second line. 
The third line is because the 
tensor product exchanges with direct sums as abelian groups. 
  The fourth isomorphism is $\theta_{a e_i,b p_j} \mapsto ab e_{i,j}$ for $a,b \in \G$, $e_i\in \G^n, e_{i,j} \in M_n(\G)$ the canonical basis vectors and $p_j$ the canonical projection onto the $j$th coordinate. 
\if 0
(Sketch) For the first isomorphism one writes
at first $\G \cong \G \otimes \G$. 
The fourth isomorphism is by lemma, at first
one write $\calk_\A(\cale) \otimes \G \cong \calk_\G(\G)$. 
The last isomorphism is analogous. 

For the last assertion, we mean that 
every occurrence 
of $\otimes$ has to be replaced by $\otimes $ 
\fi 
\end{proof}



{\em We impose now the following convention throughout: 
}

From now on, all rings are assumed to be 
quadratik $G$-rings if nothing else is said. 
\if 0 
So $\ring G$ (introduced below) consists of such rings only.  
\fi
All modules are understood to be right functional 
$G$-modules and they 
may not be cofull, 
but it is implicitly understood that they  
are 
whenever they are the underlying module $\cale$  
of the compact operators $\calk_B(\cale)$,
because of lemma \ref{lemma113},
if nothing else is said.

\section{$GK$-theory}

\label{sec3}

In this paper we write compositions of morphisms in a category and compositions of functions from 
left to right. That is, for instance, if $f:A \rightarrow B$ and $g : B \rightarrow C$ are maps, then we write
$f g$ for $g \circ f$, where composition operator $\circ$ is used in the usual sense from right to left.
This will go as far as that we write $fg(x)$ for $g(f(x))$.
In spaces of operators like $\call(\cale)$ we use the multiplication in the usual sense, that is,
$S T$ means $S \circ T$ for $S,T \in \call(\cale)$, but to avoid confusion, we mostly write
$S \circ T$.


\begin{definition}[Distinguished coordinate] 
\label{def21} 
{\rm

Let $(A,\alpha)$ be a ring. 
We write $\H_A$ for 
a direct sum 
$\H_A:= \bigoplus_{i \in I} A$ of copies of $A$,
where $I$ may have any 
cardinality above zero, 
and where the $G$-action on $\H_A$ is of the form $\alpha \oplus S$ on $\H_A = A \oplus \bigoplus_{i \in I \backslash \{i_0\}} A$, and the direct summand $(A,\alpha)$ on the position $i_0 \in I$ is called the {\em distinguished coordinate} 
of $\H_A$. 
Thereby, $S$ may be any $G$-action and need not to be a direct sum $G$-action.
}
\end{definition}

\if 0
Here, $S$ can be any action
\fi 

This $\H_A$ is merely an auxiliary space. 
It will become clear, that it is sufficient to 
take only $\H_A = A$, see remark \ref{rem12} below. 

\if 0
Technically it is completely unimportant that $\H_A$ is an infinite direct sum. 
Everything would also go throuph if we just set $\H_A = (A,\alpha)$. 
It is merely an auxilary space, and that is why we even do not indicate the cardinality of $I$. 
\fi

\begin{definition}[Corner embedding] 		\label{def22}
{\rm

Let $(A,\alpha)$ be a ring.  
Let $(\cale,S)$ be a not necessarily cofull,  right functional $A$-module. 
Let $\H_A =(\oplus_{i \in I} A, \alpha \oplus T)$ be a direct sum. 
Assume that $\cale \oplus \H_A$ is cofull. 
(Cf. lemma \ref{lemma117}.) 

The injective 
ring homomorphism
$$e: (A,\alpha)  \rightarrow  \calk_A 
\big (
(\cale \oplus 
\H_A, S \oplus \alpha \oplus T) \big )$$
defined by 
$e(a) (\xi \oplus b \oplus \eta)
= 0 \oplus  a b \oplus 0$
for $\xi \in \cale, a \oplus \eta \in \H_A = A \oplus \bigoplus_{i \in I \backslash \{i_0\}} A$ 
is called a  {\em corner embedding}. 

}
\end{definition}

 That is,
$e(a)$ is the multiplication operator 
acting on the distinguished coordinate $(A,\alpha)$ 
of $\cale \oplus \H_A$.
Note that $e(a)= \theta_{a,\phi}$, where $\phi$ projects onto the distinguished coordinate.  
We call also ring homomorphisms which are corner embeddings up to ring isomorphism 
corner embeddings. 

There is also a corner embedding 
$e:A \rightarrow (M_\infty( A), \Ad( \alpha \oplus S))$ in the more usual sense
 defined by $e(a)=\diag(a,0,0,\ldots)$ 
by lemmas \ref{lemma1172} and \ref{lemma1170}. 


Let $C[0,1]:= (C([0,1],\C),\triv)$ be the complex-valued continuous functions on the unit interval, endowed with the trivial $G$-action.

For a ring $(B,\beta)$ we write $B[0,1] := (B \otimes C[0,1], \beta \otimes \triv)$ for the exterior tensor product with diagonal $G$-action. 
 
Gersten \cite{gersten} has defined a homotopy theory 
for rings by declaring polynomial functions 
as 
homotopy. For our purposes, logically 
this is 
a too 
strict notion and we rather take continuous functions 
in the complexified ring. 
(Replacing $\Z[x]$ in Gersten by $C[0,1]$ here.)

\begin{definition}
{\rm 

Let $(A,\alpha)$ and $(B,\beta)$ be rings. 
A {\em homotopy} is a $G$-equivariant ring homomorphism 
$f: (A,\alpha) \rightarrow (B \otimes C[0,1], \beta \otimes \triv)$ such that 
for the evaluation maps $v_t:B \otimes C[0,1] \rightarrow B \otimes \C$, $v_t(b \otimes z)= 
b \otimes z(t)$, the following holds true: 
the ranges of $f v_0$ and $f v_1$ are in the subring $B \otimes \Z$. 

The ring homomorphisms $f v_0 \tau$ and $f v_1 \tau$ from $A$ to $B$ are then called to be {\em homotopic}, where
$\tau:B \otimes \Z \rightarrow B$ is the canonical ring isomorphism $\tau(b \otimes n)= bn$.
 
}
\end{definition}

It is often useful to view homotopy as above 
by complexifying $B$ to $\hat B:=B \otimes \C$ at first and then do $\C$-algebra homotopy 
$f :A \rightarrow \hat B$  as usual. 
%
The choice of continuous homotopies is 
relatively arbitrary, having some $\sin$ and 
$\cos$ would be enough.

\begin{definition}
{\rm 
We fix a certain 
category $\ring G$ of $G$-equivariant, quadratik  rings (objects) and $G$-equivariant ring homomorphisms (morphisms),  
closed under all constructions 
needed in this paper. 
(In particular, the compact operators, 
adjointable operators, and corner embeddings.) 
}
\end{definition}

We are going to recall the definition of $GK^G$-theory
(``Generators and relations $KK$-theory" 
with ``$G$"-equivariance) 
for which we refer for more details to 
\cite{bgenerators}. 
The split exactness axiom is slightly but equivalently altered,
see \cite[Lemma 3.7]{bremarks}. 


\begin{definition}		\label{def121}
{\rm

Let $GK^G$ be the following 
category.
Object class of $GK^G$ is the object class of the class $\ring G$
(i.e. a given class of $G$-equivariant 
quadratik rings). 

Generator 
morphism class is the collection of all $G$-equivariant 
ring homomorphisms $f:A \rightarrow B$ (with obvious source and range objects) 
from $\ring G$ and the collection of the following ``synthetical" morphisms:

\begin{itemize}

\item
For every equivariant corner embedding $e:(A,\alpha) \rightarrow (\calk_A(\cale \oplus \H_A),\delta)$ as in definition \ref{def22} 
add a morphism called $e^{-1}: (
\calk_A(\cale \oplus \H_A),\delta) \rightarrow (A,\alpha)$.

\item
For every equivariant short split exact sequence
\begin{equation}			\label{splitexact}
\cals: \xymatrix{0 \ar[r] & (B,\beta)  \ar[r]^j & (M,\delta)  \ar[r]^f & (A,\alpha) \ar[r] \ar@<.5ex>[l]^s & 0}
\end{equation}
in $\ring G$ add a morphism called
$\Delta_{\cals}: (M,\delta) \rightarrow (B,\beta)$
or $\Delta_s$ if $\cals$ is understood.

\end{itemize}

Form the free category of the above generators
together with free addition and subtraction of morphisms having same range and source 
(formally this is like the free ring generated by these generator morphisms, but one can only add 
and multiply if 
source and range fit together)
and divide out the following relations
to turn it into the category $GK^G$:

\begin{itemize}

\item
({\em 
Ring category})
Set 
$g \circ f = f g$ for all $f \in \ring G(A,B)$
and $g \in \ring G(B,C)$.

\item
{(\em Unit})
For every object $A$,
${\rm id}_A$ is the unit morphism.

\item
({\em Additive category})
For all 
diagrams
$$\xymatrix{A  \ar[r]^{i_A} &  A \oplus B  \ar[r]^{p_B} 
\ar@<.5ex>[l]^{p_A}
&  B  \ar@<.5ex>[l]^{i_B} }$$
(canonical projections and injections) set
$1_{A \oplus B}= p_A i_A + p_B i_B$.

\item
({\em Homotopy invariance})
For all homotopies $f:A \rightarrow B[0,1]$ in $\ring G(A,B)$ set $f_0 = f_1$.

\item
({\em Stability})
All corner embeddings $e$ as in definition \ref{def22}  are invertible with inverse $e^{-1}$.

\item
({\em Split exactness})
For all split exact sequences (\ref{splitexact}) set
\begin{eqnarray*}
&& 1_B   =  j \Delta_s  \\
&& 1_M   =    \Delta_s j + f s
\end{eqnarray*}

\end{itemize}

}
\end{definition}

For more comments like these confer also 
\cite{bgenerators}.

\begin{remark}			\label{rem12}
{\rm
\if 0
(i) If we have given a split exact sequence (\ref{splitexact}) then it splits completely as linear maps, that is, $j$ has a linear split
$t:M \rightarrow B$ with $t(x)=j^{-1}(x- fs(x))$,
and the split exactness relations of definition \ref{def121} are satisfied for such a split $\Delta_s:=t$.
Thus $\Delta_s$ 
may be viewed as a substitute for this linear split $j$, and it is often useful to think about 
$\Delta_s$ as $j$ in heuristical considerations.
\fi 

(i)
Set-theoretically $M= j(B) + s(A)$ in (\ref{splitexact}), and this is a direct 
 sum by the last point.

\if 0
(iii)
If (\ref{splitexact}) has the flaw that it is not exact in the middle but only $j(B) \subseteq {\rm ker}(f)$
then this can be repaired by restricting $M$ to the $G$-subalgebra $N:= j(B)+ s(A)$.

(iv)
$j,s,f$ in (\ref{splitexact}) all influence $\Delta_s$. This is clear for the linear split $t$, and so this must be even more true for the free generator $\Delta_s$.
\fi 

(ii) If we have given an additional homomorphism $u:A \rightarrow M$ in (\ref{splitexact}) then this is a second split for $f$ if and only if
$$u(a)-s(a) \in j(B) \quad \forall a \in A$$

(iii) Given (\ref{splitexact}), we have $s \Delta_s =0$
because
$s \Delta_s = s \Delta_s j \Delta_s =
s (1-fs) \Delta_s =0$.

\if 0
(vii) If $f: (A,\alpha) \rightarrow M_n(A,\delta)$
is a corner embedding, then it is invertible in $GK$. In fact, $g :(M_n(A),\delta) \rightarrow (M_n(A) \otimes \calk, \delta \otimes {\rm triv})$
is an invertible corner embedding, as well as $fg$, so $f$ itself must be invertible.
\fi

(iv) The $GK^G$-theory as in Definition \ref{def121} remains completely unchanged if 
we would require stability only for corner embeddings with 
$\H_A = \oplus_{i \in  I} A$ with 
only one fixed cardinality $|I| \ge 1$. 
Indeed, if $|I|=1$ then $\cale \oplus \H_A 
= \cale \oplus A$, 
and so all other cases are obviously included. 
For the opposite direction write 
$$
\xymatrix{ 
A  \ar[r]^e    &  
\calk_A(\cale \oplus A) \ar[r]^-f  
 & M_\infty (\calk_A(\cale \oplus A)) \cong \calk_A( 
\oplus_{i \in I} (\cale \oplus A))   
\cong   
\calk_A( 
H_\cale \oplus \H_A)
}$$
where  $H_\cale:= \oplus_{i \in I} \cale$ and  
where $e$ and $f$ are corner embeddings in the sense of definition \ref{def22} (modulo ring isomorphisms). 
Since $f$ and $e f$ are invertible 
corner embeddings by our required axioms for fixed $|I|$, 
$e$ must also be invertible .

(v) 
By the analogous reasoning, 
the above analogous corner embedding $e$ is also 
invertible in $KK^G$-theory 
for $\cale$ a countably generated $A$-module,
because we have 
$\calk_A( 
H_\cale \oplus \H_A) \cong 
\calk_A( \H_A)$ by 
Kasparov's (equivariantly interpreted) stabilization theorem (cf. \cite[lemma 8.5]{baspects}) for $\H_A$ being the countably infinite direct 
Hilbert module sum of copies of $A$, and so 
$ef$ 
is also an invertible corner embedding 
in $KK^G$-theory.   

}

\end{remark}

\if 0
If $\cale$ is a countably generated $A$-module,  then the corner embedding $A \rightarrow \calk_A(\cale \oplus A)$ is invertible morphism 
also in $KK G$-theory. 
Indeed, by Kas
\fi



We sometimes refer to the following notion
and variants of it (in particular $\call_B$ replaced by $\calk_B$). 
Typically, corner embeddings 
might be rotated to other corner embeddings. 

\begin{definition} 			\label{def26}  
{\rm 
Let $\pi: A \rightarrow \call_B((\cale,S) \oplus (\cale,S)) =:X$  be 
a ring homomorphism. 
Under a {\em rotation homotopy} 
we mean the homotopy 
$$\sigma:A \rightarrow X[0,1] \cong 
M_2( \call_B( \cale) \otimes C[0,1])  
$$
%
%

given by  
$\sigma(a) = (\idd_{\call_B(\cale)} \otimes V) \circ (\pi(a) \otimes \idd_{C[0,1]}) \circ (\idd_{\call_B(\cale)} \otimes  V^{-1} )$  
for the complex-valued homotopy 
$V_t :=\Big (\begin{matrix} \cos t & \sin t \\ - \sin t &  \cos t \end{matrix}
\Big ) \in M_2(\C)$ for $t \in [0,\pi/2]$, 
$V^{-1}_t=V_{-t}$.   
}
\end{definition}

\begin{definition}
{\rm
Given $P \in GK^G(A,B)$ we define 
$P^*:GK^G(B,X) \rightarrow GK^G(A,X)$ 
by $P^*(z):= P z$ and
$P_*:GK^G(X,A) \rightarrow GK^G(X,B)$ 
by $P_*(z):= z P$.
}
\end{definition} 

Let us finally remark that we 
restrict the object class $\ring G$ to a 
set by a particular selection such that
$GK^G$ turns to a small category and 
all Hom-classes $GK^G(A,B)$ are sets. 
See for example \cite{bgenerators}.

\section{Double split exact sequences}

\label{sec4}

We shall also recall in this section and in the next 
ones a couple of constructions and results 
from \cite{baspects}, where $GK$-theory for $C^*$-algebras is considered. 
\if 0
, but not necessarily all 
results that may remain valid. 
\fi
\if 0
valid here, but rather only the most
important ones. 
\fi
\if 0
Occassionally the proofs use the quadratik 
property of $C^*$-algebras, which we require 
aximatically for the rings. 
\fi 
  
Throughout, $(A,\alpha)$ and $(B,\beta)$ are $G$-rings.  

\begin{definition}			\label{def31}
{\rm
A {\em double split exact sequence}
is a diagram of the form
$$\xymatrix{
0 \ar[r] &  
(B,\beta)  \ar[r]^j & (M,\gamma) \ar[rrr]^f 
\ar[dr]^{e_{11}}
&
& &  (A,\alpha)  \ar@<.5ex>[lll]^{s}
\ar[ld]^{t} 		\ar[r]
& 0\\
& 
&
& (M_2(M), \theta) 
& (M,\delta) \ar[l]^{e_{22}}		
}
$$
where all morphisms in the diagram are equivariant 
ring homomorphisms,
the first line is a split exact sequence in 
$\ring G$,  
$t$ is another split in the sense that
$t f = 1_A$ in non-equivariant $\ring G$ 
and $e_{ii}$ are the 
corner embeddings.



\if 0 
Moreover, 
we assume that 
$\theta$ is of the form
\begin{equation}			\label{formtheta} 
\theta_g
\left (\begin{matrix} x & y\\
 z & w
\end{matrix}
\right ) =
\left (\begin{matrix} \alpha_g(x) & \beta_g(y)\\
 \gamma_g(z) & \delta_g(w)
\end{matrix} \right ) 
\end{equation}
and that $\theta$ leaves the ideal $M_2(j(B))$ invariant. 
\fi 

}
\end{definition}

\if 0
Notice that by the equivariant map $f \otimes 1$,
the map $f:(M,\delta) \rightarrow (A,\alpha)$
is also equivariant.
\fi

\begin{definition}			\label{def32}
{\rm
Consider a double split exact sequence as above.
We denote by $\mu_\theta$, or $\mu$ if $\theta$ is understood, the morphism
$$\mu: (M,\delta) \rightarrow (M,\gamma) :
\mu = e_{22} e_{11}^{-1}$$
in $GK^G$.
The {\em morphism in $GK^G$ associated to the 
double split exact sequence} is 
$$t \mu \Delta_{s}$$
}
\end{definition}

We use sloppy language and say for example ``the diagram is $t \mu \Delta_s$ in $GK$'', or two double split exact sequences are said to be ``equivalent" 
if their associated morphisms are.
Throughout, the short notation for the above double split exact sequence will be
$$\xymatrix{
(B,\beta)  \ar[r]^j & (M,\gamma) \ar[r]^f 
&  (A,\alpha)  \ar@<.5ex>[l]^{s,t}
} $$
Notating such a diagram, it is implicitly understood that this is a double split exact sequence as above if nothing else is said.
Often $s,t$ is stated as $s_\pm$, 
which has to be read as $s_-,s_+$.
The $G$-action $\theta$ of definition \ref{def31}
will sometimes be called the ``$M_2$-action of the double split exact sequence" 
for simplicity.

\if 0
\begin{definition}
{\rm

}
\end{definition}

We also call it the morphism in $GK$ of line..

Say two double split exact seuqences are isomorphic if thier associated elements are equal.
\fi

\if 0 
\begin{example}
{\rm
Assume $G$ is the trivial group.
%
Then $\mu=1$ is the identity in $GK$ because $e_{11}$ and $e_{22}$ are homotopic by a rotation in $C^*$.

Consequently, we have double split exact sequences in the more usual sense and
$t \mu \Delta_s = t \Delta_s$.
Moreover,
$t \Delta_s = (t-s)\Delta_s $.

}
\end{example}
\fi


\begin{lemma}[{\cite[lemma 5.4]{baspects}}]			\label{lemma21}
Consider two double split exact sequences
which are connected by three morphisms
$b,\Phi,a$ in $GK^G$ as in this diagram:
$$\xymatrix{
B  \ar@<.5ex>[r]^i
\ar[ddd]^b
 & M \ar[rrr]^f 
\ar[rd]^{e_{11}}
\ar@{..>}[ddd]^\phi
 &
& & A  \ar@<.5ex>[lll]^{s_-}
\ar[ld]^{s_+} 
\ar[ddd]^a
\\
&& M_2(M) 	\ar[d]^\Phi 
& M \ar[l]^{e_{22}}		
\ar@{..>}[d]^\psi   \\
&& M_2(N)  
& N \ar[l]^{f_{22}}		\\
D  \ar@<.5ex>[r]^j & N  \ar[rrr]^g 
\ar[ur]^{f_{11}}
 &
& & C  \ar@<.5ex>[lll]^{t_-}
\ar[ul]^{t_+}
} 
$$

Here we have defined
$$\phi:= e_{11} \Phi f_{11}^{-1}
\qquad
\psi:= e_{22} \Phi f_{22}^{-1}
$$

(i)
Then
for the commutativity of the left rectangle of the diagram we note
$$\Delta_{s_-} b = \phi \Delta_{t_-}
\qquad \Leftarrow   
\qquad
i \phi = b j \quad {\rm and}
\quad 
f s_- \phi = \phi g t_-$$

(ii)
For 
commutativity within the right big square of the diagram we observe
$$s_+ \mu \phi = a t_+ \mu 
\qquad \Leftrightarrow   
\qquad
s_+ \psi = a t_+
$$

(iii)
Consequently, commutativity of double split exact sequences in this diagram can be decided as 
$$s_+ \mu \Delta_{s_-} b
= a t_+ \mu \Delta_{t_-} 
\qquad \Leftarrow   
\qquad
\mbox{Conditions of (i) and (ii) hold true}
$$

\end{lemma}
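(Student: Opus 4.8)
The statement is organized as three parts, and (iii) is an immediate formal consequence of (i) and (ii): assuming the conditions of both, (i) gives $\Delta_{s_-}b = \phi\,\Delta_{t_-}$, hence $s_+\mu\Delta_{s_-}b = s_+\mu\phi\,\Delta_{t_-}$; then (ii) rewrites $s_+\mu\phi$ as $a\,t_+\mu$, so $s_+\mu\Delta_{s_-}b = a\,t_+\mu\,\Delta_{t_-}$. So the real work is (i) and (ii), and the plan is to verify each by unwinding the defining relations of $GK^G$ from Definition \ref{def121}.

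For (i): the morphisms $\phi = e_{11}\Phi f_{11}^{-1}$ and $\Delta_{t_-}$ are already given, and I want $\Delta_{s_-}b = \phi\,\Delta_{t_-}$. The idea is to compose both sides appropriately with the split-exactness relations. Recall from Definition \ref{def121} that $1_B = i\,\Delta_{s_-}$ (in the top sequence, with inclusion $i$) and similarly $1_D = j\,\Delta_{t_-}$. Since $i$ is (the inclusion of) an ideal and $\Delta_{s_-}$ is its left inverse, a morphism $X:D\to B$ is determined by $iX$; so to prove $\Delta_{s_-}b = \phi\,\Delta_{t_-}$ it suffices to check $i(\Delta_{s_-}b) = i(\phi\,\Delta_{t_-})$, i.e. that $\Delta_{s_-}\,i\cdot$ applied appropriately collapses correctly. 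Concretely: $i\,\Delta_{s_-}b = (i\Delta_{s_-})b$; but $i\Delta_{s_-}\ne 1_M$ in general — rather $\Delta_{s_-}i + f s_- = 1_M$. The cleaner route, mirroring \cite[lemma 5.4]{baspects}, is to use the hypothesis $i\phi = bj$ together with $1_D = j\Delta_{t_-}$: then $b = b\,1_D$-type manipulations, combined with the second hypothesis $f s_-\phi = \phi g t_-$ which controls the ``$fs$'' complementary piece, force the two expressions to agree after multiplying through the relation $1_M = \Delta_{s_-}i + fs_-$ (top) or its analogue for the bottom sequence. I would write $\phi\,\Delta_{t_-} = \phi(j\Delta_{t_-})\Delta_{t_-}$? — no; more carefully, start from $\Delta_{s_-}b$, insert $1_D = j\Delta_{t_-}$ on the right? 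That changes the source. Instead insert $1_M = \Delta_{s_-}i + fs_-$: write $b = 1_M' b\,1_M''$ with the appropriate units, expand, and kill the $fs$-term using $s\Delta_s = 0$ (Remark \ref{rem12}(iii)) and the hypothesis relating $fs_-\phi$ to $\phi g t_-$. The bookkeeping is the content of \cite[lemma 5.4]{baspects}, which I may cite.

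For (ii): here $\mu = e_{22}e_{11}^{-1}$ and $\phi = e_{11}\Phi f_{11}^{-1}$, $\psi = e_{22}\Phi f_{22}^{-1}$, with $f_{11}^{-1}$, $e_{11}^{-1}$ corner-embedding inverses that are invertible by the Stability axiom. The reverse implication ($s_+\psi = a t_+ \Rightarrow s_+\mu\phi = a t_+\mu$) is the relevant direction: compute $s_+\mu\phi = s_+\,e_{22}e_{11}^{-1}\,e_{11}\Phi f_{11}^{-1} = s_+\,e_{22}\,\Phi f_{11}^{-1}$, using $e_{11}^{-1}e_{11} = 1$. Now $s_+\,e_{22}$: since $e_{22}$ is the lower corner embedding $M\to M_2(M)$ and $s_+:A\to M$ is the diagonal split $t$ in the diagram's notation, I'd like to rewrite $s_+ e_{22}$ in terms of $e_{22}$ precomposed differently — but $e_{22}$ and $e_{11}$ land in the same $M_2(M)$ via different corners, and a rotation homotopy (Definition \ref{def26}) shows $e_{11}$ and $e_{22}$ become conjugate; the key algebraic fact packaged in $\mu$ is exactly that $e_{22} = e_{11}\mu^{-1}$... one must instead directly use naturality of corner embeddings: $s_+ e_{22} = e_{22}' (s_+ \text{ in } M_2)$? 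The honest path is: $s_+\mu\phi = s_+ e_{22}\Phi f_{11}^{-1}$ and $a t_+\mu = a t_+ e_{22} e_{11}^{-1}$; using that $t_+ f_{22} = $ the corner embedding compatibility and the hypothesis $s_+\psi = at_+$, i.e. $s_+ e_{22}\Phi f_{22}^{-1} = at_+$, multiply on the right by $f_{22}$ to get $s_+ e_{22}\Phi = at_+ f_{22}$, then post-compose with $f_{22}^{-1}$... matching this against the two target expressions requires the identity $f_{11}^{-1}\cdot(\text{something}) = e_{11}^{-1}\cdot(\text{something})$ coming from the fact that $f_{11}, f_{22}$ are corner embeddings into the \emph{same} $M_2(N)$, so $f_{11}^{-1}f_{22} \in GK^G(N,N)$ is a fixed invertible element $\nu$ (the analogue of $\mu$ for the bottom sequence), and both $\mu$ and this $\nu$ satisfy the same rotation-homotopy relation. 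Equivalence $s_+\mu\phi = at_+\mu$ then reduces, after cancelling $\mu$ (invertible), to $s_+\psi = at_+$.

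The main obstacle is the bookkeeping in (i) — keeping straight which of the several units $1_M$, $1_B$, $1_D$, $1_N$ to insert and in which slot, and correctly discarding the $fs$-terms via $s\Delta_s = 0$. Part (ii) is comparatively mechanical once one observes that the $e_{11}^{-1}e_{11}$ and $f_{22}f_{22}^{-1}$ cancellations are legal (Stability) and that $\mu$, being invertible, can be cancelled. I would organize the writeup as: (a) reduce (iii) to (i)+(ii) in two lines; (b) prove (ii) by the cancellation computation above; (c) prove (i) by citing the parallel argument in \cite[lemma 5.4]{baspects}, since the rings are quadratik and every step there used only the category axioms of $GK^G$, which hold verbatim here.
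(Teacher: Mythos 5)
Your handling of (ii) and (iii) is essentially the paper's: (iii) is the two-line consequence of (i) and (ii), and (ii) follows purely from invertibility of the corner embeddings, since $s_+\mu\phi = s_+e_{22}e_{11}^{-1}e_{11}\Phi f_{11}^{-1} = s_+e_{22}\Phi f_{11}^{-1}$ while $at_+\mu = at_+f_{22}f_{11}^{-1}$ (note the $\mu$ on the right is the bottom one, $f_{22}f_{11}^{-1}$, not $e_{22}e_{11}^{-1}$ as you wrote at one point), so the identity is equivalent, after right-cancelling the $GK^G$-invertible $f_{11}$ and then $f_{22}$, to $s_+e_{22}\Phi f_{22}^{-1}=at_+$, i.e.\ $s_+\psi=at_+$. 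No rotation homotopy enters, and ``cancelling $\mu$'' is not literally what happens, since the two $\mu$'s are different morphisms.

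The genuine gap is (i): you list the right relations but never execute the computation, and the concrete reductions you propose are not valid. Testing the identity after precomposition with $i$ would need precomposition with $i$ to be injective on $GK^G(M,D)$, which is not available (from $1_M=\Delta_{s_-}i+fs_-$ one only gets $X-Y=fs_-(X-Y)$), and sandwiching $b$ between identities of $M$ does not typecheck, as $b\in GK^G(B,D)$. The whole proof of (i) is the single chain obtained by expanding $\phi\Delta_{t_-}$ at its source $M$:
$$\phi\,\Delta_{t_-}=(\Delta_{s_-}i+fs_-)\,\phi\,\Delta_{t_-}
=\Delta_{s_-}\,(i\phi)\,\Delta_{t_-}+f\,(s_-\phi)\,\Delta_{t_-}
=\Delta_{s_-}\,b\,(j\Delta_{t_-})+\phi\,g\,(t_-\Delta_{t_-})
=\Delta_{s_-}b,$$
using the hypotheses $i\phi=bj$ and $fs_-\phi=\phi g t_-$, the split-exactness relations $1_M=\Delta_{s_-}i+fs_-$ and $j\Delta_{t_-}=1_D$, and $t_-\Delta_{t_-}=0$ from remark \ref{rem12}.(iii). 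You named all of these ingredients, so the missing piece is execution rather than idea; but as written, the only substantive part of the lemma is deferred to the citation while the explicit manoeuvres you do sketch would fail, so the proposal does not stand on its own.
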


\if 0
\begin{proof}
(i) 
We compute
$$\phi \Delta_{t_-} = (\Delta_{s_-} i + f s_-) \phi \Delta_{t_-}
= \Delta_{s_-} b j \Delta_{t_-} + \phi g t_- \Delta_{t_-} 
= \Delta_{s_-} b$$

\if 0
$\Rightarrow$:
We consider
$$bj = i \Delta_{s_-} b j = i \phi \Delta_{t_-} j
= i \phi(1-g t_-) = i \phi - i f s_- \phi 
= i \phi$$
which shows the first identity.
Consequently
$$\Delta_{s_-} i \phi g t_- =
\Delta_{s_-} b j g t_- =0.$$
This yields
$$\phi g t_- =(f s_- + \Delta_{s_-} i) \phi g t_-
=
f s_- \phi g t_- = f s_- \phi (1- \Delta_{t_-} j)
=$$
$$ f s_- \phi - f s_- \Delta_{s_-} b j 
= f s_- \phi$$
\fi

(ii) This is clear by commutativity of involved rectangles in the diagram and invertibility of all corner embeddings. (iii) Also clear.
\end{proof}
\fi

Let us revisit the last lemma and state for 
further reference: 


\begin{remark} 
\label{cor22}
{\rm

\if 0
We are going to have many diagrams as that above, and are going to ask the reader to finalize a proof by the last lemma. 
The $M_2$-space is usually not written down in a diagram, and one usually has a diagram like in lemma \ref{lemma14}, say. 
But,
\fi
 $\Phi$ will always be of the form 
$\Phi= \phi \otimes 1_{M_2}$ for 
a non-equivariant 
ring 
homomorphism $\phi:M \rightarrow M$.
Then this $\phi$ is the $\phi$ and $\psi$ in the above diagram as non-equivariant maps, and both
are automatically equivariant as maps as 
entered in the diagram when $\Phi$ is.
So, 
to check commutativity of a diagram 
involving two exact double-split exact sequences with lemma 
\ref{lemma21}, we only have to show that 
$f s_- \phi = \phi g t_-, i \phi = b j$
and $s_+ \phi =a t_+$, and that
$\Phi$ is $G$-equivariant. 

\if 0
, for which 
it sufficed to check $G$-equivariance 
only in the lower left corner by 
\cite[corollary 7.4]{baspects}. 
\fi 

\if 0
Assume that $\Phi= \phi \otimes 1_{M_2}$
is equivariant
for a non-equivariant 
$*$-homomorphism $\phi:M \rightarrow M$.

Then this $\phi$ is the $\phi$ and $\psi$ in the above diagram as non-equivariant maps, and both
are automatically equivariant as maps as 
entered in the diagram.
%
\fi
}
\end{remark}


\section{The $M \square A$-construction}
%
%

\label{sec6}

We shall use the following standard procedure
to produce split exact sequences,
and this is in fact 
key:



\begin{definition}		\label{lemma35}
{\rm
Let $i: (B,\beta) \rightarrow (M,\gamma)$ be an equivariant injective 
ring homomorphism 
such that
the image of $i$ is a  
two-sided ideal in $M$.
Let $s: (A,\alpha) \rightarrow (M,\gamma)$ be an equivariant 
ring homomorphism. 
%
Then we 
define the equivariant 
$G$-subring   
$$M \square_s A := 
\{ (s(a)+i(b),  a) \in M \oplus A|\,a \in A, b \in B\} $$ 
of $(M \oplus A, \gamma \oplus \alpha)$. 
We also just write $M \square A$ if $s$ is understood. 
The $G$-action on $M \square A$ is denoted by
$\gamma \square \alpha$.
In particular we have a split exact sequence
$$\xymatrix{0 \ar[r] & B  \ar[r]^j & M \square A \ar[r]^f & A \ar[r] \ar@<.5ex>[l]^{s \square 1} & 0},$$
where $j(b)= (i(b),0)$, $f(m,a)=a$ and $(s \square 1)(a) :=(s(a),a)$ for all $a \in A, b \in B, m \in M$.

}
\end{definition}

If we have given a double split exact sequence 
as in definition \ref{def31} with $M$ of the form $M \square A$
then 
it is understood that $j,f$ and $s \square 1$ 
are always of the form as in the last definition.
Moreover, the construction of $M \square A$ refers always to the first notated split $s$, or the split indexed by minus (e.g. $s_- \square 1$)
if it appears in a double split exact sequence.
We denote elements of $M \square A$ by $m \square a :=(m,a)$. The operator $\square$ binds weakly, that is for example, $m+n \square a =(m+n)\square a$.

Observe that $M \square_s A = (s\square \idd)(A) + B$ is quadratik if $A$ and $B$ are. 

Non-equivariantly we have 
\begin{equation}				\label{isq}
M_2(M \square_s A) \cong
M_2(M) \square_{s \otimes 1} M_2(A) \subseteq M_2(M) \oplus M_2(A)
\end{equation}
with respect to
$i \otimes 1_{M_2}$ and $s \otimes 1_{M_2}$.

\begin{definition}
{\rm 
If we have $G$-algebras $(M_2(M),\gamma)$ and $(M_2(A),\delta)$ 
and $(M_2(M \square A), \theta)$
is canonically a $G$-invariant $G$-subalgebra of 
$(M_2(M) \oplus M_2(A),\gamma \oplus \delta)$ 
then we call $\theta$ also $\gamma \square \delta$.
} 
\end{definition}

\if 0
\begin{lemma}
Consider definition \ref{lemma35}.
If we have $G$-algebras $(M_2(M),\gamma)$ and $(M_2(A),\delta)$ and
$M_2(i(B))$ is 
$G$-invariant under $\gamma$
then
$M_2(M \square A)$ is $G$-invariant under the $G$-action 
$\gamma \oplus \delta$ 
if and only if
$$\gamma_g((s\otimes 1)(a))- (s \otimes 1)(\delta_g(a)) \in M_2(i(B))
\quad \forall a \in M_2(A)$$ 
\end{lemma}

\begin{proof}
We apply the isomorphism 
(\ref{isq}) and may work with $i \otimes 1$ and $s \otimes 1$ as in definition
\ref{lemma35}. The proof is then straightworward, or confer the similar proof of
lemma \ref{lemma272}.
\end{proof}
\fi

\begin{remark} 		\label{remark43}
{\rm 
It will be important to observe, 
and is understood that the reader is aware of it in checking the validity of double split exact sequences, that 
the non-equivariant splits of the 
exact sequence of 
definition \ref{lemma35}
are exactly the maps of the form $t \square 1$, 
where $t : A \rightarrow M$ is any 
non-equivariant ring homomorphism 
such that, for all $a \in A$,  
$$t(a) - s(a) \in j(B)$$

Consequently, also then  non-equivariantly $M \square_s A = 
M \square_t A$. 
}
\end{remark} 

\if 0

\if 0

This is the only reason why we have added the arrow $f \otimes 1$ to the diagram in definition \ref{def31}.

\begin{lemma}
If $M$ of definition definition \ref{def31}
is of the form $M \square A$ then
the $G$-action on $M_2(M \square A)$ is of the form $\delta \square (\alpha \otimes 1)$.
\end{lemma}

\begin{proof}

\end{proof}

\fi

We want to look at how second splits 
look like in this diagram, thereby
igonoring the $G$-action for the moment:

\begin{lemma}
Let $G$ be trivial.

The splits $t:(A,\alpha) \rightarrow (M \square A,\delta)$ are exactly
those maps $t$
where 
$\delta$ is of the form $\gamma \square \alpha$
and $t$ is of the form
$t = u \square 1$ for an equivariant $*$-homomorphism
$u: (A,\alpha) \rightarrow (M,\gamma)$ with the property that
$$u(a) - s(a) \in i(B)  \quad \forall a \in A.$$


\end{lemma}


\if 0
\begin{lemma}
Consider the split exact sequence 
as in the first line of diagram..

Then $M= j(B) + s(A)$.

An equivariant map $t$ as in the diagram 
is a split if and only
$$t(a) -s(a) \in j(B)$$

\end{lemma} 
\fi

\if 0

We can now define a split exact sequence
which we describe next involving also a second split:

\begin{definition}
{\rm

}
\end{definition}

\fi

\fi
We may bring any 
double split exact sequence to 
the form of definition \ref{lemma35}, see 
%
%
for instance 
{\cite[lemma 6.3]{baspects}}, 
 or lemma \ref{lemma14} below, 
with $M_2$-action of the form 
$\gamma \square \delta$. 

\if 0
We remark that the not labelled arrows in diagrams as above will always mean
canoncal identity maps, embeddings, projections ad so forth.
Note how we used and shall use the suggestive notation $m\square a:= (m,a) \in M \square A$ in 
in the last proof.
\fi

\if 0
\begin{lemma}
If diagram
$$\xymatrix{B \ar[r] & D \ar[r] \ar[l] & A}$$
not necessarily exact but $B$ ideal in $D$,
double split with $s_+-s_- \in B$,
and $f$ surjectiv auf $A$, $B \subseteq ker(f)$, 
then there is a subalgebra $E \subseteq D$
such that
$$\xymatrix{B \ar[r] & E \ar[r] \ar[l] & A}$$
split exact.
\end{lemma}

\begin{proof}
Nimm Summe $E:=B + s_-(A) \subseteq D$.
ist algebra, da $D$ multiplier of $B$.

\end{proof}
\fi

\section{Actions on $M_2(A)$}

\label{sec7}

In this section we want 
to inspect closer how
a $M_2$-action of a double split exact sequences looks like.
%
%
This is a key lemma, and it is just a special 
case of lemma \ref{lemma1170}, but despite that let us restate it:

\begin{lemma}[{\cite[lemma 7.1]{baspects}}]  
			\label{lemma51}
Let $S,T$ be two $G$-actions on a 
right functional 
$(B,\beta)$-module $\cale$.
Then
$$
\alpha_g
\left (\begin{matrix} x & y\\
 z & w
\end{matrix}
\right ) =
\left (\begin{matrix} S_g x S_{g^{-1}} & S_g y T_{g^{-1}}\\
 T_g z S_{g^{-1}} & T_g w T_{g^{-1}}
\end{matrix} \right ) 
$$
defines a $G$-action $\alpha$ on $M_2\big ( \call_B(\cale) \big)$
which leaves the ideal $M_2\big ( \calk_B(\cale) \big)$ invariant. 


This is actually the inner 
action ${\rm Ad} (S \oplus T)$ 
on $\call_B \big ((\cale,S) \oplus (\cale,T) \big )$.

\end{lemma}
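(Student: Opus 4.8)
The plan is to recognize Lemma \ref{lemma51} as nothing more than the matrix identification of Lemma \ref{lemma1170} specialized to $n=2$ and the particular choice of two (possibly different) $G$-actions $S$ and $T$ on the same underlying module $\cale$. First I would set $\cale_1 := (\cale,S)$ and $\cale_2 := (\cale,T)$, so that by Lemma \ref{lemma1170} there is a ring isomorphism $M_2(\call_B(\cale)) \cong \call_B(\cale_1 \oplus \cale_2)$ which intertwines the matrix $G$-action $\theta_g((x_{ij})) = (S_{i,g} \circ x_{ij} \circ S_{j,g^{-1}})$ (with $S_1 = S$, $S_2 = T$) with the inner action $\Ad(S \oplus T)$ on $\call_B(\cale_1 \oplus \cale_2)$, and which restricts to $M_2(\calk_B(\cale)) \cong \calk_B(\cale_1 \oplus \cale_2)$. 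Writing out $\theta_g$ for the four index pairs $(1,1),(1,2),(2,1),(2,2)$ gives exactly the displayed formula for $\alpha_g$ in the statement, so $\alpha = \theta$ under this identification.

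The remaining points then follow for free: since $\Ad(S \oplus T)$ is a genuine $G$-action on $\call_B(\cale_1 \oplus \cale_2)$ (it is the conjugation action by the invertible additive maps $S_g \oplus T_g$, and $g \mapsto S_g \oplus T_g$ is a group homomorphism into the invertible additive maps compatible with the $B$-module structure because $S$ and $T$ each are), its transport $\alpha$ along the isomorphism is a $G$-action on $M_2(\call_B(\cale))$. Invariance of the ideal $M_2(\calk_B(\cale))$ is immediate from the fact that the isomorphism carries it onto $\calk_B(\cale_1 \oplus \cale_2)$, which is $\Ad(S \oplus T)$-invariant by the general $G$-invariance of compact operators (Definition \ref{defcompact} / the $G$-invariance computation after it). The final sentence of the lemma — "this is actually the inner action $\Ad(S \oplus T)$" — is then just the content of the intertwining property already used.

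Alternatively, if one prefers to avoid invoking Lemma \ref{lemma1170} as a black box, I would verify everything directly: check that $\alpha_g$ as displayed is additive and multiplicative on $2\times 2$ matrices (a short block computation using $S_g S_h = S_{gh}$, $T_g T_h = T_{gh}$, and $S_{g^{-1}} = S_g^{-1}$ shows $\alpha_g \alpha_h = \alpha_{gh}$ and $\alpha_1 = \idd$), that each entry $S_g x_{ij} S_{j,g^{-1}}$ or $T_g \dots$ lands back in $\call_B(\cale)$ — this uses that conjugating an adjointable operator by a $G$-action again satisfies the functional-compatibility condition, i.e. $\phi \circ (S_g \circ V \circ T_{g^{-1}}) \in \Theta_B(\cale)$ whenever $\phi \circ V \in \Theta_B(\cale)$, because $\Theta_B(\cale)$ is $G$-invariant and $S,T$ intertwine the module structure — and similarly that conjugating an elementary compact operator $\theta_{\xi,\phi}$ stays compact, giving the ideal invariance. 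The identification with $\Ad(S \oplus T)$ is then a direct comparison of the block formula with the definition of the inner conjugation action on $\call_B((\cale,S) \oplus (\cale,T))$.

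The only mild obstacle is bookkeeping: one must be careful that the two $G$-actions sit in the correct slots (row index governs the left factor, column index the right factor, with the inverse on the column side) so that the associativity/homomorphism check for $\alpha_g \alpha_h$ comes out right, and that the "adjointable stays adjointable under conjugation" point genuinely uses $G$-invariance of $\Theta_B(\cale)$ together with the intertwining relation $S(\xi b) = S(\xi)\beta(b)$ (and likewise for $T$). Everything else is routine, which is exactly why the authors note this is "just a special case" they restate only for convenience.
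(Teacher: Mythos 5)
Your proposal is correct and matches the paper's own treatment: the paper gives no separate argument for Lemma \ref{lemma51} beyond observing that it is the special case $n=2$, $S_1=S$, $S_2=T$ of Lemma \ref{lemma1170}, with the matrix action $\theta_g((x_{ij}))=(S_{i,g}\circ x_{ij}\circ S_{j,g^{-1}})$ identified with $\Ad(S\oplus T)$ and invariance of the compacts coming from the $G$-invariance noted after Definition \ref{defcompact}. Your index bookkeeping (row governs the left factor, column the right, with the inverse on the column side) is exactly right, and the optional direct verification you sketch is a fine, if unnecessary, supplement.
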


\begin{definition}
{\rm
The
$\alpha$ of the last lemma is also denoted by
${\rm Ad}(S \oplus T)$ or
${\rm Ad}(S,T)$.
}
\end{definition}


\begin{lemma}[Cf. {\cite[lemma 7.3]{baspects}}]  
		\label{lemma61}
Let $(A,\alpha)$ and $(A,\delta)$ be $G$-rings. 

Let $(M_2(A),\theta)$ be a $G$-ring 
and the corner embeddings
$e_{11} :(A,\alpha) \rightarrow
(M_2(A),\theta)$ and
$e_{22} :(A,\delta) \rightarrow
(M_2(A),\theta)$ be equivariant.



\if 0
Assume that $\theta$ is of the form 
(\ref{formtheta}). 
\fi 

Then
$\theta$ is of the form

$$
\theta_g
\left (\begin{matrix} x & y\\
 z & w
\end{matrix}
\right ) =
\left (\begin{matrix} \alpha_g(x) & \beta_g(y)\\
 \gamma_g(z) & \delta_g(w)
\end{matrix} \right ) 
$$

Also:



{\rm (i)}
One has the relations
\begin{eqnarray}
&&\gamma_g(ax)= \delta_g(a)\gamma_g(x)	
\qquad \gamma_g(xb)= \gamma_g(x) \alpha_g(b)	
	\label{l2}  \\
&& \beta_g(ax)= \alpha_g(a) \beta_g(x)  
\qquad
\beta_g(xb)= \beta_g(x) \delta_g(b)	\\
&&\alpha_g(xy)= \beta_g(x) \gamma_g(y)	
\qquad
\delta_g(xy)= \gamma_g(x) \beta_g(y)	
	\label{l4}  \\
&&
\qquad \gamma_{gh}= \gamma_g \gamma_h
\qquad \beta_{gh}= \beta_g \beta_h 
\end{eqnarray}

{\rm (ii)}
$(A,\gamma)$ is 
a functional Morita equivalence 
$((A,\delta),(A,\alpha))$-bimodule, 
see definition \ref{def131} below, 
where the bimodule structure is 
multiplication in $A$,
and the
right module functional space $\Theta_{(A,\alpha)}((A,\gamma))$ is given by left multiplication $\theta_a(b) = a b$, and the left module functional space $\Theta_{(A,\delta)}((A,\gamma))$ is 
given by right multiplication 
$\varrho_b( a) =a b$.

{\rm (iii)}
Analogously,
$(A,\beta)$ is 
a functional Morita equivalence 
$((A,\alpha),(A,\delta))$-bimodule, which is inverse to the later one.

{\rm (iv)}
Let
$\chi : A \rightarrow 
\call_A(A)$ be the natural embedding 
given by $\chi(a)(b) = ab$. 

Then $\alpha$ and $\gamma$ are $G$-actions
on the 
right $(A,\alpha)$-module $A$.

Consequently we have the $G$-action
${\rm Ad}(\alpha \oplus \gamma)$
on the matrix algebra $M_2(\call_{(A,\alpha)}(A))$.

The map
$$\chi \otimes 1_{M_2}:(M_2(A), \theta) 
\rightarrow 
(M_2(\call_{(A,\alpha)}(A)), {\rm Ad}(\alpha \oplus \gamma))$$
is a $G$-equivariant injective 
ring homomorphism.

\if 0
Recall also that
$$(M_2(\call_{(A,\alpha)}(A)), {\rm Ad}(\alpha,\gamma))
\cong
(\call_{(A,\alpha)}(A \oplus A), {\rm Ad}(\alpha
\oplus \gamma))
$$
\fi

{\rm (v)}
$\alpha$ and  
$\gamma$, or $\beta$ and $\gamma$, determine 
 $\theta$ uniquely and completely.


\if 0
(vii)
$(A,\gamma)$ is even an imprimitivity Hilbert $((A,\delta),(A,\alpha))$-bimodule, where the
right inner product is $<a,b>=a^*b$ and the left one
is $<a,b>=a b^*$.
\fi 

{\rm (vi)}
$\theta$ determines $\alpha, \beta, \gamma$ and $\delta$ uniquely.

{\rm (vii)}
In general, $\alpha$ and $\delta$ do not determine $\gamma$ and thus not $\theta$.

{\rm (viii)}
If we drop all assumptions then we may add:

A $G$-ring $(A,\alpha)$ and a 
right functional $(A,\alpha)$-module action $\gamma$ on $(A,\Theta_A(A))$ 
alone 
(implicitly meaning also that $\Theta_A(A)$ becomes $\Ad(\gamma,\alpha)$-invariant)
ensure the existence of the above $\theta$
with all assumptions and assertions of this lemma.
%


\end{lemma}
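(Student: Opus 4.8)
The plan is to read every assertion off from one structural fact: for each $g$, $\theta_g$ is a ring automorphism of $M_2(A)$ which, by equivariance of the corner embeddings, restricts to $\alpha_g$ on $e_{11}(A)$ (the matrices supported in the $(1,1)$-corner) and to $\delta_g$ on $e_{22}(A)$. To obtain the block form I would first make the four matrix corners $E_{ij}$ of $M_2(A)$ intrinsic, so that their $\theta$-invariance is automatic. Since $A$, hence $M_2(A)$, is quadratik (Lemma~\ref{lemma1172}), the $\theta$-invariant subgroup $e_{11}(A)M_2(A)$ (finite sums of such products) equals the ``top row'' $\{(x\;y;0\;0)\}$, while $M_2(A)e_{11}(A)$ equals the ``left column'', and analogously with $e_{22}$ for the bottom row and the right column. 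Intersecting these four $\theta$-invariant subgroups in pairs exhibits each $E_{ij}$ — in particular $E_{12}$ and $E_{21}$ — as $\theta$-invariant. Defining $\beta_g,\gamma_g\colon A\to A$ by the restrictions of $\theta_g$ to $E_{12}$ and $E_{21}$ then gives the asserted block form, and additivity of $\theta_g$ together with $\theta_{gh}=\theta_g\theta_h$ forces $\beta,\gamma$ to be group homomorphisms into the additive automorphisms of $A$, which is the last line of (i).

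For the remaining relations in (i) I would write each element in sight as a product of two corner elements and apply the ring homomorphism $\theta_g$, comparing corners. For instance
\[ \begin{pmatrix} 0 & x \\ 0 & 0 \end{pmatrix}\begin{pmatrix} 0 & 0 \\ y & 0 \end{pmatrix} = \begin{pmatrix} xy & 0 \\ 0 & 0 \end{pmatrix} \]
gives $\beta_g(x)\gamma_g(y)=\alpha_g(xy)$, the product in the other order gives $\gamma_g(x)\beta_g(y)=\delta_g(xy)$, while $e_{11}(a)\cdot E_{12}(x)=E_{12}(ax)$ gives $\beta_g(ax)=\alpha_g(a)\beta_g(x)$, $E_{12}(x)\cdot e_{22}(b)=E_{12}(xb)$ gives $\beta_g(xb)=\beta_g(x)\delta_g(b)$, and the two relations for $\gamma$ come symmetrically from $e_{22}(a)E_{21}(x)$ and $E_{21}(x)e_{11}(b)$. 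Items (ii) and (iii) are then a matter of matching these against the axioms of Definition~\ref{def131}: $\gamma_g(xb)=\gamma_g(x)\alpha_g(b)$ says $\gamma$ is a right $(A,\alpha)$-module $G$-action on the functional module $A$ and $\gamma_g(ax)=\delta_g(a)\gamma_g(x)$ gives the left $(A,\delta)$-action; $G$-invariance of the two functional spaces is again a relation from (i); fullness on both sides is quadratik-ness of $A$ (Lemmas~\ref{lemma114} and \ref{lemma113}); the compatibility of the two functionals is associativity of multiplication in $A$; and that $(A,\beta)$ is the inverse bimodule is exactly $\alpha_g(xy)=\beta_g(x)\gamma_g(y)$ and $\delta_g(xy)=\gamma_g(x)\beta_g(y)$, which exhibit the two internal tensor products as the identity bimodules.

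For (iv): $\chi$ is injective because a quadratik ring is an essential ideal in itself (Lemma~\ref{lemma1172}), and $\chi(a)\in\call_{(A,\alpha)}(A)$ since left multiplication by $c$ followed by $\chi(a)$ is left multiplication by $ca\in A=\Theta_A(A)$; hence $\chi\otimes 1_{M_2}$ is an injective ring homomorphism, and $\alpha,\gamma$ are module actions on $A$ by (i). Equivariance is an entrywise check against $\Ad(\alpha\oplus\gamma)$, namely $\chi(\alpha_g(x))=\alpha_g\chi(x)\alpha_{g^{-1}}$, $\chi(\beta_g(y))=\alpha_g\chi(y)\gamma_{g^{-1}}$, $\chi(\gamma_g(z))=\gamma_g\chi(z)\alpha_{g^{-1}}$ and $\chi(\delta_g(w))=\gamma_g\chi(w)\gamma_{g^{-1}}$, each a one-line consequence of a relation from (i); e.g. the second unwinds to $\beta_g(y)b=\alpha_g\!\big(y\,\gamma_{g^{-1}}(b)\big)$, which is $\alpha_g(uv)=\beta_g(u)\gamma_g(v)$ with $v=\gamma_{g^{-1}}(b)$. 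For (v), given $\alpha$ and $\gamma$ the relation $\alpha_g(xy)=\beta_g(x)\gamma_g(y)$ determines $\beta_g(x)$ (let $\gamma_g(y)$ range over $A$, using that $\gamma_g$ is bijective and multiplication in $A$ is faithful), and then $\delta$ is determined by $\delta_g(xy)=\gamma_g(x)\beta_g(y)$; symmetrically $(\beta,\gamma)$ determine $\delta$ and $\alpha$, hence $\theta$. Part (vi) is immediate by restricting $\theta$ to the corners. For (vii) take $A$ commutative with nontrivial unit group and $G=\Z$ with $\alpha=\delta=\triv$: every homomorphism $n\mapsto u_n\in A^{\times}$ yields a module action $\gamma_n=(\text{multiplication by }u_n)$ with $\beta_n=(\text{multiplication by }u_n^{-1})$, so $\alpha$ and $\delta$ stay trivial while $\gamma$ varies with the choice of $(u_n)$. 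Finally (viii) is Lemma~\ref{lemma51} (equivalently Lemma~\ref{lemma1170}) in this special case: form the functional module $(A,\alpha)\oplus(A,\gamma)$, put $\theta:=\Ad(\alpha\oplus\gamma)$ on $M_2(\call_A(A))\cong\call_A\big((A,\alpha)\oplus(A,\gamma)\big)$, observe — running the computation of (iv) backwards — that $\chi\otimes 1_{M_2}$ carries $M_2(A)$ onto a $\theta$-invariant subring, and restrict.

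The one step that needs genuine care is the very first one. In the nonunital setting there are no matrix units $e_{ij}\in M_2(A)$ to conjugate by, so the $\theta$-invariance of the four corners cannot be read off a single conjugation; it must be extracted from quadratik-ness by presenting the corners as the products $e_{ii}(A)M_2(A)$, $M_2(A)e_{ii}(A)$ and their pairwise intersections. Once that is in place, everything else is bookkeeping with the six relations of (i), and no further analytic input is needed.
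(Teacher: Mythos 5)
Your proposal is correct and follows essentially the same route as the paper: invariance of the four matrix corners is extracted from quadratik-ness together with equivariance of $e_{11},e_{22}$, the relations in (i) come from multiplying corner matrices, and (ii)--(v) and (viii) are handled through the embedding $\chi\otimes 1_{M_2}$ into $(M_2(\call_{(A,\alpha)}(A)),\Ad(\alpha\oplus\gamma))$, with (vii) settled by a unit-twist counterexample of the same kind as the paper's. The only compressed point is (viii), where ``running (iv) backwards'' should be made explicit, since $\beta$ and $\delta$ do not yet exist there: the $(1,2)$-corner invariance of the image of $\chi\otimes 1_{M_2}$ is exactly the assumed $\Ad(\gamma,\alpha)$-invariance of $\Theta_A(A)$ (this defines $\beta_g(y):=\alpha_g\circ m_y\circ\gamma_{g^{-1}}$), and the $(2,2)$ corner then follows by writing $w=\sum w_1w_2$ via quadratik and combining the $(2,1)$ and $(1,2)$ cases --- which is precisely how the paper finishes the argument.
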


\begin{proof}
By assumption, the corner rings must be $G$-invariant. Hence, by quadratik we write an element of $A$ as a sum of 
products $abc$ and then 
 by 
$$
\left (\begin{matrix} 0 & abc \\
 0 & 0
\end{matrix}
\right )   
= 
\left (\begin{matrix} a & 0\\
 0 & 0
\end{matrix}
\right )
\left (\begin{matrix} 0 & b\\
 0 & 0
\end{matrix} \right ) 
\left (\begin{matrix} 0 & 0\\
 0 & c
\end{matrix}
\right )
$$
we see that 
$\theta_g$ applied to the 
first matrix has again the form of the 
first  matrix.

(i) One computes 
expressions like
$$
\left (\begin{matrix} 0 & 0\\
 \gamma_g(z) & 0
\end{matrix}
\right )
\left (\begin{matrix} \alpha_g(x) & 0\\
 0 & 0
\end{matrix} \right ) 
\quad
\left (\begin{matrix} 0 & 0\\
 0 & \delta_g(x)
\end{matrix}
\right )
\left (\begin{matrix} 0 & 0\\
 \gamma_g(z) & 0
\end{matrix} \right ) 
$$
and uses the fact that $\theta$ is a $G$-action on a ring.  

(ii) 
The $G$-action property of modules is by 
line (\ref{l2}), and the $G$-invariance 
of the functional spaces follows from 
(\ref{l4}).

\if 0
(ii) Put the first relation of (\ref{l5})
into line (\ref{l4}) 
and then use lines
(\ref{l2}) and (\ref{l4}).
\fi 

(iv) By (ii), $\alpha$ and $\gamma$ are $G$-actions as claimed, so that the existence of ${\rm Ad}(\alpha \oplus \gamma)$ is by lemma \ref{lemma51}.
By relations (i) one can deduce
\begin{equation}			\label{l7}
\left (\begin{matrix} \alpha_g(x) x' & \beta_g(y)y' \\
 \gamma_g(z)z' & \delta_g(w)w'
\end{matrix} \right ) 
= 
\left (\begin{matrix} \alpha_g(x \alpha_{g^{-1}} 
(x'))  & \alpha_g(y \gamma_{g^{-1}}(y')) \\
 \gamma_g(z \alpha_{g^{-1}}(z')) & \gamma_g(w
\gamma_{g^{-1}}(w'))
\end{matrix} \right ) 
\end{equation}
for all $x,...,w' \in A$, which shows $G$-equivariance of $\chi \otimes 1$.
In fact, the second matrix line follows directly from (\ref{l2}), and the upper right corner from the first relation of (\ref{l4}).

(v) $\alpha$ and $\delta$  are determined by  $\beta$ and $\gamma$ by (\ref{l4}). 
The version $\alpha$ and $\gamma$ follows 
from (iv). 
 
(viii) By (iv), 
we can 
construct 
${\rm Ad}(\alpha \oplus \gamma)$ and aim to 
define $\theta$ by its restriction. 
To show that the image of $\chi \otimes 1$ is $G$-invariant, we consider the right hand side of (\ref{l7})
and want to construct identity with the left hand side.
For the first column this is clear 
by the $\gamma$- and $\alpha$-actions. 
For the upper right corner we use 
definition \ref{def12} that $X:=\Theta_{(A,\alpha)} ((A,\gamma)$ is $G$-invariant. So 
as left multiplication $m_y$ by $y$ is in $X$, 
$\alpha_g \circ m_y \circ \gamma_{g^{-1}} =:\beta_g(y)$ is also on $X$. 
For the lower right corner we write 
$w=w_1 w_2$ by quadratik and then 
$\gamma_g(w \gamma_{g^{-1}}(w')) 
= \gamma_g(w_1) \alpha_g(w_2 \gamma_{g^{-1}}(w'))= \gamma_g(w_1) \beta_g(w_2) w' 
=: \delta_g(w) w'$ (under 
quotation marks). 
%

\if 0


the assumption that  
Setting $\beta$ as in the first identity of (\ref{l5}) we get the upper right corner.
The lower right corner follows from
$$\gamma_g(a a^* \gamma_{g^{-1}} (x)) = \gamma_g(a) \alpha_g(a^* \gamma_{g^{-1}}(x)) = \gamma_g(a) \gamma_g(a)^* x$$

-> die invariance folgt nun aus der definition 
von $G$-action, dass sie die kompakten invaraint lassen 

-> n, da ja $\gamma$ frei gegeben, und nicht $\theta$ gegegben 

-> auch in der unteren rechten ecke 

-> muss vmtl axiomatisch diese regeln dazuverlangen 

-> die obere rechte ecke von (\ref{l7}) 
, ist ja die bedingung von invrainz $\phi$ bei kompatken operators: 
$\alpha_g(y(\gamma_g()) = \psi() = z \times ()$ 

-> ist  bedingung einer gamma aktion ?

-> für untere rechte ecke könnte man
$w= w_1 w_2$ schreieben 

\fi

(vii)
Take for example $G=\Z/2$, $A=\C$ (or any $A$), $\alpha=\delta$ the trivial action. Then $\gamma_g(x) = x$ and $\gamma_g(x) = (-1)^g x$ are two valid choices.
%
\end{proof}

\begin{corollary}[{\cite[corollary 7.5]{baspects}}]  
			\label{cor79}
Consider the double split exact sequence of definition \ref{def31}.

{\rm (i)} Then 
the ideal 
$M_2(j(B))$
is invariant under the action $\theta$.

{\rm (ii)}
The map $f \otimes 1: (M_2(M),\theta)
\rightarrow (M_2(A),\delta)$
is equivariant for the quotient $G$-action 
$\delta$ on
$M_2(A) \cong M_2(M) / M_2(j(B))$.
%
\end{corollary}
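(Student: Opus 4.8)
The plan is to reduce both assertions to Lemma \ref{lemma61}, whose hypotheses are already in force for the double split exact sequence of Definition \ref{def31}: there we have two $G$-rings $(A,\alpha)$ and $(A,\delta)$ at the two corners, a $G$-action $\theta$ on $M_2(A)\cong M_2(M)/M_2(j(B))$, and the two corner embeddings $e_{11}\colon(A,\alpha)\to(M_2(M),\theta)$ and $e_{22}\colon(A,\delta)\to(M_2(M),\theta)$ are equivariant ring homomorphisms by hypothesis. Hence Lemma \ref{lemma61} applies verbatim, and in particular $\theta$ has the $2\times 2$ ``matrix'' form displayed there, with off-diagonal actions $\beta,\gamma$ satisfying the relations (\ref{l2})--(\ref{l4}).

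For part (i), the point is that $j(B)$ sits inside the \emph{distinguished corner} coming from $e_{11}$ — i.e. $j(B)$ is a two-sided ideal in the copy of $M$ embedded as the $(1,1)$-corner of $M_2(M)$ — so I would first record that $M_2(j(B))$ is the image of $M_2$ applied to this ideal. Then invariance of $M_2(j(B))$ under $\theta$ follows from the block description of $\theta$ together with the fact that $j(B)$ is $\gamma$- (equivalently $\beta$-, $\alpha$-, $\delta$-)invariant: since $j$ is an equivariant ring homomorphism with $\gamma$-invariant image $j(B)\subseteq M$, and the four block-actions $\alpha,\beta,\gamma,\delta$ all restrict compatibly to $j(B)$ by relations (\ref{l2})--(\ref{l4}) (each is built from $\gamma$ and the ring multiplication by the quadratik trick of Lemma \ref{lemma61}), each of the four matrix slots carries $j(B)$ into $j(B)$. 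Therefore $\theta_g$ maps $M_2(j(B))$ into itself for every $g\in G$, which is (i). One small check is that the matrix identification $M_2(j(B))\cong$ (image of $j$ in $M_2(M)$ entrywise) is compatible with the $\theta$-block structure — this is immediate from how corner embeddings and $M_2$ interact, cf. Lemma \ref{lemma1170}.

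For part (ii), once (i) is established the quotient $G$-action on $M_2(M)/M_2(j(B))\cong M_2(A)$ is well-defined; call it $\theta'$. I would then identify $\theta'$ with the block action of Lemma \ref{lemma61} on $M_2(A)$, i.e. the one built from $\alpha$ and $\delta$ on the diagonal and $\beta,\gamma$ off-diagonal — this is forced because $f\otimes 1$ intertwines $\theta$ with $\theta'$ by construction, and $f\otimes 1$ restricted to corners recovers $f\colon M\to A$, which is $\alpha$- resp. $\delta$-equivariant on the two corners. Equivariance of $f\otimes 1\colon(M_2(M),\theta)\to(M_2(A),\delta)$ is then the tautology that the quotient map is equivariant for the quotient action; the only content is the already-noted identification of the quotient action with the $\delta$-labelled block action appearing as the codomain in Definition \ref{def31}, which follows from Lemma \ref{lemma61}(v)--(vi) (uniqueness of $\theta$ given its diagonal and one off-diagonal block).

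I do not expect a genuine obstacle here: the corollary is essentially a bookkeeping consequence of Lemma \ref{lemma61}. The only mildly delicate point is making precise the compatibility of the three identifications in play — $M_2$ of an ideal versus the ideal inside $M_2$, the quotient $M_2(M)/M_2(j(B))$ versus $M_2(M/j(B))=M_2(A)$, and the block form of $\theta$ versus that of the quotient action — and checking that each respects the $G$-action; all of these are routine, and none requires anything beyond the relations (\ref{l2})--(\ref{l4}) and the quadratik hypothesis already used in Lemma \ref{lemma61}.
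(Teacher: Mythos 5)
Your argument is correct and follows essentially the paper's own route: apply Lemma \ref{lemma61} to get the block form of $\theta$, observe that the $(1,1)$-corner action preserves $j(B)$ because $j$ is equivariant, propagate invariance to the other three block actions via the relations (\ref{l2})--(\ref{l4}) together with quadratik and the two-sided-ideal property of $j(B)$ in $M$, and then (ii) is formal since $f\otimes 1$ is just the quotient map $M_2(M)\to M_2(M)/M_2(j(B))\cong M_2(A)$ carrying the quotient action. The only point to make explicit is that propagation step, e.g. $\gamma_g(ab)=\gamma_g(a)\,\alpha_g(b)\in M\,j(B)\subseteq j(B)$ and analogously for $\beta$ and $\delta$ after writing elements of $j(B)$ as sums of products; the relations alone do not suffice without invoking the ideal property you mention at the start.
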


\if 0

\begin{corollary}
If we have given a $G$-algebra $(A,\alpha)$
and a Hilbert module $(A,\gamma)$ over $(A,\alpha)$
then we can construct a uniquely determined $\theta$ of the form of the last lemma and satisfying all assumptions and properties of the last lemma.
\end{corollary}

\fi

\if 0
\begin{definition}
{\rm
Let $U \in A$ be a unitary in a $C^*$-algebra $A$.
Then we define the $*$-homomorphism
$$\kappa_U:M_2(A) \rightarrow M_2(A) :
\kappa_U
\left (\begin{matrix} x & y\\
 z & w
\end{matrix}
\right ) =
\left (\begin{matrix} x  &  y U^*\\
 U z  & U w U^*
\end{matrix} \right ) 
$$
}
\end{definition}

In other words, $\kappa_U={\rm Ad} (1 \oplus U)$ on $\call_D(D \oplus D) \cong M_2(\call_D(D))$
for $A= \call_D(D)$.

Notice that $\kappa_U^{-1} = \kappa_{U^*}$.

\begin{definition}
{\rm
Let $U \in A$ be a unitary in a $G$-algebra $(A,\alpha)$.
Then we write $\theta^U$ for the $G$-action on $M_2(A)$ defined by
$$\theta^U_g = \kappa_{U} \circ ( \alpha_g \otimes 1_{M_2}) \circ \kappa_U^{-1}$$ 
}
\end{definition}
\fi


\begin{lemma}[{\cite[lemma 7.8]{baspects}}]   
	\label{lemma272}
Let $S,T$ be two $G$-actions on a 
$(B,\beta)$-module $\cale$.

Consider a diagram 
$$\xymatrix{
\calk_B(\cale)  
\ar[r] 
& 
\call_B(\cale) \square A \ar[r]    & A 
\ar@<.5ex>[l]^{s_\pm \square \idd} 
}$$
which is double split exact except that
we have not found a $M_2$-action yet.
But we know that $s_-$ is equivariant with respect to 
${\rm Ad}(S)$ on $\call(\cale)$, and $s_+$ is equivariant with respect to
${\rm Ad}(T)$ on $\call(\cale)$.

Equip $M_2(\call_B(\cale) \oplus A)
\cong \call_B(\cale \oplus \cale) \oplus M_2(A)$ with the 
$G$-action 
$${\rm Ad}(S \oplus T) \oplus (\alpha \otimes 1_{M_2}) 
$$

Then the following assertions are equivalent:
\begin{itemize}

\item[(i)]

 $s_-(a) \big( S_g T_{g^{-1}} - S_g S_{g^{-1}} \big) $ and 
 $s_-(a) \big( T_g S_{g^{-1}} - T_g T_{g^{-1}} \big)$ 
are in $\calk_B(\cale)$ for all $g \in G$, $a \in A$.


\item[(ii)]
$S_g s_-(a) T_{g^{-1}} - s_-(\alpha_g(a))$
and $T_g s_-(a) S_{g^{-1}} - s_-(\alpha_g(a))$  
are in $\calk_B(\cale)$
 for all $g \in G$, $a \in A$.


\if 0
$s \otimes 1:M_2(M) \rightarrow M_2(A)$
is equivariant

(auch nicht unbedingt, denn es könnte aktion quer über $M_2$ gehen, der $\call$-teil

zb $A=0$)

(stimmt jedoch schon wenn man (wie immer) die aktion  $\delta$ als $(\delta_{ij})$ annimmt)
\fi


\item[(iii)]
$M_2(\call_B(\cale) \square A)$
is a $G$-invariant subalgebra.

\end{itemize}

In case that there is a 
invertible operator $U \in \call(\cale)$ such that $T_g \circ U = U \circ S_g$ for all $g \in G$, 
\if 0
that is if 
$${\rm Ad}(S \oplus T) = \theta^{U}$$
for the $G$-action ${\rm Ad}(S)$ on $\call(\cale)$,
\fi
these conditions are
also equivalent to

\begin{itemize}

\item[(iv)]
$s_- (a) \big (g(U) - U 
\big ) \in \calk_B(\cale)$ for all $g \in G$, $a \in A$
($G$-action is ${\rm Ad}(S)$).

\end{itemize}

\end{lemma}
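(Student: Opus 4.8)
The statement is a cycle of equivalences (i)$\Leftrightarrow$(ii)$\Leftrightarrow$(iii), with (iv) added under the extra hypothesis that an intertwining invertible $U$ exists. The plan is to first rewrite condition (iii) explicitly. By the isomorphism $M_2(\call_B(\cale) \oplus A) \cong \call_B(\cale\oplus\cale) \oplus M_2(A)$ and the description of $M\square_s A$ in definition \ref{lemma35}, the subalgebra $M_2(\call_B(\cale)\square A)$ consists of pairs $\big((s_-\otimes 1)(x) + k,\ x\big)$ with $x\in M_2(A)$ and $k\in M_2(\calk_B(\cale))$. Since the $G$-action on the second summand $M_2(A)$ is $\alpha\otimes 1_{M_2}$, which already preserves $M_2(A)$, and since $M_2(\calk_B(\cale))$ is $\Ad(S\oplus T)$-invariant by lemma \ref{lemma51}, condition (iii) is equivalent to
\begin{equation*}
\Ad(S\oplus T)_g\big((s_-\otimes 1)(x)\big) - (s_-\otimes 1)(\alpha\otimes 1)_g(x) \in M_2(\calk_B(\cale)) \quad \forall g\in G,\ x\in M_2(A).
\end{equation*}
Writing out the four matrix entries of $\Ad(S\oplus T)_g$ from lemma \ref{lemma51}, the diagonal entries give $S_g s_-(a) S_{g^{-1}} - s_-(\alpha_g(a))$ and $T_g s_-(a) T_{g^{-1}} - s_-(\alpha_g(a))$, while the off-diagonal entries give $S_g s_-(a) T_{g^{-1}} - s_-(\alpha_g(a))$ and $T_g s_-(a) S_{g^{-1}} - s_-(\alpha_g(a))$. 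Now $s_-$ is $\Ad(S)$-equivariant, so the first of these is automatically zero; hence (iii) collapses to exactly the two conditions in (ii). This gives (ii)$\Leftrightarrow$(iii).

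For (i)$\Leftrightarrow$(ii): here I would use quadratik-ness of $A$. Given (i), write $a = \sum_i b_i c_i$ in $A$; then
\begin{equation*}
S_g s_-(a) T_{g^{-1}} - s_-(\alpha_g(a)) = \sum_i \big(S_g s_-(b_i) S_{g^{-1}}\big)\big(S_g s_-(c_i)(T_{g^{-1}} - S_{g^{-1}})\big) = \sum_i s_-(\alpha_g(b_i))\, s_-(\alpha_g(c_i))\big(S_gT_{g^{-1}} - S_gS_{g^{-1}}\big),
\end{equation*}
using that $s_-$ intertwines $S$ and $\alpha$ for the first factor; by (i) each summand lies in $\calk_B(\cale)$ (which is a two-sided ideal in $\call_B(\cale)$ by lemma \ref{lemma113}), so the sum does, proving the first clause of (ii); the second clause with $T_g s_-(a) S_{g^{-1}}$ is symmetric, swapping the roles. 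Conversely, given (ii), multiply $S_g s_-(a) T_{g^{-1}} - s_-(\alpha_g(a)) \in \calk_B(\cale)$ on the left by $s_-(\alpha_g(a')) = S_g s_-(a') S_{g^{-1}}$ and note $S_g s_-(a') S_{g^{-1}} \cdot S_g s_-(a) T_{g^{-1}} = S_g s_-(a'a) T_{g^{-1}}$; comparing with the same expression obtained from $S_g s_-(a'a) S_{g^{-1}} - s_-(\alpha_g(a'a))$ (which is zero) and again invoking quadratik-ness to absorb the extra factor, one recovers $s_-(a)(S_g T_{g^{-1}} - S_g S_{g^{-1}}) \in \calk_B(\cale)$. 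One has to be a little careful that the products land back inside $\calk_B(\cale)$ rather than merely $\call_B(\cale)$, but that is guaranteed by the ideal property.

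For the final clause, suppose $U\in\call(\cale)$ is invertible with $T_g U = U S_g$, i.e.\ $T_{g^{-1}} = U S_{g^{-1}} U^{-1}$. Then $S_g T_{g^{-1}} - S_g S_{g^{-1}} = S_g(U S_{g^{-1}} U^{-1} - S_{g^{-1}})$. Using $S_g S_{g^{-1}} = \idd$ (a $G$-action) one computes $S_g T_{g^{-1}} - \idd = S_g U S_{g^{-1}} U^{-1} - \idd = (\Ad(S)_g(U)\, U^{-1} - \idd)$, and since $\calk_B(\cale)$ is an ideal, $s_-(a)(S_g T_{g^{-1}} - S_g S_{g^{-1}}) \in \calk_B(\cale)$ for all $a,g$ is equivalent to $s_-(a)(\Ad(S)_g(U) - U)U^{-1} \in \calk_B(\cale)$, hence — multiplying by the invertible $U$ on the right, which preserves the ideal — to $s_-(a)(g(U) - U) \in \calk_B(\cale)$, which is (iv). The second half of condition (i), involving $T_g S_{g^{-1}} - T_g T_{g^{-1}}$, reduces to the same statement by a parallel computation (here $T_g S_{g^{-1}} - \idd = T_g S_{g^{-1}} T_{g^{-1}} T_g \cdot$ ... is handled by writing everything back in terms of $U$ and $S$), so no new condition arises.

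**Main obstacle.** The routine matrix bookkeeping in (ii)$\Leftrightarrow$(iii) is mechanical. The genuinely delicate point is the reverse direction (ii)$\Rightarrow$(i): peeling off the ``extra'' factor $s_-(\alpha_g(a'))$ to isolate $s_-(a)(S_gT_{g^{-1}}-S_gS_{g^{-1}})$ requires the quadratik hypothesis on $A$ in an essential way (a single element of $A$ need not be of the form $a'a$), and one must check throughout that one stays inside $\calk_B(\cale)$ — this is where lemma \ref{lemma113} (that $\calk_B(\cale)$ is a two-sided ideal in $\call_B(\cale)$, given $\cale$ cofull, which it is since it underlies compact operators by our standing convention) is used. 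Getting condition (iv) exactly right, with the operator $U$ on the correct side and the invertibility used to move it past the ideal, is the other spot demanding care.
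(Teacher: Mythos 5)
Your overall strategy coincides with the paper's (unpack (iii) entrywise through $M_2(M\square A)\cong M_2(M)\square M_2(A)$, use $\Ad(S)$-equivariance of $s_-$ for (i)$\Leftrightarrow$(ii), and $T_g=U\circ S_g\circ U^{-1}$ for (iv)), but two steps have genuine gaps, and both trace back to the same omission: you never use the second split $s_+$ (with $s_+(a)-s_-(a)\in\calk_B(\cale)$ and $\Ad(T)$-equivariance of $s_+$), nor the fact that mixed conjugates $S_g\,k\,T_{g^{-1}}$ and $k\,S_gT_{g^{-1}}$ of compact $k$ are again compact --- exactly the two ingredients the paper's proof relies on. Concretely: in (ii)$\Rightarrow$(iii) your entrywise translation of (iii) produces four conditions; you dispose of the upper-left one ($S_gs_-(a)S_{g^{-1}}-s_-(\alpha_g(a))=0$) but silently drop the lower-right one, $T_gs_-(a)T_{g^{-1}}-s_-(\alpha_g(a))\in\calk_B(\cale)$, which is neither automatic nor contained in (ii). It does hold, but only after an argument: either write $s_-(a)=s_+(a)-k'$ with $k'$ compact and use $T_gs_+(a)T_{g^{-1}}=s_+(\alpha_g(a))$ (the paper's route, via remark \ref{rem12}), or derive it from both clauses of (ii) using compactness of $k\,S_gT_{g^{-1}}$. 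Secondly, your claim that the clause $T_gs_-(a)S_{g^{-1}}-s_-(\alpha_g(a))\in\calk_B(\cale)$ of (ii) follows from (i) ``by symmetry, swapping the roles'' is not legitimate: interchanging $S$ and $T$ is not a symmetry of the hypotheses, because $s_-$ is $\Ad(S)$- but not $\Ad(T)$-equivariant (that role belongs to $s_+$); your mirrored computation would need $T_gs_-(b_i)T_{g^{-1}}=s_-(\alpha_g(b_i))$, which is false in general. This is precisely the point the paper flags with ``no involution'': in the $C^*$-case one takes adjoints, here one must go through $s_+$ (or the mixed-compactness fact) instead.

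The converse (ii)$\Rightarrow$(i) as you wrote it (``absorb the extra factor \ldots by quadratik'') is not a derivation, and in any case only addresses the first clause of (i). No quadratik is needed anywhere in (i)$\Leftrightarrow$(ii): for (i)$\Rightarrow$(ii) the one-line identity $S_gs_-(a)T_{g^{-1}}-s_-(\alpha_g(a))=s_-(\alpha_g(a))\big(S_gT_{g^{-1}}-S_gS_{g^{-1}}\big)$ suffices, and conversely this same identity together with surjectivity of $\alpha_g$ gives the first clause of (i) from (ii); the second clause of (i) follows by writing $s_-(a)\equiv S_gs_-(\alpha_{g^{-1}}(a))T_{g^{-1}}$ modulo $\calk_B(\cale)$ and using that compacts absorb $T_gS_{g^{-1}}$ on the right. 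So the ``main obstacle'' you single out (quadratik) is a red herring; the real extra input is the double-split data and the compactness of mixed conjugates. Your treatment of (iv) is essentially correct, but the trailing-off computation for the second clause of (i) should be completed as $T_gS_{g^{-1}}-\idd=(U-g(U))\,g(U)^{-1}$, so it is invertibility of $g(U)$ (equivalently of $U$, since $g(U)^{-1}=g(U^{-1})$) that lets you pass to (iv) for that clause as well.
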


\begin{proof}
Analogous proof as in lemma 
\cite[lemma 7.8]{baspects}.  
One uses the fact that $S_g  k  T_{g^{-1}}$ 
is compact for compact operators $k$, 
see definition \ref{defcompact}.  
The additional occurrences of $T_g S_{g^{-1}}$
are necessary here as we have no involution as
in $C^*$-algebras. 
\if 0						

(ii) $\Rightarrow$ (iii):
Let $x:=s_-(a) + k \square a \in X:= \call(B) \square A$ for $a \in A, k \in \calk(B)$.
Put $x$ into the lower left corner of $M_2(X)$
and apply the $G$-action and see what comes out:
\begin{equation}  	\label{eq21}
S_g \big (s_-(a)+ k \big) T_{g^{-1}} \square \alpha_g(a)
= s_-(\alpha_g(a)) + S_g k   T_{g^{-1}}
\square \alpha_g(a)
\in \call(\cale) \square A
\end{equation}

Similarly we get it for the upper right corner by taking the adjoint in (ii).
For the lower right corner we observe 
$$T_g (s_-(a) + k) T_{g^{-1}} \square \alpha_g(a)
= T_g \big (s_+(a) + k' + k \big) T_{g^{-1}} \square \alpha_g(a)
\in \call(\cale) \square A$$
for a certain $k' \in \calk(B)$ by
remark \ref{rem12}.

(iii) $\Rightarrow$ (ii): By (\ref{eq21})
for $k=0$.
(i) $\Rightarrow$ (ii):
$$S_g s_-(a) T_{g^{-1}} = 
S_g s_-(a) S_g S_{g^{-1}} T_{g^{-1}} 
\equiv s_-(\alpha_g(a)) \mod \calk(\cale)$$
Since $T= U \circ S \circ U^*$, (i) $\Leftrightarrow$ (iv) is obvious.
\if 0

We can of course form the $G$-algebra
$M_2(\call(\cale) \oplus A), \alpha \oplus \delta)$. We need to show that the subalgebra
$M_2(\call(\cale) \square A)$ is $G$-invariant.

For all $a \in A, k \in \calk(\cale)$ we have
$$S_g \big (s_+(a)+ k \big) T_{g^{-1}} \oplus \alpha(a)
= S_g \big (s_+(a) + k \big) S_{g^{-1}} S_g  T_{g^{-1}}
\oplus \alpha(a)
\quad \in \call(\cale) \square A$$

For every $a \in A$ there is a $k \in \calk(\cale)$ such that
$$T_g s_+(a) T_{g^{-1}} \oplus \alpha_g(a)
= T_g \big (s_-(a) + k \big) T_{g^{-1}} \oplus \alpha_g(a)
\quad \in \call(\cale) \square A$$

\fi
%

\fi
\end{proof}

\if 0
By using corollary 
\ref{cor79},
the equivalence between (ii) and (iii)
of the last lemma
may be analogously generalized to diagrams of the form
$\xymatrix{
B  
\ar[r] 
& 
M \square A \ar[r]    & A 
\ar@<.5ex>[l]^{s_\pm \square} 
}$.
\fi

\section{Computations with double split exact sequences}


From now on, if nothing else is said, the $M_2$-action on $M \square A$ is always understood to be of the form
$\gamma \square (\alpha \otimes 1_{M_2})$ for $G$-algebras
$(M_2(M),\gamma)$ and 
$(A ,\alpha)$, 
cf. the last paragraph of section \ref{sec6}.

Actions on $\call_B(\cale)$ 
will always be of the form ${\rm Ad}(S)$ for a $G$-action $S$ on $\cale$.

\begin{lemma}[{\cite[lemma 8.1]{baspects}}]		\label{lemma100}
Given 
a ring homomorphism 
$f:A \rightarrow B$ 
we get a double split exact sequence
$$\xymatrix{B \ar[r]^i & B \oplus A \ar[r]^g 
 & A  \ar@<.5ex>[l]^{s_\pm} }$$
with $s_-(a)=(0,a), s_+(a)=(f(a),a)$
and one has $f = s_+ \mu \Delta_{s_-}$ in $GK$. 
\end{lemma}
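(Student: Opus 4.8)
The plan is to verify directly that the displayed diagram is a double split exact sequence in the sense of Definition \ref{def31}, and then to compute the associated morphism $s_+\mu\Delta_{s_-}$. First I would check the structural data: the sequence
$$\xymatrix{0 \ar[r] & B \ar[r]^i & B\oplus A \ar[r]^g & A \ar[r] \ar@<.5ex>[l]^{s_-} & 0}$$
is split exact with $i(b)=(b,0)$, $g(b,a)=a$ and $s_-(a)=(0,a)$; this is immediate since $i(B)=B\oplus 0$ is a two-sided ideal, $g$ is surjective with kernel $i(B)$, and $gs_-=1_A$. Next, $s_+(a)=(f(a),a)$ is a ring homomorphism because $f$ is, and $gs_+=1_A$, so $s_+$ is a second split. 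Note $B\oplus A$ is quadratik since $B$ and $A$ are. The remaining piece of structure is the $M_2$-action $\theta$ on $M_2(B\oplus A)$ making the corner embeddings $e_{11},e_{22}$ equivariant; here I would take $\theta = (\beta\otimes 1_{M_2})\oplus(\alpha\otimes 1_{M_2})$ acting diagonally, which manifestly leaves $M_2(i(B))$ invariant and for which both corner embeddings of $(B\oplus A,\ \cdot\ )$ into itself are equivariant. So $\mu = e_{22}e_{11}^{-1}$ is defined in $GK^G$ and the associated morphism is $s_+\mu\Delta_{s_-}$.

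The key computation is to show $s_+\mu\Delta_{s_-}=f$. I expect the cleanest route is to exploit that, with the diagonal action above, $e_{11}$ and $e_{22}$ are homotopic via a rotation homotopy (Definition \ref{def26}): conjugating by $V_t=\bigl(\begin{smallmatrix}\cos t & \sin t\\ -\sin t & \cos t\end{smallmatrix}\bigr)$ carries the ``$(1,1)$-corner'' map to the ``$(2,2)$-corner'' map, and since $V_t$ is a scalar matrix it is $G$-equivariant for the diagonal action, so the homotopy is valid in $\ring G$. By homotopy invariance this gives $e_{11}=e_{22}$ in $GK^G$, hence $\mu = e_{22}e_{11}^{-1} = 1_{B\oplus A}$. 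Therefore $s_+\mu\Delta_{s_-} = s_+\Delta_{s_-}$, and I must now show $s_+\Delta_{s_-}=f$.

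For the last step I would use the split exactness relations of Definition \ref{def121} applied to the sequence with split $s_-$: namely $1_{B\oplus A} = \Delta_{s_-} i + g s_-$. Composing on the left with $s_+$ and on the right with... more directly, write
$$s_+ \Delta_{s_-} i = s_+ (1 - g s_-) = s_+ - s_+ g s_- = s_+ - s_- ,$$
using $s_+g = 1_A = s_-g$ so $s_+gs_- = s_-$. Now $s_+ - s_-$ is the homomorphism $a\mapsto (f(a),0) = i(f(a))$, i.e. $s_+ - s_- = f i$ (in the left-to-right composition convention of the paper, this is $i\circ f$). Since $i$ is a split injection with $i\Delta_{s_-} = 1_B$, composing the identity $s_+\Delta_{s_-} i = f i$ on the right with $\Delta_{s_-}$ yields $s_+\Delta_{s_-} i \Delta_{s_-} = f i \Delta_{s_-}$, i.e. $s_+\Delta_{s_-} = f$, because $i\Delta_{s_-}=1_B$. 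Combining, $s_+\mu\Delta_{s_-} = s_+\Delta_{s_-} = f$ in $GK^G$.

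The main obstacle I anticipate is purely bookkeeping: getting the composition-order conventions right (the paper writes $fg$ for $g\circ f$ and even $fg(x)$ for $g(f(x))$), and being careful that the rotation homotopy really does land in $\ring G$ equivariantly, which reduces to observing that a scalar rotation matrix commutes with the diagonal $G$-action on $M_2(B\oplus A)$. Everything else is a direct application of the additive-category, homotopy-invariance, stability, and split-exactness relations already available.
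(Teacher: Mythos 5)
Your overall route is the same as the paper's: take the $M_2$-action $(\beta\oplus\alpha)\otimes 1_{M_2}$, observe $\mu=e_{22}e_{11}^{-1}=1$ by an equivariant rotation homotopy, and then finish with the split-exactness relations. That part is fine.

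The one step that is not yet justified is the identity $s_+ - s_- = fi$ in $GK^G$. You argue it pointwise ($s_+(a)-s_-(a)=(f(a),0)=i(f(a))$), but addition and subtraction of morphisms in $GK^G$ are \emph{formal} operations in the free additive category, not pointwise operations on maps, so the equality of the formal difference $s_+-s_-$ with the single ring homomorphism $a\mapsto(f(a),0)$ is exactly the kind of statement the axioms must deliver, not something you may read off from values. Here it does follow, but only by invoking the additive-category axiom for the genuine direct sum $B\oplus A$: with the canonical projection $p_B:B\oplus A\rightarrow B$ one has $1_{B\oplus A}=p_B\, i + g\, s_-$, and composing with $s_+$ (noting $s_+p_B=f$ and $s_+g=1_A$ as ring homomorphisms) gives $s_+ = fi + s_-$, i.e.\ $s_+-s_-=fi$; with that line inserted, your computation $s_+\Delta_{s_-}i = s_+(1-gs_-) = s_+-s_- = fi$ followed by cancelling $i$ via $i\Delta_{s_-}=1_B$ closes the argument. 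The paper's own (very terse) proof takes a slightly more direct version of the same idea: it first identifies $\Delta_{s_-}$ with the projection homomorphism itself, $\Delta_{s_-} = (p_B\,i+g\,s_-)\Delta_{s_-} = p_B$ (using $i\Delta_{s_-}=1_B$ and $s_-\Delta_{s_-}=0$ from remark \ref{rem12}), whence $s_+\mu\Delta_{s_-}=s_+p_B=f$ immediately; either formulation is acceptable once the additivity axiom is explicitly used.
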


\if 0
\begin{proof}
The $M_2$-space is $(M_2(B \oplus A),(\beta \oplus \alpha) \otimes 1_{M_2})$, $\mu=1$ by a rotation homotopy, $i(b)=(b,0),g(b,a)=a$ and $\Delta_{s_-} = 
(1- g s_-) i^{-1}$
is just the linear split, see remark \ref{rem12}.
\end{proof}
\fi

\begin{lemma}[{\cite[lemma 8.2]{baspects}}]  
		\label{lemma215}

Given the first line and an
equivariant 
ring homomorphism $\varphi$ 
as in this diagram it can be completed to this diagram
$$\xymatrix{
B  
\ar[r]^{i \square 0} 
& M 
\square A 
\ar[r]    & A 
\ar@<.5ex>[l]^{s_\pm \square 1} 
\\
B \ar[r]^{i \square 0}  \ar[u] &
M  \ar[r] \ar[u]^{\phi} \square X & X \ar@<.5ex>[l]^{t_\pm} \ar[u]^\varphi
}$$
such that
$\varphi (s_+ \square 1) \mu \Delta_{s_- \square 1} = t_+ \mu \Delta_{t_-}$
in $GK$.

We assume here that the $G$-action on $M_2(M \square A)$ is of the form $\theta \square (\alpha \otimes 1)$.
\end{lemma}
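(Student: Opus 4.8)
The plan is to realize the lower row as an explicit pullback of the upper row along $\varphi$ and then read off the claimed identity from Lemma~\ref{lemma21}, in the streamlined form of Remark~\ref{cor22}.

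First I would construct the lower double split exact sequence. Set $r_\pm := \varphi s_\pm : X \rightarrow M$ (that is, $s_\pm \circ \varphi$); since $s_+(a)-s_-(a)\in i(B)$ for all $a\in A$ by Remark~\ref{remark43}, one has $r_+(x)-r_-(x)\in i(B)$ for all $x\in X$. Form $M\square_{r_-} X$ as in Definition~\ref{lemma35}, with its split exact sequence $0 \to B \xrightarrow{i\square 0} M\square X \xrightarrow{f} X \to 0$ and second split $t_\pm := r_\pm \square 1$ (a legitimate non-equivariant second split by Remark~\ref{remark43}). Equip $M_2(M\square X)$ with the action $\theta\square(\alpha_X\otimes 1_{M_2})$, where $\alpha_X$ is the $G$-action of $X$; via the isomorphism (\ref{isq}) this is the restriction of $\theta\oplus(\alpha_X\otimes 1_{M_2})$ on $M_2(M)\oplus M_2(X)$. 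Its $G$-invariance, hence the equivariance of the corner embeddings $e_{11},e_{22}$, follows exactly as on the $A$-side, which is given: the map $\id_{M_2(M)}\oplus(\varphi\otimes 1_{M_2})$ carries $M_2(M\square_{r_-}X)$ into $M_2(M\square_{s_-}A)$, the latter being $\theta\oplus(\alpha\otimes 1)$-invariant, and one transports back using $G$-equivariance of $\varphi$ (alternatively one appeals to Lemma~\ref{lemma61}(viii)). This produces a genuine double split exact sequence $B \xrightarrow{i\square 0} M\square X \xrightarrow{f} X$ with splits $t_\pm$, and $M\square X=(r_-\square\idd)(X)+B$ is quadratik since $X$ and $B$ are.

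Next I would write down the vertical comparison maps: $\phi:M\square X\to M\square A$, $\phi(m,x):=(m,\varphi(x))$, together with $\id_B$ on the ideal and $\varphi$ on the quotient, and $\Phi:=\phi\otimes 1_{M_2}$ on the $M_2$-level. One checks that $\phi$ is well defined (if $m=r_-(x)+i(b)=s_-(\varphi(x))+i(b)$ then $(m,\varphi(x))\in M\square_{s_-}A$), a $G$-equivariant ring homomorphism (it is $\id_M$ in the first coordinate and $\varphi$ in the second), and that $\Phi$ is $G$-equivariant for the chosen $M_2$-actions. The three compatibility identities of Remark~\ref{cor22} are then immediate coordinatewise computations: $i\phi=\id_B\,j$ since $(i\square 0)\circ\phi$ and $(i\square 0)$ agree on $B$; on $(m,x)\in M\square X$ both $f\,t_-\,\phi$ and $\phi\,f\,(s_-\square 1)$ yield $(s_-(\varphi(x)),\varphi(x))$; and on $x\in X$ both $t_+\,\phi$ and $\varphi\,(s_+\square 1)$ yield $(s_+(\varphi(x)),\varphi(x))$.

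Finally I would apply Lemma~\ref{lemma21} with the lower (``$X$'') sequence placed in the top position of that lemma's diagram and the given (``$A$'') sequence in the bottom position, the downward maps being $\id_B$, $\Phi$ and $\varphi$. By Remark~\ref{cor22} the three verified identities together with the $G$-equivariance of $\Phi$ let one invoke part~(iii), which gives $t_+\mu\Delta_{t_-}\cdot\id_B = \varphi\,(s_+\square 1)\,\mu\,\Delta_{s_-\square 1}$ in $GK^G$, i.e.\ exactly the asserted equality. The only genuine obstacle is the bookkeeping in the first step, namely confirming that $\theta\square(\alpha_X\otimes 1_{M_2})$ really is a $G$-invariant $M_2$-action making all corner embeddings equivariant; everything afterwards is formal, being either a coordinatewise check or a direct citation of Lemma~\ref{lemma21}.
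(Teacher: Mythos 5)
Your proposal is correct and follows essentially the same route as the paper's own argument: the paper likewise sets $t_\pm=\varphi s_\pm\square 1$, takes $\phi=\id\square\varphi$, puts the $M_2$-action of the second line to $\theta\square(\gamma\otimes 1)$ for $(X,\gamma)$, and verifies the identity via Lemma \ref{lemma21} with $\Phi=\phi\otimes 1_{M_2}$ (cf.\ Remark \ref{cor22}). Your extra care about $G$-invariance of $M_2(M\square_{r_-}X)$, transported from the given invariance of $M_2(M\square_{s_-}A)$ via equivariance of $\varphi$, is a correct filling-in of a step the paper leaves implicit.
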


\if 0
\begin{proof}
Let $X=(X,\gamma)$.
If the $M_2$-action of the first line is
$\theta \square (\alpha \otimes 1)$,
then of the second line put it to $\theta \square (\gamma \otimes 1)$.
Set
$\phi= {\rm id} \square \varphi$
and 
$t_\pm = \varphi s_\pm \square 1$
and check 
the claim with lemma \ref{lemma21} with $\Phi=\phi \otimes 1$.
%
%
\if 0
Aktion

By lemma \ref{lemma41} we may assume the action on $M_2(M \square A) \cong M_2(M) \square M_2(A)$ is of the form
$\gamma \square \alpha$. 
\fi
\end{proof}
\fi

\if 0 
brauche dieses lemma nicht in seiner follwn form (double split) sondern nur für einfach split ?:
\fi

\begin{lemma}[{\cite[lemma 8.3]{baspects}}]		\label{lemma14} 
Every double split exact sequence as in the first line
is isomorphic to the one of the second line
as indicated in this diagram:
$$\xymatrix{
B  \ar[r]^i \ar[d]  & M  \ar[d]^\phi  \ar[r]^f & A \ar@<.5ex>[l]^{s_\pm}  
\ar[d]   \\
B  \ar[r]^j  & \call_B(B) \square  A \ar[r]  & A \ar@<.5ex>[l]^{t_\pm} 
}$$
That is, 
$s_+ \mu \Delta_{s_-} = t_+ \mu \Delta_{t_-}$ in GK.



The $G$-action on $M_2(\call_B(B) \square A)$ is of the form ${\rm Ad}(S \oplus T) \square \delta$,  
where 
$(M_2(A),\delta)$ and $(M_2(\call_B(B)), {\rm Ad}(S \oplus T))$
are 
$G$-algebras.
\end{lemma}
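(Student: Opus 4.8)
The plan is to reduce an arbitrary double split exact sequence to the standard $\square$-form in two moves, both already prepared by the preceding lemmas. First I would use the $M\square A$-construction (Definition \ref{lemma35}) together with Lemma \ref{lemma215} and Remark \ref{remark43} to replace the given $M$ by $M\square_{s_-}A$; since $i(B)$ is a two-sided ideal in $M$, set-theoretically $M=j(B)+s_-(A)$ by Remark \ref{rem12}(i), so $M$ is already isomorphic to $M\square_{s_-}A$ via $m\mapsto(m,f(m))$ after renaming. Thus, without loss of generality, we may take the first line to be of the form $\xymatrix{B \ar[r]^{i\square 0} & M\square A \ar[r] & A \ar@<.5ex>[l]^{s_\pm\square 1}}$ with $M_2$-action $\theta\square(\alpha\otimes 1_{M_2})$, and the claim about the associated $GK^G$-morphisms is preserved by Lemma \ref{lemma215} applied with $\varphi=\id_A$.

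Second, I would feed the given $M$ into the module picture. By Lemma \ref{lemma1172} we have $(B,\beta)\cong(\calk_B(B),\Ad(\beta))$, and since $B$ is an essential ideal in itself (Lemma \ref{lemma113}) the multiplier/adjointable picture gives a canonical injective ring homomorphism $\chi:M\hookrightarrow\call_B(B)$ — this is exactly the content of Lemma \ref{lemma61}(iv) in the matrix form, where $\chi$ sends $m$ to the operator $b\mapsto mb$ (using that $\calk_B(B)=j(B)$ is an ideal in $\call_B(B)$, so $\chi(i(b))$ lands in $\calk_B(B)$). The two splits $s_-,s_+:A\to M$ compose with $\chi$ to give two ring homomorphisms $A\to\call_B(B)$; call the corresponding $G$-actions on the module $B$ by $S$ (from the $s_-$-side) and $T$ (from the $s_+$-side), as furnished by Lemma \ref{lemma61}(ii),(iii). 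The resulting diagram $\xymatrix{\calk_B(B) \ar[r] & \call_B(B)\square A \ar[r] & A \ar@<.5ex>[l]^{t_\pm\square 1}}$ with $t_\pm:=\chi\circ s_\pm$ and $M_2$-action $\Ad(S\oplus T)\square\delta$ is double split exact: the split-exactness is formal, and the existence of the $M_2$-action is precisely the equivalence of conditions (ii) and (iii) in Lemma \ref{lemma272}, which holds here because the difference of the two actions on the $s_-(A)$-part is realized inside $M$, hence lands in $\calk_B(B)=j(B)$ after applying $\chi$.

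Finally, to conclude $s_+\mu\Delta_{s_-}=t_+\mu\Delta_{t_-}$ in $GK^G$, I would apply the commutativity criterion of Lemma \ref{lemma21}, in the streamlined form of Remark \ref{cor22}: the vertical map $M\square A\to\call_B(B)\square A$ is $\chi\square\id_A$, which is of the form $\phi\square 1$ for the non-equivariant ring homomorphism $\chi$, and $\Phi:=\chi\otimes 1_{M_2}$; one then only checks the three equalities $f s_-\phi=\phi g t_-$, $i\phi=bj$, and $s_+\phi=at_+$ (all immediate from the definitions of $\chi$, $t_\pm$, $j$) plus $G$-equivariance of $\Phi$, which is Lemma \ref{lemma61}(iv) (equation (\ref{l7})). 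I expect the only real obstacle to be verifying that the $M_2$-action on $\call_B(B)\square A$ genuinely exists in the required form $\Ad(S\oplus T)\square\delta$ — i.e. that the subalgebra $M_2(\call_B(B)\square A)$ is $G$-invariant — and organizing the bookkeeping so that the $M_2$-action $\theta$ on the original $M_2(M)$ (which by Lemma \ref{lemma61} is determined by $\alpha$ and $\gamma$) maps correctly under $\chi\otimes 1_{M_2}$ into $\Ad(S\oplus T)$; but both of these are exactly what Lemmas \ref{lemma61} and \ref{lemma272} were set up to handle, so the proof should be essentially a matter of citing them in the right order.
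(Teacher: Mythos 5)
Your overall route is the intended one (the paper itself defers to \cite{baspects}, but the idea there is exactly yours: send $m$ to the left-multiplication operator $\chi(m)\in\call_B(B)$, set $\phi(m)=\chi(m)\,\square\, f(m)$, $t_\pm=\chi\circ s_\pm\,\square\,1$, obtain $S,T$ by restricting the corner actions of $\theta$ to $B$ via Lemma \ref{lemma61}, and verify with Lemma \ref{lemma21} and Remark \ref{cor22}). Two of your intermediate claims, however, are off. First, Lemma \ref{lemma215} cannot justify your step 1: it presupposes that the first line is already in $\square$-form and absorbs a homomorphism $\varphi:X\to A$ into the quotient; it does not convert an arbitrary double split exact sequence into $\square$-form. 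Either justify step 1 by a direct application of Lemma \ref{lemma21} with $m\mapsto m\,\square\, f(m)$ (this is the cited \cite[lemma 6.3]{baspects}), or skip the intermediate step altogether by using $\phi(m)=\chi(m)\,\square\, f(m)$ in one go. Second, $\chi$ is in general \emph{not} injective ($\chi(m)=0$ iff $mB=0$, e.g.\ $m=0\oplus a$ in $M=B\oplus A$); Lemma \ref{lemma113}/\ref{lemma61}(iv) concern a ring acting on itself, not $M$ acting on the ideal $B$. Injectivity is not needed for the $GK^G$-equality, so drop that claim rather than rely on it.

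The genuine gap is the point you yourself flag and then wave through: the validity of the $M_2$-action on $M_2(\call_B(B)\,\square\,A)$. Your stated reason (``the difference of the two actions on the $s_-(A)$-part is realized inside $M$, hence lands in $j(B)$'') is not the relevant computation, and quoting Lemma \ref{lemma272} verbatim is not adequate either, because that lemma fixes the $M_2(A)$-part of the action to be $\alpha\otimes 1_{M_2}$, whereas Lemma \ref{lemma14} (which precedes the section-\ref{sec9} convention) allows a general $\delta$; indeed one must take $\delta$ to be the quotient action of $\theta$ on $M_2(A)\cong M_2(M)/M_2(j(B))$ furnished by Corollary \ref{cor79}(ii), whose off-diagonal corners need not be $\alpha$ (cf.\ Lemma \ref{lemma61}(vii)). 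Concretely: take $S:=$ the restriction to $B$ of the diagonal action on $M$ and $T:=$ the restriction to $B$ of the off-diagonal corner of $\theta$ (these restrict to $B$ by Corollary \ref{cor79}(i) together with quadratik, $\gamma_g(b_1b_2)=\gamma_g(b_1)\alpha_g(b_2)\in B$); then the relations (\ref{l2})--(\ref{l4}) give $S_g\chi(m)T_{g^{-1}}=\chi(\beta_g(m))$ and $T_g\chi(m)S_{g^{-1}}=\chi(\gamma_g(m))$ (corner maps of $\theta$), and the required compactness of $S_g t_-(a)T_{g^{-1}}-t_-(\cdot)$ etc.\ follows because $\beta_g(s_-(a))-s_-(\bar\beta_g(a))$ and $\gamma_g(s_-(a))-s_-(\bar\gamma_g(a))$ lie in $\ker f=j(B)$, $\bar\beta,\bar\gamma$ being the corresponding corners of the quotient action $\delta$. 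With that in place, equivariance of $\Phi=\phi\otimes 1_{M_2}$ and of $t_\pm$ follows from the same corner relations, and the final application of Lemma \ref{lemma21}/Remark \ref{cor22} goes through as you describe.
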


For useful and important lemmas with respect 
to homotopy see {\cite[lemmas 8.10, 8.11]{baspects}}, which work 
analogously also in the ring setting.

\if 0
Normally, a homotopy runs in 
a fixed algebra with a 
fixed $G$-action. If we combine 
homotopy with matrix technique, we can however allow homotopies where the $G$-action of the range algebra, and so the range object changes:

\begin{lemma}		\label{lemma123}
Let $s: A \rightarrow \big ( M_2((X \otimes \C)[0,1]), (\theta^{(t)})_{t \in [0,1]} \big )$
be an equivariant 
homomorphism into the lower right corner for $\theta$ as in lemma \ref{lemma61}.
Assume that the upper left corner action $\theta_{11}^{(t)}$ does not depend on $t \in [0,1]$.
Then $s_0 {e_{11}^{(0)}}^{-1} = s_1 {e_{11}^{(1)}}^{-1}:A \rightarrow X$ in $GK$.

Assume that the matrix can be properly evaluated at the endpoints to rings. 
\end{lemma}
\fi 

\if 0				
\begin{proof}
Consider the diagram
$$\xymatrix{
\big ((X \otimes \C)[0,1], (\theta^{(t)}_{11})_{t \in [0,1]} \big )
\ar[rr]^{e_{11}}		\ar[d]^{\psi_t}
&&
\big ( M_2  ((X \otimes \C)[0,1]), (\theta^{(t)})_{t \in [0,1]} \big) 
\ar[d]^{\phi_t}
   	&&  A   	\ar[ll]^s		\ar[d]  	\\
\big (X, \theta^{(t)}_{11}  \big )
\ar[rr]^{e^{(t)}_{11}}
&&
\big ( M_2(X), \theta^{(t)}  \big ) 
&& A
\ar[ll]^{s_t}
}
$$
where $e_{11}$ and $e_{11}^{(t)}$ are the corner embeddings and $\phi_t$ and $\psi_t$ are the evaluation maps to rings for $t \in \{0,1\}$.
Since both rectangles of the diagram commute we get $s e_{11}^{-1} \psi_t = s_t {(e_{11}^{(t)})}^{-1}$.

For $\theta^{(t)}_{11}$ is independent of $t$, 
$\psi_t$ is
evaluation of the identity homotopy,
so $\psi_0 = \psi_t$ in $GK$.
%
\end{proof}
\fi



\begin{lemma}			\label{lemma101}  

Each $\Delta_t$ can be we written as a double split exact sequence in $GK^G$. More precisely, 
$\Delta_t = (1 \square 1) \Delta_{gt \square 1}$
with respect to the diagram 
$$\xymatrix{
J 
\ar[r]_j 
& M \ar@<-.5ex>[l]_{\Delta_t} 
\ar@<.5ex>[r]^g 
& X \ar[l]^t     \\
J  \ar[u]  \ar[r]^{j \square 0} 
& M \square M 
\ar[u]^p 
\ar@<.5ex>[r]^f  
& M \ar[l]^{ gt \square 1, 1 \square 1 }  \ar[u]^g 
}
$$

\end{lemma}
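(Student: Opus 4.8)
The plan is to verify that the lower row of the displayed diagram is a genuine double split exact sequence in the sense of Definition \ref{def31}, and then apply Lemma \ref{lemma21} (in the streamlined form of Remark \ref{cor22}) to the two double split exact sequences obtained by regarding $\Delta_t$ itself as a double split exact sequence via Lemma \ref{lemma100}. First I would set up the lower row: starting from the split exact sequence $\xymatrix{0 \ar[r] & J \ar[r]^j & M \ar[r]^g & X \ar[r] \ar@<.5ex>[l]^t & 0}$, form $M \square_t M$ by the $M\square A$-construction of Definition \ref{lemma35} applied to the ideal inclusion $j: J \to M$ and to the homomorphism $t: M \to M$ (note $t$ here plays the role of ``$s$'', with source object $M$ rather than $X$). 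This gives the split exact sequence $\xymatrix{0 \ar[r] & J \ar[r]^{j\square 0} & M\square M \ar[r]^f & M \ar[r] \ar@<.5ex>[l]^{t\square 1} & 0}$ with $f(m,a)=a$; here $t\square 1 = 1\square 1$ by definition, which explains why the lower right split is written $1\square 1$. The second split $gt\square 1$ is legitimate because $gt(a)-1(a) = gt(a)-a = -(a-gt(a)) \in \ker(1_M)$... more precisely, by Remark \ref{remark43} a map $r\square 1$ is a split of $f$ iff $r(a)-t(a)\in j(J)$, and since $t$ splits $g$ we have $(gt\square 1)(a) - (1\square 1)(a)$ lands in the ideal, so $gt\square 1$ is indeed a second split. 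Equip $M_2(M\square M)$ with the action of the form $\gamma \square (\delta\otimes 1_{M_2})$ as prescribed after Lemma \ref{lemma51}, so that it is a bona fide double split exact sequence.

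Next I would assemble the comparison diagram connecting $\Delta_t$-as-a-double-split (via Lemma \ref{lemma100}, with the splits $s_-(a)=(0,a)$ and $s_+(a)=g(a)$ into $J\oplus X$ — wait, one must instead use the diagram $\xymatrix{J \ar[r]^j & M \ar[r]^g & X \ar@<.5ex>[l]^{t}}$ directly, reading $\Delta_t = \Delta_{s_-}$ with the trivial second split so that $\mu = 1$) to the lower double split exact sequence. The vertical maps are the identity on $J$, the canonical corner/diagonal map $p: M\square M \to M$ (or its relevant section) and $g: M \to X$. I would then check the three equivariance-and-commutativity conditions demanded by Remark \ref{cor22}: that $\Phi = \phi\otimes 1_{M_2}$ for a non-equivariant $\phi$ and is $G$-equivariant, that $i\phi = bj$ (the left square on the ideal side commutes, which is immediate since both composites are $j$ up to the identifications), that $fs_-\phi = \phi g t_-$ (commutativity relating the two ``minus'' splits), and that $s_+\phi = at_+$ (the ``plus'' split square). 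Feeding these into Lemma \ref{lemma21}(iii) yields $s_+\mu\Delta_{s_-} b = a t_+ \mu \Delta_{t_-}$, which after unwinding the identifications is exactly $\Delta_t = (1\square 1)\Delta_{gt\square 1}$ in $GK^G$ — reading the associated morphism $t_+ \mu \Delta_{t_-}$ of the lower row as $(1\square 1)\mu\Delta_{gt\square 1}$ with $\mu=1$ after a rotation homotopy, since both corner embeddings $e_{11}, e_{22}$ into $M_2(M\square M)$ are homotopic by Definition \ref{def26}.

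The main obstacle I expect is pinning down precisely which $G$-actions make the lower row a legitimate double split exact sequence: one must choose the $M_2$-action $\gamma$ on $M_2(M)$ and verify, via Lemma \ref{lemma272} or Lemma \ref{lemma61}(viii), that $M_2(M\square M)$ is a $G$-invariant subalgebra and that $gt\square 1$, $1\square 1$ are both equivariant splits with respect to compatible ``Ad'' actions on the relevant $\call$-pieces. Concretely, one needs the conditions of Lemma \ref{lemma272}(iii) to hold, which amounts to a short compatibility check between the action $\delta$ on $M$ (as the quotient object) and the inner actions induced by the two splits; this is routine but is the one place where the quadratik hypothesis and the precise form of Definition \ref{def31} are genuinely used. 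Everything else — the commutativity squares, the $\mu=1$ via rotation, the final bookkeeping with the $\square$-notation — is formal manipulation of the kind already carried out in Lemmas \ref{lemma100}, \ref{lemma215} and \ref{lemma14}.
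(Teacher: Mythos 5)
Your setup of the lower row is essentially right (it is a double split exact sequence by remark \ref{remark43}, since $m-gt(m)\in j(J)$, and the $M_2$-action can be taken to be $(\theta\square\theta)\otimes 1_{M_2}$, which is legitimate because $j(J)$ is $\theta$-invariant by corollary \ref{cor79} -- you do not need the heavier machinery of lemma \ref{lemma272} here). A small slip: the $\square$-construction is performed with respect to the ring homomorphism $\id_M:M\to M$ (equivalently $gt:M\to M$), not with respect to $t$, which is a map $X\to M$ and not an endomorphism of $M$; the phrase ``$t\square 1=1\square 1$ by definition'' does not parse, although the conclusion you draw from it (the splits are $1\square 1$ and $gt\square 1$) is correct.

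The genuine gap is in the deduction step. You propose to verify all three conditions of remark \ref{cor22}, including the plus-split condition $s_+\phi=at_+$, and then invoke lemma \ref{lemma21}.(iii). In the only orientation compatible with the vertical arrows of the diagram (the $M\square M$-row playing the role of the upper sequence of lemma \ref{lemma21}, with $b=\id_J$, $\phi=\psi=p$, $a=g$), that condition reads $(1\square 1)\,p = g\,t_+$ for some second split $t_+$ of $g$; but $(1\square 1)p=\id_M$ while any composite $g\,t_+$ factors through $X$ and kills $j(J)$, so the condition fails whenever $J\neq 0$. Worse, the conclusion of \ref{lemma21}.(iii) in this orientation would be $(1\square 1)\mu\Delta_{gt\square 1}=g\,t\,\mu'\Delta_t$, whose right-hand side vanishes because $t\Delta_t=0$ (remark \ref{rem12}.(iii)) -- so the route through part (iii) cannot produce the stated identity. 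What works, and is how the paper argues, is to use only lemma \ref{lemma21}.(i), which involves neither $\mu$ nor the second splits: one checks $(j\square 0)p=j$ and $f(gt\square 1)p=p\,g\,t$ on $M\square M$ (the latter using that every element of $M\square M$ has the form $(m+j(x))\square m$), obtaining $p\,\Delta_t=\Delta_{gt\square 1}$, and then concludes purely at the level of ring homomorphisms from $(1\square 1)p=\id_M$ that $\Delta_t=(1\square 1)p\,\Delta_t=(1\square 1)\Delta_{gt\square 1}$. Your final remark about $\mu=1$ via a rotation homotopy is harmless but unnecessary once the argument is run through part (i) only.
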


\begin{proof}

Note that $(1-gt)(m) \in j(J)$ 
by the definition of the first line of the above diagram so that the lower line is double 
split by remark \ref{remark43}.
  

Set $p(m \square n)= m$ and $f(m \square n)=n$. 
By lemma \ref{lemma21}.(i),  $p  \Delta_t = \Delta_{gt \square 1}$.
(Note that $M \square_{gt\square \idd} M
= M \square_{\idd\square \idd} M = \{(m +j)\oplus m| m \in M, j \in J\}$ and so 
$f (gt\square \idd)p= p gt$ on $M \square M$.) 
 Thus 
$$\Delta_t = (1 \square 1) p \Delta_t
= (1 \square 1) \Delta_{gt \square 1}$$

\if 0
By lemma .. $\Delta_t = (1 \square 0) \Delta_{gt \square 1}$. Thus 
$$\Delta_t = (1-gt) \Delta_{t}
=
(1-gt) (1 \square 0) \Delta_{gt \square 1}
= (1 \square 1  - gt  \square 1) \Delta_{gt \square 1}  
= (1 \square 1 ) \Delta_{gt \square 1}$$

\fi

\if 0
$\Phi(\Delta_t)$ is the extended split eaxct sequence

$$\Delta_t = (1 \square 1) \Delta_{gt \square 1}$$

$f_\pm = 1, gt$
\fi

\if 0
$M_2$-aktion: (($\Ad((S \square S) \oplus
(S \square S))$ wenn $(M,S)$ ))
\fi 

We set the $M_2$-action on the second line 
of the above diagram to 
$(\theta \square \theta ) \otimes 1_{M_2}$ for $(M,\theta)$ being the given $G$-action of the first line of the above diagram. 
Note that $j(B)$ is invariant under the $\theta$-action by lemma \ref{cor79}, and so the $M_2$-action of the second line is valid. 
\if 0 
erwähnen $G$-invarince, und dass ideal $M_2(B \oplus 0))$ invarinat 
\fi 
\end{proof}

The next lemma shows how we may 
unitize the `starter' 
ring of a double split exact sequence.


\begin{lemma}			\label{lemma67} 
Let the left part of the first line of the following diagram be a given double split exact sequence, 
and $\pi$ be the identical embedding. 
Then 
these data can be completed to this diagram
$$\xymatrix{ B \ar[r] \ar[d] & M \square 
A  \ar[r] \ar[d]^\phi  &
(A,\alpha) \ar@<.5ex>[l]^{s_\pm}  \ar[d]^\pi  
\ar[r]^e   
&  M_\infty((A,\alpha))   
 \ar[d]^{\pi \otimes \idd}   \\  
B \ar[r] & \tilde M \square 
\tilde A \ar[r] &
(\tilde A, \tilde \alpha) \ar@<.5ex>[l]^{\tilde s_\pm}    
\ar[r]^f  
& M_\infty(
(\tilde A,\tilde \alpha))   
}$$
such that $s_+ \mu \Delta_{s_-} =  
\pi \tilde s_+ \mu \Delta_{\tilde s_-}$. 
If $M$ is unital, then one may replace $\tilde M$ by $M$ here. 

\end{lemma}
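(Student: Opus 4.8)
The plan is to realize the unitization as the standard "$M\square A$" construction applied to the unital ring $\tilde A$, and then to invoke Lemma \ref{lemma215} together with the matrix/homotopy machinery already developed. First I would record the bottom line of the diagram: starting from the split exact sequence $0 \to B \to M \square A \to A \to 0$ with splits $s_\pm \square 1$, one forms $\tilde M \square \tilde A$, where $\tilde M$ is the unitization of $M$, and $\tilde s_\pm : \tilde A \to \tilde M$ extend $s_\pm$ by sending the adjoined unit to the adjoined unit; since $\tilde s_+(a) - \tilde s_-(a) = s_+(a) - s_-(a) \in j(B)$ for $a \in A$ and the two extensions agree on the adjoined unit, we still have $\tilde s_+(x) - \tilde s_-(x) \in j(B)$ for all $x \in \tilde A$, so by Remark \ref{remark43} the bottom line is genuinely double split exact, with the $M_2$-action on $M_2(\tilde M \square \tilde A)$ taken to be of the form $\tilde\theta \square (\tilde\alpha \otimes 1_{M_2})$ (this uses Lemma \ref{cor79}.(i) for the $G$-invariance of the ideal and the standard conventions at the start of Section \ref{sec6}, adapting $\theta$ to $\tilde M$).

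Next I would set up the vertical maps. The map $\pi : (A,\alpha) \to (\tilde A, \tilde\alpha)$ is the identical embedding; the map $\phi : M \square A \to \tilde M \square \tilde A$ is $\iota \square \pi$ where $\iota : M \to \tilde M$ is the canonical inclusion; and the map $B \to B$ on the left is the identity. With these choices the left square and the middle (split) squares commute strictly: $\phi$ intertwines the $j$'s, intertwines $f$ with $f$, and satisfies $\phi(s_\pm \square 1) = (\tilde s_\pm \square 1)\pi$. The corner-embedding square on the right commutes because $e$ and $f$ are both $\diag(\,\cdot\,,0,0,\dots)$ and $\pi \otimes \idd$ is compatible with these. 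Thus this is exactly the situation of Lemma \ref{lemma215} (with the roles: $M$ there $=\tilde M$, $A$ there $=\tilde A$, $X$ there $=A$, $\varphi$ there $=\pi$, and $\Phi = \phi \otimes 1_{M_2}$), provided we check $G$-equivariance of $\Phi$; but $\Phi = (\iota \square \pi) \otimes 1_{M_2}$ is built from equivariant maps and preserves the chosen block-diagonal $M_2$-action, so it is equivariant. Applying Lemma \ref{lemma215} — more precisely its conclusion $\varphi(s_+ \square 1)\mu\Delta_{s_-\square 1} = t_+\mu\Delta_{t_-}$ read with $t_\pm = \tilde s_\pm$ — yields exactly
$$\pi\,\tilde s_+\,\mu\,\Delta_{\tilde s_-} = s_+\,\mu\,\Delta_{s_-}\,.$$

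For the last sentence, when $M$ is unital there is no need to adjoin a unit to $M$: one may take $\tilde M := M$ and let $\iota$ be the identity, the rest of the argument being unchanged (the only point of unitizing $M$ in general is to make $\tilde M \square \tilde A$ a ring containing $\tilde A$ via a split, and if $M$ already has a unit that unit can serve as the image of the adjoined unit of $\tilde A$). The main obstacle I anticipate is purely bookkeeping: pinning down the correct $G$-actions on $\tilde M$, $\tilde A$ and on the various $M_2$-algebras so that all the squares — and in particular $\Phi$ — are strictly equivariant and the $M_2$-actions have the required block form $(\,\cdot\,)\square(\,\cdot\,\otimes 1)$, so that Lemma \ref{lemma215} applies verbatim; once those conventions are fixed, no genuine computation remains, as everything reduces to the already-established Lemmas \ref{lemma21}, \ref{lemma215} and \ref{cor79}.
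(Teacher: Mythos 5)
Your proof is correct and is essentially the paper's own argument: the same extension $\tilde s_\pm(a+\lambda 1_{\tilde A})=s_\pm(a)+\lambda 1_{\tilde M}\,\square\, a+\lambda 1_{\tilde A}$, $\phi$ the identical embedding, the $M_2$-action $\tilde\theta\,\square\,(\tilde\alpha\otimes 1_{M_2})$ extending that of the first line, and the double-split check via remark \ref{remark43}; the paper then verifies directly with lemma \ref{lemma21} (and remark \ref{cor22}) for $\Phi=\phi\otimes 1_{M_2}$, while you route through lemma \ref{lemma215}, which is just that verification packaged. One small wording slip: with your role assignment the first line of lemma \ref{lemma215} is the tilde sequence, so its produced splits $t_\pm=\pi\tilde s_\pm\,\square\,1$ are the original $s_\pm\,\square\,1$ (using $\tilde M\,\square_{\pi\tilde s_-}A=M\,\square_{s_-}A$), not ``$t_\pm=\tilde s_\pm$'' as you wrote, though the displayed identity you conclude is the correct one.
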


\begin{proof}

Set
$$\tilde s_\pm (a + \lambda 1_{ \tilde A})= 
s_\pm (a) + \lambda 1_{\tilde M} \square 
a + \lambda 1_{\tilde A}$$
and 
$\phi$ 
to be the identical embedding. 
The $M_2$-action of the second line of the diagram is $\tilde \theta \square \tilde \alpha \otimes 1_{M_2}$ for being it $\theta \square \alpha \otimes 1_{M_2}$ of the first line. 
Verify the claim with lemma \ref{lemma21} for $\Phi:= \phi \otimes 1_{M_2}$, thereby 
recalling remarks \ref{cor22} and \ref{remark43}. 
\end{proof}

\section{Calculations with corner embeddings}

The next proposition shows how 
compact operators on a module over 
compact 
operators may be regarded as compact operators 
of a plain module. 


\begin{proposition}			\label{lemma71}  

\if 0
Assume that $\cale,\calf,B$ are cofull. 

Let $\cale$ be full module ($\cale = \sum \cale B$)
in the sense that $\sum \cale B B= \cale$. 

Let $\calf$ satisfy: $\forall \eta \in \cale$, $\eta A=0$ implies $\eta = 0$. 
\fi 


Let $(B,\beta)$ be a ring, $(\cale,X)$ a 
weakly cofull $(B,\beta)$-module 
and $(\calf,Y)$ a 
cofull $\calk_B\big ((\cale \oplus B,X \oplus \beta) \big)$-module.


Then 
there is a ring isomorphism
$$\pi:\calk_{\calk_B(\cale \oplus B)}(\calf) \rightarrow
\compacts B {\calf 
M_B }: \pi(T)= T 
|_{\calf 
M_B}$$
where 
$M_B \subseteq \compacts B {\cale \oplus B}$
is the subalgebra of corner multiplication operators $m_b$ defined by 
$m_b(\eta \oplus d)= 0 \oplus bd$ 
for $b,d \in B, \eta \in \cale$. 

The additive subgroup $\calf  M_B: = \{\xi m_b \in \calf | \,\xi \in \calf, m_b \in M_B\}$ of $\calf$ 
is turned into a $B$-module by setting
$\eta \cdot b := \eta m_b$ 
for $b \in B$ and $\eta \in \cale M_B$.

Turn the the right $(B,\beta) \cong (M_B, \Ad(\beta))$ module $\calf M_B$ into a cofull functional 
module by defining    
$$\Theta_{B}(\calf M_B) := \{ m \circ \phi|_{\calf M_B}| \, 
m \in M_B, \phi \in \Theta_{\calk_B(\cale \oplus \H_B)} (\calf) \}$$

\if 0
oder um cofullness zu vermeiden, 
jene $\phi$ in $\Theta(\calf)$, die in
$A_2$ landen $\phi(x) \in A_2$ 
 
Fcofull: $\xi' m_b m_b = (\xi m_b \phi(\eta)) m_b = \xi \phi(\eta m_b) = \xi m_b \phi|_{\calf M_B} (\eta m_b)$
\fi 

The $G$-action on $\calf M_B$ is
the one induced by restriction
of $Y$. 

\end{proposition}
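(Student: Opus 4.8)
The plan is to construct the isomorphism $\pi$ directly via restriction of compact operators and check it is well-defined, multiplicative, bijective, and $G$-equivariant, relying throughout on the structural identities for compact operators established in Lemmas \ref{lemma113}, \ref{lemma114}, \ref{lemma117}, and \ref{lemma17}. The key point is that $M_B \subseteq \calk_B(\cale\oplus B)$ is the corner $\calk_B((\cale\oplus B))$-module corresponding to the distinguished coordinate; concretely $M_B \cong (B,\beta)$ as a ring with $G$-action $\Ad(\beta)$ (this is essentially Lemma \ref{lemma1172}), and an element $\xi \in \calf$ acted on by $M_B$ lands in the $B$-submodule $\calf M_B$. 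First I would verify the elementary module-theoretic facts: that $\calf M_B$ is a right $B$-module under $\eta\cdot b := \eta m_b$, that the proposed functional space $\Theta_B(\calf M_B)$ is a $G$-invariant left $B$-submodule of $\Hom B(\calf M_B, B)$ (here one uses $m\circ\phi|_{\calf M_B}$ lands in $M_B\cong B$ because $\phi(\xi)\in\calk_B(\cale\oplus B)$ and left-multiplying by $m\in M_B$ pushes everything into the corner $M_B$), and that $\calf M_B$ is cofull — this follows from cofullness of $\calf$ over $\calk_B(\cale\oplus B)$ together with the computation, for $\xi m_b = \xi \phi(\eta) m_b = \xi m_b\,(m_b\circ\phi|_{\calf M_B})(\eta m_b)$ after using $b=\sum b_i c_i$ from $B$ being quadratik, so that $\calf M_B = \sum (\calf M_B)\,\Theta_B(\calf M_B)(\calf M_B)$.

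Next I would define $\pi$ on elementary operators by $\pi(\theta_{\eta,\phi}) = \theta_{\eta|_{\calf M_B},\, m\circ\phi|_{\calf M_B}}$ — more precisely, every $T\in\calk_{\calk_B(\cale\oplus B)}(\calf)$ restricts to the submodule $\calf M_B$ since $\calf M_B$ is an invariant subspace (because $T$ is $\calk_B(\cale\oplus B)$-linear and $m_b = m_b \cdot (\text{element of }\calk_B(\cale\oplus B))$-type factorizations keep things inside $\calf M_B$), and I set $\pi(T) = T|_{\calf M_B}$. Well-definedness and the fact that the restriction actually lands in $\calk_B(\calf M_B)$ is the content of Lemma \ref{lemma17} applied to the surjective functional-module homomorphism $\calf \to \calf M_B$, once one checks the compatibility identity (\ref{ref16b}) relating $\Theta_{\calk_B(\cale\oplus B)}(\calf)$-functionals to $\Theta_B(\calf M_B)$-functionals; multiplicativity is then automatic from $\theta_{\xi,\phi}\theta_{\eta,\psi} = \theta_{\xi\phi(\eta),\psi}$ together with the observation that the $\calk_B(\cale\oplus B)$-action on $\calf$ restricted to $M_B$ agrees with the $B$-action on $\calf M_B$ under $M_B\cong B$.

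For injectivity of $\pi$ I would use that $\calf M_B$ is a ``large'' submodule: if $T|_{\calf M_B}=0$ then, since $\calf$ is cofull over $\calk_B(\cale\oplus B)$ and every element of $\calk_B(\cale\oplus B)$ is a finite sum of products factoring through $M_B$ (here is where weak cofullness of $\cale$ over $B$ enters, via Lemma \ref{lemma117}.(i), guaranteeing $\cale\oplus B$ is cofull so that $\calk_B(\cale\oplus B)$ is quadratik by Lemma \ref{lemma113} and its elements are built from $\theta_{\xi,\phi}$ with $\phi$ projecting suitably), one gets $T$ acting as zero on all of $\calf$, hence $T=0$ — combined with $\calk_{\calk_B(\cale\oplus B)}(\calf)$ being an essential ideal in itself (Lemma \ref{lemma113}). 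Surjectivity: any $\theta_{\eta m_a, m\circ\phi|_{\calf M_B}}$ with $\eta\in\calf$ is hit by $\pi(\theta_{\eta m_a,\phi})$, and these generate $\calk_B(\calf M_B)$ by cofullness. Finally $G$-equivariance is the computation $g(\pi(T)) = g(T)|_{\calf M_B} = \pi(g(T))$ since the $G$-action $Y$ on $\calf$ restricts to $\calf M_B$ (using $Y_g(\xi m_b) = Y_g(\xi)\,\Ad(\beta)_g(m_b) \in \calf M_B$) and $\pi$ intertwines the $\Ad$-actions. The main obstacle I anticipate is the bookkeeping in the injectivity step — precisely pinning down that a compact operator on $\calf$ over $\calk_B(\cale\oplus B)$ is determined by its restriction to $\calf M_B$, which requires carefully exploiting that the distinguished-coordinate corner $M_B$ is a full corner of $\calk_B(\cale\oplus B)$ (so $\calk_B(\cale\oplus B) = \sum \calk_B(\cale\oplus B)\,M_B\,\calk_B(\cale\oplus B)$, a consequence of $\cale\oplus B$ being cofull), rather than any subtlety about the functional spaces themselves.
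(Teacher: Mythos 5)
Your overall route is the paper's (restrict operators to the corner submodule $\calf M_B$ and exploit that $M_B$ is a full corner of $\calk_B(\cale\oplus B)$), and your injectivity sketch is essentially the paper's argument. But the central step --- that $\pi(T)=T|_{\calf M_B}$ actually lies in $\compacts B {\calf M_B}$ with respect to the functional space $\Theta_B(\calf M_B)$ --- is not established by what you wrote. You invoke Lemma \ref{lemma17} for ``the surjective functional-module homomorphism $\calf\to\calf M_B$'', but there is no canonical module map in that direction, and Lemma \ref{lemma17} in any case compares two functional modules over the \emph{same} coefficient ring, whereas here $\calf$ is a module over $\calk_B(\cale\oplus B)$ and $\calf M_B$ over $B$; the lemma cannot transport compactness across this change of base ring. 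Likewise your formula $\pi(\theta_{\eta,\phi})=\theta_{\eta|_{\calf M_B},\,m\circ\phi|_{\calf M_B}}$ is not meaningful for a general $\eta\in\calf$ (there is no distinguished $m$, and $\eta$ is a vector, not a map). What is needed is the explicit corner factorization: weak cofullness of $\cale$, quadratik of $B$ and cofullness of $\calf$ give $\xi=\sum_i\zeta_i x_i m_{b_i}m_{c_i}y_i$ with $x_i\in\calk_B(B,\cale\oplus B)$, $y_i\in\calk_B(\cale\oplus B,B)$, and then $\theta_{\xi,\phi}|_{\calf M_B}=\sum_i\theta_{\zeta_i x_i m_{b_i},\,\psi_i}$ with $\psi_i:=m_{c_i}\,y_i\,\phi|_{\calf M_B}$; the split $m_{b_ic_i}=m_{b_i}m_{c_i}$ is precisely what produces the left $M_B$-factor demanded by the definition of $\Theta_B(\calf M_B)$. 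Without this computation $\pi$ has no target. The same factorization is what your injectivity paragraph silently uses, and your cofullness verification for $\calf M_B$ ($\xi m_b=\xi m_b\,(m_b\circ\phi|)(\eta m_b)$) does not balance the $m_b$'s --- it too must be run through that decomposition, as in the paper.

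On the other hand, once well-definedness is in place, your surjectivity argument is fine up to an index slip --- the correct identity is $\theta_{\xi m_a,\;m_c\circ\phi|_{\calf M_B}}=\pi(\theta_{\xi m_a m_c,\,\phi})$, not $\pi(\theta_{\xi m_a,\phi})$ --- and hitting the elementary generators this way is actually shorter than the paper's step, which constructs an explicit preimage of a general $S\in\compacts B {\calf M_B}$ and checks its well-definedness via the essential-ideal property of Lemma \ref{lemma113}. So the gap is concentrated in the compactness of the restriction, not in the bijectivity bookkeeping you worried about.
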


\begin{proof}

(a) 
We claim that for each 
$z \in A:=\calk_B(\cale \oplus B)$ 
there are $x_i \in A_1:=\calk_B(B, \cale \oplus B) \subseteq A,
y_i \in A_2:= \calk_B( \cale \oplus B,B) \subseteq A, b_i \in B$
such that $z = \sum_{i=1}^n x_i m_{b_i} y_i$. 

Indeed, let $z= \theta_{\xi,\phi}$. 
Write $\xi= \sum_{i=1}^n \zeta_i b_i d_i$ 
for $\zeta \in \cale \oplus B, b_j,d_j \in B$ 
by cofullness of $\cale \oplus B$ and quadratik of $B$. 
Set $\psi \in \Theta_B(\cale \oplus B)$ to be the canonical projection onto the coordinate $B$ (said in the sloppy sense). 
Then, for $\eta \in \cale \oplus B$, 
\begin{eqnarray*}
z(\eta) &=& \xi \phi(\eta)
= \sum_{i=1}^n \zeta_j b_j d_j \phi(\eta)
= \sum_{i=1}^n \theta_{\zeta_i,\psi} \circ m_{b_i}
\circ \theta_{(0 \oplus d_i),\phi} 
(\eta)
\end{eqnarray*}

The last formula 
also precises what is 
meant by $A_1$ and $A_2$. 
Note that 
the product $A_2 A_1 \subseteq M_B$.

(b) 
Let us view $\calf M_B$ as a $M_B$-module. 
As $m \phi(\xi n)= m \phi(\xi) n \in M_B$ for $ 
\phi \in \Theta_B(\calf), 
\xi \in \calf, m,n \in M_B$, we see that the functional $m \phi$ of the functional space of $\calf M_B$ maps into $M_B$. 
To show cofullness of $\calf M_B$ we write,
using cofullness of $\calf$ two times and
quadratik of $B$ and (a),  
$$
\xi m = \sum_i \xi_i \phi_i(\eta_i) \phi_i(\zeta_i) m 
= \sum_{i,j=1} \xi_i x_{i,j} m_{i,j} 
n_{i,j} y_{i,j}
  \phi_i(\zeta_i m)  $$ 
 for 
$\xi,\xi_i,\eta_i,\zeta_i \in\calf,  m, 
m_{i,j},n_{i,j} \in M_B, x_{i,j} \in A_1, y_{i,j} \in A_2$.


\if 0
mit $x_i,y_i \in \calk_B(\cale \oplus  B)$ 

bew:

Sei $\xi= \zeta b^2$ wegen $\cale$ full

$z(\eta)= \xi \phi(\eta)
= \zeta b^2 \phi(\eta)
= \zeta m_b( b \phi(\eta)  \oplus 0_\cale)
$

$
=  (\zeta \psi) \circ m_b \circ ((b \oplus 0_\cale)\phi) (\eta)
= x m_b y
$

wo 
$\psi$ projetion auf koord $B$ 

---
\fi

(c) 
Next we prove that $\pi(T)$ is a compact operator as claimed. 

By claim (a), the cofullness of $\calf$ and quadratik of $B$ we may write a $\xi \in \calf$ as 
$\xi = \sum_{i=1}^n \zeta_i x_i m_{b_i c_i}  y_i$ 
for $\zeta_i \in \calf, x_i \in A_1, y_i \in A_2, b_i, c_i \in B$.  

Then, for $\eta \in \calf$ and 
$\theta_{\xi,\phi} \in \calk_A(\calf)$, 
$$\theta_{\xi,\phi} (\eta m_b) 
= \sum_{i=1}^n \zeta_i x_i m_{b_i c_i} y_i \phi(\eta) m_b = \sum_{i=1}^n \zeta_i' \psi_i(\eta m_b)
$$
for $\zeta_i':= \zeta_i x_i m_{b_i}$ and
$\psi_i := m_{c_i} y_i \phi|_{\calf M_B}$. 

\if 0
Note that $\psi_i \in \Theta_{B}(\calf M_B)$ 
because by quadratik of $B$, or claim (a), say, 
we may further decompose $y_i$ into a sum of products with factors $M_B$ on the left hand side. 

because $y_i \phi(\eta) m_b \in M_B$ for all $\eta \in \calf$. 
\fi

\if 0

$T(\xi m_b)= T(\xi) m_b \in \calf M_B$ 

$\xi \phi(\eta m_b)= \xi \phi(\eta) m_b 
= \xi (x m_b y) \phi(\eta) m_b
= (\xi x m_b) \psi(\eta  m_b)$

mit $x,y \in \calk_B(\cale \oplus B)$ 

mit $\psi(\eta)= y \phi(\eta)$

$\Rightarrow $ $\pi(T)$ compacter operator wenn $T$ ist 

\fi

(d)
We claim that $\pi$ is injective. 

Using again claim (a) as before for $\xi \in \calf$,  assuming $\pi(T)=0$ 
gives 

\if 0
Intjective:

Let $\xi= \sum_{i=1}^n \xi_i z_i$ mit $\xi \in \cale \oplus \H_B$ und  $z_i \in \calk_B(\cale \oplus \H_B)$ 

Assume $\pi(T)=0$.
\fi

$$T(\xi)= \sum_i T(\zeta_i x_i m_{b_i} y_i)= 
\sum_i T(\zeta_i x_i m_{b_i}) y_i 
= 0 $$



(e) We claim that $\pi$ is surjective. 

Indeed, 
given $S \in \compacts B {\calf M_B}$
we try to define its preimage  $T \in \compacts {\compacts B {\cale \oplus B}} \calf$ by
$$T \Big(\sum_{i=1}^n \xi_i  x_i m_{b_i} y_i \Big):= \sum_{i=1}^n (\sigma \circ S)(\xi_i x_i m_{b_i}) y_i$$
for all $\xi_i \in \calf, x_i \in A_1,y_i \in A_2, b_i \in B$. 
Here, $\sigma: \calf M_B \rightarrow \calf$ is the identity embedding.

If we multiply both sides of the above identity with $z= \sum_{j=1}^m p_j m_{d_j} q_j \in A$ ($p_j \in A_1, q_j \in A_2, d_j \in B$ as in claim (a)), then,  because $y_i p_j$ 
is an element in $M_B$,  
we get
$$T \Big(\sum_{i=1}^n \xi_i  x_i m_{b_i} y_i \Big) z = \sum_{j=1}^m (\sigma \circ S) \Big (\sum_{i=1}^n \xi_i x_i m_{b_i} y_i
p_j  \Big ) m_{d_j} q_j = 0$$

Because $z \in A$ was arbitrary, and 
$\compacts B {\calf 
M_B }$ is an ideal in itself by 
lemma \ref{lemma113} and (b), we conclude 
that $T$ is well-defined.
\if 0
if $w:=\sum_{i=1}^n \xi_i  x_i m_{b_i} y_i = 0$. 

Because $\calf$ has the property .., this 
shows that $T$ is well defined. 
\fi
Notice that by claim (a), $T$ is fully defined. 
\end{proof}

The next 
corollary shows that the composition
of two corner embeddings is again one.

\begin{corollary}  

\if 0
Consider the assupmtions of lemma.. on $\cale$ and $\calf$. 

Let $B$ have an approcimate nit. 
\fi

Let the first line of the following diagram be given,
where $e$ and $f$ are corner embeddings.
$$\xymatrix{ B \ar[r]^e 
\ar[d]^g
& \calk_B(\cale \oplus \H_B) \ar[rr]^f  & &  \calk_{\calk_B(\cale \oplus \H_B)} \Big(\calf \oplus \H_{\calk_B(\cale \oplus \H_B)} \Big) \ar[d]^\phi			\\
\calk_B(
\calz \oplus \H_B)
& & & \calk_B \Big(
 \big(\calf \oplus \H_{\calk_B(\cale \oplus \H_B)} \big ) M_B \Big)
 \ar[lll]^\psi		
}$$

Then one can make the above commuting diagram
where $\phi$ and $\psi$ are ring isomorphisms 
and $g$ is a corner embedding.
\end{corollary}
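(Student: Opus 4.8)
The plan is to reduce the statement to Proposition \ref{lemma71}. Put $A:=\calk_B(\cale\oplus\H_B)$. Since $\cale\oplus\H_B$ is cofull, Lemma \ref{lemma117}(i) lets us regard it as $\cale'\oplus B$ with a distinguished $B$-coordinate, where $\cale'$ is weakly cofull; then, by Definition \ref{def22}, the corner embedding $e$ is literally the map $b\mapsto m_b$, so its image is the subring $M_B$ and $e\colon(B,\beta)\xrightarrow{\ \sim\ }(M_B,\Ad\beta)$ (cf. the identification of $M_B$ with $B$ in Proposition \ref{lemma71}). Moreover $f$, being a corner embedding, has the form $f(a)=\theta_{\hat a,\Phi_0}$, where $\hat a\in\calf\oplus\H_A$ is $a$ placed in the distinguished $A$-slot $i_0$ of $\H_A$ and $\Phi_0\in\Theta_A(\calf\oplus\H_A)$ is the projection onto that slot; in particular $(ef)(b)=f(m_b)=\theta_{\widehat{m_b},\Phi_0}$.

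Next I apply Proposition \ref{lemma71} to the cofull $A$-module $\calf\oplus\H_A$ (it is cofull because $f$ is a well-defined corner embedding). This yields the ring isomorphism $\phi\colon\calk_A(\calf\oplus\H_A)\to\calk_B\big((\calf\oplus\H_A)M_B\big)$, $\phi(T)=T|_{(\calf\oplus\H_A)M_B}$, together with the cofull functional $B$-module structure on $(\calf\oplus\H_A)M_B$ described there. Since $M_B$ acts coordinate-wise, $(\calf\oplus\H_A)M_B=\calf M_B\oplus\bigoplus_{j\in J}(AM_B)$, and a direct computation using the quadratik and weak-cofullness hypotheses identifies $AM_B$ with $\{\sum_k\theta_{\xi_k,\,0\oplus c_k}\mid\xi_k\in\cale'\oplus B,\ c_k\in B\}$ and, decomposing the $\xi_k$ along $\cale'\oplus B$, shows $AM_B=K\oplus M_B$ as functional $B$-modules, where $K$ is the submodule of operators with range in $\cale'\oplus 0$ and $M_B\cong(B,\beta)$ with its standard functional $B$-module structure. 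Extracting the copy $M_B^{(i_0)}$ in the distinguished slot and collecting the rest into $\calz:=\calf M_B\oplus K^{(i_0)}\oplus\bigoplus_{j\ne i_0}(AM_B)$, we obtain
$$(\calf\oplus\H_A)M_B=\calz\oplus\H_B,\qquad \H_B:=M_B^{(i_0)}\cong(B,\beta),$$
a direct sum of functional $B$-modules with distinguished $B$-coordinate $\H_B$; it is cofull because the whole module is.

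Finally I trace the corner embedding through $\phi$. Evaluating $\phi\big((ef)(b)\big)=\theta_{\widehat{m_b},\Phi_0}|_{(\calf\oplus\H_A)M_B}$: it annihilates $\calf M_B$ and every non-distinguished $AM_B$ (because $\Phi_0$ projects onto the $i_0$-slot of $\H_A$), it annihilates $K^{(i_0)}$ (because $m_b$ kills the $\cale'$-coordinate, so $m_b\cdot K=0$), and on $\H_B=M_B^{(i_0)}\cong(B,\beta)$ it acts by left multiplication $m_c\mapsto m_{bc}$. Hence, with respect to the above presentation, $\phi\circ f\circ e$ is exactly the corner embedding $B\to\calk_B(\calz\oplus\H_B)$ attached to the module $\calz\oplus\H_B$ in the sense of Definition \ref{def22}. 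Taking $\psi:=\mathrm{id}$ and $g:=ef\phi\psi$ (which equals $ef\phi$ since $\psi=\mathrm{id}$) gives the asserted commuting diagram with $\phi,\psi$ ring isomorphisms and $g$ a corner embedding. Equivariance is inherited throughout: $e$ and $f$ are equivariant, $\phi$ is by Proposition \ref{lemma71}, and the $G$-action that $\H_B=M_B^{(i_0)}$ carries as a coordinate of $(\calf\oplus\H_A)M_B$ is precisely $\beta$, as Definition \ref{def22} requires of the distinguished coordinate.

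The step I expect to be the main obstacle is the identification $(\calf\oplus\H_A)M_B=\calz\oplus\H_B$ as a \emph{direct sum of functional} $B$-modules — concretely, that the copy of $M_B$ in the distinguished slot splits off as a functional submodule isomorphic to $(B,\beta)$ with its standard functional space, and that the transported $f$ vanishes on the complement. This is exactly where the quadratik and weak-cofullness assumptions enter (to pin down $AM_B$ and to keep the relevant functionals and annihilators under control), together with careful bookkeeping of the functional space $\Theta_B(\calf M_B)$ introduced in Proposition \ref{lemma71} and of the several $G$-actions; should $(\calf\oplus\H_A)M_B$ turn out only isomorphic (rather than equal) to such a $\calz\oplus\H_B$, one absorbs the discrepancy into $\psi$ via Lemma \ref{lemma17}.
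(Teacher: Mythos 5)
Your proposal is correct and follows essentially the same route as the paper: apply Proposition \ref{lemma71} to the cofull $A$-module $\calf\oplus\H_A$ to get $\phi$, identify $AM_B$ with $\calk_B(B,\cale\oplus\H_B)$ split along the distinguished coordinate, and recognize the transported composite as the corner embedding onto a distinguished $(B,\beta)$-summand. The only cosmetic difference is that the paper splits off the whole original $\H_B$ (via $AM_B\cong\calk_B(B,\cale)\oplus\H_B$) and lets $\psi$ be the induced isomorphism, whereas you split off a single distinguished copy of $B$ (permitted by Definition \ref{def21}) and take $\psi=\mathrm{id}$, with a more explicit check of the corner-embedding property.
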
   

\begin{proof}
Let $M_B \subseteq A:= \calk_B(\cale \oplus \H_B)$ be the subring of corner multiplication operators as in 
proposition \ref{lemma71}  acting on the distinguished coordinate $B$ of $\H_B$. 

Then there is a $B$-module isomorphism
($B$ identified with $M_B$), 
recalling also lemma \ref{lemma221}.(i), 
$$A M_B \cong \calk_B(B,\cale \oplus \H_B)
\cong 
\calk_B(B,\cale) \oplus \H_B
$$

Hence, using this isomorphism for the distinguished first coordinate $A$ of $\H_A$ we get a $B$-module isomorphism 
$$(\calf \oplus \H_A) M_B \cong \calf M_B
\oplus (\H_A \ominus A) M_B \oplus \calk_B(B,\cale) \oplus \H_B = \calz \oplus \H_B$$
for $\calz$ obviously defined. 
This yields $\psi$ and the above commuting diagram. 
%
%
%
%
\end{proof}

In the following lemma we show how the inverses of corner embeddings can skip ring homomorphisms.

\begin{lemma}[Cf. {\cite[lemma 8.8]{baspects}}]  
			\label{lemma43}
Let a corner embedding $e$ and
a ring homomorphism $\pi$ as in the following
diagram be given:

\begin{equation}	\label{diag}
\xymatrix{ A \ar[rr]^e \ar[d]^\pi & &  \calk_A(\cale \oplus \H_A)  \ar[d]^\phi   \\
B \ar[rr]^{f} & & \calk_B((\cale  \oplus \H_A)\otimes_\pi B \oplus \H_B)
}
\end{equation}

Then we draw the above commuting diagram 
where $f$ is a corner embedding.

In particular, $e^{-1} \pi = \phi f^{-1}$
in $GK^G$.

\end{lemma}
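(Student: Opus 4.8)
The plan is to exhibit the homomorphism $\phi$ explicitly and recognize $f$ as a corner embedding via the isomorphism identities already proven. First I would set $\phi := \sigma \otimes \idd$, where $\sigma:\calk_A(\cale\oplus\H_A)\to\calk_B\big((\cale\oplus\H_A)\otimes_\pi B\big)$ is the ring homomorphism of Lemma \ref{lemma116}.(v) (the map $T\mapsto T\otimes\idd$, which by Lemma \ref{lemma116}.(v) lands in $\calk_B$ because $\cale\oplus\H_A$ is cofull — note $\cale\oplus\H_A$ is cofull as it contains the distinguished coordinate $A$, using Lemma \ref{lemma117}), composed with the inclusion into the larger compact-operator ring after adding $\H_B$. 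One then has to check that the square \eqref{diag} commutes, i.e. $e\phi = \pi f$ as ring homomorphisms $A\to\calk_B\big((\cale\oplus\H_A)\otimes_\pi B\oplus\H_B\big)$. This is a direct computation: $e(a)$ is the multiplication operator $\theta_{a,\psi}$ on the distinguished coordinate $A$ of $\H_A$ (Definition \ref{def22}), so $\phi(e(a)) = e(a)\otimes\idd$ acts on $(\cale\oplus\H_A)\otimes_\pi B$ as multiplication by $\pi(a)$ on the coordinate $A\otimes_\pi B$; and by Lemma \ref{lemma220}.(i) we have $A\otimes_\pi B\cong B_0$, a functional $B$-submodule of $B$, under which this operator becomes precisely left multiplication by $\pi(a)$ on the distinguished $B$-coordinate, which is $f(\pi(a))$.

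The key remaining point is to verify that $f$ is a corner embedding in the sense of Definition \ref{def22}, i.e. that $\calk_B\big((\cale\oplus\H_A)\otimes_\pi B\oplus\H_B\big)$ is of the form $\calk_B(\calf'\oplus\H_B)$ for a functional $B$-module $\calf'$, with $f$ embedding $B$ onto the distinguished coordinate. Here $\H_A = A\oplus\bigoplus_{i\in I\setminus\{i_0\}}A$, and by Lemma \ref{lemma221}.(ii) the internal tensor product commutes with direct sums, so $(\cale\oplus\H_A)\otimes_\pi B \cong (\cale\otimes_\pi B)\oplus(A\otimes_\pi B)\oplus\bigoplus_{i\in I\setminus\{i_0\}}(A\otimes_\pi B)$; using Lemma \ref{lemma220}.(i) to replace the distinguished $A\otimes_\pi B$ summand by $B_0\subseteq B$, and then absorbing $B_0$ together with the extra $\H_B$ copy — the point being that a single copy of $B$ suffices by Remark \ref{rem12}.(iv) — one sees that the whole module is of the required shape $\calf'\oplus\H_B$ with $B$ sitting as the distinguished coordinate via $f$. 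Cofullness of $\calf'\oplus\H_B$, needed for $f$ to qualify as a corner embedding, follows from Lemma \ref{lemma117}.(ii) since $(\cale\oplus\H_A)\otimes_\pi B$ is cofull by Lemma \ref{lemma116}.(iv) (as $\cale\oplus\H_A$ is cofull) and $B$ is quadratik.

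Once the commuting square is established with $f$ a corner embedding, the final assertion is immediate from the Stability axiom of Definition \ref{def121}: $e$ and $f$ are invertible in $GK^G$ with prescribed inverses, and $e\phi = \pi f$ gives $e^{-1}\pi = e^{-1}\pi f f^{-1} = e^{-1} e \phi f^{-1} = \phi f^{-1}$ in $GK^G$. The main obstacle I anticipate is bookkeeping in the module identifications — keeping track of which coordinate is "distinguished" through the chain of isomorphisms (internal tensor product distributing over the direct sum defining $\H_A$, the $A\otimes_\pi B\cong B_0$ replacement, and the reshuffling of $\H_B$), and confirming that all the functional-space structures match up under these isomorphisms so that Lemma \ref{lemma17} applies and $\phi$ is genuinely a ring isomorphism onto its range (or at least that the diagram commutes on the nose). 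The algebraic verification $e\phi = \pi f$ itself is routine once the identifications are pinned down.
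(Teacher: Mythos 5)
There is a genuine gap at the central step, namely your claim that the square commutes on the nose. The operator $\phi(e(a)) = e(a)\otimes\idd \oplus 0_{\H_B}$ acts by multiplication by $\pi(a)$ on the summand $A\otimes_\pi B$ sitting inside $(\cale\oplus\H_A)\otimes_\pi B$, whereas $f(\pi(a))$ acts by left multiplication by $\pi(a)$ on the distinguished coordinate $B$ of the separately added summand $\H_B$. These are different direct summands, so the two operators are genuinely different: on an element supported in the distinguished $\H_B$-coordinate, $f(\pi(a))$ acts by $\pi(a)$ while $\phi(e(a))$ acts by zero. Moreover the identification of lemma \ref{lemma220}.(i) gives $A\otimes_\pi B\cong B_0=\sum\pi(A)B$, which in general is a \emph{proper} functional $B$-submodule of $B$, so no reshuffling of summands can identify it with the full distinguished coordinate $B$; your proposed "absorption" of $B_0$ into $\H_B$ cannot repair this. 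Consequently $e\phi=\pi f$ fails as an identity of ring homomorphisms, and the diagram only commutes in $GK^G$.

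This is exactly where the paper's proof does extra work that your proposal omits: after the identification $A\otimes_\pi B\cong B_0\subseteq B$, one constructs the equivariant ring homomorphism $h:M_2(\pi(A))\to\calk_B(B_0\oplus B)\subseteq\calk_B(B_0\oplus\H_B)$ by matrix--vector multiplication (this uses quadratik/cofullness to write $\pi(a)b$ as a sum of elementary compacts), observes that $e\phi(a)=h(\diag(\pi(a),0))$ and $\pi f(a)=h(\diag(0,\pi(a)))$, and then connects the two by a rotation homotopy (definition \ref{def26}) performed in the domain of $h$, giving $e\phi=\pi f$ in $GK^G$ by the homotopy axiom. Your concluding computation $e^{-1}\pi = e^{-1}e\phi f^{-1} = \phi f^{-1}$ is fine once this $GK^G$-commutativity is in hand, and your verification that $f$ is a corner embedding (distributing $\otimes_\pi B$ over the direct sum, cofullness via lemmas \ref{lemma116}.(iv) and \ref{lemma117}) is essentially sound; what is missing is the $h$-plus-rotation argument, without which the asserted strict commutativity is false.
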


\begin{proof}
Define $\phi(T)=T \otimes 1 \oplus 0_{\H_B}$. 
At first we write 
$$\calk_B((\cale  \oplus \H_A)\otimes_\pi B \oplus \H_B)
\cong \calk_B(\cale \otimes_\pi B  \oplus \H_{A \otimes_\pi B} \oplus \H_B)$$
such that 
$e \phi (a)$ 
acts by $a$-multiplication on the summand $(A \otimes_\pi B,\alpha \otimes \beta)$.

\if 0
Let $C=(C,\gamma)$.
We set 
$$\phi(T \square a)= T \otimes 1 \oplus 0 
\square a$$
and $t_\pm = (s_\pm \square 1) \phi$.
If the $M_2$-action of the first line 
is 
${\rm Ad}(S \oplus
T) \square \delta$
 (confer lemma \ref{lemma14}),
then we set it to
${\rm Ad}(S \otimes \gamma \oplus R_0,
T \otimes \gamma \oplus R_0) \square 
\delta$ in the second line.
\fi

By lemma \ref{lemma220}.(i) 
we have a 
$B$-module 
isomorphism
$$
X:A \otimes_\pi B \rightarrow B_0:= 
\sum 
\pi(A) B  \subseteq B
: X(a\otimes b) = 
\pi(a) b 				\\
$$
%

We have an equivariant ring homomorphism
$$h:M_2(\pi (A)) \rightarrow \calk_{B}(B_0 \oplus B) \subseteq \calk_{B}(B_0 \oplus \H_B)$$
by matrix-vector multiplication,
where the summand $B$ means here the 
distinguished first coordinate $(B,\beta)$ of $\H_B$.

(Here we use cofullness of $A$, i.e.  
$\pi(a) b= \sum_i \theta_{\pi(a_i), \pi(a_i')} 
b$.)

Note that $e \phi(a) = h(\diag (\pi(a),0))$. 
We may rotate this to $h(\diag (0,\pi(a))) = \pi f (a)$ 
by a 
rotation homotopy in the domain of $h$, 
see definition \ref{def26}.  
 
Hence we get $e \phi = \pi f$ in $GK^G$ by homotopy. 
%
%
\if 0
That is why we can rotate 
$i \phi$
to $g$ for
$$g(b)= (0 \otimes 0 \oplus (f(b) \oplus 0)) \square 0$$
by a homotopy in 
the image of $h$, which is in
$\kappa(C \otimes \calk)$.

Thus $i \phi=g = f e \kappa$ in $GK$.
It is now easy to verify with lemma \ref{lemma21}.
\if 0

the subspace $h(T \square a)$ + 
image of $h$, which lies in
$\kappa(C \otimes \calk)$.

$
\phi|_B \cong f \oplus 0 \oplus 0 \square {\rm id}$
to 
$$0 \oplus f \oplus 0 \square {\rm id}:B \rightarrow \call_C(D \oplus C \oplus \H_C) \square A$$
%
by a homotopy
of $*$-homomorphisms in $M_2(f(B)) \subseteq \kappa(C \otimes \calk)$.

That is why the left rectangle of the diagram
commutes in $GK$, that is, $i \phi = f e \kappa$.

That is why $\psi|_B =  \phi \oplus 0 \oplus 0 \ldots$ is homotopic to $g = 
0 \oplus \phi \oplus 0 \ldots$

We identify
$$\call_C \big(\phi(B) C \oplus C \oplus \mathbb{H}_C
\big ) \cong 
\call_C \big(C \oplus \mathbb{H}_C
\big )
\cong
\call_C \big(\mathbb{H}_C
\big )$$
by Kasparov summodule theorem, where
we leave the single factor $C$ identical.

We define $u$ by embedding the corner
of $C \otimes \calk$ to the operator
$C \rightarrow C$ of the single fixed $C$.

Now
$$\call_C (\H_B) \cong \calm(C \otimes \calk)$$
and $u$ is exactly the canonical ideal embedding.

Damit sieht man dass $C \otimes \calk$ ideal, aber man lässt die ursprüngliche $G$-action.
man definiert auf $C \otimes \calk$ die $G$-action
so, dass $u$ $G$-equivariant. 

\fi
\fi
\end{proof}

\if 0
\begin{proof} 
$\phi(T)=T \otimes 1 \oplus 0$

durch rotation ist
$e \phi = f e_2$

damit

$\phi e_2^{-1} = e^{-1} f$

\end{proof}
\fi

\begin{definition}
{\rm

Define $L_0 GK^G$ (`{\em level-$0$ morphisms}') to be the smallest additive subcategory of $GK^G$ generated by all ring homomorphisms. 

}
\end{definition}

So the morphisms in $L_0 GK^G(A,B)$ are exactly those 
of the form
$\pm f_1 \pm f_2 \pm \ldots \pm f_n$
for some 
ring homomorphisms $f_i:A \rightarrow B$ for all $1 \le i \le n$. 

The next lemma shows how inverses of corner embeddings can skip such morphisms: 

\begin{lemma}
If $e \in GK^G(A,D)$ is a corner embedding and $z \in L_0 GK^G(A,B)$ 
then 
there is a corner embedding $f \in GK^G(B,E)$ and a $w \in L_0 GK^G(D,E)$ such that
$$e^{-1} z = w f^{-1}$$

\end{lemma}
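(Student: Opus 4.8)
The plan is to reduce the statement for a general $z \in L_0GK^G(A,B)$ to the case of a single ring homomorphism, which is exactly Lemma \ref{lemma43}, and then to assemble the pieces. First I would write $z = \pm f_1 \pm \dots \pm f_n$ with ring homomorphisms $f_i : A \to B$. Applying Lemma \ref{lemma43} to the given corner embedding $e \in GK^G(A,D)$ and to each $f_i$ separately, I obtain for each $i$ a corner embedding $f^{(i)} \in GK^G(B,E_i)$ and a ring homomorphism $\phi_i \in GK^G(D,E_i)$ with $e^{-1} f_i = \phi_i (f^{(i)})^{-1}$ in $GK^G$. The obstacle is that the target objects $E_i$ of these corner embeddings differ, so the expressions $\phi_i (f^{(i)})^{-1}$ cannot be added directly; they live in different Hom-groups only after composing with $(f^{(i)})^{-1}$, but the $(f^{(i)})^{-1}$ have different sources.

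The key step is therefore to funnel all the $E_i$ into one common object. Recall from Remark \ref{rem12}.(iv) that any two corner embeddings out of the same object become comparable after one further corner embedding: given corner embeddings $g_i : E_i \to E$ for $1 \le i \le n$ into a common $E$ (built by the kind of direct-sum/$M_\infty$ construction used in Remark \ref{rem12}.(iv), e.g.\ $E := \calk_B\big(\bigoplus_i(\calf_i \oplus \H_B) \oplus \H_B\big)$ where $\calf_i$ is the module defining $f^{(i)}$), each $g_i$ is itself an invertible corner embedding, and moreover $f^{(i)} g_i : B \to E$ is again a corner embedding for every $i$. The catch is that I need a \emph{single} corner embedding $f : B \to E$ such that each $f^{(i)} g_i$ equals $f$ in $GK^G$; this need not hold on the nose, but the composite $f^{(i)} g_i$ is a corner embedding, hence invertible, and $(f^{(i)})^{-1} = g_i (f^{(i)} g_i)^{-1}$. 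To get a common $f$, I would instead observe that by stability any two corner embeddings $B \to E$ with the same range object are equal in $GK^G$: indeed each becomes, after one more corner embedding into a further common object, an invertible corner embedding, so both agree with the unique invertible morphism $B \to E$ arising as (corner embedding into the big object) composed with (inverse of the other big corner embedding). Concretely, since $f^{(i)}g_i$ and $f^{(j)}g_j$ are both corner embeddings $B \to E$, they are both invertible, and $f^{(i)}g_i \cdot (f^{(j)}g_j)^{-1} = 1_E$ because a corner embedding into $E$ followed by the inverse of another corner embedding into $E$ is computed, via one more common corner embedding $E \to E'$, to be the identity; hence $(f^{(i)})^{-1} = g_i \cdot (fg)^{-1}$ for $f := f^{(1)}$, $g := g_1$, uniformly in $i$.

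With that in hand, set $f := f^{(1)} g_1 : B \to E$, a corner embedding, and put $w := \sum_i (\pm) \phi_i g_i \in GK^G(D,E)$. Since each $\phi_i$ is a ring homomorphism and each $g_i$ is a corner embedding (a generator of $GK^G$), the morphism $\phi_i g_i$ is a composite of generators; but I must check $w$ lies in $L_0 GK^G(D,E)$, i.e.\ is a sum of \emph{ring homomorphisms} $D \to E$. A corner embedding is a ring homomorphism, and a composite of ring homomorphisms is a ring homomorphism, so $\phi_i g_i : D \to E$ is a ring homomorphism; hence $w = \pm\phi_1 g_1 \pm \dots \pm \phi_n g_n \in L_0GK^G(D,E)$ as required. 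Finally compute in $GK^G$:
$$
e^{-1} z = \sum_i (\pm)\, e^{-1} f_i = \sum_i (\pm)\, \phi_i (f^{(i)})^{-1} = \sum_i (\pm)\, \phi_i g_i\, (fg)^{-1},
$$
wait --- more cleanly, using $(f^{(i)})^{-1} = g_i f^{-1}$ with $f = f^{(1)}g_1$: $e^{-1}z = \sum_i(\pm)\phi_i g_i f^{-1} = \big(\sum_i(\pm)\phi_i g_i\big) f^{-1} = w f^{-1}$, which is the claim. The main obstacle, as indicated, is the bookkeeping in the middle paragraph: justifying that all the auxiliary corner embeddings $g_i : E_i \to E$ and the resulting $f^{(i)}g_i : B \to E$ can be made to agree in $GK^G$, which rests on the stability axiom together with the now-standard trick (Remark \ref{rem12}, Lemma \ref{lemma43}, and the corollary composing two corner embeddings) that corner embeddings with a common range object represent the same $GK^G$-morphism.
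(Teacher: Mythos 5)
There is a genuine gap at the key step of your middle paragraph. You need the assertion that any two corner embeddings $B \to E$ with the same range object coincide in $GK^G$ (so that $f^{(i)}g_i = f^{(1)}g_1$ for all $i$, giving a common $f$), but the justification you offer is circular: you argue that the two composites agree because, after one more corner embedding $E \to E'$, they become corner embeddings into a common object — which is exactly the statement to be proved, pushed up one level. There is no ``unique invertible morphism $B \to E$'' to appeal to; invertibility of both maps does not force equality. No such comparison lemma for arbitrary corner embeddings with common range appears in the paper, and in this discrete-ring setting every such identification is obtained by an explicit equivariant rotation homotopy inside a fixed module decomposition (as in the proof of Lemma \ref{lemma43}, Remark \ref{rem12}.(iv), or the corollary on composing corner embeddings). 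In your concrete situation the claim could in fact be repaired: $f^{(i)}g_i$ and $f^{(1)}g_1$ both act by multiplication on a distinguished coordinate $(B,\beta)$ of the big direct sum, and a rotation homotopy between these two coordinates (Definition \ref{def26}) would identify them in $GK^G$ — but that argument has to be supplied, and you did not supply it.

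The paper avoids the issue altogether by a simpler device: writing $z=\pm\pi_1\pm\cdots\pm\pi_n$, it runs the construction of Lemma \ref{lemma43} for all $\pi_i$ simultaneously with one fixed target, replacing the lower right corner of diagram (\ref{diag}) by $\calk_B\big(\bigoplus_{i=1}^n(\cale\oplus\H_A)\otimes_{\pi_i}B\,\oplus\,\H_B\big)$. Then a single corner embedding $f$ (multiplication on the distinguished coordinate of $\H_B$) works for every $i$, the ring homomorphism $\phi_i$ places $T\otimes 1$ into the $i$-th summand, and the proof of Lemma \ref{lemma43} gives $e^{-1}\pi_i=\phi_i f^{-1}$ for all $i$ at once, so $w=\pm\phi_1\pm\cdots\pm\phi_n$ and $e^{-1}z=wf^{-1}$ with no need to compare different corner embeddings afterwards. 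If you want to keep your ``funnel the $E_i$ into one $E$'' route, you must replace the uniqueness claim by an explicit equivariant rotation homotopy between the specific embeddings $f^{(i)}g_i$; as written, the proof does not go through.
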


\begin{proof}

If $z= \pi_1 \pm 
\ldots \pm \pi_n$ then we
do the diagram (\ref{diag}) 
for each $\pi_i$ but with one constant 
corner embedding $f$ by replacing the 
lower right corner of 
diagram (\ref{diag})  by 
$\calk_B \big (\oplus_{i=1}^n (\cale  \oplus \H_A)\otimes_{\pi_i} B \oplus \H_B \big )$, which
works for all $\pi_i$.  
We obtain then by (the proof of) lemma 
\ref{lemma43} that 
$e^{-1} \pi_i =  \phi_i f^{-1}$ for all $1 \le i \le n$. 
\if 0

\begin{equation}	\label{diag}
\xymatrix{ A \ar[rr]^e \ar[d]^\pi & &  \calk_A(\cale \oplus \H_A)  \ar[d]^{\phi_1 
\oplus \ldots \oplus \phi_n}   \\
B \oplus \ldots \oplus B \ar[rr]^{f} & & \calk_B(\oplus_{i=1}^n (\cale  \oplus \H_A)\otimes_{\pi_i} B \oplus \H_B)
}
\end{equation}

betrachte diagram
(\ref{diag})
und erweitere untere rechte ecke mit
$$\calk_C((\cale  \oplus \H_B)\otimes_f C 
\oplus
(\cale  \oplus \H_B)\otimes_g C \oplus \H_C)$$

$\phi$ ist dann $\phi(T) = T \otimes 1 \oplus 0 \oplus 0$
und
$\psi$ ist $\phi(T) = 0 \oplus T \otimes 1  \oplus 0$

$E$ ist dann die untere eckeneinbettung
\fi
\end{proof}

\section{Calculations with extended double split exact sequences}
							\label{sec9}

From now on, we make the convention that for double split exact sequences as in 
definition \ref{def31} the ring homomorphism 
 $f \otimes 1_{M_2}: (M_2(M),\theta) \rightarrow (M_2(A), \alpha \otimes 1_{M_2})$ 
is $G$-equivariant. In other words, 
$\delta = \alpha \otimes 1_{M_2}$ in 
corollary \ref{cor79}. 

\begin{definition}
{\rm 

By an {\em extended double split exact sequence } 
we mean a diagram of the form 
$$\xymatrix{
(Z,\zeta) \ar[r]^{e} & (B,\beta)  \ar[r]^j & (M,\gamma) \ar[r]^f 
&  (A,\alpha)  \ar@<.5ex>[l]^{s,t}
} $$
where $e$ is a corner embedding and the other part is a double split exact sequence as 
in definition \ref{def31}. 
\if 0 
such that, as an additional slight specialization,  $f \otimes 1_{M_2}: (M_2,\theta) \rightarrow (M_2(A), \alpha \otimes 1_{M_2})$ 
is $G$-equivariant.  
\fi  
The {\em associated 
morphism}  in $GK^G$ to this extended double split exact sequence is 
$s \mu \Delta_t e^{-1}$. 
}
\end{definition}

We shall also abbreviate 
$s \mu \Delta_t e^{-1}=s \nabla_{t}$ by setting $\nabla_{t}:= \mu \Delta_t e^{-1}$.  

The next lemma shows how we may add or subtract superfluous modules involved in extended 
double split exact sequences. 

\begin{lemma}    \label{lemma10}

Let the first line of the the following diagram be given and $(\calf,T)$ be any 
weakly cofull $B$-module. 
Then we can draw this diagram 
\begin{equation} 			\label{standardspe}
\xymatrix{ B \ar[r]^e \ar[d]  
&  \calk_B(\cale \oplus \H_B)  \ar[d]^\psi    \ar[r]
& \call_B(\cale \oplus \H_B) \square A  \ar[d]^{\phi}  \ar[r]  
& A  \ar@<.5ex>[l]^{s_\pm }  \ar[d] \\
B \ar[r]^-{f} & \calk_B(\cale  \oplus \calf \oplus \H_B )   \ar[r]
&
\call_B(\cale  \oplus  \calf \oplus \H_B) \square A    \ar[r] 
& A  \ar@<.5ex>[l]^-{t_\pm} 
}
\end{equation}
such that 
$s_+ \mu \Delta_{s_-} e^{-1} = t_+ \mu \Delta_{t_-} f^{-1}$. 

\end{lemma}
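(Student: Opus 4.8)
The plan is to exhibit the vertical maps explicitly and then invoke Lemma \ref{lemma21} (via Remark \ref{cor22}) to verify commutativity of the two extended double split exact sequences, plus Lemma \ref{lemma43} to handle the corner-embedding squares. First I would define the middle vertical map $\psi$ and the right vertical map $\phi$ so that they are compatible with the obvious inclusion $\cale \oplus \H_B \hookrightarrow \cale \oplus \calf \oplus \H_B$: concretely, $\psi$ should send $\calk_B(\cale \oplus \H_B)$ into $\calk_B(\cale \oplus \calf \oplus \H_B)$ by extending operators by $0$ on the $\calf$-summand, and $\phi$ is then $\psi \square \idd_A$, acting on $\call_B(\cale \oplus \H_B) \square A$ by the same $0$-extension on the $\call$-part (this extends to $\call$ since an adjointable operator that is $0$ on a direct summand is still adjointable, cf. Lemma \ref{lemma1170}). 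The $M_2$-action on the second line is taken to be $\Ad(S' \oplus T') \square (\alpha \otimes 1_{M_2})$, where $S',T'$ are the $G$-actions on $\cale \oplus \calf \oplus \H_B$ obtained from the given ones on $\cale \oplus \H_B$ by using the given action $T$ on $\calf$; this keeps $\phi$ equivariant, and one checks that $M_2(\call_B(\cale \oplus \calf \oplus \H_B) \square A)$ is $G$-invariant exactly as in Lemma \ref{lemma272}, because the obstruction term lives on the $\cale \oplus \H_B$-part where it already vanishes by hypothesis.

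Next I would set the splits on the second line by $t_\pm := s_\pm \phi$, i.e.\ $t_\pm = (s_\pm \square 1)\phi$, which are genuine ring homomorphisms splitting the bottom exact sequence (note $(1 - f t_-)$ lands in the ideal, so the bottom line is double split by Remark \ref{remark43}). The leftmost vertical map $B \to B$ is the identity. With these choices the right-hand big square and the middle square commute on the nose, so the hypotheses $f s_- \phi = \phi g t_-$, $i\phi = bj$ and $s_+ \phi = a t_+$ of Lemma \ref{lemma21} (in the form of Remark \ref{cor22}, with $b = a = \idd$ and $\Phi = \psi \otimes 1_{M_2}$) are immediate once $G$-equivariance of $\Phi$ is checked; equivariance of $\Phi$ follows from the choice of $M_2$-action together with Lemma \ref{lemma51}. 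Lemma \ref{lemma21}.(iii) then gives $s_+ \mu \Delta_{s_-} = \phi\, t_+ \mu \Delta_{t_-}$ up to the identifications made, which is the statement modulo the corner-embedding pieces.

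Finally I would deal with the leftmost square, relating the corner embeddings $e$ and $f$: here $e : B \to \calk_B(\cale \oplus \H_B)$ and $f : B \to \calk_B(\cale \oplus \calf \oplus \H_B)$ are both corner embeddings onto the distinguished coordinate $B$, and the map $\psi$ intertwines them, $e\psi = f$, because $\psi$ is the $0$-extension on $\calf$ while the distinguished coordinate is untouched. Hence $e^{-1} = \psi f^{-1}$ in $GK^G$ by the stability axiom. Combining this with the previous paragraph, $s_+ \mu \Delta_{s_-} e^{-1} = s_+ \mu \Delta_{s_-} \psi f^{-1}$; but $\Delta_{s_-}\psi$ and $\phi$-conjugation are linked by Lemma \ref{lemma21}.(i) applied to the middle-and-left rectangle (with $\phi$ there being $e\psi f^{-1} = \idd$-type, i.e.\ the identity after identifications), yielding $\Delta_{s_-}\psi = \psi' \Delta_{t_-}$ in the appropriate sense, and one assembles $s_+ \mu \Delta_{s_-} e^{-1} = t_+ \mu \Delta_{t_-} f^{-1}$. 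The main obstacle I anticipate is purely bookkeeping: tracking the several direct-sum reassociations $\cale \oplus \calf \oplus \H_B \cong (\cale \oplus \H_B) \oplus \calf$ and ensuring the chosen $M_2$-actions on the enlarged algebras are simultaneously (a) valid $G$-actions making the $\square$-subalgebra invariant and (b) making all vertical maps equivariant; once the actions are pinned down as $\Ad$ of the extended module actions, everything reduces to the already-established Lemmas \ref{lemma21}, \ref{lemma272} and the stability axiom, with no genuinely new computation.
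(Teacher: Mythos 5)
Your proposal is correct and follows essentially the same route as the paper: the same zero-extension maps $\psi(T)=T\oplus 0_{\calf}$ and $\phi(T\square a)=T\oplus 0_{\calf}\square a$, the splits $t_\pm=s_\pm\phi$, the extended $\Ad$-type $M_2$-action checked via Lemma \ref{lemma272}, and the verification through Lemma \ref{lemma21} with Remark \ref{cor22}, with the corner-embedding square handled by $e\psi=f$ and stability. The only caveat is notational: in the application of Lemma \ref{lemma21} the map between the ideals is $\psi$ (not the identity, as you wrote "$b=a=\idd$"), but your final assembly via $e^{-1}=\psi f^{-1}$ and $\Delta_{s_-}\psi=\psi'\Delta_{t_-}$ uses it correctly.
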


\begin{proof}
Set 
$\phi(T \square a)= T \oplus 0_\calf \square a$ and $\psi(T)=T \oplus 0_\calf$.   
%
Define $t_\pm = s_\pm \phi$ 
and verify the claim with lemma \ref{lemma21}. 
The $M_2$-action of the second line, 
to be checked by lemma \ref{lemma272}, 
 is 
$\Ad(S \oplus T \oplus  X\oplus T) \square \alpha$ if of the first line it is $\Ad(S \oplus X) \square \alpha$. 
Check the claim with lemma \ref{lemma14}
and recall remark \ref{cor22}.  
\end{proof}

\begin{lemma}			\label{lemma89} 

Every double split exact sequence can be turned to an extended double split exact sequence, 
even to the form (\ref{standardspe}) 
with $s_\pm$ being of ``standard form", 
i.e. $s_\pm = s_\pm' \oplus 0_{\H_B} \square \idd$.   
\end{lemma}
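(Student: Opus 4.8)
The plan is to combine the two preceding lemmas. First I would apply Lemma \ref{lemma14} to replace the given double split exact sequence by an isomorphic one of the form
$$\xymatrix{
B \ar[r]^j & \call_B(B) \square A \ar[r]^f & A \ar@<.5ex>[l]^{t_\pm}
}$$
with $M_2$-action $\Ad(S \oplus T) \square \delta$; this does not change the associated $GK^G$-morphism, so it suffices to prove the claim for such a presentation. Next, the module $\cale = B$ here is weakly cofull by Lemma \ref{lemma114} (indeed $B$ is quadratik, and cofullness implies weak cofullness), so I can invoke Lemma \ref{lemma100}'s companion construction: more directly, I want an extended double split exact sequence, i.e.\ a corner embedding prepended on the left. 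The natural choice is the corner embedding $e : B \to \calk_B(B \oplus \H_B)$ from Definition \ref{def22} (note $B \oplus \H_B$ is cofull by Lemma \ref{lemma117}.(i)). Since $\calk_B(B) \cong B$ by Lemma \ref{lemma1172}, prepending $e$ and absorbing it amounts to passing from the presentation over $\call_B(B)$ to one over $\call_B(B \oplus \H_B)$, which is exactly what Lemma \ref{lemma10} accomplishes.

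So the key steps, in order, are: (1) use Lemma \ref{lemma14} to normalise the double split exact sequence to the $\call_B(B) \square A$ form; (2) interpret the identity $1_B = e^{-1} e$ (valid by the Stability axiom) to rewrite the associated morphism $t_+ \mu \Delta_{t_-}$ as $t_+ \mu \Delta_{t_-} e^{-1} e$, i.e.\ reattach a corner embedding so as to view the diagram as an extended double split exact sequence with starter corner embedding $e : B \to \calk_B(B \oplus \H_B)$; (3) apply Lemma \ref{lemma10} with $\calf$ chosen appropriately (taking $\cale = B$, and $\calf$ the module needed so that $\cale \oplus \calf \oplus \H_B$ has the shape demanded by (\ref{standardspe})) to bring the diagram to the form (\ref{standardspe}); (4) finally, observe that after these steps the splits $s_\pm$ have acquired the shape $s_\pm' \oplus 0_{\H_B} \square \idd$, because in the form (\ref{standardspe}) the split maps into $\call_B(\cale \oplus \H_B) \square A$ and the $\H_B$-direct-summand is hit by zero — which is precisely the ``standard form''. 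At each stage the associated $GK^G$-morphism is preserved, by the cited lemmas together with Remark \ref{cor22} and Remark \ref{remark43}.

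The main obstacle I expect is bookkeeping the $M_2$-actions and the cofullness hypotheses so that Lemma \ref{lemma10} (hence Lemma \ref{lemma272} and Lemma \ref{lemma21}) genuinely applies: one must check that the modules produced at each step are weakly cofull over $B$, that the $M_2$-action is of the permitted form $\Ad(S \oplus T \oplus \cdots) \square \alpha$, and that the various $\Phi = \phi \otimes 1_{M_2}$ are $G$-equivariant. None of this is deep — it is the same kind of verification already done in the proofs of Lemmas \ref{lemma14} and \ref{lemma10} — but it is where the actual content of writing down the diagram lies. A secondary subtlety is making sure the ``standard form'' $s_\pm = s_\pm' \oplus 0_{\H_B} \square \idd$ is literally achieved and not merely something isomorphic to it; this is handled by choosing, in the application of Lemma \ref{lemma14}, the module $\cale$ and its decomposition so that after the single application of Lemma \ref{lemma10} the split visibly lands in the $\cale$-and-$A$ part with zero on $\H_B$.
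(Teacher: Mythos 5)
Your plan is correct and follows essentially the same route as the paper: compose with the (inverse of the) identity corner embedding $B \cong \calk_B(B)$ to make the sequence extended, normalise via Lemma \ref{lemma14}, and then enlarge with Lemma \ref{lemma10} (taking $\calf$ to supply the extra $\H_B$-summands), which also forces the splits into the standard shape $s_\pm' \oplus 0_{\H_B} \square \idd$. Only a notational slip: with the paper's left-to-right composition and $e:B \to \calk_B(B\oplus\H_B)$ one inserts $e\,e^{-1}=1_B$ (not $e^{-1}e$), and the absorption of $e$ into the double split part is exactly what the Lemma \ref{lemma10}-type diagram provides, so nothing substantive is missing.
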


\begin{proof}
Every double split exact sequence 
can be turned to an extended one by 
composing with the inverse of the identity corner embedding $e: B \rightarrow B \cong \calk_B(B)$.  
But
by application of 
lemmas  \ref{lemma14} and \ref{lemma10} 
we can also achieve this for 
corner embeddings of the form 
$e: B \rightarrow B \cong \calk_B(\H_B)$ 
for bigger direct sums $\H_B$.
\if 0
By lemma \ref{lemma14} we may assume that 
the given double split exact sequence is as 
in the upper right part of the following diagram, and then by lemma 

we 
copy the easy proof of lemma 
\ref{lemma10} 
for $\calf=(\H_\beta, \oplus \beta)$ to get the desired result 
in the second line of the following diagram:  
$$\xymatrix{ B \ar[r]^e \ar[d] &  \calk_B(B)  \ar[d]^\psi    \ar[r]
& \call_B(B) \square A  \ar[d]^{\phi} 
& A  \ar[l]^{s_\pm }  \ar[d] \\
B \ar[r]^{f} & \calk_B( \H_B )   \ar[r]
&
\call_B( \H_B) \square A
& A  \ar[l]^{t_\pm} 
}$$
\fi
\end{proof}

\begin{lemma}
A double split exact sequence 
(\ref{standardspe}) 
can be equivalently 
turned to the same one but with 
$\cale$ replaced by $\overline \cale$. 
In other words, all involved modules 
then are  separated by the functionals. 
\end{lemma}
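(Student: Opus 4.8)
The plan is to exhibit the given extended double split exact sequence (\ref{standardspe}) and the one obtained from it by replacing $\cale$ with $\overline\cale$ as connected by a commuting diagram of the type treated in lemma \ref{lemma21}, and to read off equality of the associated morphisms in $GK^G$. First I would record the identification $\overline{\cale\oplus\H_B}=\overline\cale\oplus\H_B$: the submodule of functional-killed elements of a direct sum is the direct sum of the corresponding submodules, and each copy of $B$ occurring in $\H_B$ is already separated by its functionals, since $\overline B=B\otimes_{\id}B\cong B$ as functional $B$-modules by lemmas \ref{lemma219}(iii) and \ref{lemma220}(ii), and being separated by functionals is inherited along functional-module isomorphisms. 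Since weak cofullness obviously descends to the quotient $\cale\to\overline\cale$, lemma \ref{lemma117} shows $\overline\cale$ is weakly cofull and hence $\overline\cale\oplus\H_B$ is again cofull; so $f\colon B\to\calk_B(\overline\cale\oplus\H_B)$ really is a corner embedding and the second sequence is a genuine extended double split exact sequence, all of whose modules ($\overline\cale\oplus\H_B$) are separated by the functionals, which is the assertion to be proved.

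Next I would build the vertical maps. Let $\rho\colon\call_B(\cale\oplus\H_B)\to\call_B(\overline{\cale\oplus\H_B})$ be the canonical $G$-equivariant ring homomorphism of lemma \ref{lemma219}(i); it restricts to $\psi\colon\calk_B(\cale\oplus\H_B)\to\calk_B(\overline{\cale\oplus\H_B})$ and carries the actions $S,T$ to the induced actions $\overline S,\overline T$. Put $\phi(m\square a):=\rho(m)\square a$ and let $t_\pm:=\rho\circ s_\pm$ be the splits of the second sequence; since $s_+-s_-$ is compact, so is $t_+-t_-=\psi(s_+-s_-)$, so $t_+$ is indeed a second split (remark \ref{remark43}). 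The key observation is $e\psi=f$: both send $b$ to multiplication by $b$ on the distinguished coordinate of $\H_B$, which $\psi$ leaves untouched; as $e$ and $f$ are invertible corner embeddings this yields $\psi f^{-1}=e^{-1}$ in $GK^G$.

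Then I would verify the hypotheses of lemma \ref{lemma21} in the simplified form of remark \ref{cor22}, with vertical maps $\id_B,\psi,\phi,\id_A$ and $\Phi:=\phi\otimes 1_{M_2}$. The $M_2$-action on the second sequence is $\Ad(\overline S\oplus\overline T)\square(\alpha\otimes 1_{M_2})$, and both its validity and the $G$-equivariance of $\Phi$ follow from the corresponding facts for the first sequence by pushing the relevant conditions of lemma \ref{lemma272} through the $G$-equivariant ring homomorphism $\psi$, which sends compact operators to compact operators. The identities $fs_-\phi=\phi gt_-$, $i\phi=j$ and $s_+\phi=t_+$ hold by construction, as $\phi$ is the identity on the $A$-component and respects the ideal inclusions and the projections. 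Lemma \ref{lemma21}(iii) then gives $s_+\mu\Delta_{s_-}\psi=t_+\mu\Delta_{t_-}$ (each $\mu$ being that of the respective sequence); post-composing with $f^{-1}$ and using $\psi f^{-1}=e^{-1}$ yields $s_+\mu\Delta_{s_-}e^{-1}=t_+\mu\Delta_{t_-}f^{-1}$, i.e.\ the associated morphisms of the two extended double split exact sequences coincide.

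The difficulty is bookkeeping rather than a single hard step: keeping the identification $\overline{\cale\oplus\H_B}=\overline\cale\oplus\H_B$ consistent throughout, checking $e\psi=f$ and the $G$-equivariance of $\Phi$, and --- the one genuinely load-bearing point --- confirming that the validity of the $M_2$-action, i.e.\ the conditions of lemma \ref{lemma272}, survives passage to the quotient. Once these are settled the remainder is the same diagram chase already carried out for lemmas \ref{lemma10} and \ref{lemma14}.
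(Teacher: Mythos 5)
Your argument is exactly the paper's: the paper's proof is a one-line sketch saying to repeat the proof of lemma \ref{lemma10} with $\overline\cale$ in the second line and with $\phi,\psi$ induced by the canonical map of lemma \ref{lemma219}.(i), and your proposal carries out precisely that construction (splits $t_\pm=\rho\circ s_\pm$, verification via remark \ref{cor22}/lemma \ref{lemma21} and lemma \ref{lemma272}). The additional bookkeeping you supply, such as the identification $\overline{\cale\oplus\H_B}\cong\overline\cale\oplus\H_B$ using $\overline B\cong B$ for quadratik $B$ and the check that $f$ is still a corner embedding, is a correct filling-in of details the paper leaves implicit.
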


\begin{proof}
Do an 
analogous proof as in 
lemma \ref{lemma10}, but where in the second line 
of the diagram $\cale \oplus \calf$ is replaced 
by $\overline \cale$, and $\phi$ and $\psi$
are induced by the map of  lemma 
\ref{lemma219}.(i). 
\end{proof}


In this lemma we show how an extended double split exact sequence may absorb a ring homomorphism 
from the other side than in  lemma 
\ref{lemma215}. 

\if 0
In this lemma we show that an extended double split exact sequence concatenated with a ring homomorphism 
may be fused to a single double split exact sequence. 
\fi 

\begin{lemma}			\label{lemma83}   

Let an extended double split exact sequence 
as in the first line of the following diagram 
and
a ring homomorphism $\pi$ as in the following
diagram be given:

$$\xymatrix{ B \ar[r]^e \ar[d]^\pi &  \calk_B(\cale \oplus \H_B)  \ar[d]^{\psi}    \ar[r]
& \call_B(\cale \oplus \H_B) \square A  \ar[d]^{\phi}		\ar[r] 
& A  \ar@<.5ex>[l]^{s_\pm}  \ar[d] \\
D \ar[r]^-{f} & \calk_D \Big ((\cale  \oplus \H_B)\otimes_\pi D \oplus \H_D \Big )   \ar[r]
&
\call_D \Big((\cale  \oplus \H_B)\otimes_\pi D \oplus \H_D \Big ) \square A   \ar[r]
& A  \ar@<.5ex>[l]^-{t_\pm}    
\\
D \ar[r]^g \ar[u]  
& \calk_D  ( \cale\otimes_\pi D \oplus \H_D )   \ar[r]  \ar[u] 
&
\call_D ( \cale  \otimes_\pi D \oplus \H_D  ) \square A  \ar[u]   \ar[r]
& A  \ar@<.5ex>[l]^-{v_\pm}  \ar[u] 
}$$
\if 0
D \ar[r]^{f} & \calk_D(\cale  \otimes_\pi D \oplus \H_D)   \ar[r]
&
\call_D(\cale  \otimes_\pi D \oplus \H_D) \square A
& A  \ar[l]^{t_\pm} 
\fi

{\rm (i)} 
Then we can complete these data to the above diagram where the second line is extended double split and
$s_+ \mu \Delta_{s_-} e^{-1} \pi =
t_+ \mu \Delta_{t_-} f^{-1}$.
 
{\rm (ii)}  
If $s_\pm$ is standard (see lemma \ref{lemma89})   
then this morphism 
is also the one of the third line, that is, 
$s_+ \mu \Delta_{s_-} e^{-1} \pi =
v_+ \mu \Delta_{v_-} g^{-1}$. 
\end{lemma}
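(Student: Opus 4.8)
The plan is to split the composite $s_+\mu\Delta_{s_-}e^{-1}\pi$ at the piece $e^{-1}\pi$ by means of Lemma \ref{lemma43}, then transport the remaining double split part across by Lemma \ref{lemma21} (in the form of Remark \ref{cor22}), and finally deduce (ii) from (i) by Lemma \ref{lemma10}. Concretely, apply Lemma \ref{lemma43} to the corner embedding $e\colon B\to\calk_B(\cale\oplus\H_B)$ and the ring homomorphism $\pi\colon B\to D$; substituting $B$ for the ``$A$'' there, $D$ for the ``$B$'', $\cale\oplus\H_B$ for ``$\cale\oplus\H_A$'' and $\H_D$ for ``$\H_B$'', it produces the corner embedding $f\colon D\to\calk_D\big((\cale\oplus\H_B)\otimes_\pi D\oplus\H_D\big)$ on the distinguished coordinate $(D,\tau)\subseteq\H_D$ (here $\tau$ denotes the $G$-action on $D$), the ring homomorphism $\psi\colon\calk_B(\cale\oplus\H_B)\to\calk_D\big((\cale\oplus\H_B)\otimes_\pi D\oplus\H_D\big)$, $\psi(T)=T\otimes\idd\oplus 0_{\H_D}$, and the relation $e^{-1}\pi=\psi f^{-1}$ in $GK^G$. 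This fills in the left part of the diagram together with the ideal of the second line.

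Next, extend $\psi$ to the middle objects by $\phi\colon\call_B(\cale\oplus\H_B)\square A\to\call_D\big((\cale\oplus\H_B)\otimes_\pi D\oplus\H_D\big)\square A$, $\phi(T\square a)=(T\otimes\idd\oplus 0_{\H_D})\square a$, where for adjointable $T$ the operator $T\otimes\idd$ on $(\cale\oplus\H_B)\otimes_\pi D$ is the usual functorial lift along the internal tensor product (cf.\ the comment after (\ref{eqh})); then $\phi$ restricts to $\psi$ on the ideal and induces $\idd_A$ on the quotient. Put $t_\pm:=s_\pm\phi$, which are automatically splits of the second line. It remains to fix the $M_2$-actions. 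The first line carries the action $\Ad(S\oplus T)\square(\alpha\otimes 1_{M_2})$, where $S,T$ are the $G$-actions on $\cale\oplus\H_B$ for which $s_-$, resp.\ $s_+$, is $\Ad(S)$-, resp.\ $\Ad(T)$-equivariant (convention of Section \ref{sec7}); equip the second line with $\Ad\big((S\otimes\tau)\oplus(T\otimes\tau)\oplus R\big)\square(\alpha\otimes 1_{M_2})$, where $S\otimes\tau,T\otimes\tau$ are the diagonal actions on $(\cale\oplus\H_B)\otimes_\pi D$ of Definition \ref{def110} and $R$ is the direct-sum action on $\H_D$.

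That this is a legitimate $M_2$-action, i.e.\ that $M_2\big(\call_D((\cale\oplus\H_B)\otimes_\pi D\oplus\H_D)\square A\big)$ is $G$-invariant and $\Phi:=\phi\otimes 1_{M_2}$ is $G$-equivariant, is the crux; it is checked with Lemma \ref{lemma272}, whose compactness conditions (i) for the second line are obtained from the corresponding ones for the first line by applying $(-)\otimes\idd$, using Lemma \ref{lemma116}.(v) and the identities $(S_gT_{g^{-1}})\otimes\idd=(S\otimes\tau)_g\circ(T\otimes\tau)_{g^{-1}}$ and $(T_gS_{g^{-1}})\otimes\idd=(T\otimes\tau)_g\circ(S\otimes\tau)_{g^{-1}}$. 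In particular $\psi,\phi,\Phi$ are $G$-equivariant. Now the middle parts of the first and second lines are two double split exact sequences joined by $b:=\psi$, $\Phi=\phi\otimes 1_{M_2}$ and $a:=\idd_A$; the three identities $i\phi=\psi j$, $fs_-\phi=\phi gt_-$ and $s_+\phi=\idd_A\, t_+$ required by Remark \ref{cor22} hold by inspection (the last being the definition of $t_+$), so Lemma \ref{lemma21}.(iii) gives $s_+\mu\Delta_{s_-}\psi=t_+\mu\Delta_{t_-}$, whence
$$s_+\mu\Delta_{s_-}e^{-1}\pi=s_+\mu\Delta_{s_-}\psi f^{-1}=t_+\mu\Delta_{t_-}f^{-1},$$
which is (i).

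For (ii), assume $s_\pm=s_\pm'\oplus 0_{\H_B}\square\idd$ with $s_\pm'\in\call_B(\cale)$. Since $(\cale\oplus\H_B)\otimes_\pi D=(\cale\otimes_\pi D)\oplus(\H_B\otimes_\pi D)$ (Lemma \ref{lemma221}) and $(s_\pm'\oplus 0_{\H_B})\otimes\idd$ vanishes on $\H_B\otimes_\pi D$, the split $t_\pm$ does not meet the summand $\H_B\otimes_\pi D$, so $t_\pm=v_\pm\phi'$, where $\phi'$ is the insertion of that summand and $v_\pm:=(s_\pm'\otimes\idd\oplus 0_{\H_D})\square\idd$ is a standard split of the third line. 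The module $\cale\otimes_\pi D\oplus\H_D$ of the third line is cofull (Lemma \ref{lemma117}) and $\H_B\otimes_\pi D$ is weakly cofull (Lemma \ref{lemma116}.(iv)), so Lemma \ref{lemma10}, applied with its first line being the third line and its second line being the second line, gives $v_+\mu\Delta_{v_-}g^{-1}=t_+\mu\Delta_{t_-}f^{-1}$, and combining with (i) finishes (ii). The one delicate point, as indicated, is the validity of the second line's $M_2$-action through the internal tensor product: lacking an involution one must carry both twisted conjugations $S_g(-)T_{g^{-1}}$ and $T_g(-)S_{g^{-1}}$ along $\otimes_\pi$ and use that $\otimes_\pi$ preserves compact operators; once that is settled, everything reduces to bookkeeping with Lemmas \ref{lemma43}, \ref{lemma21} and \ref{lemma10}.
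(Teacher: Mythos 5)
Your proposal is correct and follows essentially the same route as the paper's proof: Lemma \ref{lemma43} gives $e^{-1}\pi=\psi f^{-1}$, the maps $\phi$, $t_\pm=s_\pm\phi$ and the tensor-diagonal $M_2$-action $\Ad(S\otimes\delta\oplus R,\,T\otimes\delta\oplus R)$ are validated via Lemmas \ref{lemma272} and \ref{lemma116}.(v), the identity then follows from Lemma \ref{lemma21} with $\Phi=\phi\otimes 1_{M_2}$ and Remark \ref{cor22}, and part (ii) is Lemma \ref{lemma10} applied to $\calf=\H_B\otimes_\pi D$, exactly as in the paper. The only detail you leave implicit is that the second line is genuinely double split, i.e.\ $t_+(a)-t_-(a)$ lies in the compact ideal; this follows from Remark \ref{remark43} for $s_\pm$ together with the same Lemma \ref{lemma116}.(v) you already invoke.
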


\begin{proof}  
{\rm (i)} 
Set 
$\phi(T \square a)=T \otimes 1 \oplus 0_{\H_D} \square a$, 
$\psi(T)=T \otimes 1 \oplus 0_{\H_D}$  and 
$t_\pm = {s_\pm \phi}$. 

Because $s_+(a)-s_-(a)$ is in the ideal of compact operators by remark \ref{remark43}, 
$t_+(a)-t_-(a)$ is in the ideal of compact operators by lemma 
\ref{lemma116}.(v), and so the second line 
of the above diagram is double split. 



If the $M_2$-action of the first line of the diagram is $\Ad(S \oplus T)$, then on the second line we set it to 
$\Ad(S \otimes \delta \oplus R,  T \otimes \delta \oplus R)$, where $D= (D,\delta)$ and $\H_D=(\H_D,R)$. 

Since for all $a \in A$ 
$$s_-(a) \big ((S_g \otimes \delta_g) (T_{g^{-1}} \otimes \delta_{g^{-1}}) - 
(S_g \otimes \delta_g) (S_{g^{-1}} \otimes \delta_{g^{-1}}) \big ) \in \calk_D( \cale \otimes_\pi D)$$
by lemmas \ref{lemma272} 
and  
\ref{lemma116}.(v),  
the $M_2$-action of the second line is valid 
by lemma \ref{lemma272} again. 
The first claim is then verified by 
lemma \ref{lemma43}, and 
lemma   \ref{lemma21} with $\Phi= \phi \otimes 1_{M_2}$ and recalling remark \ref{cor22}.


(ii)
The last claim with the third line simply follows 
 from lemma \ref{lemma10} applied 
to $\calf:= \H_B \otimes_\pi D$. 
\end{proof}

\begin{definition}			\label{def84}  
{\rm
The maps $t_\pm$ of the last lemma (or the $v_\pm$ if 
applicable) 
are denoted by
$s_\pm^{\pi} := t_\pm$ (or $s_\pm^{\pi} := v_\pm$). 
So one has, under the assumptions of the last lemma,
$$s_+ \nabla_{s_-} \pi = s_+^\pi \nabla_{s_-^\pi} $$  
 }
\end{definition}

The next proposition shows 
that the inverse of a plain corner embedding can be 
fused with a double split exact sequence.

\begin{proposition}			\label{lemma85}  

Let $A$ be unital. Let the first line, an extended double split exact sequence, and the plain matrix embedding $g$ (i.e. without occurrence of $\cale$, but still with any $G$-action)  
of the 
following diagram be given. 
Thereby, the cardinalities of the index sets of the direct sums $\H_A$ and $\H_B$ may differ. 
We also need to assume that 
$s_-(1)=s_+(1)=1_\cale$. 

\if 0 
, 
and the $G$-action on $\H_A=\oplus_{i \in I} A$ 
has to be of the form $\gamma= \oplus_{i \in I} \gamma_i$ for ring $G$-action $\gamma_i$ on $A$.  
\fi 

\if 0
Ohne beschränkund der allgmeinheit
$s_\pm(1)=1$, sonst $\cale$ auf $s_-(1) \cale$
zusammenschneiden.
\fi 

$$\xymatrix{
B \ar[r] 
\ar[d]   &
\calk_B(\cale \oplus \H_B) \ar[r] 
\ar[d] 
& \call_B( \cale \oplus \H_B) \square  A 
\ar[d]^\phi  \ar[r]   &
 A \ar@<.5ex>[l]^{s_\pm \oplus 0 \square 1}  \ar[d]^g    \\
B \ar[r] 
& \calk_B( H_\cale \oplus \H_B 
 ) \ar[r] 
& \call_B
\Big (  H_\cale \oplus \H_B 
\Big ) \square  \calk_A(\H_A)  \ar[r]  
&
 \calk_A(\H_A) \ar@<.5ex>[l]^-{t_\pm }  
}$$

\if 0
\\ 
B  \otimes M_n  \ar[r] \ar[d] & \call_B( ( A \otimes_{f} B) ^n
 \oplus ((1-f(1))B)^n ) \square  A \ar[r]  \ar[d] &
 A  \otimes M_n \ar[l]^{1,0}  \ar[d]   \\  
B  \otimes M_n  \ar[r] & \call_B( M_n( A) \otimes_1 ( A \otimes_{f} B)
 \oplus ((1-f(1))B)^n ) \square  A \ar[r] &
 A  \otimes M_n \ar[l]^{1,0}   \\  
\fi

Then we complete these data to the above diagram such that, in $GK^G$,  
$$g^{-1}  ( 
s_+ \oplus 0 \square 1)  
\nabla_{ s_- \oplus 0 \square 1} 
= t_+ \nabla_{t_-}$$  

\end{proposition}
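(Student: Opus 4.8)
The plan is to show that precomposition with $g^{-1}$ can be absorbed into the extended double split exact sequence of the first line, turning it into the one of the second line, and then to compare the two by lemma \ref{lemma21}, exactly in the spirit of lemmas \ref{lemma43} and \ref{lemma83} (with the inverse corner embedding $g^{-1}$ now playing the role that the ring homomorphism $\pi$ played in lemma \ref{lemma83}). Since $A$ is unital and $s_-(1)=s_+(1)=1_\cale$, lemma \ref{lemma220}.(iv) gives $A\otimes_{s_\pm}\cale\cong\cale$, so writing $\H_A=\bigoplus_{i\in I}A$ we may identify $H_\cale:=\bigoplus_{i\in I}\cale$ (the module of the second line) with the internal tensor product $\H_A\otimes_{s_-}\cale$; this description carries whatever $G$-action $\H_A$ has, which is convenient because that action need not be a direct sum action.

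First I would fix the vertical data. I would define ring homomorphisms $\sigma_\pm:\calk_A(\H_A)\to\call_B(H_\cale)$, where $\sigma_-(k)=k\otimes\idd_\cale$ on $\H_A\otimes_{s_-}\cale$ (well defined into the adjointable operators because every $k\in\calk_A(\H_A)$ is adjointable on $\H_A$, by the same computation as behind lemma \ref{lemma116}), and $\sigma_+$ the analogue built from $s_+$; under $H_\cale\cong\bigoplus_i\cale$ and $\calk_A(\H_A)\cong M_{|I|}(A)$ these are just ``$s_\pm$ applied entrywise'' to finitely supported matrices. The point that makes everything work is $\sigma_+(k)-\sigma_-(k)\in\calk_B(H_\cale)$ for all $k$, which is $s_+(a)-s_-(a)\in\calk_B(\cale)$ (remark \ref{remark43}) applied entrywise to a finitely supported matrix. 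I then set $t_\pm:=\sigma_\pm\oplus 0_{\H_B}\square\,\idd_{\calk_A(\H_A)}$; by the previous sentence $t_+-t_-$ is compact valued, so the second line is a genuine double split exact sequence (remark \ref{remark43}), and the corner embedding $f:B\to\calk_B(H_\cale\oplus\H_B)$ acts on the distinguished coordinate of $\H_B$ inside $H_\cale\oplus\H_B$. I define $\psi:\calk_B(\cale\oplus\H_B)\to\calk_B(H_\cale\oplus\H_B)$ as the obvious extension-by-zero map along the inclusion of $\cale\oplus\H_B$ as the summand ``$i_0$-th copy of $\cale$ together with $\H_B$'', and $\phi:\call_B(\cale\oplus\H_B)\square A\to\call_B(H_\cale\oplus\H_B)\square\calk_A(\H_A)$ by $\phi(T\square a)=\hat T\square g(a)$ with $\hat T$ the same extension by zero, so that $\psi=\phi$ restricted to compacts. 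Well-definedness of $\phi$ into the $\square$-subring rests on the identity $\widehat{s_-(a)\oplus 0}=\sigma_-(g(a))\oplus 0_{\H_B}$, which holds because $g(a)$ is multiplication by $a$ on the distinguished coordinate and $\sigma_-$ is entrywise $s_-$; this is exactly where $s_\pm(1)=1_\cale$ enters. For the $G$-actions I would put on $M_2(\call_B(H_\cale\oplus\H_B)\square\calk_A(\H_A))$ the action $\Ad(\Sigma_-\oplus\Sigma_+)\square(\alpha\otimes 1_{M_2})$, with $\Sigma_\pm$ the actions on $H_\cale\oplus\H_B$ with respect to which $s_\pm$, hence $\sigma_\pm$, become equivariant, and verify its validity with lemma \ref{lemma272}, the compactness hypotheses being transported from $\cale$ to $H_\cale$ exactly as in the proof of lemma \ref{lemma83}.

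Then I would invoke lemma \ref{lemma21} with $b=\psi$, $a=g$ and $\Phi=\phi\otimes 1_{M_2}$: by remark \ref{cor22} it remains only to check that $\phi$ is $G$-equivariant and the three algebraic identities of remark \ref{cor22} (compatibility of $\phi$ with the ideal inclusions, with the quotient maps, and with $s_+,t_+$), all of which are immediate from the definitions of $\phi,\psi,t_\pm,\sigma_\pm$ and the identity of the previous paragraph. Lemma \ref{lemma21}.(iii) then gives $(s_+\oplus 0\square 1)\,\mu\Delta_{s_-\oplus 0\square 1}\,\psi=g\,t_+\,\mu\Delta_{t_-}$. Since $e\psi=f$ (both $e$ and $f$ are the corner embeddings onto the distinguished coordinate of $\H_B$), we have $\psi=e^{-1}f$ in $GK^G$ by stability, hence $\psi f^{-1}=e^{-1}$; composing the displayed identity on the right with $f^{-1}$ yields $(s_+\oplus 0\square 1)\,\nabla_{s_-\oplus 0\square 1}=g\,t_+\,\nabla_{t_-}$, and composing on the left with $g^{-1}$ gives $g^{-1}(s_+\oplus 0\square 1)\,\nabla_{s_-\oplus 0\square 1}=t_+\,\nabla_{t_-}$, which is the assertion.

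I expect the main obstacle to be the $G$-action bookkeeping, not the formal part: producing a valid $M_2$-action on the second line for an arbitrary, not necessarily direct sum, action on $\H_A$, i.e. checking the hypotheses of lemma \ref{lemma272} after the internal tensor product substitution $H_\cale=\H_A\otimes_{s_-}\cale$, together with the simultaneous verification that $\phi$ lands in the $\square$-subring. Conceptually this is the same computation as in lemma \ref{lemma83}, but one has to carry along the two different actions $\sigma_-,\sigma_+$ of $\calk_A(\H_A)$ on $H_\cale$ and the fact that their difference is compact valued throughout.
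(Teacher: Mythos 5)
Your proposal follows essentially the same route as the paper's proof: identify $H_\cale$ with $\H_A\otimes_{s_\pm}\cale$ via lemma \ref{lemma220}.(iv) (this is where $A$ unital and $s_\pm(1)=1_\cale$ enter), define $t_\pm$ as $T\otimes 1_\cale$ transported through these two identifications, get compactness of $t_+-t_-$ entrywise from split exactness of the first line and finite support, take $\phi$ to be extension by zero paired with $g$, transport the $M_2$-action and check its validity with lemma \ref{lemma272}, and finish with lemma \ref{lemma21} together with $e\psi=f$ to handle the corner-embedding inverses. The only slip is notational: the $\square$-part of the second line's $M_2$-action should be the action induced by the $G$-action $V$ on $\H_A$, i.e. $\Ad(V)\otimes 1_{M_2}$ on $M_2(\calk_A(\H_A))$, rather than $\alpha\otimes 1_{M_2}$.
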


\begin{proof}
We at first ignore any $G$-action. 

We use the 
functional $B$-module isomorphism $p_-: A \otimes_{s_-} \cale \rightarrow \cale$ defined by 
$p_-(a \otimes \xi)= s_-(a)(\xi)$, and 
its inverse 
isomorphism defined by $p_-^{-1}  (\xi)= 1 \otimes \xi$ 
\if 0
together with 
$f:\Theta_A( \cale) \rightarrow \Theta_A(A \otimes_{s_-} \cale)$ defined by 
$f(\phi)= \idd \otimes \phi$, thereby recalling 
definition \ref{def116} and lemma \ref{lemma17},
\fi
of lemma \ref{lemma220}.(iv), 
  and an analogous 
$p_+$ for $s_+$. 
 
Let $\pi_\pm: H_\cale  \rightarrow \H_A \otimes_{s_\pm} \cale$ 
be the canonical isomorphisms induced by 
$p_-^{-1}$ and $p_+^{-1}$, respectively, 
where $H_\cale := \oplus_{i \in I} \cale$. 

Set
$$\phi(T \square a)= T \oplus 0 \oplus 0 \oplus  \cdots  \square g(a)$$

That means, more precisely, $T$ canonically operates on the right hand side of the identity only at the first summand  of $H_\cale$
and on $\H_B$. 

We set
$$
t_\pm(T) = \pi_\pm \circ ( T \otimes 1_\cale ) 
\circ \pi_\pm^{-1} \oplus 0_{\H_B} 
\square T \\
$$

Let us write $e_i$ for the canonical generators of $\H_A$ with entry $1_A$ in coordinate $i$ and otherwise zero, and let $T(e_i)_j \in A$ denote the $j$th coordinate of $T(e_i)$. 

Then
$$ \big (t_-(T)- t_+(T) \big )(\oplus_i \xi_i) =
\bigoplus_j \sum_i \Big ( \big (s_- - s_+ \big ) \big ( T(e_i)_j \big ) \Big )(\xi_i) 
 \square 0 = \bigoplus_j \sum_i k_{i,j}(\xi_i) 
\square 0$$

for $\xi_i \in \cale, T \in \calk_A(\H_A)$ and  
operators $k_{i,j} \in \call_B(\cale, A)$ obviously defined. 
But they are compact operators, because the first line of the above diagram is split exact. 
Hence the second line of the diagram is verified to be 
non-equivariantly double split, because $T$ is a compact operator 
and so almost all $k_{i,j}$ are zero.


If the $M_2$-action of the first line of the diagram is $\Ad (S \oplus R , T \oplus R) \square (\alpha \otimes 1)$
then on the second line it is put to 
$$\Ad ( \pi_-^{-1} \circ (V \otimes S) 
\circ \pi_- \oplus R,
\pi_+^{-1} \circ (V \otimes T) 
\circ \pi_+ \oplus R)
 \square \Ad(V)$$
where $S,T$ act on $\cale$, $R$ on $\H_B$ and $V$ on $\H_A$.

\if 0
We declare now $\pi$ to be $G$-equivariant by pulling back the now given $G$-action on the range of $\pi$ to its domain. 
\fi 

Since $(\H_A,V)= (\H_A, \alpha \oplus W)$ by definition \ref{def21}, the 
map $\Phi:=\phi \otimes 1_{M_2}$ 
between the $M_2$-spaces of the first and second line of the above diagram 
 is seen to be $G$-equivariant, because 
$\pi_-^{-1} \circ (\alpha \otimes S) \circ \pi_- = S$ on the first summand $\cale$ 
of $H_\cale$. 
Also recall the formulas of lemma 
 \ref{lemma51}. 

By lemma \ref{lemma272}, applied to the first 
line of the above diagram, we have 
$X_g := T_g T_{g -1} 
-  T_g S_{g -1} \in \calk_A(\cale)$. 

If we multiply 
\begin{equation}   \label{e33}
\pi_-^{-1} \circ (V \otimes S_g) 
\circ \pi_- \circ 
\pi_+^{-1} \circ (V \otimes T_{g^{-1}}) 
\circ \pi_+  
\end{equation}
from the right hand side with the compact 
operator 
$X_g=
\pi_+^{-1} \circ (1 \otimes X_g)  \circ \pi_+$ 
then we get (\ref{e33}) minus  (\ref{e33}) again, but where $T_g^{-1}$ is replaced by 
$S_g^{-1}$. 

By lemma \ref{lemma272} this verifies  
the validity of the $M_2$-action of the second line of the above diagram.
\if 0
\if 0
We use lemma \ref{lemma272} again to verify 
the validity of the $M_2$-action of the second line. Indeed, using the fact that 
$\gamma_g(e_i)_j = \delta_{i,j} e_i$, we get 
\begin{eqnarray*}
&&\pi_-^{-1} \circ (\gamma \otimes S_g) 
\circ \pi_- \circ 
\pi_+^{-1} \circ (\gamma \otimes T_{g^{-1}}) 
\circ \pi_+  (\oplus_i \xi_i)  \\  
&=& 
\pi_-^{-1} \circ (\gamma \otimes S_g) 
\circ \pi_-  
\big ( \oplus_i s_+ \big ( e_i \big )  \big (T_{g^{-1}}(\xi_i) \big ) \big )   \\
&=& 
\oplus_i s_- \big ( e_i \big ) 
\Big (S_g 
\big (  s_+ \big ( e_i \big )  \big (T_{g^{-1}}(\xi_i) \big ) \big )
\Big )
\end{eqnarray*}
\fi 

\if 0 
Observe that $p_- \circ (1 \otimes  \calk_A(\cale)) \circ p_-^{-1} = 
\calk_A(\cale)$. 
\fi  

Hence, for $T \in \calk_A(\H_A)$ we get, 
on $X:=\H_A \otimes_{s_-} \cale$,  
$$
(T \otimes 1) 
( V_g V_{g  -1} \otimes S_g S_{g -1} 
-  V_g V_{g  -1} \otimes S_g T_{g -1}  )
\subseteq 
(T \otimes 1) (1_{\H_A}  \otimes \calk_A(\cale))
 \subseteq \calk_A(X)$$

Conseuqently, by lemma \ref{lemma272} again, 
the $M_2$-action of the second line of the above diagram is valid. 
\fi

Set $\tilde s_\pm := s_+ \oplus 0 \square 1 $. 
Notice that we have $\tilde s_\pm \phi = g t_\pm$. 
 Finalize the proof by checking 
lemma   \ref{lemma21} with $a:=g, b:=\idd_B,  
 \Phi:=\phi \otimes 1_{M_2}$, $\phi:=\psi:=\phi$, to
obtain $\tilde s_+ \nabla_{\tilde s_-} = g t_+ \nabla_{t_-}$. 
\if 0
Because we have the factor $s_-(a)$ in lemma \ref{lemma272}, applying it at first to 
the given first line of the above diagram, we obtain by second application of this lemma that the $M_2$-action of the second line of the above diagram is valid.   
\fi 
\end{proof}

This is the counterpart to {\cite[lemma 13.1]{baspects}}, now with another proof, as the cited one would not work in our ring setting: 

\begin{corollary}

The inverse of a  plain matrix embedding 
$e: (A,\alpha) \rightarrow  M_\infty 
((A,\alpha))$ 
\if 0 
\calk_A((\H_A,\oplus_i \alpha))$ 
for a direct sum $G$-action $S= \oplus_{i \in I} \alpha_i$, where each $\alpha_i$ is a  $G$-action on $A$,  
\fi 
%
can 
be fused with an extended double split exact 
sequence, i.e. if $s_+ \nabla_{s_-}$ is given then there is $t_+ \nabla_{t_-}$ such that $e^{-1} s_+ \nabla_{s_-} 
= t_+ \nabla_{t_-}$. 

\end{corollary}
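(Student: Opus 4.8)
The plan is to reduce the corollary to Proposition \ref{lemma85}, which proves the corresponding statement under the extra hypotheses that $A$ is unital, that the extended double split exact sequence is in the standard form (\ref{standardspe}), and that the two splits restrict to \emph{unital} homomorphisms on the module $\cale$. Write the given morphism as $s_+\nabla_{s_-}=s_+\mu\Delta_{s_-}c^{-1}$, where $c\colon Z\to B'$ is the corner embedding of the extended structure and $B'\xrightarrow{j}M\xrightarrow{f}A$ with splits $s_\pm$ is the underlying double split exact sequence. I would first set the corner embedding $c$ aside and show that $e^{-1}\bigl(s_+\mu\Delta_{s_-}\bigr)\colon M_\infty(A)\to B'$ is the morphism of an extended double split exact sequence, and only afterwards reattach $c$.

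For this intermediate claim I would proceed in three steps. \emph{Unitalize}: by Lemma \ref{lemma14} present the double split exact sequence in $\square$-form and apply Lemma \ref{lemma67}; since the middle ring is here already unital, its $\tilde M$ may be taken equal to it, and one gets $s_+\mu\Delta_{s_-}=\pi\,\tilde s_+\mu\Delta_{\tilde s_-}$ for the canonical inclusion $\pi\colon A\to\tilde A$, where — this is the point — the explicit formula of Lemma \ref{lemma67} forces $\tilde s_-(1_{\tilde A})=\tilde s_+(1_{\tilde A})=1_{\tilde M}$. \emph{Standardize}: by Lemma \ref{lemma89} bring the unitalized sequence to the form (\ref{standardspe}); since the proof of Lemma \ref{lemma89} is built from Lemma \ref{lemma14}, which carries $1_{\tilde M}$ to an identity operator, and from Lemma \ref{lemma10}, which only enlarges the auxiliary summand $\H_{B'}$ and leaves the core split untouched, one checks that the resulting standard splits meet the unitality requirement of Proposition \ref{lemma85}. \emph{Apply Proposition \ref{lemma85}} to the plain matrix embedding $\tilde e\colon\tilde A\to M_\infty(\tilde A)\cong\calk_{\tilde A}(\H_{\tilde A})$ (Lemma \ref{lemma1172}): this yields $\tilde e^{-1}\,\bigl(\tilde s'_+\nabla_{\tilde s'_-}\bigr)=t_+\nabla_{t_-}$ for an extended double split exact sequence, where $\tilde s'_+\nabla_{\tilde s'_-}$ denotes the morphism of the standardized sequence (equal to $\tilde s_+\mu\Delta_{\tilde s_-}$). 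Now the square formed by $e$, $\tilde e$, $\pi$ and $M_\infty(\pi)$ commutes — both composites $A\to M_\infty(\tilde A)$ send $a\mapsto\diag(\pi(a),0,\dots)$ — hence $e^{-1}\pi=M_\infty(\pi)\,\tilde e^{-1}$ in $GK^G$ (cf. Lemma \ref{lemma43}), so that
\[
e^{-1}\bigl(s_+\mu\Delta_{s_-}\bigr)=e^{-1}\pi\,\bigl(\tilde s'_+\nabla_{\tilde s'_-}\bigr)=M_\infty(\pi)\,\tilde e^{-1}\bigl(\tilde s'_+\nabla_{\tilde s'_-}\bigr)=M_\infty(\pi)\,\bigl(t_+\nabla_{t_-}\bigr).
\]
Precomposing a double split exact sequence morphism with the ring homomorphism $M_\infty(\pi)$ again yields such a morphism by Lemma \ref{lemma215}, and, postcomposing with the inverse of the corner embedding of the extended structure (which is left unchanged), this shows $e^{-1}\bigl(s_+\mu\Delta_{s_-}\bigr)=t'_+\mu\Delta_{t'_-}c_1^{-1}$ for some splits $t'_\pm$ and a corner embedding $c_1$.

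It remains to reattach $c$:
\[
e^{-1}\,s_+\nabla_{s_-}=e^{-1}\bigl(s_+\mu\Delta_{s_-}\bigr)c^{-1}=t'_+\mu\Delta_{t'_-}c_1^{-1}c^{-1}=t'_+\mu\Delta_{t'_-}(c\,c_1)^{-1},
\]
and $c\,c_1$ is again a corner embedding by the corollary following Proposition \ref{lemma71} (composition of corner embeddings); hence the right-hand side is precisely the morphism of the extended double split exact sequence obtained from that of $t'_+\nabla_{t'_-}$ by replacing its corner embedding $c_1$ with $c\,c_1$, so $e^{-1}s_+\nabla_{s_-}$ is of the form $t_+\nabla_{t_-}$, as required. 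The main obstacle is the unitality condition $s_-(1)=s_+(1)=1_\cale$ in Proposition \ref{lemma85}: the two splits of a double split exact sequence need not coincide at the unit, so one cannot merely cut $\cale$ down to $s_-(1)\cale$; it is essential to unitalize \emph{before} standardizing, which forces both splits to equal $1_{\tilde M}$ there, and then to verify that the standardization of Lemma \ref{lemma89} preserves this equality. The remaining verifications — the commuting square for $e^{-1}\pi$, the tracking of the splits through Lemmas \ref{lemma14}, \ref{lemma10}, \ref{lemma67}, and the absorption of $c$ — are routine.
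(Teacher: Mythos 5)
Your proof is correct and follows essentially the same route as the paper: unitalize the split via Lemma \ref{lemma67}, use the commuting square with $e$, $\pi$ and the matrix embedding of $\tilde A$, fuse the inverse of the latter into the sequence by Proposition \ref{lemma85}, and absorb the remaining ring homomorphism $M_\infty(\pi)$ by Lemma \ref{lemma215}. Your additional bookkeeping — checking that standardization (Lemmas \ref{lemma14}, \ref{lemma10}, \ref{lemma89}) preserves the unitality condition $s_\pm(1)=1_\cale$ needed in Proposition \ref{lemma85}, and reattaching the original corner embedding via the composition-of-corner-embeddings corollary — only makes explicit steps the paper's terse proof leaves implicit.
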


\begin{proof}

We do the diagram of lemma \ref{lemma67}, such
that 
$$e^{-1} s_+ \nabla_{s_-}
= (\pi \otimes 1) f^{-1} s_+ \nabla_{s_-}
= (\pi \otimes 1) t_+ \nabla_{t_-}
= v_+ \nabla_{v_-}
$$

where the last two identities are by 
proposition \ref{lemma85}  and 
lemma \ref{lemma215}. 
\if 0
We at first unitalize the corner e
By lemma \ref{lemma67}, its full diagram and 
an application of lemma 
\ref{lemma85}.
\fi 
\if 0
We
glue the following commuting diagram to the right hand side of the diagram of lemma 
\ref{lemma67} and then apply lemma 
\ref{lemma85}: 
$$\xymatrix{ 
A  \ar[r]^{e}  \ar[d]^\psi  
&  
M_\infty(A)   \ar[d]^\psi   \\
\tilde A \ar[r]^{f}   
&
M_\infty( \tilde A)   
}$$
\fi 
\if 0
$$\xymatrix{ B \ar[r] \ar[d] & M \square 
A  \ar[r] \ar[d]^\phi  &
A  \ar[l]^{s_\pm}  \ar[d]^\psi  
&  
M_\infty(A) \ar[l]^{s_\pm}  \ar[d]^\psi   \\
B \ar[r] & \tilde M \square 
\tilde A \ar[r] &
\tilde A \ar[l]^{\tilde s_\pm}   
&
M_\infty( \tilde A)   \ar[l]^{\tilde s_\pm}
}$$
\fi 
\end{proof}

\if 0 

\subsection{The standard form}

\label{sec14}




\if 0
In this section 
we 
shall show 
that
every morphism in $GK$ 
may be presented by a short word,
and 
that $GK$ and $KK^G$ are isomorphic categories.
\fi 
\if 0
Recall that all our $C^*$-algebras are equivariant by a topological groupoid or discrete inverse semigroup, and separable and ungraded.
\fi 

Recall that $G$ is a locally compact 
(not necessarily Hausdorff) groupoid or a countable inverse semigroup, and that all $C^*$-algebras are separable.

\fi

\if 0
We may sum up extended double split exact sequences as follows: 
\fi 

\begin{lemma}[{\cite[lemma 14.1]{baspects}}]  
			\label{lemma191}
Let two 
extended double split exact sequences 
like in line (\ref{standardspe}) 
be given, say for actions $S,T$ and homomorphisms
$s_\pm, t_\pm$. 
Then we can sum up these diagrams to 
$$ 
\xymatrix{
B \ar[r] &
\calk_B( \cale \oplus \calf  \oplus \H_B 
) \square A  
%
\ar[r] 
& 
\call_B \big ( (\cale ,S) \oplus (\calf \oplus \H_B ,T) \big ) \square A \ar[r]    & A
\ar@<.5ex>[l]^-{s_- \oplus t_-, 
s_+ \oplus t_+ } 
}
$$ 
and this corresponds to the sum of the associated elements in $GK^G$, i.e.
$$s_+ \mu \Delta_{s_-} e^{-1}+
t_+ \mu \Delta_{t_-} e^{-1}
=
(s_+ \oplus t_+) \mu \Delta_{s_- \oplus t_-} e^{-1}
$$

The $M_2$-action is ${\rm Ad}(V \oplus W) \square (\alpha \otimes 1)$
for the two $M_2$-actions 
${\rm Ad}(V) \square (\alpha \otimes 1)$
and 
${\rm Ad}(W) \square (\alpha \otimes 1)$
of the given diagrams.  

(Note that we used the wrong but suggestive notation $s_- \oplus t_- := x \oplus y \square 1$ for $s_- = x \square 1, t_-=y \square 1$. 
Also, one has actually different $e$s.)

\if 0
{\rm (ii)}
$- s_+ \mu \Delta_{s_-} e^{-1} =
s_- \mu \Delta_{s_+} e^{-1}$
\fi

\end{lemma}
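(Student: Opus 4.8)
The plan is to realize both extended double split exact sequences as instances of the ``standard form'' (\ref{standardspe}) — which is possible by lemmas \ref{lemma89}, \ref{lemma14}, and \ref{lemma10} — so that both share a common corner embedding $e: B \to \calk_B(\cale \oplus \H_B)$ after enlarging the auxiliary module; the wording ``Note that we used\ldots one has actually different $e$s'' signals that one should first arrange $\cale$ to be a common module $\cale$ and the module part of the first diagram to be $\calf \oplus \H_B$ for the second, etc., using lemma \ref{lemma10} twice to pad each diagram with the module of the other. Once both live over $\call_B(\cale \oplus \calf \oplus \H_B) \square A$, I would form the single diagram displayed in the statement, where the two splits are combined coordinatewise into $s_\pm \oplus t_\pm$ (in the $\square$-notation, $x \oplus y \square \idd$ for $s_\pm = x \square \idd$, $t_\pm = y \square \idd$), and the $M_2$-action is declared to be ${\rm Ad}(V \oplus W) \square (\alpha \otimes 1_{M_2})$.

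The key steps, in order: first, verify that the combined diagram really is an extended double split exact sequence. Exactness and the splitting relations for $s_\pm \oplus t_\pm$ are immediate from those of the summands, since $s_+ \oplus t_+ - (s_- \oplus t_-)$ lands in $\calk_B$ because each of $s_+ - s_-$ and $t_+ - t_-$ does (remark \ref{remark43}); and the validity of the $M_2$-action ${\rm Ad}(V \oplus W) \square (\alpha \otimes 1)$ follows from lemma \ref{lemma272} applied to the direct-sum action, using that $S_g k T_{g^{-1}}$ stays compact (definition \ref{defcompact}) and that the two given $M_2$-actions are already valid. Second, identify the associated morphism of the combined diagram as the sum of the two associated morphisms. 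The cleanest way is to use the additive-category axiom: write $\cale \oplus \calf \oplus \H_B$ as a direct sum of functional modules (lemma \ref{lemma221}), so that $\calk_B(\cale \oplus \calf \oplus \H_B) \square A$ maps via corner-type inclusions/projections to the two summand constructions $\calk_B(\cale \oplus \H_B) \square A$ and $\calk_B(\calf \oplus \H_B) \square A$; then apply lemma \ref{lemma21} (with $\Phi = \phi \otimes 1_{M_2}$, recalling remark \ref{cor22}) to each projection to see that the associated morphism of the big diagram, followed by the $i$th projection, gives the $i$th summand's morphism, hence by additivity the total is the sum.

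Concretely I would set up two copies of lemma \ref{lemma10}: one enlarging the first diagram by the module $\calf$ (the module of the second), one enlarging the second by $\cale$, so that after relabeling both have underlying module $\cale \oplus \calf \oplus \H_B$ and a common $e$. Then $\mu$ is the same $e_{22}e_{11}^{-1}$ in both, and $\Delta_{s_- \oplus t_-}$ decomposes as $\Delta_{s_-} \oplus \Delta_{t_-}$ via the split-exactness relations: $1_{M} = \Delta_{s_-}j_1 + (f)(s_-)$ on one ideal-summand block and similarly on the other, which when combined gives precisely $j(\Delta_{s_-} \oplus \Delta_{t_-}) + (f)(s_- \oplus t_-) = 1$. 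Pushing this through $\mu$ and then through $t_+ := s_+ \oplus t_+$ and $e^{-1}$ yields $s_+ \mu \Delta_{s_-} e^{-1} + t_+ \mu \Delta_{t_-} e^{-1}$.

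The main obstacle I expect is the bookkeeping around the ``different $e$s'': one must genuinely reconcile the two auxiliary spaces $\H_B$ and make the corner embedding literally the same morphism in both rows before the direct sum makes sense, and one must check that the $M_2$-action on $M_2$ of the combined $\square$-construction is $G$-invariant — i.e.\ the full hypotheses of lemma \ref{lemma272} hold for ${\rm Ad}(V \oplus W)$ — which, while routine given the two summand actions are valid, requires care with the cross terms $V_g(\cdot)W_{g^{-1}}$ that only appear in the combined matrix algebra. Everything else is a mechanical application of lemmas \ref{lemma21}, \ref{lemma10}, \ref{lemma14}, and the additive-category axiom, exactly as in {\cite[lemma 14.1]{baspects}}.
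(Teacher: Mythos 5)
Your preparatory steps are sound: padding each diagram with the other's module via lemma \ref{lemma10} so that both sit over $\call_B(\cale\oplus\calf\oplus\H_B)\square A$ with one common corner embedding, and checking the combined $M_2$-action $\Ad(V\oplus W)\square(\alpha\otimes 1)$ with lemma \ref{lemma272}, is exactly the right set-up. The gap is in the step that carries the actual content of the lemma, namely identifying $(s_+\oplus t_+)\mu\Delta_{s_-\oplus t_-}e^{-1}$ with the sum. The additivity axiom of definition \ref{def121} concerns direct sums of \emph{rings} ($1_{X\oplus Y}=p_Xi_X+p_Yi_Y$), but nothing in the combined diagram is a ring direct sum: $\calk_B(\cale\oplus\calf\oplus\H_B)$ contains the off-diagonal corners $\calk_B(\cale\oplus\H_B,\calf\oplus\H_B)$ and is strictly larger than $\calk_B(\cale\oplus\H_B)\oplus\calk_B(\calf\oplus\H_B)$; the ``corner-type projections'' you want to use are compressions $T\mapsto PTP$, which are not ring homomorphisms and hence not admissible connecting arrows for lemma \ref{lemma21}; and the associated morphism of the big diagram already ends at the single object $B$ through a single $e^{-1}$, so ``followed by the $i$th projection'' has no meaning there.

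Similarly, the claim that $\Delta_{s_-\oplus t_-}$ ``decomposes as $\Delta_{s_-}\oplus\Delta_{t_-}$'' is not available in $GK^G$: the $\Delta$'s are free generators attached to entire split exact sequences, the ideal of the combined sequence is one copy of $\calk_B(\cale\oplus\calf\oplus\H_B)$ mapping to one copy of $B$ (not to $B\oplus B$), and the relations $1_M=\Delta_s j+fs$ of the two small sequences cannot simply be superposed because the middle algebra of the big sequence is strictly larger than the sum of the two middle algebras. What a correct argument needs (and what the cited proofs in \cite{baspects} and \cite{bremarks} provide) are genuine ring homomorphisms linking the small diagrams to the big one, for instance $m\square a\mapsto m\oplus t_-(a)\square a$, which via lemma \ref{lemma21} yields $s_+\mu\Delta_{s_-}=(s_+\oplus t_-)\mu\Delta_{s_-\oplus t_-}$ and its analogue, combined with rotation-homotopy/degenerate-diagram arguments and $s\Delta_s=0$ (remark \ref{rem12}) to pass from the splits $(s_+\oplus t_-)$, $(s_-\oplus t_+)$ to $(s_+\oplus t_+)$; a pointwise identity of maps such as $s_+\oplus t_+=(s_+\oplus t_-)+(s_-\oplus t_+)-(s_-\oplus t_-)$ does not by itself give an identity of morphisms in $GK^G$. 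As written, the crucial additivity step is asserted rather than proved.
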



\if 0						
\begin{proof}
We drop the proof. 
One does this as in 
\cite[lemma 3.6]{bremarks}.
\if 0
$$s_+ \mu \Delta_{s_-} e^{-1}+
t_+ \mu \Delta_{t_-} e^{-1}$$
$$j (s_+ \oplus 1) (e_{22} e_{11}^{-1} \oplus 1) (\Delta_{s_-} \oplus 1) (e^{-1} \oplus 1)
(1 \oplus t_+) (1 \oplus f_{22} f_{11}^{-1}) (\Delta_{t_-}) (1 \oplus f^{-1})
d E^{-1}$$
\fi
\if 0
(ii)

If $T=S$ and $(t_-,t_+)=(s_+,s_-)$
then the sum presented in (i) is zero.
Indeed, the homotopy $V_t (s_- \oplus s_+) V_{-t}$
is vlaid, where $V_t= \begin{matrix} \cos(t) & \sin(t) \\ \sin(t) & - \cos(t) \end{matrix}$

\fi
\end{proof}
\fi

\if 0
An extended double split exact sequence 
may be negated as follows: 
\fi 

\begin{corollary}[{\cite[corollary 14.2]{baspects}}]  
					\label{cor142}
Consider the 
extended double split exact sequence of line 
(\ref{standardspe}). 
Make its `negative' diagram
where we exchange $s_-$ and $s_+$ and
transform the $M_2$-action under coordinate flip.
Then its associated element in $GK^G$ is 
the negative, that is, 
$$- s_+ \mu \Delta_{s_-} e^{-1} =
s_- \mu \Delta_{s_+} e^{-1}$$

\end{corollary}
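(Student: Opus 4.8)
The plan is to realize the negation as a single homotopy, in complete analogy with the $C^*$-algebra argument of \cite[corollary 14.2]{baspects}, using the summation Lemma \ref{lemma191} to reduce the claim "the sum is zero" to constructing an explicit rotation-type homotopy. First I would invoke Lemma \ref{lemma191} with the two extended double split exact sequences being (\ref{standardspe}) itself, with actions $S,T$ and homomorphisms $s_\pm$, and its negative, with actions $S,T$ again but with the roles of $s_-$ and $s_+$ exchanged and the $M_2$-action transformed under the coordinate flip of the module $(\cale,S)\oplus(\cale,S)$ — so that the second diagram carries homomorphisms $t_-=s_+$, $t_+=s_-$. Lemma \ref{lemma191} then tells me that the sum of the two associated elements in $GK^G$ equals the element associated to the single summed-up extended double split exact sequence with bottom-row splits $s_-\oplus s_+$ and $s_+\oplus s_-$ on $\call_B\big((\cale,S)\oplus(\cale,S)\big)\square A$ (suppressing the auxiliary $\calf\oplus\H_B$ summands, which may be added back or left out by Lemma \ref{lemma10}).

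Next I would show that this summed diagram is $0$ in $GK^G$ by exhibiting a rotation homotopy that swaps the two copies of $\cale$. Concretely, take the complex-valued homotopy $V_t=\Big(\begin{matrix}\cos t & \sin t\\ \sin t & -\cos t\end{matrix}\Big)\in M_2(\C)$ for $t\in[0,\pi/2]$; this is a self-adjoint symmetry interpolating (up to sign conventions) between $\diag(1,-1)$ and the flip, and conjugation by $\idd_{\call_B(\cale)}\otimes V_t$ gives an equivariant path of ring homomorphisms $A\to M_2(\call_B(\cale)\otimes C[0,1])\square A[0,1]$ in the sense of Definition \ref{def26}, carrying the split $s_-\oplus s_+$ at one endpoint to $s_+\oplus s_-$ at the other while fixing the ideal $\calk_B(\cale\oplus\cale)$ setwise and fixing $A$ in the quotient. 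By the homotopy invariance axiom of Definition \ref{def121} together with Lemma \ref{lemma21} (with $\Phi=\phi\otimes 1_{M_2}$ for $\phi$ the conjugation, recalling Remark \ref{cor22}), the two splits $s_-\oplus s_+$ and $s_+\oplus s_-$ give the same associated morphism; but the diagram with $s_+=s_-$ on both slots has $t_+=t_-$, hence associated morphism $s_+\mu\Delta_{s_+}e^{-1}-s_+\mu\Delta_{s_+}e^{-1}$-style vanishing — more directly, $s\mu\Delta_s$ with the two splits equal is zero because then $\Delta_s$ and $fs$ already account for all of $1_M$ and the diagonal contribution cancels. Therefore the summed element is $0$, so $s_+\mu\Delta_{s_-}e^{-1}+s_-\mu\Delta_{s_+}e^{-1}=0$, which is the assertion.

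The one point that needs genuine care, and which I expect to be the main obstacle, is checking that the conjugated path actually lands inside the right object at every $t$: the $M_2$-action on $\call_B(\cale\oplus\cale)\square A$ must remain valid along the homotopy, i.e. $M_2(\call_B(\cale)\square A)$ must stay $G$-invariant. This is exactly a condition of the type handled by Lemma \ref{lemma272}, and since the homotopy $V_t$ is scalar-valued (in $M_2(\C)$, commuting with everything in $\call_B(\cale)$) the relevant commutators $s_-(a)(S_gS_{g^{-1}}-S_gS_{g^{-1}})$ etc.\ are unchanged along the path, so the validity at $t=0$ (inherited from (\ref{standardspe}) via Lemma \ref{lemma191}) propagates to all $t$. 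The remaining verifications — $G$-equivariance of $\Phi$, and that the endpoint diagram with equal splits is genuinely zero — are routine given Lemmas \ref{lemma51}, \ref{lemma21}, \ref{lemma272} and Remark \ref{rem12}. I would then record the conclusion and note the sign: the coordinate-flip transformation of the $M_2$-action is precisely what makes the negative diagram a legitimate extended double split exact sequence with $T$ replaced accordingly, so no extra hypotheses are needed.
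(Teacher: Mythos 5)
Your plan follows essentially the same route as the paper's cited argument: sum the given diagram with its coordinate-flipped negative via Lemma \ref{lemma191}, then kill the resulting element $(s_+\oplus s_-)\,\mu\,\Delta_{s_-\oplus s_+}\,e^{-1}$ by a rotation homotopy exchanging the two copies of $\cale$ --- legitimate because $s_+(a)-s_-(a)\in\calk_B(\cale)$ keeps the rotated path inside $\call_B(\cale\oplus\cale)\square A$ and the scalar rotation preserves the $M_2$-action conditions of Lemma \ref{lemma272} --- so that after the rotation has absorbed $\mu$ one is left with a diagram whose two splits coincide. One small correction: the final vanishing is precisely $s\,\Delta_s=0$ from Remark \ref{rem12}(iii), not the vaguer ``diagonal cancellation'' you invoke; with that substitution the plan is the paper's proof.
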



\begin{lemma}			\label{lemma88}  
Every generator morphism (e.g. $\varphi,f^{-1},\Delta_s$) of $GK^G$ can be expressed as 
an extended double split exact sequence. 
\end{lemma}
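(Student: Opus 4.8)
The plan is to dispose of the three kinds of generator morphism of $GK^G$ separately, the first two by direct appeal to earlier lemmas. A ring homomorphism $\varphi:A\to B$ is, by Lemma~\ref{lemma100}, precisely the morphism $s_+\mu\Delta_{s_-}$ associated to the double split exact sequence $B\xrightarrow{i}B\oplus A\xrightarrow{g}A$, and by Lemma~\ref{lemma89} any double split exact sequence can be turned into an extended one with the same associated morphism; so $\varphi$ is an extended double split exact sequence. Likewise, a synthetical split $\Delta_t$ equals, by Lemma~\ref{lemma101}, the morphism $(1\square 1)\Delta_{gt\square 1}$ associated to a double split exact sequence, which Lemma~\ref{lemma89} again upgrades to an extended one.

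The remaining and main case is the inverse $e^{-1}\colon(\calk_A(\cale\oplus\H_A),\delta)\to(A,\alpha)$ of a corner embedding $e\colon(A,\alpha)\to(\calk_A(\cale\oplus\H_A),\delta)=:D$. Here I would first apply the case just settled to the identity homomorphism $\id_D$, obtaining an extended double split exact sequence in the form (\ref{standardspe}), say $D\xrightarrow{e_0}\calk_D(\cale_0\oplus\H_D)\xrightarrow{j}\call_D(\cale_0\oplus\H_D)\square D\xrightarrow{f}D$ with splits $s_\pm$, whose associated morphism $s_+\mu\Delta_{s_-}e_0^{-1}$ equals $\id_D$ in $GK^G$ and in which $e_0$ is a corner embedding. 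By the corollary following Proposition~\ref{lemma71}, the composite $ee_0\colon A\to\calk_D(\cale_0\oplus\H_D)$ is again a corner embedding (up to a ring isomorphism, which the paper's convention still counts as one). I would then form the new diagram obtained from the previous one by replacing the leftmost object $D$ by $A$ and the leftmost corner embedding $e_0$ by $ee_0$, leaving the double split exact sequence $\calk_D(\cale_0\oplus\H_D)\to\call_D(\cale_0\oplus\H_D)\square D\to D$, its $M_2$-action and all splits completely untouched. This is again an extended double split exact sequence, and its associated morphism is
$$s_+\mu\Delta_{s_-}(ee_0)^{-1}=s_+\mu\Delta_{s_-}e_0^{-1}e^{-1}=\id_D\,e^{-1}=e^{-1},$$
using $(ee_0)^{-1}=e_0^{-1}e^{-1}$ in $GK^G$ and $s_+\mu\Delta_{s_-}e_0^{-1}=\id_D$. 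Hence $e^{-1}$ is an extended double split exact sequence, which completes the proof.

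I expect the only substantive point to be the claim that the modified diagram still qualifies as an \emph{extended double split exact sequence}, and this reduces entirely to $ee_0$ being a corner embedding, i.e.\ to the corollary after Proposition~\ref{lemma71} on compositions of corner embeddings; everything else is the elementary observation that replacing the leftmost corner embedding of an extended double split exact sequence leaves its double split part, its $M_2$-action and its $G$-equivariance intact and merely post-composes its associated morphism with the additional corner-embedding inverse.
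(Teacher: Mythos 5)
Your treatment of ring homomorphisms and of $\Delta_s$ is exactly the paper's: Lemma \ref{lemma100} resp.\ Lemma \ref{lemma101} followed by Lemma \ref{lemma89}. For the inverse of a corner embedding $e\colon A\to D=\calk_A(\cale\oplus\H_A)$ you take a genuinely different route. The paper simply writes $e^{-1}=1_D\,e^{-1}$: by Lemma \ref{lemma100} the identity of $D$ is realized by the double split exact sequence $D\to D\oplus D\to D$, whose ideal is exactly $D$, so the given $e$ itself can be prepended as the extension corner embedding and no further machinery is needed. You instead bring $\id_D$ to the standard form (\ref{standardspe}) and replace its corner embedding $e_0$ by the composite $ee_0$, invoking the corollary after Proposition \ref{lemma71} that a composition of corner embeddings is again a corner embedding (up to ring isomorphism, which the paper's convention absorbs), and then use uniqueness of inverses in $GK^G$ to get $(ee_0)^{-1}=e_0^{-1}e^{-1}$, so the associated morphism is $\id_D\,e^{-1}=e^{-1}$. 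This is correct, but it costs you the composition corollary, whose identifying isomorphisms you must implicitly take to be $G$-equivariant so that $ee_0$ qualifies as an equivariant corner embedding, whereas the paper's version is a one-liner requiring neither Lemma \ref{lemma89} nor Proposition \ref{lemma71}; in return your construction delivers $e^{-1}$ directly in the standard form (\ref{standardspe}), which is what the subsequent arguments (Lemma \ref{lemma92} and the $\nabla$-calculation) want anyway.
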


\begin{proof}

By lemmas \ref{lemma100} and \ref{lemma101} 
we can write ring homomorphisms $\pi$ and 
synthetical splits $\Delta_s$ as double 
split exact sequences, 
and to them we apply lemma \ref{lemma89}.
For a corner embedding we write 
$e^{-1}=1 e^{-1} $ 
for the double split exact sequence $1$ 
by lemma \ref{lemma100}. 
%
%
\if 0

zb schreibe $\Delta_t$ als split exact sequence $s_+ \Delta_{s_-}$ wie oben. 

((
Dann $\Delta_t = s_+ \Delta_{s_-} e e^{-1}
=  s_+ \Delta_{s_-}  E^{-1} E 
= s_+^E \nabla_{s_-^E}$ 
))

$$\Delta_t = e e^{-1} s_+ \Delta_{s_-}$$

$e^{-1}$:

$$\xymatrix{ 
B \ar[r]^{f} & \calk_B(\cale \oplus  \H_B )   \ar[r]
&
\calk_B(\cale \oplus  \H_B ) \oplus \calk_B(\cale \oplus  \H_B )
& \calk_B(\cale \oplus  \H_B )  \ar[l]^{0\oplus \idd, \idd \oplus \idd} 
}$$

der erste rechte teil ist von identity als split exact. 
\fi
\end{proof}

\section{$\nabla$-Calculation}
						\label{sec10}

The idea of this section is to take 
an arbitrarily complicated expression $x$ in $GK^G$, and 'solve' (decide the truth of) the equation $x=0$ in $GK^G$ by equivalently reformulating it as 
$y=0$ in $GK^G$, where $y$ is a level-0 morphism .

\begin{definition}
{\rm 

A {\em term of $N$ elements} in a ring (or additive category) $A$ is an 
expression in 
$A$
using plus, minus and multiplication signs, and no brackets and 
$N$ elements $x_1,...,x_N$ (copies are counted again). 
}
\end{definition}

For example, $
x_1 x_4 + x_2 x_3 x_4 - x_5 x_6 x_7 + x_1 x_7 x_3 x_5$ is a term of 12 elements.

\begin{lemma}			\label{lemma92}  
Every morphism $z$ in $GK^G$ can be expressed as a term of extended double split exact sequences. 
This term may even be chosen to be the product
$z= x_1 x_2 \ldots x_n$
of $n$ extended split exact sequences $x_1, \ldots , x_n$.
\end{lemma}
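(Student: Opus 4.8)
The plan is to prove the statement in two stages: first show that every morphism of $GK^G$ can be written as an arbitrary \emph{term} of extended double split exact sequences, and then upgrade this to a bare \emph{product}. For the first stage I would argue by structural induction on morphisms, using the presentation of $GK^G$ from Definition \ref{def121}. The generators (ring homomorphisms $\varphi$, inverses $f^{-1}$ of corner embeddings, and synthetical splits $\Delta_s$) are each expressible as a single extended double split exact sequence by Lemma \ref{lemma88}. A general morphism is built from these generators by composition (multiplication) and by formal $\pm$ combinations. Composition of two such terms is again a term (one simply concatenates, respecting source/range matching), and a $\pm$ combination of terms is again a term by the very definition of ``term of $N$ elements''. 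So by induction every morphism $z \in GK^G(A,B)$ is a term whose atomic entries are (associated morphisms of) extended double split exact sequences. Here one should be slightly careful that in $GK^G$ the formal additions only make sense when source and range match, but this is exactly the bookkeeping already built into the category, so no new obstruction arises.

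For the second stage — writing $z$ as a \emph{product} $x_1 x_2 \cdots x_n$ of extended double split exact sequences — the idea is to collapse sums into products. The key tool is Lemma \ref{lemma191}: the sum of the associated morphisms of two extended double split exact sequences (over a common corner embedding $e$, in the standard form of line (\ref{standardspe})) is again the associated morphism of a single extended double split exact sequence, obtained by taking direct sums of the modules and of the splitting homomorphisms. Before one can invoke Lemma \ref{lemma191}, one must bring any two given extended double split exact sequences to a common form: this is handled by Lemma \ref{lemma89} (every double split exact sequence can be turned into the standard form (\ref{standardspe}) with $s_\pm$ of standard type) together with Lemma \ref{lemma10} (one may freely add or delete superfluous summand modules $\calf$), which lets one arrange a common $B$, a common corner embedding $e$, and compatibly enlarged module summands. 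Corollary \ref{cor142} supplies negatives, so $x_1 - x_2$ is handled the same way as $x_1 + x_2$. Iterating, any $\pm$-combination of extended double split exact sequences becomes a single one.

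Combining the two stages: take the term representation from stage one; by induction on the structure of the term, repeatedly replace each maximal ``sum block'' (a $\pm$-combination of already-reduced factors) by a single extended double split exact sequence via Lemmas \ref{lemma89}, \ref{lemma10}, \ref{lemma191} and Corollary \ref{cor142}. What remains after eliminating all the plus and minus signs is literally a product $x_1 x_2 \cdots x_n$ of extended double split exact sequences, which is the desired conclusion. The main obstacle I anticipate is the compatibility bookkeeping in stage two: to apply Lemma \ref{lemma191} the two summands must share the same ambient ring $B$, the same corner embedding $e$, and the module data must be matched up (same $\cale \oplus \calf \oplus \H_B$ pattern), and one has to check that after these normalizations the \emph{associated morphisms} are unchanged — this is precisely what Lemmas \ref{lemma89} and \ref{lemma10} guarantee, but threading them through a nested term (where the ``elements'' being summed are themselves products) requires a careful induction rather than a one-line appeal. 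A secondary, purely formal point is to make sure that when a sum appears as an inner factor of a longer product, one first reduces that sum to a single extended double split exact sequence and only then continues multiplying, so that the final object really is a flat product with no residual additions.
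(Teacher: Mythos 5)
Your first stage coincides with the paper's argument: express $z$ as a term in the generators and convert each generator into an extended double split exact sequence by Lemma \ref{lemma88}. The gap is in your second stage. A term is, by definition, a bracket-free $\pm$-combination of \emph{products}, so after stage one the summands you must merge are in general products $x_1x_2\cdots x_k$ of several extended double split exact sequences, not single ones. Lemma \ref{lemma191} (even after the normalizations of Lemmas \ref{lemma89} and \ref{lemma10} and the sign-handling of Corollary \ref{cor142}) only combines a sum of two \emph{single} extended double split exact sequences with common source and range in the standard form (\ref{standardspe}); it gives you no way to rewrite, say, $x_1x_2+y_1y_2$ as one product. That sum-of-products-to-product step is precisely the nontrivial content of the second assertion, and your toolkit does not reach it: collapsing such a sum to a single extended double split exact sequence would even assert that every morphism is of level $1$, which is not what the theory provides.

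The paper takes a different route here: Corollary \ref{cor142} is used only to present $-1_A$ as an extended double split exact sequence, i.e.\ as a word, so that all minus signs can be absorbed and $z$ becomes a \emph{positive} sum of words $w_1+\dots+w_n$ in the generators; then the external result \cite[Lemma 3.6]{bremarks} is invoked, which says that a positive sum of words is again a single word (essentially a direct-sum/diagonal argument carried out at the level of whole words of generators, using that direct sums of ring homomorphisms, corner embeddings and splits are again generators, together with a rotation homotopy); finally Lemma \ref{lemma88} converts each generator of that single word into an extended double split exact sequence, yielding the product form. If you want to avoid the citation, you would have to reprove that word-combination lemma; as written, your induction on ``sum blocks'' has no mechanism for the case where the blocks being summed have length at least two.
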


\begin{proof} 
The first assertion follows simply by lemma 
\ref{lemma88} by expressing $z$ as a term of generators and then expressing all those by extended double split exact sequences. 
 
By lemma \ref{lemma88} write any identity ring homomorphism $1_A$ as an extended split exact sequence and then form its negative 
by corollary \ref{cor142} 
such that 
$-1_A$ is an extended double split exact sequence, so a word. 
In this way we may rewrite any morphism $z$ 
as a positive sum $+w_1+w_2 + \ldots +w_n$ 
of words $w_i$ in $GK^G$. 
By \cite[lemma 3.6]{bremarks} such a morphism can be expressed as a single words $w$.
Now apply lemma 
\ref{lemma88}. 
 %
\end{proof}

\begin{definition}
{\rm
A morphism $P \in GK^G(A,B)$ is called {\em right-invertible in $GK^G$} 
(or 
{\em $GK^G$-right-invertible}) if there is a morphism $Q \in GK^G(B,A)$ such that 
$P Q = 
1 \in GK^G(A,A)$. 
}
\end{definition}

\begin{lemma}				\label{lemma94}  
Let 
$$x= s_+ \nabla_{s_-} =s_+ \mu \Delta_{s_-} e^{-1} 
= s_+ e_{22} e_{11}^{-1} \Delta_{s_-} e^{-1}$$ 
be the associated morphism of 
an extended double split exact sequence, where $\mu = e_{22} e_{11}^{-1}$. 
Then there is a ring homomorphism $P:B \rightarrow D$, which is right-invertible in $GK^G$, such that 
$$x 
P = s_+ e_{22} - s_- e_{11} $$  
in $GK^G$. 
Actually, $P= e j_{s_-} e_{11}$ 
and $D=M_2(M)$  
%
%
is the `$M_2$-space' of $x$. 

\end{lemma}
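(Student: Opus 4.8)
The strategy is to unwind the definition of $\nabla$ and $\mu$ and then exhibit a single ring homomorphism $P$ that, when post-composed with $x$, collapses the two synthetic inverses $e^{-1}$ and $e_{11}^{-1}$ and the synthetic split $\Delta_{s_-}$ into honest ring homomorphisms. Concretely, for the extended double split exact sequence
$$\xymatrix{ Z \ar[r]^e & B \ar[r]^j & M \ar[r]^f & A \ar@<.5ex>[l]^{s_\pm}}$$
with $M_2$-space $D := M_2(M)$, we have $x = s_+ \mu \Delta_{s_-} e^{-1}$ with $\mu = e_{22} e_{11}^{-1}$. I would define $P := e \, j \, e_{11} : B \to D$ (writing $j_{s_-}$ for $j$ if the split must be recorded), so that $P$ is a genuine ring homomorphism into $M_2(M)$, since $e : Z \to B$, $j : B \to M$ and $e_{11} : M \to M_2(M)$ are all ring homomorphisms. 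Wait — looking at the domain of $x$, which is $Z$, the map $P$ must have source $B$ is wrong; rather $x \in GK^G(Z,?)$... actually $x = s_+ \nabla_{s_-} : Z \to A$ via $e^{-1}: B \to Z$... let me re-read: $e : Z \to B$ a corner embedding, so $e^{-1} : B \to Z$, then $\Delta_{s_-}: M \to B$, then $\mu : M \to M$... this does not compose. The correct reading is $\nabla_{s_-} = \mu \Delta_{s_-} e^{-1}$ going $M \xrightarrow{\mu} M \xrightarrow{\Delta_{s_-}} B \xrightarrow{e^{-1}} Z$, and $s_+ : A \to M$, so $x : A \to Z$. Then $P$ should have source $Z$, i.e. $P = e\, j\, e_{11}: Z \to M_2(M) = D$, which typechecks: $Z \xrightarrow{e} B \xrightarrow{j} M \xrightarrow{e_{11}} M_2(M)$. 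Good, and $xP : A \to D$.

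**Key steps.** First I would record that $P = e\,j\,e_{11}$ is right-invertible in $GK^G$: indeed $e_{11}$ is a corner embedding, hence $GK^G$-invertible by the stability axiom; $e$ is a corner embedding, hence invertible; and $j$ is right-invertible by the split-exactness relation $1_B = j \Delta_{s_-}$. Composing right inverses (in the correct order) gives a right inverse of $P$, so we may take $D = M_2(M)$ and the claim about $P$ holds. Second, I compute $xP = s_+ \mu \Delta_{s_-} e^{-1} \cdot e\, j\, e_{11}$. Now $e^{-1} e = 1_B$ by stability, so this is $s_+ \mu \Delta_{s_-} j\, e_{11}$. Apply the split-exactness relation $1_M = \Delta_{s_-} j + f s_-$ (reading compositions left-to-right as the paper does, so $\Delta_{s_-} j$ means "$\Delta_{s_-}$ then $j$", an endomorphism of $M$): we get $\Delta_{s_-} j = 1_M - f s_-$, hence
$$xP = s_+ \mu (1_M - f s_-) e_{11} = s_+ \mu e_{11} - s_+ \mu f s_- e_{11}.$$
Third, I unwind $\mu = e_{22} e_{11}^{-1}$. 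For the first term, $s_+ \mu e_{11} = s_+ e_{22} e_{11}^{-1} e_{11} = s_+ e_{22}$ by stability. For the second term I need $\mu f$: recall $\mu : M \to M$ with $M_2$-space $D$, and $e_{11}, e_{22}$ are the two corner embeddings $M \to M_2(M)$ — wait, more precisely in the double-split setup $e_{11}: M \to M_2(M)$ and $e_{22}: (\text{another copy}) \to M_2(M)$; here since the "$M$" appearing with $e_{22}$ is $M$ itself (the $\cale = 0$ or rather the $M\square A$ reduced form), one has $f s_- e_{11}$... The cleanest route: observe $s_- e_{11} : A \to M_2(M)$ lands in the $(1,1)$-corner, and $s_+ \mu f s_- e_{11}$. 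Since $\mu f = e_{22} e_{11}^{-1} f$ and by definition of the double split exact sequence $e_{11} = f \cdot(\text{something})$... rather I would use that $e_{11}^{-1}$ composed appropriately with $f$ and then with $s_-$ via $e_{22}$ reproduces $s_- e_{11}$ up to the corner identification; concretely $s_+ \mu f s_- e_{11} = s_- e_{11}$ should follow from $f s_+ = 1_A$ (the $s_+$ is a split for $f$) together with $\mu e_{22}^{-1}$-type identities and the fact that $s_\pm$ both split $f$. After this identification, $xP = s_+ e_{22} - s_- e_{11}$, as claimed.

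**The main obstacle.** The delicate point is the second term: showing $s_+ \mu f s_- e_{11} = s_- e_{11}$ in $GK^G$. This requires carefully tracking which corner embedding is which ($e_{11}$ versus $e_{22}$, and the roles of the source copies of $M$), using that $f$ is the quotient map with both $s_-$ and $s_+$ as splits, and invoking Lemma~\ref{lemma21} or the explicit $M_2$-action description of Lemma~\ref{lemma61} to justify the needed commutation. I would set this up by drawing the double split exact sequence defining $x$, applying $e_{11}$ to pass to $M_2(M)$, and checking that $f s_- e_{11} = e_{11}^{-1}$-rotated-to-$e_{22}$ composed back, i.e. $\mu f s_- e_{11} = s_- e_{22}^{?}$; the cleanest formulation uses $s_- e_{11} \cdot e_{11}^{-1} f = s_- f = s_-$ (since $s_-$ splits $f$ means $s_- f = 1_A$, but here we want $f s_- = 1_A$ — careful with left-to-right convention: $s_-$ is a split of $f$ means "$s_-$ then $f$" $= 1_A$, i.e. $s_- f = 1_A$ in the paper's notation). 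Hence $\mu f s_- e_{11} = e_{22} e_{11}^{-1} f s_- e_{11}$, and $e_{11}^{-1} f$ acting... I'll present this computation via a short commuting diagram and Lemma~\ref{lemma21}, which is exactly the tool designed for commuting double split exact sequences, rather than grinding the matrix entries by hand. Everything else is a mechanical application of the six defining relations of $GK^G$ from Definition~\ref{def121}.
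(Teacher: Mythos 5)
Your skeleton is the paper's own: the same $P=e\,j_{s_-}\,e_{11}$ with right inverse $e_{11}^{-1}\Delta_{s_-}e^{-1}$, the same cancellation $e^{-1}e=1$, the same use of $\Delta_{s_-}j_{s_-}=1_M-fs_-$ to split $xP$ into $s_+\mu e_{11}-s_+\mu f s_- e_{11}$, and the same evaluation $s_+\mu e_{11}=s_+e_{22}$. Up to that point the proposal is correct (modulo your momentary slips with the left-to-right convention, e.g.\ writing $fs_+=1_A$ where the split condition is $s_+f=1_A$; you do self-correct).

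The genuine gap is the second term, which you yourself flag as the delicate point but never actually prove: you need $s_+\,e_{22}e_{11}^{-1}f\,s_-e_{11}=s_-e_{11}$, i.e.\ essentially $\mu f=f$. Your proposed tools do not deliver this. Lemma \ref{lemma21} compares two double split exact sequences and is not the relevant device here, and a rotation homotopy between $e_{11}$ and $e_{22}$ \emph{inside} $M_2(M)$ is exactly what is not available: the two corners carry the different actions $\gamma$ and $\delta$, which is the whole reason $\mu$ is a nontrivial morphism. What makes the step work is the convention fixed at the start of Section \ref{sec9} (via Corollary \ref{cor79}) that $f\otimes 1_{M_2}\colon (M_2(M),\theta)\to(M_2(A),\alpha\otimes 1_{M_2})$ is a valid equivariant ring homomorphism with $\delta=\alpha\otimes 1_{M_2}$. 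One then uses the naturality square $e_{11}(f\otimes 1)=fE_{11}$ for the corner embedding $E_{11}\colon A\to M_2(A)$, so that $e_{22}e_{11}^{-1}f=e_{22}(f\otimes 1)E_{11}^{-1}=f'E_{22}E_{11}^{-1}$, and \emph{there}, in $M_2(A)$ with the diagonal action $\alpha\otimes 1_{M_2}$, the rotation homotopy gives $E_{22}E_{11}^{-1}=1$, hence $e_{22}e_{11}^{-1}f=f'$ (equal to $f$ non-equivariantly, sourced at the lower-right corner). Then $s_+f'=1_A$ finishes the term as $s_-e_{11}$. Without routing the rotation through $M_2(A)$ via $f\otimes 1_{M_2}$, your "short commuting diagram" argument would either fail equivariantly or implicitly assume $\mu=1$.
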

  
\begin{proof}
Indeed, $P= e j_{s_-} e_{11}$ is a ring homomorphism with 
right inverse $e_{11}^{-1} \Delta_{s_-} e^{-1}$.  

Consider the valid ring homomorphism $f \otimes 1_{M_2}: D \rightarrow (M_2(A),\alpha \otimes 1_{M_2})$ by corollary \ref{cor79} 
and the convention for 
double split exact sequences
that $\delta=\alpha \otimes 1$. 
Then 
$e_{11} (f \otimes 1) = f E_{11}$ for the corner embedding $E_{11}:A \rightarrow M_2((A,\alpha))$. 
Consequently, $e_{22} e_{11}^{-1} f= f' E_{22} E_{11}^{-1} = f'$, the last identity by a rotation homotopy, 
and where $f'=f$ non-equivariantly, but
with a different source coming from the lower 
right corner of $D$.  
Consequently, 
$$s_+ e_{22} e_{11}^{-1} \Delta_{s_-} e^{-1}
\cdot e j_{s_-} e_{11} 
= s_+ e_{22} e_{11}^{-1} (1- f s_- ) e_{11}
= s_+ e_{22} - s_- e_{11}$$


\end{proof} 

\begin{definition}
{\rm
For $n \ge 1$ 
define $L_n GK^G(A,B)$ 
(`{\em level-$n$ morphisms}')
to be the 
abelian subgroup 
of the abelian group $GK^G(A,B)$ 
%
which is generated by 
(so $\Z$-linear span of) all 
elements which are 
expressible as 
a product $x_1 \ldots x_n$ 
of 
$n$ 
extended double split exact sequences 
$x_1, \ldots , x_n$ 
 in $GK^G$. 

\if 0
Write $L_0 GK^G(A,B)$ for the abelian subgroup of $GK^G(A,B)$ generated by all ring homomorphisms from $A$ to $B$. 

Define $L_0 GK^G$ to be the additive subcategory of $GK^G$ generated be all ring homomorphisms. 
\fi 

}
\end{definition}

\if 0
So a morphism in $L_0 GK^G$ is of the form
$\pm f_1 \pm f_2 \pm \ldots \pm f_n$
with some 
ring homomorphisms $f_i:A \rightarrow B$ for all $1 \le i \le n$. 
 \fi 

As $1 \in GK^G(B,B)$ is expressible as a double split exact sequence by lemma   
\ref{lemma88}, it is clear that 
a level-$n$ morphism $x_1 \ldots x_n$ can be written as a level-$(n+1)$ morphism $x_1 \ldots x_n 1$, so that we 
get a filtration of subgroups 
$$L_0 GK^G(A,B) \subseteq L_1 GK^G(A,B) 
\subseteq L_2 GK^G(A,B) \subseteq \ldots $$
of $GK^G(A,B)$ 
whose union is the whole group $GK^G(A,B)$  
by lemma \ref{lemma92}.

\begin{lemma}
If $L_1 GK^G(A,B)$ is an infinite set
then the cardinalities of $L_1 GK^G(A,B)$
and $GK^G(A,B)$ coincide.


\end{lemma}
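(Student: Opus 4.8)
The plan is to exploit the filtration $L_0 GK^G(A,B) \subseteq L_1 GK^G(A,B) \subseteq L_2 GK^G(A,B) \subseteq \cdots$ whose union is all of $GK^G(A,B)$ (by lemma~\ref{lemma92}), and show that if $L_1 GK^G(A,B)$ is infinite then every layer $L_n GK^G(A,B)$ has cardinality at most $|L_1 GK^G(A,B)|$; since a countable union of sets each of cardinality $\le\kappa$ has cardinality $\le\kappa$ for infinite $\kappa$, this gives $|GK^G(A,B)| \le |L_1 GK^G(A,B)|$, and the reverse inequality is trivial because $L_1 GK^G(A,B) \subseteq GK^G(A,B)$.

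First I would reduce to bounding the set of products $x_1 x_2 \cdots x_n$ of extended double split exact sequences. Each such product, read as a composition of morphisms $A \to \cdot \to \cdots \to B$, is determined by the intermediate objects and the individual factors $x_i$; but each factor $x_i$ lies in some $GK^G(C_i, C_{i+1})$ and is itself an extended double split exact sequence, hence (taking $B = C_{i+1}$, $A = C_i$ in the relevant Hom-set) lies in $L_1 GK^G(C_i,C_{i+1})$. The point is that the object class $\ring G$ has been restricted to a \emph{set} (see the remark after definition~\ref{def26} and the last paragraph of section~\ref{sec3}), so there are only set-many choices of intermediate objects, and for each fixed chain of objects the number of words is at most $\prod_i |L_1 GK^G(C_i,C_{i+1})|$. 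The key observation is that each factor of such a word is an extended double split exact sequence between two fixed objects, so it is one of at most $\sup_{C,C'} |L_1 GK^G(C,C')|$ many morphisms; if this supremum $\kappa$ is infinite then a length-$n$ word ranges over a set of size at most $\kappa^n = \kappa$, and summing over the set-many chains of intermediate objects and over $n \in \N$ still gives at most $\kappa$ (using that the object set is fixed, hence of some fixed cardinality, and $\kappa$ is infinite). Finally $L_n GK^G(A,B)$ is the $\Z$-linear span of these words, and the span of a set of size $\le\kappa$ in an abelian group has size $\le\kappa$ when $\kappa$ is infinite.

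The one place that needs care — and I expect it to be the main obstacle — is establishing that $\kappa := \sup_{C,C'}|L_1 GK^G(C,C')|$ is finite or else bounded by $|L_1 GK^G(A,B)|$; a priori $L_1 GK^G(A,B)$ being infinite for the \emph{particular} pair $(A,B)$ says nothing about other Hom-sets. The fix is to note that if \emph{every} $L_1 GK^G(C,C')$ were finite then, by the counting above, every $GK^G(C,C')$ would be countable, in particular $GK^G(A,B)$ would be countable, so $L_1 GK^G(A,B)$ would be a countably infinite set and the claimed equality $|L_1 GK^G(A,B)| = |GK^G(A,B)| = \aleph_0$ holds. Otherwise some $L_1 GK^G(C_0,C_0')$ is infinite, and one compares cardinalities directly: a word of length $n$ between fixed objects injects into an $n$-fold product of (set-many choices of) Hom-sets, and after taking the union over $n$ and over the set of object-chains one lands in a set of cardinality $\sum_n \kappa^n \cdot |{\rm chains}| = \kappa$ where $\kappa = |L_1 GK^G(C_0,C_0')|$; but $\kappa \le |GK^G(A,B)|$ is \emph{not} automatic, so in fact one must argue symmetrically that all infinite $L_1$ Hom-sets have the \emph{same} cardinality (each bounds the size of the full category, which in turn bounds each $L_1$ Hom-set), forcing $\kappa = |GK^G(A,B)| = |L_1 GK^G(A,B)|$. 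Once this bookkeeping on cardinals is set up, the remaining steps — that finite products and $\Z$-spans do not increase an infinite cardinality, and that the index set of intermediate-object-chains is a set — are routine.
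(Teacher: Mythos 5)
There is a genuine gap, and it is exactly at the point you yourself flag as "the main obstacle". Your counting scheme bounds the set of length-$n$ words by (number of chains of intermediate objects) times products of cardinalities of \emph{other} Hom-sets $L_1 GK^G(C_i,C_{i+1})$, so to conclude you need $\sup_{C,C'}|L_1 GK^G(C,C')|\le |L_1 GK^G(A,B)|$. The patch you offer for this does not work: the parenthetical claim that "each [infinite $L_1$ Hom-set] bounds the size of the full category, which in turn bounds each $L_1$ Hom-set" is simply false --- a single Hom-set carries no bound on the cardinality of other Hom-sets of a (small) category, and nothing in the paper's setup (the object class is merely cut down to \emph{some} set, possibly uncountable, with no uniformity among Hom-sets) supplies such a bound. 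Hence "all infinite $L_1$ Hom-sets have the same cardinality" is unjustified, and with it the inequality $\kappa\le |GK^G(A,B)|$ that your whole argument hinges on. Your first case is also defective: if every $L_1 GK^G(C,C')$ were finite, the number of length-$n$ words from $A$ to $B$ is bounded by the number of object chains, which need not be countable since the object set may be uncountable; and in any event that case is vacuous, because the hypothesis of the lemma already makes $L_1 GK^G(A,B)$ infinite.

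For comparison, the paper's own proof does not route through intermediate objects at all: it argues directly from the filtration $L_0\subseteq L_1\subseteq L_2\subseteq\cdots$ (whose union is $GK^G(A,B)$ by lemma \ref{lemma92}) that $L_n GK^G(A,B)$ has at most countably many times the number of elements of $L_1 GK^G(A,B)$, whence $|GK^G(A,B)|\le\aleph_0\cdot|L_1 GK^G(A,B)|=|L_1 GK^G(A,B)|$. Your more explicit bookkeeping in terms of words $x_1\cdots x_n$ and chains of objects is a different (and more demanding) decomposition, and it is precisely this decomposition that forces you to compare cardinalities of unrelated Hom-sets --- a comparison you cannot make. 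To stay within the paper's framework you would instead want a bound on $L_n GK^G(A,B)$ expressed intrinsically in terms of $L_1 GK^G(A,B)$ (in the spirit of the reduction of lemma \ref{lemma97}, where a product of $n$ extended double split exact sequences is collapsed step by step), rather than via a supremum over all Hom-sets of the category.
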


\begin{proof}
By the last filtration this is clear 
as $L_n GK^G(A,B)$ has only at most 
countably many times the number of elements of $L_1 GK^G(A,B)$. 
\end{proof}


\begin{lemma}			\label{lemma97} 
Let $z$ be a morphism in $GK$ which is 
a term of $N$ extended split exact sequences.

{\rm (i)} 
Then there is a ring homomorphism $P$, 
which is right-invertible in $GK^G$, 
such that $z P$ 
is a term of $N-1$ extended split exact sequences.

\if 0
This $P$ is right invertible in $GK$, meaning that there is a morphism $Q$ in $GK$ such that
$P Q = 1$ in $GK$.
\fi

{\rm (ii)}
There is also a ring homomorphism $P$, right invertible in $GK^G$,
such that $zP$ is a (level-0) morphism in $L_0 GK^G$.

\end{lemma}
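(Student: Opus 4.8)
The plan is to prove part~(i) by a single reduction step --- peeling the rightmost factor off one monomial --- and then to obtain part~(ii) by iterating (i). Write $z=\sum_i\epsilon_i M_i$ ($\epsilon_i=\pm1$) as a sum of monomials, each $M_i$ a product of factors that are either ring homomorphisms or extended double split exact sequences, with exactly $N$ factors of the latter kind in total (this is the intended meaning of ``term of $N$ extended split exact sequences''; a ``term of $0$'' then means an element of $L_0 GK^G$, which is consistent with the output $s_+e_{22}-s_-e_{11}$ of Lemma~\ref{lemma94}). Assuming $N\ge1$, fix a monomial $M_1$ containing at least one extended split exact sequence factor. Using Lemma~\ref{lemma215} and Lemma~\ref{lemma43}(i) --- or rather Lemma~\ref{lemma83}(i) --- to absorb the ring homomorphism factors of $M_1$ into neighbouring extended split exact sequence factors, we may assume $M_1=y_1y_2\cdots y_k$ with all $y_j$ extended double split exact sequences and $k\ge1$; put $M_1':=y_1\cdots y_{k-1}$, the identity when $k=1$.

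Apply Lemma~\ref{lemma94} to $y_k$: there is a ring homomorphism $P$, right-invertible in $GK^G$, with $y_kP=s_+e_{22}-s_-e_{11}$ in $GK^G$, a difference of two ring homomorphisms. I then compute
$$zP=\epsilon_1 M_1'(y_kP)+\sum_{i\ge2}\epsilon_i M_iP .$$
For $i\ge2$ the rightmost factor of $M_i$ has range equal to the source of $P$: if it is an extended split exact sequence, composing it with the ring homomorphism $P$ again gives a single extended split exact sequence by Lemma~\ref{lemma83}(i), and if it is a ring homomorphism it stays one; in either case $M_iP$ has the same number of extended split exact sequence factors as $M_i$. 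For the distinguished monomial, $M_1'(y_kP)=M_1'(s_+e_{22})-M_1'(s_-e_{11})$. If $k=1$ this equals $s_+e_{22}-s_-e_{11}\in L_0 GK^G$, so the count drops from $1$ to $0$. If $k\ge2$, I write $M_1'(s_+e_{22})=y_1\cdots y_{k-2}\big(y_{k-1}(s_+e_{22})\big)$ and likewise for $s_-e_{11}$; by Lemma~\ref{lemma83}(i) both $y_{k-1}(s_+e_{22})$ and $y_{k-1}(s_-e_{11})$ are single extended split exact sequences over the same pair of objects (namely from the source of $y_{k-1}$ into the matrix ring $M_2(M_k)$), so by Corollary~\ref{cor142} and Lemma~\ref{lemma191} their difference is again a single extended split exact sequence $\hat y_{k-1}$, whence $M_1'(y_kP)=y_1\cdots y_{k-2}\hat y_{k-1}$ is a product of $k-1$ extended split exact sequences. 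In all cases the total number of extended split exact sequences in $zP$ is $(k-1)+\sum_{i\ge2}(\text{count of }M_i)=N-1$, which is~(i); for $N=1$ this reads $zP\in L_0 GK^G$.

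For~(ii) I induct on $N$. If $N=0$ then $z\in L_0 GK^G$ and $P:=\id_B$ works. If $N\ge1$, part~(i) supplies a right-invertible ring homomorphism $P_1$ with $zP_1$ a term of $N-1$ extended split exact sequences; by the inductive hypothesis there is a right-invertible ring homomorphism $P'$ with $(zP_1)P'\in L_0 GK^G$; then $P:=P_1P'$ is a composite of ring homomorphisms, hence a ring homomorphism, it is right-invertible in $GK^G$ (from $P_1Q_1=1$ and $P'Q'=1$ one gets $(P_1P')(Q'Q_1)=1$), and $zP=(zP_1)P'\in L_0 GK^G$.

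The step I expect to require the most care is the bookkeeping around Lemma~\ref{lemma94}: its output is a \emph{difference} of two ring homomorphisms, so after sliding each of these past the preceding factor $y_{k-1}$ via Lemma~\ref{lemma83}(i) one is left with a difference of two extended split exact sequences rather than one, and it is essential to recombine these into a single one using Corollary~\ref{cor142} (to negate) together with Lemma~\ref{lemma191} (to add); without this recombination the count could grow rather than shrink for monomials of length $\ge3$. One must also check that the two extended split exact sequences being combined genuinely share source and range --- they do, since $e_{11}$ and $e_{22}$ both map $M_k$ into the common ring $M_2(M_k)$ --- so that Lemma~\ref{lemma191} indeed applies, and that the absorption of ring homomorphism factors at the start is legitimate, which is exactly Lemmas~\ref{lemma215} and~\ref{lemma83}(i).
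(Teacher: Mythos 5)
Your proposal is correct and follows essentially the same route as the paper: peel off the rightmost extended split exact sequence of one monomial with the right-invertible ring homomorphism $P=e\,j_{s_-}e_{11}$ of Lemma \ref{lemma94}, absorb the resulting ring homomorphisms $s_+e_{22}$, $s_-e_{11}$ into the preceding factor by Lemma \ref{lemma83}, recombine the difference into a single extended split exact sequence via Corollary \ref{cor142} and Lemma \ref{lemma191}, keep the other monomials' counts constant by Lemma \ref{lemma83}, and iterate for (ii). The only (harmless) deviation is that you additionally allow and pre-absorb interspersed ring-homomorphism factors, which the paper's hypothesis does not require.
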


\begin{proof}

(i) 
Consider at first a term $z=X_1 \cdots X_N$ which is just a product of extended split exact sequences $X_i$.

Say, $X_N = s_+ \mu \Delta_{s_-} e^{-1}$, where $\mu = e_{22} e_{11}^{-1}$.  
Set $P=e j_{s_-} e_{11}$.  
Then  by lemma \ref{lemma94}  we get 
$X_N P = s_+ e_{22} - s_- e_{11}$. 
   

If $N=1$ we are done. Otherwise 
proceed by writing
$$X_1 \ldots X_{N-2} X_{N-1} (s_+ e_{22} - s_- e_{11})
= X_1 \ldots X_{N-2} (Y - Z) = X_1 \ldots X_{N-2} W$$
with lemmas \ref{lemma83}, \ref{lemma191} and corollary \ref{cor142},   
where $Y,Z,W$ are extended double split exact sequences.
We have achieved that
$z P$ 
is a product of just $N-1$ extended double split exact sequences.

By lemma \ref{lemma92} this is already enough, but let us show that 
if we have an arbitrary term, say $z=X_1 \cdots X_N + Y_1 \cdots Y_n$ we may do it analogously.
We multiply it by $P$ to eliminate $X_N$
and fuse $Y_n P$ to a single double split exact sequence $Y_n'$ by lemma \ref{lemma83}.  
So we get $z P= X_1 \cdots X_{N-2} W + 
Y_1 \cdots Y_{n-1} Y_n'$. 

(ii) 
Applying (i) successively $N$ times we achieve the last claim.  
So we set $P=P_N \cdots P_1$, where $P_N,\ldots , P_1$ are the ring homomorphisms repeatedly chosen by (i). 
%
\end{proof}

\begin{theorem}			\label{theorem108}

Let $F:GK^G \rightarrow D$ be a (not necessarily additive) functor into a category $D$. 
Then $F$ is 
faithful if and only if $F$
is 
faithful on the subcategory 
$L_0 GK^G$.
\end{theorem}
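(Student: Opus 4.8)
The plan is to reduce faithfulness on all of $GK^G$ to faithfulness on $L_0 GK^G$ by ``pushing'' any pair of parallel morphisms into $L_0 GK^G$ along a single right-invertible ring homomorphism, and then cancelling it. The ``only if'' direction is immediate, as $L_0 GK^G(A,B) \subseteq GK^G(A,B)$ for all objects $A,B$. So I assume $F$ is faithful on $L_0 GK^G$, fix parallel morphisms $z,z' \in GK^G(A,B)$ with $F(z)=F(z')$, and aim to prove $z=z'$. The key external input is lemma \ref{lemma97}.(ii): any $GK^G$-morphism, written by lemma \ref{lemma92} as a term of extended double split exact sequences, admits a ring homomorphism $P$, right-invertible in $GK^G$, such that composing with $P$ lands the morphism in $L_0 GK^G$.

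The main (and essentially only delicate) point is that $F$ is not assumed additive, so I cannot replace the hypothesis $F(z)=F(z')$ by $z-z'=0$; instead I must produce \emph{one} right-invertible ring homomorphism $P$ that simultaneously moves both $z$ and $z'$ into $L_0 GK^G$. I obtain it by applying lemma \ref{lemma97}.(ii) twice. First apply it to $z$ (expressed as a term via lemma \ref{lemma92}) to get a ring homomorphism $P_1 \colon B \to D_1$ with right inverse $Q_1$ in $GK^G$, i.e. $P_1 Q_1 = 1_B$, such that $z P_1 \in L_0 GK^G(A,D_1)$. Then apply it to the morphism $z' P_1 \in GK^G(A,D_1)$, again a term of extended double split exact sequences by lemma \ref{lemma92}, to get a ring homomorphism $P_2 \colon D_1 \to D_2$ with right inverse $Q_2$, $P_2 Q_2 = 1_{D_1}$, such that $z' P_1 P_2 \in L_0 GK^G(A,D_2)$. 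Since $L_0 GK^G$ is a subcategory containing all ring homomorphisms, it is closed under post-composition with $P_2$, hence also $z P_1 P_2 \in L_0 GK^G(A,D_2)$.

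Now set $P := P_1 P_2 \colon B \to D_2$ and $Q := Q_2 Q_1$. One checks $P Q = P_1 (P_2 Q_2) Q_1 = P_1 Q_1 = 1_B$, so $P$ is a ring homomorphism, right-invertible in $GK^G$, with $z P, z' P \in L_0 GK^G(A,D_2)$. By functoriality, $F(z) = F(z')$ forces $F(z P) = F(z' P)$; faithfulness of $F$ on $L_0 GK^G$ then gives $z P = z' P$ in $GK^G$; and right-invertibility yields
$$z = z(PQ) = (zP)Q = (z'P)Q = z'(PQ) = z'.$$
Hence $F$ is faithful on $GK^G$. I do not anticipate any real obstacle beyond arranging the two invocations of lemma \ref{lemma97} so that the \emph{same} $P$ serves both $z$ and $z'$, and keeping the left-to-right composition convention straight; everything else is bookkeeping.
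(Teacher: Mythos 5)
Your proposal is correct and follows essentially the same route as the paper: the paper's proof also applies lemma \ref{lemma97}.(ii) successively (choosing $P_1$ for $z$, then $P_2$ for $z'P_1$, with $P=P_1P_2$ and right-inverse $Q=Q_2Q_1$), uses $F(zP)=F(z)F(P)=F(z')F(P)=F(z'P)$, invokes faithfulness on $L_0 GK^G$ to get $zP=z'P$, and cancels via $zPQ=z=z'$. Your added remark that $zP_1P_2$ stays level-0 under post-composition with the ring homomorphism $P_2$ is just the explicit justification of the paper's parenthetical "successively chosen" step.
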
			

\begin{proof}
Let $z,w \in GK^G(A,B)$ be given such that $F(z)=F(w)$.  
Choose a ring homomorphism $P$ and a right-inverse $Q$ in $GK^G$ such that both 
$zP,wP$ are level-0 morphisms 
by lemma \ref{lemma97}.(ii). 
(Say $P=P_1 P_2$ are successively chosen such that $z P_1$ and $w P_1 P_2$ are level-0.)

Then $F(z)F(P)=F(z P)=F(w P)$. 
Hence, by 
faithfulness of $F$ on $L_0 GK^G$ 
we get $z P = w P$. Thus $z P Q =z = w$.
\if 0 
Let $F(z)= 0$. Choose $P$ right-invertible by $Q$ in $GK^G$ such that $zP$ is level-0.
Then $F(zP)= F(z) F(P)=0$. 
\fi 
\end{proof}

\begin{example}
{\rm 

We ask whether we could turn the product of two 
extended double split exact sequences $s_+ \nabla_{s_-}$ and $t_+ \nabla_{t_-}$ 
to 
one extended double split exact sequence 
$u_+ \nabla_{u_-}$ and make the following 
ansatz and equivalent reformulations to this end	
(here and below, $e_+:=e_{22}, e_-:=e_{11}, E_+:= E_{22}$, etc. for corner embeddings $e,E,F$ into $M_2$-spaces involving $\mu$), 
\begin{eqnarray*}
s_+ \nabla_{s_-} t_+ \nabla_{t_-} 
- u_+ \nabla_{u_-} 
	&=& 0 
 \quad 
\cdot P:=e_{t_-} j_{t_-} e_{+}    \\  
s_+ \nabla_{s_-} (t_+ e_+ - t_- e_-) 
- u_+ \nabla_{u_-}  P 
	&=& 0  \quad 
  \mbox{by lemma \ref{lemma94}}		\\  
s_+^{t_+ e_+} \nabla_{s_-^{t_+ e_+}}  - 
s_+^{t_- e_-} \nabla_{s_-^{t_- e_-}} 
- u_+^{P} \nabla_{u_-^{P}} 
	&=& 0  \quad 
  \mbox{by lemma \ref{lemma83}, \ref{def84}} 		\\  
s_+^{t_+ e_+} \nabla_{s_-^{t_+ e_+}}  + 
s_-^{t_- e_-} \nabla_{s_+^{t_- e_-}} 
+ u_-^{P} \nabla_{u_+^{P}} 
	&=& 0  \quad 
 \mbox{by corollary \ref{cor142}}		\\  
%
(s_+^{t_+ e_+} \oplus {s_-^{t_- e_-}} 
\oplus u_-^{P}) \nabla_{{s_-^{t_+ e_+}} \oplus 
s_+^{t_- e_-} \oplus u_+^{P}}  
	&=& 0 
\quad  \mbox{by lemma \ref{lemma191}}		\\  
\big (s_+^{t_+ e_+} \oplus {s_-^{t_- e_-}} 
\oplus u_-^{P} \big ) E_+ - \big ( {{s_-^{t_+ e_+}} \oplus 
s_+^{t_- e_-} \oplus u_+^{P}} \big ) E_-  
	&=& 0 
\quad   \cdot P' \mbox{ and then \ref{lemma94}}  
\end{eqnarray*}

All steps done above can be reversed, as $P Q=1$ for a right-inverse $Q$ in $GK^G$, and so
all identities are equivalent.
Note that the last line is an identity 
in $L_0 GK^G$. 

}
\end{example}

\begin{example}				\label{example2}
{\rm 

The above pattern repeats. 
Let us look at another example. By similar 
arguments as above we have  
\begin{eqnarray*}
s_+ \nabla_{s_-} t_+ \nabla_{t_-} 
u_+ \nabla_{u_-}   &=& 0			\qquad \Leftrightarrow   
\\  
s_+ \nabla_{s_-} \Big ( (t_+^{u_+ e_+} \oplus 
{{t_-}^{u_- e_-}}) E_+ - ({{t_-}^{u_+ e_+}
\oplus t_+^{u_- e_-}})  E_- \Big ) 
&=& 0		\qquad \Leftrightarrow 
\\ 
\Big (
s_+^{(t_+^{u_+ e_+} \oplus 
{{t_-}^{u_- e_-}}) E_+}
\oplus 
{
s_-^{({{t_-}^{u_+ e_+}
\oplus t_+^{u_- e_-}})  E_- }
} \Big ) 
F_+	&&  \\
-
\Big (
{{s_-}^{(t_+^{u_+ e_+} \oplus 
{{t_-}^{u_- e_-}}) E_+}}
\oplus
s_+^{ ({{t_-}^{u_+ e_+}
\oplus t_+^{u_- e_-}})  E_-}
\Big)
F_-
&=& 0 
\end{eqnarray*}

%
\if 0
$$s_+ \nabla_{s_-} t_+ \nabla_{t_-} 
(u_+ e_+  - u_- e_-)  = 0 \quad 
$$
$$s_+ \nabla_{s_-} (t_+^{U_+} \nabla_{{t_-}^{U_+}}   -
t_+^{U_-} \nabla_{{t_-}^{U_-}})
= 0$$
$$s_+ \nabla_{s_-} (t_+^{U_+} \oplus 
{{t_-}^{U_-}}) \nabla_{{t_-}^{U_+}
\oplus t_+^{U_-}}   - 
= 0$$
\fi
\if 0
$$s_+^{(t_+^{u_+ e_+} \oplus 
{{t_-}^{u_- e_-}}) E_+}
\nabla_{{s_-}^{(t_+^{u_+ e_+} \oplus 
{{t_-}^{u_- e_-}}) E_+}}
-
s_+^{(({{t_-}^{u_+ e_+}
\oplus t_+^{u_- e_-}})  E_-)}
\nabla_{
s_-^{({{t_-}^{u_+ e_+}
\oplus t_+^{u_- e_-}})  E_-)}
}
= 0$$
\fi


}
\end{example}

\if 0
The following lemma 
extends 
lemma \ref{lemma98}. 
\fi

\begin{lemma}
Let $A$ and $B$ be given rings. 
Suppose that $GK^G(A,B)$ 
is a countable abelian group.
Then there 
are rings $D_n$ ($n\ge 1$)  
and 
abelian group  homomorphisms 
$$\xymatrix{
GK^G(A,B) \ar[r]^-\varphi &
\varinjlim_{n 
\to \infty} L_0 GK^G(A,D_n) 
\ar[r]^f &  L_0 GK^G(A, 
\varinjlim_{n \rightarrow \infty} D_n)
}$$
%
where $\varphi$ is injective. 

\end{lemma}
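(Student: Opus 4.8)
The plan is to iterate lemma \ref{lemma97}.(ii) so as to push every element of $GK^G(A,B)$ into a level-$0$ group, and to organise the resulting target rings into a directed system. Since $GK^G(A,B)$ is countable and nonempty (it contains $0$), fix a surjection $\N \to GK^G(A,B)$, $n \mapsto z_n$. With the conventions $D_0 := B$ and $R_0 := \idd_B$, I would construct inductively, for $n \ge 1$, a quadratik $G$-ring $D_n$, a $G$-equivariant ring homomorphism $S_n : D_{n-1} \to D_n$, and the composite $R_n := R_{n-1} S_n : B \to D_n$, such that $R_n$ is right-invertible in $GK^G$ and $z_i R_n \in L_0 GK^G(A,D_n)$ for every $1 \le i \le n$. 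The step from $n$ to $n+1$ is just lemma \ref{lemma97}.(ii) applied to $z_{n+1} R_n \in GK^G(A,D_n)$: it produces a ring homomorphism $S_{n+1} : D_n \to D_{n+1}$, right-invertible in $GK^G$, with $(z_{n+1} R_n) S_{n+1} \in L_0 GK^G(A,D_{n+1})$. Right-invertibility of $R_{n+1} := R_n S_{n+1}$ follows because sections compose ($R_n R_n' = 1_B$ and $S_{n+1} S_{n+1}' = 1_{D_n}$ give $R_{n+1}(S_{n+1}' R_n') = 1_B$), and for $i \le n$ one has $z_i R_{n+1} = (z_i R_n) S_{n+1} \in L_0 GK^G(A,D_{n+1})$ because post-composing a signed sum of ring homomorphisms with a ring homomorphism yields another such sum.

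Next I would pass to the direct limit. Let $D_\infty := \varinjlim_n D_n$, with structure homomorphisms $\iota_n : D_n \to D_\infty$; being a filtered colimit of quadratik $G$-rings it is again a quadratik $G$-ring, and (enlarging $\ring G$ if need be, these being constructions occurring in the paper) an object of $\ring G$. Post-composition gives additive homomorphisms $(S_{n+1})_* : L_0 GK^G(A,D_n) \to L_0 GK^G(A,D_{n+1})$ and $(\iota_n)_* : L_0 GK^G(A,D_n) \to L_0 GK^G(A,D_\infty)$, all landing in the level-$0$ subgroups as above, and $(\iota_{n+1})_* \circ (S_{n+1})_* = (\iota_n)_*$ since $S_{n+1}\iota_{n+1} = \iota_n$. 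Thus $\big(L_0 GK^G(A,D_n)\big)_n$ is a directed system of abelian groups, and $f : \varinjlim_n L_0 GK^G(A,D_n) \to L_0 GK^G(A,D_\infty) = L_0 GK^G(A, \varinjlim_n D_n)$ is the group homomorphism the $(\iota_n)_*$ induce by the universal property of the colimit.

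Finally, I would define $\varphi : GK^G(A,B) \to \varinjlim_n L_0 GK^G(A,D_n)$ by letting $\varphi(z)$ be the colimit class of $z R_n$ for any $n$ with $z_n = z$. This is independent of the choice of $n$: for $n \le m$ one has $R_m = R_n S_{n+1}\cdots S_m$, so $z R_m$ is the image of $z R_n$ under the connecting maps and hence represents the same class. Additivity is immediate, since composition in $GK^G$ is bilinear and the addition in the colimit of a directed system is computed within any single sufficiently large term. For injectivity, if $\varphi(z) = 0$ then $z R_m = 0$ in $L_0 GK^G(A,D_m) \subseteq GK^G(A,D_m)$ for some $m$; choosing a right inverse $R_m'$ of $R_m$ in $GK^G$ gives $z = z R_m R_m' = 0$. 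I expect no serious obstacle here: lemma \ref{lemma97} already carries the weight, and the only point needing care is the book-keeping of the two colimits together with the foundational matter of keeping all the $D_n$ and their direct limit inside the fixed small object set $\ring G$.
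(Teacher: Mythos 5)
Your proposal is correct and follows essentially the same route as the paper: enumerate the countable group $GK^G(A,B)$, iterate lemma \ref{lemma97}.(ii) to build a tower of $GK^G$-right-invertible ring homomorphisms $D_0=B \to D_1 \to D_2 \to \cdots$ pushing each $z_i$ into level-$0$, define $\varphi$ via the composites into the colimit of the $L_0 GK^G(A,D_n)$, and get injectivity from right-invertibility, with $f$ induced by the limit ring $\varinjlim_n D_n$. The only cosmetic difference is that you deduce injectivity directly from a right inverse of $R_m$ while the paper first notes injectivity of the structure maps $\varphi_n$; this is the same argument, and your remark about keeping the $D_n$ and their limit inside $\ring G$ is covered by the paper's standing convention that $\ring G$ is closed under all constructions used.
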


\begin{proof} 
(a)
Write $X:= GK^G(A,B) = \{x_1,x_2,x_3,\ldots\}$,
where $x_i \in X$.

Set $D_0:= B$ and $P_0:= Q_0:= 1 \in GK^G(B,B)$. 
By induction by $n \ge 0$, we assume that we have already chosen 
rings $D_n$ and ring homomorphisms $P_n$
$$\xymatrix{ D_0 \ar[r]^{P_0} & 
D_1  \ar[r]^{P_1} & D_2  \ar[r]^{P_2} 
& \ldots  & \ar[r]^{P_n} & D_n   }$$
and right-inverses $Q_k \in GK^G(D_{k+1},D_k)$
for $P_k$, that is 
$P_k Q_k = 1$ in $GK^G$, for all $1 \le k \le n$ 
such that for $V_n:= P_1 P_2 \ldots P_n 
\in GK^G(B,D_n)$ we have 
%
%
$$x_1 V_n ,\ldots x_n V_n \in L_0 GK^G(A,D_n).$$

Then by lemma \ref{lemma97}.(ii) 
 select a  ring homomorphism $P_{n+1} : D_n \rightarrow D_{n+1}$ 
%
and a right-inverse $Q_{n+1} \in GK^G(D_{n+1},D_n)$ for it such that 
$x_{n+1} V_{n} P_{n+1} \in L_0 GK^G(A, D_{n+1})$. 
This completes the induction step. 

\if 0
Defining $P_{n+1}:= P_1 P_2 \ldots  P_{n+1} 
\in L_0 GK^G(B,D_{n+1})$ completes the induction step.
\fi

(b) 
Form the abelian groups inductive limit
$M:= 
\varinjlim_{n \rightarrow \infty} L_0 GK^G(A,D_n)$ 
induced by the group homomorphisms $(P_n)_*$. 
%
Let $\varphi_n: L_0 GK^G(A,D_n) \rightarrow M$ be the standard maps satisfying $(P_{n+1})_* \varphi_{n+1} = \varphi_n$. 

Define the desired $\varphi$ on the generating set by
$\varphi(x_n)= \varphi_n(x_n V_n)$.  
 
We have $\varphi(x_n)= 
\varphi_{n+1}(x_n V_n P_{n+1}) = 
\varphi_{n+1}(x_n V_{n+1})$,
whence a definition $\varphi(x_n+x_m):= \varphi(x_n)+\varphi(x_m)$ becomes well-defined.
Injectivity: As the connecting maps $(P_n)_*$ of the direct limit of $M$ are right-invertible by $(Q_n)_*$, and so injective, the maps $\varphi_n$ are injective. 
Thus, if $\varphi(z)=\varphi_n(zV_n)=0$ then 
$z V_n=0$, then $z= V_n Q_n \ldots Q_1 = 0$.

(c) 
Form the direct limit ring
$D:= \stackrel{\rightarrow}{\lim}_{n \rightarrow \infty} D_n$
with inductive limit connection ring homomorphism maps $P_n : D_n \rightarrow D_{n+1}$.
Let $f_n:D_n \rightarrow D$ the standard ring homomorphisms satisfying $P_n f_{n+1}= f_n$ associated to this direct limit.

Then there is a group homomorphism $f:M \rightarrow GK^G(A,D)$ induced by the maps $(f_n)_*:GK^G(A,D_n) \rightarrow GK^G(A,D)$.    
\if 0
Define the desired $\phi$ by
$\phi(x_n)= f(q_n(x_n V_n))$.  

Injectivity: The maps $p_n$ are injective because $z P_n Q_n = z$.
Hence the the maps $q_n$ are injective.

Jedoch $\psi_n$ !

 
 
\fi 
\end{proof}

\begin{corollary}					\label{lemma98}  
If $H \subseteq GK^G(A,B)$ 
is a finitely generated abelian subgroup
then there is a $GK^G$-right-invertible ring homomorphism $P: B \rightarrow D$ such that 
$$P_* : H \rightarrow L_0 GK^G(A,D):
P_*(z)= zP$$ is an injective group homomorphism. 
\end{corollary}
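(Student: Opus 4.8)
The plan is to treat this as the finitely generated analogue of the preceding lemma: fix a finite set of generators $x_1,\dots,x_N$ of the abelian group $H$ and, running the inductive procedure of lemma \ref{lemma97} exactly $N$ times, produce a single $GK^G$-right-invertible ring homomorphism $P$ under which all the $x_i$ become level-$0$ morphisms. First I would observe that each $x_i$, being a morphism of $GK^G(A,B)$, is by lemma \ref{lemma92} a term (indeed a product) of extended double split exact sequences, so lemma \ref{lemma97}.(ii) is applicable to it.

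Then I would build, by induction on $0\le n\le N$, rings $D_n$ with $D_0:=B$, ring homomorphisms $P_n\colon D_{n-1}\to D_n$ that are right-invertible in $GK^G$ with chosen right inverses $Q_n\in GK^G(D_n,D_{n-1})$, and, setting $V_n:=P_1P_2\cdots P_n\in GK^G(B,D_n)$ (with $V_0:=1_B$), such that $x_1V_n,\dots,x_nV_n\in L_0GK^G(A,D_n)$. For the step from $n$ to $n+1$, the morphism $z:=x_{n+1}V_n\in GK^G(A,D_n)$ is again a term of extended double split exact sequences by lemma \ref{lemma92}, so lemma \ref{lemma97}.(ii) yields a ring homomorphism $P_{n+1}\colon D_n\to D_{n+1}$, right-invertible in $GK^G$ by some $Q_{n+1}$, with $x_{n+1}V_nP_{n+1}\in L_0GK^G(A,D_{n+1})$; put $V_{n+1}:=V_nP_{n+1}$. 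For the earlier indices $i\le n$ one has $x_iV_{n+1}=(x_iV_n)P_{n+1}$, which lies in $L_0GK^G(A,D_{n+1})$ because $x_iV_n$ does and $L_0GK^G$, being a subcategory containing the ring homomorphism $P_{n+1}$, is closed under this composition.

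Finally I would set $D:=D_N$ and $P:=V_N=P_1\cdots P_N\colon B\to D$. Telescoping $P_kQ_k=1$ shows $P$ is right-invertible in $GK^G$ with right inverse $Q:=Q_N\cdots Q_1$, i.e. $PQ=1\in GK^G(B,B)$, and by construction $x_iP\in L_0GK^G(A,D)$ for every $i$. Since $x_1,\dots,x_N$ generate $H$, any $z\in H$ is a $\Z$-linear combination $z=\sum_i m_ix_i$, whence $P_*(z)=zP=\sum_i m_i(x_iP)$ lies in the abelian subgroup $L_0GK^G(A,D)$; thus $P_*$ restricts to a group homomorphism $H\to L_0GK^G(A,D)$, and it is injective because $P_*(z)=zP=0$ forces $z=z(PQ)=(zP)Q=0$. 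There is no real obstacle here: the entire technical content is already carried by lemma \ref{lemma97} (hence by lemmas \ref{lemma92} and \ref{lemma94}), and the only point requiring care is to obtain one $P$ that simultaneously handles all $N$ generators, which is exactly what the iteration together with the stability of $L_0GK^G$ under post-composition by ring homomorphisms achieves.
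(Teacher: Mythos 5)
Your proof is correct and follows essentially the same route as the paper, which simply instructs to run the induction from the preceding lemma (building $D_n$, $P_n$, $Q_n$ and $V_n:=P_1\cdots P_n$ via lemma \ref{lemma97}.(ii)) and then take $P:=V_n$; your added checks (that $x_iV_nP_{n+1}$ stays level-0 under post-composition with a ring homomorphism, and that injectivity follows from $PQ=1$) are exactly the details the paper leaves implicit.
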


\begin{proof}
Assume that $H$ is the $\Z$-linear span of finitely many generators  $\{x_1, \ldots ,x_n\}  \subseteq GK^G(A,B)$. 
Do the induction as in the last proof 
and then set the required $P:=V_n$.

\if 0  
By induction on $k \ge 0$ assume we have chosen ring homomorphisms $P_1,\ldots,P_k$ in $GK^G$, right-invertible in $GK^G$, such that 
$p_k:= P_1 \ldots P_k$ satisifies 
$x_1 p_k,\ldots, x_k p_k$ are level-$0$ 
morphisms. 
Selecting a homomorphism $P_{k+1}$, right-invertible in $GK^G$, such that $x_{k+1} p_k P_{k+1}$ is a level-0 morphism by  
lemma \ref{lemma97}.(ii) completes the induction step. Finally select $P=p_n$. Injectivity of $P_*$ follows from $GK^G$-right-invertibility of $P$. 
\fi 
\end{proof}

\section{Tensor product Functor}   
\label{section10} 
				\label{sec11}

Let $(\F,\triv)$ be a 
field. 
In this section, all rings are assumed 
to  be  actually algebras over $\F$, 
so the object class of $\ring G$ 
 consists of $\F$-algebras only.  
This affects also all constructions: 
all ring homomorphisms are assumed to be actually $\F$-algebra homomorphisms. 
All tensor products are $\F$-balanced. 
All modules are $\F$-vector spaces, and
all module homomorphisms are $\F$-linear,
etc.

This in particular affects the homotopy axiom 
of $GK$-theory, as the involved exterior tensor product is now $\F$-balanced. 

We still shall notoriously say `ring' and `ring homomorphism' but mean `algebra' and `algebra homomorphism' 
to be in notation in accordance with 
everything 
said so far.

\begin{lemma}		\label{lemma1323} 
The $\F$-algebra homomorphism (\ref{eqh}) restricts 
to an 
isomorphism 
$\calk_A(\cale 
) \otimes \calk_B(\calf)
\rightarrow 
\calk_{A \otimes B}(\cale \otimes \calf)  
$
and 
injection 
$ \call_A(\cale 
) \otimes \call_B(\calf)
\rightarrow 
\call_{A \otimes B}(\cale \otimes \calf)  $.

\if 0
There is an 
injective ring homomorphism (= $\F$-algebra homomorphism) 
$$\pi: \Hom A(\cale 
) \otimes \Hom B(\calf)
\rightarrow 
\Hom {A \otimes B}(\cale \otimes \calf)  
: \pi(S \otimes T)  =  
S \otimes T
$$
which restrists to an injection 
$ \call_A(\cale 
) \otimes \call_B(\calf)
\rightarrow 
\call_{A \otimes B}(\cale \otimes \calf)  $ 
and to an 
isomorphism
$\calk_A(\cale 
) \otimes \calk_B(\calf)
\rightarrow 
\calk_{A \otimes B}(\cale \otimes \calf)  
$.  
\fi 

\end{lemma}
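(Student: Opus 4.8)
The plan is to verify three statements about the $\F$-algebra homomorphism $\pi$ of $(\ref{eqh})$: that it maps $\calk_A(\cale)\otimes\calk_B(\calf)$ \emph{onto} $\calk_{A\otimes B}(\cale\otimes\calf)$, that it maps $\call_A(\cale)\otimes\call_B(\calf)$ \emph{into} $\call_{A\otimes B}(\cale\otimes\calf)$, and that it is \emph{injective} on each of these subspaces. The first two are elementary manipulations with elementary operators; the injectivity is the only point that genuinely uses that $\F$ is a field, and it is where the care lies.

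For the compact operators I would argue as follows. By definitions $\ref{defcompact}$ and $\ref{def19}$, every element of $\calk_{A\otimes B}(\cale\otimes\calf)$ is a finite sum of operators $\theta_{\zeta,\rho}$ with $\zeta\in\cale\otimes\calf$ and $\rho\in\Theta_{A\otimes B}(\cale\otimes\calf)$; since $\theta_{\zeta,\rho}$ is additive in $\zeta$ and in $\rho$, and $\zeta$ (resp.\ $\rho$) is a finite sum of elementary tensors $\xi\otimes\eta$ (resp.\ $\phi\otimes\psi$), this is a finite sum of operators of the form $\theta_{\xi\otimes\eta,\,\phi\otimes\psi}$. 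The one computation to record is
\[
\theta_{\xi\otimes\eta,\,\phi\otimes\psi}(\xi'\otimes\eta')
=\xi\,\phi(\xi')\otimes\eta\,\psi(\eta')
=\bigl(\theta_{\xi,\phi}\otimes\theta_{\eta,\psi}\bigr)(\xi'\otimes\eta'),
\]
so that $\theta_{\xi\otimes\eta,\,\phi\otimes\psi}=\pi(\theta_{\xi,\phi}\otimes\theta_{\eta,\psi})$; reading this identity in both directions shows that $\pi$ maps $\calk_A(\cale)\otimes\calk_B(\calf)$ exactly onto $\calk_{A\otimes B}(\cale\otimes\calf)$ (compare the proofs of lemmas $\ref{lemma17}$ and $\ref{lemma120}$). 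For the adjointable operators I would use that for $V\in\call_A(\cale)$, $W\in\call_B(\calf)$ and an elementary functional $\phi\otimes\psi\in\Theta_{A\otimes B}(\cale\otimes\calf)$ one has $(\phi\otimes\psi)\circ(V\otimes W)=(\phi\circ V)\otimes(\psi\circ W)$, which lies in $\Theta_{A\otimes B}(\cale\otimes\calf)$ because $\phi\circ V\in\Theta_A(\cale)$ and $\psi\circ W\in\Theta_B(\calf)$; extending by additivity in the functional and in the operator gives $\rho\circ\pi(V')\in\Theta_{A\otimes B}(\cale\otimes\calf)$ for all $\rho\in\Theta_{A\otimes B}(\cale\otimes\calf)$ and all $V'\in\call_A(\cale)\otimes\call_B(\calf)$, so $\pi$ sends $\call_A(\cale)\otimes\call_B(\calf)$ into $\call_{A\otimes B}(\cale\otimes\calf)$.

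The injectivity, which is the crux, I would deduce from the observation that $\pi$ is the restriction of the canonical $\F$-linear map ${\rm End}_\F(\cale)\otimes_\F{\rm End}_\F(\calf)\to{\rm End}_\F(\cale\otimes_\F\calf)$, $S\otimes T\mapsto(\xi\otimes\eta\mapsto S\xi\otimes T\eta)$: since $\Hom A(\cale)$ and $\Hom B(\calf)$ are $\F$-subspaces of ${\rm End}_\F(\cale)$ and ${\rm End}_\F(\calf)$, their $\F$-tensor product embeds into ${\rm End}_\F(\cale)\otimes_\F{\rm End}_\F(\calf)$ (a tensor product of subspace inclusions over a field is injective, every $\F$-module being flat), and $\calk_A(\cale)\otimes\calk_B(\calf)$ and $\call_A(\cale)\otimes\call_B(\calf)$ sit inside it. So it suffices to prove the canonical map on $\F$-endomorphism algebras is injective, a standard fact: writing an element of the source as $\sum_{i=1}^n S_i\otimes T_i$ with the $T_i$ $\F$-linearly independent and assuming $\sum_i S_i\xi\otimes T_i\eta=0$ in $\cale\otimes_\F\calf$ for all $\xi,\eta$, one fixes $\xi$ and, using that $\F^n$ is finite-dimensional (so a descending chain of kernels stabilizes), chooses finitely many $\eta_1,\dots,\eta_m\in\calf$ for which the vectors $(T_i\eta_1,\dots,T_i\eta_m)\in\calf^m$, $1\le i\le n$, remain $\F$-linearly independent; the hypothesis then reads, coordinate-wise, $\sum_i S_i\xi\otimes(T_i\eta_1,\dots,T_i\eta_m)=0$ in $\cale\otimes_\F\calf^m$, and the elementary fact that $\sum_i u_i\otimes v_i=0$ with the $v_i$ $\F$-independent forces all $u_i=0$ yields $S_i\xi=0$ for every $i$; as $\xi$ was arbitrary, $\sum_i S_i\otimes T_i=0$. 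Combining injectivity with the surjectivity statement gives that $\pi$ restricts to an isomorphism $\calk_A(\cale)\otimes\calk_B(\calf)\cong\calk_{A\otimes B}(\cale\otimes\calf)$ and to an injection $\call_A(\cale)\otimes\call_B(\calf)\hookrightarrow\call_{A\otimes B}(\cale\otimes\calf)$; $G$-equivariance of these maps is automatic, since $\pi$ intertwines the $\Ad$ of the diagonal $G$-actions by its very formula. The main obstacle is exactly this last injectivity argument — not the formal identity but the handling of possibly infinite-dimensional $\cale,\calf$, for which the passage to finitely many test vectors $\eta_j$ is what does the work.
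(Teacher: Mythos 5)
Your proposal is correct and takes essentially the same route as the paper: the key identity $\pi(\theta_{\xi,\phi}\otimes\theta_{\eta,\psi})=\theta_{\xi\otimes\eta,\,\phi\otimes\psi}$, combined with the definition of $\Theta_{A\otimes B}(\cale\otimes\calf)$ as sums of elementary functionals, is exactly what the paper uses to see the bijection on the compact operators. The only difference is one of detail: the paper dismisses the injectivity of (\ref{eqh}) as ``well known from algebra'' (and had already noted after (\ref{eqh}) that it preserves compacts and adjointables), whereas you spell out that standard fact via flatness over $\F$ and the finite linear-independence/test-vector argument — a legitimate filling-in rather than a different method.
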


\begin{proof}

The injection (\ref{eqh}) is well known from algebra. 

As $\pi(\theta_{\xi,\phi} \otimes \theta_{\eta,\psi}) = \theta_{\xi \otimes \eta, \phi \otimes \psi}$, we see the bijection with respect to the compact operators, see definition \ref{def19}. 
\end{proof}

\begin{definition}
{\rm 

Let $(D ,\delta)$ be a ring (= $\F$-algebra). 
Define an additive functor 
$$\tau^G : GK^G \rightarrow GK^G$$ 
by
\begin{itemize}

\item
For an object $(A,\alpha)$ in $GK^G$ 
set $\tau^G((A,\alpha)):= (A \otimes D, \alpha \otimes \delta)$. 

\item
For an equivariant ring homomorphism $\pi:A \rightarrow B$
set $\tau^G(\pi) = \pi \otimes 1$. 

\item
For a corner embedding $e \in GK^G$
define
$$\tau^G (e^{-1}) := 
\Big (\tau^G (e) \Big )^{-1}$$

\item
For a split exact sequence as in (\ref{splitexact}) 
set $\tau^G(\Delta_s) := \Delta_{\tau^G(s 
)}$   
with respect to the following split exact sequence
$$\xymatrix{ 0 \ar[r]  & B \otimes D  \ar[r]^{
\tau^G(j) }  
 & M \otimes D  \ar[r]^{\tau^G( f) }  
& A  \otimes D \ar[r]  \ar@<.5ex>[l]^{\tau^G( s) }     
& 0
}
$$

\end{itemize}

}
\end{definition}

Now   
$\tau^G(e) = e \otimes 1$ is indeed invertible as it
is essentially a corner embedding 
by lemmas \ref{lemma1172} and \ref{lemma1323}: 
$$\xymatrix{  A  \otimes D \ar[r]^-{e \otimes 1}  
 & \calk_A(\cale \oplus \H_A)  \otimes \calk_D(D)  \ar[r]^\cong 
& 
\calk_{A \otimes D} (\cale \otimes D \oplus \H_{A   \otimes D} )
}$$

\section{Descent functor}  
						\label{sec12}

In this section we are going to define a 
descent functor in analogy to 
the descent homomorphism 
  \cite[Theorem 3.11]{kasparov1988}. 

\begin{definition}
{\rm 

Let $(A,\alpha)$ be a $G$-equivariant ring. 
Define the {\em crossed product} $A \rtimes_\alpha G$ to be 
the set of functions $f:G \rightarrow A$ with finite support, with pointwise addition. 
Such an $f$ is written as a formal sum 
$f =:\sum_{g \in G} f(g) \rtimes g$. 

A multiplication in $A \rtimes G$ is declared 
by convolution: 
$(\sum_{g \in G} a_g \rtimes g) 
(\sum_{h \in G} b_h \rtimes h) :=
\sum_{g,h \in G} a_g \beta_g(b_h) \rtimes g h$,
which turns it to a non-equivariant ring. 

\if 0

set of formal sums $\sum_{g \in G} 

the abelian group which is the 
direct sum $\oplus_{g \in G} A \times G$ 
of the abelian group $a $
\fi
}
\end{definition}

Note that if a $A$ is quadratik then also its crossed  product.  
Indeed, if $a = \sum_i b_i c_i$ in $A$ 
then 
$ a \rtimes g= \sum_i (b_i \rtimes g) (g^{-1} (c_i) \rtimes 1)$.

\if 0
\begin{lemma}
$(\oplus_i A_i) \rtimes G \cong A_i \rtimes G$

if $B$ trivial $G$-action 

$A \otimes B \rtimes G \cong (A \rtimes G) \rtimes G$

\end{lemma}
 \fi

\begin{definition}
{\rm 

Define an additive functor, called 
{\em descent functor},  
$$j^G : GK^G \rightarrow GK$$ 
by
\begin{itemize}

\item
For an object $(A,\alpha)$ in $GK^G$ 
set $j^G((A,\alpha)):= A \rtimes_\alpha G$. 

\item
For an equivariant ring homomorphism $\pi:(A,\alpha) \rightarrow (B,\beta)$
set $j^G(\pi):A \rtimes_\alpha G \rightarrow 
B \rtimes_\beta G$ to be the non-equivariant ring homomorphism
defined by
$j^G(\pi) (a \rtimes g):= \pi(a) \rtimes g$.


\item
For a corner embedding $e \in GK^G$
define
$$j^G (e^{-1}) := 
\Big (j^G (e) \Big )^{-1}$$

\item
For a split exact sequence as in (\ref{splitexact}) 
set $j^G(\Delta_s) := \Delta_{j^G(s)}$ 
with respect to the following split exact sequence
$$\xymatrix{ 0 \ar[r]  & B \rtimes_\beta G  \ar[r]^{j^G(j)}
& M \rtimes_\gamma G  \ar[r]^{j^G(f)}
& A \rtimes_\alpha G  \ar[r]  
\ar@<.5ex>[l]^{j^G(s)} 
& 0
}
$$

\end{itemize}

}
\end{definition}

Instead of $j^G(m)$ we often also write $m \rtimes \idd$ for morphisms $m$ in $GK^G$. 
 
All relations of $GK^G$ are effortlessly seen to go through the descent functor, so it is  well defined, excepting the corner embedding. 
Here, 
the 
corollary \ref{lemma121} below 
%
shows that 
$j^G(e)$ is indeed invertible for a corner embedding $e$.  

\begin{proposition}		\label{lemma122}

For any cofull $(B,\beta)$-module $(\cale,S)$ 
there is a non-equivariant ring isomorphism 
$$
\sigma: \calk_B(\cale ) \rtimes_{\Ad(S)} G 
\rightarrow  
\calk_{B \rtimes_\beta G} \Big(\cale \otimes_B (B \rtimes_\beta G) \Big) 
$$
\end{proposition}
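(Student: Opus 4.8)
The plan is to construct $\sigma$ explicitly on elementary compact operators and check that it is a well-defined ring isomorphism, in direct analogy with the Hilbert module situation behind Kasparov's descent. First I would spell out the target module: by definition~\ref{def110}, $\cale \otimes_B (B \rtimes_\beta G)$ is a right functional $(B \rtimes G)$-module, with functional space generated by elementary functionals $\phi \otimes \psi$ for $\phi \in \Theta_B(\cale)$ and $\psi \in \Theta_{B \rtimes G}(B \rtimes G)$. Since $B \rtimes G$ is quadratik (noted right after its definition), its functional space is $B \rtimes G$ itself acting by left multiplication. So elements of $\calk_{B \rtimes G}(\cale \otimes_B (B \rtimes G))$ are finite sums of operators $\theta_{\xi \otimes u,\, \phi \otimes v}$ where $\xi \in \cale$, $u,v \in B \rtimes G$, $\phi \in \Theta_B(\cale)$; using quadratik-ness of $B \rtimes G$ I may further assume $u = b \rtimes g$, $v = c \rtimes h$ are monomials. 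On the source side, a typical generator of $\calk_B(\cale) \rtimes_{\Ad(S)} G$ is $\theta_{\eta,\phi} \rtimes g$ with $\eta \in \cale$, $\phi \in \Theta_B(\cale)$, $g \in G$.

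The map I would define is
$$\sigma\big(\theta_{\eta,\phi} \rtimes g\big)\big(\xi \otimes (b \rtimes h)\big) := S_g(\eta) \otimes \big( \phi(\xi) \rtimes g \big)\,(b \rtimes h),$$
extended additively, i.e.\ $\sigma(\theta_{\eta,\phi} \rtimes g) = \theta_{\,S_g(\eta) \otimes (1 \rtimes g),\ \phi \otimes \idd\,}$ in suitable sloppy notation, where the ``$\rtimes g$'' factor encodes the group shift. The verification splits into: (1) well-definedness --- if $\sum_i \theta_{\eta_i,\phi_i} = 0$ in $\calk_B(\cale)$, meaning $\sum_i \eta_i \phi_i(\zeta) = 0$ for all $\zeta$, then the images sum to zero; this uses the internal tensor product relation $a\phi \otimes \psi = \phi \otimes \psi \circ \pi(a)$ observed after definition~\ref{def110}, together with cofullness of $\cale$ to move the functionals around (invoking lemma~\ref{lemma116}.(iv) that $\cale \otimes_\pi (B \rtimes G)$ is cofull); (2) multiplicativity --- here one computes $\sigma\big((\theta_{\eta,\phi} \rtimes g)(\theta_{\zeta,\psi} \rtimes h)\big)$, using the crossed-product convolution $(\theta_{\eta,\phi} \rtimes g)(\theta_{\zeta,\psi} \rtimes h) = \theta_{\eta,\phi} \circ \Ad(S)_g(\theta_{\zeta,\psi}) \rtimes gh = \theta_{\eta,\, g(\phi) \circ (\text{stuff})} \rtimes gh$ and the identity $\theta_{\xi,\phi}\theta_{\eta,\psi} = \theta_{\xi\phi(\eta),\psi}$ from lemma~\ref{lemma17}, matching it against the product on the target side where the crossed-product structure of $B \rtimes G$ interacts with the $\otimes_B$; (3) surjectivity --- every $\theta_{\xi \otimes (b \rtimes g),\, \phi \otimes (c\rtimes h)}$ is hit, after rewriting $\xi \otimes (b \rtimes g) = (\xi \otimes (1 \rtimes g))\cdot(g^{-1}(b) \rtimes 1)$ and absorbing the scalars via cofullness; (4) injectivity --- if $\sigma(\sum_i \theta_{\eta_i,\phi_i} \rtimes g_i) = 0$ with the $g_i$ distinct, evaluate on $\xi \otimes (1 \rtimes 1)$ and separate by the group grading (the target module is naturally $G$-graded through the $B \rtimes G$-factor) to conclude each $\theta_{\eta_i,\phi_i} \rtimes g_i = 0$.

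The main obstacle I expect is the bookkeeping in multiplicativity: the $\Ad(S)$-twisted convolution on $\calk_B(\cale) \rtimes G$ has to be reconciled with the way $B \rtimes G$ acts through $\pi: B \rtimes G \to \call_{B \rtimes G}(B \rtimes G)$ in the definition of $\otimes_\pi$, and keeping track of where the group elements ``land'' (on the vector $\eta$ via $S_g$ versus inside the functional via the $B \rtimes G$-entry) is where sign-of-exponent errors creep in. A clean way to organize this is to first establish, as a sublemma, the $B \rtimes G$-module isomorphism $\cale \otimes_B (B \rtimes G) \cong \bigoplus_{g \in G} S_g(\cale)$ as a graded $B \rtimes G$-module --- essentially using lemma~\ref{lemma220}-style arguments --- and then read off both the ring structure of the compacts on the right and the twisted crossed product on the left from this graded picture; the isomorphism $\sigma$ then becomes transparent. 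Finally I would remark that the whole construction is functorial and natural enough that corollary~\ref{lemma121} (invertibility of $j^G(e)$ for corner embeddings $e$, referenced just after) follows by applying the proposition to $\cale \oplus \H_B$ and restricting to the distinguished corner, exactly as $\calk$-level corner embeddings are handled in lemma~\ref{lemma1172}.
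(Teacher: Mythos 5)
Your overall plan (an explicit $\sigma$, cofullness of $\cale \otimes_B (B \rtimes_\beta G)$ via lemma \ref{lemma116}.(iv), and a $G$-graded decomposition of the target module for injectivity) is essentially the strategy of the paper, but the formula you actually write down for $\sigma$ is not a ring homomorphism, and that is where your step (2) breaks. The correct map — and the one the paper uses — is $\sigma(T \rtimes g) = (T \otimes 1) \circ (S_g \otimes V_g)$, where $V_g(b \rtimes h) = \beta_g(b) \rtimes gh$; on an elementary operator this sends $\theta_{\eta,\phi} \rtimes g$ to $\xi \otimes (b \rtimes h) \mapsto \eta\,\phi\big(S_g(\xi)\big) \otimes \big(\beta_g(b) \rtimes gh\big)$, so the group twist lands on the argument $\xi$ inside $\phi$. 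Your displayed formula $S_g(\eta) \otimes \big(\phi(\xi)\beta_g(b) \rtimes gh\big)$ instead twists $\eta$ and leaves $\phi$ untouched; carrying out your own multiplicativity check, $\sigma\big((\theta_{\eta,\phi}\rtimes g)(\theta_{\zeta,\psi}\rtimes h)\big)$ produces $S_{gh}(\eta)$ and $\psi$ evaluated at $S_{g^{-1}}$-shifted arguments, while $\sigma(\theta_{\eta,\phi}\rtimes g)\circ\sigma(\theta_{\zeta,\psi}\rtimes h)$ produces $S_g(\eta)$ and $\psi(\xi)$, and no internal-tensor-product relation reconciles the two. Only the covariant-pair order ``compact operator first, then the twist $S_g \otimes V_g$'' yields a homomorphism; the reversed order, which is what your ``sloppy'' rewriting $\theta_{S_g(\eta)\otimes(1\rtimes g),\,\phi\otimes\idd}$ amounts to, does not — and note that this rewriting is not even consistent with your displayed formula, since it yields $\beta_g(\phi(\xi))$ where the display has $\phi(\xi)$. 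Defining $\sigma$ by composition, as the paper does, also removes your step (1) entirely: $(T\otimes 1)\circ(S_g\otimes V_g)$ is an honest operator, so well-definedness is automatic, and what actually requires proof is that the image is compact and that every compact operator on $\cale\otimes_B(B\rtimes_\beta G)$ arises this way; the paper settles both with one computation (using cofullness, writing a generating functional of $B\rtimes_\beta G$ as left multiplication by $\sum_n a_n\rtimes y_n$) read once forwards and once backwards.

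Two smaller gaps in your steps (3)–(4): $B$ is not assumed unital, so you cannot evaluate at $\xi \otimes (1\rtimes 1)$ — the paper evaluates at $\xi\otimes(b\rtimes 1)$, $b\in B$; and the graded decomposition you invoke is $\cale\otimes_B(B\rtimes_\beta G)\cong\bigoplus_{g\in G}\overline\cale$ (lemma \ref{lemma219}.(iii)), with summands $\overline\cale$ rather than $\cale$, so concluding $\theta$-summand $=0$ from vanishing in $\overline\cale$ needs cofullness of $\overline\cale$ (inherited from $\cale$), which is the last step of the paper's injectivity argument rather than a formality. Your closing remark, that corollary \ref{lemma121} follows by applying the proposition to $\cale\oplus\H_B$ and identifying the corner, does agree with how the paper proceeds.
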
			

\begin{proof}

\if 0
Roughly, the proof is similar to the proof of the descent homomorphism of Kasparov \cite{kasparov1988}, 
following more closely our variant in \cite{burgiDescent}. 
\fi 

Equip the ring $R:=B \rtimes_\beta G$ with the trivial 
$G$-action and the right $(R,\triv)$-module $R$ 
with the $G$-action 
$$V_g( b \rtimes h) = \beta_g(b) \rtimes g h$$ 

Define now 
the indicated ring homomorphism by  
\begin{equation}  \label{dsigma} 
\sigma(T \rtimes g)= (T \otimes 1) \circ (S_g \otimes V_g) 
\end{equation}
where the right hand side are operators acting on 
$\cale  \otimes_B (B \rtimes_\beta G)$. 

By cofullness of $\cale \otimes_B B$ 
by lemma  \ref{lemma116}.(iv) 
we 
may express $\xi = \sum_i (\eta_i \otimes b_i)
(\phi_i \otimes \psi_i) (
\xi_i \otimes c_i)
$
and so
$$\xi \rtimes g = \sum_i (\eta_i \otimes (b_i \rtimes 1))
(\phi_i \otimes (\psi_i \rtimes 1)) (
\xi_i  \otimes (c_i \rtimes g) )
$$ 
for $\xi,\eta_i,\xi_i  \in \cale, b_i,c_i \in B$,
which shows cofullness of $\cale \otimes_B (B \rtimes G)$. 



\if 0
We need to show that $\sigma$ lands really in the compact operators as indicaed in the diagram above. 

Let $.$

Surjectivity of $\sigma$:

$$ \xi \otimes x \; \theta (E \otimes F)$$
$$\xi \otimes x \; \psi \Big (\pi\big(\phi(E)\big) F \Big)$$
\fi

For computing the surjectivity of $\sigma$, 
take 
$\phi \in \Theta_B(\cale)$ and $\psi \in \Theta_{B \rtimes_\beta G}(B \rtimes_\beta G)$.


Suppose $\psi(z)= w z$ for some 
$w \in R$. Let us say $w = \sum_{n=1}^N a_n \rtimes y_n$. 

\if 0
{\em 
$$\sim \xi \otimes x \; 
\sum_n a_n y_n  \;
\pi\big(\phi(E)\big) (F) \qquad (\mbox{hier approx einheit})$$

wähle $\psi$ so dass $\psi(b_j \rtimes 1) \approx \sum_n a_n y_n$
}
\fi 

For $\theta = (\psi \otimes_{B \rtimes_\beta G}  \phi)$, $\xi, E \in \cale$ and $x,F \in B \rtimes_\beta G$ we get 
\begin{eqnarray*} 
\xi \otimes x \;  \theta(E \otimes F)
& =& \xi \otimes x \; \psi \Big (\pi\big(\phi(E)\big) F \Big) \\
&=&  \sum_n \xi \otimes  x \; a_n y_n \; \pi\big(\phi(E)\big) F \\ 
&=& \sum_n \xi    \beta_x  (a_n) \beta_{x y_n} \big(\phi(E)\big) \otimes x y_n F \\
&=& \sum_n \Big (S_{xy_n} \otimes V_{x y_n}  \Big )  \Big ( S_{(xy_n)^{-1}} \big (\xi    \beta_x  (a_n) \big)  \phi(E) \otimes  F \Big )  \\ 
&=& \sum_n (S_{xy_n} \otimes V_{x y_n}) \circ \big ( T_n \otimes  1 \big ) (E \otimes F)   \\ 
&=& \sum_n \sigma \Big ( {xy_n}(T_n) \rtimes x y_n \Big ) (E \otimes F) 
\end{eqnarray*}

for the compact operators $T_n \in \calk_B(\cale)$ 
given by 
$$T_n(E) = S_{(xy_n)^{-1}} \big (\xi    \beta_x  (a_n) \big)  \phi(E)$$ 

For the other set inclusion, to show that $\sigma$ maps really into the indicated ring, 
we read the above computation from the bottom to 
the top, by setting $N=1, y_1=1$, 
and letting $T_1$ according to the formula 
freely given so to say, and thus we may also replace it by $T_1= x^{-1}( T)$, 
and by cofullness of $\cale$ we achieve it
for every $T$. 

To prove 
injectivity of $\sigma$,		
assume that  
%
$$\sigma \Big (\sum_{g \in G} T_g \otimes g \Big ) \big (\xi \otimes (b \rtimes 1) \big )
= \sum_g T_g S_g(\xi) \otimes (\beta_g(b) \rtimes g)=0$$

As $\cale \otimes_B (B \rtimes G) \cong \oplus_{g \in G} \cale \otimes_B B \cong \oplus_{g \in G} \overline \cale$ as abelian (additive) groups by lemma \ref{lemma219}.(iii), 
we get 
$$T_g S_g(\xi b)=0 \qquad \forall g \in G$$
in $\overline \cale$. 
By cofullness of $\overline \cale$ 
inherited from $\cale$,  
\if 0
,
which follows from that of $\cale$,  
\fi 
we 
conclude $T_g=0$.
\end{proof}

\begin{corollary} 			
			\label{lemma121}
Let $e:B \rightarrow \calk_B \big ( (\cale \oplus \H_B ,S) \big) 
$ be an equivariant corner embedding.

Then there is a commuting diagram of non-equivariant ring homomorphisms
$$\xymatrix{ 
\calk_B(\cale \oplus \H_B) \rtimes_{\Ad(S)} G \ar[r]^\sigma  &
\calk_{B \rtimes_\beta G} \Big((\cale  \oplus \H_B) \otimes_B (B \rtimes_\beta G) \Big) 
\ar[d]^\pi \\
B \rtimes_\beta G  \ar[r]^f  \ar[u]^{e \rtimes 1} &
\calk_{B \rtimes_\beta G} \Big(\cale   \otimes_B (B \rtimes_\beta G) 
\oplus \H_{B \rtimes_\beta G} \Big)
}$$
where $\sigma$ is an isomorphism, $\pi$ is the obvious canonical
isomorphism
by exchanging direct sums with the tensor product, and $f$ is a corner embedding.

In particular, $e \rtimes \idd = f \pi^{-1} \sigma^{-1}$ is 
a corner embedding up to isomorphism.  
\end{corollary}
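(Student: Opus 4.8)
The plan is to assemble the stated commuting diagram out of pieces that are already available and then read off the final assertion. First I would note that the upper horizontal arrow $\sigma$ is precisely the isomorphism produced by Proposition~\ref{lemma122}, applied to the cofull $(B,\beta)$-module $\cale\oplus\H_B$ (which is cofull by hypothesis, since $e$ is a genuine corner embedding as in Definition~\ref{def22}); so $\sigma$ is an isomorphism with no further work. Next I would identify the right vertical map $\pi$: since the internal tensor product commutes with direct sums in the left variable by Lemma~\ref{lemma221}.(ii), we have a canonical abelian-group, indeed functional-module, isomorphism
$$
(\cale\oplus\H_B)\otimes_B (B\rtimes_\beta G)\ \cong\ \big(\cale\otimes_B(B\rtimes_\beta G)\big)\ \oplus\ \big(\H_B\otimes_B(B\rtimes_\beta G)\big),
$$
and $\H_B\otimes_B(B\rtimes_\beta G)=\bigoplus_{i\in I}\big(B\otimes_B(B\rtimes_\beta G)\big)\cong\bigoplus_{i\in I}(B\rtimes_\beta G)=\H_{B\rtimes_\beta G}$ using Lemma~\ref{lemma220}.(iii) (for the distinguished coordinate) and Lemma~\ref{lemma219}.(iii) together with quadratik-ness of $B$ on the other coordinates. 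Feeding this module isomorphism into Lemma~\ref{lemma17} yields the ring isomorphism $\pi$ between the two rings of compact operators.

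Then I would take $f$ to be the corner embedding
$$
f:\ B\rtimes_\beta G\ \longrightarrow\ \compacts{B\rtimes_\beta G}{\cale\otimes_B(B\rtimes_\beta G)\ \oplus\ \H_{B\rtimes_\beta G}}
$$
of Definition~\ref{def22}, with distinguished coordinate $B\rtimes_\beta G$; this makes sense because $\cale\otimes_B(B\rtimes_\beta G)$ is cofull by Lemma~\ref{lemma116}.(iv) and $B\rtimes_\beta G$ is quadratik by the remark following the definition of the crossed product, so the direct sum is cofull by Lemma~\ref{lemma117}.(i). The one substantive thing left is commutativity of the square, i.e. $(e\rtimes\idd)\circ\sigma\circ\pi=f$. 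Here I would chase a generator $b\rtimes g\in B\rtimes_\beta G$: on the one hand $(e\rtimes\idd)(b\rtimes g)=e(b)\rtimes g=\theta_{b,\phi_0}\rtimes g$ where $\phi_0$ is projection onto the distinguished coordinate of $\cale\oplus\H_B$; applying formula (\ref{dsigma}) for $\sigma$ and then $\pi$ should land exactly on the corner multiplication operator $m_{b\rtimes g}$ acting on the distinguished coordinate $B\rtimes_\beta G$ of $\cale\otimes_B(B\rtimes_\beta G)\oplus\H_{B\rtimes_\beta G}$, which is $f(b\rtimes g)$. This is a direct unwinding of the definitions of $e$, $\sigma$, $\pi$ and $f$ once the identifications above are in place.

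Finally, from the commuting square with $\sigma$ and $\pi$ isomorphisms we immediately get $e\rtimes\idd=f\circ\pi^{-1}\circ\sigma^{-1}$, which exhibits $e\rtimes\idd$ as a corner embedding up to (ring) isomorphism; since corner embeddings are declared invertible in $GK$, this is exactly what is needed to make the descent functor well-defined on the generators $e^{-1}$. The main obstacle I anticipate is purely bookkeeping: getting all the functional-space identifications to line up so that $\pi$ is genuinely an isomorphism of functional modules (not just of abelian groups) and so that the distinguished coordinates match on the nose under the chain of canonical isomorphisms; the algebra is routine but one must be careful about which copy of $B\rtimes_\beta G$ becomes the distinguished coordinate of the target of $f$ and about the $G$-action $V_g(b\rtimes h)=\beta_g(b)\rtimes gh$ used in (\ref{dsigma}) versus the module structure used to form $B\otimes_B(B\rtimes_\beta G)$.
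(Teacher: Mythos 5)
Your proposal is correct and follows essentially the same route as the paper: take $\sigma$ from Proposition~\ref{lemma122}, identify $\pi$ via the canonical exchange of direct sums with the internal tensor product (Lemma~\ref{lemma220}.(iii) on each coordinate), and verify commutativity by unwinding formula (\ref{dsigma}) on generators $b\rtimes g$. You simply spell out the bookkeeping (cofullness, distinguished coordinates) that the paper's one-line proof leaves implicit.
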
			

\begin{proof}

It is easy to realize by definition 
(\ref{dsigma}) of $\sigma$ 
that the diagram commutes. 
Recall also lemma \ref{lemma220}.(iii). 
\end{proof}

\section{Exactness}

The title of this section may be promising too much as we only have little observation with respect to exactness. 
Recall that $KK$-theory for $C^*$-algebras is only known to be half-exact if there is a split by a homomorphism or weaker by a completely positive linear map, see \cite{kasparov1981} and \cite{skandalis}.  
Exactness may even fail if there is not such a split. 

The next lemma shows how exactness of the contravariant functor $GK^G(-,X)$ can be deduced from exactness of the contravariant functor $L_0 GK^G(-,X)$.

\begin{lemma}

Given a short exact sequence 
(or only $fg=0$)  
$$
\xymatrix{
A \ar[r]^f 
& B \ar[r]^g & C 
}  
$$
in $\ring G$ 
we may consider two sequences of abelian groups as follows
\begin{equation} 
	\label{l5}
\xymatrix{  	
 GK^G(A,X)  \ar[d]^{P_*}
& GK^G(B,X) 
\ar[l]_{f^*} \ar[d]^{P_*}  
& GK^G(C,X)   \ar[d]^{P_*} 
\ar[l]_{g^*}   \\
  L_0 GK^G(A,Y)    & 
   L_0 GK^G(B,Y) 
\ar[l]_{f^*} &   L_0 GK^G(C,Y)    
\ar[l]_{g^*}  
}
\end{equation}

The 
vertical arrows have to be ignored. If 
the lower line of the diagram 
is exact for every object $Y$ in $GK^G$, then 
the upper line of the diagram 
is exact for every object $X$ 
in $GK^G$. 
(Exact means 
$\ker {f^*} = \image {g^*}$.) 

\end{lemma}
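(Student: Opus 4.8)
The plan is to exploit the $\nabla$-calculation machinery, in particular Corollary~\ref{lemma98}, to reduce the exactness statement for $GK^G(-,X)$ to the assumed exactness of $L_0GK^G(-,Y)$. First I would fix an object $X$ and verify the trivial inclusion $\image{g^*}\subseteq\ker{f^*}$: for any $z\in GK^G(C,X)$ we have $f^*(g^*(z)) = fg\,z = 0\,z = 0$ in $GK^G(A,X)$, using that $fg=0$ already holds in $\ring G$ hence in $GK^G$ by functoriality of the embedding $\ring G\to GK^G$. So the content is the reverse inclusion $\ker{f^*}\subseteq\image{g^*}$.

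For the reverse inclusion, let $w\in GK^G(B,X)$ with $f^*(w) = fw = 0$. The key move is to apply Corollary~\ref{lemma98} to the finitely generated (in fact cyclic) subgroup $H := \Z w \subseteq GK^G(B,X)$: there is a $GK^G$-right-invertible ring homomorphism $P\colon X\to D$ (say with $GK^G$-right-inverse $Q$, $PQ = 1$) such that $P_*\colon H\to L_0GK^G(B,D)$, $P_*(z) = zP$, is an injective group homomorphism. Now consider the element $wP = P_*(w)\in L_0GK^G(B,D)$. We have $f^*(wP) = f(wP) = (fw)P = 0$, so $wP$ lies in $\ker\big(f^*\colon L_0GK^G(B,D)\to L_0GK^G(A,D)\big)$. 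By the assumed exactness of the lower line with $Y := D$, there is some $v\in L_0GK^G(C,D)$ with $g^*(v) = gv = wP$ in $L_0GK^G(C,D)$, hence also in $GK^G(C,D)$.

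It remains to pull this back along $Q$. Set $z := vQ\in GK^G(C,X)$. Then $g^*(z) = gz = g v Q = (wP)Q = w(PQ) = w\cdot 1 = w$ in $GK^G(B,X)$, where I have used $gv = wP$ in $GK^G(C,D)$ (the ambient category, which is legitimate since $L_0GK^G$ is a subcategory of $GK^G$) and $PQ = 1$. This exhibits $w\in\image{g^*}$, completing the proof. The main obstacle — and the reason one needs Corollary~\ref{lemma98} rather than a naive argument — is that there is no a priori reason an arbitrary $w\in\ker{f^*}$ in the big category $GK^G$ should be a level-$0$ morphism, so one cannot directly feed it into the hypothesis about $L_0GK^G$; the right-invertible homomorphism $P$ provides exactly the needed ``flattening'' to level $0$ while $Q$ undoes it without losing information. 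A minor point to check along the way is that $gv = wP$, an identity established in $L_0GK^G(C,D)$, remains valid in $GK^G(C,D)$, which is immediate from $L_0GK^G$ being a (non-full) subcategory with the same composition.
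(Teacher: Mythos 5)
Your proof is correct and follows essentially the same route as the paper's: flatten the kernel element to level~0 by a $GK^G$-right-invertible ring homomorphism $P$, apply the assumed exactness of the lower row with the new codomain playing the role of $Y$, and then compose with the right inverse $Q$ to land back in $\image{g^*}$. The only cosmetic difference is that you invoke corollary \ref{lemma98} applied to the cyclic group $\Z w$ (whose injectivity conclusion you never actually use) where the paper cites lemma \ref{lemma97}.(ii) directly, which is exactly the ingredient underlying that corollary.
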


\begin{proof}
Go to line (\ref{l5}). 
Clearly $g^* f^*=0$. To show 
$\ker f^* \subseteq \image g^*$, we consider a morphism $z$ in $\ker f^*$ and choose 
a ring homomorphism $P:X \rightarrow Y$ with 
$GK^G$-right-inverse $Q$ such that 
$z P$ is a level-0 morphism 
by lemma \ref{lemma97}.(ii). 
Clearly, $f^*(zP)= fz P=0$, and so there is 
a $w \in L_0 GK^G(C,Y)$ such that $g^*(w)= zP$ 
by the assumed exactness of the second line 
of the above diagram. 
Hence, $g^*(w Q)= g w Q=g zP Q= z$. 
\end{proof}

Tautologically by the split-exactness definition of $GK^G$-theory we record: 

\begin{lemma} 
Both functors $GK^G(-,X)$ and $GK^G(X,-)$ are split exact. 
That means, given a short split exact sequence  
(\ref{splitexact}),  
the sequence 
$$
\xymatrix{0  & GK^G(B,X)  \ar[l] & GK^G(M,X)   \ar[l]_{j^*} \ar@<-.5ex>[r]_{s^*} & GK^G(A,X)   \ar[l]_{f^*} & 0  \ar[l]}
$$
and its 
analogy with $GK^G(X,-)$ are split-exact. 
\end{lemma}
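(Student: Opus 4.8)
The plan is to observe that this is essentially immediate from the split-exactness axiom in Definition \ref{def121}, combined with the additive-category relations, and that the only content is bookkeeping. First I would recall that given the short split exact sequence (\ref{splitexact}), the axioms give us $1_B = j\Delta_s$ and $1_M = \Delta_s j + fs$ in $GK^G$, together with $s\Delta_s = 0$ (Remark \ref{rem12}.(iii)) and $fj = 0$ (since $j$ maps into $\ker f$). From these four identities in the Hom-set $GK^G(-,-)$, all six required exactness/splitting statements for $GK^G(-,X)$ and $GK^G(X,-)$ drop out by pure diagram chasing, with no further input about rings, modules, or corner embeddings. So the proof is genuinely a two-line formal argument once the relevant relations are assembled.

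Concretely, for $GK^G(-,X)$ (contravariant), the arrows are $f^* = s^*$ as noted, $j^*$, and the splitting candidate $s^*$. Exactness at $GK^G(M,X)$: given $z \in GK^G(M,X)$ with $j^*z = jz = 0$, write $z = (\Delta_s j + fs)z = fsz = f^*(sz)$, so $z \in \image f^*$; conversely $j^* f^* (w) = jfw = 0$ since $fj=0$. Surjectivity of $j^*$ onto $GK^G(B,X)$: given $y \in GK^G(B,X)$, $j^*(\Delta_s y) = j\Delta_s y = 1_B y = y$; this also gives the claimed split $s^*$ of $f^* = s^*$ at the right, since $f^* j^* = s^* \cdots$ — more precisely, that the sequence splits is witnessed by $\Delta_s^* \colon GK^G(B,X) \to GK^G(M,X)$ with $j^* \Delta_s^* = \id$, and that $s^*$ (i.e. $(fs)$-precomposition data) splits $f^*$. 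Exactness at $GK^G(B,X)$ (the leftmost $0$): $j^*$ is surjective as just shown. For $GK^G(X,-)$ the argument is the mirror image: post-compose instead of pre-compose, using $\Delta_s$ as a right inverse to $j$ (i.e. $(\Delta_s)_*$ splits $j_*$), $s$ as a right inverse to $f$, and the relation $1_M = \Delta_s j + fs$ to identify $\ker$ with $\image$ at the middle term.

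I would write this up compactly: state the four basic relations, then in each of the two cases do the three-point check (surjectivity at one end, exactness in the middle, the explicit splitting map) in a single short paragraph each. The main — and essentially only — point to get right is keeping the composition conventions straight: the paper writes composition left-to-right ($fg$ meaning $g\circ f$), so $f^*$ is precomposition by $f$ and $f_*$ is postcomposition, and one must apply the $GK^G$-relations on the correct side. There is no real obstacle here; the lemma is, as the paper says, "tautological" — the work is purely to phrase it in the left-to-right notation without sign or order errors, and to note that split-exactness is built into the very definition of $GK^G$, so no appeal to the $\nabla$-calculus or to any structural lemma about rings is needed.
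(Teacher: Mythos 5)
Your proposal is correct and follows essentially the same route as the paper: a direct diagram chase from the relations $1_B=j\Delta_s$, $1_M=\Delta_s j+fs$, $sf=1_A$ (and the vanishing of the composite of $j$ followed by $f$), applied by pre- resp.\ post-composition for the two Hom-functors. The only blemish is the convention slip where you write ``$fj=0$'' for what, in the paper's left-to-right notation, should be $jf=0$ (you use it correctly as $j^*f^*(w)=jfw=0$), so just fix that when writing it up.
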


\if 0
\begin{lemma} 
Both functors $GK^G(-,X)$ and $GK^G(X,-)$ are split exact. 
That means, if the sequence (\ref{eq6}) splits with a split $s:C \rightarrow B$ then both the sequences (\ref{l5}) 
and its 
analogy with $GK^G(X,-)$ are split-exact. 
\end{lemma}
\fi

\begin{proof}

(a)
If $z \in GK^G(A,X)$ and $f^*(z) = f z=0$ then
$s f z = z =0$, proving injectivity of $f^*$. 
If $z \in GK^G(B,X)$ then $z=j \Delta_s z 
= j^*(\Delta_s z)$ 
by split-exactness of definition 
\ref{def121}, 
showing surjectivity of $j^*$.

\if 0
If $j^*(z)=j z= 0$ then $j \Delta_s j z =0$ 
by split-exactness of definition 
\ref{def121}
\fi

If $z \in \ker (j^*)$, then $j^* 
(z) = j z=0$, then $\Delta_s j z=0	
= (1-fs) z$ 
by split-exactness of definition 
\ref{def121}, 
then $z=fs z$, so $z \in \image {f^*}$.

(b)
If $z \in \ker (f_*)$, then $f_*(z) = z f=0$, then $ z f s=0	
= z (1-\Delta_s j) $, then $z= z \Delta_s j$, so $z \in \image {j_*}$.
\end{proof}

\section{Morita Equivalence}
				\label{sec14}
 
Let $A,B$ be unital general rings. 
In 
\cite{morita} Morita shows that if a category of  
right $A$-modules 
containing $A$ is 
equivalent to a category of right $B$-modules 
containing $B$ by a 
natural transformation $D_1$ and inverse $D_2$, 
then this 
transformation can be computed by 
the formula $M \mapsto M \otimes_A \cale$, and its inverse by $N \mapsto N \otimes_B \calf$, for a certain fixed $A,B$-module $\cale$
 and $B,A$-module $\calf$. 
Actually $\cale= D_1(A)$ and $\calf=D_2(B)$, 
and the $A$-module structure on $\cale$ comes from left multiplication in $A$. 
Now the left multiplication action of $A$ on $A$ is by compact operators ($A \rightarrow \calk_A(A)$) in our terminology.  
Ideals of $\Hom A(A)$ are in one-to-one correspondences 
to ideals in $\Hom A(\cale)$, and perhaps the $A$-action on $\cale$ is also by compact operators. 
We have not clarified this, but it should 
justify our next definition. 
We take it for granted 
that it is a suitable definition 
and it is designed so   that our next  
theorem works.

\begin{definition}				\label{def131}
{\rm 

Two rings $A$ and $B$ are called {\em functional Morita
equivalent} if there are a right functional 
$B$-module $\cale$ and a right functional  $A$-module 
$\calf$, 
ring homomorphisms $m: A \rightarrow \calk_B(\cale)$, 
$n :B \rightarrow \calk_A(\calf)$ 
(turning $\cale,\calf$ to non-functional bimodules) 
such that 
$\cale \otimes_n \calf \cong A$ 
as right functional, left ordinary  $A,A$-bimodules,
and
$\calf \otimes_m \cale \cong B$ 
as right functional, left ordinary  
$B,B$-bimodules.  
}
\end{definition}

\begin{theorem}   

If two rings $A$ and $B$ are functional Morita equivalent then they are $GK^G$-equivalent.

\end{theorem}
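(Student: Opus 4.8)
The plan is to mimic, in this ring-theoretic setting, the standard $C^*$-algebra argument that a Morita equivalence bimodule induces a $KK$-equivalence via two linking-algebra corner embeddings. Given the data of Definition \ref{def131} — a right functional $B$-module $\cale$, a right functional $A$-module $\calf$, and ring homomorphisms $m:A\to\calk_B(\cale)$, $n:B\to\calk_A(\calf)$ with $\cale\otimes_n\calf\cong A$ and $\calf\otimes_m\cale\cong B$ as bimodules — I would first observe that $m$ factors through a corner: consider the right functional $B$-module $\cale\oplus B$ and the corner embedding $e:B\to\calk_B(\cale\oplus B)$ of Definition \ref{def22} (valid by Lemma \ref{lemma117}, using that $\cale$ should be arranged weakly cofull, which one can always achieve by passing to a suitable submodule since $B$ is quadratik). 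The point is that $\calk_B(\cale\oplus B)$ contains both $\calk_B(B)\cong B$ and $\calk_B(\cale)$ as corners, and $m$ lands in the latter; symmetrically $n$ lands in a corner of $\calk_A(\calf\oplus A)$. So the first key step is to set up the ``linking ring'' $\call := \calk_B(\cale\oplus B)$ and identify inside it two corner embeddings: $e_B:B\to\call$ (multiplication on the $B$-coordinate) and a second embedding $e_\cale:\calk_B(\cale)\to\call$ (multiplication on the $\cale$-coordinate).

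Second, I would exploit the hypothesis $\calf\otimes_m\cale\cong B$. Using Proposition \ref{lemma71} (compact operators on a module over compact operators are compact operators of a plain module) together with Lemma \ref{lemma220} and Lemma \ref{lemma17}, one should be able to identify $\calk_{\calk_B(\cale)}(\calf)$ with $\calk_B(\calf\otimes_m\cale)\cong\calk_B(B)\cong B$, at least after the usual cofullness bookkeeping. Thus $m$ itself becomes, up to the isomorphisms just listed, a corner embedding $\calk_B(\cale)\hookrightarrow\calk_{\calk_B(\cale)}(\calf\oplus\calk_B(\cale))$ composed with inverses of corner embeddings — hence invertible in $GK^G$. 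Concretely: $m$ is a ring homomorphism $A\to\calk_B(\cale)$, and the content of $\calf\otimes_m\cale\cong B$ is precisely that $m$ ``is'' a Morita-type corner, so that $m\in GK^G(A,\calk_B(\cale))$ is invertible. Symmetrically $n\in GK^G(B,\calk_A(\calf))$ is invertible, and one more time $A\cong\calk_A(A)\cong\calk_A(\cale\otimes_n\calf)$ via Lemma \ref{lemma1172}, Proposition \ref{lemma71} and the hypothesis, giving $\calk_A(\calf)\cong\calk_{\calk_A(\calf)}(\text{something})\cong$ a corner over $A$ — so that $n$ composed with these identifications lands as a corner embedding of $A$.

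Third, the candidate $GK^G$-equivalence $A\to B$ is then the composite
\begin{equation*}
A \xrightarrow{\ m\ } \calk_B(\cale) \xrightarrow{\ e_\cale\ } \calk_B(\cale\oplus B) \xrightarrow{\ e_B^{-1}\ } B,
\end{equation*}
where $e_\cale$ and $e_B$ are the two corner embeddings of the linking ring $\call=\calk_B(\cale\oplus B)$, both invertible in $GK^G$ by the stability axiom, and $m$ is invertible in $GK^G$ by the previous paragraph. Symmetrically one forms $B\to A$ from $n$ and the linking ring $\calk_A(\calf\oplus A)$. To show these two composites are mutually inverse in $GK^G$, I would not compute the $GK^G$-classes directly but rather produce, on the level of rings, a single large linking ring $\calk_{A\oplus B}$ built from $\cale\oplus\calf\oplus A\oplus B$ (or a double-sided linking ring) in which all four of $A$, $B$, $\calk_B(\cale)$, $\calk_A(\calf)$ sit as corners, together with the isomorphisms $\cale\otimes_n\calf\cong A$ and $\calf\otimes_m\cale\cong B$ realized as internal tensor product identifications; then every corner embedding into this common ring is $GK^G$-invertible, and the two composites become, up to these invertible corner maps, the two ways of traversing a diagram that commutes by construction — so their product is $\mathrm{id}$.

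The main obstacle I expect is the careful cofullness and ``separated by functionals'' bookkeeping needed to apply Proposition \ref{lemma71}: that proposition requires $\cale$ weakly cofull and $\calf$ cofull over $\calk_B(\cale\oplus B)$, and the hypotheses of Definition \ref{def131} only give abstract bimodule isomorphisms, so one has to check that the functional-module structures transported across $\cale\otimes_n\calf\cong A$ and $\calf\otimes_m\cale\cong B$ are compatible with the ones coming from the linking rings (using Lemma \ref{lemma219}, Lemma \ref{lemma220} and Lemma \ref{lemma221} to reduce to separated modules and to commute tensor products with direct sums). A secondary subtlety is equivariance: all the corner embeddings and the modules $\cale,\calf$ carry $G$-actions, and one must verify that the identifications above are $G$-equivariant — but since $m,n$ are assumed equivariant and the linking-ring $G$-action is the inner action $\Ad(S\oplus\beta)$ as in Lemma \ref{lemma51}, this should follow formally. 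Once the linking-ring picture is in place, the proof is essentially the diagram-chase that two corner embeddings into a common ring, composed appropriately with the equivalence $m$, compose to the identity in $GK^G$.
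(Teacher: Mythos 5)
Your plan has a genuine gap at its two load-bearing steps. First, the map $e_\cale:\calk_B(\cale)\rightarrow\calk_B(\cale\oplus B)$ acting on the $\cale$-coordinate is \emph{not} a corner embedding in the sense of definition \ref{def22}: the stability axiom only inverts maps of the form $C\rightarrow\calk_C(\calg\oplus\H_C)$ where the source ring $C$ itself is the distinguished coordinate and acts there by multiplication. To present the inclusion of the corner $\calk_B(\cale)$ into the linking ring as such a map, you would have to exhibit $\calk_B(\cale\oplus B)$ as $\calk_{\calk_B(\cale)}$ of a functional $\calk_B(\cale)$-module having $\calk_B(\cale)$ as a distinguished summand, i.e.\ construct a dual functional module of $\cale$ with the right functional space and cofullness; this is exactly where the Morita data $\calf$, $n$, $\calf\otimes_m\cale\cong B$ must enter, and you never build it. Second, your argument that $m\in GK^G(A,\calk_B(\cale))$ is invertible is circular: the identification you invoke, ``$\calk_{\calk_B(\cale)}(\calf)\cong\calk_B(\calf\otimes_m\cale)\cong B$,'' does not even typecheck ($\calf$ is a right functional $A$-module, not a $\calk_B(\cale)$-module, and proposition \ref{lemma71} applies to modules over $\calk_B(\cale\oplus B)$, not over $\calk_B(\cale)$), and the assertion that ``$m$ is a Morita-type corner, hence invertible'' is essentially the statement being proved. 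The closing ``one big linking ring in which the diagram commutes by construction'' again silently assumes that all corner inclusions of that ring are $GK^G$-invertible, which is the same unproved point as (a).

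The paper's proof deliberately avoids inverting any corner-of-a-linking-ring inclusion. It forms the honest corner-supported homomorphisms $M:A\rightarrow\calk_B(\cale\oplus\H_B)$ and $N:B\rightarrow\calk_A(\calf\oplus\H_A)$, takes the candidates $Me^{-1}$ and $Nf^{-1}$, and verifies directly that $Me^{-1}Nf^{-1}=\id_A$ (and symmetrically) by: skipping $e^{-1}$ past $N$ via lemma \ref{lemma43}, converting compacts-over-compacts to compacts over a plain $A$-module via proposition \ref{lemma71}, identifying $\calf\cong\calk_A(A,\calf)$ as functional bimodules (after reducing to modules separated by functionals, lemma \ref{lemma219}) so that $\cale\otimes_N\calk_A(A,\calf)\cong\cale\otimes_n\calf\cong A$, and finishing with a rotation homotopy of corner embeddings. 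If you want to salvage your route, you would have to carry out essentially these same module identifications to justify invertibility of your $e_\cale$; as written, the proposal postpones precisely the steps that constitute the proof.
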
    

\begin{proof}
\if 0
If $\cale$ is an $A,B$-bimodule which realizes the Morita equivalence then we set
it as in the first line of lemma 1.10.
\fi

Let $\cale$ be an $A,B$-bimodule and $\calf$
a $B,A$-module which realize the Morita equivalence as described in the last definition, including $m$ and $n$.  

We may assume that $\cale$ and $\calf$ 
are separated by the functionals. Otherwise 
we replace them by $\overline \cale$ and 
$\overline \calf$ of lemma \ref{lemma219}, and $n$ and $m$
by $\pi \circ n$ and $\pi \circ m$ for 
$\pi$ 
of lemma 
\ref{lemma219}. 
Also use lemma \ref{lemma219}.(ii) 
to see $\overline{\cale \otimes_\pi \calf} =
\overline A = A = \overline \cale \otimes_\pi \overline \calf$. 

\if 0 
That is, $\cale\otimes_B \calf \cong A$ as $A,A$-bimodule and
$\calf\otimes_A \cale \cong b$
as $B,B$-bimodule.
\fi 

The $M$ and $N$ in the diagram below are the ring homomorphisms into the canonical corner operators induced by $m$ and $n$. 
That is, 
set $M(a)(\xi \oplus \eta):= m(a)(\xi) \oplus 0$ for $\xi \in \cale, \eta \in \H_B$ 
and 
$N(b)(\xi \oplus \eta):= n(b)(\xi) \oplus 0$.

In this diagram, $e,f,g,h$ are the obvious 
corner embeddings. 
The ring homomorphism $\kappa$ is just the $\phi$ of lemma \ref{lemma43}, so the upper right rectangle of the above diagram commutes.

$$\xymatrix{ 
A \ar[d]^M \\
 \calk_B(\cale \oplus \H_B)   \ar[r]^{e^{-1}}   \ar[d]^{\kappa}  
& 
B  \ar[d]^N 
\\
\calk_{\calk_A(\calf \oplus \H_A)} \Big(
\big (\cale \oplus \H_B \big ) \otimes_N \calk_A(\calf \oplus \H_A) \oplus \H_{\calk_A(\calf \oplus \H_A)} \Big )  
\ar[r]^-{g^{-1}}  \ar[d]^\pi   
& 
\calk_A(\calf \oplus \H_A)
\ar[dd]^{f^{-1}}  
\\
\calk_{A} \Big (
\Big ( 
\cale \otimes_N \calk_A(A, \calf ) 
\oplus  
X 
 \oplus \H_{\calk_A(A, \calf  \oplus \H_A)} \Big )  M_A 	
\Big ) 
\ar[d]^\sigma  
\\
\calk_{A} \Big (\cale \otimes_n \calf  
\oplus X 
\oplus \H_{\calk_A(A,\calf)} 
\oplus   \H_A  \Big )  
\ar[r]^{h^{-1}}  
& 
A    
}$$

The ring homomorphism $\pi$ is that of proposition \ref{lemma71}. 
In the third line of the above diagram we have already canceled some $\calf$s in the domain of $\calk_A$ because of the occurrence of $M_A$,  
and also the $\calk_A(A,\H_A)$-summand 
in the first internal tensor product because 
the map $N$ cancels it anyway. 
We have also abbreviated $X:= \H_B  \otimes_N 
\calk_A(\calf \oplus \H_A)
$.

\if 0
By the definition in proposition \ref{lemma71}  we can then summarize the following. 
\fi 

\if 0
Ist cofull  nach lemma oben
\fi 

Now consider $K:=\calk_A(A ,\calf)$ 
as an $B,A$-bimodule by $b \cdot k \cdot a=  n(b) \circ  k \circ m_a $, where $m_a(x) = ax$, 
$a,x \in A$,  
as in proposition \ref{lemma71}.  

Turn it to a right functional module by setting 
$\Theta_A(K) = \calk_A(\calf,A)=:L$ defined by multiplying an element $k \in K$ with an element in  
$l \in  L$ to $l \circ k$.

We get then an $B,A$-bimodule, and even right functional $A$-module isomorphism by 
$$r:\calf \rightarrow 
\calk_A(A,\calf): r(\eta b)(a)= \eta b a = \theta_{\eta, \phi_b}(a)$$
$$f: \Theta_A(\calf) \rightarrow  \Theta_A(K) 
:f(\psi)(k) 
= \psi \circ k $$ 
for $\eta \in \calf, a,b \in A, k \in K, \psi \in \Theta_A(\calf)$, $\phi_b \in \Theta_A(A)$,
$\phi_b(a)=ba$. 
  
Note that $r$ is $A$-linear, as $r(\eta b x)(a)=r (\eta b) \circ  m_x(a)$. 
Injectivity of $r$ follows from
$\xi b a=0$ for all $a \in B$ implies 
$\phi(\xi b) a=0$ for all $\phi \in \Theta_B(\calf),a \in B$ 
implies $\xi b=0$ by 
lemmas \ref{lemma114}, \ref{lemma113} 
and that $\calf$ separates the functionals.

Hence, by the isomorphism $(r,f)$ we get 
$$\cale \otimes_N \calk_A(A, \calf)
\cong \cale \otimes_n \calf  
\cong A 
$$

arriving at the fourth line of the above diagram by a ring homomorphism $\sigma$.  

It is easy to see that $f g \pi \sigma = h$, so that the lower right rectangle of the above 
diagram commutes. 

By commutativity of the above diagram we 
get 
$$M e^{-1} N f^{-1} 
= M \kappa \pi \sigma h^{-1} 
= \idd_A$$
where the last identity we obtain by rotating 
the corner embedding $h$ acting on the distinguished coordinate of $\H_A$ to a corner embedding acting on $\cale \otimes_n \calf \cong A$. 
 
Analogously we get $N f^{-1} M e^{-1} = \idd_B$, and so $A$ and $B$ are $GK^G$-equivalent. 
\end{proof}

\section{Induction Functor}
			\label{sec15}

\begin{definition}
		\label{def1510}
{\rm  
Let $G$ be a discrete group and $H \subseteq G$ 
a subgroup.
Let $(A,\alpha)$ be a $H$-equivariant ring.  
Let $(\cale,S)$ be a $H$-equivariant right functional  $A$-module. 
Define the {\em induced module} 
\begin{eqnarray*} 
\ind H G (\cale) &=& \{f \in c(G,\cale)|
\, f(gh)= S_{h^{-1}} (f(g)) 
\,
\forall h \in H , \\
&&
\qquad  f(gH)=\{0\} \mbox{ for almost all } gH 
\in G/H
\}
\end{eqnarray*}

(i.e. continuous functions $f:G \rightarrow \cale$ 
on the discrete set $G$), 
where this a is 
$G$-equivariant 
right functional 
$\ind H G (A)$-module,  
and the
ring $\ind H G (A)$ is analogously defined 
with pointwise ring multiplication.  
A $G$-action $T$ on $\ind H G (\cale)$ is defined by
$T_g(f)(x):= f(g^{-1}x)$. 

The functional space of this module is 
set to be   
$\ind H G (\Theta_A(\cale))$
 (elements of it understood to be pointwise evaluated against the module), 
defined analogously as above as 
a left $\ind H G (A)$-module.  
}
\end{definition}

\begin{definition} 
{\rm 

Let $H \subseteq G$ be a 
subgroup. 
Define a functor, called {\em induction functor},  
$$\ind H G : GK^H \rightarrow GK^G$$ 
by
\begin{itemize}

\item
For an object $(A,\alpha)$ in $GK^H$ 
set $\ind H G(A):= \ind H G (A)$ 
of definition \ref{def1510}. 

\item
For a ring homomorphism $\pi:A \rightarrow B$
set $\ind H G (\pi):\ind H G (A) \rightarrow 
\ind H G (B)$ to be the ring homomorphism
defined by $\ind H G (\pi)(f)(g)= \pi(f(g))$.

\item
For a corner embedding $e \in GK^H(A,\calk_B(\cale \oplus \H_B))$
define
$$\ind H G (e^{-1}) := \Big (\ind H G (e) \Big )^{-1}$$

\item
For a split exact sequence as in (\ref{splitexact}) 
set $\ind H G(\Delta_s) := \Delta_{\ind H G (s)}$ 
with respect to the following split exact sequence
$$\xymatrix{ 0 \ar[r]  & \ind H G (B)  \ar[r]^{\ind H G (j)}
& \ind H G (M)  \ar[r]^{\ind H G (f)}
& \ind H G (A)  \ar[r]
\ar@<0.5ex>[l]^{\ind H G (s)}
& 0
}
$$

\end{itemize}

}
\end{definition} 

\begin{definition} 
{\rm 
Let $H \subseteq G$ be a 
subgroup. 
Define a {\em restriction functor} 
$\res G H : GK^G \rightarrow GK^H$  
by restricting $G$-equivariance of rings and 
ring homomorphisms to $H$-equivariance, and
otherwise analogously as the induction functor.
  
} 
\end{definition}

The next lemma shows that induction commutes with 
operators, and the proof is straightforward. 

\begin{lemma}

There is a $G$-equivariant ring isomorphism
$$\sigma: \ind H G \Big (\Hom A(\cale )
\Big)
\rightarrow 
\Hom {\ind H G (A)} \Big( \ind H G(\cale ) 
\Big)
$$
defined by 
$\sigma(f 
) (\eta)(g) =  f(g)\big (\eta(g)\big )$, 
where 
$f$ is in the domain of $\sigma$,   
$\eta \in\ind H G(\cale )$ 
and $g \in G$.

This $\sigma$ restricts to 
isomorphisms for $\Hom {}$ replaced by $\call$
and $\Hom {}$ by $\calk$, respectively. 

\if 0  
$\psi: \ind H G \Big (\calk_A(\cale )
\Big)
\rightarrow 
\calk_{\ind H G (A)} \Big( \ind H G(\cale ) 
\Big)
$
\fi  

\end{lemma}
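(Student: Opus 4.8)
The plan is to define the candidate map $\sigma$ explicitly by the stated formula $\sigma(f)(\eta)(g) = f(g)(\eta(g))$ and then verify, in sequence: that it is well-defined (the right-hand side lands in $\ind H G(\cale)$), that it is a ring homomorphism, that it is $G$-equivariant, that it is bijective, and finally that it carries $\ind H G(\call_A(\cale))$ onto $\call_{\ind H G(A)}(\ind H G(\cale))$ and $\ind H G(\calk_A(\cale))$ onto $\calk_{\ind H G(A)}(\ind H G(\cale))$. The well-definedness is where the defining condition $f(gh)=S_{h^{-1}}(f(g))$ of the induced module meets the analogous condition for $\cale$: for $h\in H$ one computes $\sigma(f)(\eta)(gh) = f(gh)(\eta(gh)) = S_{h^{-1}}(f(g))\big(S_{h^{-1}}(\eta(g))\big)$, and since $f(g)\in\Hom A(\cale)$ carries the $H$-action $\Ad(S)$, applying $f(g)$ equivariantly gives $S_{h^{-1}}\big(f(g)(\eta(g))\big) = S_{h^{-1}}\big(\sigma(f)(\eta)(g)\big)$, which is exactly the required transformation law; the almost-everywhere-vanishing (finite support modulo $H$) condition transfers immediately because $\eta$ has it.

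Next I would check that $\sigma(f)$ is really a right $\ind H G(A)$-module homomorphism: pointwise in $g$ this is just the fact that $f(g)$ is $A$-linear, and the module structure on $\ind H G(\cale)$ and $\ind H G(A)$ is pointwise, so $\sigma(f)(\eta\cdot a)(g) = f(g)(\eta(g)a(g)) = f(g)(\eta(g))a(g) = (\sigma(f)(\eta)\cdot a)(g)$. Multiplicativity $\sigma(f_1 f_2)=\sigma(f_1)\sigma(f_2)$ and additivity are again pointwise-in-$g$ statements reducing to the corresponding properties of composition in $\Hom A(\cale)$. For $G$-equivariance one uses the two $G$-actions: the action $T$ on $\ind H G(\cale)$ given by $T_g(\eta)(x)=\eta(g^{-1}x)$, the analogous action on $\ind H G(A)$, and the action on $\ind H G(\Hom A(\cale))$ induced by the $H$-action $\Ad(S)$ on $\Hom A(\cale)$; one verifies $\sigma\big(g(f)\big) = g\big(\sigma(f)\big)$ by evaluating both sides at $\eta\in\ind H G(\cale)$ and $x\in G$ and shifting the argument $x\mapsto g^{-1}x$, tracking how the induced action reindexes $f$.

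For bijectivity I would exhibit the inverse directly: given $F\in\Hom{\ind H G(A)}(\ind H G(\cale))$, to recover $f(g)\in\Hom A(\cale)$ one evaluates $F$ on functions supported on the single coset $gH$. Concretely, for $\xi\in\cale$ let $\delta^g_\xi\in\ind H G(\cale)$ be the function supported on $gH$ with $\delta^g_\xi(gh)=S_{h^{-1}}(\xi)$, and set $f(g)(\xi) := F(\delta^g_\xi)(g)$; one checks this is $A$-linear, that $g\mapsto f(g)$ satisfies the induced-module condition, and that $\sigma$ applied to this $f$ returns $F$, using that every element of $\ind H G(\cale)$ is a finite sum of such $\delta^g_\xi$'s. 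Finally, the restriction statements follow because $\sigma$ preserves the functional-pairing structure: it sends the functional space $\ind H G(\Theta_A(\cale))$ to the functional space $\Theta_{\ind H G(A)}(\ind H G(\cale))$ pointwise (since both functional spaces are defined pointwise-in-$g$ in Definition~\ref{def1510} and the Direct sum / pointwise constructions), hence sends $\theta_{\eta,\phi}$-generators to $\theta$-generators and adjointable operators to adjointable ones; this is a routine check once the pointwise description is in place. The main obstacle is keeping the bookkeeping of the two layers of group actions straight — the "outer" $G$-translation action versus the "inner" $H$-action $S$ built into the induced-module condition — so that the verifications of well-definedness and of $G$-equivariance are airtight; everything else is a pointwise-in-$g$ reduction to already-established facts about $\Hom A(\cale)$, $\call_A(\cale)$, and $\calk_A(\cale)$.
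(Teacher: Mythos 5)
Your pointwise verifications (well-definedness via the relation $f(gh)=\Ad(S)_{h^{-1}}(f(g))$, additivity/multiplicativity, $G$-equivariance, injectivity, and the forward inclusions sending $\ind H G(\call_A(\cale))$ into the adjointables and $\ind H G(\calk_A(\cale))$ into the compacts) are correct, and they are exactly the checks the paper suppresses (it prints no argument, declaring the lemma straightforward). The genuine gap is in your surjectivity step. The element you recover from $F$ by $f(g)(\xi):=F(\delta^g_\xi)(g)$ need not lie in $\ind H G(\Hom A(\cale))$ at all, because the induced ring carries the finite-support-modulo-$H$ condition: already for $F=\id_{\ind H G(\cale)}$ your recipe gives $f(g)=\id_\cale$ for every $g$, which is not finitely supported mod $H$ when $[G:H]=\infty$ and $\cale\neq 0$; conversely no $\sigma(f)$ with $f$ finitely supported mod $H$ can equal the identity, since it annihilates every $\delta^{g_0}_\xi$ with $g_0H$ outside the support of $f$. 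So for infinite $G/H$ the map $\sigma$ is injective but not surjective onto $\Hom{\ind H G(A)}(\ind H G(\cale))$, nor onto $\call_{\ind H G(A)}(\ind H G(\cale))$ (the identity is adjointable), and your parenthetical ``one checks that $g\mapsto f(g)$ satisfies the induced-module condition'' cannot be completed; this part of the statement only holds under an additional hypothesis such as $[G:H]<\infty$. A second, independent point: even where $f$ lands correctly, the identity $\sigma(f)=F$ needs $F$ to be coset-local, i.e. $F(\delta^{g'}_\xi)(g)=0$ for $g\notin g'H$, which is not a consequence of right $\ind H G(A)$-linearity for an arbitrary module $\cale$ (it does follow when $\cale$ is weakly cofull, since then $\delta^{g'}_\xi=\sum_i\delta^{g'}_{\eta_i}\,\delta^{g'}_{b_i}$ and right-linearity localizes $F(\delta^{g'}_\xi)$ to $g'H$); that hypothesis should be made explicit.

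The unconditionally correct part is the $\calk$-statement, and there your $\delta^g_\xi$-device is precisely what is needed: a generator $\theta_{\eta,\phi}$ of $\calk_{\ind H G(A)}(\ind H G(\cale))$ has $\eta$ finitely supported modulo $H$, and it equals $\sigma$ of the function $g\mapsto\theta_{\eta(g),\phi(g)}$, which automatically lies in $\ind H G(\calk_A(\cale))$; conversely an equivariant, finitely supported $k:G\to\calk_A(\cale)$ is carried by $\sigma$ to a finite sum of operators $\theta_{\delta^{g}_{\xi_i},\delta^{g}_{\phi_i}}$, obtained by decomposing $k(g)$ at one representative $g$ of each coset in its support and extending equivariantly along the coset. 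I suggest you reorganize the write-up accordingly: prove injectivity, the homomorphism and equivariance properties, and the forward inclusions for $\Hom{}$ and $\call$ in general; prove bijectivity only at the level of $\calk$ (or for all three under the hypothesis $[G:H]<\infty$, respectively a locality/cofullness assumption), and flag that the blanket isomorphism claim for $\Hom{}$ and $\call$ fails for infinite index.
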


\if 0 
\begin{proof}
If $f(g) = \xi_g \phi$ then
$f(\eta(g)) = \xi_g \phi(\eta(g))
= \xi' \phi' (\eta) (g)$
where $\xi' \in \calf:= \ind H G(\cale ) \oplus
\H_{\ind H G (A)}$
and $\phi' \in \Phi(\calf)$.

\end{proof}
\fi 


We shall also work in intermediate steps 
with the non-quadratik ring $\Z$, 
and notate
$c_0(G/H)=\ell^2(G/H) = \oplus_{[g] \in G/H} \Z$ 
for the direct sum of abelian groups, 
and $\ell^2(G/H) \otimes B \cong \oplus_{[g] \in G/H} B$ for the direct sum functional $B$-module. 

\begin{lemma}			\label{lemma144}
Let $(A,\alpha)$ be a $H$-equivariant ring and $(B,\beta)$ a 
$G$-equivariant ring.
Then there is a $G$-equivariant ring homomorphism $\mu$ 
$$\xymatrix{\ind H G (A \otimes \res G H (B)) 
\ar[r]^{x} & \oplus_{g \in G_0} A \otimes B 
\ar[r]^{y}   & \ind H G (A) \otimes B
}
$$
\if 0
$$\mu:   
\ind H G (A \otimes \res G H (B))
\rightarrow \ind H G (A) \otimes B$$
\fi 
defined by $\mu(f ) (g) = (\id_A \otimes \beta_g)(f(g))$ (sloppily said), where $f$ is in 
the domain of $\mu$ 
and $g \in G$. 
 
This also holds for $A=(\Z,\triv)$ and then we may identify $\ind H G (\Z) \cong  c_0(G/H)$. 


\end{lemma}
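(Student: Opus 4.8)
The plan is to write down the map $\mu$ explicitly, factor it as a composition $x \circ y$ through the intermediate direct sum $\bigoplus_{g \in G_0} A \otimes B$ (where $G_0 \subseteq G$ is a fixed set of representatives for $G/H$), and check that each factor is a well-defined $G$-equivariant ring homomorphism. First I would fix a transversal $G_0$ for $G/H$ and recall that an element $f \in \ind H G(A \otimes \res G H(B))$ is determined by its values $f(g_0)$, $g_0 \in G_0$, subject to the finite-support condition, since the values at other points of $G$ are forced by the $H$-covariance relation $f(gh) = (S_{h^{-1}})(f(g))$. This gives the $\Z$-module (indeed $A \otimes B$-module) identification $x$ sending $f \mapsto (f(g_0))_{g_0 \in G_0}$; it is a bijection, additive, and multiplicative because multiplication in $\ind H G(\cdots)$ is pointwise and so is multiplication in the direct sum. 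Similarly, on the right, an element of $\ind H G(A) \otimes B$ is a finitely supported $B$-valued function on $G/H$ (after choosing the transversal) with values in $A$, so $\bigoplus_{g_0 \in G_0} A \otimes B$ maps into it; the map $y$ will be the composite of reinterpreting a tuple $(a_{g_0} \otimes b_{g_0})$ via the $\beta$-twist described in the statement.

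Next I would spell out the formula for $\mu$ directly and verify it is multiplicative. Concretely, define $\mu(f)(g) := (\id_A \otimes \beta_g)(f(g)) \in A \otimes B$, reading $f(g) \in A \otimes \res G H(B) = A \otimes B$; one must check that $\mu(f)$ lands in $\ind H G(A) \otimes B$, i.e. that after the $\beta_g$-twist the $H$-covariance relation for $\ind H G(A)$ (with the trivial $B$-factor, or rather with $B$ pulled out as a tensor factor) holds. This is where the choice $\res G H(B)$ matters: for $h \in H$ one has $f(gh) = (S_{h^{-1}} \otimes \beta_{h^{-1}})(f(g))$ on the left-hand module, and applying $\id \otimes \beta_{gh}$ gives $(\id \otimes \beta_{gh})(S_{h^{-1}} \otimes \beta_{h^{-1}})(f(g)) = (S_{h^{-1}} \otimes \beta_g)(f(g))$, which is exactly $(S_{h^{-1}} \otimes \id_B)$ applied to $\mu(f)(g)$; that is precisely the covariance condition defining $\ind H G(A) \otimes B$, so $\mu(f)$ lies in the target. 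Multiplicativity then follows because $\beta_g$ is a ring automorphism of $B$ and multiplication is pointwise in $g$: $\mu(f_1 f_2)(g) = (\id \otimes \beta_g)(f_1(g) f_2(g)) = (\id \otimes \beta_g)(f_1(g)) \cdot (\id \otimes \beta_g)(f_2(g)) = \mu(f_1)(g)\,\mu(f_2)(g)$. Additivity and compatibility with the functional spaces are similarly immediate since everything is defined pointwise in $g$ and $\beta_g$ is linear.

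Then I would check $G$-equivariance. The $G$-action on $\ind H G(A \otimes \res G H(B))$ is $T_{g'}(f)(x) = f(g'^{-1}x)$, while the $G$-action on $\ind H G(A) \otimes B$ is the diagonal one, $T'_{g'} \otimes \beta_{g'}$, where $T'$ is the induction-shift on $\ind H G(A)$. Evaluating $\mu(T_{g'}(f))(x) = (\id \otimes \beta_x)(f(g'^{-1}x))$ and comparing with $((T'_{g'} \otimes \beta_{g'}) \mu(f))(x) = (\id \otimes \beta_{g'})\big((\id \otimes \beta_{g'^{-1}x})(f(g'^{-1}x))\big) = (\id \otimes \beta_x)(f(g'^{-1}x))$, the two agree by the cocycle identity $\beta_{g'} \circ \beta_{g'^{-1}x} = \beta_x$. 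Finally, the last sentence of the statement — the special case $A = (\Z, \triv)$ with $\ind H G(\Z) \cong c_0(G/H)$ — is handled by noting that $\Z \otimes B \cong B$ canonically and that $\ind H G(\Z)$, an induced module over the (non-quadratik) ring $\Z$ with trivial $H$-action, is by the finite-support transversal description exactly $\bigoplus_{[g] \in G/H} \Z = c_0(G/H)$; the map $\mu$ then reads off as $\ind H G(B) \to c_0(G/H) \otimes B$, $f \mapsto ([g] \mapsto \beta_g(f(g)))$.

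The main obstacle — really the only subtlety beyond bookkeeping — is keeping the two $H$-covariance conditions straight and seeing that the $\res G H(B)$-twist on the source is exactly what is needed to absorb the $\beta_g$-conjugation so that the image satisfies the (untwisted-in-$B$) covariance of $\ind H G(A) \otimes B$; this forces the appearance of $\res G H$ rather than $B$ with a $G$-action, and is the computation I highlighted in the second paragraph. Everything else (well-definedness of $x$ and $y$ as ring homomorphisms through the transversal, pointwise multiplicativity, the cocycle computation for $G$-equivariance) is routine, so I would present it compactly.
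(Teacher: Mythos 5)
Your proposal is correct and follows essentially the same route as the paper: the paper also fixes a transversal $G_0$ and establishes well-definedness by factoring $\mu$ as restriction $x(f)=f|_{G_0}$ followed by the twisted reassembly map $y$ into $\ind H G(A)\otimes B$, with equivariance and multiplicativity checked directly from the formula. The only extra item in the paper's proof is the explicit inverse $\mu^{-1}(a\otimes b)(g)=a(g)\otimes\beta_{g^{-1}}(b)$ obtained from the universal property of the tensor product, which the statement as formulated does not require.
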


\begin{proof}

By the universal property of the tensor product we may easily write 
down the well-defined inverse map $\mu^{-1}$ 
as $\mu^{-1}(a \otimes b)(g)= a(g) \otimes \beta_{g^{-1}}(b)$. 
To see that the non-equivariant $\mu$ is well-defined,
\if 0
 we fix a $g \in G$ and we define $\mu|_{g H} ( f) (gh):= g^{-1}(f(g)$

we write it a s a compositioen 
\fi 
we choose an arbitrary  complete representation $G_0 \subseteq G$ of $G/H$ such that 
no two elements $g$ of  $G_0$ are in the same class. 
Then set $x(f):= f|_{G_0}$, 
and 
$$y(\oplus_{g \in G_0} a_g \otimes b_g)= \sum_{g \in G_0}
\big ( k 
\mapsto 1_{\{k=gh \in gH\}} h^{-1}(a_g) 
\big ) \otimes \beta_{g} 
 (b_g)$$  
  and 
check that $\mu =xy$, inverse to $\mu^{-1}$.
\end{proof}

Roughly, 
this type of Frobenius reciprocity 
and its proof are well-known. 

\begin{proposition}  

Let $H \subseteq G$ be a 
subgroup. 
\if 0
Assume all rings of $\ring G$ and $\ring H$ are $\F$-algebras  
(i.e. setting of section \ref{section10}).  
\fi 

Then 
the induction functor is left adjoint to the restriction functor with respect to $GK$-theory. 
That means we have
$$GK^G \Big ( \ind H G (A),B \Big ) = GK^H 
\Big (A,\res G H (B) \Big)$$
and this identity is natural in $A$ and $B$. 

\end{proposition}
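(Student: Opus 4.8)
The plan is to construct, for each pair of objects $A$ in $\ring H$ and $B$ in $\ring G$, mutually inverse maps
$\Phi_{A,B}: GK^H(A,\res G H(B)) \to GK^G(\ind H G(A),B)$ and
$\Psi_{A,B}: GK^G(\ind H G(A),B) \to GK^H(A,\res G H(B))$, and then check that they are natural in both variables. The key technical ingredients are two canonical morphisms, to be verified on the nose: a \emph{unit} $\eta_A: A \to \res G H \ind H G(A)$ in $GK^H$ and a \emph{counit} $\varepsilon_B: \ind H G \res G H(B) \to B$ in $GK^G$. For $\eta_A$ I would take the $H$-equivariant ring homomorphism sending $a \in A$ to the function supported on $H$ with value $S_{h^{-1}}$ applied appropriately — concretely $\eta_A(a)(h) = \alpha_{h^{-1}}(a)$ for $h\in H$ and $0$ off $H$; one checks this is $H$-equivariant for the restricted $G$-action on $\ind H G(A)$. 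For $\varepsilon_B$ I would use Lemma \ref{lemma144} (Frobenius reciprocity for tensor products): taking $A = (\Z,\triv)$ there gives a $G$-equivariant isomorphism $\ind H G(\res G H(B)) \cong \ind H G(\Z)\otimes B \cong c_0(G/H)\otimes B$, and composing with the projection $c_0(G/H)\otimes B \to B$ onto the coordinate of the trivial coset (``evaluation at $eH$'') — or rather with the sum-over-cosets map, whichever lands in a genuine ring homomorphism after suitable rescaling by the $G$-action — yields $\varepsilon_B$. Then $\Phi_{A,B}(z) := \ind H G(z)\cdot \varepsilon_B$ and $\Psi_{A,B}(w) := \eta_A \cdot \res G H(w)$.

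The main steps, in order, are: (1) verify $\eta_A$ and $\varepsilon_B$ are well-defined morphisms in the respective $GK$-categories — this is where one must be careful, since $\ind H G$ and $\res G H$ are only \emph{functors} on $GK$-theory, not operations one can naively iterate, so ``$\varepsilon_B$'' must literally be exhibited as a composite of ring homomorphisms and inverses of corner embeddings, using Lemma \ref{lemma144} and the identification $\ind H G(\Z)\cong c_0(G/H)$ to realize $\ind H G\res G H(B)$ as (essentially) a corner of $\calk_B(c_0(G/H)\otimes B \oplus \H_B)$ so that $\varepsilon_B$ becomes an inverse corner embedding up to ring homomorphism; (2) prove the two triangle identities, $\res G H(\varepsilon_B)\circ \eta_{\res G H(B)} = \id$ and $\ind H G(\eta_A)\circ \varepsilon_{\ind H G(A)} = \id$ in $GK^H$ and $GK^G$ respectively — since these are identities between honest ring homomorphisms composed with inverse corner embeddings, they should follow from explicit computation combined with rotation homotopies (Definition \ref{def26}) to kill the corner-embedding discrepancies, much as in the Morita equivalence proof of the previous section; (3) deduce from (1) and (2) by the standard category-theoretic argument that $\Phi_{A,B}$ and $\Psi_{A,B}$ are mutually inverse bijections; (4) check naturality in $A$ and $B$, which reduces to functoriality of $\ind H G$ and $\res G H$ together with naturality of $\eta$ and $\varepsilon$, the latter being routine once $\eta$, $\varepsilon$ are described by the canonical formulas above.

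For step (3) I would spell out the adjunction bijection directly rather than invoke abstract nonsense blindly: given $z\in GK^H(A,\res G H(B))$ and $w\in GK^G(\ind H G(A),B)$, one computes $\Psi(\Phi(z)) = \eta_A\cdot \res G H(\ind H G(z))\cdot \res G H(\varepsilon_B) = \eta_A \cdot \res G H \ind H G(z)\cdot \res G H(\varepsilon_B)$, then uses naturality of $\eta$ to rewrite $\eta_A \cdot \res G H\ind H G(z) = z\cdot \eta_{\res G H(B)}$, and finally the first triangle identity to collapse $\eta_{\res G H(B)}\cdot \res G H(\varepsilon_B) = \id$, giving $z$ back; the other composite is symmetric using the second triangle identity. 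All these manipulations are legitimate in $GK^G$ since $\ind H G$, $\res G H$ are functors and every arrow in sight is a genuine morphism of the respective $GK$-category.

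The hard part will be step (1)–(2): producing $\varepsilon_B$ as an \emph{explicit morphism in $GK^G$} and verifying the triangle identities. The subtlety is that $\ind H G$ composed with $\res G H$ does not literally return $B$ but a ``fattened up'' algebra of $G/H$-indexed matrices over $B$, and one must pin down, via Lemma \ref{lemma144} with $A=(\Z,\triv)$, exactly which corner embedding is being inverted and check that the resulting $G$-action matches the one coming from the functor $\ind H G$ applied to the corner embedding defining stability — in particular, that the $G$-action permuting the cosets $G/H$ is compatible with $\Ad$ of a $G$-action on $c_0(G/H)\otimes B\oplus \H_B$ in the sense of Lemma \ref{lemma51}. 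Once that bookkeeping is done, the rotation-homotopy arguments needed for the triangle identities are of the same flavor as those already used in Lemma \ref{lemma43} and the Morita theorem, so I expect no new conceptual difficulty there.
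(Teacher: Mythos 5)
Your plan coincides with the paper's proof: the same unit $\iota_A(a)(g)=1_{\{g\in H\}}\,g^{-1}(a)$, a counit obtained from Lemma \ref{lemma144} (with $A=\Z$) composed with the diagonal-multiplication representation on $\ell^2(G/H)\otimes B\oplus\H_B$ and the inverse of the corner embedding acting on $\H_B$, the two triangle identities checked by an explicit computation plus a rotation homotopy (via the cut-down to the submodule $M\cong B$), and MacLane's criterion to conclude the adjunction, with naturality routine. Just discard your first two candidate formulas for $\varepsilon_B$ --- evaluation at the trivial coset is not $G$-equivariant and the sum over cosets is not multiplicative --- and keep the corner-embedding composite you fall back on in step (1), which is exactly the paper's counit $\pi_B=\mu\,\lambda\,e^{-1}$.
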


\begin{proof}

\if 0
Roughly, 
this type of Frobenius reciprocity 
and its proof are well-known, and we follow 
more closely our variant in \cite{burgiAdjoint}. 
\fi 

\if 0
Because we work with $\F$-algebras, we can invoke on lemma \ref{lemma1323}. 
 \fi

We shall consider the unit and counit of adjunction. 
First we have the natural transformation
$\iota$  
of the identity functor $\id_{GK^H}$
on $GK^H$ and the functor $\res G H \ind H G$ 
on $GK^H$ 
given by the family of ring homomorphisms
$$\iota_A: A \rightarrow \res G H \ind H G(A)
: 
\iota_A(a) ( g)= 1_{\{g \in H\}} g^{-1}(a) $$

Second we have the natural transformation
$\pi$ between the functors $\id_{GK^G}$ 
and $\ind H G \res G H $ on $GK^G$
defined by the family of ring homomorphisms
$$\xymatrix{ \pi_B : 
\ind H G \res G H (B)
\ar[r]^\mu &
c_0(G/H) \otimes B
\ar[r]^\lambda &
\call_B ( \ell^2(G/H) \otimes B 
\oplus \H_B)
\ar[r]^-{e^{-1}} &
B
}$$


where $\lambda$ 
sends to the coordinate-wise diagonal multiplication operators, but to zero on $\H_B$,
and $e$ is the corner embedding acting on $\H_B$.   

\if 0
where $c_0(G/H)=\ell^2(G/H) = \oplus_{[g] \in G/H} \Z$ is the direct sum of abelian groups, 
$\ell^2(G/H) \otimes B \cong \oplus_{[g] \in G/H} B$ is the direct sum functional $B$-module, 
and $\lambda$ 
sends to the coocrdinate-wise diagonal multiplication operators on this module. 
\fi

It is sufficient to show that
\begin{equation}   \label{ident1}
\pi_{\ind H G (A)} \circ \ind H G (\iota_A) = \mbox{id}_{\ind H G (A)}
\end{equation}
in $KK^G$ and
\begin{equation}   \label{ident2}
{\res G H (\pi_B)} \circ \iota_{\res G H (B)} = \id_{\res  G H(B)}
\end{equation}
in $KK^H$ by \cite[IV.1 Theorem 2.(v)]{maclane}.

Define $P_{[g]}$ to be the projection acting on 
$\ind H G(A)$ which leaves an element on $gH$
unmodified and sets it to zero outside of it. 
 
Then, for $\mu$ of Lemma \ref{lemma144} 
and 
$a \in \ind H G (A)$, one has  
\begin{eqnarray}
\label{c1}
&& \mu \big(\ind H G (\iota_A)(a) \big ) \\
\label{c2}		
 &=& 
 \sum_{[g] \in G/H} 1_{[g]} \otimes 
P_{[g]}(a)
\quad
\in   \quad  c_0(G/H) \otimes \ind H G (A) 
\end{eqnarray}

Now $\pi_{\ind H G (A)} \circ  \ind H G (\iota_A)
= \ind H G (\iota_A) \mu \lambda e^{-1}$  
 is a morphism
$ \ind H G  (A) \rightarrow
\ind H G  (A)$. 
Set $B:= \ind H G  (A)$. 

We have a direct sum decomposition 
$X:=\ell^2(G/H) \otimes \ind H G (A)\cong  M \oplus N$ 
of $G$-modules,   
where $M$ is just the set of all elements 
(\ref{c2}) when $a$ is varied over all $a \in B$.
\if 0
$$M = \span 
 \Big \{ 1_{[g]} \otimes \Big (\sum_{k \in gH} 1_{k} \otimes a_k \Big)  \in X \Big |
\, [g]\in G /H, a_k \in A  \Big \}$$
\fi 

\if 0
Since by the above computation (\ref{c1}), $z:= \ind H G ( \iota_A) \mu \lambda$ acts only $M$, we can cut-down $
X$ to $M$, and write $z e^{-1}=z_0 \phi e^{-1} 
= z_0 f^{-1}$, where $z_0(a):= z(a)|_{\cale \oplus \H_B}$    
and $\phi$ and $f$ are 
like in
lemma \ref{lemma281}. 
\fi 

 


We have an isomorphism 
$$u:M \rightarrow B  : u \Big (  
\sum_{[g] \in G/H} 
1_{[g]} \otimes 
P_{[g]} (a)
\Big ) = a
$$ 
of $G$-$\ind H G  (A)$-modules 
for $a \in B$.

\if 0
(-> damit $M$ cofull) 
\fi 

\if 0
It follows that 
$$z_0 f^{-1} = z_0 \Ad(u) Ad(u)^{-1} f^{-1} = \idd$$
\fi 

Since by the above computation (\ref{c1}), $z:= \ind H G ( \iota_A) \mu \lambda$ acts only on $M$, 
if we compose it with $\Ad(u \oplus 1 \oplus 1)$ it becomes the multiplication operator on $B$,
so a corner embedding $f$ and we get $z e^{-1} = f e_2^{-1} =\idd$ by a rotation homotopy, see the following commuting diagram: 

$$\xymatrix{ B \ar[r]^-z \ar[d] &  \calk_B(M \oplus N \oplus \H_B)  \ar[d]^{ \Ad (u \oplus 1 \oplus 1)} 
&  B  \ar[d]  \ar[l]^-e
\\
B \ar[r]^-{f} & \calk_B( B  \oplus N \oplus \H_B )   
&    B   \ar[l]^-{e_2} 
}$$

The case (\ref{ident2}) is 
proven similarly.				
\if 0

We are going to show (\ref{ident2}).
We have
for $b \in {\mbox{R}_G^H(B)}$ and
\begin{eqnarray}
\mbox{R}_G^H(\mu) \big (\iota_{\mbox{R}_G^H(B)}(b) \big )
&=& \sum_{h \in H / H} 1_{h h^*} \otimes 1_{h H} \otimes 
\sigma^{-1}_H(h h^*) (b)   \label{ident3} \\
&\in&  \mbox{R}_G^H \big (c_0(G_H/H) \otimes^{C_0(X)} \mbox{R}_G^H(A)  \big ) .  \nonumber
\end{eqnarray}

Now
$\mbox{R}_G^H(\pi_B) \circ \iota_{\mbox{R}_G^H(B)}: \mbox{R}_G^H(B) \rightarrow \mbox{R}_G^H(B)$
is realized by the Kasparov cycle
\begin{equation}   \label{cycle2}
(\nu, \varepsilon(E_H) \otimes^{C_0(X_H)} \mbox{Res}_G^H \big (\ell^2(G_H/H) \otimes^{C_0(X)} \mbox{R}_G^H(B) \big ),0),
\end{equation}
where $\nu(b)$ is the multiplication operator with the element (\ref{ident3}).
Again, similar as before, we can cut down the Hilbert module of 
this cycle to
\begin{eqnarray*}
\varepsilon(E_H) \otimes^{C_0(X)} \ell^2(H_H/H) \otimes^{C_0(X)} \mbox{R}_G^H(B)
&\cong&
\mbox{R}_G^H(B),
\end{eqnarray*}
where the last isomorphism of $G$-Hilbert $\mbox{R}_G^H(B)$-modules is given by 
$$v(h h^* \otimes 1_{h H} \otimes \sigma^{-1}_H( h h^*) (b) ) = \sigma^{-1}_H(h h^*)(b).$$
Noticing that $\sum_{h \in H/H} \sigma^{-1}_H(h h^*)(b) = b$ by Lemma \ref{lemma13},
we see that the new equivalent Kasparov cycle is
$(j,\mbox{R}_G^H(B),0)$, where $j$ is the multiplication operator.
This shows (\ref{ident2}).
\fi
\end{proof}

\section{Green-Julg Isomorphism}
				\label{sec16}

In this section we shall prove the 
well-known Green-Julg isomorphism theorem 
\cite{julg} 
in the 
framework  
of $GK$-theory. 

In this section, if nothing else is said, $G$ denotes a 
finite, discrete group 
and we 
put $n:= |G|$.


Moreover, if nothing else is said, $(\F,\triv)$ denotes any given commutative, associative field with 
$\character(\F) \neq n$ (because we shall divide by $n$). 

All rings and modules are now $\F$-algebras 
and $\F$-vector spaces, respectively,  
as explained in section \ref{section10}.  
In particular, all tensor products are $\F$-balanced. 

 Typically, $\delta_g$ for $g \in G$ denote canonical 
basis elements. 

\if 0
All external and internal tensor products 
are $\F$-balanced, so vector space tensor products. So $A \otimes B:=
A \otimes^\F B$  
\fi 

\begin{definition}   	\label{def151}
{\rm 

Let $G$ be a finite group. 
Write $\ell^2(G):= \oplus_{g \in G} \F$ 
for 
the direct sum functional right (and also left notated)  $\F$-module.
We also write $\langle \xi,\eta\rangle := \sum_{g \in G} \xi_g \eta_g$ for the ``inner product". 

On it define  
the right, ``{\em right regular}" $G$-action
$V$ by 
$V_g(\delta_h) = \delta_{h g}$ 
for $\xi \in \ell^2(G), g,h \in G$. 

Equip $\ell^2(G)$ with the left $G$-action $V^{-1}$ 
defined by $V_g^{-1}:= V_{g^{-1}}$.  

Write $\K:= \big (\calk_\F(\ell^2(G)), \Ad(V ^{-1}
)\big )$. 

Set $m:=\sum_{g \in G} \delta_g \in \ell^2(G)$. 
A corner embedding $f: \F \rightarrow \K= \calk_\F( \F^n 
)$ is given by 
$$f(x)(\xi)=x m \frac{1}{n} \langle \xi,m\rangle  = \theta_{x m, \phi_f}(\xi)$$ 
for all $x \in \F, \xi \in \ell^2(G), h \in G$. 
Here, $\phi_f \in \Theta_\F(\ell^2(G))$ is $\phi_f(\xi)=n^{-1} \langle \xi,m\rangle$. 

}
\end{definition}

\begin{lemma}			\label{lemma152}  	

{\rm (i)} $f$ is indeed a corner embedding.

{\rm (ii)}
If $A$ is 
a ring then 
 $1 \otimes f: A \cong A \otimes \F \rightarrow A \otimes \K$ 
is a corner embedding. 
\if 0
(Alternatevly: tensor produc functor ) 
\fi 
\end{lemma}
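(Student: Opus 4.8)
\textbf{Proof plan for Lemma \ref{lemma152}.}

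The plan is to verify (i) directly from Definition \ref{def22}, by exhibiting an isomorphism of functional modules that carries the abstract corner embedding into $f$, and then to obtain (ii) by applying the tensor product functor $\tau^G$ of Section \ref{section10}.

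For part (i), first I would identify the distinguished-coordinate structure. Set $\cale := \F$ with trivial $G$-action sitting as the distinguished coordinate, and recall that the corner embedding $e: \F \to \calk_\F(\F \oplus \H_\F)$ of Definition \ref{def22} is $e(x) = \theta_{x,\phi}$, where $\phi$ projects onto the distinguished coordinate. The claim is then that $(\ell^2(G), V^{-1})$ is isomorphic, as a functional $\F$-module with $G$-action, to a direct sum $\F \oplus \H'$ in which the distinguished copy of $\F$ is spanned by the normalized vector $\frac{1}{n} m$ equipped with the functional $\phi_f(\xi) = n^{-1}\langle \xi, m\rangle$. Concretely, I would check that the element $m = \sum_{g\in G}\delta_g$ is $V^{-1}$-invariant (indeed $V_g^{-1}(m) = V_{g^{-1}}(m) = \sum_h \delta_{hg^{-1}} = m$), so $\F m$ is a $G$-invariant submodule carrying the trivial action; that $\phi_f(m) = n^{-1}\langle m,m\rangle = n^{-1}\cdot n = 1$, so that $\phi_f$ restricted to $\F m$ is the ``identity" functional matching the distinguished-coordinate functional; and that $\ell^2(G) = \F m \oplus (\ker \phi_f)$ as $G$-modules, using $\character(\F)\neq n$ to invert $n$. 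Writing $p: \F \to \ell^2(G)$, $p(x) = xm$, and $\phi_f$ as above, one has $\phi_f(p(x)) = x$, so $(p,\phi_f)$ is a functional-module embedding realizing $\F$ as the distinguished coordinate with complement $\H' := \ker\phi_f$. Since $\ell^2(G) = \F \oplus \H'$ is cofull over the unital ring $\F$ (trivially, as $\F$ is a field), Definition \ref{def22} applies, and by the explicit formula $\theta_{p(1),\phi_f}(\xi) = m\,\phi_f(\xi) = \frac{1}{n} m \langle\xi,m\rangle$ we see $f(x) = \theta_{xm,\phi_f} = x\cdot\theta_{p(1),\phi_f}$ is exactly the corner embedding associated to this decomposition, up to the ring isomorphism $\calk_\F(\ell^2(G)) \cong \calk_\F(\F\oplus\H')$ of Lemma \ref{lemma17}; hence $f$ is a corner embedding in the sense made legitimate by the last sentence of Definition \ref{def22}. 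I would also note $G$-equivariance of $f$: since $\K$ carries $\Ad(V^{-1})$ and $\F$ the trivial action, equivariance amounts to $V_g^{-1}\circ f(x)\circ V_g = f(x)$, which follows from $V$-invariance of both $m$ and $\phi_f$ (the latter because $\phi_f(V_g(\xi)) = n^{-1}\langle V_g\xi, m\rangle = n^{-1}\langle \xi, V_g^{-1}m\rangle = \phi_f(\xi)$).

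For part (ii), the cleanest route is to invoke the functor $\tau^G: GK^G \to GK^G$ of Section \ref{section10} with $D := \K$ (or rather its ``other side" variant $A\mapsto A\otimes D$), under which corner embeddings are sent to corner embeddings: this is exactly the diagram displayed right after the definition of $\tau^G$, using Lemmas \ref{lemma1172} and \ref{lemma1323} to identify $\calk_\F(\ell^2(G)\oplus\H_\F)\otimes\calk_A(A)$ (for $A$ now playing the role of the distinguished coordinate ring, with the trivial factor on the other side) with $\calk_{A}(\ell^2(G)\otimes A \oplus \H_{A})$. Applying $\tau^G$ to the corner embedding $f$ of part (i) yields $1\otimes f = f\otimes 1 : A\otimes\F \to A\otimes\K$, which is therefore a corner embedding up to ring isomorphism. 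Alternatively, one can argue directly as in part (i): $1\otimes m$ is $(1\otimes V^{-1})$-invariant, $1\otimes\phi_f$ is an $A$-valued functional on $A\otimes\ell^2(G)$ with $(1\otimes\phi_f)(a\otimes m) = na^{-1}\cdots$ — I mean $(1\otimes\phi_f)(a\otimes m)=a$ after the $n^{-1}$ normalization — and $A\otimes\ell^2(G) = (A\otimes m)\oplus(A\otimes\ker\phi_f)$, so $1\otimes f$ is the corner embedding attached to this decomposition by Definition \ref{def22}. Either way the statement follows.

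The main obstacle I anticipate is purely bookkeeping rather than conceptual: making the identification ``$\calk_\F(\ell^2(G))\cong\calk_\F(\F\oplus\H')$ with $\F$ the distinguished coordinate" rigorous requires tracking the functional spaces through the direct-sum decomposition $\ell^2(G)=\F m\oplus\ker\phi_f$ and checking that the restriction of $\Theta_\F(\ell^2(G))$ to each summand gives the intended functional spaces (trivially true since $\F$ is a field, so $\Theta_\F$ of any $\F$-vector space module is all linear functionals), and then confirming via Lemma \ref{lemma17} and Lemma \ref{lemma221}.(i) that this is genuinely an isomorphism of functional modules compatible with the $G$-actions. The use of $\character(\F)\neq n$ is essential and enters exactly at the splitting $\ell^2(G)=\F m\oplus\ker\phi_f$: without invertibility of $n$ the projection $\xi\mapsto \phi_f(\xi)\cdot m = n^{-1}\langle\xi,m\rangle m$ is not defined, and $m$ might even lie in $\ker\phi_f$.
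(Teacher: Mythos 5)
Your argument is correct and follows essentially the same route as the paper: for (i) you use the same decomposition $\ell^2(G)=\F m\oplus\ker\phi_f$ (valid since $\character(\F)\neq n$), the $V$-invariance of $m$, and the identification of $f(x)=\theta_{xm,\phi_f}$ as multiplication on the distinguished coordinate, which the paper phrases by transporting everything to $\F^n$ via a chosen basis of $\ker\phi_f$. For (ii) your appeal to the tensor-product functor together with Lemmas \ref{lemma1172} and \ref{lemma1323} (giving $A\otimes\K\cong\calk_A(A^n)$ with distinguished coordinate $(A,\alpha)$) is exactly the paper's computation.
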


\begin{proof}
(i) 
Observe that $f(xy)= f(x)f(y)$. 
%
Choosing a $\F$-vector space basis in $\ker \phi$ we obtain a non-equivariant isomorphism 
$$T: \ell^2(G) \cong \F \oplus \ker(\phi_f)
\rightarrow  
\F \oplus \F^{n-1} = \F^n$$ 
of vector spaces, 
such that $T(m)=m \oplus 0^{n-1}$ for $m$ of definition \ref{def151}. 
Select the $G$-action $W$ on $\F^n$ such that $T$ becomes $G$-equivariant. 

Observe that $m$ is $V$-invariant (i.e. $V_g(m)=m$), such that $T(m)$ is $W$-invariant, and so the first copy $\F$ of the direct sum $(\F^n,W)$ is a distinguished first coordinate $(\F,\triv)$. 

Then $(T \circ f(x) \circ T^{-1})(\eta)= x \eta_1 \oplus 0^{n-1}$ for $\eta \in \F^n$, so $f$ is a corner embedding. 

(ii)
This is just the tensor product functor, 
but let us observe it once more: 
$$(A,\alpha) \otimes \calk_\F((\F^n,W)) \cong \calk_A(A) \otimes \calk_\F(\F^n) \cong \calk_{A \otimes \F}( (A \otimes \F^n,\alpha \otimes W))
\cong \calk_A(A^n)$$

where the first copy of $A$ in $A^n$ is a distinguished first coordinate $(A,\alpha)$,
and the map $1 \otimes f$ 
turns to the obvious 
canonical corner embedding $A \rightarrow \calk_A(A^n)$. 
\end{proof}

\begin{definition}   		\label{def153} 
{
\rm 

Let $(A,\alpha)$ be a ring. 
We often identify 
$$\big (A \otimes \K,\alpha \otimes \Ad(V^{-1}) 
\big ) \cong \Big (\calk_A \big (A \otimes \ell^2(G) \big ),
\Ad(\alpha \otimes V^{-1} )\Big )$$
and also abbreviate the $G$-action 
$\rho:= \Ad(V^{-1})$ on $\K$. 


Let $\pi:A \rightarrow A \otimes \K$ be the ring homomorphism defined by $\pi(a)(b \otimes \delta_g)
= \alpha_{g^{-1}}(a) b \otimes \delta_g$. 

Let $U: G \rightarrow A \otimes \K$ be 
the group homomorphism  
defined by
 $U_g(a \otimes \delta_h)
= a \otimes \delta_{g h}$. 
(``Left regular action", but it is not a group action as we have the $G$-action $\alpha$ on $A$.)  

Let $\lambda_A: (A \rtimes_\alpha G, \triv) \rightarrow 
(A \otimes \K, \alpha \otimes \rho)$ 
be the 
injective, $G$-equivariant ring homomorphism 
(``{\em left regular representation}") given by  
$\lambda_A(a \rtimes g)= \pi(a) \circ U_g$.

}
\end{definition}

This is the Green-Julg map: 

\begin{definition}    \label{defgj}
{
Let $(A,\alpha)$ be a ring. 
Define an abelian group homomorphism 
$$S:GK^G(\F,A) \rightarrow GK(\F,A \rtimes_\alpha G)$$
by
$$S(\xymatrix{\F \ar[r]^{Y} & A} ) =
\xymatrix{ \F \ar[r]^M & \F \rtimes G \ar[r]^{Y \rtimes \idd} & A \rtimes_\alpha G} 
$$
where $M$ is the ring homomorphism (``averaging map") given by  
$M(x) = 
 x n^{-1} \sum_{g \in G}  1 \rtimes g$.
}
\end{definition}

$M$ is chosen such that $M \lambda_\F = 1 \otimes f$, and this is the only reason for its choice. 

This is the inverse Green-Julg map: 

\begin{definition}    \label{definvgj}
{
Let $(A,\alpha)$ be a ring. 
Define an abelian group homomorphism
$$T:GK(\F,A \rtimes_\alpha G) \rightarrow GK^G(\F,A)$$
by 
$$T(\xymatrix{\F \ar[r]^{L} & A \rtimes_\alpha  G} ) =
$$
$$
\xymatrix{ (\F,\triv) \ar[r]^{L} & (A \rtimes_\alpha G,\triv)  \ar[r]^{\lambda_A} &  (A \otimes \K,  
\alpha \otimes \rho 
) \ar[rr]^{(\idd \otimes f)^{-1}} && (A \otimes \F, \alpha \otimes 1)}$$

\if 0
$\lambda_A(a \rtimes g)(\delta_h)= \pi(a) (\delta_{gh})$
mit $\pi(a)\delta_g= \alpha_{g^{-1}}(a) \delta_g$

linksreguläre darstellung

(normale multiplikation $\pi$ wäre dann nicht adjungierbarer oerator $\pi(a)$

besser: $\alpha_g \otimes U_g$ wäre dann nicht adjungierbarer operator)
\fi 

}
\end{definition}

\begin{lemma}		\label{lemma1160} 

Let $\varphi:(A,\alpha) \rightarrow (B,\beta)$ be a morphism in $GK^G$. Then the following diagram commutes:

$$
\xymatrix@u{ 
B \rtimes_\beta 
G  \ar[d]^{\lambda_B}  & A \rtimes_\alpha 
G  \ar[d]^{\lambda_A} 
\ar[l]^{\varphi \rtimes 1}  \\
B \otimes \K  \ar[d]^{(1 \otimes f)^{-1}} & A \otimes \K  \ar[d]^{(1 \otimes f)^{-1}}  
\ar[l]^{\varphi \otimes 1} \\
B \otimes \F   
  & A \otimes \F   \ar[l]^{\varphi \otimes 1}
}
$$

\end{lemma}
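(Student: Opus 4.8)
The statement asks us to verify commutativity of two squares in $GK$ (equivalently $GK^G$ for the bottom square): the upper square involving $\lambda_A,\lambda_B$ and $\varphi\rtimes 1$ versus $\varphi\otimes 1$, and the lower square involving the inverses of the corner embeddings $1\otimes f$. The bottom square is a pure formality: $(1\otimes f)^{-1}$ is the $GK$-inverse of the corner embedding $1\otimes f$ of Lemma~\ref{lemma152}, the functor $-\rtimes\idd$ (descent) and $-\otimes\K$ are compatible, and $\varphi\otimes 1$ is shorthand for $\tau^G(\varphi)$ applied after identifying $A\otimes\K\cong\calk_A(A\otimes\ell^2(G))$; since corner embeddings are natural under ring homomorphisms by (the proof of) lemma~\ref{lemma43}, and $(1\otimes f)$ is a fixed corner embedding not depending on the algebra beyond functoriality, the square $\varphi\otimes 1$ against $(1\otimes f)^{-1}$ commutes once one checks $(1\otimes f)\circ(\varphi\otimes 1)=(\varphi\otimes 1)\circ(1\otimes f)$ as honest ring homomorphisms, which is immediate from $\tau^G$ being a functor. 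So I would dispatch the lower square in one or two lines by invoking naturality of $\tau^G$ and $e^{-1}=\tau^G(e)^{-1}$.

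\textbf{The upper square.} This is the substantive part, but still essentially a computation. First I would reduce to the case where $\varphi$ is an honest $G$-equivariant ring homomorphism, a corner embedding inverse, or a synthetical split $\Delta_s$, since $GK^G$ is generated by these under addition and composition, and $\lambda$, $\otimes\K$, $\rtimes G$ are all additive. For $\varphi$ a ring homomorphism the square commutes strictly: one checks on a generator $a\rtimes g$ that $\lambda_B\big((\varphi\rtimes 1)(a\rtimes g)\big)=\lambda_B(\varphi(a)\rtimes g)=\pi_B(\varphi(a))\circ U_g^B$, while $(\varphi\otimes 1)\big(\lambda_A(a\rtimes g)\big)=(\varphi\otimes 1)\big(\pi_A(a)\circ U_g^A\big)$, and both equal the operator $b\otimes\delta_h\mapsto \beta_{h^{-1}}(\varphi(a))\,b\otimes\delta_{gh}$ after using equivariance $\varphi\circ\alpha_{h^{-1}}=\beta_{h^{-1}}\circ\varphi$; here I would also note $(\varphi\otimes 1)\circ\pi_A=\pi_B\circ\varphi$ and $(\varphi\otimes 1)(U_g^A)=U_g^B$ directly from Definition~\ref{def153}. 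For $\varphi=e^{-1}$ a corner embedding inverse one deduces commutativity from the ring-homomorphism case applied to $e$ together with invertibility, exactly the bootstrap used repeatedly in the paper (cf.\ lemma~\ref{lemma43}, lemma~\ref{lemma121}); one should also observe $j^G(e)$ is again a corner embedding by corollary~\ref{lemma121}, so $\lambda$ intertwines the two.

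\textbf{The split case and the obstacle.} The case $\varphi=\Delta_s$ for a split exact sequence $\cals$ is the crux. The descent functor sends $\Delta_s$ to $\Delta_{j^G(s)}$ for the descended split exact sequence, and the tensor functor sends $\Delta_s$ to $\Delta_{\tau^G(s)}$, so I would produce a morphism of split exact sequences connecting $\big(j^G(\cals)\big)$ via $\lambda$ to the sequence obtained from $\tau^G(\cals)$ after identifying $-\otimes\K\cong\calk_{-}(-\otimes\ell^2(G))$, and appeal to naturality of $\Delta$ under morphisms of split exact sequences (which follows from the defining relations $1_B=j\Delta_s$, $1_M=\Delta_s j+fs$, exactly as in remark~\ref{cor22} / lemma~\ref{lemma21}). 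Concretely one checks $\lambda_M\circ j^G(j)=(\text{id}\otimes\text{incl})\circ (\text{the }j\text{-map of }\tau^G(\cals))$ after the $\K$-identification, and similarly for $f$ and $s$, so that $\lambda$ is an equivariant morphism of the two split exact sequences; then $\Delta$ of one maps to $\Delta$ of the other. \textbf{I expect the main obstacle to be bookkeeping the $G$-action identification} $(A\otimes\K,\alpha\otimes\rho)\cong(\calk_A(A\otimes\ell^2(G)),\Ad(\alpha\otimes V^{-1}))$ compatibly across the whole diagram --- in particular verifying that $\lambda_A$ is genuinely $G$-equivariant for the action $\alpha\otimes\rho$ on the target and the trivial action on $A\rtimes_\alpha G$, which uses the interplay $\pi(\alpha_h(a))=\rho_h(\pi(a))$-type relations and $\rho_h(U_g)=U_g$ up to the appropriate twist, all of which must be tracked simultaneously with the descent of the split exact sequence. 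Everything else is routine diagram-chasing on generators plus the additive and split-exactness axioms of $GK^G$.
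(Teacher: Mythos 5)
Your proposal is correct and follows essentially the same route as the paper's (very terse) proof: reduce to the generator cases, where the ring-homomorphism case is a direct check on $a\rtimes g$, the corner-embedding-inverse case follows by reversing arrows, and the $\Delta_s$ case is handled by a morphism of the two split exact sequences (descended vs.\ tensored) verified with lemma \ref{lemma21}. The only cosmetic difference is that you invoke naturality of $\tau^G$ for the bottom square, which still tacitly requires the same generator-by-generator check for $\Delta_s$, but that is covered by the argument you already give.
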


\begin{proof}
For $\varphi$ a ring homomorphism this is obvious, for $\varphi$ the inverse of a corner embedding also, by proving commutativity with 
the reversed arrows. 
For $\varphi=\Delta_s$ one makes obvious 
diagrams with split exact sequences 
and verifies with lemma \ref{lemma21}.   
\end{proof}

\begin{proposition}
We have $T \circ S = \id_{GK^G(\F,A)}$.
\end{proposition}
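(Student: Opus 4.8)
The plan is to compute $T\!\circ\! S$ on an arbitrary morphism $Y\in GK^G(\F,A)$ by a short diagram chase, the only substantial inputs being Lemma \ref{lemma1160}, the stability axiom applied via Lemma \ref{lemma152}, and the identity $M\lambda_\F = 1\otimes f$ recorded right after Definition \ref{definvgj}.

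Unwinding Definitions \ref{defgj} and \ref{definvgj}, one has $S(Y) = M\,(Y\rtimes\idd)$ in $GK(\F, A\rtimes_\alpha G)$, and hence, reading this morphism with trivial $G$-action as in the definition of $T$,
$$T(S(Y)) = M\,(Y\rtimes\idd)\,\lambda_A\,(1\otimes f)^{-1}\in GK^G(\F,A).$$
Next I would apply Lemma \ref{lemma1160} to $\varphi:=Y\colon(\F,\triv)\to(A,\alpha)$. Commutativity of its upper rectangle gives $(Y\rtimes\idd)\,\lambda_A=\lambda_\F\,(Y\otimes 1)$, and commutativity of its lower rectangle gives $(Y\otimes 1)\,(1\otimes f)^{-1}=(1\otimes f)^{-1}\,(Y\otimes 1)$ (with $Y\otimes 1$ having source $\F\otimes\K$ on the left and source $\F\otimes\F$ on the right). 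Substituting these in turn,
$$T(S(Y)) = M\,\lambda_\F\,(Y\otimes 1)\,(1\otimes f)^{-1} = M\,\lambda_\F\,(1\otimes f)^{-1}\,(Y\otimes 1).$$

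Finally I would invoke $M\lambda_\F = 1\otimes f$; if a self-contained check is preferred, both sides are the ring homomorphism sending $x\in\F$ to the operator $c\otimes\delta_h\mapsto n^{-1}xc\otimes m$ on $\F\otimes\ell^2(G)$, using $\pi(1)=\idd$ in the left regular representation of Definition \ref{def153} and $\langle\delta_h,m\rangle=1$. Since $1\otimes f$ is a corner embedding by Lemma \ref{lemma152}.(ii), the stability axiom gives $(1\otimes f)(1\otimes f)^{-1}=\idd$, so
$$T(S(Y))=(1\otimes f)(1\otimes f)^{-1}(Y\otimes 1)=(Y\otimes 1)=Y,$$
the last step being the canonical identification $\F\otimes\F\cong\F$, $A\otimes\F\cong A$ under which $Y\otimes 1$ becomes $Y$. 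As $Y$ was arbitrary, this gives $T\circ S=\id_{GK^G(\F,A)}$. I do not expect a genuine obstacle: the argument is bookkeeping, and the only points needing care are that every intermediate identity is read in $GK^G$ rather than merely in $GK$ (legitimate because $\lambda_A$ and $(1\otimes f)^{-1}$ are equivariant and Lemma \ref{lemma1160} is an equivariant statement) and that Lemma \ref{lemma1160} is used for an arbitrary morphism of $GK^G$, not just a ring homomorphism — which is how it is stated and proved.
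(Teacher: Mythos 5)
Your proof is correct and follows essentially the same route as the paper: unwind $T\circ S(Y)=M\,(Y\rtimes\idd)\,\lambda_A\,(\idd\otimes f)^{-1}$, use Lemma \ref{lemma1160} to slide $Y$ past $\lambda$ and $(\idd\otimes f)^{-1}$, and conclude from $M\lambda_\F=1\otimes f$ together with stability. The only difference is that you spell out the verification of $M\lambda_\F=1\otimes f$ and the cancellation $(1\otimes f)(1\otimes f)^{-1}=\idd$, which the paper leaves implicit.
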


\begin{proof}
We have
$$T \circ S(Y) =
\xymatrix{
\F \ar[r]^M & \F \rtimes G \ar[r]^{Y \rtimes \idd} & A \rtimes G
\ar[r]^{\lambda_A} &  A \otimes \K \ar[rr]^{(\idd \otimes f)^{-1}} && A \otimes \F}
$$

Hence, by lemma \ref{lemma1160},  
$$T \circ S(Y) = 
\xymatrix{
\F \ar[r]^M & \F \rtimes G \ar[r]^{\lambda_\F} &  \F \otimes   \K
\ar[rr]^{(\idd \otimes f)^{-1}} && \F \otimes \F   \ar[r]^{\idd \otimes Y}  & \F \otimes A 
}
$$

Since $M \lambda_\F = f \otimes \idd$, the proposition is proved.
%
%
\end{proof}

\begin{lemma}		\label{lemma118}

Let $(A,\alpha)$ be a unital $C^*$-algebra with a $G$-action. Let $A^\alpha$ be its fixed point algebra under the $G$-action. 
If $u \in A^\alpha$ is a unitary element 
non-equivariantly 
homotopic to $1 \in A^\alpha$ 
by a unitary path in $A$ and with spectrum not equal to the torus $\T$, 
then $u$ is non-equivariantly homotopic to $1$ by a unitary path
also in $A^\alpha$. 

\end{lemma}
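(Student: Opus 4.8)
The statement is the classical fact that on a fixed point algebra the unitary $u$, if it is connected to $1$ through unitaries of the ambient algebra and its spectrum misses a point of $\T$, then it is already connected to $1$ through unitaries of $A^\alpha$. The key observation is that one should \emph{not} try to deform the given path in $A$; instead one uses the functional calculus to write down an explicit path inside $A^\alpha$.

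First I would use that $\spec(u) \neq \T$: pick $\lambda_0 \in \T \setminus \spec(u)$, and choose a branch of the logarithm that is holomorphic and single-valued on $\C \setminus \R_{\ge 0}\lambda_0$ (a ray through the origin and $\lambda_0$). Since $\spec(u) \subseteq \T \setminus \{\lambda_0\}$ is a compact subset of the domain of this branch, $h := -i\log u \in A$ is a well-defined self-adjoint element with $\exp(ih)=u$ and $\spec(h)$ contained in a real interval of length $<2\pi$. The crucial point is that $h$ lies in $A^\alpha$: since $u \in A^\alpha$ and each $\alpha_g$ is a $*$-automorphism (hence continuous and compatible with holomorphic functional calculus on the fixed common domain), $\alpha_g(h) = -i\log(\alpha_g(u)) = -i\log u = h$ for all $g \in G$. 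Then the path $t \mapsto \exp(ith)$, $t \in [0,1]$, is a norm-continuous path of unitaries lying entirely in $A^\alpha$ (as $A^\alpha$ is a closed $*$-subalgebra, hence closed under continuous functional calculus of self-adjoint elements), running from $1$ to $u$.

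I do not think the hypothesis that $u$ is homotopic to $1$ through unitaries of $A$ is actually used in the above argument once $\spec(u)\neq\T$ is assumed; the honest role of the homotopy hypothesis is presumably to handle the case where one only knows the $K_1$-class of $u$ vanishes, or it is simply a hypothesis inherited from the context in which the lemma is invoked. So I would present the direct logarithm argument as the main body, and then remark that the connectedness-in-$A$ hypothesis is automatic, or alternatively is what one feeds into an induction allowing $\spec(u)=\T$ by first perturbing $u$ along the given path to a unitary with smaller spectrum. Concretely: along the path $u_t$ in $A$ from $1$ to $u$, by a compactness/partition argument one finds $0=t_0<t_1<\dots<t_N=1$ with $\|u_{t_{k+1}}-u_{t_k}\|<1$, hence $u_{t_k}^* u_{t_{k+1}}$ has spectrum in a proper arc; but these intermediate elements need not lie in $A^\alpha$, so this route requires more care and is only needed if one refuses the spectral hypothesis — which we are granted.

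The main obstacle is purely bookkeeping about the functional calculus: one must be careful that the chosen branch of $\log$ is holomorphic on a neighborhood of $\spec(u)$ that is \emph{invariant} (it automatically is, being a ray-slit plane, and $\spec(\alpha_g(u))=\spec(u)$), so that $\alpha_g(\log u)=\log(\alpha_g u)$ genuinely holds; and one must note that $A$ being unital is used to make $1\in A^\alpha$ and to form $\exp$. No completeness beyond that of a unital Banach algebra is needed, and the $*$-structure is only used to say "unitary" and "self-adjoint", so the proof is short. I would therefore write it in three or four lines: choose $\lambda_0$, define $h=-i\log u\in A^\alpha$, exhibit the path $\exp(ith)$, and remark that this lies in $A^\alpha$ because $A^\alpha$ is closed under continuous functional calculus.
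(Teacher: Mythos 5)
Your proof is correct, but it takes a genuinely different route from the paper's. You exploit the spectral gap to take a logarithm: a self-adjoint $h$ with $e^{ih}=u$ lying in the unital $C^*$-subalgebra generated by $u$ and $1$, hence in $A^\alpha$, and the explicit path $t\mapsto e^{ith}$ stays in $A^\alpha$; as you observe, this makes the hypothesis that $u$ is unitarily homotopic to $1$ inside $A$ redundant, and it never uses finiteness of $G$ (only that each $\alpha_g$ is a $*$-automorphism fixing $u$, so $A^\alpha$ is a closed unital $*$-subalgebra stable under functional calculus). The paper instead uses both hypotheses: it takes the given unitary path $(u_t)$ in $A$, translates by a large scalar $\lambda$ chosen in the direction of the gap of $\spec(u)$ so that the segment from $u$ to $u+\lambda 1$, the path $u_t+\lambda 1$, and a final segment to $1$ consist of invertibles, then pushes this path into $A^\alpha$ with the averaging projection $\nu(a)=n^{-1}\sum_{g\in G}\alpha_g(a)$ (invertibility preserved for $|\lambda|$ large by a Neumann-series estimate), and finally corrects the resulting path of invertibles in $A^\alpha$ to a unitary path. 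What each buys: your argument is shorter, standard, and carries over verbatim to the compact-group corollary since no averaging is needed, while the paper's averaging scheme is what one would fall back on if the spectral hypothesis were dropped and only the path in $A$ were available, and it matches the averaging pattern used elsewhere in that section. One cosmetic point: to see that $h$ is self-adjoint it is cleanest to use the continuous functional calculus with a real-valued branch of the argument on $\spec(u)\subseteq\T\setminus\{\lambda_0\}$, or to note that $-i\log u$ is normal with real spectrum; otherwise your bookkeeping about the branch cut and equivariance of the functional calculus is exactly right.
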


\begin{proof}

Let $\nu:A \rightarrow A^\alpha: \nu(a)=n^{-1} \sum_{g \in G} \alpha_g(a)$ be the averaging map, which is a linear continuous projection. 

Let $(u_t)$ be the given homotopy in $A$. 
Since $\spec(u) \neq \T$, there is a fixed scalar $\lambda \in \C$ (direction through the gap of $\spec(u)$) such that the straight line in $A^\alpha$ connecting $u$ with $u + \lambda 1$ consists of only invertible elements. We move further from $u + \lambda 1$ by the homotopy $t \mapsto u_t +\lambda 1$ of invertible elements in $A$ and end at $(\lambda+1_\C)1$ in $A^\alpha$. 
From here, we move by a straight line to $1 \in A^\sigma$. 
If we choose $|\lambda|$ big enough (by Neumann series), even 
the image under     
$\nu$ of this sketched path in $A$ from $u$ to $1$ remains to consist of invertible elements only, and is a path in $A^\alpha$. We may change it to a unitary path in 
$A^\alpha$. 
\end{proof}

\begin{proposition}[
{\cite[Proposition 4.6]{rieffel}}] 
				\label{prop159}
Let $(A,\alpha)$ be a 
$\F$-algebra with $\F \subseteq \C$ being a subfield. 
There is a 
natural ring isomorphism 
$\zeta_A: A \rtimes_\alpha G \rightarrow 
(A \otimes \K)^{\alpha \otimes \rho}$
(which is not $\lambda_A$). 

In the $C^*$-algebra setting 
this holds also for compact groups $G$. 

\end{proposition}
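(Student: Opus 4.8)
The plan is to build $\zeta_A$ by hand as an explicit $\F$-algebra isomorphism, read off bijectivity and the homomorphism property from a concrete ``matrix'' description of the fixed-point algebra, and then check naturality; the compact-group case is the same argument with finite sums replaced by Haar integrals.

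\emph{Step 1 (describing $(A\otimes\K)^{\alpha\otimes\rho}$).} Using the identification of Definition~\ref{def153}, $(A\otimes\K,\alpha\otimes\rho)\cong\big(\calk_A(A\otimes\ell^2(G)),\Ad(\alpha\otimes V^{-1})\big)$. Write an operator $T$ on $A\otimes\ell^2(G)=\bigoplus_{k\in G}A$ as a $G\times G$ ``matrix'' $(T_{jk})$ of left-multiplication operators, $T\big((b_k)_k\big)=\big(\sum_k T_{jk}b_k\big)_j$. A direct computation with $V_g\delta_h=\delta_{hg}$ gives $\Ad(\alpha\otimes V^{-1})_g(T)=\big(\alpha_g(T_{jg,\,kg})\big)_{jk}$, so $T$ is a fixed point iff $T_{jk}=\alpha_{k^{-1}}\!\big(T_{jk^{-1},1}\big)$ for all $j,k\in G$. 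Hence, setting $a_u:=T_{u,1}$, the map $(a_u)_{u\in G}\mapsto\big(\alpha_{k^{-1}}(a_{jk^{-1}})\big)_{jk}$ is an $\F$-linear bijection $\bigoplus_{u\in G}A\to(A\otimes\K)^{\alpha\otimes\rho}$. Writing $[a,g]$ for the image of the tuple concentrated at $u=g$ with value $a$, one checks (one line) that composition of operators corresponds to the twisted convolution $[a,g]\circ[b,h]=[\alpha_{h^{-1}}(a)\,b,\;gh]$.

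\emph{Step 2 ($\zeta_A$ is a homomorphism into the fixed points).} Define $\zeta_A\colon A\rtimes_\alpha G\to(A\otimes\K)^{\alpha\otimes\rho}$ by $\zeta_A(a\rtimes g):=[\alpha_{g^{-1}}(a),g]$, equivalently by the operator formula $\zeta_A(a\rtimes g)(b\otimes\delta_k)=\alpha_{(gk)^{-1}}(a)\,b\otimes\delta_{gk}$. First, $\zeta_A(a\rtimes g)$ is a genuine \emph{compact} operator: decomposing $a=\sum_i a_ia_i'$ by quadratik-ness of $A$, one writes it as a finite sum of elementary operators $\theta_{\eta,\psi}$ with $\eta$ a multiple of $\delta_{gk}$ and $\psi$ in the functional space of $A\otimes\ell^2(G)$, exactly as in the proof of Proposition~\ref{lemma122} (cf.\ also Lemma~\ref{lemma116}). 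Second, multiplicativity: by Step~1 it suffices to compute on generators, and $[\alpha_{g^{-1}}(a),g]\circ[\alpha_{h^{-1}}(b),h]=[\alpha_{(gh)^{-1}}(a)\,\alpha_{h^{-1}}(b),\,gh]=[\alpha_{(gh)^{-1}}(a\alpha_g(b)),\,gh]=\zeta_A(a\alpha_g(b)\rtimes gh)=\zeta_A\big((a\rtimes g)(b\rtimes h)\big)$. $\F$-linearity is clear, and the image lies in the fixed-point algebra by Step~1 (equivalently, $\zeta_A$ is $G$-equivariant out of $(A\rtimes_\alpha G,\triv)$). Up to conventions in the crossed-product multiplication and the choice of regular representation this $\zeta_A$ is the map of \cite[Proposition~4.6]{rieffel}; any such normalization works.

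\emph{Step 3 (bijectivity, naturality, compact case).} Surjectivity is immediate from Step~1: every fixed operator equals $\sum_{u\in G}[s_u,u]=\zeta_A\big(\sum_u\alpha_u(s_u)\rtimes u\big)$. For injectivity, if $\zeta_A(\sum_g a_g\rtimes g)=0$ then evaluating on $b\otimes\delta_k$ and comparing $\delta$-coordinates (the various $g$ give distinct $gk$) yields $\alpha_{(gk)^{-1}}(a_g)\,b=0$ for all $b\in A$ and all $g,k$; since $A$ is quadratik the map $a\mapsto(b\mapsto ab)$ is injective (Lemmas~\ref{lemma113}, \ref{lemma114}, \ref{lemma1172}), so $a_g=0$. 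Naturality in $A$ is routine: a ring homomorphism $\varphi\colon A\to B$ induces $\varphi\rtimes\idd$ on crossed products and, being $G$-equivariant, $\varphi\otimes\idd_\K$ restricts to a map $(A\otimes\K)^{\alpha\otimes\rho}\to(B\otimes\K)^{\beta\otimes\rho}$, and both composites send $a\rtimes g$ to $[\alpha_{g^{-1}}(\varphi(a)),g]$. For the $C^*$-algebra statement with $G$ compact, replace $\ell^2(G)$ by $L^2(G)$ for normalized Haar measure, $\calk_\F(\ell^2(G))$ by $\calk(L^2(G))$, the finite sums over $G$ by Haar integrals and the averaging constant $1/n$ (which occurs elsewhere, not here) by $\int_G$; Steps~1--2 go through verbatim once matrices are reinterpreted as integral kernels $T(s,t)=\alpha_{t^{-1}}(a(st^{-1}))$, and the (in)jectivity arguments are unchanged. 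Note that finiteness of $G$ and the hypothesis $\character(\F)\neq n$ are not used in this proof --- they are needed only to make sense of the corner embedding $f$ of Definition~\ref{def151}.

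The one real difficulty is Step~1 together with the group-element bookkeeping in Step~2: one has to pin down the parametrization of the fixed-point algebra and the twists in the formula for $\zeta_A$ so that ``composition of operators'' becomes exactly ``(twisted) convolution''. Once the correct formula is isolated, every remaining verification is a one- or two-line computation.
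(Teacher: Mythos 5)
Your proposal is correct in substance but takes a genuinely different route from the paper. The paper's own ``proof'' of Proposition \ref{prop159} is essentially a citation: it invokes Rieffel's Proposition 4.6 and asserts, after inspection, that the argument there involves no $C^*$-specific analysis and therefore carries over to $\F$-algebras; no map or verification is written down. You instead construct the isomorphism explicitly: your matrix description of the fixed-point algebra (a fixed $T$ satisfies $(g\cdot T)_{jk}=\alpha_g(T_{jg,kg})$, hence $T_{jk}=\alpha_{k^{-1}}(T_{jk^{-1},1})$, so it is parametrized by its first column, with composition becoming the twisted convolution $[a,g][b,h]=[\alpha_{h^{-1}}(a)b,gh]$) is correct, and the multiplicativity, bijectivity and naturality checks that follow are all right; compactness of the image operators is indeed covered by the $M_n(A)\cong\calk_A(A^n)$ identification of Lemmas \ref{lemma1172} and \ref{lemma1170} together with quadratikness. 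This buys a self-contained algebraic proof where the paper asks the reader to re-inspect Rieffel.

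Two caveats. First, your $\zeta_A$ is literally the map $\lambda_A$ of Definition \ref{def153}: $\zeta_A(a\rtimes g)(b\otimes\delta_k)=\alpha_{(gk)^{-1}}(a)b\otimes\delta_{gk}=\big(\pi(a)\circ U_g\big)(b\otimes\delta_k)$. So what you actually prove is that $\lambda_A$ itself is an isomorphism onto $(A\otimes\K)^{\alpha\otimes\rho}$ in the finite discrete setting, which contradicts, or at least renders moot, the parenthetical ``(which is not $\lambda_A$)'' in the statement; you should say this explicitly (it is not a mathematical defect --- if anything it simplifies the later use of $\zeta$ in the $S\circ T=\id$ argument, where the needed flip property becomes immediate when $\zeta=\lambda$ --- but it is a discrepancy with the statement as written and with the specific Rieffel map the author appeals to). Second, for the compact-group $C^*$-case your claim that Steps~1--2 go through ``verbatim'' with integral kernels is too quick: surjectivity onto the fixed-point algebra for an infinite compact $G$ requires the analytic step that an invariant compact operator is a norm limit of operators with continuous kernels of the stated form (averaging over $G$ and approximation), and that is exactly where Rieffel's $C^*$-argument does real work; since the paper simply cites Rieffel for that half of the statement, it would be cleaner to do the same rather than assert the finite computation transfers unchanged.
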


\begin{proof}

In the $C^*$-algebra setting this is 
\cite{rieffel} and the remark thereafter. 
Inspection of the proof of \cite{rieffel} 
shows that no $C^*$-typical analysis is involved and consists of relatively canonical 
homomorphisms, which appear to do not only work for 
$\C$-algebras, but even $\F$-algebras. 
\if 0
It is remarked in \cite{} that the proposition holds also for comapct groups. 
\fi
\end{proof}

\begin{proposition} 

Assume  that $\F$ is a subfield of $\C$. 

Then 
we have $S \circ T = \id_{GK(\C,A \rtimes_\alpha G)}$.
\end{proposition}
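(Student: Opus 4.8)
The plan is to mirror the proof of $T \circ S = \id$ by establishing the dual compatibility statement for $S$ and $T$ and then reducing to the identity $M \lambda_\F = 1 \otimes f$ together with the identification $\zeta_A$ of Proposition \ref{prop159}. First I would record the analogue of Lemma \ref{lemma1160} for the inverse Green--Julg direction: given a morphism $\varphi:(A,\alpha) \to (B,\beta)$ in $GK^G$, the diagram relating $\varphi \rtimes \idd$ (or rather $\varphi$ itself via $j^G$) and the averaging map $M$ commutes, in the sense that $S$ is natural with respect to $GK$-morphisms in the appropriate variable. Concretely, for $L \in GK(\F, A \rtimes_\alpha G)$ one has
$$S \circ T(L) = \xymatrix{\F \ar[r]^M & \F \rtimes G \ar[rr]^{(T(L)) \rtimes \idd} && A \rtimes_\alpha G},$$
and the task is to show this composite equals $L$.

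The key step is to unfold $T(L)$ as $L \lambda_A (\idd \otimes f)^{-1}$ landing in $A \otimes \F \cong A$, apply the crossed-product functor $j^G$ (which by the definition of the descent functor is additive and respects corner embeddings and synthetical splits), and then precompose with $M$. Using the naturality lemma for $S$ (the new lemma) one pushes $M$ past everything so that the computation reduces to analysing $M \circ (\lambda_A \rtimes \idd) \circ \ldots$ at the level of $\F$, exactly as in the proof of $T \circ S = \id$ where the reduction was to $M \lambda_\F = f \otimes \idd$. Here the analogous reduction should produce a composite which is, up to the natural isomorphism $\zeta_A$ of Proposition \ref{prop159} identifying $A \rtimes_\alpha G$ with $(A \otimes \K)^{\alpha \otimes \rho}$, literally $L$; this is where the hypothesis $\F \subseteq \C$ enters, since Proposition \ref{prop159} is only available then. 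One also needs that $\lambda_A$ composed with the canonical inclusion of the fixed-point algebra recovers $\zeta_A$ up to a homotopy, or more precisely that $j^G(\lambda_A (\idd \otimes f)^{-1})$, when fed back through $M$, inverts the effect of $\lambda_A$; this should follow from the unitality arguments of Lemma \ref{lemma118} applied inside $(A \otimes \K)^{\alpha \otimes \rho}$, used to contract the relevant unitary coming from comparing $\lambda_A$ with $\zeta_A$.

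I would organize the argument as: (1) state and prove the naturality of $S$ analogous to Lemma \ref{lemma1160}, verifying it separately for ring homomorphisms (trivial), for inverses of corner embeddings (by commuting the reversed diagram, using that $M$ of a corner-embedding source is again compatible), and for $\Delta_s$ (via split exact sequences and Lemma \ref{lemma21}); (2) expand $S \circ T(L)$ and use this naturality to move all morphisms to the right of $M$ and $\lambda$-type maps; (3) identify the resulting endomorphism of $A \rtimes_\alpha G$ with the identity by invoking Proposition \ref{prop159} and the relation $M \lambda_\F = \idd \otimes f$, absorbing the discrepancy through a rotation homotopy and Lemma \ref{lemma118}.

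The main obstacle I anticipate is step (3): unlike the clean identity $M \lambda_\F = f \otimes \idd$ used for $T \circ S$, the reverse composite involves comparing the "left regular representation" $\lambda_A$ with the Rieffel-type isomorphism $\zeta_A$ onto the fixed-point algebra, and these differ by an inner automorphism implemented by a unitary in $(A \otimes \K)^{\alpha \otimes \rho}$. Showing that this unitary is homotopic to the identity within the fixed-point algebra — so that it becomes trivial in $GK$ — is exactly what Lemma \ref{lemma118} is designed for, but one must check the spectral hypothesis (spectrum not all of $\T$) holds for the specific unitary arising here, which in the purely algebraic $\F$-algebra setting is where one has to be careful; most likely the unitary is a permutation-type operator on $\ell^2(G)$ with finite order, hence finite spectrum, so the hypothesis is automatic. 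Assembling these pieces and checking the $GK$-morphism bookkeeping (that everything composes with the right sources and ranges) is routine but somewhat lengthy.
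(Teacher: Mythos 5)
Your toolbox is the right one (the naturality diagram of lemma \ref{lemma1160}, proposition \ref{prop159}, lemma \ref{lemma118}, and the identity $M\lambda_\F = \idd\otimes f$), and your guess that the unitary to be contracted has finite order, hence spectrum $\neq\T$, is correct. But the central mechanism of the proof is missing, and the discrepancy you propose to contract is not the one that actually occurs. In the paper's argument one writes $S\circ T(L)$ out as the composite $M$, $L\rtimes\idd$, $\lambda_A\rtimes\idd$, $(\idd\otimes f)^{-1}\rtimes\idd$ and observes that this would literally be $L$ if the \emph{two copies of $G$} (the one inside $A\rtimes_\alpha G$ and the one introduced by $S$) were interchanged. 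Descending through the left regular representations via lemma \ref{lemma1160} (no new naturality statement for $S$ is needed), the whole question becomes whether the flip $F=\idd\otimes\Ad(H)$ of the two $\K$-factors of $B:=A\otimes\K\otimes\K$ is trivial in $GK$; here $u=1\otimes H$ lies in the fixed point algebra $B^G$, lemma \ref{lemma118} contracts it inside $B^G$, and Rieffel's $\zeta$ of proposition \ref{prop159} is used only to conjugate $F|_{B^G}$ into an endomorphism $W$ of the range of $\lambda_{A\otimes\K}$, together with the (explicitly ``taken for granted'') naturality $W(a\otimes U_g\otimes U_h)=a\otimes U_h\otimes U_g$. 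Your step (3) instead locates the obstruction in an alleged inner discrepancy between $\lambda_A$ and $\zeta_A$; you give no reduction showing that $S\circ T(L)$ equals $L$ up to such an inner perturbation, and indeed there is no clean analogue of ``$M\lambda_\F=f\otimes\idd$'' in this direction — without the flip observation the computation never reduces to anything lemma \ref{lemma118} can act on.

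A second, smaller gap: for a general subfield $\F\subseteq\C$, lemma \ref{lemma118} is a $C^*$-algebra statement and cannot be applied directly to the $\F$-algebra $B^G$. The paper handles this by complexifying with lemma \ref{lemma120}, identifying $\hat\K\cong M_n(\C)\cong\calk_\C(\ell^2(G,\C))$, performing the unitary homotopy in the complexified fixed point algebra, and using only its endpoint evaluations — which is exactly what the paper's complexified notion of homotopy permits. Checking the finite spectrum of the flip, as you do, addresses the hypothesis of lemma \ref{lemma118} but not the question of what a unitary homotopy even means over $\F$; your proof would need this complexification step spelled out.
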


\begin{proof}
(a)
Let $\xymatrix{ \C \ar[r]^L  & A \rtimes_\alpha G}$ in $KK(\C , A \rtimes G)$ be given.

Consider the diagram
$$\xymatrix{
\C \ar[r]^M  
\ar[rd]^f 
& \C \rtimes_\triv G  \ar[r]^{L \rtimes \idd}  
\ar[d]^{\lambda_\C}
&  ( A \rtimes_\alpha G)  \rtimes_\triv G \ar[r]^{\lambda_{A} \rtimes \idd }
\ar[d]^{\lambda_{A \rtimes_\alpha G}} 
&
(A \otimes \K) \rtimes_{\alpha \otimes \rho} 
 G  \ar[rr]^{(\idd \otimes f)^{-1} \rtimes \idd} 
\ar[d]^{ \lambda_{A \otimes \K} }  
&&  A \rtimes_\alpha G  
\ar[d]^{\lambda_A}  \\
& \C \otimes \K  \ar[r]^{L \otimes \idd}  &  (A \rtimes_\alpha G)  \otimes \K    
\ar[r]^{\lambda_A \otimes \idd}
& 
A \otimes \K \otimes \K  
 &&   A \otimes \K     
 \ar[ll]^{(\idd \otimes f) \otimes \idd} 
\\
}
$$

The top line is exactly $S \circ T(L)$. 
It is not ad hoc $L$, but one may observe 
that it was 
if only the two occurring copies of $G$ would be exchanged. 

To make the flip, we go down with the $\lambda$s everywhere as notated in the diagram. 
The whole diagram commutes  
by lemma \ref{lemma1160}.  

We restrict now 
the last line of the above diagram to the image spaces of the down-going $\lambda$-arrows, so they are now all 
bijective. 

Obviously, again everything in the diagram commutes, where we define the restricted 
$L \otimes 1$ as 
as 
$\lambda_\C^{-1} (L \rtimes 1) \lambda_{A \rtimes G}$.

(b) 
Assume for the moment that $\F=\C$. 
On the 
ring $B:=A \otimes \K \otimes \K$ 
the flip 
ring endomorphism $F$ defined by $F(a \otimes k \otimes l)= a  \otimes l \otimes k$  
is homotopic to the identical 
ring homomorphism 
$\idd$ on that space, because $F=\idd \otimes \Ad(H)$ for the unitary 
flip operator $H \in \K \otimes \K \cong  
B(\C^n \otimes \C^n)$, and $H$ is homotopic to $\idd$ by a unitary path 
$(H^t)_t$, as is well known. 

Set $u:= 1 \otimes H$. Observe that $u \in B^G:=B^{\alpha \otimes \rho \otimes \rho}$
(fixed point algebra), and that 
$F=\Ad(u)$ can be restricted to $B^G$. 

By lemma 
 \ref{lemma118} applied  
to $(\K \otimes \K, \rho \otimes \rho)$ and  $H$, 
$F|_{B^G}$ is homotopic to the identity ring homomorphism on $B^G$  
by a homotopy $F'_t:=\Ad(u_t)$ for
$u_t \in B^G$.  

(c) 
Let $X$ be the 
range of $\lambda_{A \otimes \K}$. 
Now our desired ``flip" is 
$$W:X \rightarrow X: W=\lambda_{A \otimes \K}^{-1} \zeta \cdot 
F|_{B^G} \cdot 
\zeta^{-1} \lambda_{A \otimes \K},$$  
which is $\idd$ in $GK$.  
Here, $\zeta$ is from proposition 
\ref{prop159} applied to $A:=(A\otimes \K,\alpha \otimes \rho)$.

Let us take for granted, that $\zeta$ is so `natural' that $W(a \otimes U_g \otimes U_h)=
a \otimes U_h \otimes U_g$ (note that $U_g$ is $\rho$-invariant), 
which might only be seen by inspection of the proof of 
\cite[Proposition 4.6]{rieffel}. 

Thus, 
as the image of $f$ is in the image of $\lambda_\C$ by the first 
triangle of the above diagram, 
and the $\lambda$s are realized by the operators $U$, 
the operator $W$ exchanges the coordinates in such 
a way that $\lambda_A ((1 \otimes f) \otimes 1) W = \lambda_A \otimes f$ 
in the left area of the above diagram.

We compute now $S \circ T(L)$ by going the bottom path in the above diagram, and implementing $W=1$ at $X$. 
Using the granted 'naturality' of $W$, 
we thus have 
$$S \circ T(L) = f(L \otimes 1) (\lambda_A \rtimes 1) W (1 \otimes f \otimes 1)^{-1} \lambda_A^{-1} = L$$

proving the proposition for 
$\F=\C$.

(d)
Now let $\F$ be an arbitrary subfield of $\C$. 
We do everything as above and need only 
explain why $F|_{B^G}$ is still homotopic to 
$1$ as endomorphisms on $B^G$. 
To this end we at first complexify 
$B$ and 
obtain the following $\C$-algebra isomorphism 
by lemma \ref{lemma120}, 
$$
\tau :
 B \otimes \C
=
   \big ( A \otimes \K \otimes \K  
\big )  \otimes \C 
\rightarrow     
\hat A \otimes^\C \hat \K \otimes^\C \hat \K   
$$

\if 0
the 
$\lambda_{A \otimes \K}$-down arrow of the above diagram 
and 
obtain the following commuting diagram
by lemma \ref{lemma120}.  
\fi
%
%
%
%
%
\if 0
\begin{equation}			\label{diagram1}   
\xymatrix{
(\hat A \otimes^\C \hat \K) \rtimes_{\rho \otimes 1} G   
\ar[d]^{ \lambda_{\hat A \otimes^\C \hat \K} }  
&&  
\big ( (A \otimes \K) \rtimes_\rho G   
\big )  \otimes \C    
\ar[d]^{\lambda_{A \otimes \K}  \otimes \idd }  
\ar[ll]_{\cong}   
\\
\hat A \otimes^\C \hat \K \otimes^\C \hat \K   
 &&  \big ( A \otimes \K \otimes \K  
\big )  \otimes \C   
 \ar[ll]_{\cong}  
\\
}
\end{equation}  
\fi 

Because $\F \subseteq \C$ is a subfield, 
we get $\F \otimes \C 
\cong \C$ as   
algebras over $\C$, because we use the 
$\F$-balanced tensor product everywhere. 
Consequently, we get an isomorphism 
of $\C$-algebras 
$$\hat \K = M_n(\F^n) \otimes \C \cong 
M_n(\F^n \otimes \C) \cong 
M_n(\C^n) 
\cong \calk_\C (\ell^2(G,\C))  
$$
\if 0
$$\hat \K = \calk_\F(\F^n) \otimes \C \cong 
\calk_{\F \otimes \C}(\F^n \otimes \C) \cong 
\calk_\C(\C^n) 
= \calk_\C (\ell^2(G,\C))  
$$
\fi 

\if 0
The 
bottom arrow of diagram (\ref{diagram1})  
\fi 
The map $\tau$ 
is also equivariant 
with respect to $(\alpha \otimes \rho \otimes \rho) \otimes \idd$ and $\alpha \otimes \rho \otimes \rho$, respectively, whence we
can restrict 
$\tau$ to the fixed 
point algebras.  
%
We can 
thus perform the desired and already verified homotopy 
of endomorphisms 
on the fixed point algebra of the range of 
$\tau$ 
and go back to 
endomorphisms on $B^G$ 
at $0$ and $1$ only.  
\end{proof}

\begin{corollary}[Green-Julg Theorem] 

Let $G$ be a finite group 
and $\F$ a subfield of $\C$. 
Then the Green-Julg map $S$ of definition 
\ref{defgj} 
is an isomorphism of abelian groups 
with inverse isomorphism $T$ as defined in definition 
\ref{definvgj}. 

\end{corollary}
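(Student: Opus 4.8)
The plan is that this is now a purely formal consequence of the two preceding propositions, so the work consists only in assembling the pieces and checking that the hypotheses line up. First I would record that both $S$ of Definition~\ref{defgj} and $T$ of Definition~\ref{definvgj} are, by their very definitions, homomorphisms of abelian groups: $S$ is post-composition with the fixed ring homomorphism $M$ followed by the additive functor $- \rtimes \idd$, and $T$ is post-composition with $\lambda_A$ followed by $(\idd \otimes f)^{-1}$, both of which induce additive maps on the relevant $\Hom$-groups of $GK^G$ and $GK$. Also, since $\F$ is assumed to be a subfield of $\C$, its characteristic is $0$, so in particular $\character(\F) \neq n = |G|$ and the divisions by $n$ appearing in $M$ and in Definition~\ref{def151} are legitimate; thus $S$ and $T$ are well-defined.

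Next I would invoke the proposition stating $T \circ S = \id_{GK^G(\F,A)}$, which needs no restriction on $\F$ beyond $\character(\F)\neq n$, and the proposition stating $S \circ T = \id_{GK(\F, A \rtimes_\alpha G)}$, whose proof (part (d)) is exactly the place where $\F \subseteq \C$ enters, via the complexification argument reducing the flip homotopy on the fixed point algebra of $A \otimes \K \otimes \K$ to the already-established $C^*$-algebra case over $\C$. Having a two-sided inverse, $S$ is a bijective group homomorphism, hence an isomorphism of abelian groups, with inverse precisely $T$.

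The only point that requires a word of care, rather than an actual obstacle, is the naturality used implicitly above: Lemma~\ref{lemma1160} guarantees that both $S$ and $T$ are natural in $A$ (in the appropriate functorial sense on $GK^G$ and $GK$), so that the isomorphism $S$ is in fact a natural isomorphism between the functors $GK^G(\F,-)$ and $GK(\F, (-)\rtimes_\alpha G)$; I would state this as part of the conclusion since it is the content usually attached to the Green--Julg theorem. In short, there is no hard step remaining here: the theorem is the combination of the computation $M\lambda_\F = 1\otimes f$ (which drives $T\circ S = \id$) and the $C^*$-algebraic flip homotopy transported along Proposition~\ref{prop159} (which drives $S\circ T = \id$), and the corollary merely packages these into the assertion that $S$ and $T$ are mutually inverse abelian group isomorphisms.
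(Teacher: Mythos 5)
Your proposal is correct and follows exactly the paper's route: the corollary is stated without a separate proof precisely because it is the immediate combination of the two preceding propositions ($T\circ S=\id$ and, using $\F\subseteq\C$, $S\circ T=\id$), together with the evident additivity and well-definedness of $S$ and $T$. Your extra remarks on $\character(\F)=0$ and on naturality via Lemma \ref{lemma1160} are consistent with the paper (the latter is recorded there as Lemma \ref{lemma1512}).
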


\begin{lemma}			\label{lemma1512}
The Green-Julg isomorphism $S$ 
is functorial, that is, 
for every morphism $Z: A \rightarrow B$ one has a commuting diagram 
$$\xymatrix{ 
KK^G(\F,A ) \ar[r]^{S_A}    \ar[d]^{Z_*}  
 & KK(\F,A \rtimes G)   \ar[d]^{(Z \rtimes 1)_*}   
 \\
KK^G(\F,B) \ar[r]^{S_B}  
 & KK(\F,B \rtimes G)	}
$$
\end{lemma}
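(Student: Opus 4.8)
The plan is to reduce the claim to functoriality of the descent functor $j^G$, which is already part of its construction. First I would recall that $S_A$ is, by its very definition in Definition \ref{defgj}, a post-composition: for $y \in GK^G(\F,A)$ one has $S_A(y) = M \cdot j^G(y)$, where $M \colon \F \to \F \rtimes G$ is the averaging homomorphism and $j^G(y) = y \rtimes \idd$ is the image of $y$ under the descent functor. The crucial observation is that $M$ does not depend on the object $A$ at all — it only involves $\F$ and $G$ — so $S$ is literally the recipe ``apply $j^G$, then left-compose with the single fixed morphism $M$''.

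Next I would unwind the two ways around the square on an element $y \in GK^G(\F,A)$, using the left-to-right composition convention of Section \ref{sec3} together with $Z_*(y) = yZ$ and $(Z\rtimes 1)_*(w) = w\,(Z\rtimes \idd)$. Going down–then–right gives $S_B(Z_*(y)) = M \cdot j^G(yZ)$, while going right–then–down gives $(Z\rtimes 1)_*(S_A(y)) = M\cdot j^G(y)\cdot j^G(Z)$. These agree precisely because $j^G$ is a functor on $GK^G$, so $j^G(yZ) = j^G(y)\,j^G(Z)$; associativity of composition in $GK$ then yields the asserted commutativity. The same reasoning applies verbatim to the partner square in Lemma \ref{lemma1160} if one wants it stated for all of $GK^G$ rather than only ring homomorphisms.

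I expect essentially no obstacle here: once $S$ is seen to factor as $j^G$ followed by an object-independent post-composition, functoriality of $S$ is immediate from functoriality of $j^G$. The only points requiring a word of care are that $Z$ is permitted to be an arbitrary $GK^G$-morphism (fine, since $j^G$ is defined on all of $GK^G$ and $S_A$, $S_B$ are defined on the whole groups $GK^G(\F,A)$, $GK^G(\F,B)$), and that one is using the same $M$ on both sides; both are transparent from the definitions above.
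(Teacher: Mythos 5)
Your argument is correct. Since $S_A(Y)=M\cdot j^G(Y)$ by Definition \ref{defgj}, with $M:\F\rightarrow\F\rtimes G$ a single ring homomorphism that does not depend on the coefficient ring, commutativity of the square is exactly the identity $M\cdot j^G(YZ)=M\cdot j^G(Y)\cdot j^G(Z)$, which is functoriality of the descent functor of section \ref{sec12} (well defined on all of $GK^G$ by corollary \ref{lemma121}) together with the conventions $Z_*(y)=yZ$ and $(Z\rtimes 1)_*(w)=w\,(Z\rtimes\idd)$. The paper states the lemma without proof, and your factorization is precisely why it can do so; note the contrast with lemma \ref{lemma1160}, where the maps $\lambda_A$ and $(1\otimes f)^{-1}$ do depend on $A$, so naturality there has to be checked case by case on the generators (ring homomorphisms, inverses of corner embeddings, and the $\Delta_s$), whereas for $S$ no such generator analysis is needed. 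Your closing remarks — that $Z$ may be an arbitrary $GK^G$-morphism and that the same $M$ appears on both sides — are exactly the right points to flag, and both hold as you say.
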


This is actually a corollary of the above 
proof:

\begin{corollary}[Green-Julg Theorem]

Let $G$ be a compact group. 
Let $(A,\alpha)$ be a 
separable $G$-$C^*$-algebra. 
Then there is an isomorphism of abelian groups 
$$S:KK^G(\C, A) \rightarrow KK(\C,A \rtimes_\alpha G)$$
with inverse map $T$, analogously defined as above. 
\end{corollary}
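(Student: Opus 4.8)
The plan is to run the proof of the preceding corollary verbatim in the $C^*$-algebra setting, the only changes being that the finite sums $n^{-1}\sum_{g\in G}$ are replaced by the Haar integral $\int_G \cdot\, dg$ with respect to normalized Haar measure, and that the norm-closed versions of all functional modules are used. First I would recall, via remark \ref{rem12} and \cite{bgenerators}, that when the object class $\ring G$ is restricted to separable $G$-$C^*$-algebras, with functional spaces those induced by the inner product, modules the countably generated Hilbert modules, and $\calk_A(\cale)$ defined with the norm closure, then $GK^G$ coincides with Kasparov's $KK^G$ and $GK$ with $KK$. Hence it suffices to reprove the Green-Julg statement inside $GK^G$ for this class and for $G$ compact.

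Next I would set up the compact-group analogues of Definitions \ref{def151}, \ref{def153}, \ref{defgj}, \ref{definvgj}. Replace $\ell^2(G)=\oplus_{g\in G}\F$ by the Hilbert space $L^2(G)$ for normalized Haar measure, put $\K:=\big(\calk_\C(L^2(G)),\Ad(V^{-1})\big)$ with $V$ the right regular representation, let $m\in L^2(G)$ be the constant function $1$, and let $f:\C\to\K$ be the rank-one projection $f(x)(\xi)=x m\langle\xi,m\rangle$; this is a corner embedding exactly as in Lemma \ref{lemma152}(i), since $m$ is $V$-invariant and spans a distinguished copy of $(\C,\triv)$ inside $(L^2(G),W)$ for the appropriate $G$-action $W$. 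Now $\C\rtimes_\triv G=C^*(G)$, and the averaging map $M:\C\to C^*(G)$, $M(x)=x\int_G 1\rtimes g\,dg$, is a well-defined $*$-homomorphism onto the one-dimensional ideal of $C^*(G)$ corresponding to the trivial representation, chosen so that $M\lambda_\C=1\otimes f$. The homomorphisms $\pi,U$ and the left regular representation $\lambda_A:A\rtimes_\alpha G\to A\otimes\K$ of Definition \ref{def153} are the standard ones, and with these data $S$ and $T$ are the verbatim analogues of Definitions \ref{defgj} and \ref{definvgj}; no division that fails to dissolve into the Haar integral occurs, and Lemma \ref{lemma1160} holds by the same diagram chase.

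Then $T\circ S=\id_{GK^G(\C,A)}$ follows exactly as in the corresponding proposition, using only $M\lambda_\C=1\otimes f$ and naturality of $\lambda$. For $S\circ T=\id$ I would run parts (a)--(c) of the proof of the $\F\subseteq\C$ proposition with $\F=\C$ throughout, so that the complexification step (d) is simply omitted. The two analytic inputs for compact $G$ are available: Proposition \ref{prop159} (Rieffel) is explicitly stated to hold for compact $G$ in the $C^*$-setting, giving the natural isomorphism $\zeta_A:A\rtimes_\alpha G\cong(A\otimes\K)^{\alpha\otimes\rho}$ and, by inspection of \cite[Proposition 4.6]{rieffel}, the compatibility $W(a\otimes U_g\otimes U_h)=a\otimes U_h\otimes U_g$; and Lemma \ref{lemma118} is already formulated for $C^*$-algebras, its averaging map $\nu$ being the Haar integral when $G$ is compact. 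The only remaining point is that the flip unitary $H$ on $L^2(G)\otimes L^2(G)\cong L^2(G\times G)$ is homotopic to $1$ by a norm-continuous unitary path in $\calm(\K\otimes\K)$: this holds because $H^2=1$, so $\spec(H)=\{\pm 1\}\neq\T$, and the unitary group of the bounded operators on the separable Hilbert space $L^2(G\times G)$ is norm path-connected; Lemma \ref{lemma118} applied to $(\K\otimes\K,\rho\otimes\rho)$ and $H$ then shows that $F|_{B^G}=\Ad(1\otimes H)|_{B^G}$ is homotopic to the identity endomorphism of $B^G$, which is all part (b) requires.

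The main obstacle is bookkeeping rather than a genuine difficulty: one must be sure that every synthetic move of the finite-group proof --- the corner embeddings $e,f$, the rotation homotopies of Definition \ref{def26}, the descent of morphisms along crossed products by compact groups, and Rieffel's isomorphism $\zeta$ with its compatibility with $\lambda$ --- survives when $\sum_{g\in G}$ is replaced by $\int_G dg$ and norm closures are inserted in the definitions of compact and adjointable operators. Granting this, which is the general principle that all results hold analogously in $KK^G$ invoked throughout the paper together with the explicitly stated compact-group form of Proposition \ref{prop159}, the argument is identical to the one already given, and the corollary follows.
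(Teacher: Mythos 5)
Your proposal is correct and follows essentially the same route as the paper: identify $KK^G\cong GK^G$, replace finite sums by Haar integrals and $\ell^2(G)$ by $L^2(G)$ with $m$ the constant function $1$ (so the factor $n^{-1}$ disappears), and rerun the finite-group argument in the case $\F=\C$ only, invoking the compact-group form of Proposition \ref{prop159} and the fact that the left regular representation lands in $A\otimes\K$. Your handling of the flip is if anything slightly more explicit than the paper's (placing $H$ and its unitary path in $\calm(\K\otimes\K)$ and noting $\spec(H)=\{\pm1\}$), with the understood adjustment that lemma \ref{lemma118} is then applied to that unital multiplier algebra, whose inner automorphisms restrict to the fixed-point algebra $B^G$, rather than to the non-unital $\K\otimes\K$ itself.
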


\begin{proof}

We may identify $KK^G \cong GK^G$ by \cite{bgenerators}. 

One replaces sums over $G$ by integrals over $G$. 
The vector space 
$\ell^2(G)$ is 
replaced by $L^2(G)$ (Haar measure) with inner product 
$\langle \xi,\eta\rangle = \int_G \xi(g) \overline{ \eta(g)} dg$. 
The averaging factor $n^{-1}$ can be omitted 
as the Haar measure of $G$ is one. 

For example, in definition \ref{def151}, one sets 
$m:= 1 \in L^2(G)$ (constant $1$ function) 
and 
$f: \C \rightarrow \K$, $f(x)= x \langle \xi,m\rangle $.  

The function-formulas involving $\delta_g$ must be one-to-one reformulated in function-evaluation writing. 
For example, $U_g(a)(\xi)(h) := a(\xi(g^{-1}h))$. 

The finite basis of the proof of lemma \ref{lemma152} 
has to be replaced by a possibly countably infinite,
orthonormal basis. 
It is well known that 
the left regular representation of 
definition \ref{def153} maps 
into the compact operators for $G$ compact.    

Otherwise the proof 
is formally completely unchanged. 
\end{proof} 

We remark that the Green-Julg theorem 
holds also in a `weaker' theory than $GK^G$, where we drop the split-exactness axiom 
involving the $\Delta$s, because 
the $\Delta$s are nowhere essentially used in the proof, 
but do also not disturb.

\section{Baum-Connes map}
				\label{sec17}

We continue the last section by extending 
the Green-Julg map to the Baum-Connes map. 
We let the field be $\F:=\C$. 
 To incorporate the $C^*$-case, 
we also allow 
 $G$ to be a locally compact, second-countable group $G$. 
 
We very briefly recall some notions to 
define a Baum-Connes map with respect to 
$GK$-theory. 
For more details on these notions see
\cite{baumconneshigson1994}, or for instance 
\cite{guentnerhigsontrout}. 

\begin{definition}
{\rm 
A locally compact space $X$ equipped with a continuous $G$-action 
is called {\em proper}
if the map $\chi: G \times X \rightarrow X \times X$ defined by $\chi(g,x)=(g x,x)$ is proper; that means, if for all compact subsets 
$K \subseteq X\times X$, $\chi^{-1}(K)$ is compact.

A locally compact $G$-space $X$ is called {\em $G$-compact} if the quotient space $G\backslash X$ is compact (if and only if 
$X$ is the $G$-saturation $G \cdot  K$ 
for a compact subset $K \subseteq X$).
}
\end{definition}

\begin{definition}
{\rm 
If $X$ is a locally compact, proper $G$-compact $G$-space then there
is a {\em cut-off function} $c$ for it,
that means a continuous function 
$c:X \rightarrow \R$ with compact support 
such that
$$
\int_G c(g^{-1} x)^2 dg=1 \qquad 
\forall x \in X$$ 
with respect to the Haar measure on $G$.

A projection $p_X \in C_c(X) \rtimes G$  is defined by 
$$p_X(g)(x) 
= m_G(g)^{-1/2} c(g^{-1}x) c(x)$$
\if 0
One has a projection $p_X \in C_c(X) \rtimes G$ 
and one defines 
\fi
where $m_G$ is the modular function of $G$,   
and a ring homomorphism $M_X:\C \rightarrow C_0(X) \rtimes G$ by $M_X(z)= z p_X$.

}
\end{definition}

Let $\underline E G$ be an universal example 
for proper actions 
by $G$,
see \cite{baumconneshigson1994}. 

\if 0
Let $\underline E G$ be an example of 
a homotopy invaraint space for proper actions,
see \cite{baumconneshigson1994}. 
\fi 


\begin{definition}
{\rm 
If $X$ is a  locally compact, $G$-compact $G$-space then define the {\em assembly map} 
as the abelian group homomorphism  
$$\nu_X^{G,A}:GK^G(C_0(X),A) \rightarrow GK(\C,A \rtimes G)$$
$$\nu_X^{G,A}(\xymatrix{C_0(X) \ar[r]^{V} & A} ) =
\xymatrix{ \C \ar[r]^{M_X} & C_0(X) \rtimes G \ar[r]^{V \rtimes \idd} & A \rtimes G},$$
}
\end{definition}

\begin{definition}
{\rm 
Define the {\em Baum-Connes assembly map} 
as the abelian group homomorphism  
$$\nu^{G,A}: \lim_{X \subseteq \underline E G}  GK^G(C_0(X),A) \rightarrow GK(\C,A \rtimes G)$$

where the direct limit of abelian groups is indexed over all $G$-compact, second-countable, locally compact $G$-invariant subsets $X \subseteq \underline E G$ with direct limit connecting 
abelian group homomorphisms  
$\pi^*$ 
for all restriction ring homomorphisms  
$\pi:C_0(Y) \rightarrow C_0(X)$, $\pi(f)(x)=f(x)$,
whenever $Y \supseteq X$.
The abelian group homomorphism $\nu^{G,A}$ is then defined to be  canonically induced by the abelian group homomorphisms
$\nu_X^{G,A}$.
 
}
\end{definition}

\begin{lemma}				\label{lemma167}
The Baum-Connes map is functorial 
in the coefficient algebra  $A$, that is a
completely analogous statement as in 
lemma \ref{lemma1512} holds.
 
More precisely, $\nu^{G,A}_X \circ (Z \rtimes 1)_* = Z_* \circ \nu^{G,B}_X$ 
and $\nu^{G,A} \circ (Z \rtimes 1)_* = Z_* \circ \nu^{G,B}$ 
for all morphisms $Z :A \rightarrow B$. 

\end{lemma}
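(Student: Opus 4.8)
The plan is to reduce the statement entirely to two elementary facts: the descent $j^G\colon GK^G\to GK$ is a functor (it is well defined on all of $GK^G$, including the synthetical generators, by the discussion around Corollary \ref{lemma121}), and the ring homomorphism $M_X\colon\C\to C_0(X)\rtimes G$, $M_X(z)=zp_X$, entering the definition of $\nu^{G,A}_X$ depends only on $X$ (and $G$, and the chosen cut-off function), never on the coefficient algebra $A$. Throughout I write $j^G(m)=m\rtimes\id$ for morphisms $m$ in $GK^G$, and I read the composites in the two displayed identities of the lemma from left to right, consistently with the conventions of Section \ref{sec3}; in the usual right-to-left notation the first identity reads $(Z\rtimes1)_*\circ\nu^{G,A}_X=\nu^{G,B}_X\circ Z_*$, i.e.\ the evident naturality square commutes.

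First I would fix a $G$-compact, second-countable, locally compact $G$-invariant $X\subseteq\underline E G$ and verify the square at that level. For $V\in GK^G(C_0(X),A)$ one has, by the very definition of the assembly map, $\nu^{G,A}_X(V)=j^G(V)\circ M_X$ (the composite $\C\xrightarrow{M_X}C_0(X)\rtimes G\xrightarrow{V\rtimes\id}A\rtimes G$). Given a morphism $Z\colon A\to B$ in $GK^G$, one computes on the one hand
\[
(Z\rtimes1)_*\bigl(\nu^{G,A}_X(V)\bigr)=j^G(Z)\circ j^G(V)\circ M_X ,
\]
and on the other hand, using $Z_*(V)=Z\circ V$ and the same $M_X$ (this is the point where $A$-independence of $M_X$ is used),
\[
\nu^{G,B}_X\bigl(Z_*(V)\bigr)=j^G(Z\circ V)\circ M_X .
\]
Functoriality of $j^G$ gives $j^G(Z\circ V)=j^G(Z)\circ j^G(V)$, so the two expressions coincide; note that this applies to arbitrary $GK^G$-morphisms $V,Z$, not merely ring homomorphisms. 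Additivity in $V$ is immediate since $(Z\rtimes1)_*$, $Z_*$ and $\nu^{G,A}_X$ are additive. This proves the first identity of the lemma.

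It then remains to pass to the direct limit. By construction of $\nu^{G,A}$ the homomorphisms $\nu^{G,A}_X$ are compatible with the connecting maps $\pi^*$ induced by the restrictions $\pi\colon C_0(Y)\to C_0(X)$ for $Y\supseteq X$, and hence induce $\nu^{G,A}$ on $\varinjlim_X GK^G(C_0(X),A)$. The operator $Z_*$ commutes with every $\pi^*$ — one is post-composition with $Z$, the other pre-composition with $\pi$, and composition in $GK^G$ is associative and bilinear — so $Z_*$ descends to a homomorphism $\varinjlim_X GK^G(C_0(X),A)\to\varinjlim_X GK^G(C_0(X),B)$ restricting to $Z_*$ on each term. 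By the universal property of the direct limit, the two homomorphisms $(Z\rtimes1)_*\circ\nu^{G,A}$ and $\nu^{G,B}\circ Z_*$ out of $\varinjlim_X GK^G(C_0(X),A)$ agree as soon as they agree after pre-composition with each structure map into the limit, which is exactly the $X$-level identity just established; this yields the second identity $\nu^{G,A}\circ(Z\rtimes1)_*=Z_*\circ\nu^{G,B}$.

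I do not expect a serious obstacle: the argument is purely formal. The only points that require (minor) care are the verification that $j^G$ really is a functor on all of $GK^G$ — so that $j^G(Z\circ V)=j^G(Z)\circ j^G(V)$ may be applied to general morphisms — the observation that $M_X$ is manifestly independent of the coefficient algebra, and the bookkeeping needed to see that $Z_*$ descends to the direct limit and that the universal property then forces the limit identity. Incidentally, Lemma \ref{lemma1512} is the special case of this argument in which $X$ is the one-point space and $M_X$ is the averaging map $M$, proved in exactly the same way.
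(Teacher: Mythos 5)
Your proof is correct and is exactly the intended argument: the paper states Lemma \ref{lemma167} without proof (as ``analogous'' to Lemma \ref{lemma1512}, which is likewise unproved), and the content is precisely what you give --- since $\nu^{G,A}_X$ is by definition composition of the descent $j^G$ with the coefficient-independent class $M_X$, naturality at each $X$ is just functoriality of $j^G$ on arbitrary $GK^G$-morphisms (established in section \ref{sec12} together with Corollary \ref{lemma121}), and the limit statement follows because $Z_*$ commutes with the connecting maps $\pi^*$. Your reading of the displayed identities as the commuting naturality square is also the correct one; it is exactly the form in which the lemma is invoked in the proof of Proposition \ref{prop176}.
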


If we restrict everything 
on the domain of the 
Baum-Connes map 
 to level-0 morphisms, then
we get a 
restricted abelian group homomorphism   
$$L_0\nu^{G,A}: \lim_{X \subseteq \underline E G}  L_0 GK^G(C_0(X),A) \rightarrow L_0 GK(\C,A \rtimes G)$$

\begin{proposition}			\label{prop176}

The Baum-Connes maps $\nu^{G,A}$ are injective 
for all coefficient algebras $A$ if and only 
if 
$L_0\nu^{G,A}$ is injective for all coefficient algebras $A$. 
\end{proposition}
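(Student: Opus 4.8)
The plan is to reduce the statement to the $\nabla$-calculation machinery of Section \ref{sec10}, exactly as was done for the exactness lemma and for Theorem \ref{theorem108}. One direction is trivial: if $\nu^{G,A}$ is injective for all $A$, then its restriction to level-$0$ morphisms $L_0\nu^{G,A}$ is injective for all $A$, since $L_0 GK^G(C_0(X),A)$ is a subgroup of $GK^G(C_0(X),A)$ and the assembly maps are compatible with the inclusions. So the content is the converse: assuming $L_0\nu^{G,A}$ is injective for all coefficient algebras $A$, deduce injectivity of the full $\nu^{G,A}$.

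For the converse, first I would unwind the direct limit. An element of $\lim_{X}GK^G(C_0(X),A)$ killed by $\nu^{G,A}$ is represented by some $z\in GK^G(C_0(X),A)$ for a fixed $G$-compact $X\subseteq\underline EG$, with $\nu^{G,A}_X(z)=0$ in $GK(\C,A\rtimes G)$ (after possibly enlarging $X$). Recall $\nu^{G,A}_X(z)=M_X(z\rtimes\idd)$. The key tool is Lemma \ref{lemma97}.(ii): choose a ring homomorphism $P:A\rightarrow D$ which is right-invertible in $GK^G$ (say with right inverse $Q$, $PQ=1$) such that the composite $zP\in GK^G(C_0(X),D)$ is a level-$0$ morphism. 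Here I must be slightly careful: $\nu^{G,A}_X$ is a map out of the variable in the \emph{first} slot $C_0(X)$, so I want $P$ to act on the coefficient side $A$, i.e. I apply Lemma \ref{lemma97}.(ii) to the morphism $z$ viewed in $GK^G(C_0(X),A)$ and post-compose, producing $zP\in L_0 GK^G(C_0(X),D)$. By functoriality of the assembly map in the coefficient algebra, Lemma \ref{lemma167}, we have $\nu^{G,D}_X(zP)=(P\rtimes 1)_*\,\nu^{G,A}_X(z)=(P\rtimes 1)_*(0)=0$. Since $zP$ is level-$0$, the hypothesis that $L_0\nu^{G,D}$ is injective gives $zP=0$ in $L_0 GK^G(C_0(X),D)$, hence in $GK^G(C_0(X),D)$. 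Composing on the right with $Q$ and using $PQ=1$ yields $z=zPQ=0$, so the class of $z$ in the direct limit is zero. This shows $\nu^{G,A}$ is injective.

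One point deserving care: Lemma \ref{lemma97}.(ii) produces $P$ depending on $z$, and $P:A\rightarrow D$ has target some auxiliary ring $D$ that may not lie in a predetermined subclass, but since $\ring G$ is closed under the constructions used in that lemma this is fine, and the functoriality Lemma \ref{lemma167} applies to any morphism $Z:A\rightarrow B$ in $GK^G$, in particular to the ring homomorphism $P$ and to $Q\in GK^G(D,A)$. Also the direct limit is taken over $X$ on the $C_0(X)$-side, which is untouched by $P$, so enlarging $X$ to make $\nu^{G,A}_X(z)=0$ and then choosing $P$ commute without interference. Finally, when we conclude $zP=0$ from injectivity of $L_0\nu^{G,D}$ we are using that $\nu^{G,D}_X(zP)=0$ and that $zP$ genuinely lies in $L_0 GK^G(C_0(X),D)$, so it represents a class in $\lim_X L_0 GK^G(C_0(X),D)$ that is killed by $L_0\nu^{G,D}$.

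The main obstacle I anticipate is purely bookkeeping rather than conceptual: making sure the side on which the skip-homomorphism $P$ is inserted matches the variance of the assembly map (assembly is covariant in the coefficient $A$ and the $\nabla$-calculation in Section \ref{sec10} is set up to skip homomorphisms from the \emph{right}, i.e. post-composition $zP$), and checking that Lemma \ref{lemma167} is stated with enough generality (it is: it covers arbitrary morphisms $Z:A\rightarrow B$ and asserts $\nu^{G,A}_X\circ(Z\rtimes1)_* = Z_*\circ\nu^{G,B}_X$, which is precisely what is needed both for $P$ and for its $GK^G$-right inverse $Q$). Everything else is a direct transcription of the proof scheme already used for the exactness lemma and Theorem \ref{theorem108}, so the write-up should be short.
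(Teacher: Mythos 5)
Your strategy is the paper's own: produce a $GK^G$-right-invertible ring homomorphism $P:A\to D$ with $zP$ level-$0$ via lemma \ref{lemma97}.(ii), push forward with the functoriality lemma \ref{lemma167}, invoke injectivity of $L_0\nu^{G,D}$, and undo $P$ with its right inverse $Q$. There is, however, one genuine gap at the decisive step: from $\nu^{G,D}_X(zP)=0$ and injectivity of $L_0\nu^{G,D}$ you conclude that ``$zP=0$ in $L_0 GK^G(C_0(X),D)$, hence in $GK^G(C_0(X),D)$''. What the hypothesis actually gives is only that the class of $zP$ in the direct limit $\varinjlim_X L_0 GK^G(C_0(X),D)$ vanishes, since $L_0\nu^{G,D}$ is a map out of that limit; vanishing of a class in a direct limit means $wzP=0$ for some restriction homomorphism $w:C_0(Y)\to C_0(X)$ with $Y\supseteq X$, not vanishing at the stage $X$ itself, because the connecting maps $\pi^*$ are not known to be injective. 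You even observe in your final paragraph that it is the limit class that is killed, but then still pass to $zP=0$ at stage $X$, which does not follow.

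The repair is short and is exactly how the paper finishes: from $wzP=0$ and $PQ=1$ one gets $wz=wzPQ=0$ in $GK^G(C_0(Y),A)$, hence $\psi_X^{G,A}(z)=\psi_Y^{G,A}(wz)=0$, i.e.\ the element of $\varinjlim_X GK^G(C_0(X),A)$ you started with is zero, which is all that injectivity of $\nu^{G,A}$ requires. Two minor remarks: your parenthetical ``after possibly enlarging $X$'' at the outset is unnecessary, since the target $GK(\C,A\rtimes G)$ of the assembly map is a fixed group rather than a limit, so $\nu^{G,A}(\psi_X^{G,A}(z))=\nu^{G,A}_X(z)=0$ on the nose; and your trivial direction is fine, as a filtered colimit of subgroup inclusions is again injective.
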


\begin{proof} 
Let $\psi_X^{G,A}$ the 
abelian 
group homomorphisms from the domain of $\nu_X^{G,A}$ 
to the domain of the Baum-Connes map $\nu^{G,A}$ 
associated to the direct limit. 

Suppose $\nu^{G,A}(\psi_X^{G,A}(z))
=\nu^{G,A}_X(z)=0$ 
for a morphism $z:C_0(X) \rightarrow A$. 
Then 
$z P$ is a level-0 morphism for some 
$GK^G$-right-invertible 
ring homomorphism $P:A \rightarrow D$ with right-inverse $Q$.
Thus, by lemma \ref{lemma167},  
$$\nu^{G,D}(\psi_X^{G,D}(z P)) 
= \nu^{G,D}_X(z P)= \nu^{G,D}_X(z) (P \rtimes 1)=0$$ 

By injectivity of $L_0\nu^{G,D}$ we conclude 
that $\psi_X^{G,D}(z P)=0$. 

But then $w z P=0$ in $GK^G$ for some restriction map 
$w:C_0(Y) \rightarrow C_0(X)$ for $Y \supseteq X$. 
Hence $w z = w z PQ=0$. 
Thus $\psi_X^{G,A}(z)=\psi_Y^{G,A}(wz)=0$. 
\end{proof}

Note that this holds also with respect to $C^*$-algebras and $KK$-theory by analogy. That is, 
the classical Baum-Connes map is injective 
if and only if it is injective on 
$L_0 KK^G$, 
or more precisely,  
on the abelian subgroup 
generated by 
the $*$-homomorphisms.

\bibliographystyle{plain}
\bibliography{references}

\if  1 


\section{}

In the present paper we shall consider a class $\calr$ of rings, which may be topologized, but we discuss mainly the case where these rings are equipped with the discrete topology.

\begin{definition}[Rings]
{\rm

Let $\calr$ be a given class of associative, not necessarily commutative and possibly non-unital rings $(A,\tau,\alpha,\chi)$, 
all of which are euipped with a Hausdorff topology $\tau$ and an approximate unit.
That means, for each ring $A$ in $\calr$ there exists a net $(p_i)_{i \in I}$ in $A$ such that for all $a \in A$, the nets $a - p_i a$ and $a - a p_i$ converge to zero. 
Moreover, 
each 
$A$ is equipped with a group action $\alpha: G \rightarrow \Aut(A)$. 

The $\chi$ stands for some possible extra information like for instance a vector space structure on $A$, a norm on $A$, or an involution on $A$. 
%
}
\end{definition} 

kann die ringe nicht diskret nehmen, 
wenn man auf operatorräumen keine diskrete topology hat .

am besten überall vorgschriebene frei gewählte topology, auch auf $\call$ etc .

\begin{definition}
{\rm
For each ring $A$ in $\calr$ let us be given 
a class $\calh_A$ of infinite 
}
\end{definition}

\begin{definition}[Modules]
{\rm

For each ring $A$ in $\calr$ we have given a class of right $A$-modules $\calm_A$, which is typically closed under the following elementary constructions:

\begin{itemize}

\item
$A$ is in $\calm_A$;

\item
the direct sum $M \oplus N$, a version of an exterior tensor product $M \otimes N$, a version of an internal tensor product $M \otimes_f N$ is in $\calm_A$ whenever $M,N$ are modules in $\calm_A$ 

\end{itemize}

Let $\calm$ be a given class of modules. 
More precisely, each module in $M$ should be a right module over at least one ring $A$ of $\calr$.

}
\end{definition}

\begin{definition}[Functionals]
{\rm

To each module $(M,S)$ is associated an abelian group $\Theta_A(M)$ 
consisting of right $A$-linear module maps $\phi:M \rightarrow A$ ($A$-linear functionals).

}
\end{definition}

Als topology in operator räumen nehme 
punktweise konvergenz: $\phi_i (x) \rightarrow \phi(x)$ für alle $x$

-

die frage ob nicht alle compa ct werden ->
testen auf $\call_A(A)$

n, denn norm convergence -> pktweise convergenz, dh normabschluss größer (?)

n, umgekehrte implication : pktweise abschluss > normabschluss

tatsächlich operatortopology abschluss con compacten auf h , K(H) -> abschluss B(H)

\begin{definition}[Compact operators]
{\rm

To each module $M$ in $\calm_A$ is associated 
a space of compact opertators $\compacts A M \subseteq \call_A(M)$
which consists as a dense subspace the
finite sums  of elementary compact operators 
$\theta_{\xi,\phi}$ defined by
$\theta_{\xi,\phi}(\eta)= \xi \phi(\eta)$ for $\xi,\eta\in M$ and $\phi \in \Theta_A(M)$.

}
\end{definition}

wegen der idealbedingung muss gelten:

wenn $\phi \in \Theta_A(\cale)$ dann $\phi \circ T \in \Theta_A(\cale)$ für alle $T \in \call_A(\cale)$ 

klären ob $\Hom A(A)$ nur multiplikationsoperatoren 

---

Es macht keinen unterschied ob man
$\Theta_A(\cale)$ oder $A^+ \Theta_A(\cale)$ 
nimmt in der def von kompakten

\begin{definition}[Infinite direct sums]
{\rm

To each ring $(A,\alpha)$ in $\calr$ is a associated 
a class $\calh_A$ consisting of objects in $\calm_A$ which are infinite direct sums.
  That is, an object in $\calh$, typically denotes by $\H_A$ is an right $A$-module which has the infnite direct sum $\bigoplus_{n=1}^\infty A$ as a dense sub-A-module.
The $G$-action on $\H_A = A \oplus A \oplus A \oplus ...$ must be of the form $(\alpha \oplus S)$, where $S$ may be any $G$-action $S$ on 
$A \oplus A \oplus A \oplus \ldots$ 
(and need not to be the canonical driect sum $G$-action $ \alpha \oplus \alpha \oplus \ldots$).

---

vllt besser nur $\bigoplus A$ is untermodule of $\H_A$ 

eigentlich brauche nur $A \oplus A$, um die rotation machen zu können 

vllt besser: $A \oplus  A$ directer summand von $\H_A$ 

---

eigentlih brauche nur eine koordinate $A$ in $\H_A$ 

}
\end{definition}

Sind 2 ringe mit ungerschiedlich topology gleich?
 
---

Normally, a module homomoprhism is an $A$-linear map. But we need to require that this map also intertwinnes the functional spaces:

\begin{definition}
{\rm
A module isomorphism between two $A$-modules
$(\cale,\Theta_A(\cale))$ and 
$(\calf,\Theta_A(\calf))$
is an $A$-linear map $T:\cale \rightarrow \calf$ and a 
additive 
set-theoretical bijektion $f:\Theta_A(\cale)  \rightarrow \Theta_A(\calf)$ 
such that for every $\varphi \in \Theta_A(\cale)$ 

$f(\varphi) (T \xi)= \varphi(\xi)$
 
vllt auch $f$ links-$A$-module homomoprhism 

$$T(\xi a \phi()) = T(\xi) a f(\phi())$$

---

man müsste hier erwähnen, dass es egal ob $\Theta_A$ oder $A^+ \Theta_A$ und dass die def funktioniert 

vllt verlangen, dass $\Theta_A$ link-$A$ module 

---

$f:A^+ \Theta_A(\cale) \rightarrow A^+ \Theta_A (\calf)$

}
\end{definition}

\section{}

The first coordinate $(A,\alpha)$ of $(\H_A,\alpha \oplus S)$ is often referred to 
as the {\em distinguished coordinate} of $\H_A$.

approx einheit von $\compacts A {\cale \otimes_f \calf}$ etc. 

$\theta_{\xi,\phi} = \xi \phi$
 
$\theta_{\xi,\phi} \theta_{\eta,\psi}
= \xi \phi(\eta) \psi = \theta_{\xi \phi(\eta),\psi}$

Für jedes $\eta \in \cale$ gebe es ein netz $\phi_i \in \Theta_A(\cale)$ sodass 
$\phi_i(\eta)$ approx einheit von $A$

$\xi a_i - \xi$

sodass $\eta \phi_i(\eta) \rightarrow \eta$

Für jedes $\phi \in \Theta_A(\cale)$ gebe es ein netz $\eta_i \in \cale$ 

sodass $\phi(\eta_i) \phi \rightarrow \phi$

$$\xi \otimes \eta \;
\psi \Big (\pi\big(\phi(\xi)\big) \eta \Big)$$

$$\xi \phi(\xi) \otimes \eta \;
\psi \Big (\pi\big(\phi(\xi)\big) \eta \Big)$$

$$
\psi \Big (\pi\big(\phi(\xi)\big) \eta \Big)
\psi \Big (\pi\big(\phi(a)\big) b \Big)$$

$$
\psi \Big (\pi\big(\phi(\xi)\big) \Big( \eta 
\psi \Big (\pi\big(\phi(a)\big) \Big ) b \Big)
\Big)
$$

vllt reicht links approximierende einheit

\section{}

$\calk(E\otimes F)  \rightarrow \calk(E \otimes_f F)$

$T_i \otimes S_i$

$T(x b) \otimes S(y)= T(x) \otimes f(b) S(y)
=T(x) \otimes S(f(b)y)$
 
wenn approx einheiten quasizentral

setze $T \otimes S$ null auf kernel 
($\otimes_f$) 
sodass wohldef

\section{}

$\xi \phi(x) \otimes \eta \psi (y)$

$\xi \phi( \eta \psi(x)) = \xi \phi(\eta) \psi(x)$

\section{}

brauche apporx in $\calk$ denn brauche
in $\call \square A$

auch nicht ?


erstens für $M_2$-aktion 

zweitens für $\nabla$-rechung

\section{}

$\phi:A \rightarrow \calk_A(A)$
$\phi(a)(b)= a b = a s(b)$ mit $s=id \in \Theta_A(A)$

\section{}

für alle ringe $A$ ist
$\sum_i a_i b_i$ dicht in $A$ (endliche summen, $a_i,b_i \in A$)

für alle $A$-module ist $\cale A$ dicht in $\cale$

\section{}

$< \xi , \eta>^* a = < \eta , \xi> a$

$< \eta, T(a)> = < \eta, \xi a> $

\section{}

A $G$-action on a ring $A$ is 

A $G$-action on a $(A,\alpha)$-module $M$
is a group homomorphism
$S:G \rightarrow {\rm Aut-group}(M)$
such that $S_g(m a)= S_g(m) \alpha_g(a)$

$T_g(\theta)\big (S_g(x) \big ) = \alpha_g\big (\theta (x)\big)$

\begin{example}

$A$ ring, $\theta(a)=x a$

$T_g(\theta)(a)= \alpha_g(x) a$

$T_g(\theta)(\alpha_g(a)) = \alpha_g(T_g(a))$

\end{example}

genauso inneres produkt

\section{}

action on $\calk_B(\cale)$

$S_g T S_{g^{-1}}$

$S_g \big (x \phi( S_{g^{-1}} \xi) \big)
= S_g (x)  T_g(\phi)( \xi) $

\section{}

\begin{itemize}

\item 
$\calf$ eine familiy von ringen

\item
$\calm$ eine familie von modulen über den
ringen von $\calf$

abeschlossen unter $R \in \calf$ $\Rightarrow $
$R \in \calm$, $M,N\in \calm$ $\Rightarrow $
$M \oplus N$ in $\calm$, auch unednliche direkte summen, gewissen,
ud
$M,N \in \calm$ dann auch $M \otimes_f N$ in $\calm$

\end{itemize}

\begin{lemma}
$\calk_B(\cale)$ has approximate unit. 
$\rightarrow$ axiom 
\end{lemma}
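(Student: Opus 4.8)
The plan is to read ``approximate unit'', in the present discrete (topology-free) setting, as the statement that $\calk_B(\cale)$ has \emph{local units}: for every finite family $k_1,\dots,k_m\in\calk_B(\cale)$ there is a single $u\in\calk_B(\cale)$ with $uk_\ell=k_\ell u=k_\ell$ for all $\ell$. By the standard characterization of $s$-unital rings this is equivalent to $\calk_B(\cale)$ being left and right $s$-unital, i.e.\ to $k\in\calk_B(\cale)\,k$ and $k\in k\,\calk_B(\cale)$ for every single $k$. So I would reduce the lemma to: given $k\in\calk_B(\cale)$, produce $p,q\in\calk_B(\cale)$ with $p\circ k=k$ and $k\circ q=k$. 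The two inputs available are the standing assumption that $B$ is quadratik and the standing convention (end of Section~\ref{sec2}) that the module $\cale$ underlying $\calk_B(\cale)$ is cofull.

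Next I would write $k=\sum_{i=1}^{n}\theta_{\xi_i,\phi_i}$ and use $B$-linearity together with the composition rule $\theta_{\xi,\phi}\circ\theta_{\zeta,\chi}=\theta_{\xi\,\phi(\zeta),\chi}$ (as in Lemma~\ref{lemma17}) to get the reductions $p\circ k=\sum_i\theta_{p(\xi_i),\phi_i}$ and $k\circ q=\sum_i\theta_{\xi_i,\,\phi_i\circ q}$; note $\phi\circ q\in\Theta_B(\cale)$ for $q\in\calk_B(\cale)$ since $\Theta_B(\cale)$ is a left $B$-module. Hence it suffices to find $p=\sum_a\theta_{\eta_a,\psi_a}$ with $\sum_a\eta_a\,\psi_a(\xi_i)=\xi_i$ for all $i$, and, dually, $q=\sum_b\theta_{\zeta_b,\chi_b}$ with $\sum_b\phi_i(\zeta_b)\,\chi_b=\phi_i$ in $\Theta_B(\cale)$ for all $i$. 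For $p$, I would apply cofullness of $\cale$ to write $\xi_i=\sum_a\eta_{ia}\,\psi_{ia}(\zeta_{ia})$, then iterate cofullness once more on the $\eta_{ia}$ and use quadratik of $B$ to rearrange, via $B$-linearity of the functionals, the inner $B$-factors into a form that acts as a left unit on the finitely many scalars $\psi_{ia}(\zeta_{ia})$ involved, assembling $p$ from the resulting elementary operators; the construction of $q$ is the mirror image, carried out in the left $B$-module $\Theta_B(\cale)$.

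The hard part is exactly this last rearrangement: cofullness only hands each $\xi\in\cale$ to us as a \emph{value} $\sum_a\theta_{\eta_a,\psi_a}(\zeta_a)$ of a finite-rank operator, whereas we need an operator that simultaneously \emph{fixes} the finitely many $\xi_i$, and a single finite-rank operator cannot in general fix a prescribed finite set unless one can genuinely localize inside the quadratik subring generated by the relevant entries of $B$. I expect this to go through by a bootstrapping argument of Tominaga type inside that subring, but if it does not, the alternative — consistent with the ``$\rightarrow$ axiom'' annotation, and with the fact that the main text never actually needs this statement, using instead the weaker conclusion of Lemma~\ref{lemma113} (quadratik, essential ideal in itself) wherever an approximate unit would classically appear — is to promote a local-unit condition on $\cale$ itself to a standing axiom, from which the lemma then follows immediately by the reductions above.
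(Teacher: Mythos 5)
This statement lives in the paper's discarded draft material, and the author's own ``proof'' of it is a four-line fragment taking a quite different route from yours: fix a generating set $X$ of $\cale$ as a $B$-module, fix a \emph{single} functional $\phi\in\Theta_B(\cale)$, and take the net $p_Y:=\sum_{x\in Y}\theta_{x,\phi}$ indexed by the finite subsets $Y\subseteq X$. That is the topological notion of approximate unit transplanted into the discrete setting, and the fragment verifies nothing: one has $p_Y\circ\theta_{\xi,\psi}=\theta_{p_Y(\xi),\psi}$ with $p_Y(\xi)=\sum_{x\in Y}x\,\phi(\xi)$, and there is no reason this should equal, or in any sense converge to, $\xi$ for one fixed $\phi$ --- that would need a frame-type reproducing condition far beyond cofullness. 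So the paper's attempt founders on essentially the obstruction you isolate, only in a less transparent form. Your reduction is the cleaner one: the passage to local units, the identities $p\circ\theta_{\xi,\phi}=\theta_{p(\xi),\phi}$ and $\theta_{\xi,\phi}\circ q=\theta_{\xi,\phi\circ q}$ (with $\phi\circ q\in\Theta_B(\cale)$ by the left $B$-module structure), and the resulting criterion that $p$ must \emph{fix} the finitely many $\xi_i$ are all correct, and your diagnosis --- cofullness exhibits each $\xi$ only as a \emph{value} of a finite-rank operator, not as a fixed point of one, and quadratik is strictly weaker than $s$-unitality --- is the genuine reason the statement is not derivable from the standing hypotheses. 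Both you and the author land in the same place: the ``$\rightarrow$ axiom'' tag, and the fact that the published body of the paper never uses this lemma but works exclusively with the weaker Lemma \ref{lemma113} (quadratik, essential ideal in itself), confirm that it has to be postulated rather than proved. Your account is the more precise explanation of why.
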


\begin{proof}
Let $X$ be a generating set of $\cale$ as a $B$-module.

Let $F$ be the set of finite subsetes of $X$. 

Let $\phi \in \Theta_B(\cale)$.

Set $p_Y:= \sum_{x \in Y} \theta_{x,\phi}$
for $Y \in F$.

\end{proof}

\begin{definition}
{\rm
A {\em corner embedding} is 
an injective 
ring homomorphism
$$e: A \rightarrow  \calk_A(
\cale \oplus 
\H_A)$$
defined by 
$e(a) (\xi \oplus a_1 \oplus a_2 \oplus a_3 \oplus \ldots)
= 0_\cale \oplus  a a_1 \oplus 0_A \oplus 0_A \oplus \ldots $. That is,
$e(a)$ is the multiplication operator 
acting on the distingushed coordinate $(A,\alpha)$ 
of $(\cale \oplus \H_A,S)$.

}
\end{definition}

\begin{definition}
{\rm
A {\em corner embedding} is 
an injective 
ring homomorphism
$$e: A \rightarrow  \calk_A(
\H_A\oplus \cale)$$
where 
for all $a \in A$, $e(a)$ 
is the topological limit of one-dimensional compact operators 
of the form $e_i(a)= \xi_i a \phi_i(\pi(\cdot))$, where
$\phi_i \in \Theta_A(\H_A)$ and $\xi_i \in \H_A$, and $\pi:\H_A\oplus \cale \rightarrow \H_A$ is the canonical projection. 
So $e(a)= \lim_{i \rightarrow \infty} e_i(a)$.
}
\end{definition}

The typical example is the corner embedding
where $e$ is the limit of the operators $e_i$, where 
$e_i(a)= (p_i,0,0,\ldots) a \phi(\cdot )$, for an approximate unit $(p_i) \subseteq A$
and where $\phi:\H_A \oplus \cale \rightarrow A$ is projection onto the first coordinate $A$
of $\H_A$.
%


\begin{definition}

To every module $\cale$ in $\moduleright B$ is associated a subset
of $B$-linear functionals  $\functionals B \cale
\subseteq \modulehomomorphisms B \cale B$

We define the ideal of komact operators to be
$\calk_B(\cale)$ consisting of all operators of the form 
$T(\xi)= \eta \phi(\xi)$ for all $\xi,\eta \in \cale$ and all $\phi \in \functionals B \cale$.

\end{definition}

\begin{definition}
{\rm

Let $\cale$ in $\moduleright B$
and $\calf$ in $\moduleleft C$
and   
$\pi:B \rightarrow \moduleendomorphisms C \calf$ a ring homomorphism.

Then we set 
$\functionals C {\cale \otimes_\pi \calf}
$
to be the set of all $\theta$ in $\modulehomomorphisms C
{\cale \otimes_\pi \calf_C} C$
of the form
$$\theta(\xi \otimes \eta) = \psi \Big (\pi\big(\phi(\xi)\big) \eta \Big)$$
for some $\phi$ in $\functionals B {\cale}$ and $\psi$ in $\functionals C {\calf}$.

$$V_g(\theta)(\xi \otimes \eta) = T_g(\psi) \Big (\pi\big(S_g(\phi)(\xi)\big) \eta \Big)$$

}
\end{definition}

\if 0

\begin{lemma}
{\rm (i)}
Let $e:B \rightarrow \calk_B \big ( (\cale \oplus \H_B ,S) \big) 
$ be an equivariant corner embedding.
Let $G$ be a discrete group.

Then there is a non-equivariant ring isomorphism 
$$\sigma: \calk_B(\cale \oplus \H_B) \rtimes_{\Ad(S)} G \rightarrow 
\calk_{B \rtimes_\beta G} \Big((\cale  \oplus \H_B) \otimes_B (B \rtimes_\beta G) \Big)$$
defined by
$$\sigma(T \rtimes g)= (T \otimes 1) \circ (S_g \otimes V_g)$$
(where here $1=\id_{B \rtimes_\beta G}$).

{\rm (ii)}
Consequently, 
$e$ induces a non-equivariant corner embedding 
$$
f 
: B \rtimes_\beta G \rightarrow 
\calk_{B \rtimes_\beta G} \Big(\cale   \otimes_B (B \rtimes_\beta G) 
\oplus \H_{B \rtimes_\beta G} \Big)
$$
by sending $b \rtimes g$ to $\pi \circ \sigma (e(b) \rtimes g)$.
where $\pi$ is the obvious canonical isomorphism identifying the images of $\sigma$ and $f$.
\end{lemma}

\fi

\section{}

erlaube $\Theta_A(A)$ für ring $A$ nur multiplikationsoperatoren mit multiplieren von multiplier algebra 

oder besser nur multipliers in $A$ und $1$

---

was wenn etwa $\cale \otimes_f \calf$ zufällig ring $A$ wird, und $\Theta_A(A)$ davon nicht 
$\Theta_A(\cale \otimes_f \calf)$

man könnte $\Theta$ für ringe und moudle separieren, jedoch wird gerade dieser 
fall in morita equivalnz verwendet 

---

zwei module sind gleich wenn bijecktion $\phi$
und dieses $\phi$ auch bijektion auf $\Theta$ macht, mit eigener bijektion.

hier unten definiere 
$B \rtimes G$ , das $\Theta$ darauf wie üblich


$a_g \rtimes g=0$ in a ring 

$\cale \otimes_B (\oplus^n \calf_n )  \rightarrow \oplus^n \;\; \cale \otimes_B \calf_n$ 
als abelsche gruppen 


\begin{lemma}
Let $A,B$ be rings where $A$ is a subring of $B$.

Then there is a ring homomorphism
$$\phi:\moduleendomorphisms B \cale \rightarrow
\moduleendomorphisms A {\cale \cdot A}: \phi(T)= T \circ E$$
where $E(\xi)= \lim_i \xi a_i$
for an approximate unit $(a_i)$ of $A$,
and $E \in \moduleendomorphisms A \cale$.

\end{lemma}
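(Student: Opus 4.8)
The plan is to recognise $\phi$ as, essentially, the restriction-of-endomorphisms map, with $E$ playing the role of the canonical idempotent of $\cale$ onto the submodule $\cale\cdot A$. First I would fix the setup: since $A$ is a subring of $B$, the right $B$-module $\cale$ becomes a right $A$-module by restriction of scalars, and $\cale\cdot A$ — the (closed, in the topological setting) right $A$-submodule generated by the products $\xi a$, $\xi\in\cale$, $a\in A$ — is again a right $A$-module, so that $\moduleendomorphisms{A}{\cale\cdot A}$ is a legitimate target ring. I would also record the one topological input needed: the module action $\cale\times A\to\cale$ is separately continuous, so that for fixed $\xi$ the map $a\mapsto\xi a$ is continuous; this is what makes the approximate-unit limits behave.

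The core step is to pin down $E$. For $\eta=\xi a\in\cale\cdot A$ one has $\eta a_i=(\xi a)a_i=\xi(aa_i)$, and since $(a_i)$ is an approximate unit, $aa_i\to a$ in $A$, hence $\xi(aa_i)\to\xi a$ by continuity; additivity and continuity then give $E(\eta)=\eta$ for every $\eta$ in $\cale\cdot A$ and in its closure, so $E$ restricts to the identity on $\cale\cdot A$. The statement also asks that $E\in\moduleendomorphisms{A}{\cale}$, which requires the net $(\xi a_i)$ to converge for every $\xi\in\cale$; granting this (it is automatic in the $C^*$/Hilbert-module setting, and one could alternatively take $\cale\cdot A$, resp.\ a nondegenerate $\cale$, as the domain of $E$), the limit $E(\xi)=\lim_i\xi a_i$ lies in $\cale\cdot A$ as a limit of elements of $\cale\cdot A$, $E$ is additive, and $E(\xi a)=\xi a=(\lim_i\xi a_i)a=E(\xi)a$ using $a_i a\to a$, so $E$ is $A$-linear and idempotent with range $\cale\cdot A$.

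Next I would verify that $\phi$ has the stated target and is a ring homomorphism. For $T\in\moduleendomorphisms{B}{\cale}$ and $\eta=\xi a\in\cale\cdot A$, $B$-linearity gives $T(\eta)=T(\xi)a\in\cale\cdot A$, so $T$ preserves $\cale\cdot A$; combined with $E(\cale)\subseteq\cale\cdot A$ this shows $\phi(T)=T\circ E$ maps $\cale$, and in particular $\cale\cdot A$, into $\cale\cdot A$, and $\phi(T)$ is $A$-linear since $E$ is $A$-linear and $T$ is $B$-linear, hence $A$-linear; thus $\phi(T)\in\moduleendomorphisms{A}{\cale\cdot A}$. Additivity of $\phi$ is immediate. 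For multiplicativity, note that the range of $T\circ E$ lies in $\cale\cdot A$, where $E$ is the identity, so $E\circ T\circ E=T\circ E$; consequently $\phi(S)\circ\phi(T)=S\circ E\circ T\circ E=S\circ T\circ E=\phi(S\circ T)$, recalling the paper's convention that the product $ST$ in the endomorphism ring is $S\circ T$. Equivalently, on $\cale\cdot A$ one has $\phi(T)=T|_{\cale\cdot A}$, and $\phi$ is just the restriction homomorphism.

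The main obstacle is not algebraic but is the correct interpretation of $E$: one must decide whether $E$ is defined on all of $\cale$ — which needs the net $(\xi a_i)$ to converge, a nondegeneracy/continuity hypothesis on $\cale$ not spelled out in the statement — or only on $\cale\cdot A$, where it is simply the identity. Once that is settled and separate continuity of the $A$-action is granted, every other claim (that $B$-endomorphisms preserve $\cale\cdot A$, that $E$ is $A$-linear, that $\phi$ is additive and multiplicative) is a short formal check.
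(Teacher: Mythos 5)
The paper states this lemma only in the unprocessed draft material at the end and supplies no proof, so there is nothing of the author's to compare against; judged on its own, your argument is correct and is the natural one. The essential observations --- that $E$ restricts to the identity on $\cale\cdot A$ because $(\xi a)a_i=\xi(aa_i)\to\xi a$, that every $T\in\moduleendomorphisms{B}{\cale}$ preserves $\cale\cdot A$ by $B$-linearity (hence $A$-linearity after restriction of scalars), and that consequently $E\circ T\circ E=T\circ E$, so that $\phi$ is multiplicative and coincides with the restriction map $T\mapsto T|_{\cale\cdot A}$ --- are exactly what is needed, and you use the two-sidedness of the approximate unit correctly ($aa_i\to a$ for $E|_{\cale\cdot A}=\mathrm{id}$, $a_ia\to a$ for $A$-linearity of $E$). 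The caveats you flag are genuine defects of the statement rather than of your proof: the lemma presupposes that $\lim_i\xi a_i$ exists for every $\xi\in\cale$ (some nondegeneracy or continuity hypothesis on the module that is not stated), and it does not specify whether $\cale\cdot A$ means the algebraic span or its closure --- in the latter case you would additionally need continuity of $T$ to see that $T$ preserves the closure, and some equicontinuity or boundedness of the net $(a_i)$ to extend $E(\eta)=\eta$ from $\cale\cdot A$ to its closure, as the naive interchange of the limits over $i$ and over the approximating net for $\eta$ is not automatic in a general topological module. With either reading made explicit, your verification is complete.
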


We use this lemma for 

\begin{lemma}

Let $B$ have an approxiamte unit. 

for every $\xi$ there is a $b \in B, \zeta \in \cale$ such that
$\xi = \zeta b^2$

oder mit summen von solchen

oder $\xi = \zeta b_1 b_2$

---

There is a ring isomorphism
$$\pi:\calk_{\calk_B(\cale \oplus B)}(\calf) \rightarrow
\compacts B {\calf 
E }: \phi(T)= T 
|_{\calf 
E}$$
where 
$E \in \compacts B {\cale \oplus B}$
is the canocial projection onto the coordinate $B$, 
and the additive subgroup $\calf  E: = \{\xi E \in \calf | \xi \in \calf\}$ of $\calf$ 
is turned into a $B$-module by setting
$\xi E \cdot b := \xi E m_b$ 
for $b \in B$ and where $m_b \in \calk_B(\cale \oplus B)$ is the corner operator
$m_b(\eta \oplus d)= 0 \oplus bd$.  
The $G$-action on $\calf E$ is
the one induced by restriction. 
%

%

---

vllt besser:

$\calf M_B$, wo $M_B$ die corner operator algebra ist $m_b$

->

$z(\eta)= \xi \phi(\eta)
= \zeta b^2 \phi(\eta)
= \zeta m_b( b \phi(\eta)  \oplus 0_\cale)
$

$
=  (\zeta \psi) \circ m_b \circ ((b \oplus 0_\cale)\phi) (\eta)
= x m_b y
$

\end{lemma}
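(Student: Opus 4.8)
The plan is to construct $\pi$ explicitly from the stated formula and to verify, in turn, that it is well defined into $\calk_B(\calf M_B)$, multiplicative, injective and surjective, reducing each point to one structural decomposition of operators in $A:=\calk_B(\cale\oplus B)$. So the first step is the decomposition lemma: every $z\in A$ can be written as a finite sum $z=\sum_i x_i m_{b_i} y_i$ with $x_i\in A_1:=\calk_B(B,\cale\oplus B)$, $y_i\in A_2:=\calk_B(\cale\oplus B,B)$ and $b_i\in B$. For $z=\theta_{\xi,\phi}$ one writes $\xi=\sum_j\zeta_j b_j d_j$, using weak cofullness of $\cale$ — hence cofullness of $\cale\oplus B$ by Lemma~\ref{lemma117} — together with the quadratik property of $B$, and then factors $\theta_{\xi,\phi}$ through the corner projection $\psi$ onto the $B$-coordinate exactly as in the proof of Lemma~\ref{lemma113}. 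One records along the way that the relevant composites of an $A_2$-operator followed by an $A_1$-operator land in $M_B$; this is what will keep $\calf M_B$ stable under all the manipulations below.

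The second step is to check that $\calf M_B$, regarded as a right functional module over $(M_B,\Ad(\beta))\cong(B,\beta)$ with the functional space $\Theta_B(\calf M_B)$ as defined, is cofull: one applies cofullness of $\calf$ over $A$ twice and the decomposition of step~1, absorbing the extra factors into $M_B$. By Lemma~\ref{lemma113} this yields that $\calk_B(\calf M_B)$ is quadratik and an essential ideal in itself — the property that will later rescue the well-definedness of the inverse of $\pi$. With this in place, step~3 shows $\pi$ is defined: for $T\in\calk_A(\calf)$ one has $T(\xi m_b)=T(\xi)m_b\in\calf M_B$, so $\pi(T)$ is a $B$-module endomorphism of $\calf M_B$; and writing $T=\theta_{\xi,\phi}$ with $\xi$ decomposed as in steps~1--2 exhibits $\pi(T)$ as a finite sum of elementary compacts $\theta_{\zeta_i',\psi_i}$ with $\zeta_i'\in\calf M_B$ and $\psi_i\in\Theta_B(\calf M_B)$, so $\pi(T)\in\calk_B(\calf M_B)$. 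That $\pi$ is additive and multiplicative is then immediate from $\theta_{\xi,\phi}\theta_{\eta,\chi}=\theta_{\xi\phi(\eta),\chi}$ and the fact that restriction commutes with composition.

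For injectivity (step~4): assuming $\pi(T)=0$, decompose an arbitrary $\xi\in\calf$ as $\xi=\sum_i\zeta_i x_i m_{b_i} y_i$ using cofullness of $\calf$ and step~1, so that $T(\xi)=\sum_i T(\zeta_i x_i m_{b_i})\,y_i$; since each $\zeta_i x_i m_{b_i}\in\calf M_B$ and $\pi(T)=0$, every summand vanishes and $T=0$. For surjectivity (step~5): given $S\in\calk_B(\calf M_B)$, define $T$ on the spanning elements by $T\big(\sum_i\xi_i x_i m_{b_i} y_i\big):=\sum_i(\sigma\circ S)(\xi_i x_i m_{b_i})\,y_i$, where $\sigma:\calf M_B\hookrightarrow\calf$ is the inclusion. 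To see this does not depend on the chosen representation, multiply by an arbitrary $z=\sum_j p_j m_{d_j} q_j\in A$: because each composite $y_i p_j$ lies in $M_B$, the right-hand side becomes $\sum_j(\sigma\circ S)\big(\sum_i\xi_i x_i m_{b_i} y_i p_j\big)m_{d_j}q_j$, which vanishes whenever $\sum_i\xi_i x_i m_{b_i} y_i=0$; since $z$ was arbitrary and $\calk_B(\calf M_B)$ is an essential ideal in itself (step~2), $T$ is well defined, is visibly compact, and satisfies $\pi(T)=S$. The $G$-action statement is then a routine check. I expect the main obstacle to be exactly this well-definedness argument in step~5 — it is the only point where one really needs the essential-ideal-in-itself property rather than bare manipulation of elementary operators — together with the continual bookkeeping of which composites of $A_1$-, $A_2$- and $M_B$-operators stay inside $M_B$, since the entire proof works by funnelling everything through the distinguished $B$-corner of $\cale\oplus B$.
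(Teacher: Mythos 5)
Your argument is, step for step, the paper's own proof of Proposition \ref{lemma71} (the polished form of this statement): the same decomposition $z=\sum_i x_i m_{b_i}y_i$ through the $B$-corner with the observation $A_2A_1\subseteq M_B$, the same cofullness argument making $\calk_B(\calf M_B)$ an essential ideal in itself via Lemma \ref{lemma113}, and the same multiply-by-arbitrary-$z$ device to get well-definedness of the preimage in the surjectivity step. Nothing to add.
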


\begin{proof}

Set $\calf E:= \calf M$, wo $M$ bild of corner embedding 
$e: B \rightarrow \call_B(\cale \oplus B)$

Sei $z \in \compacts B {\cale \oplus B}$

Dann
$z(\eta)= \xi \phi(\eta)
\approx \xi b^2 \phi(\eta)
= \xi m_b( b \phi(\eta)  \oplus 0_\cale)
$

$= \xi \psi (\phi(\eta) \oplus 0_\cale)
\approx \xi \psi( (b \oplus 0_\cale)\phi(\eta))
\approx \xi \psi (m_b( (b \oplus 0_\cale)\phi(\eta)) ) $

$
=  (\xi \psi) \circ m_b \circ ((b \oplus 0_\cale)\phi) (\eta)
= x m_b y
$

wo $\psi \in \Theta_B(\cale \oplus B)$ projektion auf koordinate $B$ 


jedoch scheint alles mit approx einheit, 
da für alle $\eta$ gelten muss 

---

man müsste sogar limes $x m_b y$ nehmen 

\end{proof}

$b^2 in \calk_B(\cale)$ :

$a b (\eta)= 
\xi \phi(\xi' \phi(\eta))
= \xi \phi(\xi') \phi(\eta)$  

\begin{proof}

We have 
$$\xi  \phi(\eta )=
\lim_i  
(\xi \oplus b) E( b_i \phi(\eta ))$$
 
for all $\xi,\eta \in \cale \oplus B$ such that
every $z \in \compacts B {\cale \oplus B}$
is presentable as a sum of products of the form $x E y$ for some
$x,y \in \compacts B {\cale \oplus B}$

Injective: Assume that $\pi(T)= T \circ E=0$. Then
$T(\xi E)= 0$ for all $\xi \in \calf$.

Given any $\xi$,
Choose a $z \in \compacts B {\cale \oplus B}$
such that $\xi z = \xi$.

Then $T(\xi z)= 0$ because $T(\xi x E y)=
T(\xi x E)y =0$ by what we said above.

Thus $T=0$.

Surjective:

Given $S \in \compacts B {\calf E}$
we define $T \in \compacts {\compacts B {\cale \oplus B}} \calf$ by
$T(\xi \sum_i x_i E y_i):= \sum_i S(\xi x_i E) y_i$, where $\xi \sum_i x_i E y_i = \xi$.

Define

$T(\xi E z):= S(\xi E) z$



\end{proof}

\begin{remark}
Funktoriellen eigenschaften von $\phi = \phi(\pi)$
\end{remark}



We have seen that we can successively eliminate extended double split exact sequences with a then final homomorphism $P$ eliminating them all.
So we may remark that we can then eliminate words simultanously:

\begin{lemma}
If $z_1 , \ldots , z_n
\in GK^G(A,B)$ then there is a ring homomoprhism $P$, right-invertible by some morphism $Q$ in $GK^G$, so $P Q =1$ in $GK^G$, such that $z_1 P,z_2 P, \ldots, z_n P$ are in $L_0 GK^G$.
\end{lemma}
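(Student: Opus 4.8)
The plan is to reduce the statement to the already-proved Corollary \ref{lemma98}. First I would let $H \subseteq GK^G(A,B)$ be the abelian subgroup generated by $z_1,\dots,z_n$; it is finitely generated by construction. Corollary \ref{lemma98} then supplies a $GK^G$-right-invertible ring homomorphism $P\colon B \to D$ — say with right inverse $Q \in GK^G(D,B)$, so that $PQ = 1$ in $GK^G$ — such that $P_*\colon H \to L_0 GK^G(A,D)$, $P_*(z)=zP$, is a well-defined group homomorphism. In particular $z_iP = P_*(z_i) \in L_0 GK^G(A,D)$ for every $i$, which is exactly the assertion.

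Alternatively, and more self-containedly, I would rerun the induction underlying Corollary \ref{lemma98}. By Lemma \ref{lemma97}.(ii) pick $P_1$, right-invertible in $GK^G$ by some $Q_1$, with $z_1P_1 \in L_0 GK^G$. Inductively, given $P_1,\dots,P_k$ right-invertible in $GK^G$ with $z_jP_1\cdots P_k \in L_0 GK^G$ for all $j\le k$, apply Lemma \ref{lemma97}.(ii) to the morphism $z_{k+1}P_1\cdots P_k$ to obtain $P_{k+1}$, right-invertible by some $Q_{k+1}$ in $GK^G$, with $z_{k+1}P_1\cdots P_{k+1} \in L_0 GK^G$. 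After $n$ steps put $P := P_1\cdots P_n$ and $Q := Q_n\cdots Q_1$; then $PQ = 1$ in $GK^G$, $P$ is a ring homomorphism, and each $z_iP \in L_0 GK^G$.

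The only thing to check — and the main, very mild, obstacle — is that appending the later homomorphisms $P_{k+1},\dots,P_n$ on the right does not spoil the level-$0$ property already achieved for $z_1,\dots,z_k$. This is immediate: a morphism in $L_0 GK^G(A,D)$ has the form $\pm f_1\pm\cdots\pm f_m$ with the $f_i$ ring homomorphisms, so composing on the right with a ring homomorphism $g$ yields $\pm f_1g\pm\cdots\pm f_mg$, again a level-$0$ morphism since each $f_ig$ is a ring homomorphism. Hence $L_0 GK^G$ is stable under right composition with ring homomorphisms, and the induction goes through. Everything else is routine bookkeeping with the right inverses $Q_i$, noting that the composite $Q = Q_n\cdots Q_1$ is a genuine right inverse of $P$ in $GK^G$.
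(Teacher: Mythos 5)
Your proposal is correct and is essentially the paper's own argument: the paper obtains this simultaneous statement by exactly the induction you describe, repeatedly applying lemma \ref{lemma97}.(ii) and composing the right-invertible ring homomorphisms (this is the induction carried out for corollary \ref{lemma98}, which your first route simply cites). Your explicit check that $L_0 GK^G$ is stable under right composition with ring homomorphisms is the same (tacit) observation the paper relies on, so nothing is missing.
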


\begin{definition}

Let $\F$ be the free category with $\pm$ generated by the ring homomorphisms and the $\Delta$s and $e^{-1}$s.

Then we can select a $P_z \in \F$ such that
$z P_z$ is level-0 in the quotient $GK^G=\F/\equiv$, and there is a $Q_z \in \F$ such that $P_z Q_z = 1$ in $GK^G$.

\end{definition}

\begin{definition}
{\rm  
We have seen above that we may assign to each 
morphism $z \in \F$ 
a morphism $w \in \F$ by a finite and well-defined algorithm such that 
\begin{itemize}

\item
$[w] \in L_0 GK^G$

\item
$[z] = 0$ if and only if $[w] = 0$ in $GK^G$

\end{itemize}

Let $\Phi: \F \rightarrow L_0 \F$ denote this assigment, thta is, $\Phi(z):= w$.

Recall that more precisely $w = z P$ for a $GK^G$-rightinvertible ring morphism $P$ depending on $z$. 

Define an equivalence relation on $L_0 \F$ by saying $\Phi(z_1) \equiv \Phi(z_2)$ in if 
$[z_1] = [z_2]$ in $GK^G$. 
 
We get then an assignment $\Psi: GK^G \rightarrow L_0 \F/\equiv$ given by $\Psi([z])= [\Phi(z)]$.

$$\xymatrix{  GK^G   \ar[rd]^\Psi   \\
  L_0 GK^G   \ar[r]^q   \ar[u] &  L_0 \F /\equiv
}
$$
}
\end{definition}

ist $\Psi$ überhaupt injectiv? 

---

jedoch geht das gar nicht, etwa $z_1,z_2 \in L_0$, es gibt zwei $P_1= 1, P_2 \in L_n$

kann nicht $z_1 \equiv z_2 P_2$ setzen

nur $L_0 GK^G$ das richtige7
\begin{lemma}
If $X$ is a finite subset of $GK^G$
then there is an injective map 
$X \rightarrow L_0 GK^G$
\end{lemma}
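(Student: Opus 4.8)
The plan is to read this as a statement about a finite family $X=\{x_1,\ldots,x_n\}$ of morphisms with a common source $A$ and common range $B$ (this is the only reading under which the conclusion makes clean sense, via $z\mapsto zP$ for a single right-invertible $P$), and to deduce it directly from the $\nabla$-calculation, concretely from Corollary~\ref{lemma98}. First I would let $H\subseteq GK^G(A,B)$ be the abelian subgroup generated by $X$; since $X$ is finite, $H$ is finitely generated. Corollary~\ref{lemma98} then furnishes a ring homomorphism $P:B\rightarrow D$, right-invertible in $GK^G$, such that $P_*:H\rightarrow L_0 GK^G(A,D)$, $P_*(z)=zP$, is an injective group homomorphism. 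Restricting $P_*$ to $X\subseteq H$ yields an injective map $X\rightarrow L_0 GK^G(A,D)$, which is the claim. So the lemma is just the observation that a finite set sits inside a finitely generated subgroup, to which the machinery of Section~\ref{sec10} applies verbatim; it is strictly weaker than Corollary~\ref{lemma98} (a finite subset need not be a subgroup), so it requires nothing new.

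If one prefers a self-contained argument, I would instead iterate Lemma~\ref{lemma97}.(ii): choose successively ring homomorphisms $P_1,\ldots,P_n$, each right-invertible in $GK^G$ with right-inverse $Q_1,\ldots,Q_n$, such that $x_1 P_1$ is a level-$0$ morphism, $x_2 P_1 P_2$ is a level-$0$ morphism, and in general $x_k P_1\cdots P_k\in L_0 GK^G$. Put $P:=P_1\cdots P_n$ and $Q:=Q_n\cdots Q_1$; then $PQ=1$ in $GK^G$, so $P$ is $GK^G$-right-invertible, and since $L_0 GK^G$ is by definition a subcategory of $GK^G$ containing all ring homomorphisms, it is closed under composition, whence each $x_k P=(x_k P_1\cdots P_k)\,P_{k+1}\cdots P_n$ lies in $L_0 GK^G(A,D)$. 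Finally, if $x_i P=x_j P$ then $x_i=x_i PQ=x_j PQ=x_j$, so $z\mapsto zP$ is injective on $X$, as desired.

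The only two points that need a word of justification are precisely the two just invoked: that $GK^G$-right-invertibility is preserved under composition (immediate, taking $Q:=Q_n\cdots Q_1$), and that a level-$0$ morphism composed on the right with a ring homomorphism remains level-$0$ (immediate from $L_0 GK^G$ being a subcategory). I do not expect any genuine obstacle here; all the substance is already packaged in Lemma~\ref{lemma97} and Corollary~\ref{lemma98}, and the proof is essentially a one-line reduction. The only thing to be slightly careful about is fixing and stating the interpretation of ``finite subset of $GK^G$'' (finite subset of a fixed Hom-set $GK^G(A,B)$), so that the target $L_0 GK^G$ of the asserted injection is a well-defined abelian group into which $z\mapsto zP$ maps.
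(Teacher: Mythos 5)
Under the reading you fix --- $X$ a finite subset of a single Hom-set $GK^G(A,B)$, with the injection given by $z\mapsto zP$ for one uniform $GK^G$-right-invertible ring homomorphism $P$ --- your argument is correct, and it is essentially the paper's own machinery: the iteration you describe (choose $P_1,\dots,P_n$ successively by Lemma \ref{lemma97}.(ii), set $P:=P_1\cdots P_n$, $Q:=Q_n\cdots Q_1$, note $PQ=1$ and that level-$0$ morphisms stay level-$0$ under right composition with ring homomorphisms) is literally the induction in the proof of the lemma preceding Corollary \ref{lemma98} and of Corollary \ref{lemma98} itself, and injectivity from right-invertibility is the same observation made there. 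So for the restricted statement there is indeed nothing new to prove.

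The caveat is that this restriction is not an innocent clarification but is exactly where the content of the stated lemma lies. As literally written the claim is about a finite subset of the whole category $GK^G$ (morphisms with possibly different sources and targets), and the intended map is the $\nabla$-reduction $x\mapsto xP_x$ with $P_x$ depending on $x$; note also that if one drops that intention the statement becomes vacuous, since any injection of a finite set into the infinite class $L_0GK^G$ would do. The draft notes attached to this lemma in the source record precisely the obstruction your reading avoids: for $X=\{z,\,zP_1\}$ the natural choices send both $z$ and $zP_1$ to $zP_1P_2$, so the assignment fails to be injective across Hom-sets, and the author concludes ``lemma falsch'' and never proves the statement --- only the Hom-set-wise versions (simultaneous level-$0$ reduction of $z_1,\dots,z_n\in GK^G(A,B)$, and Corollary \ref{lemma98}) survive into the paper proper. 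Your proposal therefore establishes a special case of Corollary \ref{lemma98} correctly, but it does not address the stated generality: there one would additionally have to rule out, or avoid by a more careful choice of the $P_x$, coincidences between images coming from different Hom-sets whose target algebras happen to agree, and no such argument is given either by you or in the paper.
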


Kann immer eine injective abb $GK^G \rightarrow L_0GK^G$ wählen 

könnten gleichb sein, zb,  $z, z p_1 \rightarrow z p_1 p_2, z p_1 p_2$ gleich

also lemma falsch

\section{}

vllt unterschied zwischen 
$A \otimes B$ und $A \otimes_\Z B$

\section{}

\begin{definition}
{
Define $f:A \rightarrow A \otimes \K
 \cong \compacts A  { \ell^2(G,A)}$
by
$f(a) \xi = v a \langle v,\xi\rangle$
where $v$ is invariant vector in $\ell^2(g,A)$,
and $\xi \in \ell^2(G,A)$.

$f(a)(b \otimes \xi)= a b \otimes w \langle w,\xi\rangle$
where $a,b \in A, w,\xi \in \ell^2(G)$

$= a \langle 1, b\rangle \otimes w \langle w,\xi\rangle
=
= a \otimes w \langle 1 \otimes w,b \otimes \xi\rangle$
}
\end{definition}

\begin{definition}
{
bei ringen:

$\phi:A \oplus \ldots A \rightarrow A: \phi(a_1, \ldots,a_n) = a_1$

$f(a)(a_1,\ldots,a_n)= (a,0,...,0) \phi(a_1,\ldots,a_n)$
$\approx (b_i,0,...,0) a \phi(a_1, \ldots,a_n)$

mit approx einheit $(b_i)_i \subseteq A$

}
\end{definition}

\begin{definition}
{
jedoch nicht invariant vektor oben

so:
$\phi(a_1, \ldots,a_n) = (1/n)\sum_i a_i =:v$

$v a \phi()$

$v a \phi(v b \phi(x)) = v a \phi(v) b \phi(x)
\approx v a b \phi(x)$

mit $v:\approx (b_i,b_i, .... ,b_i) /n \approx (1,...,1) /n$
  
dann $\phi(v)=1$

---

kann hier einheit $1 \in \F$ nehmen,
da kroern erinbettung $\F \rightarrow M_n(\F)$, 
und dann erst $1_A \otimes f$ 

}
\end{definition}

\section{}

wie ist $f$ kornereinbettung wenn $G$ compacte gruppe

vllt einfach $\F^\infty = \ell^2(G)=\ell^2(\N)$ mit unendlich dim onb

\section{}

\begin{definition}
{
jedoch kornereinbettungsdefinition:
"eindimensionale kompakte operatoren"

$a \mapsto v a \phi(.)$

bzw. limiten davon

sodass bijectiv auf $A$

---

kann darauf verzichten:

nehme $\C = \F$ kommutativer körper (zb $\Q,\Z_p$)

dann kann in $\ell^2(\F)$ basis ergänzen

wird es aber $\F^n$ mit der direkten summe aktion von $G$,
$\rightarrow$ brauche nicht, denn aktion nicht diagonal

}
\end{definition}

\section{}

was ist minimal projektion in $M_n \otimes \F$ ?

wie homotop dort
 
min proj =ein dim proj so wie oben?

\section{}

\begin{lemma}
There  is an isomoprhism 
$$\pi: \calk_B(\cale 
) \otimes D
\rightarrow 
\calk_{B \otimes D}(\cale \otimes D 
) 
$$
$$
\pi(\theta_{\xi,\phi} \otimes d) = \theta_{\xi \otimes d, \phi \otimes m_d}$$

where 
$m_d \in \Theta_D(D)$ is the multiplication operator $m_d(d_2)= d d_2$.
and $\xi \in \cale , \phi \in \Theta_B(\cale)$. 

\end{lemma}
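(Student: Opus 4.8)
The plan is to obtain $\pi$ as the restriction of the ring homomorphism of line (\ref{eqh}), taken with the second module being $D$ itself. First I would make $D$ a right functional $D$-module over itself, with $\Theta_D(D)=D$ acting by the left multiplications $m_d(d_2)=dd_2$ (definition \ref{def115}); then $\calk_D(D)\cong D$ by lemma \ref{lemma1172}, and (\ref{eqh}) yields a ring homomorphism $\Hom B(\cale)\otimes\Hom D(D)\to\Hom{B\otimes D}(\cale\otimes D)$ which, as noted immediately after (\ref{eqh}), sends compact operators to compact operators. So nothing has to be checked about well-definedness. The one computation to carry out is, on an elementary tensor of elementary compacts, $\pi(\theta_{\xi,\phi}\otimes\theta_{d_1,\psi})=\theta_{\xi\otimes d_1,\,\phi\otimes\psi}$; this is immediate by evaluating both sides on $\xi'\otimes d'$ and unwinding the definition of the functional $\phi\otimes\psi\in\Theta_{B\otimes D}(\cale\otimes D)$ from definition \ref{def19} (it is the same identity used in the proofs of lemma \ref{lemma17} and lemma \ref{lemma1323}). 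Specialising $\psi$ to $m_d$ and resolving the first slot via quadratik-ness of $D$, in the paper's customary abbreviated notation, gives the displayed formula.

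Surjectivity is then immediate from this identity and the shape of the functional space: by definition \ref{def19} every element of $\Theta_{B\otimes D}(\cale\otimes D)$ is a finite sum of $\phi\otimes\psi$, and every vector of $\cale\otimes D$ is a finite sum of $\xi\otimes d_1$, so an arbitrary compact operator on $\cale\otimes D$ expands $\Z$-bilinearly into a finite sum of terms $\theta_{\xi\otimes d_1,\,\phi\otimes\psi}=\pi(\theta_{\xi,\phi}\otimes\theta_{d_1,\psi})$, each the image of an element of $\calk_B(\cale)\otimes\calk_D(D)$; the same expansion shows the image is all of $\calk_{B\otimes D}(\cale\otimes D)$.

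The real work, and the step I expect to be the main obstacle, is injectivity. In the $\F$-balanced setting this is precisely lemma \ref{lemma1323} (take $A:=B$, $B:=D$, $\calf:=D$ and identify $\calk_D(D)\cong D$), where injectivity is inherited from the fact that (\ref{eqh}) itself is injective over a field. Over $\Z$ the map (\ref{eqh}) need not be injective on all of $\Hom B(\cale)\otimes\Hom D(D)$, so one must argue inside the compacts: given $x=\sum_i\theta_{\xi_i,\phi_i}\otimes\theta_{d_i,\psi_i}$ with $\sum_i\xi_i\phi_i(\xi)\otimes d_i\psi_i(d)=0$ in $\cale\otimes D$ for all $\xi\in\cale$, $d\in D$, I would pass to the finitely generated subgroups of $\calk_B(\cale)$ and $\calk_D(D)$ generated by the $\theta_{\xi_i,\phi_i}$ and the $\theta_{d_i,\psi_i}$, use $\calk_D(D)\cong D$ together with quadratik-ness to put the $D$-side operators into a matrix-unit-like normal form, and then evaluate against the functionals $\phi\in\Theta_B(\cale)$, $\psi\in\Theta_D(D)$ to slice off the individual summands and conclude $x=0$ already in $\calk_B(\cale)\otimes\calk_D(D)$. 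The point — as with the classical identity $M_n(A)\otimes M_m(B)\cong M_{nm}(A\otimes B)$ — is that everything stays among finite sums of elementary operators, so no unit in $B$ or $D$ is ever needed; this is exactly why injectivity survives on the compacts even though it can fail on all module homomorphisms, and it is the bookkeeping of this normal form that I would have to set up carefully.
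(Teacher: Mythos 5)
Your construction and your surjectivity argument are essentially the route the paper itself takes: restrict the homomorphism (\ref{eqh}) with second factor the module $D$ over itself, identify $\calk_D(D)\cong D$ via lemma \ref{lemma1172}, and use $\pi(\theta_{\xi,\phi}\otimes\theta_{d,\psi})=\theta_{\xi\otimes d,\phi\otimes\psi}$ together with definition \ref{def19}, exactly as in the one-line proof of lemma \ref{lemma1323} and as the statement is invoked in section \ref{section10}. (Your decision to read the displayed formula as abbreviated notation is indeed necessary: taken literally, $\theta_{\xi\otimes d,\phi\otimes m_d}$ is the image of $\theta_{\xi,\phi}\otimes d^2$, not of $\theta_{\xi,\phi}\otimes d$.)

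The genuine gap is the injectivity part, and it already infects the identification $\calk_D(D)\cong D$ that both the displayed formula and your argument presuppose. Quadratik does not imply that the left regular representation $D\rightarrow\calk_D(D)$ is injective; that is the separate condition of being an ``essential ideal in itself'' from definition \ref{def111}, and it can fail: let $D$ have $\Z$-basis (or $\F$-basis) $e,n$ with $e^2=e$, $en=n$, $ne=n^2=0$ (the first-row matrices inside the $2\times 2$ matrix ring). Then $D=eD$ is quadratik, but $nD=0$, so left multiplication by $n$ is zero and $\calk_D(D)\cong D/\langle n\rangle$. Now take $B=\cale=\Z$ (or $\F$): every compact operator on $\cale\otimes D$ is a left multiplication by an element of $D$, so the element $z\otimes n$, with $z$ the identity of $\calk_B(\cale)\cong\Z$, is nonzero in $\calk_B(\cale)\otimes D\cong D$ yet is sent to the zero operator. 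Hence the map of the lemma is not injective in the stated generality, neither over $\Z$ nor in the $\F$-balanced setting, and this shows why your proposed repair cannot be completed: evaluating the image operator against vectors and functionals only ever sees each $d_i$ through the products $d_i d'$, i.e.\ only modulo the left annihilator of $D$, and a general quadratik ring has no matrix units or idempotents with which to produce the ``normal form'' your sketch relies on. To make the lemma true one must add a hypothesis such as $D$ unital, or $D$ an essential ideal in itself (which is what the identification in lemma \ref{lemma1172} tacitly requires); and if one insists on $\otimes_\Z$ rather than the $\F$-balanced setting of section \ref{section10} (where injectivity of (\ref{eqh}) settles the matter as in lemma \ref{lemma1323}), one would in addition have to control torsion obstructions to the injectivity of $\calk_B(\cale)\otimes_\Z D\rightarrow\Hom{\Z}(\cale,\cale\otimes_\Z D)$, which your finitely-generated-subgroup reduction does not address.
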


If $G$ is a compact group then
we may choose $\underline E G$ to be a one-point space and so the above Baum-Connes map $\nu$ is exactly the Green-Julg map.

bc für diskrete vector spaces über fields:

die poroper spaces $X$ diskret (?)

kann aber auch alle zulassen

\begin{remark}
{\rm 
If $A$ is a $C^*$-algebra 
then then the usual baum connes map $\nu_{C^*}$ is only
the composition of the above map $\nu$ with the 
induced map $f_*$ for the ring homomorphism  
$f:A \rtimes G \rightarrow A \rtimes_{C^*} G
:= \overline{A \rtimes G}^{C^*}$.
That is, $\nu_{C^*} = \nu f_*$. 

Everything in $KK^G$, that is, $GK^G=KK^G$. 

stimmt nicht, da $GK^G$ groupen größer wenn man auch nicht abgeschlossene $C1*$-algebrwn zulässt 
}
\end{remark}

\begin{lemma}
If the $KK$-theory $C^*$-axioms are incoorporated in $GK^G$-theory then there is a canonical functor
$\zeta: KK^G \rightarrow GK^G$.

kommutiert mit deszent, tensor

---

jedoch auch ev schlecht für $\nabla$-rechnung, denn man muss ja auch die zusätzlichen $C^*$-kornereinbettungen wegbringen 

---

die $C^*$-e s könnte man aber auch separat wegbringen mit $e \rightarrow e e^{-1}$

-->
n, denn man muss homomoprhisemen mit kornereinbettung skippen, und hier mischung chlecht, siehe lemma \ref{lemma43}

---

beachte aber auch tensor product und descent functor

---

zudem hat man zwei $\call_A$ und zwei komakte räume $\calk_A$ 
wenn man alles von $C^*$ mitnehmen will 

-> könnte aber passen, da man ja glieche module mit unterschidlichen $\Theta$ unterscheidet

\end{lemma}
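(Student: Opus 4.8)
The plan is to obtain $\zeta$ from the universal property of $KK^G$ recalled in \cite{bgenerators} (going back to \cite{higson,cuntzn}): the canonical functor $q\colon\mathfrak{Sep}^G\to KK^G$ from the category $\mathfrak{Sep}^G$ of separable $G$-$C^*$-algebras and equivariant $*$-homomorphisms is the universal additive, homotopy invariant, $C^*$-stable and split exact functor out of $\mathfrak{Sep}^G$ into an additive category; hence every functor $F$ out of $\mathfrak{Sep}^G$ with these four properties factors uniquely as $F=\zeta q$. I will feed into this the functor induced by the inclusion of separable $C^*$-algebras into the $C^*$-incorporated version of $GK^G$.

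First I would record that $\mathfrak{Sep}^G$ is (identified with) a subcategory of $\ring G$: every $C^*$-algebra is quadratik, because by the Cohen--Hewitt factorization theorem each of its elements is a single product $ab$; closed ideals and quotients are again $C^*$-algebras, hence quadratik, in line with Lemma \ref{lemma116}; and $*$-homomorphisms are ring homomorphisms. Incorporating the $KK$-theory $C^*$-axioms means: on a countably generated Hilbert $A$-module $\cale$ one takes the functional space $\Theta_A(\cale)=\{\langle\eta,\cdot\rangle\mid\eta\in\cale\}$ and defines $\calk_A(\cale)$ as the \emph{norm closure} of the span of the $\theta_{\eta,\phi}$, the corner embeddings of Definition \ref{def22} being formed with these data and with $\H_A$ the standard Hilbert $A$-module; one uses the $C^*$-tensor product and the usual $C^*$-notion of homotopy (endpoints landing in $B$). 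As explained in the introduction and in Remark \ref{rem12}, with these choices $GK^G$ restricted to $\mathfrak{Sep}^G$ coincides with $KK^G$ by \cite{bgenerators}. Let $F\colon\mathfrak{Sep}^G\to GK^G$ be the composite $\mathfrak{Sep}^G\hookrightarrow\ring G\to GK^G$ of this inclusion with the canonical functor $\ring G\to GK^G$ of Definition \ref{def121}, so that $F(A)=A$ on objects and $F(\pi)=\pi$ on $*$-homomorphisms.

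Next I would verify that $F$ has the four required properties. Additivity is the ``Additive category'' relation $1_{A\oplus B}=p_A i_A+p_B i_B$ of Definition \ref{def121}, applied to $C^*$-direct sums. Homotopy invariance: a continuous path of $*$-homomorphisms $A\to B$ is a homotopy $A\to B\otimes C[0,1]$ in the $C^*$-incorporated sense, so the ``Homotopy invariance'' relation forces $F(f_0)=F(f_1)$. $C^*$-stability: for separable $A$ the stabilization $A\to A\otimes\calk$ (here $\calk$ the compacts on a separable infinite-dimensional Hilbert space) is, up to ring isomorphism, a corner embedding in the sense of Definition \ref{def22}, taking $\cale=\H_A$ the countably infinite Hilbert-module direct sum of copies of $A$ and norm-closed compact operators, and using Kasparov's stabilization theorem to identify $\calk_A(H_\cale\oplus\H_A)\cong\calk_A(\H_A)\cong A\otimes\calk$ exactly as in Remark \ref{rem12}(iv),(v); hence it is inverted by the ``Stability'' relation, and likewise every finite matrix corner embedding $A\to M_n(A)$. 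Split exactness: a $C^*$-split exact sequence $0\to B\to M\to A\to 0$ with $*$-homomorphic split $s$ is a split exact sequence in $\ring G$, and the ``Split exactness'' relation supplies $\Delta_s$ with $1_B=j\Delta_s$ and $1_M=\Delta_s j+fs$, so $F$ is split exact.

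The universal property then yields a unique functor $\zeta\colon KK^G\to GK^G$ with $\zeta q=F$, and it is this uniqueness that makes $\zeta$ canonical; on objects and on $*$-homomorphisms $\zeta$ is the tautological inclusion, and one should further note that $\zeta$ intertwines the descent, tensor product and induction functors on the two sides, since these are given by identical formulas on $*$-homomorphisms and on the generators $e^{-1}$ and $\Delta_s$. The main obstacle I anticipate is not any single verification but pinning down the hypothesis: one must define the $C^*$-incorporated $GK^G$ so that its homotopy relation genuinely contains $C^*$-homotopy (the unmodified homotopy of the paper passes through a complexification, which is vacuous but has to be reconciled for algebras that are already complex) and so that its stability relation inverts the $C^*$-stabilizations $A\to A\otimes\calk$ together with all matrix embeddings; once that datum is fixed correctly, each of the four checks above is routine and the universal property of $KK^G$ does the rest. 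A secondary point requiring care is that the universal property as stated in \cite{bgenerators} be applicable with $GK^G$ itself as the receiving additive category, which it is, $GK^G$ being additive by construction.
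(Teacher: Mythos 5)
You should know that the paper contains no proof of this statement at all: the ``lemma'' sits in the unused scratch material at the end of the source, and the German annotations inside it are the author's own notes of hesitation, not an argument. So there is nothing to compare your proof against, and it has to stand on its own; in my judgement it does. Routing the composite $\mathfrak{Sep}^G\hookrightarrow\ring G\to GK^G$ through the Cuntz--Higson--Thomsen universal property of $KK^G$ is legitimate and is in fact the same mechanism by which \cite{bgenerators} identifies the $C^*$-level theory with $KK^G$; an equivalent and slightly more economical route, closer to the spirit of definition \ref{def121}, is to use the generators-and-relations presentation directly: once the ``$C^*$-axioms are incorporated'', the enlarged $GK^G$ contains every generator of the $C^*$-version of $GK^G\cong KK^G$ ($*$-homomorphisms, inverses of norm-closed corner embeddings, the $\Delta_s$ of $*$-split sequences) and satisfies all of its relations, so $\zeta$ exists tautologically and uniquely on generators. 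Either way, your four verifications (additivity, homotopy invariance, stability via remark \ref{rem12} and Kasparov stabilization, split exactness) are correct, and your two flagged caveats are exactly the right ones to flag: the hypothesis has to be fixed so that the homotopy relation literally contains $C^*$-homotopy (work $\F=\C$-balanced as in section \ref{section10}, so that $B\otimes^{\C}C[0,1]=B[0,1]$ and no spurious complexification intervenes) and so that the stabilizations $A\to A\otimes\calk$ are among the inverted corner embeddings.

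One caution so that you do not overclaim: the author's notes record concerns that your existence proof correctly sidesteps but does not resolve. After incorporation the theory carries two species of compact and adjointable operators on the same underlying module (algebraic $\calk_A$ versus norm-closed), which is formally harmless because modules with different functional spaces $\Theta$ are different objects of the formalism, but the extra $C^*$-corner embeddings do interfere with the $\nabla$-calculation of section \ref{sec10}: lemma \ref{lemma43} lets ring homomorphisms skip past a corner embedding by an internal tensor product construction adapted to one fixed notion of compacts, and mixing the two notions in a single reduction is precisely what the note warns about. Likewise, your closing remark that $\zeta$ ``intertwines descent and tensor product'' is plausible but is asserted, not proved; on the $KK^G$ side those functors are defined via Kasparov cycles (or via their own universal properties), and checking compatibility on the generators $e^{-1}$ and $\Delta_s$ requires the analogues of proposition \ref{lemma122} and lemma \ref{lemma1323} in the norm-closed setting. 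If you keep the statement of your lemma to the bare existence and uniqueness of $\zeta$, the proof is complete; the compatibility claims should be split off and proved separately.
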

 
jedoch könnte es mit dem functor gehen, wenn
strong bc stimmt 

\begin{lemma}
Let the strong Novikov conjecture be true,
that is, there is a map
$\rho$ such that $\nu_{C^*} \rho = \idd$ in $KK^G$.

Then for every $C^*$-algebra $A$,
$\nu^{G,A}$ is injective.

stimmt wieder nicht, da $GK^G$ größer ist
als $KK^G$ 

jedoch level 0  

jedoch ring homomoprhism und $*$-homomoprhism

\end{lemma}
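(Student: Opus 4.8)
Following the paper's left-to-right composition convention, I read the hypothesis ``$\nu_{C^*}\rho=\idd$ in $KK^G$'' as the statement that the reduced Baum--Connes assembly map $\nu_{C^*}$ admits a left inverse $\rho$, i.e. is split injective; in the Dirac/dual-Dirac language $\rho$ is a dual-Dirac element. The plan is to promote this single splitting into a family of splittings, one for each coefficient algebra $A$, by using functoriality of assembly in the coefficient, and then to relate $\nu^{G,A}$ (which uses the algebraic crossed product) to $\nu_{C^*}$ via the canonical $*$-homomorphism $f\colon A\rtimes G\to\overline{A\rtimes G}^{C^*}$, so that $\nu_{C^*}=\nu^{G,A}f_*$ reduces injectivity of $\nu^{G,A}$ to injectivity of $\nu_{C^*}$ on the image of $f_*$; matching up the $GK^G$-domains against the $KK^G$-ones in this identification is itself a point requiring care.

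First I would collect the structural inputs. The tensor-product functor of section~\ref{sec11} and the descent functor of section~\ref{sec12}, together with proposition~\ref{lemma122} and corollary~\ref{lemma121} (descent commutes with compact operators and with corner embeddings), let one form from $\rho$ an exterior-product element $\gamma_A\in GK^G(A,A)$; it acts by post-composition on $\lim_{X}GK^G(C_0(X),A)$ and, compatibly, by $j^G(\gamma_A)$ on $GK(\C,A\rtimes G)$. Functoriality of the assembly map in the coefficient, lemma~\ref{lemma167} ($\nu^{G,B}_X(Z\rtimes 1)_*=Z_*\nu^{G,A}_X$), then produces a commuting square relating $\nu^{G,A}$, the descent of $\rho$, and $\gamma_A$. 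Chasing this square shows that the descent of $\rho$ (after the identification $\nu_{C^*}=\nu^{G,A}f_*$) provides a one-sided inverse for $\nu^{G,A}$ on the subgroup that is the image of $\gamma_{A,*}$ inside $\lim_{X}GK^G(C_0(X),A)$.

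Second, the extra information carried by strong Novikov in its dual-Dirac form is that $\gamma$ restricts to $1$ over $\underline E G$: for every structure map $\pi\colon C_0(Y)\to C_0(X)$ with $X\subseteq Y\subseteq\underline E G$ one has $\pi^*=\pi^*\circ\gamma_{A,*}$ on $GK^G(C_0(Y),A)$, so that $\gamma_{A,*}$ becomes the identity in the inductive limit. Hence the image of $\gamma_{A,*}$ is all of $\lim_{X}GK^G(C_0(X),A)$, the one-sided inverse of the previous paragraph is a genuine left inverse, and injectivity of $\nu^{G,A}$ follows.

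The main obstacle --- and the reason this lemma belongs among the author's open remarks --- is this last step for the \emph{full} domain: $\lim_{X}GK^G(C_0(X),A)$ is strictly larger than its $KK^G$-analogue, containing classes built from the synthetic splits $\Delta_s$ and the inverses $e^{-1}$ of corner embeddings, and a hypothesis living in $KK^G$ need not control them, so one cannot in general conclude $\gamma_{A,*}=\idd$ on all of $\lim_{X}GK^G(C_0(X),A)$. Two honest repairs remain. (a) Restrict to level-$0$ morphisms: by the $\nabla$-calculation (theorem~\ref{theorem108}, and the argument of proposition~\ref{prop176}) it suffices to split $L_0\nu^{G,A}$; the dual-Dirac splitting preserves level-$0$ classes, where $\gamma_{A,*}=\idd$ does hold, giving injectivity of $L_0\nu^{G,A}$ for all $A$ --- the ``however, level $0$'' version. (b) Restrict to the image of the comparison functor $KK^G\to GK^G$, equivalently to classes represented by $*$-homomorphisms or Kasparov cycles, where the identification with $KK^G$ makes the classical Dirac/dual-Dirac argument run verbatim --- the ``however, ring/$*$-homomorphism'' version. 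I expect that proving $\gamma_{A,*}=\idd$ beyond the level-$0$ part is the true sticking point, so the statement one should actually formulate and prove is the level-$0$ one.
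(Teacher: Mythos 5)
There is a genuine gap, and it is instructive to compare with what the paper itself does here. The ``lemma'' you were given is one of the author's scratch items, and the paper's own ``proof'' of it is in fact a refutation note: the author writes down the composite $\lim_X L_0 GK^G(C_0(X),A) \to L_0 GK(\C,A\rtimes G) \to L_0 GK(\C, A\rtimes_{C^*}G)$ and then concludes that the statement does \emph{not} hold (``stimmt nicht''), for two reasons that your proposal only partially absorbs. First, $GK^G$ is strictly larger than $KK^G$ even at level $0$: $L_0 GK^G(C_0(X),A)$ is the span of arbitrary equivariant \emph{ring} homomorphisms, which need be neither $*$-preserving nor continuous, hence are not morphisms of $KK^G$ at all; a hypothesis of the form $\nu_{C^*}\rho=\idd$ formulated in $KK^G$ therefore gives no control even over the level-$0$ part. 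Second, the paper's $\nu^{G,A}$ takes values in $GK(\C,A\rtimes G)$ for the purely algebraic crossed product, whereas $\nu_{C^*}$ lives in $KK$ with the $C^*$-completion; the author notes explicitly that $\zeta(\nu_{C^*})\neq\nu$, so the factorization $\nu_{C^*}=\nu^{G,A}f_*$ that your reduction relies on is not available inside a single category without a comparison functor between $KK^G$ and this ring-theoretic $GK^G$, which the paper does not construct (its scratch notes even flag that such a functor is problematic).

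Your diagnosis of the main obstacle (the $GK^G$-domain is bigger than the $KK^G$-one, so $\gamma_{A,*}=\idd$ cannot be concluded on the full limit) agrees with the author's, and your appeal to lemma \ref{lemma167} and the argument of proposition \ref{prop176} correctly identifies the only genuine theorem the paper proves in this direction: $\nu^{G,A}$ is injective for all $A$ if and only if $L_0\nu^{G,A}$ is injective for all $A$. But your repair (a) overshoots. You assert that ``the dual-Dirac splitting preserves level-$0$ classes, where $\gamma_{A,*}=\idd$ does hold, giving injectivity of $L_0\nu^{G,A}$ for all $A$''; this is exactly the step the author's notes reject (``jedoch ring homomorphism und $*$-homomorphism''), because on level-$0$ classes represented by non-$*$ ring homomorphisms neither the construction of $\gamma_A$ nor the identity $\gamma_{A,*}=\idd$ is inherited from $KK^G$. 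The honest conclusion is the conditional statement of proposition \ref{prop176} only; deducing injectivity of $\nu^{G,A}$ (or even of $L_0\nu^{G,A}$) from the strong Novikov conjecture in $KK^G$ remains open in this framework, and your repair (b) — restricting to classes in the image of a comparison with $KK^G$ — would first require constructing that comparison, which is precisely what is missing.
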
 

\begin{proof}

$$\xymatrix{ \lim_{X}
L_0 GK^G(C_0(X),A) \ar[r]^{\nu^{G,A}}  &   L_0 GK(\C,A \rtimes G)
\ar[r]  & L_0 GK(\C,A \rtimes_{C^*} G)
}
$$

jedoch $L_0 GK^G \neq L_0 KK^G$

deswegen stimmts nicht

habe aber immerhin $\zeta(\rho)$ 

$\zeta(\nu_{C^*}) \neq \nu$
\end{proof}

frage ob $\rho$ durch multiplication mit einem morphism realisiert

jedoch $GK$ und $GK^G$ 

zudem nutzt man sicher etwa eaxaktheit von $KK$ im $KK$-beweis 

\begin{example}

$$\xymatrix{ B \ar[r]^e \ar[d]^\pi &  \calk_B(\cale \oplus \H_B)  \ar[d]^{\phi}    \ar[r]
& L^p(\R) \cap L^q(\R) \square A  \ar[d]^{\psi} 
& K^p(\C)  \ar[l]^{s_\pm}  \ar[d] \\
D \ar[r]^{f} &    \ar[r]
&
& A  \ar[l]^{t_\pm} 
}$$

jedoch hat man nicht double abbildung $s_\pm$

zudem kann man immer free konstruktion

\end{example}

\section{}

bei uvf , lemma 7.3 ganz anfang (format der aktion)
brauche approx einheit

bei lemma 6.3.(iii) brauche positive $b^2$ spannt alle elemente,
durch gebrauch in lemma 7.5

brauche auch bei lemma 8.3 $b^2$ spannt alg

\section{}

$A$ $A,A$-bimodule

$\Theta_A(A), A_\Theta(A)$ 

$a \phi(b) = a x b = \psi(a) b$

\section{}

\begin{definition}

A ring $A$ is called ... if
$A = {\rm lin}_\Z \{a b|a,b \in A\}$

bzw closure davon

\end{definition}

wenn $A$ approx einheit hat, dann erfüllt:

$a = \lim_i a a_i $

\section{}

was mit rotation in ringen? 

wenn man nur polynom homotopy hat

zumindest äquivalenz von ecken einbettung in $M_2(A)$ als aiom in $GK$ 

---

eher mehr, denn in lemma braucht man 
rotation in $f(B) C \oplus C$

oder doch $M_2 (f(B))$   
(?)

\section{}

was mit widersrpuchsfreiheit von axoim eckeneinbettungen 

etwa wenn $A=0$ dann ist kornereinbettung auch nicht invertierbar 

kann man in kategorie für einen morphismus $w \neq 0$ immer $w$ invertierbar fordern ?

bei $A=0$ ist $1_A =0$

\section{}

abgeschlossenheit von $M \square A$. sum ideal und algebra

\section{}

nimmt man körper als ringe, so haben sie einheit 

\section {}

allg probelm bei top ringen -> stetigkeit von ring homomorphismsen

und in der regel bild nicht abgeschlossen, sodass man umkehrabbildung braucht, etwa bei ring isomorphismen, die ebenfalls stetig, 
damit folgenkonvergenz im domain ->  folgenkonvergenz im range und vice versa 

zb lemma \ref{lemma121}, wenn gruppe $G$ unendlich, welches crossed product, verschiedene abschlüsse etc.

\section{}

man könnte topologische ringe, wie top vektorraumalgebren, zulassen, mit stetigen abb,
aber keinen top abschluss bei compacten nehmen

\section{}

bei green julg könnte man top vektoraumalgebren über $\R$ und $\C$ zulassen, da man nur einfache matrixeinbettungen als  kornereinbettung hat 

also homotpy 

\section{}

brauche vmtl kardinality restriction von classe der ringe damit Hom-classen Hom-sets sind 

wie ufert homomorphisme $f:A \rightarrow A$ aus wenn über beliebig viele verschiedene $B$ geht : 

$A \rightarrow^f B \rightarrow^{f^{-1}} A$ in $GK(A,A)$  

-> kann aber fusionieren : $f f^{-1} = 1$

\section{}

axoimatisch $\H_B$: 

brauche dass $\H_B$ als direckte summe mit internal tensorprodukt vertauscht im lemma von descent functor

\section{}

bei GJ scheint es zu genügen dass $A$ vr über $\F$ und $\F$ selbst zulässiger ring, 
und nicht dass alle ringe vr sein müssen

\section{}

\begin{lemma}
gegeben 
$$\xymatrix{ \calk_A(\cale \oplus \calf \oplus \H_A)}$$

cutdown $\calf$ 

\end{lemma}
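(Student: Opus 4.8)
The plan is to identify the ``cutting down'' of $\calf$ with the statement that the canonical zero-extension ring homomorphism
$$\iota:\calk_A(\cale \oplus \H_A) \longrightarrow \calk_A(\cale \oplus \calf \oplus \H_A),\qquad \iota(\theta_{\xi,\phi}) = \theta_{\xi \oplus 0_\calf,\ \phi \oplus 0_\calf},$$
is an isomorphism in $GK^G$. First I would check that $\iota$ is a well-defined, $G$-equivariant ring homomorphism: the module inclusion $\cale \oplus \H_A \hookrightarrow \cale \oplus \calf \oplus \H_A$ together with the functional extension $\phi \mapsto \phi \oplus 0_\calf$ forms a functional module homomorphism in the sense of definition \ref{def116}, and if $\theta_{\xi,\phi}$ vanishes on all of $\cale \oplus \H_A$ then $\theta_{\xi \oplus 0_\calf,\ \phi \oplus 0_\calf}$ vanishes on all of $\cale \oplus \calf \oplus \H_A$, since evaluating a functional of the form $\phi \oplus 0_\calf$ ignores the $\calf$-component. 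Hence $\iota$ is well defined; multiplicativity and equivariance are then immediate from $\theta_{\xi,\phi}\theta_{\eta,\psi} = \theta_{\xi\phi(\eta),\psi}$ and the $G$-invariance of the functional spaces, exactly as in the proof of lemma \ref{lemma17}. (Note that lemma \ref{lemma17} itself does not apply verbatim here, because the underlying module map is not surjective, so this well-definedness must be argued by hand.)

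Next I would reduce to corner embeddings. Write $\H_A = A \oplus \H_A'$ with $A$ the distinguished coordinate, so that $A$ is simultaneously the distinguished coordinate of $\cale \oplus \H_A$ and of $\cale \oplus \calf \oplus \H_A$. Since $\calk_A(\cale \oplus \calf \oplus \H_A)$ occurs as a ring, the module $\cale \oplus \calf \oplus \H_A$ is cofull by our standing convention; applying lemma \ref{lemma117}.(iii) to the functional $A$-submodule $\cale \oplus \H_A$ of $\cale \oplus \calf \oplus \H_A$ (recall $A$ is quadratik) shows that $\cale \oplus \H_A$ is cofull as well, hence $\calk_A(\cale \oplus \H_A)$ is a quadratik ring by lemma \ref{lemma113}, and we have genuine corner embeddings $e:A \to \calk_A(\cale \oplus \H_A)$ and $e':A \to \calk_A(\cale \oplus \calf \oplus \H_A)$ in the sense of definition \ref{def22}, both onto this common distinguished coordinate. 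A direct check on elements gives $e\,\iota = e'$: both send $a$ to the operator acting by $a$-multiplication on the coordinate $A$ and by $0$ elsewhere.

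The conclusion is then immediate. By the stability axiom of definition \ref{def121}, $e$ and $e'$ are invertible in $GK^G$, so from $e\,\iota = e'$ we obtain $\iota = e^{-1} e'$, which is invertible with inverse $(e')^{-1} e$. Thus $\iota$ is a $GK^G$-isomorphism $\calk_A(\cale \oplus \H_A) \cong \calk_A(\cale \oplus \calf \oplus \H_A)$; equivalently, $(e')^{-1} e$ is the $GK^G$-morphism realizing the passage to the ``cut down'' ring. The same argument in fact cuts down any functional submodule of $\cale \oplus \calf \oplus \H_A$ that contains the distinguished coordinate.

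The main obstacle is bookkeeping rather than conceptual. The one point that genuinely needs care is verifying the cofullness hypothesis of definition \ref{def22} for the smaller module $\cale \oplus \H_A$ — this is precisely what lemma \ref{lemma117}.(iii) supplies — together with the hand-verification that zero-extension respects the relations defining compact operators; everything else is a routine unwinding of the stability axiom.
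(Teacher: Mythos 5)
Your proposal is correct and is essentially the paper's own argument: the intended proof (the note points to lemma \ref{lemma281} and to lemma \ref{lemma10} read in reverse) rests on exactly your observation that the zero-extension map $T \mapsto T \oplus 0_\calf$ intertwines the two corner embeddings onto the common distinguished coordinate, so that it equals $e^{-1}f$ and is invertible by stability. The only cosmetic difference is that the paper phrases the cut-down inside the diagrams of extended double split exact sequences (lemma \ref{lemma10}), while you state it directly as a $GK^G$-isomorphism of the two rings of compacts, with the cofullness check via lemma \ref{lemma117} supplied as needed for definition \ref{def22}.
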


cutdown ist lemma \ref{lemma10}

man kann auch umgkehrte abbildungen nehmen, also projektion $P$ als cutdown und dann umgekhrte richtung 

man kann \ref{lemma10} auch umgekhrt lessen, also überflüssigen teil löschen 

$\phi f^{-1} = \phi' f^{-1} =  \phi_1 \phi_2 f^{-1} = \phi_1 F^{-1}$ mit lemm \ref{lemma281} 

wo $\phi'$ das rotierte $\phi$ ist

\section{}

muss $\call_A(A)$ so definieren:

alle $\phi \in \Hom A(A)$ sodass $m_a \circ \phi = m_b$ für multiplikationsoperator $m:A \rightarrow \call_A(A)$ 

also, so, sodass $\calk_A(A)$ ideal wird in $\call_A(A)$, wenn $\calk_(A)$ genau die multiplikationsoperatoeren

---

allgemain:  $\call_A(\cale) \subseteq \Hom A(\cale)$ so groß, sodass $\calk_A(\cale)$ ideal wird

	\section{}

definiere so: $\call_A(\cale)$ subalgebra in $\Hom A(\cale)$ sodass $\calk_A(\cale)$ ideal

denn, mit obigen $\Theta_A$ funktioniert $\call_A(A)$ nicht: $\phi=1_A$ , $T$ multiplier von $A$, dann $\phi \circ T = T$ nicht in $\Theta_A(A)$

---

man könnte theoret aber auch auf $\phi=1$ in $\Theta_A(A)$ verzichten, denn $a x= \sum a_i b_i x$ ($a,x,b \in A$) nach axiomat vorausetzung 

aber obiges allgemeiner, besser

\section{}

auch auf kommutativen braucht es i.a. keinen split geben: 

etwa $X=$ kreisscheibe, fkt auf $X$ kann nicht einfach auf $\C$ ausdehnen , ausseerhalb von $X$ konstant machen, da auf rand von $X$ viele verschidene werte

\section{}

kurze exacte folge

$GK(A,X) \rightarrow GK(B,X) \rightarrow ...$

bringe auf level 0

wenn level 0 exact -> obiges exact

da $(i^* z) P = i^* (z P)$

\section{}

chern character

künneth 

k-homology berechnungn (chern)

rstriction von gj und bc auf level-n

twisted equivariant theroy
(G/N )

\section{}

surjectivity gj (homotopy) 
für algebren über körper die $\R$ enthalten

\section{}

$\phi( T x)= \psi(x)$ adjungiertheit

-> $\phi \circ T = \psi$

\section{}

\begin{lemma}

Let $A,B.C$ be an exact sequence such thaat

Let for every $X$ the contravariant functor $A \mapsto L_0 GK^G(A,X)$ be exact. 

Then the contravariant functor $A \mapsto GK^G(A,X)$ is exact for every $X$.

\end{lemma}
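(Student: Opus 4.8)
The plan is to deduce exactness of the contravariant functor $GK^G(-,X)$ from the assumed exactness of $L_0 GK^G(-,Y)$, the essential tool being the $\nabla$-calculation in the form of Lemma \ref{lemma97}.(ii): for every morphism $z$ of $GK^G$ there is a ring homomorphism $P$, right-invertible in $GK^G$, such that $zP$ lies in $L_0 GK^G$. Because $P$ is a genuine ring homomorphism, post-composition with it sends $L_0 GK^G(-,X)$ into $L_0 GK^G(-,Y)$, and because $P$ has a $GK^G$-right-inverse $Q$ with $PQ=1$, this operation can be undone at the end. So the argument becomes a diagram chase between the two rows of line (\ref{l5}), using $P$ and $Q$ to move between them.

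First I would dispose of the easy inclusion $\image {g^*} \subseteq \ker {f^*}$ in the $GK^G$-row: for $w \in GK^G(C,X)$ one has $f^*(g^*(w)) = f(gw) = (fg)w = 0$, since $fg=0$ in $\ring G$ by hypothesis.

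For the reverse inclusion I would take $z \in GK^G(B,X)$ with $f^*(z)=fz=0$ and apply Lemma \ref{lemma97}.(ii) to obtain a ring homomorphism $P: X \to Y$, right-invertible in $GK^G$ by some $Q: Y \to X$ with $PQ=1_X$, such that $zP \in L_0 GK^G(B,Y)$. By associativity of composition $f^*(zP)=(fz)P=0$, so $zP$ is a level-$0$ morphism lying in the kernel of $f^*: L_0 GK^G(B,Y) \to L_0 GK^G(A,Y)$. The assumed exactness of the lower row at $L_0 GK^G(B,Y)$ then gives $w \in L_0 GK^G(C,Y)$ with $g^*(w)=gw=zP$. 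Putting $v:=wQ \in GK^G(C,X)$ we compute $g^*(v)=g(wQ)=(gw)Q=(zP)Q=z(PQ)=z$, whence $z \in \image {g^*}$, as wanted.

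The only real content is Lemma \ref{lemma97}.(ii) itself, where all the difficulty sits: expressing an arbitrary $GK^G$-morphism as a word in extended double split exact sequences and then successively absorbing the factors into a $GK^G$-right-invertible ring homomorphism, which is the entire machinery of Sections \ref{sec4}--\ref{sec10}. Granting it, the argument above is routine; the one subtlety is that $P$ must be a genuine ring homomorphism, so that $P_*$ acts on $L_0 GK^G$ and carries $z$ into that subcategory, which is precisely what Lemma \ref{lemma97}.(ii) guarantees. The same reasoning applies verbatim in $KK^G$-theory for $C^*$-algebras.
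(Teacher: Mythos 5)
Your argument is correct and coincides with the paper's own proof: both dispose of $g^*f^*=0$ trivially and then use Lemma \ref{lemma97}.(ii) to choose a $GK^G$-right-invertible ring homomorphism $P$ (with right-inverse $Q$) making $zP$ level-$0$, lift $zP$ through the assumed exactness of the $L_0$-row, and compose the lift with $Q$ to recover $z$. No gaps; the reliance on $P$ being an actual ring homomorphism so that $zP$ lands in $L_0 GK^G$ is exactly the point the paper uses as well.
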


\section{}

Functor $L_0 GK^G \rightarrow GK^G$ 

$x y P S$

\section{}

es gibt auch ring homomorphism $P:B \rightarrow D$ um $P_*$ von $GK(A_i,B)$ $i=1,..,n$ simultan nach $L_0 GK(A_i,D)$

\section{}

homotpy von level 1 zyklen auch für ringe 

zudem

$s_+ \nabla_{s_-} = t_+ \nabla_{t_-}$

wen $s_\pm \equiv t_\pm$ homotop

\section{}

$$\calk_B( \cale \otimes \cale ) \otimes \calk_\C(\C) 
\cong \calk_{B \otimes B \otimes \C} (\cale \otimes \cale \otimes \C) 
\cong \calk_{(B \otimes \C) \otimes_\C ( B \otimes \C)} 
((\cale \otimes \C) \otimes_\C 
(\cale \otimes \C) )$$

Homotopy:

complexification

$B_\C:= B \otimes \C$

$f:A \rightarrow B_\C[0,1]$ 
homotopy 

wo auswerung in endpunkten 
$f(a)(0),f(a)(1)$ in $B \otimes \Z$

combiniert mit dem ring homomorphism

$\tau: B \otimes \Z \rightarrow B: \tau(b \otimes n)= b n$

topology auf $B \otimes \C$ discrete $\otimes$ $\C$

\section{}

\section{}

bei kronereinbettung muss $\cale$ wahrscheinlich immer fulles module sein 

\section{}

für alle $\xi \neq  0 \in \cale$ gibt es ein $\psi \in \Theta_A(\cale)$ und ein $a \in \cale $ sodass
$\xi \psi(a) \neq 0$ 

für alle $\xi \neq 0$ gibt es ein $\psi$ sodass 
$\psi(\xi) \neq 0$ ,,
gibt es $\eta$ sodass $\eta \psi(\xi) \neq 0$

->

für jedes $\xi \neq 0$ gibt es ein $a$ sodass $\xi a \neq 0$

für jedes $a \in A$ gibt es $\psi$ und $\xi$ sodass $a=\psi(\xi)$  

für jedes $a \neq 0$ gibt es ein $\xi$ sodass $\xi a \neq 0$

\section{}

\begin{definition}

Two rings $A$ and $B$ are called Morita
equivalent if there are a functional
$B$-module $\cale$ and a functional $A$-module 
$\calf$,
ring homomorphisms $m: A \rightarrow \calk_B(\cale)$, 
$n :B \rightarrow \calk_A(\calf)$ 
such that 
$\cale \otimes_n \calf \cong A$ 
as $A,A$-bomdule, right functional .
And
$\calf \otimes_m \cale \cong B$ 
as $B,B$-bomdule, right functional .

And these is a isomorphism 
$\calf \rightarrow \calk_A(\cale,A)$
as $A,A$-bimodule , right functional 

\end{definition}


$$\xymatrix{ A \ar[r]^m  &  \calk_B(\cale)   \ar[r]^\phi
& \calk_A(\cale \otimes_B \calf)
\ar[r]^\cong
& \calk_A(A)  
\ar[r]^\cong
& A  
}$$

We have an isomorphism
$$r: 
\cale \otimes_n \calk_A(\calf,A) 
= \cale \otimes_n \Theta_A(\calf) 
\rightarrow
\cale \otimes_n \calf $$
of $A$-modules (in the domain $A$-module strucuture  is still given by $M_A$) 
defined by 
$r(\xi \otimes \theta_{\eta, \phi}) = \xi \otimes \eta \phi(1)$, where 
$1$ is here just used as a dummy and not really needed. 

$$\cale \otimes_n \calk_A(\calf,A) 
 \cong \calk(A,\cale) \otimes_n \calk_A(\calf,A) 
$$

$$\calk_A(\calk_A(\calf,A))
$$

$$
\calk_{A} \Big (
\Big ( 
\cale \otimes_N \calk_A(\calf \oplus \H_A)  \Big )  M_A 	
\Big )
\cong
\calk_{A} ( 
\cale ) \otimes_N 1
$$

$$r:\cale \otimes_B \calf \rightarrow 
\cale \otimes_B \calk_A(A,\calf)$$
$$r(\xi \otimes \eta)= \xi \otimes f$$
wo $f(a) = \lim_n \eta a$. 

$r^{-1}(\xi \otimes f)=  \xi \otimes \lim_n f(p_n)$
	$\qquad$ (brauche hier keinen limes) 

-

$r^{-1}(\xi \otimes \eta \phi_d) = \xi \otimes \eta d$
 
wo $\eta_d(a) = d a $

-

Damit
$$\cale \otimes_B \calk_A(A,\calf \oplus \H_A)
\cong
\cale \otimes_B \calk_A(A,\calf)
\cong
\cale \otimes_B \calf
\cong
A
$$

\section{}

\begin{proof}
  $\psi \circ T = \phi(x)$ for $\psi(\eta)= x \eta_1 \oplus 0^{n-1}$ and $\eta \in \F^n$. 

Since $\ker$

Complete 
$\{m\}$ to a vector space basis 
$\{m,b_2 ,\ldots ,b_n\}$ of $\ell^2(G)$. 
Note that $m$ is a $V$-invariant vector. 
Hence, 
$$(\ell^2(G),V) \cong (\F m \oplus \F b_2 \oplus \ldots \F b_n\} ,W)$$ 
where the $G$-action is defined in such a way that this isomorphism is $G$-equivariant, and $\F m$ is the distingushed coordinate. 
 
\end{proof}

\begin{definition}
Define
$$f: \F \rightarrow M_n(\F) : f(x)= x \sum_i 1/n (e_ii)$$

$$\pi: \calk_\F(\F^n) \rightarrow M_n(\F_n)$$

$$\pi^{-1}(x_{ij} e_{ij}) ((y_1,\ldots,y_n)) = \sum_{k=1}^n x_{ik} y_k$$  

wähle basis in $\F^n$ sodass $\F^n \cong {\rm span} \{f(1) , b_2,b_3, \ldots ,b_n \}$
 als vr
\end{definition}

Da $f(1)$ invariant uter der akion $\Ad(V)$, ist $f$ equivariante korerner einbettung


Dann
$$A \otimes_\F M_n(\F) \cong \calk_A(A) \otimes_\F \calk_\F(\F^n) \cong \calk_{A \otimes \F}( A \otimes_\F \F^n)$$
$$\cong \calk_A(A^n)$$
mit lemma \ref{lemma1323}


\section{}

\begin{proof}
  $\psi \circ T = \phi(x)$ for $\psi(\eta)= x \eta_1 \oplus 0^{n-1}$ and $\eta \in \F^n$. 

Since $\ker$

Complete 
$\{m\}$ to a vector space basis 
$\{m,b_2 ,\ldots ,b_n\}$ of $\ell^2(G)$. 
Note that $m$ is a $V$-invariant vector. 
Hence, 
$$(\ell^2(G),V) \cong (\F m \oplus \F b_2 \oplus \ldots \F b_n\} ,W)$$ 
where the $G$-action is defined in such a way that this isomorphism is $G$-equivariant, and $\F m$ is the distingushed coordinate. 
 
\end{proof}

\begin{definition}
Define
$$f: \F \rightarrow M_n(\F) : f(x)= x \sum_i 1/n (e_ii)$$

$$\pi: \calk_\F(\F^n) \rightarrow M_n(\F_n)$$

$$\pi^{-1}(x_{ij} e_{ij}) ((y_1,\ldots,y_n)) = \sum_{k=1}^n x_{ik} y_k$$  

wähle basis in $\F^n$ sodass $\F^n \cong {\rm span} \{f(1) , b_2,b_3, \ldots ,b_n \}$
 als vr
\end{definition}

Da $f(1)$ invariant uter der akion $\Ad(V)$, ist $f$ equivariante korerner einbettung


Dann
$$A \otimes_\F M_n(\F) \cong \calk_A(A) \otimes_\F \calk_\F(\F^n) \cong \calk_{A \otimes \F}( A \otimes_\F \F^n)$$
$$\cong \calk_A(A^n)$$
mit lemma \ref{lemma1323}

\section{}

$$\sigma (\int_{g \in G} (a \otimes k) W_g dg
= (a \otimes k) (1 \otimes U_{g^{-1}}) W_g dg$$

\section{}

\begin{proof}
Clearly, $\C^n=\ell^2(G)$. 
Since $\lambda_\C(p)$ is a minimal projection in $\calk (\C^n)$
\end{proof}

\begin{lemma}
Let $p \in \F \rtimes G$ be a 
projection 
such that $\lambda_\F(p) = \theta_{b,\phi}$ 
is an elementary projection. 

$\phi(\xi)=< \xi,b>$, $b_1=1/n$ 

Then $S \circ T (p) =  p$.  

\end{lemma}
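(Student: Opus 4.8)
The plan is to identify the projection $p$ with the ring homomorphism $L_p\colon(\F,\triv)\to(\F\rtimes G,\triv)$, $L_p(x)=xp$, which is exactly the class of $p$ in $GK(\F,\F\rtimes G)$, and then to observe that the normalization hypotheses pin $p$ down to be $M(1)=n^{-1}\sum_{g\in G}1\rtimes g$, after which the roundtrip collapses. First I would compute $L_p\cdot\lambda_\F$: since $\lambda_\F(p)=\theta_{b,\phi}$, this composite is the ring homomorphism $(\F,\triv)\to(\K,\rho)$ sending $x$ to $x\,\theta_{b,\phi}$. On the other hand $\lambda_\F(M(1))=n^{-1}\sum_g U_g=f(1)=\theta_{m,\phi_f}$ is the distinguished rank-one idempotent of Definition \ref{def151} --- this is precisely the identity $M\lambda_\F=1\otimes f$ recorded right after Definition \ref{defgj}. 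The conditions $\phi(\xi)=\langle\xi,b\rangle$, $b_1=1/n$, together with idempotency of $\theta_{b,\phi}$ (which forces $\langle b,b\rangle=1$), force $\theta_{b,\phi}=f(1)$, i.e. the $V$-invariant line $\F b$ is the trivial-character line $\F m$. Since $\lambda_\F$ is injective by Definition \ref{def153}, this yields $p=M(1)$, hence $L_p=M$ as morphisms $\F\to\F\rtimes G$.

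Granting $L_p=M$, the computation is immediate. By definition of $T$ we have $T(p)=M\cdot\lambda_\F\cdot(1\otimes f)^{-1}$, and since $M\lambda_\F=1\otimes f$ this equals $(1\otimes f)\cdot(1\otimes f)^{-1}=\idd_\F$ in $GK^G(\F,\F)$. Applying $S$ and using that the descent functor is a functor, $S(\idd_\F)=M\cdot(\idd_\F\rtimes\idd)=M\cdot\idd_{\F\rtimes G}=M=L_p$, and therefore $S\circ T(p)=p$. Throughout one must respect the paper's left-to-right composition convention and keep track of which arrows are morphisms in $GK$ and which in $GK^G$.

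The main obstacle is the step identifying $\theta_{b,\phi}$ with $f(1)$: one has to check that the stated normalization really excludes the other elementary idempotents in $\K$ lying in the image of $\lambda_\F$, namely those cutting out a $V$-invariant line carrying a nontrivial $\pm1$-valued character --- for such a $p$ one does not return to $M$ at all, consistently with the fact that $S\circ T=\id$ is proved in general only under $\F\subseteq\C$. If one prefers not to read $p=M(1)$ off the normalization, an alternative is to note that $\theta_{b,\phi}$ is conjugate to $f(1)$ by an invertible $u\in\call_\F(\ell^2(G))$ which --- under the same character hypothesis --- can be chosen $G$-equivariantly homotopic to the identity, so that $L_p\cdot\lambda_\F=f\cdot\Ad(u)=f$ in $GK^G$ by a rotation homotopy in the sense of Definition \ref{def26}, and again $T(p)=f\cdot f^{-1}=\idd_\F$; but this route bottoms out at the very same point. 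Everything after that step is routine bookkeeping with $M\lambda_\F=1\otimes f$, injectivity of $\lambda_\F$, and functoriality of $S$, $T$ and $j^G$.
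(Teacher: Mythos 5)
There is a genuine gap at the heart of your argument: the step where you claim the hypotheses force $\theta_{b,\phi}=f(1)$, i.e.\ that the $V$-invariant line $\F b$ must be the line $\F m$ of the constant vector, so that $p=M(1)$ and $L_p=M$. Invariance of the elementary projection under $\Ad(V^{-1})$ only forces $b$ to span a $V$-invariant \emph{line}, i.e.\ $b_g=\chi(g)\,b_1$ for a character $\chi$ with $\chi^2=\triv$; idempotency gives $\langle b,b\rangle=1$ and hence $b_1^2=n^{-1}$ automatically, so the stated normalization carries no extra information that would exclude the nontrivial-character case. Concretely, for $G=\Z/2$ and $b=2^{-1/2}(1,-1)$ one gets an invariant elementary projection satisfying all the hypotheses with $b_1^2=1/n$, yet $p\neq M(1)$ and $L_p\neq M$ (this very counterexample is recorded in the paper's own notes). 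Your fallback at the end does not repair this: for such a $p$ the conclusion $S\circ T(p)=p$ is still supposed to hold (and is exactly what is needed if the lemma is to feed into $S\circ T=\id$), it is only the intermediate identification with $M$ that fails; the class $T(p)$ is genuinely different from $T(M(1))$ (it sees the character $\chi$), so neither the ``$p=M(1)$'' route nor the proposed equivariant conjugation of $\theta_{b,\phi}$ onto $f(1)$ can work.

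The paper's proof takes a different and strictly more general route: it writes $\lambda_\F(p)=\sum_g a_g U_g$, evaluates on the basis $\delta_h$ to get $a_g=b_hb_{gh}$, hence $a_g=b_1b_g$, $b_1^2=b_g^2=n^{-1}$ and $a_{gh^{-1}}=b_1^{-2}a_ga_h$, and then computes the composite defining $S\circ T(p)$ explicitly, obtaining $n^{-1}\sum_{g,h}\pi(a_{gh^{-1}})U_g\rtimes h=\sum_h\pi(a_h)\,p\rtimes h$. The conclusion then comes from observing that $x\mapsto xp$ is a $G$-equivariant corner embedding (because $p$ is an invariant elementary idempotent) which can be rotated/homotoped to $f$, so that after composing with $(\idd\otimes f)^{-1}\rtimes\idd$ one recovers $\sum_h a_h\rtimes h=p$ --- no identification of $p$ with $M(1)$ is made or needed. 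Your argument only covers the degenerate case $\chi=\triv$, where the statement is essentially the tautology $T\circ S$ applied to $1$, and misses the actual content of the lemma.
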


\begin{proof}
Since $\lambda_\F$ is $G$-invariant, $\lambda_\F(p)$ is $G$-invariant with respect to 
$\Ad(V^{-1})$. 
As $\phi$ is of the form $\phi(\xi)= \sum_{g \in G} \xi_g c_g$

Thus $g \circ \theta_{g ,\phi}(g^{-1}(\xi))
= g(b) g (\phi)(\xi)= \theta_{g(b),g(\phi)}$
$=\theta_{b,\phi}$. 
So $g(b)= b$ and $ $

Since $p$ is 
So we have 
$$p (\xi)= \sum_{g \in G} a_g U_g (\xi) = \phi(\xi) b = \sum_{g \in G} \xi_g b_g b$$

Consider a minimal projection in $A \otimes \K$, so of the form 
$$p (\xi)= \int_G a(g) U_g d g = \phi(\xi) b$$

$$p (\xi)= \sum_{g \in G} a_g U_g = \langle \xi, b \rangle b$$

We choose on $A$  $\Theta(A)$
to be the functionals by right multiplication.
$\langle \xi,a\rangle := \xi a$.

We choose on $\ell^2(G,A)$ $\Theta(\ell^2(G,A))$
to be th inner products
$\langle \xi,\eta\rangle := \sum_g \xi_g \eta_g$

$$p (\delta_h)= \sum_{g \in G} a_g \delta_{gh} = b_h b$$
thus
$$a_g = b_1 b_g \qquad \forall g \in G$$

$$a_g = b_h b_{gh} = b_1 b_g \qquad \forall g,h \in G$$

((

$\rightarrow \quad $
$a_1 = b_h b_{h} = b_1 b_1 $

$a_g = b_{g^{-1}} b_1 = b_1 b_g$

))

$$M (L \rtimes 1) (\lambda_A \rtimes 1) \sigma (1_\C)
= n^{-1}\sum_{g,h} \pi(a_g) U_g U_h  \rtimes h
= n^{-1}\sum_{g,h \in G} \pi(a_{g h^{-1}}) U_g  \rtimes h$$
$$=
n^{-1}\sum_{g,h \in G} \pi(b_1^{-2}) \pi(a_h) \pi(a_{g}) U_g  \rtimes h
= 
n^{-1} \sum_{h \in G} \pi(b_1^{-2})  \pi(a_h) p  \rtimes h
$$

$b_1^2= 1/n$, da
$b_1^2=b_k^2$ für alle $k,$ und
$<b,b>=\sum_i b_i^2 =1$ 

geht zumindest für $A=\C$,

dann kommutiert $\pi(a)$ mit $p$
und  $a \mapsto \pi(a) p$
ist eine kornereinbettung
 
----
->

$a_g$ ist ja konstant eigenltich, famit nicht
$a_{g h} = a_g a_h$ etc.

zudem, oben ghört noch $1/n$ dazu  ?

-->

braucht nicht konstant, etwa $<\xi,b> b$ invarinate projektion für $b=(1,-1)$ unf $G=\Z_2$

--
erläuterung oben

$a_{g h^{-1}} = b_1 g_{g h^{-1}}
= b_h b_g = a_h a_g * b_1^{-2}$ 

\end{proof}

\section{}

möglichkeit um doubel split zu konstruienen

nehme 1-fach split und bringe auf doube split

schalte irgendenen homomoprhismjus vor,
etwa szbalgebra $N$ von $M$,,

dann $x:N -> M \square M,$ 

$x\square 1, x gt \square 1$

zb  herdeitary unteralgebra 

---

zb weiters:

wähle 2 unteralgebren $X,A \subseteq \call_B(\H_B)$

betrachte $M:= alg(X \cup A)$ algebra 
generiert bei $X$ und $A$ 

mache split mit $M$, mitte ist $M$  

gehe von $X$ nache $M$

\section{}

\begin{lemma}
Suppose that $\F=A=\C$ with trivial $G$-actions. 

(i)
Let $p \in \C \rtimes G \cong M_{n_1} \oplus \ldots \oplus M_{n_N}$ be a 
minimal projection. 
%
%
Then $S \circ T (p) =  p$.  

(ii) 
Conseuqently, $S \circ T= \id$. 

\end{lemma}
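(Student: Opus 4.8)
This is the special case $\F=A=\C$ of the Proposition on $S\circ T=\id$ proven above, so strictly nothing more is needed; but I sketch a direct elementary argument for this case that avoids the flip-homotopy and the Rieffel-isomorphism machinery. The plan is to show that $T$ is already an isomorphism of abelian groups. Since $T\circ S=\id_{GK^G(\C,\C)}$ has been established, this forces $S=T^{-1}$, hence $S\circ T=\id$ on $GK(\C,\C\rtimes G)$, which gives (ii); and then $S(T(p))=T^{-1}(T(m_j))=m_j$ for the class $m_j$ represented by a minimal idempotent $p$ of the $j$-th Wedderburn block, which gives (i).

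First I would pin down the two groups. Let $m_j\in GK(\C,\C\rtimes G)$ be the class of the ring homomorphism $z\mapsto z p_j$, where $p_j$ is a minimal idempotent in the block $M_{n_j}(\C)$ of $\C\rtimes G$. By the additive category axiom $GK(\C,\C\rtimes G)\cong\bigoplus_{j=1}^N GK(\C,M_{n_j}(\C))$, and by stability each summand is $GK(\C,\C)\cong\Z$, with the corner embedding $\C\to M_{n_j}(\C)$ as generator, i.e.\ with $m_j$ as generator; so $\{m_1,\dots,m_N\}$ generates $GK(\C,\C\rtimes G)$. (Any two minimal idempotents of a fixed block give the same $m_j$, by a rotation homotopy.) On the other side, $GK^G(\C,\C)$ is the representation ring $R(G)$, free abelian of rank $N$ on the classes $[\sigma_1],\dots,[\sigma_N]$ of the irreducible complex $G$-representations.

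Next I would compute $T(m_j)$. By definition it is the class of the composite $\C\xrightarrow{z\mapsto z p_j}\C\rtimes G\xrightarrow{\lambda_\C}(\calk_\C(\ell^2(G)),\rho)\xrightarrow{(\idd\otimes f)^{-1}}\C$, i.e.\ the class of $z\mapsto z\,\lambda_\C(p_j)$ post-composed with $(\idd\otimes f)^{-1}$. The operator $\lambda_\C(p_j)$ is the left regular representation of $p_j$; it commutes with the right regular representation $V$, hence is $\rho$-invariant, and its range is a $G$-invariant subspace of $(\ell^2(G),V^{-1})$. Decomposing the regular representation $(\ell^2(G),V^{-1})\cong\bigoplus_k n_k\sigma_k$ as usual, left multiplication by the minimal idempotent $p_j$ picks out exactly one irreducible summand, namely one copy of the irreducible $\sigma_{\pi(j)}$ corresponding to the block $M_{n_j}(\C)$, and $\pi$ is a bijection of $\{1,\dots,N\}$. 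Finally $\idd\otimes f$ is itself a corner embedding whose range is the trivial one-dimensional subrepresentation $\C m$ (with $m=\sum_g\delta_g$ being $V$-invariant); so post-composing with $(\idd\otimes f)^{-1}$ is precisely the canonical identification $GK^G(\C,\calk_\C(\ell^2(G)))\cong GK^G(\C,\C)$ attached to this corner embedding, and it sends the class of $z\mapsto z\,\lambda_\C(p_j)$ to $[\sigma_{\pi(j)}]$. Hence $T(m_j)=[\sigma_{\pi(j)}]$.

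Thus $T$ carries the generating set $\{m_j\}$ of $GK(\C,\C\rtimes G)$ bijectively onto the $\Z$-basis $\{[\sigma_k]\}$ of $R(G)$. Consequently the map $\Z^N\to GK(\C,\C\rtimes G)$, $e_j\mapsto m_j$, is surjective and, composed with $T$, is an isomorphism onto $R(G)$; so it is itself an isomorphism and $T$ is an isomorphism of abelian groups. Combined with $T\circ S=\id$ this yields $S=T^{-1}$, proving (ii) and hence (i) as above. I expect the only delicate point to be the bookkeeping in the preceding paragraph — keeping the left/right regular representation conventions and the $\rho=\Ad(V^{-1})$ action straight when identifying the range of $\lambda_\C(p_j)$ as a $G$-module — together with quoting the (standard) fact that $GK^G(\C,\C)$ really is the representation ring in this algebraic $GK$-framework; everything else is formal.
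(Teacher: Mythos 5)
Your opening reduction is fine: the lemma is literally the case $\F=A=\C$ of the proposition asserting $S\circ T=\id_{GK(\C,A\rtimes_\alpha G)}$, so citing that proposition already settles (i) and (ii). Be aware, though, that the paper's own proof of this lemma is different from both that proposition and from your sketch: it is a bare-hands computation with the coefficients of the minimal projection, writing $\lambda_\C(p)(\xi)=\langle \xi,b\rangle b$, expanding $p=\sum_{g}a_gU_g$, deriving $a_g=b_1b_g$ and $b_1^2=n^{-1}$, then evaluating $M(L\rtimes 1)(\lambda_\C\rtimes 1)$ directly and recognizing $F(x)=xp$ as a $G$-equivariant corner embedding homotopic to $f$; no knowledge of the groups $GK(\C,\C)$ or $GK^G(\C,\C)$ enters there.

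The alternative ``elementary'' route you propose has a genuine gap. To get injectivity of $T$ you need (a) that the classes $m_j$ of minimal idempotents generate $GK(\C,\C\rtimes G)$, which after additivity and stability amounts to $GK(\C,\C)$ being generated by the class of $\id_\C$ (i.e.\ $GK(\C,\C)\cong\Z$), and (b) that the elements $T(m_j)$ are $\Z$-linearly independent in $GK^G(\C,\C)$, which you justify by asserting $GK^G(\C,\C)\cong R(G)$. Neither computation is carried out anywhere in the paper, and in this ring-level $GK$-framework neither is a ``standard fact'': the homotopy, stability and split-exactness axioms here differ from Kasparov's $KK^G$ and from the algebraic $kk$-theories, so one cannot simply import $KK^G(\C,\C)=R(G)$ or $KK(\C,\C\rtimes G)\cong\Z^N$ --- and for finite $G$ those identifications are essentially what the Green--Julg isomorphism is supposed to deliver, so invoking them is circular. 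Without (a) and (b), the surjectivity of $T$ (which does follow from $T\circ S=\id$) says nothing about injectivity, the conclusion $S=T^{-1}$ does not follow, and your derivation of (i) and (ii) collapses back onto the quotation of the general proposition in your first sentence.
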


\begin{proof}

Clearly, $\C^n=\ell^2(G)$. 
Let us $\C \rtimes G$ identify with its image under $\lambda_\C$ and do everthing in $\calk(\C^n) \cong M_n$. 
Of course, $p$ is also minimal there. 

Since $p$ is a minimal projection in $\calk (\C^n)$, it has one dimensional range, and so there is a vector $b \in \C^n$ 
with $\|b\|=1$ 
such that $p(\xi)= \langle \xi,b \rangle b$. 
As $p \in \C \rtimes G$, we may express it as 
$p = \sum_{g \in G} a_g U_g$ and so
 $$p(\xi) = \sum_{g \in G} a_g U_g (\xi) 
= \langle \xi,b\rangle b 
$$
Evalution in $\xi = \delta_h$ yields 
$a_g = b_h b_{gh}$. 
It follows
\begin{equation}		\label{idab} 
a_g = b_1 b_g  \quad 
a_{gh^{-1}} = b_h b_g = b_1^{-2} a_g a_h 
\quad b_1 b_{g h^{-1}} = b_h b_g 
\quad b_1^2 = b_g^2
\end{equation}  
 Because of the last identity and $\|b\|^2 = 1$,
one gets $b_1^2 = n^{-1}$. 

Set $L:\C \rightarrow \C \rtimes G$ as $L(x)= xp$. 
Then, with $\sigma$ of lemma \ref{lemma1111} 
and the second identity of (\ref{idab}), 
we get (for $F$ see below)  

$$\Big ( M (L \rtimes 1) (\lambda_A \rtimes 1) \sigma \Big ) (1_\C)
= n^{-1}\sum_{g,h} a_g U_g U_h 
  \rtimes h
= n^{-1}\sum_{g,h \in G} a_{g h^{-1}} U_g  \rtimes h$$
$$=
n^{-1}\sum_{g,h \in G} b_1^{-2} a_h a_{g} U_g  \rtimes h
=  \sum_{h \in G}   a_h p  \rtimes h 
= (F \rtimes 1) \big (\sum_{h \in G} a_h \rtimes h \big )
$$

As $\lambda_\C$ is a $G$-equivariant homomorphism, $p\equiv \lambda_\C(p)$ 
is $\Ad(V^{-1})$-invariant. 
Thus $F : \C \rightarrow \C \rtimes G$ 
defined by
$F(x)= x p$ is a $G$-equivariant corner embedding, 
and so 
homotopic to $f$. 

Thus $F \rtimes 1$ is homotopic to $f \rtimes 1$ (descent functor evaluations). 
 
$(F \rtimes 1) \sigma^{-1}$ is a corner embedding und tus hmotopic to 

zudem müsste man $V_g^{-1}$ statt $U_g$ in 
$\sigma$ nehmen

\end{proof}

\section{}

\begin{proof}

\if 0
Thus, the last rectangle of the above diagram commutes now also. 
Indeed, 
%
$(\idd \otimes f \rtimes \idd)
\lambda_{A \otimes \K} F = \lambda_A (
\idd \otimes f \otimes \idd)$ in $\calr^G$. 
  \fi

We restrict now 
the last line of the above diagram to the image spaces of the down-going $\lambda$-arrows, so they are now all 
bijective. 

Obviously, again everything in the diagram commutes. 

To perform the flip also in the image space $X$ 
of $\lambda_{A \otimes \K}$, we recall 
that it is isomorphic to the fixed point algebra 
$(A \otimes \K \otimes \K)^{\alpha \otimes \rho  
\otimes \rho}$ 

Apply lemma \ref{lemma118} 
to $A:= (B,\alpha \otimes \rho,rho)$ and
$u:= 1 \otimes H$. 

Consider $X \rightarrow X: \lambda_{A \otimes \K}^{-1} \zeta 
\Ad(u) \zeta^{-1} \lambda_{A \otimes \K}$. 


Because of that, the unitary path $H$ of the flip
$F: G \rtimes \K \otimes \K \rightarrow G \rtimes \K \otimes \K$
to the identity $\idd$ can also be 	done
in the later fixed point algebra 
by averaging the unitary path $H$ at every point of time to a
unitary path $I$ with the same end points.

Set $\K:= \calk_\C \big ( (\ell^2(G),\lambda) \big)$
equipped 
with the $G$-action $ \Ad(\lambda)$.
The $G$-action on $A \otimes \calk$ is $\alpha \otimes \Ad(\lambda)$.

$A \rtimes_\alpha G$ is equipped with the trivial $G$-action.

The second line of the diagram we restrict then to the image spaces of the $\lambda$s down arrows.

They are actually the fixed point algebras under the $G$-action $1 \otimes \Ad(\rho)$,
where $1$ is the identity action on the spaces 
before the last $\cdots \otimes \K$, respectively.

So, $\lambda_A :A \rtimes G \rightarrow (A \otimes \K)^{1_A \otimes \rho}$ is a ring isomorphism.

wenn man oben $L \otimes 1$ einschränkt so definiert man
$L \otimes 1$ als $\lambda_\C^{-1} (L \rtimes 1) \lambda_{A \rtimes G}$,
wo hier die $\lambda$s eingeschränkt, bijectiv

\end{proof}

\section{}

Obviously $\pi(T) = S$. 

This shows

then 

$T(\sum_i \xi_i  x_i m_{b_i} y_i) p m_t q = \sum_i S(\xi_i x_i m_{b_i}) y_i p m_t q
= 
 S( \sum_i \xi_i x_i m_{b_i} y_i p m_t ) q
$

(hier schreibe zuerst $b_i = c_i d_i$, bzw wähle range of $y_i$ in $B$)

dh $T$ multiplizeirt mit $p m_t q$ wohldef 

Brauche in ring $\calk_B(\cale \oplus \H_B$ : 

$r t = 0$ für alle $t$ dann $r = 0$ 

-

damit ev. $\phi(\xi)=0$ für alle $\phi$ -> $\xi=0$ für funktionale

\if 0
Sei $\phi(\eta)= a_\eta \psi_\eta(.)$ mit $a_\eta \in \cale \oplus \H_B$ und $\psi \in$

$= \xi (\eta) m_b$ 
\fi

\section{}


$M \mapsto \Hom A (M,X)$   

$X= D_1(A)$

$a x := D_1(f_a) (x)$ wo $a \in A,x \in X$

$f_a:A \rightarrow A: f_a(b)= ba$  $A$-homomoprhism

$f_a \in \calk_A(A)$

$\Hom A(X)$ ring und $\calk_A(X)$ ideal

das bild von ideal unter $D_1$ wieder ideal 

---

es gibt in $\Hom \C (H)$ mehrere ideale,
 etwa trace class operators etc. 

equivalenzbimoudle bringt ieale auf idelae

---

functor $D_1$ ist ring isomorphsmus zwiscehn $\Hom A(A)$
und $\Hom B(X)$ 

wenn man zeigen kann compacten in $\Hom A(A)$ 
das einzige ideal ist, dann muss $D_1$ in doe compacten von $\Hom B(X)$ landan

allerdings hat man auch noch natürlicje transformation dazwischen 

---

es gibt schon mehr ideale, zb
ideale $I$ von $A$ in $\call_A(A)$ 

zudem zb: $\calm(C_0(\R))= \calm(A) = C_b(A)$ 

idalie $I$ in $C_b(A)$ sind nicht $A$, teilweise elemente in $A$ und außerhalb $A$

jedoch $C_0(A)$ maximales ideal in $C_b(A)$ 

(zb $I=C_b(\R \backslash \{x\})$, $I=C_b(\R^+)$ )

---

vllt obiges arguemtn abändern auf: 

$\calk$ ist maximales ideal in $\Hom A$

(bzw. $\call_A$) 
 
 ---

functor links-mod-A auf links-mod-B:

$M \mapsto M \otimes_A \cale$ wo

\section{Rings and Modules}

\begin{definition}

$\Theta_A(\cale)$ must be invariant under 
$G$-action on $\cale$:

$\alpha_g$

\end{definition}

Damit mit 2 aktioenn:

$S_g (\xi \phi(T_{g^{-1}}(\eta))
= S_g(\xi) \alpha_g(\phi())=
S_g(\xi) g(\phi)(\eta)$  komalt

\begin{definition}

$G$-action on $(A,\alpha)$-module $(\cale,\Theta_A(\cale))$ muss
erfüllen:  $\calk_A(\cale)$ is invariant.

\end{definition}

\begin{definition}
{\rm 

Let $(A,\alpha)$ be a ring. 
A corner embedding is an equivariant ring homomorphism
$e: (A,\alpha) \rightarrow \calk_{(A,\alpha)} \big ((\cale \oplus \H_A , S \oplus \alpha \oplus R) \big)$ 
of the form 
$$e(a) \big(\xi \oplus a_1 \oplus a_2 \oplus a_3 \oplus \ldots \big )= 0 \oplus a a_1 \oplus 0 \oplus 0 \oplus \ldots $$ 
($\xi \in \cale, a, a_i \in A$),
where $(\cale,S)$ is a full $(A,\alpha)$-module 
and $(\H_A,\alpha \oplus R)$ is the infinite
direct sum 
with distinuished first coordinate $(A,\alpha)$.

}
\end{definition}

note that indeed, $e(a)= \theta_{(0 \oplus a \oplus 0 \ldots) , (0 \oplus \phi \oplus 0 \oplus \ldots)}$, where $\phi \in \Theta_A(A)$ is the identity map.

BEM:

$B^2 \subseteq B$ ist ideal in $B$
und vllt braucht man $B=B^2$ weil man überall fulle module braucht, und am rechten teil steht meist ring

\begin{definition}

Verlange jedes $B$-module cofull (full) in dem sinne:

für alle $\zeta \in \cale$ gibt es $\phi_i \in \Theta_B(\cale)$ und $\xi_i,\eta_i \in \cale$
sodass

$\zeta = \sum_{i=1}^n \xi_i \phi_i(\eta_i)$

strongly co-full, wenn alle $\eta_i=\eta$ 

\end{definition}

Damit ist in $\calk$ alles summe von produkten:

$\theta_{\zeta,\psi}
= \sum_i \theta_{\xi_i,\phi_i} \theta_{\eta_i,\psi}
= \sum_i \xi_i \phi_i(\eta_i) \psi =   \theta_{ \sum_i \xi_i \phi_i(\eta_i),\psi}$

WEITER:

in $M \square A$ ist alles summe von produkten, 
weil es in $A$ gilt und damit in $(s \square 1(A)$, und in den kompakten, und damit in deren summe 

WEITER:

definiere $\call_A(\cale)$ als multiplier zu kompakten, un dann das ideal $\sum B^2$ genommen  

->

automatisch, da einheit $1 \in \call$ 

WEITER:

ideal $\sum B^2$ ist $G$-invariant

\subsection{}			\label{bed1}

Verlange

für alle $\xi \neq  0 \in \cale$ gibt es ein $\psi \in \Theta_A(\cale)$ und ein $a \in \cale $ sodass
$\xi \psi(a) \neq 0$ 

Damit die null-bedingung an K:

sei $\sum_i \theta_{\xi_i,\phi_i} \neq 0$ 

dann

$ 
\sum_i \theta_{\xi_i,\phi_i} \theta_{\eta,\psi} (a)
= \sum_i \xi_i \phi_i(\eta) \psi (a) \neq 0$

\subsection{}		\label{subsection12}

$\cale \phi(\cale)$ fulles module

bedingung so:

$\xi \phi(\eta)= \xi \phi(\xi_i \phi(\eta_i))
= \xi \phi(\xi_i) \phi(\eta_i)
= \theta_{\xi, \phi} \theta_{\xi_i,\phi}(\eta_i)$

brauche strongly cofull 

->

so, wenn obiges ugleich 0, dann it ja zumindse
ein summand ungleich 0, alo
sein $\theta_{\xi, \phi} \theta_{\xi_i,\phi} \neq 0$  

\subsection{}

$M_n(A) \otimes B 
\rightarrow M_n(A \otimes B)$

ohne $\otimes^\C$

\subsection{}

wenn split exact sequence von der form
$\call_B \square$ mit $M_2$-aktion 
$\Ad(S \oplus T)$, dann ideal bedingung 
($G$-invarinace) automatisch erfüllt, weil
ideal compacte operatoren,

\subsection{}

$B^2 = B^3 = B^n$ offensichtlich , wenn $B=B^2$

\section{}

The easy proof of the following lemma 
will be done in lemma \ref{lemma10}. 

\begin{lemma}		\label{lemma281}

If $e:A \rightarrow \calk_A(\cale \oplus \H_A)$ corner embedding then 
$e^{-1} = \phi f^{-1}$ where 
$\phi:  \calk_A(\cale \oplus \H_A) 
\rightarrow \calk_A(\cale \oplus \calf \oplus \H_A)$
and $f$ the corner embedding, see the left most square of the diagram of lemma \ref{lemma10}.

\end{lemma}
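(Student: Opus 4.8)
The plan is to reduce this to the left rectangle of the diagram appearing in the proof of lemma \ref{lemma10}. Concretely, one applies that construction with $B := A$, with $\H_A$ and $\cale$ as given, and with $\calf$ any weakly cofull $A$-module (this weak cofullness is the implicit hypothesis that makes $f$ a genuine corner embedding). The ``padding'' ring homomorphism is $\phi: \calk_A(\cale \oplus \H_A) \to \calk_A(\cale \oplus \calf \oplus \H_A)$ sending $\theta_{\xi,\psi}$ to $\theta_{\xi \oplus 0_\calf,\, \psi \oplus 0_\calf}$ (extended additively); that this is well defined into the compact operators, and multiplicative, is immediate from $\theta_{\xi,\psi}\theta_{\eta,\varrho} = \theta_{\xi\psi(\eta),\varrho}$.

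First I would check that $f: A \to \calk_A(\cale \oplus \calf \oplus \H_A)$ is indeed a corner embedding in the sense of definition \ref{def22}, i.e.\ that $\cale \oplus \calf \oplus \H_A$ is cofull. Since $e$ is a corner embedding, $\cale \oplus \H_A$ is already cofull; and then one recovers the missing $\calf$-part exactly as in the proof of lemma \ref{lemma117}.(i): writing $\xi = \sum_i \eta_i b_i$ by weak cofullness of $\calf$ and $b_i = \sum_j x_{ij} y_{ij}$ by quadratik of $A$, the element $0 \oplus \xi \oplus 0$ is the corresponding finite sum of elementary compact operators applied to vectors, the functionals being the left-multiplication functionals living on the distinguished coordinate of $\H_A$. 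Next I would record the crucial but trivial point that $e\phi = f$ holds already on the level of ring homomorphisms: for every $a \in A$, both $e\phi(a)$ and $f(a)$ act by left multiplication by $a$ on the distinguished coordinate $(A,\alpha)$ of $\cale \oplus \calf \oplus \H_A$ and by zero on every other summand.

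Finally, by the stability axiom $e$ and $f$ are invertible in $GK^G$ with the prescribed inverses, so pre-composing $e\phi = f$ with $e^{-1}$ gives $\phi = e^{-1}e\phi = e^{-1}f$, and post-composing with $f^{-1}$ gives $\phi f^{-1} = e^{-1}f f^{-1} = e^{-1}$, which is the assertion. The only ``obstacle'' here is the bookkeeping around well-definedness of $\phi$ and cofullness of $\cale \oplus \calf \oplus \H_A$; the $GK^G$-identity itself is purely formal and is in any case already implicit in the verification of lemma \ref{lemma10} via lemmas \ref{lemma21} and \ref{lemma14}, so no new computation is required.
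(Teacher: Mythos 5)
Your proposal is correct and follows essentially the same route the paper intends: the assertion is exactly the commutativity of the left-most square of the diagram in lemma \ref{lemma10} (with $\phi(T)=T\oplus 0_\calf$, so that $e\phi=f$ as ring homomorphisms), combined with invertibility of the two corner embeddings. Your added bookkeeping (well-definedness of $\phi$ and cofullness of $\cale\oplus\calf\oplus\H_A$ via weak cofullness of $\calf$, quadratik and lemma \ref{lemma117}) matches the hypotheses and arguments already used there, so nothing further is needed.
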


\section{}

FRAGLICH W	OHLDEFINIERT

\begin{lemma}

Sei $C$ unital.

Sei $f:B \rightarrow C$ homomorphismus.

Es gibt homomorphismus abbildung
$$\phi:\calk_B(\cale \oplus \H_B)
\rightarrow \calk_C(\cale \otimes_f C \oplus 
\H_C)$$

$\phi( \theta_{\xi \oplus u,\eta \oplus v} )
= \theta_{\xi \otimes_f 1 \oplus f(u),
\eta \otimes_f 1 \oplus f(v)}$

wo, allgemein

$\theta_{\xi \oplus u,\eta \oplus v} (a \oplus b)
= (\xi \oplus u) (g(a) + v b)$

wo

$g:\cale \rightarrow B$

$B$-module abbildung 

\end{lemma}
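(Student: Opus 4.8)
The plan is to obtain $\phi$ as the functorial action of the internal tensor product $-\otimes_f C$ on operators, rather than to verify the displayed formula by hand. Write $\cale':=\cale\oplus\H_B$; by our standing convention this module is cofull, since it underlies a ring of compact operators. For an adjointable $T\in\call_B(\cale')$ define $T\otimes\idd$ first on $\cale'\otimes_\Z C$ by $(T\otimes\idd)(\xi\otimes c)=T(\xi)\otimes c$, and then check it descends to the quotient $\cale'\otimes_f C$ of Definition \ref{def110}: if $\sum_i\xi_i\otimes c_i$ lies in the submodule one divides out, that is $\sum_i f\big(\varphi(\xi_i)\big)c_i=0$ for every $\varphi\in\Theta_B(\cale')$, then since $T$ is adjointable $\varphi\circ T\in\Theta_B(\cale')$ for each such $\varphi$, whence $\sum_i f\big((\varphi\circ T)(\xi_i)\big)c_i=0$, i.e.\ $\sum_i T(\xi_i)\otimes c_i$ again lies in that submodule. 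Thus $T\mapsto T\otimes\idd$ is a well-defined, clearly additive and multiplicative map $\call_B(\cale')\to\call_C(\cale'\otimes_f C)$ (a short computation with $\chi=\varphi\otimes c$ shows $\chi\circ(T\otimes\idd)=(\varphi\circ T)\otimes c\in\Theta_C(\cale'\otimes_f C)$, so the target is indeed $\call_C$), and by Lemma \ref{lemma116}.(v) (using cofullness of $\cale'$) it restricts to a ring homomorphism $\calk_B(\cale')\to\calk_C(\cale'\otimes_f C)$. This also settles the well-definedness concern noted just before the statement: a compact operator $\theta_{\eta,\psi}$ is adjointable because $\varphi\circ\theta_{\eta,\psi}=\varphi(\eta)\,\psi\in\Theta_B(\cale')$, the functional space being a left $B$-submodule, so the descent argument covers all of $\calk_B(\cale')$.

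Next I would rewrite the target module. By Lemma \ref{lemma221}.(ii) the internal tensor product commutes with direct sums of functional modules, so $(\cale\oplus\H_B)\otimes_f C\cong(\cale\otimes_f C)\oplus(\H_B\otimes_f C)$, and applying the same coordinatewise inside $\H_B=\bigoplus_i B$ together with Lemma \ref{lemma220}.(i) identifies $\H_B\otimes_f C\cong\bigoplus_i f(B)C$. When $f$ is nondegenerate — in particular unital, which is the intended reading since $C$ is assumed unital — this is literally a direct sum $\H_C$ of copies of $C$, and composing the map of the previous paragraph with these identifications yields the desired ring homomorphism $\phi:\calk_B(\cale\oplus\H_B)\to\calk_C(\cale\otimes_f C\oplus\H_C)$. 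For a general $f$ one should read ``$\H_C$'' in the statement as the submodule $\bigoplus_i f(B)C$.

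It remains to recognize $\phi$ as the map written in the statement. Using cofullness of $\cale'$ to express an arbitrary range vector $\zeta\in\cale'$ as a finite sum of terms $\xi\,\varphi(\eta)$ with $\xi,\eta\in\cale'$, $\varphi\in\Theta_B(\cale')$, and the relation $\xi b\otimes c=\xi\otimes f(b)c$ in $\cale'\otimes_f C$ (immediate from Definition \ref{def110} and right $B$-linearity of the $\varphi$'s), one gets $\theta_{\zeta,\psi}\otimes\idd=\theta_{\zeta\otimes 1,\ \psi\otimes 1}$ exactly as in the proof of Lemma \ref{lemma116}.(v). Transporting this through the isomorphism above, under which $\xi\oplus u\mapsto(\xi\otimes 1)\oplus f(u)$ and a functional with $\cale$-component $g\in\Theta_B(\cale)$ and $\H_B$-component $v$ goes to the functional with $\cale\otimes_f C$-component $g\otimes 1$ and $\H_C$-component $f(v)$, gives precisely $\phi\big(\theta_{\xi\oplus u,\ g\oplus v}\big)=\theta_{(\xi\otimes 1)\oplus f(u),\ (g\otimes 1)\oplus f(v)}$, which is the asserted formula (the ``$\eta$'' there being shorthand for the functional $g$). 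If $B,C$ are $G$-rings and $f$ equivariant, equivariance of $\phi$ is automatic since $T\otimes\idd$ intertwines the diagonal $G$-actions, and the same argument already produced the $\call$-version $\call_B(\cale\oplus\H_B)\to\call_C(\cale\otimes_f C\oplus\H_C)$. The only genuinely delicate point is the one just flagged — $\H_B\otimes_f C\cong\H_C$ holds on the nose only for nondegenerate $f$, otherwise the clean statement is about $\bigoplus_i f(B)C$; everything else is routine once one observes that adjointable, hence compact, operators pull functionals back to functionals.
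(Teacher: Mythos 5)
Your proposal is correct, but it proves the lemma by a genuinely different mechanism than the paper does. The paper's own argument is entirely formula-driven: it defines $\phi$ on elementary compact operators by the displayed assignment and then verifies multiplicativity by computing $\theta_{\xi\oplus u,g\oplus v}\,\theta_{\xi'\oplus u',g'\oplus v'}=\theta_{(\xi\oplus u)(g(\xi')+u'v),\,g'\oplus v'}$ on both sides of $\phi$; additive well-definedness (whether a vanishing finite sum of elementary operators is sent to zero) is explicitly left open there (the note ``FRAGLICH WOHLDEFINIERT''). You instead construct $\phi$ functorially as $T\mapsto T\otimes\idd$ on adjointable operators, using adjointability ($\varphi\circ T\in\Theta_B(\cale\oplus\H_B)$) to check that $T\otimes\idd$ descends to the quotient module $(\cale\oplus\H_B)\otimes_f C$ of definition \ref{def110}, and then restrict to compacts via lemma \ref{lemma116}.(v); this buys exactly the well-definedness the paper's computation does not address, plus equivariance for free, at the cost of an extra identification step. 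That step is where the only real caveat sits, and you flag it correctly: $\H_B\otimes_f C\cong\bigoplus_i f(B)C$ equals $\H_C$ only when $f$ is unital/nondegenerate (the hypothesis in the statement is only that $C$ is unital, which is what makes $\xi\otimes_f 1$ meaningful), so your construction literally lands in $\calk_C\big((\cale\oplus\H_B)\otimes_f C\big)$ rather than in $\calk_C(\cale\otimes_f C\oplus\H_C)$ for general $f$; one cannot in general push compacts of a functional submodule into compacts of the larger module. Note that the body of the paper resolves this exact point differently: in lemma \ref{lemma43} (and lemma \ref{lemma83}) the map is taken to be $\phi(T)=T\otimes\idd\oplus 0_{\H_C}$ into $\calk_C\big((\cale\oplus\H_B)\otimes_f C\oplus\H_C\big)$, i.e.\ a fresh $\H_C$ summand is adjoined instead of identifying $\H_B\otimes_f C$ with $\H_C$, which makes your functorial argument apply verbatim without any nondegeneracy assumption; with that target your proof is complete and, unlike the paper's own computation for this scratch lemma, settles well-definedness.
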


\begin{proof}
$\theta_{\xi \oplus u,g \oplus v}
\theta_{\xi^2 \oplus u^2,g^2 \oplus v^2} (a \oplus b)
= \theta_{\xi \oplus u,g \oplus v}(\xi^2 \oplus
 u^2) (g^2(a) + v^2 b)$

$=\xi \oplus u (g(\xi^2) + u^2 v) (g^2(a) + v^2 b)$

$= \xi_{\xi \oplus u (g(\xi^2) + u^2 v), g^2 \oplus v^2}$

$ \rightarrow \phi:$

$= \xi_{\xi (g(\xi^2) + u^2 v) \otimes 1 \oplus f(u (g(\xi^2) + u^2 v)), g^2 \otimes 1 \oplus f(v^2)}$

$= \theta_{\xi  \otimes 1 \oplus f(u)  f(g(\xi^2) + u^2 v)), g^2 \otimes 1 \oplus f(v^2)} (a \otimes x \oplus b)$

$=\xi  \otimes 1 \oplus f(u)  f(g(\xi^2) + u^2 v)) ( f(g^2(a)) x + f(v^2) b)$

$\theta_{\xi \otimes 1 \oplus f(u),g \otimes 1\oplus f(v)}
\theta_{\xi^2 \otimes 1 \oplus f(u^2),g^2 \otimes 1 \oplus f(v^2)} (a \otimes x \oplus b)$

$= \theta_{\xi \otimes 1 \oplus f(u),g \otimes 1 \oplus f(v)}(\xi^2 \otimes 1 \oplus
 f(u^2)) (f(g^2(a))x + f(v^2) b)$

$= \xi \otimes 1 \oplus f(u)
(f(g(\xi^2))  + f(v) f(u^2)) (f(g^2(a))x + f(v^2) b)$

\end{proof}

allgemeiner:

$\cale \subseteq \calf$ 

man könne functionale $\phi$ auf $\cale$ 
auf functionale $\tilde \phi$ auf $\calf$  ausdehnen 

dann abbilding $\calk_C(\cale) \rightarrow \calk_C(\calf)$ 

$\theta_{\xi,\phi} \mapsto \theta_{\xi,\tilde \phi}$  

multiplikativ 

jedoch wohldef?

\section{}

We may simplify the second line of the above diagram of the last lemma further 
by removing the $\H_B$-terms without altering 
its associeted $GK^G$-term:

\begin{corollary}
If in the last lemma the $s_\pm$ are of the form 
$s_\pm = u_\pm \oplus 0_{\H_B} \square 1$ then one can remove the $\oplus \H_B$-terms in the second line of the diagram of the last lemma. 
That is $s_+ \nabla_{s_-} \pi = v_+ \nabla_{v_-}$ for 
$$\xymatrix{ 
D \ar[r]^{f} & \calk_D  ( \cale\otimes_\pi D \oplus \H_D )   \ar[r]
&
\call_D ( \cale  \otimes_\pi D \oplus \H_D  ) \square A
& A  \ar[l]^{v_\pm} 
}$$
\end{corollary}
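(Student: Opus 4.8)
The plan is to recognise the passage from the third (``reduced'') line back to the second line of the diagram of lemma \ref{lemma83} as an instance of lemma \ref{lemma10}, with the superfluous module taken there to be $\calf := \H_B \otimes_\pi D$; in fact the statement is nothing but assertion (ii) of lemma \ref{lemma83}, so it suffices to spell out that mechanism.

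First I would record that $\calf = \H_B \otimes_\pi D$ is cofull, hence weakly cofull: since $B$ is quadratik, the module $B$ over itself is cofull by lemma \ref{lemma114}, a direct sum of cofull modules is cofull (immediate from the direct-sum functional space), so $\H_B$ is cofull, and then $\H_B \otimes_\pi D$ is cofull by lemma \ref{lemma116}.(iv). Next, using that $\otimes_\pi$ commutes with direct sums (lemma \ref{lemma221}.(ii)) I would identify $(\cale \oplus \H_B) \otimes_\pi D \oplus \H_D \cong (\cale \otimes_\pi D) \oplus \calf \oplus \H_D$, so that the module in the second line of the diagram of lemma \ref{lemma83} is obtained from the one in the third line precisely by inserting the summand $\calf$ between $\cale \otimes_\pi D$ and $\H_D$ — exactly the move from $\cale \oplus \H_B$ to $\cale \oplus \calf \oplus \H_B$ performed in lemma \ref{lemma10}.

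The standardness hypothesis $s_\pm = u_\pm \oplus 0_{\H_B} \square \idd$ (cf. lemma \ref{lemma89}) enters at one point only: pushing $s_\pm$ through the embedding $\phi(T \square a) = T \otimes \idd \oplus 0_{\H_D} \square a$ of lemma \ref{lemma83} and using the identification above, the second-line homomorphisms take the block form $t_\pm(a) = (u_\pm(a) \otimes \idd) \oplus 0_\calf \oplus 0_{\H_D} \square a$; thus they are exactly the third-line homomorphisms $v_\pm(a) = (u_\pm(a) \otimes \idd) \oplus 0_{\H_D} \square a$ followed by the zero-insertion on $\calf$. Were $s_\pm$ not standard, $t_\pm$ would act nontrivially on $\H_B \otimes_\pi D$ and that summand could not be dropped. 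One also has to observe that the reduced third line is a genuine extended double split exact sequence: $v_+ - v_-$ lands in $\calk_D(\cale \otimes_\pi D)$ because $u_+ - u_-$ lands in $\calk_B(\cale)$ by remark \ref{remark43} and then tensoring preserves compactness by lemma \ref{lemma116}.(v), while the validity of its $M_2$-action (the restriction to the $\cale \otimes_\pi D \oplus \H_D$-coordinates of the $M_2$-action $\Ad(S \otimes \delta \oplus R, T \otimes \delta \oplus R)$ of the second line) is checked via lemma \ref{lemma272} exactly as in lemma \ref{lemma10}.

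Then lemma \ref{lemma10}, applied to the third line with superfluous module $\calf = \H_B \otimes_\pi D$, yields $t_+ \nabla_{t_-} = v_+ \nabla_{v_-}$ in $GK^G$ (the diagram being checked with lemma \ref{lemma21} and remark \ref{cor22} as in the proof of lemma \ref{lemma10}). Combining this with lemma \ref{lemma83}.(i), which gives $s_+ \nabla_{s_-} \pi = t_+ \nabla_{t_-}$, we obtain $s_+ \nabla_{s_-} \pi = v_+ \nabla_{v_-}$, as claimed. The only real obstacle here is bookkeeping: one must line up the several direct-sum decompositions so that lemma \ref{lemma10} applies verbatim, and see that standardness of $s_\pm$ is precisely what forces the zero block on $\calf$.
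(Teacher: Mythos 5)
Your proposal is correct and follows essentially the paper's own route: the paper also disposes of the $\H_B$-terms by applying lemma \ref{lemma10} with the superfluous module $\calf := \H_B \otimes_\pi D$, observing that inserting this summand into the third line reproduces the second line, and then combining with lemma \ref{lemma83}.(i). Your additional checks (cofullness of $\H_B \otimes_\pi D$, the direct-sum identification via lemma \ref{lemma221}.(ii), and the observation that standardness of $s_\pm$ is exactly what forces the zero block on $\calf$) are details the paper leaves implicit, and they are all correct.
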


\begin{proof}
There are two possibilities of proof, both of which yielding the same result.

At first we define 
$$t_\pm(a)= u_\pm(a) \otimes 1 \oplus 0 \square a$$ 
to get the extended double split exact sequence of the corollory. 
Then we apply it to lemma \ref{lemma10} for 
$\calf := \H_B \otimes_\pi D$,
and by observing that the second line of the diagram lemma \ref{lemma10} is the second line of the diagram of the last lemma we ar done.

Either we may use lemma \ref{lemma10} 
to $\calf := \H_B \otimes_\pi D$ to remove it,
or we may define $\psi$ in the last lemma, but with $\calf$ removed, in the same way on $s_-(A)$, and use lemma ... do define $\psi$
on $\calk(\cale \oplus \H_B) \square 0$.

\end{proof}

\section{}

\begin{lemma}		\label{lemma14}

If $\cale$ is a cofull right functional $B$-module and $\pi:B \rightarrow D$ is a ring homomorphism
mapped further to $\call_D(D)$ by left multiplication 
then
$$\calk_{B}(\cale) \otimes_\pi D
\subseteq \calk_D ( \cale \otimes_\pi D)$$

\end{lemma}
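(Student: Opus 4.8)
\textbf{Proof plan for Lemma \ref{lemma14}.} The statement is that for a cofull right functional $B$-module $\cale$ and a ring homomorphism $\pi:B \rightarrow D$ (viewed inside $\call_D(D)$ by left multiplication), the canonical map $\calk_B(\cale) \otimes_\pi D \rightarrow \calk_D(\cale \otimes_\pi D)$ lands in the compact operators. The plan is to verify this on elementary tensors of elementary compact operators and then extend by additivity and by cofullness, exactly as in the proof of lemma \ref{lemma116}.(v), of which this is really a mild generalization (there $D$ was quadratik, here we use cofullness of $\cale$ instead).

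First I would fix notation: an elementary compact operator $\theta_{\xi,\phi} \in \calk_B(\cale)$ with $\xi \in \cale$, $\phi \in \Theta_B(\cale)$, and an element $d \in D$. The internal tensor product $\calk_B(\cale) \otimes_\pi D$ has its functional structure from definition \ref{def110} applied to the module $(\calk_B(\cale), \ldots)$ over $D$ via $\pi$ (left multiplication in $D$), and its elements map into $\Hom D(\cale \otimes_\pi D)$ by the homomorphism $\pi$ of (\ref{eqh}) restricted appropriately. The key computation is to show that $\theta_{\xi,\phi} \otimes d$ corresponds, under this map, to a finite sum of elementary compact operators $\theta_{\eta_i \otimes d_i,\, \psi_i}$ on $\cale \otimes_\pi D$ for suitable $\eta_i \in \cale$, $d_i \in D$, $\psi_i \in \Theta_D(\cale \otimes_\pi D)$.

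The main step is as follows. Using cofullness of $\cale$, write $\xi = \sum_i \zeta_i \, \phi_i(\omega_i)$ with $\zeta_i,\omega_i \in \cale$ and $\phi_i \in \Theta_B(\cale)$, so that $\theta_{\xi,\phi} = \sum_i \theta_{\zeta_i,\phi_i} \theta_{\omega_i,\phi}$. Then on $\cale \otimes_\pi D$ one computes, for $\eta \in \cale$ and $e \in D$,
\begin{eqnarray*}
(\theta_{\xi,\phi} \otimes d)(\eta \otimes e) &=& \xi\,\phi(\eta) \otimes d e
= \sum_i \zeta_i\,\phi_i(\omega_i)\,\phi(\eta) \otimes d e  \\
&=& \sum_i \zeta_i \otimes \pi\big(\phi_i(\omega_i)\,\phi(\eta)\big)\, d e
= \sum_i \big(\zeta_i \otimes \pi(\phi_i(\omega_i)) d\big)\,\psi_i(\eta \otimes e),
\end{eqnarray*}
where $\psi_i$ is the elementary functional $\phi \otimes \psi$ of definition \ref{def110} with $\psi \in \Theta_D(D)$ chosen to be left multiplication by $1$ (resolvable into a finite sum $\sum \theta_{\cdot,\cdot}$ if $D$ is not unital, as remarked after definition \ref{def115}). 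Hence $\theta_{\xi,\phi} \otimes d = \sum_i \theta_{\zeta_i \otimes \pi(\phi_i(\omega_i))d,\; \psi_i} \in \calk_D(\cale \otimes_\pi D)$, and by additivity every element of $\calk_B(\cale) \otimes_\pi D$ maps into $\calk_D(\cale \otimes_\pi D)$.

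I expect the only real obstacle to be bookkeeping: making sure the functionals $\psi_i$ genuinely lie in $\Theta_D(\cale \otimes_\pi D)$ as defined in definition \ref{def110}, and handling the non-unital case of $D$ cleanly (which is why we phrase the functional $\psi$ via elementary operators rather than literally as multiplication by $1$). The cofullness hypothesis on $\cale$ is exactly what lets us pull the factor $\phi_i(\omega_i)$ across the $\pi$ into the $D$-slot; without it one cannot in general express $\theta_{\xi,\phi} \otimes d$ as a finite sum of elementary compacts on the tensor product. Since everything else is the universal property of the tensor product and the identities already recorded in lemma \ref{lemma116}, the proof should be short.
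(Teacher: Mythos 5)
Your overall strategy --- decompose the left leg of an elementary compact operator by cofullness of $\cale$ and pull the resulting $B$-factor across the balanced tensor product --- is exactly the paper's argument: its whole proof is the identity $\theta_{\xi\phi(\eta),\psi}\otimes 1=\theta_{\xi\otimes\pi(\phi(\eta)),\psi\otimes 1}$ (compare the twin statement, lemma \ref{lemma116}.(v)), with the notational functional ``$1$'' resolved through quadratikness just as you indicate. For the case the lemma actually asserts and the paper actually uses, namely the operators $T\otimes\idd$ (see the applications in lemma \ref{lemma83} and proposition \ref{lemma85}), your computation with the factor $d$ deleted is correct and essentially identical to the paper's.

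The gap is in your generalization to $\theta_{\xi,\phi}\otimes d$ for an arbitrary $d\in D$ acting by left multiplication in the second leg. First, such an operator is in general not even well defined on $\cale\otimes_\pi D$: the relations of the internal tensor product (definition \ref{def110}) involve the left $\pi(B)$-action on $D$, and left multiplication by $d$ need not commute with it; the homomorphism (\ref{eqh}) you invoke concerns the exterior tensor product and does not cover this. Concretely, for $B=D=M_2(\F)$, $\pi=\idd$, $\cale=B$ and matrix units $e_{ij}$, one has $e_{11}\otimes e_{12}=e_{12}\otimes e_{22}$ in $\cale\otimes_\pi D$, yet your formula with $\xi=d=e_{11}$ and $\phi$ equal to left multiplication by $e_{11}$ sends the first representative to $e_{11}\otimes e_{12}\neq 0$ and the second to $0$. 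Second, even where the operator does make sense, the last equality of your displayed computation is false for noncommutative $D$: the preceding line is $\sum_i\zeta_i\otimes\pi(\phi_i(\omega_i))\,\pi(\phi(\eta))\,d\,e$, whereas $\bigl(\zeta_i\otimes\pi(\phi_i(\omega_i))d\bigr)\,\psi_i(\eta\otimes e)=\zeta_i\otimes\pi(\phi_i(\omega_i))\,d\,\pi(\phi(\eta))\,e$; you have silently commuted $d$ past $\pi(\phi(\eta))$. So the argument is valid exactly when $d$ is the (notational) identity, or more generally commutes with $\pi(B)$ --- which is the reading the paper intends --- and as written for general $d$ it contains a genuine error.
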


\begin{proof}

Set $\phi \otimes \psi (\xi \otimes \eta):=
\psi( \pi( \phi(\xi))(\eta))$ 
füor $ \phi \in \Theta_B(\cale), \psi \in \Theta_D(\calf)$

TRhen

$$\theta_{\xi b,\phi} \otimes 1
= \theta_{\xi \otimes \pi(b), \phi \otimes 1}$$

probe:

$$\xi b\phi(u) \otimes v
= \xi \otimes \pi(b) (\phi \otimes 1)(u \otimes v)$$
$$= \xi \otimes \pi(b) \cdot (\pi(\phi(u)) v)$$
$$= \xi  b \phi(u) \otimes  v$$

\end{proof}

\section{}

\begin{lemma}

complexificatin

complexification of external tensor product:



$$\calk_B( \cale \otimes \cale ) \otimes \calk_\C(\C) 
\cong \calk_{B \otimes B \otimes \C} (\cale \otimes \cale \otimes \C) 
\cong \calk_{(B \otimes \C) \otimes_\C ( B \otimes \C)} 
((\cale \otimes \C) \otimes_\C 
(\cale \otimes \C) )$$



Let $\F$ ang $\G$ be fields, or noth there (omitted), then 

Associativity
$$(A \otimes^\F B) \otimes^\G C =
A \otimes^\F (B \otimes^\G C)$$

If $B$ is a $\F$-vectorspace then
$$\F \otimes^\F B =  B$$
 
Geht
$$A \otimes^\F (B \otimes^\G C) =
( A  \otimes^\G C)\otimes^\F B $$
j, weil 
$$A \otimes^\F B = B \otimes^\F A$$
($A,B$ $\F$ und $\G$ vr, $C$ $\G$ vr) 

weiters
(schreibe $\G=\G \otimes^\G \G$)  


\end{lemma}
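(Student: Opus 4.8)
The plan is to reduce the statement to the single algebraic fact that the homomorphism $\pi$ of (\ref{eqh}) is injective, and then to obtain surjectivity onto the compacts by a bare-hands computation on generators.

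First I would observe that, under the standing convention of this section, every module is an $\F$-vector space and every module homomorphism is $\F$-linear, so $\Hom A(\cale) \subseteq \mathrm{End}_{\F}(\cale)$, $\Hom B(\calf) \subseteq \mathrm{End}_{\F}(\calf)$ and $\Hom{A \otimes B}(\cale \otimes \calf) \subseteq \mathrm{End}_{\F}(\cale \otimes \calf)$, and the map $\pi$ of (\ref{eqh}) is just the restriction of the canonical $\F$-linear map $\mathrm{End}_{\F}(\cale) \otimes \mathrm{End}_{\F}(\calf) \to \mathrm{End}_{\F}(\cale \otimes \calf)$, $S \otimes T \mapsto S \otimes T$. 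The latter is injective: if $\sum_i S_i \otimes T_i$ acts as zero on all $\xi \otimes \eta$ with the $S_i$ linearly independent over $\F$, then applying an arbitrary $\F$-linear functional $\mu$ on $\cale$ to the first leg gives $\sum_i \mu\big(S_i(\xi)\big)\, T_i = 0$ for every $\xi$ and $\mu$; linear independence of the $S_i$ forces the scalar vectors $\big(\mu(S_i(\xi))\big)_i$ to span $\F^n$ as $\xi,\mu$ vary, and choosing $n$ of them that form a basis we conclude $T_i = 0$ for all $i$. Since a tensor product of $\F$-linear inclusions is again an inclusion, $\pi$ is injective. Restricting the source to $\call_A(\cale) \otimes \call_B(\calf)$ therefore gives an injection, and the image lies in $\call_{A \otimes B}(\cale \otimes \calf)$ (as was already noted after (\ref{eqh}), using $(\phi \otimes \psi)\circ(S\otimes T) = (\phi\circ S)\otimes(\psi\circ T)$); likewise with $\calk$ in place of $\call$.

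It remains to check that $\calk_A(\cale) \otimes \calk_B(\calf)$ maps \emph{onto} $\calk_{A\otimes B}(\cale \otimes \calf)$. By Definition \ref{defcompact} and Definition \ref{def19}, the latter ring is spanned by the elementary operators $\theta_{\xi \otimes \eta,\, \phi \otimes \psi}$ with $\xi \in \cale$, $\eta \in \calf$, $\phi \in \Theta_A(\cale)$, $\psi \in \Theta_B(\calf)$, because a general vector of $\cale \otimes \calf$ and a general element of $\Theta_{A\otimes B}(\cale \otimes \calf)$ are finite sums of such elementary tensors and functionals, and $\theta_{-,-}$ is bi-additive. For all $\xi' \in \cale$, $\eta' \in \calf$ one has
$$\theta_{\xi \otimes \eta,\, \phi \otimes \psi}(\xi' \otimes \eta') = \xi\,\phi(\xi') \otimes \eta\,\psi(\eta') = \big(\theta_{\xi,\phi} \otimes \theta_{\eta,\psi}\big)(\xi' \otimes \eta'),$$
so $\theta_{\xi \otimes \eta,\, \phi \otimes \psi} = \pi\big(\theta_{\xi,\phi} \otimes \theta_{\eta,\psi}\big)$ lies in the image; reading this identity in the other direction also shows the image of $\calk_A(\cale) \otimes \calk_B(\calf)$ is contained in $\calk_{A\otimes B}(\cale \otimes \calf)$. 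Hence $\pi$ restricts to a surjection there, and it is injective by the previous paragraph, so it is an isomorphism.

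The only genuine obstacle is the injectivity of $\pi$, and this is exactly where the field hypothesis of this section is used: over $\Z$, or a general commutative ground ring, $\pi$ need not be injective (cf.\ the remark after (\ref{eqh})), so one cannot avoid the linear-independence argument valid over a field. Everything else is routine bookkeeping with the definitions of $\Theta_A$, $\calk_A$, $\call_A$ and of the external tensor product of functional modules in Definitions \ref{def19} and \ref{defcompact}.
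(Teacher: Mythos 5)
There is a genuine gap: what you have written is essentially a (more detailed) proof of Lemma \ref{lemma1323} — that over a field $\F$ the map (\ref{eqh}) is injective and restricts to an isomorphism $\calk_A(\cale)\otimes^\F\calk_B(\calf)\cong\calk_{A\otimes^\F B}(\cale\otimes^\F\calf)$ — but that is not the statement at hand, which concerns \emph{complexification}. In the first isomorphism of the displayed chain the tensor product with $\calk_\C(\C)\cong\C$ is $\Z$-balanced: at that stage $B$ and $\cale$ are only a ring and a functional module, not $\F$-vector spaces, so your key injectivity argument — which relies on $\F$-linear functionals separating points of vector spaces and on flatness over a field — simply does not apply; this is exactly the regime in which the paper notes, right after (\ref{eqh}), that the canonical map need be neither injective nor surjective. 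The paper's route is different: one first uses the rebalancing identities of Lemma \ref{lemma120}, namely $(A\otimes B)\otimes\G\cong(A\otimes\G)\otimes^\G(B\otimes\G)$ and its module analogue (obtained from the universal property of the tensor product and passing to the relevant quotients), to rewrite the complexified data as $\C$-balanced tensor products of complexifications, and only \emph{then} invokes the field-case identification of compacts (your generator computation $\pi(\theta_{\xi,\phi}\otimes\theta_{\eta,\psi})=\theta_{\xi\otimes\eta,\phi\otimes\psi}$, i.e.\ Lemma \ref{lemma1323} with $\G=\C$).

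Moreover, the remaining assertions of the statement — the mixed-base associativity $(A\otimes^\F B)\otimes^\G C = A\otimes^\F(B\otimes^\G C)$, the unit law $\F\otimes^\F B=B$, and the commutation identity — are not addressed anywhere in your proposal, yet they are precisely what justifies the second isomorphism in the chain (redistributing the single $\C$-factor onto each tensor factor). To repair the argument, first establish these scalar-redistribution identities as in Lemma \ref{lemma120}; after that reduction everything lives over $\C$ and your linear-independence injectivity argument together with the elementary-operator computation does correctly finish the identification of the compact operators.
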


\begin{lemma}

If $\phi \circ W \in \Theta_A(\cale)$ for all
$\phi \in \Theta_A(\cale)$ then 
$W$ adjointable. 

--

$\calk_\F(\cale)=\call_\F(\cale)$ 
für finite dim vr $\cale$ 

\end{lemma}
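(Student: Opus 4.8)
The plan is as follows. The first assertion amounts, in the terminology fixed above, to unwinding a definition: by the definition of adjointable operators, $\call_A(\cale):=\call_A(\cale,\cale)$ consists precisely of those $W\in\Hom A(\cale)$ with $\phi\circ W\in\Theta_A(\cale)$ for every $\phi\in\Theta_A(\cale)$, and this is exactly the stated hypothesis. So there is nothing further to prove, although one may additionally record that such a $W$ is then automatically a multiplier of $\calk_A(\cale)$: indeed $W\circ\theta_{\xi,\phi}=\theta_{W(\xi),\phi}\in\calk_A(\cale)$ for any $W\in\Hom A(\cale)$, and $\theta_{\xi,\phi}\circ W=\theta_{\xi,\phi\circ W}$, which lies in $\calk_A(\cale)$ precisely because $\phi\circ W\in\Theta_A(\cale)$ by hypothesis; hence $W\in\calm(\calk_A(\cale))$ as well.

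For the second assertion I would first put $\cale$ in normal form. A finite-dimensional $\F$-vector space is free of some finite rank $n$ as an $\F$-module, so we may identify $\cale\cong\F^n$; moreover, since the modules underlying the compact operators are (taken to be) separated by their functionals, the inclusion $\Theta_\F(\cale)\subseteq\cale^{*}$ into the linear dual together with the resulting injection $\cale\hookrightarrow\Theta_\F(\cale)^{*}$ forces, by a dimension count over $\F$, that $\Theta_\F(\cale)=\cale^{*}$ is the whole dual space.

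Then the remaining input is elementary linear algebra: the operators $\theta_{\eta,\phi}$, $\theta_{\eta,\phi}(\xi)=\eta\,\phi(\xi)$, with $\eta\in\F^n$ and $\phi$ ranging over $\cale^{*}$, are exactly the rank-$\le 1$ endomorphisms, and their finite sums exhaust $\Hom\F(\cale)$ (write a matrix $(a_{ij})$ as $\sum_{i,j}a_{ij}\,e_i\otimes e_j^{*}$); hence $\calk_\F(\cale)=\Hom\F(\cale)$. On the other hand one has the always-valid inclusions $\calk_\F(\cale)\subseteq\call_\F(\cale)\subseteq\Hom\F(\cale)$, the first because $\phi\circ\theta_{\eta,\psi}=\phi(\eta)\,\psi$ lies in the left $\F$-submodule $\Theta_\F(\cale)$ and the second by the definition of $\call$. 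Combining, $\calk_\F(\cale)\subseteq\call_\F(\cale)\subseteq\Hom\F(\cale)=\calk_\F(\cale)$, so all three coincide.

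I do not expect any serious obstacle: the first assertion is definitional and the second is bookkeeping with the definitions of $\calk$, $\call$ and $\Hom$ together with the fact that rank-one maps span the endomorphism ring of a finite-dimensional space. The one point that genuinely needs attention is the implicit requirement that $\Theta_\F(\cale)$ be the full linear dual — equivalently that $\cale$ be separated by its functionals — since for an arbitrary distinguished functional space (for instance one reading off only a single coordinate of $\F^2$) the equality $\calk_\F(\cale)=\call_\F(\cale)$ genuinely fails.
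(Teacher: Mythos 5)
Your proof is correct, and since the paper records this lemma only as an unproved note, the comparison is with the definitions and surrounding facts rather than with a written-out argument. For the first assertion you are right that, with the paper's definition of $\call_A(\cale)$, the hypothesis $\phi\circ W\in\Theta_A(\cale)$ for all $\phi\in\Theta_A(\cale)$ (for $W\in\Hom A(\cale)$) \emph{is} the definition of adjointability, and your added observation $\theta_{\xi,\phi}\circ W=\theta_{\xi,\phi\circ W}$ is exactly the remark the paper makes after the multiplier definition, so $W\in\calm(\calk_A(\cale))$ comes for free. For the second assertion your route (reduce to $\F^n$, note $\calk_\F\subseteq\call_\F\subseteq\Hom\F$, and use that the rank-one operators $\theta_{\eta,\phi}$ with $\phi$ running over the full dual span $\Hom\F(\F^n)$) is the same mechanism the paper implicitly relies on when it identifies $\calk_\G(\G^n)\cong M_n(\G)$ in lemma \ref{lemma120} and uses $\K=\calk_\F(\ell^2(G))$ in the Green--Julg section, where the direct-sum functional space of $\oplus_{g}\F$ is indeed all of the linear dual. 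Your one caveat is well taken and is the genuinely substantive point: the paper's standing convention only guarantees cofullness of the underlying module, not separation by functionals, and your $\F^2$-with-one-coordinate example correctly shows that cofullness alone does not give $\calk_\F(\cale)=\call_\F(\cale)$; so the statement needs (and, in the paper's intended use cases, has) $\Theta_\F(\cale)=\cale^{*}$, which for finite-dimensional $\cale$ is equivalent to separation by the dimension count you give.
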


\section{}

We may reformulate further by exchanging the orders of "making a direct sum" and "do matrix embedding" 

\begin{eqnarray*}
s_+^{t_+ e_+} E_+^{(1)} \oplus {s_-^{t_- e_-}} 
E_+^{(2)}  
\oplus u_-^{P} E_+^{(3)} - \big ( {{s_-^{t_+ e_+}} \oplus 
s_+^{t_- e_-} \oplus u_+^{P}} \big ) E_-  
	&=& 0 
\quad   \cdot P' \mbox{ and then \ref{lemma94}}  
\\
\oplus \rightarrow + ?
\end{eqnarray*}

\section{}

\begin{lemma}		\label{lemma1323}
There is an injectgion
$$\pi: \call_A(\cale 
) \otimes \call_B(\calf)
\rightarrow 
\call_{A \otimes B}(\cale \otimes \calf)  
: \pi(S \otimes T)  =  
S \otimes T
$$
which restricts to an isomorphism
$\calk_A(\cale 
) \otimes \calk_B(\calf)
\rightarrow 
\calk_{A \otimes B}(\cale \otimes \calf)  
$
\end{lemma}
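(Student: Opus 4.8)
The plan is to factor the statement through three observations: injectivity of the map (\ref{eqh}) on the whole algebraic tensor product, identification of the image of the compact operators, and identification of the image of the adjointable operators. Throughout we are in the setting of Section \ref{section10}, so every space in sight is an $\F$-vector space and every $\otimes$ is $\F$-balanced. First I would record that (\ref{eqh}) is injective on all of $\Hom A(\cale) \otimes \Hom B(\calf)$: this is the classical fact that $\Hom_\F(V_1,V_2)\otimes_\F\Hom_\F(W_1,W_2)\hookrightarrow\Hom_\F(V_1\otimes W_1,V_2\otimes W_2)$ over a field. Concretely, if $\sum_{i=1}^n S_i\otimes T_i$ lies in the kernel, with the $T_i$ chosen $\F$-linearly independent, then $\sum_i S_i(\xi)\otimes T_i(\eta)=0$ in $\cale\otimes\calf$ for all $\xi,\eta$; fixing $\xi$, expanding the $S_i(\xi)$ in a basis of the finite-dimensional subspace they span, and extending to a basis of $\cale$ so that $\cale\otimes\calf$ becomes a direct sum of copies of $\calf$, one reads off $\sum_i a_{ik}T_i=0$ for the coordinate scalars, forcing $a_{ik}=0$ and hence $S_i(\xi)=0$; as $\xi$ is arbitrary, $S_i=0$. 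In particular the restriction of (\ref{eqh}) to any $\F$-subspace, and so to $\calk_A(\cale)\otimes\calk_B(\calf)$ and to $\call_A(\cale)\otimes\call_B(\calf)$, is injective.

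Next I would handle the compact operators. The key identity, immediate from unwinding the module structures and Definition \ref{def19}, is $\pi(\theta_{\xi,\phi}\otimes\theta_{\eta,\psi})=\theta_{\xi\otimes\eta,\phi\otimes\psi}$ for $\xi\in\cale$, $\eta\in\calf$, $\phi\in\Theta_A(\cale)$, $\psi\in\Theta_B(\calf)$. This at once shows that $\pi$ maps $\calk_A(\cale)\otimes\calk_B(\calf)$ into $\calk_{A\otimes B}(\cale\otimes\calf)$. For the reverse, by Definition \ref{def19} the functional space $\Theta_{A\otimes B}(\cale\otimes\calf)$ consists exactly of finite sums of elementary functionals $\phi\otimes\psi$, and every element of $\cale\otimes\calf$ is a finite sum of elementary tensors $\xi\otimes\eta$; using additivity of $\theta_{-,-}$ in both arguments, every elementary compact operator $\theta_{\zeta,\chi}$ on $\cale\otimes\calf$, hence every compact operator, is a finite sum of terms $\theta_{\xi\otimes\eta,\phi\otimes\psi}=\pi(\theta_{\xi,\phi}\otimes\theta_{\eta,\psi})$ and so lies in the image. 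With the injectivity of the previous paragraph this gives the isomorphism $\calk_A(\cale)\otimes\calk_B(\calf)\cong\calk_{A\otimes B}(\cale\otimes\calf)$.

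Finally, for the adjointable operators I would only need to check that $\pi$ sends $\call_A(\cale)\otimes\call_B(\calf)$ into $\call_{A\otimes B}(\cale\otimes\calf)$, since injectivity is already in hand. For $S\in\call_A(\cale)$, $T\in\call_B(\calf)$ and an elementary functional $\phi\otimes\psi$ one computes
$$
(\phi\otimes\psi)\circ(S\otimes T)=(\phi\circ S)\otimes(\psi\circ T),
$$
and $\phi\circ S\in\Theta_A(\cale)$, $\psi\circ T\in\Theta_B(\calf)$ by adjointability of $S$ and $T$, so the right-hand side is again an elementary functional; summing over finitely many terms, $\chi\circ\pi(V)\in\Theta_{A\otimes B}(\cale\otimes\calf)$ for every $V\in\call_A(\cale)\otimes\call_B(\calf)$ and every $\chi\in\Theta_{A\otimes B}(\cale\otimes\calf)$, which is precisely membership in $\call_{A\otimes B}(\cale\otimes\calf)$. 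The only genuinely non-formal step is the injectivity in the first paragraph; everything else is bookkeeping with Definitions \ref{def19} and the compact/adjointable operator definitions, so I expect the main point to be presenting that injectivity cleanly — and since it is the classical flatness statement over a field, I would be content to cite it rather than reprove it in full.
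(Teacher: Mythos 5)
Your argument is correct and matches the paper's proof: the paper likewise disposes of injectivity by citing the classical vector-space fact about $\Hom \otimes \Hom \hookrightarrow \Hom$ over the field $\F$ (which you spell out), and obtains the bijection on compacts from the same key identity $\pi(\theta_{\xi,\phi}\otimes\theta_{\eta,\psi})=\theta_{\xi\otimes\eta,\phi\otimes\psi}$ together with Definition \ref{def19}. Your extra check that $(\phi\otimes\psi)\circ(S\otimes T)=(\phi\circ S)\otimes(\psi\circ T)$ is exactly the (unwritten) verification behind the paper's earlier remark that (\ref{eqh}) maps adjointable operators to adjointable ones, so nothing is missing.
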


\begin{proof}

Surj:

$X = \lim_i \sum_j S_{ij} \otimes T_{ij} \in \calk_{A \otimes B}$

$\xi \otimes \eta \; \phi(\xi_2) \otimes \psi(\eta_2)$

vmtl schwer zu klären stetigkeit von abb und umkehrabb ohne konkrete topology 

auch frage der stetigen fortsetzung 

-> vllt restriction auf compacte = bijectiv als axiom 

$\sum_i \xi_i \phi_i(x) \otimes \eta_i \psi_i(y)=0$
\end{proof}

\section{}

\begin{proof}


---  jedoch hängt der opertor von $b_j$ der  der approx ein heit ab 

Dies zeigt dass
der angegebene bildraum von $\sigma$ 
im defeinitionsberich von $\sigma$ liegt.

Um die umgekehrte menegninklusion zu bekommen,
machen wir die obige rechung von unten nach oben, indem wir nur 1 $T_n$ wählen, und dieses
$y_n=1$ setzen, 
und $a_n$ (und somit $\beta_x(a_n)$) ganz weglassen. 


As $\beta_g(p_i)$ is an apporximate unit of $B$
(brauche als definition ), $T_g=0$ for all $g$.

---
ginge auch:  gilt für alle $p_i \in B$ $\Rightarrow$ $T_g=0$ $\forall g$

It is obvious to see that the above diagram commutes, where $f$ is the obvious canoncial corner embedding.

brauche also, $\xi b= 0$ für alle $b$ dann
$\xi=0$ , also gewisse fullheit von $\cale$ 
 
brauche auch für $B$ 

->  so, nach bedingung \ref{bed1}, gibt es
$\xi \phi(a) \neq 0$ , wähle $b:= \phi(a)$
\end{proof}

\section{}

\begin{lemma}		\label{lemma41}
{\rm (i)}
Any double split 
exact sequence as in 
the 
first line of 
this diagram
can be completed to this diagram
$$\xymatrix{
B  \ar[r]^i \ar[d]
& M  \ar[r]^f 
\ar[d]^\phi
& A 
\ar@<.5ex>[l]^{s_\pm} 	
\ar[d]
\\
B  \ar[r]^j  & M \square A \ar[r]^g  & A 
\ar@<.5ex>[l]^{t_\pm}  
}
$$
such that
the first line ist the second line in GK,
that is, $s_+ \mu \Delta_{s_-}
= t_+ \mu \Delta_{t_-}$.

\if 0
Thereby,
if the $G$-action on $M_2(M)$ is $\theta$, and $\alpha$ on $A$,
then the
$G$-action on $M_2(M \square A)$
is $\theta \square (\alpha \otimes 1)$.
\fi 

\if 0
{\rm (ii)}
If the $G$-action on $M_2(M)$ is $\theta$,
then the
$G$-action on $M_2(M \square A)$
is 
$\theta \square \delta$
if and only if the $G$-action $\theta \square \delta$ exists
if and only if $f \otimes 1 : 
(M_2(M),\theta) \rightarrow (M_2(A),\delta)$
is equivariant.
\fi 

{\rm (iii)}
The
$G$-action on $M_2(M \square A)$
is of the form 
$\theta \square \delta$.

\if 0
, referring to 
$$\xymatrix{0 \ar[r] & M_2(B)  \ar[rr]^j && M_2(M) \square M_2(A)  \ar[rr]^{f \otimes 1} && A \ar[r] \ar@<.5ex>[ll]^{(s \otimes 1) \square 1} & 0},$$
\fi
\end{lemma}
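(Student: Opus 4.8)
\textbf{Plan for the proof of Lemma \ref{lemma41}.}

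The key is that the statement is really about the structure theory of double split exact sequences developed in Lemma \ref{lemma61} applied to the $M_2$-space of the given double split exact sequence, combined with the $M\square A$-construction of Definition \ref{lemma35}. First I would recall from Definition \ref{def31} that the double split exact sequence in the first line comes with an $M_2$-action $\theta$ on $M_2(M)$, two equivariant corner embeddings $e_{11},e_{22}$, and the extra split $t$ (here renamed to avoid clash with the $t_\pm$ of the conclusion). The plan is to use Definition \ref{lemma35}: set $M\square A:= M\square_{s_-} A \subseteq M\oplus A$, where $s_-$ (the first notated split) supplies the data, and take $i$ to be the identity embedding $b\mapsto (i(b),0)$ into the ideal. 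Then the canonical split exact sequence $0\to B\to M\square A\to A\to 0$ has $s\square 1$ as a split, and by Remark \ref{remark43} a second split $t_+\square 1$ exists for every non-equivariant ring homomorphism $t_+:A\to M$ with $t_+(a)-s_-(a)\in i(B)$; in particular $s_+$ itself works. I would then define $\phi:M\to M\square A$ by $\phi(m)=(m,f(m))$, check it is an equivariant injective ring homomorphism with image the ideal-plus-split sum, and set $t_-:=s_-\square 1$, $t_+:=s_+\square 1$.

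For part (i) the heart of the matter is to verify that $\phi$ intertwines the two double split exact sequences at the level of Lemma \ref{lemma21}. By Remark \ref{cor22} it suffices to check the three commutation identities $i\phi=\mathrm{id}_B\cdot j$, $fs_-\phi=\phi g t_-$, $s_+\phi=\mathrm{id}_A\cdot t_+$ as non-equivariant maps, plus $G$-equivariance of $\Phi=\phi\otimes 1_{M_2}$. The first and third are immediate from the definitions of $\phi$, $j$, $g$, $t_\pm$. The second is a short computation using that $M=i(B)+s_-(A)$ set-theoretically (Remark \ref{rem12}.(i)) and that $g(m,a)=a$, $f$ is the quotient map. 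Once these hold, Lemma \ref{lemma21}.(iii) gives $s_+\mu\Delta_{s_-}=t_+\mu\Delta_{t_-}$ in $GK^G$, provided we choose the $M_2$-action on $M_2(M\square A)$ so that the connecting $\Phi$ is equivariant — which is exactly the content of part (iii).

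For part (iii) I would invoke Lemma \ref{lemma61}: the $M_2$-action $\theta$ on $M_2(M)$ restricts on the corners to the $G$-actions $\gamma,\alpha$ (and determines $\beta$), and by Corollary \ref{cor79} the ideal $M_2(i(B))$ is $\theta$-invariant with $f\otimes 1_{M_2}$ equivariant for the quotient action $\alpha\otimes 1_{M_2}$ on $M_2(A)$ (under the standing convention $\delta=\alpha\otimes 1_{M_2}$ adopted just before this section). Then, via the non-equivariant isomorphism (\ref{isq}) $M_2(M\square_s A)\cong M_2(M)\square_{s\otimes 1}M_2(A)$, one checks that the subalgebra $M_2(M\square A)$ of $M_2(M)\oplus M_2(A)$ is invariant under $\theta\oplus(\alpha\otimes 1_{M_2})$; this is where the hypothesis $f\otimes 1_{M_2}$ equivariant is used, since invariance of $M_2(M\square A)$ is equivalent (cf. the commented-out lemma after Definition \ref{lemma35} and the analogous Lemma \ref{lemma272}) to $\theta_g((s\otimes 1)(a))-(s\otimes 1)(\delta_g(a))\in M_2(i(B))$ for all $a\in M_2(A)$, which follows from equivariance of $f\otimes 1_{M_2}$ together with exactness in the middle. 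Calling this restricted action $\theta\square\delta$ as in the definition after (\ref{isq}) finishes (iii), and then feeding it back into part (i) closes the argument.

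\textbf{Main obstacle.} I expect the only real friction to be bookkeeping: carefully distinguishing the two systems of splits ($s_\pm$ for the abstract sequence versus $t_\pm$ on $M\square A$), making sure the $M_2$-action chosen on $M_2(M\square A)$ is simultaneously (a) well-defined as a restriction, (b) makes $\Phi=\phi\otimes 1_{M_2}$ equivariant, and (c) is of the declared form $\theta\square\delta$ — all three being consequences of Corollary \ref{cor79} and the standing convention, but requiring one to unwind the identification (\ref{isq}) explicitly. The genuinely substantive input (that a double split exact sequence forces the $M_2$-action to have the structure of Lemma \ref{lemma61}) is already available, so no new analytic or algebraic idea is needed.
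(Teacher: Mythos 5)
Your construction ($\phi(m)=m\,\square\, f(m)$, $t_\pm=s_\pm\square 1$, verification of the three identities of Lemma \ref{lemma21} via Remark \ref{cor22}, and Corollary \ref{cor79} for the $M_2$-action) is exactly the paper's argument, so the proposal is correct and takes essentially the same route. The only cosmetic difference is that the paper defines the $G$-action on $M_2(M\square A)$ by transporting $\theta$ through the bijection $\Phi=\phi\otimes 1_{M_2}$ and then identifies it as $\theta\square\delta$, whereas you restrict $\theta\oplus(\alpha\otimes 1_{M_2})$ and check invariance of the subalgebra via $\theta_g\bigl((s\otimes 1)(a)\bigr)-(s\otimes 1)\bigl(\delta_g(a)\bigr)\in M_2(i(B))$ — these are equivalent and rest on the same inputs.
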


\begin{proof}
Same proof as in \cite{bgenerators},
except: The reference to \cite[gk]{corollary 4.9.(i)} (invariance of ideal
$M_2(j(B))$) is now by assumption of 
definition ...

\if 0
(i)
Define the second line of the diagram as in definition \ref{lemma35}, that is,
put $g(m \square a)= a$,
$j(b) = i(b) \square 0$ and $t_\pm(a)= s_\pm(a) \square a$.
Define 
$$\phi(m) = m \square f(m)$$

We are going to apply lemma \ref{lemma21}
for $B=D$, $A=C$, $a=1$, $b=1$,
and
$\Phi= \phi \otimes 1_{M_2}$.
Note that $\phi$ is bijective. We define the $G$-action on $M_2(M \square A)$ in such a way that $\Phi$ becomes equivariant.
%
%
By 
remark \ref{cor22}, $\phi=\psi$ in the diagram of lemma \ref{lemma21} and both maps are $G$-equivariant.
We have 
\begin{eqnarray*}
&&i \phi(b) = i(b) \square 0 = j(b)  \\
&&f s_- \phi(m) = f s_-(m) \square f(m) = \phi g (s_- \square 1)(m),
\end{eqnarray*}
which is the condition of
lemma \ref{lemma21}.(i).
Further 
$s_+ \phi(a) = s_+(a) \square a = 
t_+(a)$ 
yields the condition of lemma \ref{lemma21}.(ii).
Hence the claim follows 
from lemma \ref{lemma21}.(iii).
\if 0
Note that $M = i(B) + s_-(A)$. 
We define $\phi(i(b)+ s_-(a))= (b + s_-(a)) \square a$
or equivalently $\phi(m) = m \square f(m)$.
\fi

(ii)
We assume that $(M_2(A),\delta)$ exists and want to see when $\theta \square \delta$ is valid:

Note that 
$m 
\oplus a \in M \square A$ if and only if  $f(m) = a$.
Hence
$\theta_g(m) 
\oplus \delta_g(a) \in M_2(M \square A)$ if and only if $(f \otimes 1)(\theta_g(m))=\delta_g(a)$.
Set $a=(f\otimes 1)(m)$.

(iii)
This follows from (ii) and corollary \ref{cor79},
which is independent from this lemma. 
\fi
\end{proof}


\section{}

eher weg: 

\begin{corollary}			\label{cor74}
Let $(M_2(A_i),\theta_i)$ be two $G$-algebras as in lemma \ref{lemma61} ($i=1,2$). Let $\phi:A_1 \rightarrow A_2$ be a non-equivariant $*$-homomorphism.
Then $\phi \otimes 1_{M_2}$ is $G$-equivariant
if and only if it is $G$-equivariant on the lower left corner space 
 $(A_1,\gamma_1)$.
\end{corollary}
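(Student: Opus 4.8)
The plan is this. For $i=1,2$ write the action $\theta_{i}$ of Lemma~\ref{lemma61} with upper-left, upper-right, lower-left and lower-right components $\alpha_{i,g},\beta_{i,g},\gamma_{i,g},\delta_{i,g}$. Comparing the four matrix entries, $\phi\otimes 1_{M_2}$ is $G$-equivariant for $\theta_1,\theta_2$ if and only if, for all $g\in G$,
$$\phi\circ\alpha_{1,g}=\alpha_{2,g}\circ\phi,\quad \phi\circ\beta_{1,g}=\beta_{2,g}\circ\phi,\quad \phi\circ\gamma_{1,g}=\gamma_{2,g}\circ\phi,\quad \phi\circ\delta_{1,g}=\delta_{2,g}\circ\phi .$$
Since $\theta_i$ leaves the lower left corner invariant and acts on it through $\gamma_i$, the restriction of $\phi\otimes 1_{M_2}$ to that corner is $G$-equivariant precisely when the third identity holds. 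Hence the ``only if'' direction is immediate (restriction of a $G$-equivariant map to an invariant subgroup), and the whole content is to deduce the first, second and fourth identities from the third.

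For the converse I would first record that the hypothesis already gives, for each $g$, $\gamma_{2,g}(\phi(A_1))=\phi(\gamma_{1,g}(A_1))=\phi(A_1)$, because $\gamma_{1,g}$ is an invertible additive map of $A_1$. Put $P:=\phi(A_1)$; by Lemma~\ref{lemma116}(i) it is quadratik, hence by Lemmas~\ref{lemma114},~\ref{lemma113} and~\ref{lemma1172} an essential right ideal in itself ($r\in P$, $rP=0$ imply $r=0$). Now one feeds $\phi$ through the web of relations of Lemma~\ref{lemma61}(i): applying $\phi$ to $\gamma_{1,g}(ax)=\delta_{1,g}(a)\gamma_{1,g}(x)$, to $\gamma_{1,g}(xb)=\gamma_{1,g}(x)\alpha_{1,g}(b)$, to $\beta_{1,g}(ax)=\alpha_{1,g}(a)\beta_{1,g}(x)$, to $\delta_{1,g}(xy)=\gamma_{1,g}(x)\beta_{1,g}(y)$, and to $\alpha_{1,g}(xy)=\beta_{1,g}(x)\gamma_{1,g}(y)$, and using in each case that $\phi$ is multiplicative, the third identity, and the same relation in $A_2$, one obtains that the ``defects'' $c_b:=\alpha_{2,g}(\phi(b))-\phi(\alpha_{1,g}(b))$, $d_a:=\delta_{2,g}(\phi(a))-\phi(\delta_{1,g}(a))$ and $b_w:=\beta_{2,g}(\phi(w))-\phi(\beta_{1,g}(w))$ are annihilated by $P$ from the appropriate side and are expressible through one another ($c_{xy}=b_x\gamma_{2,g}(\phi(y))$, $d_{xy}=\gamma_{2,g}(\phi(x))b_y$, etc.). Using quadratik of $A_1$ (write $a,b,w$ as sums of products) together with $\gamma_{2,g}(P)=P$, one then shows $\alpha_{2,g}(P),\beta_{2,g}(P),\delta_{2,g}(P)\subseteq P$, so that all three defects lie in $P$; essentiality of $P$ in itself forces them to vanish, which is exactly the remaining three identities.

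The main obstacle is the entanglement of these identities: naively each of $c_b,d_a,b_w$ needs another one to conclude (and $\beta_{2,g}$ can be pinned down only up to a left-multiplier ambiguity, via Lemma~\ref{lemma61}(viii)), so they must be treated simultaneously, carefully tracking on which side of $P$ each defect is absorbed and exploiting $P=P^2$ to close the loop. This is precisely the step that is automatic in the $C^*$-algebra analogue \cite[Corollary 7.4]{baspects}: there the involution relates $\beta$ to $\gamma$, so $\gamma$ alone already determines $\theta$ through Lemma~\ref{lemma61}(v); in the present involution-free setting $\gamma$ does \emph{not} determine $\theta$ (Lemma~\ref{lemma61}(vii)), and it is the quadratik hypothesis on the rings (hence on the ranges $\phi(A_i)$) that plays the compensating role, allowing the bootstrap through Lemma~\ref{lemma61}(i) to go through.
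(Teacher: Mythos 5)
Your reduction to the three remaining intertwinings, and the defect identities $P\,c_b=0$ and $d_a\,P=0$ with $P=\phi(A_1)$, are correct; the argument breaks at exactly the step you yourself single out. From $\phi(b)=\sum_i\phi(x_i)\phi(y_i)$ you only get $\alpha_{2,g}(\phi(b))=\sum_i\beta_{2,g}(\phi(x_i))\,\gamma_{2,g}(\phi(y_i))$, and the factors $\beta_{2,g}(\phi(x_i))$ lie in $A_2$, not in $P$ (and $P$ is not an ideal of $A_2$), so nothing places the defects inside $P$; being annihilated by $P$ on one side is then vacuous and the essentiality step never starts. In fact no repair is possible, because the ``if'' direction is false for rings. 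Take $G=\Z$, let $A_2$ be the lower triangular $2\times 2$ complex matrices, $u:=1+e_{21}$ (so $u^n=1+ne_{21}$), and let $\theta_{2,n}:=\Ad(\diag(u^n,1))$ on $M_2(A_2)$; this is of the form of lemma \ref{lemma61}, with $\alpha_{2,n}=\Ad(u^n)$, $\beta_{2,n}$ left multiplication by $u^n$, $\gamma_{2,n}$ right multiplication by $u^{-n}$, $\delta_{2,n}=\id$, and equivariant corner embeddings from $(A_2,n\mapsto\Ad(u^n))$ and $(A_2,\triv)$. Let $A_1=\C$ with $\theta_1$ trivial and $\phi(z)=z e_{11}$. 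Since $e_{11}e_{21}=0$ one has $\gamma_{2,n}(ze_{11})=ze_{11}u^{-n}=ze_{11}=\phi(\gamma_{1,n}(z))$, so $\phi$ is equivariant on the lower left corner; but $\alpha_{2,n}(ze_{11})=u^nze_{11}u^{-n}=z(e_{11}+ne_{21})\neq ze_{11}=\phi(\alpha_{1,n}(z))$ for $n\neq0$, so $\phi\otimes 1_{M_2}$ is not equivariant. The defect $zne_{21}$ is indeed left-annihilated by $P=\C e_{11}$, exactly as your computation predicts, and both rings are unital, hence quadratik, so quadratikness (or essentiality of $P$) cannot play the compensating role you hoped for.

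For comparison, the paper's own proof of this corollary is the bare appeal to lemma \ref{lemma61}.(i) (cf.\ (v)), i.e.\ the argument of the $C^*$-predecessor \cite[corollary 7.4]{baspects} from which the statement is copied (it still speaks of a $*$-homomorphism). There the involution supplies the missing link $\beta_g(y)=\gamma_g(y^*)^*$, so a $*$-homomorphism intertwining the $\gamma$'s automatically intertwines the $\beta$'s, and then $\alpha_g(xy)=\beta_g(x)\gamma_g(y)$, $\delta_g(xy)=\gamma_g(x)\beta_g(y)$ together with $A=\overline{A^2}$ give the rest; your closing diagnosis of where the involution enters is exactly right, but in the involution-free setting that link is irrecoverable, as the example shows, and the main text accordingly does not use this reduction (remark \ref{cor22} asks for equivariance of $\Phi$ to be checked directly). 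A small side point: lemma \ref{lemma61}.(vii) says that $\alpha$ and $\delta$ do not determine $\gamma$, not that $\gamma$ fails to determine $\theta$; your own identity $z\,\alpha_{2,g}(p)=\gamma_{2,g}(\gamma_{2,g^{-1}}(z)\,p)$ shows $\gamma$ does determine $\alpha$ up to annihilators --- the real obstruction is that the hypothesis controls this identity only for $z\in P$, which is precisely the room the counterexample exploits.
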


\begin{proof}
By lemma \ref{lemma61}.(i), confer also (v).
\end{proof}

\section{}

to clarify, if $W$ $X_\F \subseteq X$ invariant lässt 

\section{}

\begin{lemma}
The Green-Julg isomorphism $S$ and its inverse isomorphism $T$ are functorial, that is, 
for a morphisms $Y : \F \rightarrow A$ and $Z:A \rightarrow B$ one has 
\begin{eqnarray*}
&& S_B (Y Z) = S_A(Y) (Z \rtimes 1)  \\
&& S(\xymatrix{\F \ar[r]^{Y} & A \ar[r]^{Z} 
& B } ) =
\xymatrix{ \F \ar[r]^M & \F \rtimes G \ar[r]^{Y \rtimes \idd} & A \rtimes_\alpha G} 
\end{eqnarray*}
\end{lemma}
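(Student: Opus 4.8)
The plan is to reduce the functoriality of $S$ to the functoriality of the descent functor $j^G:GK^G \to GK$, which has already been established (the remark that all $GK^G$-relations pass through the descent functor). Recall from definition \ref{defgj} that the Green-Julg map is literally postcomposition after a single universal map: $S_A(Y) = M\,(Y \rtimes \idd)$, where $M:\F \rightarrow \F \rtimes G$ is the averaging ring homomorphism $M(x) = x\,n^{-1}\sum_{g\in G} 1 \rtimes g$, which depends neither on $A$ nor on $Y$, and $Y \rtimes \idd = j^G(Y)$. Since $j^G$ is a functor on $GK^G$, it respects composition; with the paper's left-to-right convention this reads $(YZ)\rtimes\idd = (Y\rtimes\idd)(Z\rtimes\idd)$ for composable morphisms $Y:\F\to A$, $Z:A\to B$ in $GK^G$.

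The computation is then a one-liner. For composable $Y,Z$ as above,
$$S_B(YZ) = M\big((YZ)\rtimes\idd\big) = M\,(Y\rtimes\idd)(Z\rtimes\idd) = \big(M\,(Y\rtimes\idd)\big)(Z\rtimes\idd) = S_A(Y)\,(Z\rtimes\idd),$$
where beyond associativity of composition and functoriality of $j^G$ the only input is the defining formula for $S$. One should check only that $M$ is a genuine ring homomorphism, so that $M\cdot j^G(Y)$ is a legitimate morphism in $GK$, and keep the composition convention ($fg$ means $g\circ f$) straight; there is no real obstacle. This gives exactly the displayed identities, which are the commutativity of the diagram in the statement (i.e. $S_B \circ Z_* = (Z\rtimes 1)_* \circ S_A$ in the formulation of lemma \ref{lemma1512}).

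For the inverse map $T$ the corresponding statement $T_B\big(L\,(Z\rtimes\idd)\big) = T_A(L)\,Z$, for $L \in GK(\F,A\rtimes_\alpha G)$ and $Z:A\to B$ in $GK^G$ (modulo the canonical identification $A\otimes\F\cong A$), follows the same way: unfold $T_A(L) = L\,\lambda_A\,(\idd\otimes f)^{-1}$ from definition \ref{definvgj} and apply the commuting diagram of lemma \ref{lemma1160}; its top square gives $(Z\rtimes\idd)\lambda_B = \lambda_A\,(Z\otimes\idd)$ and its bottom square lets $Z\otimes\idd$ commute past $(\idd\otimes f)^{-1}$, yielding $T_B\big(L\,(Z\rtimes\idd)\big) = L\,\lambda_A\,(\idd\otimes f)^{-1}(Z\otimes\idd) = T_A(L)\,Z$. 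Alternatively, once the Green-Julg theorem is in place and $S,T$ are mutually inverse isomorphisms, functoriality of $T$ is immediate from that of $S$ by inverting. Either route is routine; the only care needed is the composition bookkeeping.
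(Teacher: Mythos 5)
Your argument is correct: since $S_A(Y)=M\,(Y\rtimes\idd)=M\,j^G(Y)$ with $M:\F\to\F\rtimes G$ fixed, functoriality in the coefficient algebra is exactly functoriality of the descent functor $j^G$, and the statement for $T$ follows from unfolding definition \ref{definvgj} and the commuting diagram of lemma \ref{lemma1160} (or from $S,T$ being mutually inverse). This is precisely the routine verification the paper leaves implicit — lemma \ref{lemma1512} is stated without proof — and you use only the paper's own ingredients, so there is nothing to correct beyond noting the typo in the second displayed line of the statement, whose right-hand side should end at $B\rtimes_\beta G$ via $(YZ)\rtimes\idd$.
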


\begin{lemma}

$$\xymatrix{ 
KK^G(\C,A \otimes M_n) \ar[r]   \ar[d] 
 & KK(\C,(A\otimes M_n) \rtimes G)   \ar[d] 
 \\
KK^G(\C,A) \ar[r] 
 & KK(\C,A \rtimes G)	}
$$

funktorialität von GJ, von $S$ und $T$
\end{lemma}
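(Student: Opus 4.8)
The plan is to deduce this from the way $S$ is built: it is the composite of the descent functor $j^G$ (already shown to be a well-defined functor on $GK^G$ in Section~\ref{sec12}) with precomposition by a single morphism that does not depend on the coefficient algebra. Concretely, recall that for $Y \in GK^G(\F,A)$ one has $S_A(Y) = M\cdot (Y\rtimes\idd)$, where $M\colon \F\to \F\rtimes G$, $M(x)=x\,n^{-1}\sum_{g\in G}1\rtimes g$, is a fixed ring homomorphism independent of $A$, and $Y\rtimes\idd = j^G(Y)$. Thus $S_A = M^{*}\circ j^G$ as a map $GK^G(\F,A)\to GK(\F,A\rtimes G)$, and since $j^G$ is functorial and $M$ is fixed, naturality is automatic. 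I will nevertheless write the one-line verification.

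Given a morphism $Z\colon A\to B$ in $GK^G$ and $Y\in GK^G(\F,A)$, so that $Z_*(Y)=YZ$ in $GK^G$ (composition read left to right), functoriality of $j^G$ yields
\begin{eqnarray*}
S_B\big(Z_*(Y)\big) &=& M\cdot j^G(YZ) \;=\; M\cdot j^G(Y)\cdot j^G(Z) \\
&=& \big(M\cdot(Y\rtimes\idd)\big)\cdot(Z\rtimes\idd) \;=\; S_A(Y)\cdot(Z\rtimes\idd) \\
&=& (Z\rtimes\idd)_*\big(S_A(Y)\big),
\end{eqnarray*}
which is exactly commutativity of the square. The only substantive input is that $j^G$ respects composition of \emph{arbitrary} morphisms of $GK^G$, not merely of ring homomorphisms --- this is part of the construction of the descent functor in Section~\ref{sec12} --- together with the trivial observation that $M$ is independent of the target.

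For the companion statement about $T$ (and in particular for the diagram relating $A$ with $A\otimes M_n$), I would argue dually, now using Lemma~\ref{lemma1160}. Write $T_A(L)=L\cdot\lambda_A\cdot(\idd\otimes f)^{-1}$, with the target identified via $A\otimes\F=A$. For $Z\colon A\to B$ in $GK^G$ and $L\in GK(\F,A\rtimes G)$ one has $(Z\rtimes\idd)_*(L)=L\cdot(Z\rtimes\idd)$, and Lemma~\ref{lemma1160} gives $(Z\rtimes\idd)\cdot\lambda_B\cdot(\idd\otimes f)^{-1}=\lambda_A\cdot(\idd\otimes f)^{-1}\cdot(Z\otimes\idd)$; hence $T_B\big((Z\rtimes\idd)_*(L)\big)=L\cdot\lambda_A\cdot(\idd\otimes f)^{-1}\cdot(Z\otimes\idd)=Z_*\big(T_A(L)\big)$. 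Finally, the diagram with $A\otimes M_n$ is the instance $Z=e^{-1}\colon A\otimes M_n\to A$ for the corner embedding $e\colon A\to A\otimes M_n$; its descent $e\rtimes\idd$ is again a corner embedding up to isomorphism by Corollary~\ref{lemma121}, so the two vertical arrows there are indeed invertible morphisms of $GK^G$ resp.\ $GK$, and commutativity follows from the naturality just established.

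I do not expect a real obstacle here; the work is purely bookkeeping --- keeping the left-to-right composition convention straight, not forgetting the identification $A\otimes\F=A$ (so that $(Z\otimes\idd)_*$ is genuinely $Z_*$), and making sure that Lemma~\ref{lemma1160} is invoked for $Z$ as a morphism of $GK^G$, which is legitimate because that lemma was proved separately for ring homomorphisms, for inverses of corner embeddings, and for the synthetic splits $\Delta_s$, hence for all morphisms.
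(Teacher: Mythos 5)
Your argument is correct and amounts to exactly the bookkeeping the paper leaves implicit: this functoriality statement (cf.\ Lemma \ref{lemma1512}) is stated without proof, and the ingredients you use --- the decomposition $S_A = M^{*}\circ j^G$ with the averaging map $M$ independent of the coefficient algebra, functoriality of the descent functor of Section \ref{sec12}, Lemma \ref{lemma1160} for the $T$-direction, and Corollary \ref{lemma121} to recognize the vertical arrows as (descents of) the corner embedding $e:A\rightarrow A\otimes M_n$ and its inverse --- are precisely the paper's own tools, applied in the intended way. No gap; nothing further is needed.
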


\begin{lemma}
wenn $p \in K(A \rtimes G)$ von der form $P (e^{-1} \rtimes 1)$, $e:A \rightarrow A \otimes M_n$ 

---

allgeminer: wenn $p$ von der form $P (m \rtimes 1_G)$ wo $P$ projektion in 
$KK(\C,B \rtimes G)$ und 
$m:B \rightarrow A$ morphism in $GK$, und $B$
beliebig 

\end{lemma}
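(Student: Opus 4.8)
The plan is to read the statement as an existence result: for every $G$-ring $A$ and every $p\in GK(\C,A\rtimes_\alpha G)$ there are a $G$-ring $B$, a morphism $m\in GK^G(B,A)$ and a ``projection'' $P\in L_0 GK(\C,B\rtimes G)$ (a $\Z$-combination of ring homomorphisms $\C\to B\rtimes G$, equivalently of idempotents of $B\rtimes G$) such that $p=P\cdot j^G(m)=P\cdot(m\rtimes\idd)$. The Green--Julg map (Definition \ref{defgj}: $B=\C$, $P=M$, $m$ a morphism $\C\to A$) and the assembly maps $\nu_X^{G,A}$ ($B=C_0(X)$, $P=M_X$, $m=V$) already produce elements of exactly this shape, so the whole content lies in realizing an \emph{arbitrary} $p$ in this way; the first, less general, assertion ($p=P\cdot(e^{-1}\rtimes\idd)$ with $e\colon A\to A\otimes M_n$) is then the special case $B=M_n(A)$, $m=e^{-1}$, where $j^G(e^{-1})$ is the inverse of a corner embedding by Corollary \ref{lemma121}.

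First I would apply the $\nabla$-calculation. By Lemma \ref{lemma97}.(ii) (equivalently Corollary \ref{lemma98}) there is a ring homomorphism $\rho\colon A\rtimes G\to D$, right-invertible in $GK$ with right-inverse $\rho'$, such that $p\rho$ is a level-$0$ morphism, i.e.\ a $\Z$-combination of ring homomorphisms $\C\to D$; hence $p=(p\rho)\,\rho'$. Thus, \emph{if} $\rho'$ could be taken of the form $j^G(m)$ for some $m\in GK^G(B,A)$ with $D=B\rtimes G$, we would be done with $P:=p\rho\in L_0 GK(\C,B\rtimes G)$.

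The core of the proof, and the step I expect to be the main obstacle, is therefore to run this reduction \emph{equivariantly over the crossed product}: one must arrange that the reducing homomorphism $\rho$ is the descent $j^G(r)$ of a $GK^G$-right-invertible equivariant ring homomorphism $r\colon A\to B$, so that $\rho'=j^G(m)$ with $m\in GK^G(B,A)$ its $GK^G$-right-inverse. Concretely one revisits the proof of Lemma \ref{lemma94}, where the reducing homomorphism is a composite $e\,j_{s_-}\,e_{11}$ of corner embeddings of $A\rtimes G$ and the ideal inclusion of a split exact sequence, and replaces each building block by the descent of its equivariant counterpart over $A$: for the corner embeddings this is precisely Corollary \ref{lemma121} together with Proposition \ref{lemma122}, which identify $\calk_B(\cale)\rtimes G\cong\calk_{B\rtimes G}(\cale\otimes_B(B\rtimes G))$; for the split exact sequences one first brings, by Lemmas \ref{lemma14}, \ref{lemma89} and \ref{lemma83}, every double split exact sequence occurring in a presentation of $p$ as a term of extended double split exact sequences (Lemma \ref{lemma92}) into one whose starter ring and whose $M_2$-space are obtained by applying $j^G$ to a $G$-equivariant double split exact sequence over $A$-algebras. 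The delicate bookkeeping here is the transport of the $M_2$-actions of Lemmas \ref{lemma272} and \ref{lemma61} through the descent functor, which one checks using the description $j^G(\calk_B(\cale))\cong\calk_{B\rtimes G}(\cale\otimes_B(B\rtimes G))$ of Proposition \ref{lemma122}; this is the only place where the special structure of the crossed product, as opposed to a general ambient ring, is exploited.

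Granting the equivariant reduction, one concludes $p=(p\,j^G(r))\cdot j^G(m)=P\cdot(m\rtimes\idd)$ with $m\in GK^G(B,A)$ and $P:=p\,j^G(r)\in L_0 GK(\C,B\rtimes G)$, i.e.\ $P$ is a $\Z$-combination of idempotents of $B\rtimes G$ — ``a projection'' in the relevant sense; after a matrix amplification (replacing $B$ by $M_k(B)$, using $M_k(B)\rtimes G\cong M_k(B\rtimes G)$, and the usual $K$-theoretic device of passing to a unitization to absorb the signs) it may if desired be presented as the difference of two genuine idempotents, or a single idempotent once $B$ is enlarged so that the relevant corner of $B\rtimes G$ is unital. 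The same argument, read verbatim with the $C^*$-algebra axioms of $GK^G$ in force, yields the corresponding statement in $KK^G$-theory for $C^*$-algebras.
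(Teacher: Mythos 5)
The statement you were handed is an unfinished fragment from the scratch notes at the end of the source: it consists only of a hypothesis (``wenn $p$ von der Form $P\,(m\rtimes 1_G)$ \dots'') with the conclusion missing, and the paper contains no proof of it. Judging from the surrounding notes (``wenn $p$ in rechter oberer Ecke, dann gibt es $q$ \dots sodass $S(q)=p$; damit $S(q e^{-1}) = p\,(e\rtimes 1)^{-1}$''), the intended conclusion is that such a $p$ lies in the image of the Green--Julg map: $P$, being the class of a projection, is $S_B(Q)$ for some $Q\in GK^G(\C,B)$, and then functoriality of $S$ (Lemma \ref{lemma1512}, applied to $Z:=m$) gives $S_A(Q\,m)=S_B(Q)\,(m\rtimes\idd)=P\,(m\rtimes\idd)=p$. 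In other words, the intended argument is a one-line application of functoriality in which $B$, $P$ and $m$ are \emph{given} as hypotheses; the special case $e:A\to A\otimes M_n$, $m=e^{-1}$ is then handled by Corollary \ref{lemma121}.

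You instead read the fragment as a universal factorization theorem --- every $p\in GK(\C,A\rtimes G)$ can be written as $P\,(m\rtimes\idd)$ with $m\in GK^G(B,A)$ --- which is a far stronger claim than the hypothesis-only fragment, and your proof of it has a genuine gap exactly at the step you yourself flag as ``the main obstacle''. Lemma \ref{lemma97} produces its reducing ring homomorphism (via Lemma \ref{lemma94}) out of a presentation of $p$ by extended double split exact sequences over $A\rtimes G$, and these are arbitrary \emph{non-equivariant} data over the crossed product; nothing in the $\nabla$-calculation, nor in Proposition \ref{lemma122} or Corollary \ref{lemma121} (which only compute the descent of equivariantly given compact-operator algebras), allows you to arrange that these modules, splits and $M_2$-actions over $A\rtimes G$ are descents $j^G(-)$ of $G$-equivariant data over $A$. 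Asserting that the level-$0$ reduction can be run ``equivariantly over the crossed product'' is in effect a surjectivity statement for the descent, of the same nature as the surjectivity half of a Green--Julg/Baum--Connes-type theorem, and cannot be obtained by the bookkeeping you describe; ``granting the equivariant reduction'' grants the theorem. Secondarily, your $P:=p\,j^G(r)$ would only be a level-$0$ morphism, i.e.\ a $\Z$-combination of idempotent-induced homomorphisms $\C\to B\rtimes G$, not a projection, and the sign-absorbing amplification you sketch is not available inside $L_0GK$ without further argument.
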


$$\xymatrix{ 
KK^G(\C,A \otimes M_n) \ar[r]   \ar[d] 
 & KK(\C,(A\otimes M_n) \rtimes G)   \ar[d] 
\ar[r]		&  
 KK(\C, (A \rtimes G) \otimes M_n)
 \\
KK^G(\C,A) \ar[r] 
 & KK(\C,A \rtimes G)	}
$$

wenn $p$ in rechter oberer ecke, dann es gibt $q$ in linker oberer ecke sodass $S(q)=p$. 
Damit $S(q e^{-1}) = p (e \rtimes 1)^{-1}$

Let $G$ be a discrete group. 

Für $\ell^2(G)$ braucht man wahrscheinlich 
kornereinbettungen mit summen $\H_A$ mit $A$-kopien cardinalität bis zur kardinalittä der gruppe $G$

\begin{lemma}   \label{lemma1111}

There is a ring homomorphism
$$\sigma: (A \otimes \K) \rtimes_{ \alpha \otimes \Ad(U)} G \rightarrow
 (A \otimes \K) \rtimes_{\alpha \otimes \triv} G$$

by
$$\sigma ((a \otimes k) \rtimes g)
= (a \otimes k U_{g}) \rtimes g$$

\end{lemma}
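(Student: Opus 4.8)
The claim is the existence of the ring homomorphism $\sigma$ sending $(a \otimes k) \rtimes g$ to $(a \otimes k U_g) \rtimes g$, where the source carries the crossed product by $\alpha \otimes \Ad(U)$ and the target the crossed product by $\alpha \otimes \triv$ (with $U = U_g$ the left-regular operators of definition \ref{def153}). The plan is to check directly that $\sigma$ is additive and multiplicative, the only nontrivial point being multiplicativity, which amounts to a bookkeeping computation with the twisting cocycle $g \mapsto U_g$.

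First I would observe that $\sigma$ is well-defined and additive: on each fixed group element $g$, the map $a \otimes k \mapsto (a \otimes k) \circ (\idd \otimes U_g)$ is an additive (in fact $\Z$-linear) map on $A \otimes \K \cong \calk_A(A \otimes \ell^2(G))$, because right composition with the fixed invertible operator $\idd \otimes U_g$ is additive; and the crossed product is just the finitely supported functions $G \to A \otimes \K$, so $\sigma$ assembled coordinate-wise is additive. Second, I would verify multiplicativity. Writing an element of $A \otimes \K$ simply as $x$, the product in the source crossed product is
$$
(x \rtimes g)(y \rtimes h) = x\,(\alpha \otimes \Ad(U))_g(y)\, \rtimes\, gh = x\,(1 \otimes U_g)\, y\, (1 \otimes U_{g^{-1}})\, \rtimes\, gh,
$$
where I use that $\alpha$ acts on the first tensor leg and $U$ implements the twist on the second, together with $U_{g^{-1}} = U_g^{-1}$ as operators (note $U$ is a group homomorphism into invertibles of $A \otimes \K$ by definition \ref{def153}). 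Applying $\sigma$ gives $x\,(1 \otimes U_g)\, y\, (1 \otimes U_{g^{-1}})\, (1 \otimes U_{gh})\, \rtimes\, gh = x\,(1 \otimes U_g)\, y\, (1 \otimes U_h)\, \rtimes\, gh$. On the other hand, in the target crossed product (trivial twist on $\K$), $\sigma(x \rtimes g)\,\sigma(y \rtimes h) = \big(x (1\otimes U_g) \rtimes g\big)\big(y (1 \otimes U_h) \rtimes h\big) = x (1 \otimes U_g)\,(\alpha_g \otimes \triv)\big(y (1\otimes U_h)\big)\, \rtimes gh$. The twist on the $\K$-leg is now trivial, so this equals $x (1 \otimes U_g)\, \alpha_g(y_1) \otimes U_h \cdots$ — here the subtlety is that $\alpha_g$ still acts on the $A$-leg of $y$, so the two expressions must be compared using $\alpha_g \otimes \triv$ applied to $y(1 \otimes U_h)$ versus $(1 \otimes U_g) y (1 \otimes U_{g^{-1}})$. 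This is exactly where the main obstacle lies.

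The key identity to establish is therefore that, as operators in $A \otimes \K$,
$$
(1 \otimes U_g)\; y \;(1 \otimes U_{g^{-1}}) \;=\; (\alpha_g \otimes \triv)(y),
\qquad \text{equivalently}\qquad
\Ad(1 \otimes U_g) = \alpha_g \otimes \triv \ \text{ on } A \otimes \K,
$$
which, read against the twists, says precisely that the $\Ad(U)$-twist on the $\K$-factor, when conjugated across, absorbs the $\alpha$-twist on the $A$-factor — or rather that $U_g$ already carries the $\alpha$-part. Concretely, with $A \otimes \K \cong \calk_A(A \otimes \ell^2(G))$ and $U_g(a \otimes \delta_h) = a \otimes \delta_{gh}$, one computes $\Ad(\idd \otimes U_g)$ on elementary operators $\theta_{a \otimes \delta_h, \phi}$ and compares with the effect of $\alpha_g \otimes \rho_g$ where $\rho = \Ad(V^{-1})$; the discrepancy between $\rho$ and $\triv$ on $\K$ is exactly compensated by the relation between $V$ (right regular) and $U$ (left regular) on $\ell^2(G)$. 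I expect this to reduce, after a short computation with $\delta_h$'s, to the group-multiplication identity $g(g^{-1}h) = h$ and the commutation of left and right translations. Once this operator identity is in hand, the two displayed expressions for $\sigma$ of a product coincide and multiplicativity follows; additivity being already checked, $\sigma$ is the desired ring homomorphism. (If one wants $G$-equivariance of $\sigma$ as well, one notes both sides carry the trivial $G$-action on the crossed product, so there is nothing further to check.)
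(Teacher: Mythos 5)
Your overall strategy (check additivity, then verify multiplicativity by a direct computation with the convolution product) is the right one, and indeed the paper states this lemma without proof in its supplementary material, so a direct check is all that is needed. But your execution contains a genuine error at the decisive step. You expand the product in the source as $(x \rtimes g)(y \rtimes h) = x\,(1 \otimes U_g)\,y\,(1 \otimes U_{g^{-1}}) \rtimes gh$; this drops the $\alpha_g$ on the $A$-leg, since $(\alpha_g \otimes \Ad(U_g))(b \otimes l) = \alpha_g(b) \otimes U_g l U_{g^{-1}}$, whereas $(1\otimes U_g)(b \otimes l)(1 \otimes U_{g^{-1}}) = b \otimes U_g l U_{g^{-1}}$ (conjugation by $U_g$ only touches the $\K$-leg, as one sees from $U_g(b \otimes l)U_{g^{-1}} = b \otimes U_g l U_{g^{-1}}$ in $\calk_A(A \otimes \ell^2(G))$). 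Having lost that $\alpha_g$, you are then forced to posit the ``key identity'' $\Ad(1 \otimes U_g) = \alpha_g \otimes \triv$ on $A \otimes \K$, which is false: test it on $y = b \otimes l$ with $l$ commuting with $U_g$ (e.g. $l = \sum_h \theta_{\delta_h,\delta_h}$), where the left side returns $b \otimes l$ but the right side returns $\alpha_g(b) \otimes l$. The ``short computation with $\delta_h$'s'' you defer to cannot repair this, and the right regular action $V$ (i.e. $\rho$) plays no role in this lemma at all — the twist in the source is $\Ad(U)$, not $\rho$.

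The fix is that no such identity is needed, because $\alpha_g$ enters both crossed products identically and simply cancels in the comparison. Concretely, on elementary tensors the source product is $\bigl((a\otimes k)\rtimes g\bigr)\bigl((b\otimes l)\rtimes h\bigr) = \bigl(a\alpha_g(b) \otimes k\,U_g l U_{g^{-1}}\bigr) \rtimes gh$, and applying $\sigma$ gives $\bigl(a\alpha_g(b) \otimes k\,U_g l U_{g^{-1}} U_{gh}\bigr)\rtimes gh = \bigl(a\alpha_g(b)\otimes k U_g l U_h\bigr)\rtimes gh$, using only $U_{g^{-1}}U_{gh} = U_h$. On the other side, $\sigma\bigl((a\otimes k)\rtimes g\bigr)\sigma\bigl((b\otimes l)\rtimes h\bigr) = (a \otimes kU_g)\bigl(\alpha_g(b)\otimes lU_h\bigr)\rtimes gh = \bigl(a\alpha_g(b) \otimes kU_g lU_h\bigr)\rtimes gh$. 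The two agree, so $\sigma$ is multiplicative; together with the additivity you already noted (extend $\Z$-linearly, or $\F$-linearly in the algebra setting, over finitely supported functions $G \to A\otimes\K$), the lemma follows. So the only identities your proof should invoke are the leg-wise action of $\alpha_g \otimes \triv$ and the multiplicativity $U_gU_h = U_{gh}$ of the left regular operators from definition \ref{def153} — not the false conjugation identity around which your argument is organized.
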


\section{}

\begin{lemma}
$\Delta_s (f-g) = \phi \Delta_t E^{-1} - \psi \Delta_s E^{-1}$ 
\end{lemma}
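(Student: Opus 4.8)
The statement fits the general pattern of identities between morphisms of $GK^G$ assembled from synthetic splits $\Delta$ and inverses $E^{-1}$ of corner embeddings, so the plan is to bring both sides into the standard extended double split exact form of line (\ref{standardspe}) and then read the identity off Lemma \ref{lemma21}. First I would fix the ambient split exact sequence $\cals$ (with its second split $t$, so that $\Delta_t$ makes sense) and the corner embedding $E$ into its $M_2$-space, working under the standing hypothesis that $f-g$ factors additively through the ideal $j(B)$ — this is exactly what legitimizes the $M\square A$-constructions below by Remark \ref{remark43} and Definition \ref{lemma35}. Using Lemma \ref{lemma101} I rewrite each occurrence of a $\Delta$ as a genuine double split exact sequence of $M\square A$-type, and by Lemmas \ref{lemma14} and \ref{lemma89} I put all of them into the standard form (\ref{standardspe}), with $M_2$-action of the shape $\Ad(S\oplus T)\square\delta$; the corrections coming from $f-g$ are absorbed into a single double split exact sequence by Lemma \ref{lemma83} and Definition \ref{def84}.

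Second, I would aggregate the right-hand side. The term $\phi\Delta_t E^{-1}$ is the associated morphism of the extended double split exact sequence obtained from $\cals$ (with split $t$) by pre-composition with $\phi$ (Lemma \ref{lemma215}) and extension along $E$; likewise $\psi\Delta_s E^{-1}$ from the same sequence with split $s$. By Corollary \ref{cor142} the second term, taken with its minus sign, is itself an extended double split exact sequence — the ``negative'' one, with the two splits interchanged and the $M_2$-action transformed under the coordinate flip — and by Lemma \ref{lemma191} the sum $\phi\Delta_t E^{-1}-\psi\Delta_s E^{-1}$ collapses to a single extended double split exact sequence, namely the direct sum of the two, with $M_2$-action $\Ad(V\oplus W)\square(\alpha\otimes 1_{M_2})$. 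I perform the analogous reduction for $\Delta_s(f-g)$, so that at the end of this step both sides are exhibited as extended double split exact sequences built over the same $M_2$-space, over modules of the form $\cale\oplus\calf\oplus\H_B$.

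Third, I would connect the two diagrams by the triple $(b,\Phi,a)$ demanded in Lemma \ref{lemma21}: take $b=\id$, $a=\id$ and $\Phi=\phi_0\otimes 1_{M_2}$ for the evident non-equivariant ring homomorphism $\phi_0$ identifying the ``$M$-parts'' of the two standard forms (the identity on the $\call_B$-summands, extended by the coordinate data of $E$); here presumably the maps $\phi,\psi$ of the statement are precisely $e_{11}\Phi f_{11}^{-1}$ and $e_{22}\Phi f_{22}^{-1}$ from the diagram of Lemma \ref{lemma21}. By Remark \ref{cor22} the verification then reduces to the three ring-homomorphism identities $i\phi_0=\phi_0 j$, $fs_-\phi_0=\phi_0 g t_-$ and $s_+\phi_0=t_+$, together with $G$-equivariance of $\Phi$; the first three are immediate from the explicit formulas of the $M\square A$-construction, and equivariance of $\Phi$ follows from the formulas of Lemma \ref{lemma51} once the $M_2$-actions have been normalised.

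The main obstacle is the alignment carried out in the first two steps rather than any single equality: forcing the two sides into literally the same line-(\ref{standardspe}) shape — same underlying module and same $M_2$-action — requires several passes of Lemmas \ref{lemma10} and \ref{lemma14} to insert or cancel superfluous direct summands and to normalise the $G$-actions, and at each such pass one must check, via Lemma \ref{lemma272}, that the normalised action is still a valid $M_2$-action of a double split exact sequence. Once the two extended double split exact sequences are aligned, $\Phi$ is forced, and the hypotheses of Lemma \ref{lemma21} become the routine checks listed above.
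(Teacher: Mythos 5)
You are proving a fragment: the statement never specifies what $f$, $g$, $t$, $\phi$, $\psi$ and $E$ are, and the paper itself records no argument for it, so everything hinges on first pinning these data down — and your proposal never does. You guess that $\phi,\psi$ are the maps $e_{11}\Phi f_{11}^{-1}$, $e_{22}\Phi f_{22}^{-1}$ of lemma \ref{lemma21}, and you smuggle in the hypothesis that $f-g$ factors through the ideal $j(B)$, which is not part of the statement and does not even parse on the natural reading in which $f,g$ are homomorphisms out of $B$ composed after $\Delta_s\colon M\to B$ (in the paper's left-to-right notation $\Delta_s(f-g)$ is ``first $\Delta_s$, then $f-g$''). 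Any proof of an identity of this shape in this framework must begin by writing the split exact sequences, the corner embedding $E$ and the homomorphisms $\phi,\psi$ explicitly, in the style of lemmas \ref{lemma83}, \ref{lemma85} or \ref{lemma101}; without that there is literally nothing to verify.

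Second, even granting your reading, what you give is a reduction scheme, not a proof. The entire content of such identities here is the verification, for concretely exhibited maps, of the conditions of lemma \ref{lemma21} together with remark \ref{cor22} (the identities $i\phi=bj$, $fs_-\phi=\phi g t_-$, $s_+\phi=at_+$ and $G$-equivariance of $\Phi$, plus validity of the $M_2$-action via lemma \ref{lemma272}); you defer exactly these to ``routine checks'' for maps you have not written down. Moreover, with $b=a=\id$, lemma \ref{lemma21} only yields an equality of two associated morphisms $s_+\mu\Delta_{s_-}=t_+\mu\Delta_{t_-}$, so you still owe two nontrivial identifications: that the aggregation of the right-hand side via corollary \ref{cor142} and lemma \ref{lemma191} is the specific difference $\phi\Delta_t E^{-1}-\psi\Delta_s E^{-1}$ claimed, and that the left-hand side $\Delta_s(f-g)$, after being brought to $\nabla$-form through lemma \ref{lemma101}, lemma \ref{lemma89} and lemma \ref{lemma83}, lands in the same standard form over the same module with the same $M_2$-action — being ``over the same $M_2$-space'' is not by itself an equality. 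Note also that an identity of this shape is more naturally attacked directly from the split-exactness relations $1_B=j\Delta_s$ and $1_M=\Delta_s j+fs$ by a short algebraic manipulation (as in the suppressed computation for lemma \ref{lemma21}.(i)), rather than by the heavy standard-form machinery; either way, the decisive steps — defining the data and checking the lemma \ref{lemma21} conditions — are missing from your proposal.
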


\begin{proof}

\end{proof}


\begin{lemma}

$\Phi(1 - f s) = \Phi(\Delta_s j)$
\end{lemma}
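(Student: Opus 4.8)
The plan is to read the statement as the compatibility check that the generator-level prescription $\Phi$ (as used, e.g., in the constructions of $\tau^G$, $j^G$ and $\ind H G$) respects the split-exactness relation $1_M = \Delta_s j + fs$ imposed in Definition \ref{def121}; once this is established one knows $\Phi$ descends to a functor on $GK^G$. If $\Phi$ is instead already known to be a well-defined functor (or additive map) on $GK^G$, the statement is immediate: by that very relation $1_M - fs$ and $\Delta_s j$ are one and the same morphism of $GK^G$, so $\Phi$ takes the same value on them.

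So assume $\Phi$ is defined on generators. First I would check that $\Phi$ carries the given split exact sequence \eqref{splitexact} to a short split exact sequence $0 \to \Phi(B) \to \Phi(M) \to \Phi(A) \to 0$ of the same shape, with split $\Phi(s)$: that $\Phi(j)$ stays injective with image a two-sided ideal, $\Phi(f)$ stays surjective with that ideal as kernel, $\Phi(f)\Phi(s) = 1_{\Phi(A)}$, and the objects stay in $\ring G$ (quadratik, $G$-equivariant). For $\tau^G = (-)\otimes D$ over a field this holds because tensoring with the vector space $D$ is exact and preserves the splitting; for the descent functor it holds because the crossed product of a split exact sequence of $G$-rings is again split exact, with $j^G(s)$ still a ring homomorphism; for the induction functor it is the pointwise version of the same. (Lemma \ref{lemma116} and the remark after the crossed-product definition give that quadratik-ness survives.)

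Next, by construction $\Phi(\Delta_s) := \Delta_{\Phi(s)}$, the synthetical split attached to the image sequence. Applying the split-exactness relation of Definition \ref{def121} to that image sequence yields $1_{\Phi(M)} = \Delta_{\Phi(s)}\Phi(j) + \Phi(f)\Phi(s)$, hence, in the abelian group $GK^G(\Phi(M),\Phi(M))$, one has $\; 1_{\Phi(M)} - \Phi(f)\Phi(s) = \Delta_{\Phi(s)}\Phi(j)$. Since $\Phi$ preserves identities and is multiplicative on the ring-homomorphism generators, the left-hand side equals $\Phi(1_M - fs)$ and the right-hand side equals $\Phi(\Delta_s j)$, which is the claim.

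I do not expect a genuine obstacle here: the only non-formal ingredient is the first step — that $\Phi$ sends split exact sequences to split exact sequences — and for each of the relevant functors this is either standard homological algebra (tensoring with a vector space, crossed products) or the elementary pointwise argument already used in the paper; the remainder is pure bookkeeping with the defining relations of $GK^G$ from Definition \ref{def121}.
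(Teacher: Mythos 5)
Your proposal is correct and matches the paper's (implicit) reasoning: the paper states this lemma without giving a proof, treating it as the ``effortlessly seen'' verification that a generator-level assignment $\Phi$ (as for $\tau^G$, $j^G$, $\ind H G$) respects the split-exactness relation of Definition \ref{def121}. Your two steps --- check that $\Phi$ sends the split exact sequence (\ref{splitexact}) to a split exact sequence in the target, then apply $1_{\Phi(M)} = \Delta_{\Phi(s)}\Phi(j) + \Phi(f)\Phi(s)$ there --- are exactly that verification, and the trivial case where $\Phi$ is already a functor on $GK^G$ is handled correctly as well.
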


\section{}

\begin{definition}

Let $D$ be a ring.

There is an additive  functor $\tau_D: GK^G \rightarrow
GK^G$ defined by $\tau_D(A)= A \otimes D
= A \otimes_\F D$
and $\tau_D(f: A \rightarrow B)= f \otimes 1$.

\if 0 

---

(
$\F$ inkludiert das triviale field $\F=\{0,1\}$ 
und damit rein ringe
(nein, weil $1+1=0$) 
)
\fi 

\end{definition}

\section{}

\begin{lemma}		

If $\Theta_D(D)= \{1\} \equiv D^+$, then
$\calk_D(D)\cong D$.
\end{lemma}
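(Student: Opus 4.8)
Looking at the final statement, it asks to show that if $\Theta_D(D) = \{1\} \equiv D^+$ (meaning the functional space is just multiples of the identity on $D$, interpreted via the unitization $D^+$), then $\calk_D(D) \cong D$.

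\textbf{The plan.} The strategy is to unwind the definitions of the compact operators $\calk_D(D)$ from Definition \ref{defcompact} in this specific case, and to compare with the natural embedding $\pi: D \rightarrow \Hom D(D)$ from Definition \ref{def111} sending $d$ to the left-multiplication operator $\pi(d)(c) = dc$. When $\Theta_D(D) = D^+$, the elementary compact operators are $\theta_{\eta, \phi_a}$ where $\phi_a \in D^+$ acts by $\phi_a(c) = ac$ (with $a \in D^+$, i.e. $a = d + \lambda 1$), so $\theta_{\eta,\phi_a}(c) = \eta \phi_a(c) = \eta(ac) = (\eta a) c$. Thus every elementary compact operator is in fact the left-multiplication operator by the element $\eta a \in D$ (since $\eta \in D$ and $D$ is an ideal in $D^+$). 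So first I would observe that $\calk_D(D) \subseteq \pi(D)$ as subrings of $\Hom D(D)$: every finite sum $\sum_i \theta_{\eta_i, \phi_{a_i}}$ equals $\pi(\sum_i \eta_i a_i)$.

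\textbf{Key steps.} Second, I would check the reverse inclusion: is every left-multiplication operator $\pi(d)$ a finite sum of elementary compact operators? Here is the subtlety — without quadratik, $\pi(d)$ equals $\theta_{d, \phi_1}$ where $\phi_1 = 1 \in D^+$ is the identity functional, so taking $\eta = d$, $a = 1$ gives $\theta_{d,1}(c) = d c = \pi(d)(c)$. This uses exactly that $1 \in \Theta_D(D) = D^+$. Hence $\pi(D) \subseteq \calk_D(D)$ as well, so $\calk_D(D) = \pi(D)$ as subsets of $\Hom D(D)$, and the ring structures agree since both use composition. Third, I would then argue that $\pi$ is injective, hence $D \cong \pi(D) = \calk_D(D)$. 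Injectivity of $\pi$ is not automatic for general $D$, but note that under our convention (last paragraph of Section \ref{sec2}) all rings are quadratik, and by Lemma \ref{lemma114} a quadratik ring is cofull as a module over itself, whence by Lemma \ref{lemma113} $\calk_D(D)$ is an essential ideal in itself; more directly, by Lemma \ref{lemma1172} for a quadratik $D$ one already has $(D,\alpha) \cong (\calk_D(D), \Ad(\alpha))$ — but that lemma is stated for the functional space $\Theta_D(D) = D$, so the point of the present lemma is to record that the alternative choice $\Theta_D(D) = D^+$ yields the same conclusion.

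\textbf{Main obstacle.} The real content, and the only place care is needed, is verifying that the two functional-space choices $D$ and $D^+$ genuinely produce the same ring of compact operators — this is already asserted in the discussion following Definition \ref{def115}, so I would simply invoke that observation together with Lemma \ref{lemma1172}: for quadratik $D$, $\calk_D(D)$ computed with $\Theta_D(D) = D$ is isomorphic to $D$, and since it coincides as a set of operators with $\calk_D(D)$ computed using $D^+$ (every $\theta_{\eta, \lambda 1} = \lambda \pi(\eta)$ is already attainable via $\eta = \sum_i \eta_i' \eta_i''$ and $\theta_{\eta, \lambda 1} = \sum_i \theta_{\eta_i', \lambda \eta_i''}$ by quadratik), the isomorphism $\calk_D(D) \cong D$ persists. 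So the proof reduces to: (i) unwind that $\theta_{\eta, a}$ for $a \in D^+$ is left multiplication by $\eta a \in D$; (ii) note $\pi(d) = \theta_{d,1}$; (iii) quote Lemma \ref{lemma1172} (or Lemmas \ref{lemma113} and \ref{lemma114}) for injectivity of $\pi$ when $D$ is quadratik. If $D$ is not assumed quadratik, then injectivity of $\pi$ must be hypothesized (e.g. $D$ an essential ideal in itself), and I would state the lemma accordingly; I expect the intended reading is with the standing quadratik convention in force.
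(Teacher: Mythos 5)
Your proposal is correct and follows essentially the same route as the paper's own (sketchy) proof: identify every elementary compact $\theta_{\eta,\phi_a}$ with $a\in D^+$ as left multiplication by $\eta a\in D$, note that $1\in\Theta_D(D)$ gives the reverse inclusion via $\pi(d)=\theta_{d,1}$, and reduce the isomorphism to injectivity of the left-multiplication map $\pi$. The paper handles that last point exactly as you flag it, by invoking the assumption ``$dx=0$ for all $x\in D$ implies $d=0$'' (i.e.\ $D$ essential in itself, respectively the quadratik convention), so your treatment of the injectivity hinge matches the intended argument.
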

 
\begin{proof}

$\phi:D \rightarrow \calk_D(D)$
$\phi(d)x =dx = x \phi_1(x)$

Axiom: $d x=0$ for all $x \in D$ then $d=0$
\end{proof}

\section{}

vmtl brauche hier wieder vektorraumalgebren, 
da kornereinbettung 

--> rückschau inverse halbgruppen

\section{}

These works also for every subring of $\call_A(\H_A)$ insteat of that ring (?).

Das kann man aber sowieso leicht ableiten durch composition einer einbettung-

\section{}

\begin{corollary}[{\cite{bgenerators}[corollary 14.2]}]  
					\label{cor142}
Consider the diagram (\ref{bz2}).
Make its `negative' diagram
where we exchange $t_-$ and $t_+$ and
transform the $M_2$-action under coordinate flip.
Then its associated element in $GK$ is 
the negative, that is, 
$$- t_+ \mu \Delta_{t_-} e^{-1} =
t_- \mu \Delta_{t_+} e^{-1}$$

\end{corollary}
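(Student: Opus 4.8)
The plan is to reduce the asserted identity, via the summation Lemma \ref{lemma191}, to the statement that one particular extended double split exact sequence is null in $GK^G$, and then to kill that one by a rotation homotopy of the type in \cite[lemmas 8.10, 8.11]{baspects}. Write $D$ for the given extended double split exact sequence, with $\Delta$-side split $t_-$, after-split $t_+$, and (by Lemma \ref{lemma272}) actions $S,T$ on $\call_B(\cale)$ rendering $t_-,t_+$ equivariant for $\Ad(S),\Ad(T)$; its associated morphism is $t_+\mu\Delta_{t_-}e^{-1}$. Write $D'$ for the negative diagram, obtained by exchanging the two splits and flipping the $M_2$-action; the flipped action is then of the form $\Ad(T\oplus S)\square(\alpha\otimes 1)$, and it is a legitimate $M_2$-action because the two conditions of Lemma \ref{lemma272}.(i) for $D'$ read, after the swap, $t_+(a)\big(T_gS_{g^{-1}}-T_gT_{g^{-1}}\big)\in\calk_B(\cale)$ and $t_+(a)\big(S_gT_{g^{-1}}-S_gS_{g^{-1}}\big)\in\calk_B(\cale)$, which follow from the corresponding conditions for $D$ since $t_+(a)-t_-(a)$ lies in the ideal of compact operators by Remark \ref{remark43}. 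The associated morphism of $D'$ is $t_-\mu'\Delta_{t_+}e^{-1}$, with $\mu'$ formed from the flipped corner embeddings.

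Next I would sum $D$ and $D'$ by Lemma \ref{lemma191}. This produces an extended double split exact sequence with underlying module $\cale\oplus\cale$, $\Delta$-side split $t_-\oplus t_+$, after-split $t_+\oplus t_-$, $M_2$-action the direct sum of the two (hence itself of the required form, by the $M_2$-action formula of Lemma \ref{lemma191}), and with associated morphism
\[
t_+\mu\Delta_{t_-}e^{-1}+t_-\mu'\Delta_{t_+}e^{-1}.
\]
Thus it suffices to show that this summed diagram is $0$ in $GK^G$. For that I would apply, inside its $M_2$-space, the rotation homotopy of Definition \ref{def26} given by $V_t=\Big(\begin{matrix}\cos t&\sin t\\-\sin t&\cos t\end{matrix}\Big)\in M_2(\C)$, $t\in[0,\pi/2]$, conjugating the after-split $t_+\oplus t_-$ by $\idd_{\call_B(\cale)}\otimes V_t$: since $t_+\oplus t_-$ acts block-diagonally and $\Ad(V_{\pi/2})$ interchanges the diagonal blocks, at $t=\pi/2$ the rotated split equals $t_-\oplus t_+$, i.e. it agrees with the $\Delta$-side split. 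The rotation stays inside the relevant $\square A$-subalgebra at every $t$, because $t_+(a)-t_-(a)$ being compact makes all the diagonal and off-diagonal corrections produced by $\Ad(\idd\otimes V_t)$ compact; and the homotopy tracks the $M_2$-action, exactly as in Lemma \ref{lemma272} and the constant-corner homotopies of \cite[lemmas 8.10, 8.11]{baspects}, so that it is a genuine homotopy of double split exact sequences. A double split exact sequence whose two splits coincide has associated morphism $0$ — this is the case $f=0$ of Lemma \ref{lemma100} — so the homotopy yields $t_+\mu\Delta_{t_-}e^{-1}+t_-\mu'\Delta_{t_+}e^{-1}=0$, which is exactly the claim.

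The main obstacle is the equivariant bookkeeping in the homotopy step: conjugating the after-split by the scalar rotation $V_t$ twists the $G$-action on the $M_2$-space, so one is really performing a homotopy through a varying family of target objects, and one must verify that the corner action relevant to $\mu$ and $\Delta$ stays constant along $[0,\pi/2]$ and that the endpoint is itself a valid double split exact sequence. This is precisely what the ring analogues of \cite[lemmas 8.10, 8.11]{baspects} take care of, and the whole argument runs parallel to the $C^*$-algebra proof of \cite[corollary 14.2]{baspects}; the reduction via Lemma \ref{lemma191}, the compactness checks, and the identification of the endpoint with a split-coinciding sequence are then routine.
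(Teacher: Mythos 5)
Your proposal follows essentially the same route as the paper's proof (which is deferred to the $C^*$-version in \cite[corollary 14.2]{baspects}): sum the given sequence with its coordinate-flipped negative via Lemma \ref{lemma191}, and annihilate the summed sequence by the rotation homotopy interchanging the two copies of $\cale$, with compactness of $t_+(a)-t_-(a)$ keeping the path inside the $\square$-subalgebra and the varying range $G$-action handled by the ring analogues of \cite[lemmas 8.10, 8.11]{baspects}, exactly as you indicate. One small correction: the endpoint vanishing is not literally the $f=0$ case of Lemma \ref{lemma100} (whose diagram has the special form $B\oplus A$); rather, the rotation also conjugates the $22$-corner action onto the $11$-corner action, so that $\mu$ becomes the identity by a rotation homotopy between $e_{11}$ and $e_{22}$, and the associated element is then $(t_-\oplus t_+)\Delta_{t_-\oplus t_+}e^{-1}=0$ by Remark \ref{rem12}.(iii).
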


\begin{proof}


Considering a sum as in the last lemma we have
$$(t_+ \oplus t_-) \mu \Delta_{t_- \oplus t_+}
= (t_- \oplus t_+)  \Delta_{t_- \oplus t_+} =0$$
by the 
rotation homotopy
$V_s ((t_- \oplus t_+) \otimes 1_\C) V_{-s}$,
where 
$$V_s= 1 \otimes \Big (\begin{matrix} \cos s & \sin s \\ - \sin s &  \cos s \end{matrix}
\Big ) \oplus 1 \otimes  1 \in 
\call_B(\calk) \otimes M_2(\C) \oplus \tilde A 
\otimes \C \cong
(\call_B( \cale^2) \oplus \tilde A) \otimes \C$$  
for $s \in [0,\pi/2]$
and where we define the $G$-action
on 
$M_2(X[0,1])$ 
for $X=\call_B(\cale \oplus \cale)$ by $(\theta^{V_s} 
)_{s \in [0,\pi/2]}$ 
so that we can apply lemma \ref{lemma123}.
%
%
\if 0
Also $\mu=1$ by a rotation homotopy, as (let us assume) the $M_2$-action is of the form ${\rm Ad}(S \oplus S)$.
\fi
\if 0
Also $\mu=1$ by a rotation homotopy.
, as (let us assume) the $M_2$-action of (\ref{bz2}) is of the form ${\rm Ad}(S \oplus T)$.
\fi

Compute that the homotopy runs really in $M_2(\call_B(\cale)) \square_{t_ \oplus t_+} 
A = \image {(t_- \oplus t_+)} +  M_2(\calk_B(\cale))$ as $t_-(a)- t_+(a) \in \calk_B(\cale)$ for all $a \in A$.
\end{proof}

\section{}

\begin{lemma}			\label{lemma85}  

Let $A$ be unital. Let the first line, an extended double plsit exact sequence, and the plain matrix embedding $g$ (i.e. without occurence of $\cale$)  
of the 
following diagram be given. 
Thereby, the cardinalities of the index sets of the direct sums $\H_A$ and $\H_B$ may differ.

\if 0
Ohne beschränkund der allgmeinheit
$s_\pm(1)=1$, sonst $\cale$ auf $s_-(1) \cale$
zusammenschneiden.
\fi 

$$\xymatrix{
B \ar[r] 
\ar[d]   &
\calk_B(\cdots) \ar[r] 
\ar[d] 
& \call_B( \cale \oplus \H_B) \square  A 
\ar[d]^\phi  &
 A \ar[l]^{s_\pm \oplus 0 \square}  \ar[d]^g    \\
B \ar[r] 
& \calk_B(\cdots) \ar[r] 
& \call_B
\Big (  \H_A \otimes_{s_-} \cale
 \oplus \H_A \otimes_{s_+} \cale
\oplus \H_B 
\Big ) \square  \calk_A(\H_A) 
&
 \calk_A(\H_A) \ar[l]^{t_\pm }  
}$$

\if 0
\\ 
B  \otimes M_n  \ar[r] \ar[d] & \call_B( ( A \otimes_{f} B) ^n
 \oplus ((1-f(1))B)^n ) \square  A \ar[r]  \ar[d] &
 A  \otimes M_n \ar[l]^{1,0}  \ar[d]   \\  
B  \otimes M_n  \ar[r] & \call_B( M_n( A) \otimes_1 ( A \otimes_{f} B)
 \oplus ((1-f(1))B)^n ) \square  A \ar[r] &
 A  \otimes M_n \ar[l]^{1,0}   \\  
\fi

Then we complete these data to the above diagram such that 
$$g {-1} ((s_+ \oplus 0) \square 1) 
\nabla_{(s_- \oplus 0) \square 1} 
= t_+ \nabla_{t_-}$$  

\end{lemma}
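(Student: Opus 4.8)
The strategy is to mimic the proof of the descent‐type lemma \ref{lemma43}, replacing the internal tensor product over a homomorphism $\pi$ by the internal tensor products over the two splits $s_-$ and $s_+$, and to produce the required diagram so that lemma \ref{lemma21} applies. First I would set up the non‐equivariant picture: ignoring all $G$-actions, I would use the functional $B$-module isomorphisms $p_\pm : A\otimes_{s_\pm}\cale\to\cale$ of lemma \ref{lemma220}.(iv) (valid since $A$ is unital and $s_\pm(1)=1_\cale$), with inverses $\xi\mapsto 1_A\otimes\xi$, to obtain canonical isomorphisms $\pi_\pm : H_\cale := \oplus_{i\in I}\cale \to \H_A\otimes_{s_\pm}\cale$. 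Then I would define $\phi$ on the $\square$-construction by $\phi(T\square a)=T\oplus 0\oplus 0\oplus\cdots\square g(a)$ (with $T$ acting only on the first $\cale$-summand of $H_\cale$ and on $\H_B$), and $\psi$ analogously on the compact operators, and set $t_\pm(T)=\pi_\pm\circ(T\otimes 1_\cale)\circ\pi_\pm^{-1}\oplus 0_{\H_B}\square T$ for $T\in\calk_A(\H_A)$.

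The next step is to check that the second line is indeed (non-equivariantly) double split. Writing $e_i$ for the canonical generators of $\H_A$ and $T(e_i)_j$ for the $j$-th coordinate of $T(e_i)$, a direct computation gives
\[
\bigl(t_-(T)-t_+(T)\bigr)(\oplus_i\xi_i)=\bigoplus_j\sum_i\Bigl(\bigl(s_--s_+\bigr)\bigl(T(e_i)_j\bigr)\Bigr)(\xi_i)\square 0,
\]
and since the first line is split exact, each $(s_--s_+)(a)$ lies in $\calk_B(\cale)$, so the operators $k_{i,j}\in\call_B(\cale,A)$ defined this way are compact; as $T$ is compact, almost all $k_{i,j}$ vanish, whence $t_+-t_-$ lands in $\calk_B(\cdots)$ and the second line is double split by remark \ref{remark43}. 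Then I would verify via lemma \ref{lemma43} that the left rectangle commutes in $GK^G$ (this is where $g^{-1}$ gets absorbed — the map $g$ is exactly the plain matrix embedding and $\phi$ is the $\phi$ of that lemma, up to the rotation argument), and observe $\tilde s_\pm\phi=gt_\pm$ for $\tilde s_\pm:=s_+\oplus 0\square 1$.

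The main obstacle — and the part requiring real care — is the equivariant bookkeeping of the $M_2$-action on the second line. If the $M_2$-action on the first line is $\Ad(S\oplus R,T\oplus R)\square(\alpha\otimes 1)$ with $S,T$ on $\cale$, $R$ on $\H_B$, then I would put on the second line
\[
\Ad\bigl(\pi_-^{-1}\circ(V\otimes S)\circ\pi_-\oplus R,\ \pi_+^{-1}\circ(V\otimes T)\circ\pi_+\oplus R\bigr)\square\Ad(V)
\]
where $V$ is the $G$-action on $\H_A$. Since $(\H_A,V)=(\H_A,\alpha\oplus W)$ by definition \ref{def21}, on the distinguished first summand $\cale$ of $H_\cale$ one has $\pi_-^{-1}\circ(\alpha\otimes S)\circ\pi_-=S$, which is precisely what makes $\Phi=\phi\otimes 1_{M_2}$ equivariant. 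To check the $M_2$-action is valid I would apply lemma \ref{lemma272}: from its application to the first line we get $X_g:=T_gT_{g^{-1}}-T_gS_{g^{-1}}\in\calk_A(\cale)$, and multiplying the relevant product $\pi_-^{-1}\circ(V\otimes S_g)\circ\pi_-\circ\pi_+^{-1}\circ(V\otimes T_{g^{-1}})\circ\pi_+$ on the right by the compact operator $\pi_+^{-1}\circ(1\otimes X_g)\circ\pi_+$ reduces the condition to the already-known compactness, so lemma \ref{lemma272} applies again. Finally I would conclude by invoking lemma \ref{lemma21} with $a:=g$, $b:=\idd_B$, $\Phi:=\phi\otimes 1_{M_2}$, $\phi:=\psi:=\phi$, recalling remark \ref{cor22}, to obtain $\tilde s_+\nabla_{\tilde s_-}=g\,t_+\nabla_{t_-}$, hence $g^{-1}(s_+\oplus 0\square 1)\nabla_{s_-\oplus 0\square 1}=t_+\nabla_{t_-}$ in $GK^G$.
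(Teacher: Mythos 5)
Your proposal is essentially the paper's own proof: the same isomorphisms $p_\pm$ and $\pi_\pm$ from lemma \ref{lemma220}.(iv), the same definitions of $\phi$ and $t_\pm$, the same $e_i$-coordinate computation showing $t_--t_+$ lands in the compacts, the same choice of $M_2$-action with equivariance of $\Phi=\phi\otimes 1_{M_2}$ via $(\H_A,V)=(\H_A,\alpha\oplus W)$, the same lemma \ref{lemma272} argument with $X_g$, and the same final application of lemma \ref{lemma21} with $a:=g$, $b:=\idd_B$. The only inaccuracy is the aside that the left rectangle is handled "via lemma \ref{lemma43}": it in fact commutes strictly as ring homomorphisms (both verticals fix $\H_B$ and the domain copy of $B$), and the plain embedding $g$ is absorbed through the right-hand condition of lemma \ref{lemma21}, exactly as your concluding step already does, so this remark is harmless.
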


\begin{proof}

We may assume that $s_\pm(\idd)=\idd$, otherwise cut $\cale$ down to $s_-(1) \cale$ 
by lemma \ref{lemma10}. 
We at first ignore any $G$-action. 

We use the 
functional $B$-module isomorphism $p_-: A \otimes_{s_-} \cale \rightarrow \cale$ defined by 
$p_-(a \otimes \xi)= s_-(a)(\xi)$ and $p_-  {-1} (\xi)= 1 \otimes \xi$, and an analogous one for $s_+$. 
 
Let $\pi: H_\cale \oplus H_\cale  \oplus \H_B \rightarrow \H_A \otimes_{s_-} \cale
 \oplus \H_A \otimes_{s_+} \cale  \oplus \H_B$
be the canonical isomorphism induced by $p_\pm$,
and $F$ be the flip operator defined by 
$F(S \oplus T \oplus Y)= T \oplus S \oplus Y$ on the domain of $\pi$, where $H_\cale := \oplus_{i \in I} \cale$. 

Set
$$\phi(T \square a)= \pi \circ (T \oplus 0 \oplus 0 \oplus 0 \oplus \cdots) \circ \pi^{-1} \square g(a)$$

That means, $T$ operates on the right hand side only at the first summand $A \otimes_{s_-} \cale$ of $H_\cale \cong \H_A \otimes_{s_-} \cale$
and on $\H_B$. 

We set
\begin{eqnarray*}
t_-(T) &=& T \otimes 1_\cale \oplus 0_{ \H_A \otimes_{s_+} \cale
}  \oplus 0_{\H_B} 
\square T \\
t_+(T) &=& \pi \circ F \circ \pi {-1} \circ ( 0_{ \H_A \otimes_{s_-} \cale
} \oplus  T \otimes 1_\cale  \oplus 0_{\H_B} ) \circ \pi \circ F^{-1} \circ \pi^{-1} \square T  
\end{eqnarray*}


If the $M_2$-action of the first line of the diagram is $\Ad (S \oplus R , T \oplus R) \square (\alpha \otimes 1)$
then on the second line it is
$$\Ad (V \otimes S \oplus V \otimes S \oplus R,
V \otimes T \oplus V \otimes T \oplus R)
 \square \Ad(V)$$
where $S,T$ act on $\cale$, $R$ on $\H_B$ and $V$ on $\H_A$.

We declare now $\pi$ to be $G$-equivariant by pulling back the now given $G$-action on the range of $\pi$ to its domain. 

Since $(\H_A,V)= (\H_A, \alpha \oplus W)$ by definition \ref{def21}, the 
map $\phi$ is seen to be $G$-equivariant. 

By lemma \ref{lemma272}, applied to the first 
line of the above diagram, we have 
$S_g S_{g -1} 
-  S_g T_{g -1} \in \calk_A(\cale)$. 

\if 0 
Observe that $p_- \circ (1 \otimes  \calk_A(\cale)) \circ p_-^{-1} = 
\calk_A(\cale)$. 
\fi  

Hence, for $T \in \calk_A(\H_A)$ we get, 
on $X:=\H_A \otimes_{s_-} \cale$,  
$$
(T \otimes 1) 
( V_g V_{g  -1} \otimes S_g S_{g -1} 
-  V_g V_{g  -1} \otimes S_g T_{g -1}  )
\subseteq 
(T \otimes 1) (1_{\H_A}  \otimes \calk_A(\cale))
 \subseteq \calk_A(X)$$

Conseuqently, by lemma \ref{lemma272} again, 
the $M_2$-action of the second line of the above diagram is valid. 
 Finalize the proof by checking 
lemma   \ref{lemma21}. 

\if 0
Because we have the factor $s_-(a)$ in lemma \ref{lemma272}, applying it at first to 
the given first line of the above diagram, we obtain by second application of this lemma that the $M_2$-action of the second line of the above diagram is valid.   
\fi 
\end{proof}

\section{}

-> und dann als multiplikationsoperatorne 

cofull -> sind nicht multiplikationsoperatoren

\section{}

\begin{proof}
A simple computation shows that $T_g z S_{g^{-1}}$ is in $\call_A(\cale)$ and in $\calk_A(\cale)$ for $z$ compact. 

This is actually just the $G$-invariance condition observed in Def \ref{defcompact} for 
$\calk_A((\cale,S),(\cale,T))$. 
\end{proof}

\section{}

We may assume that $s_\pm(\idd)=\idd$, otherwise cut $\cale$ down to $s_-(1) \cale$ 
by lemma \ref{lemma10}.  ---> muss nicht yei

\section{}

\begin{proof}
Suposse that $\nu(z)=0$. Then 
$z P$ is a level-0 morphism for some $GK^G$-invertible 
ring homomoprhism $P$ with right-inverse $Q$.
Hence, $\nu(zP) = \nu(z) (P\rtimes 1)=0$ by 
the functoriality of the Baum-Connes map. By  injectivity of the Baum-Connes map on $L_0 GK^G$ we get $zP=0$, and so  
$z PQ=z=0$. 
 %
%
%
\end{proof}

\section{}

\begin{lemma}
BC ist rechts funktoiral:

$\lim GK^G(C_0(Y) ,A) \rightarrow \lim GK^G(C_0(Y) ,B): x \mapsto x z$

$z \in GK^G(A,B)$

$\mu_G(xz)= \mu_G(x) j^G(z)$

\end{lemma}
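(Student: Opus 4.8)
The plan is to notice that the assembly map is literally the composite of a fixed ring homomorphism with the descent functor, so its naturality in the coefficient algebra is a formal consequence of the functoriality of descent. First I would unwind the definition: for a morphism $V \in GK^G(C_0(X),A)$ one has $\nu_X^{G,A}(V) = M_X\, j^G(V) \in GK(\C,A\rtimes G)$, where $M_X : \C \to C_0(X)\rtimes G$ is the fixed ring homomorphism $z\mapsto z p_X$, the symbol ``$V\rtimes\idd$'' in the definition of $\nu_X^{G,A}$ being read as $j^G(V)$ for general (not necessarily homomorphism) $V$, and where compositions are read from left to right as everywhere in this paper. Since $j^G$ of section \ref{sec12} is a functor, $j^G(VZ) = j^G(V)\, j^G(Z)$ for any $Z \in GK^G(A,B)$, whence
$$\nu_X^{G,B}(VZ) = M_X\, j^G(VZ) = \big(M_X\, j^G(V)\big)\, j^G(Z) = \nu_X^{G,A}(V)\, (Z\rtimes\idd),$$
and since $(Z\rtimes\idd)_*$ is right multiplication by $Z\rtimes\idd$ and $Z_*V = VZ$, this is exactly the asserted $\nu_X^{G,A}\circ(Z\rtimes\idd)_* = Z_*\circ\nu_X^{G,B}$. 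That disposes of the $X$-level statement in two lines.

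Next I would pass to the direct limit. Recall that $\nu^{G,A}$ is by construction the homomorphism induced by the family $(\nu_X^{G,A})_X$ over the system of $G$-compact, second countable, locally compact $G$-invariant $X\subseteq \underline E G$ with connecting maps $\pi^*$ coming from the restriction homomorphisms $\pi:C_0(Y)\to C_0(X)$ for $Y\supseteq X$; write $\psi_X^{G,A} : GK^G(C_0(X),A) \to \varinjlim_X GK^G(C_0(X),A)$ for the canonical maps, so $\nu^{G,A}\,\psi_X^{G,A} = \nu_X^{G,A}$. I would check that $Z_* \pi^* = \pi^* Z_*$, which is just associativity $(\pi V)Z = \pi(VZ)$ in $GK^G$, so that $Z_*$ descends to a map of the colimits with $\psi_X^{G,B} Z_* = Z_*\psi_X^{G,A}$. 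Then for $x=\psi_X^{G,A}(V)$ the $X$-level identity gives $\nu^{G,B}(Z_*x) = \nu_X^{G,B}(VZ) = \nu_X^{G,A}(V)(Z\rtimes\idd) = \nu^{G,A}(x)(Z\rtimes\idd)$, which is the colimit statement; additivity of all maps involved is immediate since $j^G$ and the colimit maps are additive. This is the complete argument, and it parallels the bookkeeping already used for the Green--Julg functoriality lemma \ref{lemma1512}.

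There is no deep obstacle; the only facts that carry weight are ones already in place once the objects are defined. The main ``load-bearing'' inputs are (i) that $j^G$ really extends to a functor on all of $GK^G$ — in particular is compatible with inverses of corner embeddings, which is corollary \ref{lemma121}, and with the synthetic splits $\Delta_s$ — so that $j^G(VZ)=j^G(V)j^G(Z)$ holds for arbitrary morphisms and not merely ring homomorphisms; and (ii) that the $\nu_X^{G,A}$ are compatible with the connecting maps $\pi^*$, equivalently that $M_Y(\pi\rtimes\idd)=M_X$ in $GK$ after the usual homotopy adjustment of cut-off functions, which is exactly what makes $\nu^{G,A}$ well defined at all. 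If I wanted to be scrupulous about (ii) I would record that two cut-off functions on $X$ give Murray--von Neumann equivalent, hence homotopic, projections $p_X$, so $M_X$ is well defined in $GK$, and that restricting a cut-off on $Y$ to $X$ yields an admissible cut-off on $X$; but this is the standard fact underlying the very definition of the Baum--Connes map, so I would just cite it. The only genuine ``step'', then, is the first display; everything else is unwinding the colimit.
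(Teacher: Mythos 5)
Your argument is correct and is exactly the intended one: the paper states this functoriality (lemma \ref{lemma167}, the statement you were given being its scratch form) without proof, treating it as immediate from the fact that $\nu_X^{G,A}(V)=M_X\,j^G(V)$ with $V\rtimes\idd=j^G(V)$, functoriality of the descent functor $j^G$ (whose only nontrivial ingredient is corollary \ref{lemma121}), and the compatibility $Z_*\pi^*=\pi^*Z_*$ needed to pass to the colimit. Your write-up just makes that formal unwinding explicit, so there is nothing to add.
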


\begin{lemma}
BC map injectiv $\Leftrightarrow$
injectiv auf level-0

\end{lemma}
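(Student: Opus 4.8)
The statement to prove is Proposition \ref{prop176}: the Baum-Connes maps $\nu^{G,A}$ are injective for all coefficient algebras $A$ if and only if the restricted maps $L_0\nu^{G,A}$ are injective for all $A$. The ``only if'' direction is immediate, since $L_0\nu^{G,A}$ is just the restriction of $\nu^{G,A}$ to a subgroup. So the content is the ``if'' direction, and the strategy is exactly the one used already for the functor theorem \ref{theorem108} and the exactness lemma: push an arbitrary element into level $0$ by multiplying with a $GK^G$-right-invertible ring homomorphism $P$, use injectivity of $L_0\nu^{G,\cdot}$ there (crucially at the \emph{new} coefficient algebra $D$, not $A$, which is why we need the hypothesis for \emph{all} coefficient algebras), and then pull back.

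\textbf{Steps.} First I would fix a class in the domain of $\nu^{G,A}$, represented by a morphism $z : C_0(X) \rightarrow A$ in $GK^G$ for some $G$-compact $G$-invariant $X \subseteq \underline E G$, write $\psi_X^{G,A}$ for the canonical map from $GK^G(C_0(X),A)$ into the direct limit, and assume $\nu^{G,A}(\psi_X^{G,A}(z)) = \nu_X^{G,A}(z) = 0$. Second, by Lemma \ref{lemma97}.(ii) (the $\nabla$-calculation) choose a $GK^G$-right-invertible ring homomorphism $P : A \rightarrow D$, with right-inverse $Q$ satisfying $PQ = 1$ in $GK^G$, such that $zP$ is a level-$0$ morphism in $L_0 GK^G(C_0(X),D)$. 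Third, apply the functoriality of the Baum-Connes map in the coefficient algebra, Lemma \ref{lemma167}: $\nu_X^{G,D}(zP) = \nu_X^{G,D}(z)(P \rtimes 1) = \nu_X^{G,A}(z)\,(P\rtimes 1) = 0$, hence $\nu^{G,D}(\psi_X^{G,D}(zP)) = 0$. Fourth, since $zP$ is level-$0$ and $L_0\nu^{G,D}$ is injective by hypothesis, conclude $\psi_X^{G,D}(zP) = 0$ in the direct limit $\varinjlim_Y L_0 GK^G(C_0(Y),D)$. Fifth, unwind the direct limit: vanishing in the colimit means there is a larger $G$-compact $Y \supseteq X$ in $\underline E G$ with $w\,zP = 0$ in $GK^G$, where $w : C_0(Y) \rightarrow C_0(X)$ is the restriction ring homomorphism. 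Sixth, multiply by $Q$ on the right: $w z = w z P Q = 0$, so $\psi_X^{G,A}(z) = \psi_Y^{G,A}(wz) = 0$, which is what we wanted.

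\textbf{Main obstacle.} There is no deep obstacle here — the architecture is entirely parallel to Theorem \ref{theorem108} and the contravariant-exactness lemma, and all the ingredients (Lemma \ref{lemma97}.(ii), Lemma \ref{lemma167}, the colimit description of the domain) are already in hand. The one point that needs genuine care is the bookkeeping of \emph{which} coefficient algebra the $L_0$-injectivity hypothesis is applied to: after the reduction we are forced into the algebra $D$ produced by the $\nabla$-calculation, so the hypothesis must be quantified over all coefficient algebras, and correspondingly the conclusion is only obtained for all coefficient algebras simultaneously — one cannot prove injectivity of $\nu^{G,A}$ for a single fixed $A$ from injectivity of $L_0\nu^{G,A}$ for that same $A$ alone. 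A minor secondary point is verifying that $w$ commutes correctly through the colimit structure maps $\psi^{G,\cdot}$ and through $\nu_X^{G,\cdot}$; this is routine from the definitions of the assembly map and the fact that the structure maps of the direct limit are exactly the $\pi^*$ for restriction homomorphisms $\pi$, but it should be stated explicitly so the final line $\psi_X^{G,A}(z) = \psi_Y^{G,A}(wz)$ is justified.
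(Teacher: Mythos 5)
Your argument is correct and follows the paper's own proof essentially verbatim: reduce $z$ to a level-$0$ morphism $zP$ via Lemma \ref{lemma97}.(ii), transport the vanishing through the functoriality of Lemma \ref{lemma167} to the new coefficient algebra $D$, apply injectivity of $L_0\nu^{G,D}$, unwind the direct limit to get $wzP=0$, and cancel $P$ by its right-inverse $Q$. Your remarks on why the hypothesis must range over all coefficient algebras and on the compatibility of $w$ with the colimit structure maps are accurate but add nothing beyond the paper's argument.
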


\section{}

\begin{proof}

One does this as in lemma... 
If one does not use cofullness of $B$, then we just 
We just mention the following modification:

By lemma \ref{lemma61}.(iv), $M_2(M)$
embedds equivariantly into $\call_M(M \oplus M,{\rm Ad}(\alpha \oplus \gamma))$, confer the formula in lemma
\ref{lemma51}. 
By assumptions on a split exact sequence,
$\gamma$ leaves $B$ invariant, and so restricts 
to a $G$-action on $B$. 
Otherwise, by cofullness: $\gamma(b_1 b_2)= \gamma(b_1) \alpha(b_2) \in B$ for $b_1,b_2 \in B$.

\if 0		

Set $j=i \phi$.
Define $\chi$ analogously as in (\ref{chi}).
%
Set 
$\phi(m)= \chi(m) \square f(m)$.
Put $t_\pm(a)= \chi(s_\pm(a)) \square a$.
%
%
Note that $\phi$ is bijective.
Set $\Phi = \phi \otimes 1_{M_2}$ 
and
define the 
$G$-action on its range in such a way that
$\Phi$ becomes $G$-equivariant.
Verify with lemma
\ref{lemma21}.

For the sake of simpler notation we assume now that $i$ is the identity embedding.
By lemma \ref{lemma61}.(iv), $M_2(M)$
embedds equivariantly into $\call_M(M \oplus M,{\rm Ad}(S \oplus T))$, confer the formula in lemma
\ref{lemma51}.

But $S$ restricts to a $G$-action on $B$, and $T$ restricts to a Hilbert $(B,S)$-module on $B$ 
(because $T(b^2)= T(b) S(b) \in B$).

Nach voraussetzung ist die rechte untere ecke $B$ von $M$ $G$-invariant. 

--> das $\gamma$ dort ist aber das $T$

((
Nach voraussetzung ist die rechte untere ecke $B$ von $M$ $G$-invariant. 

Damit

$\delta_g(a) a = T_g( b T_{g^{-1}}(a)= x a = T_g(b) a$

Das $x$ folgt aus der def dass $Ad(T)$-aktion die kompatken invariant lässt.

Damit $T_g(b)= x$, also in $B$
))

Hence by \ref{lemma61}.(vii)
we can equip $\call_B(B \oplus B)$
with the $G$-action ${\rm Ad}(S \oplus T)$ as well, or in other words, use the same formula as in lemma \ref{lemma51}.
The $G$-action $\delta$ 
comes from corollary \ref{cor79}. 
\if 0
Set $\Phi = \phi \otimes 1_{M_2}$ 
and
define the 
$G$-action on its range 
as stated in the lemma.
%
Verify with lemma
\ref{lemma21}.
%
%
\fi

\fi

\end{proof}

\section{}

\begin{proof}
Write $\theta$ as in lemma \ref{lemma61}.
The corner action $\alpha$ leaves $j(B)$ invariant. 
Hence, by the formulas of lemma \ref{lemma61}.(i) we see that $\gamma, \beta,\delta$ leave 
a product $ab \in j(B)$ ($a,b \in j(B)$) invariant. By quadratik 
we are done.   
\end{proof}

\section{}

Nun ist $f g \kappa$ homotop zu
$m \psi \kappa$ durch rotation, wie zu zeigen war.

We do not need to bother wether the last module is cofull. Comming from the third line, where it is cofull, so it is cofull $\cale \oplus A$, so $\cale$ is itself cofull.

\section{}

We have a 
$B$-module functional isomorphism
$(X,l)$ defined by 
\begin{eqnarray*}
&&  X:A \otimes_\pi B \rightarrow B_0:= 
\sum 
\pi(A) B  \subseteq B
: X(a\otimes b) = 
\pi(a) b 				\\
&&  l: \Theta_B(A \otimes_\pi B) 
\rightarrow \Theta_B(B_0) : l(m_y \otimes m_x) = m_{x \pi(y)}
\end{eqnarray*} 
%
where $m_x$ denotes left multiplication operator ($x \in B,y \in A$) 
and $\Theta_B(B_0)$ is defined to be the image of $l$, into a (now functional) $B$-submodule $B_0$ of $B$, see also lemma \ref{lemma17}.

\section{}

sthet schon in der einleitung :::

If $\ring G$ 
is the collection of all 
separable $G$-equivariant $C^*$-algebras, 
the modules $\cale$ in definition 
just the countably generated Hilbert $A$-modules 
(or, equivalently, just th countable infinite 
direct sums $\cale=(\oplus_{i \in \N} A,S)$) 
and the compact operators are defined 
to be the norm-closure of the algebraically defined compact operators as above, 
then $GK^G$-theory is just $KK^G$-theory. 

\fi

\end{document}